\title{Reticulation functor and the transfer properties}
\author{George Georgescu \\ \footnotesize University of Bucharest\\ \footnotesize Faculty of Mathematics and Computer Science\\ \footnotesize Bucharest, Romania\\ \footnotesize Email: georgescu.capreni@yahoo.com}
\date{}
\begin{document}
\maketitle

\begin{abstract}
It is known that by using the commutator operation, for each congruence modular algebra $A$ one can define a notion of prime congruence. The set $Spec(A)$ of prime congruences of $A$ is endowed with a Zariski style topology. The reticulation of the algebra $A$ is a bounded distributive lattice $L(A)$ whose prime spectrum $Spec(L(A))$ (with the Stone topology) is homemorphic to $Spec(A)$.

In a recent paper, C. Mure\c{s}an and the author have  proven the existence of reticulation for a semidegenerate congruence modular algebra $A$.

 The present paper aims to give an answer to two types of problems:

 $(I)$ how some properties of the algebra $A$ can be transferred to the lattice $L(A)$ and viceversa, how some  properties of $L(A)$ can be transferred to $A$;

 $(II)$ how the transfer properties from $(I)$ can be used to prove some old and new characterizations of some remarkable classes of algebras.

 We study the transfer properties related to Boolean centers, annihilators, patch and flat topologies of spectra, Pierce spectrum, pure and $w$ - pure congruences, the operators $Ker(\cdot)$ and $O(\cdot)$,etc.

By using these transfer properties, we obtain characterization theorems for several types of algebras : hyperarchimedean algebras, congruence normal and congruence $B$ - normal algebras, $mp$ - algebras, $PF$ - algebras, congruence purified algebras and $PP$ - algebras.

\end{abstract}

\textbf{Keywords}: commutator operation, semidegenerate congruence - modular algebra, reticulation, congruence normal and $B$ - normal algebras, $mp$ - algebras, $PF$ -algebras, $PP$ - algebras, congruence purified algebras.
\newtheorem{definitie}{Definition}[section]
\newtheorem{propozitie}[definitie]{Proposition}
\newtheorem{remarca}[definitie]{Remark}
\newtheorem{exemplu}[definitie]{Example}
\newtheorem{intrebare}[definitie]{Open question}
\newtheorem{lema}[definitie]{Lemma}
\newtheorem{teorema}[definitie]{Theorem}
\newtheorem{corolar}[definitie]{Corollary}

\newenvironment{proof}{\noindent\textbf{Proof.}}{\hfill\rule{2mm}{2mm}\vspace*{5mm}}

\section*{Contents}

\begin{enumerate}
\item Introduction
\item Preliminaries
\item Reticulation of a universal algebra
\item Pure congruences
\item Transferring the operators $Ker(\cdot)$ and $O(\cdot)$ by reticulation
\item Flat and patch topologies on the spectra
\item Hyperarchimedean algebras
\item Congruence normal algebras
\item $Mp$ - algebras
\item Congruence purified algebras
\item $PP$ - algebras
 \end{enumerate}

\section{Introduction}

 \hspace{0.5cm} The reticulation of a (unitary) commutative ring $R$ is a bounded distributive lattice $L(A)$ whose prime spectrum $Spec(L(A))$ (endowed with the Stone topology) is homeomorphic to the prime spectrum $Spec(R)$ of $R$ (endowed with the Zariski topology). The notion of reticulation is due to Joyal in \cite{Joyal} and extensively studied by Simmons in \cite{Simmons}, who named $L(R)$ "the reticulation of $R$". In fact, the assignment $R\mapsto L(R)$ is a covariant functor (named "reticulation functor") from the category of commutative rings to the category of bounded distributive lattices. Paper \cite{Simmons} emphasises the way in which the reticulation functor transfers some properties from rings to lattices and viceversa and Chapter $V$ of the book \cite{Johnstone} uses the reticulation for many sheaf constructions in ring theory.

 Inspired by these ideas, many authors have proposed the similar notions of "reticulation" for various structures: $F$ - rings \cite{Johnstone}, $MV$ - algebras \cite{Belluce}, $BL$ - algebras \cite{g}, residuated lattices \cite{Muresan},etc. Then the problem to define a notion of reticulation in a context of universal algebra appeared in a natural manner. In order to define a reticulation for a universal algebra $A$ we need to have a notion of "prime spectrum". In 1993 Agliano defined a prime spectrum of a congruence
 - modular algebra by using the commutator operation (cf. \cite{Agliano}).

 The commutator theory, developed by Fresee and McKenzie in the monograph \cite{Fresee} for congruence - modular algebras, is a wonderful tool in proving some deep results of universal algebra (see \cite{Hobby}, \cite{McKenzie}). The commutator operation $[\cdot,\cdot]$, defined on the lattice $Con(A)$ of congruences of a congruence - modular algebra $A$, is a commutative multiplication, but not necessarily associative. It allows us to define a notion of prime congruence of $A$ and the set $Spec(A)$ of prime congruences of $A$ (= the prime spectrum of $A$) has a canonical topological structure \cite{Agliano} that generalizes the Zariski topology \cite{Atiyah} and the Stone topology \cite{BalbesDwinger}. Thus this topology on $Spec(A)$ will be called the Zariski topology and  the corresponding topological space will be denoted by $Spec_Z(A)$. In \cite{Agliano} the properties of topological space $Spec(A)$ for various types of algebras are analyzed. One can remark from \cite{Agliano} that the semidegenerate congruence - modular algebras \cite{Kollar} provide rich topological properties of the prime spectra.

 The reticulation of a congruence - modular algebra $A$ is a bounded distributive lattice $L(A)$ having the property that the prime spectrum $Spec(A)$ of $A$ is homeomorphic with the prime spectrum $Spec(L(A))$ of $L(A)$. The paper \cite{GM2} introduced  the reticulation for  semidegenerate congruence - modular algebras and \cite{GKM} studied the functorial properties of this reticulation. The construction given in \cite{GM2} extends all the reticulations existing in literature.

 Let us fix a variety $\mathcal{V}$ of semidegenerate congruence - modular algebras and an algebra $A$ in $\mathcal{V}$ such that the set $K(A)$ of compact congruence of $A$ is closed under the commutator operation. This paper concerns two types of problems:

 $(I)$ how some properties of the algebra $A$ can be transferred to the lattice $L(A)$ and viceversa, how some  properties of $L(A)$ can be transferred to $A$;

 $(II)$ how the transfer properties from $(I)$ can be used to prove some old and new characterizations of some remarkable classes of algebras.

 A celebrated theorem of Hochster \cite{Hochster} says that for any bounded distributive lattice $L$ there exists a commutative ring $R$ such that the reticulation $L(R)$ of $R$ is isomorphic to $L$. Thus for the fixed algebra $A$ there exists a commutative ring $R$ such that the reticulations $L(A),L(R)$ are isomorphic and the prime spectra $Spec(A), Spec(R)$ are homeomorphic. This result assures a transfer of properties from rings to algebras and viceversa, obtaining a new way for exporting results from rings to algebras.

Now we shall describe the content of this paper.

Section 2 contains some basic properties on the commutator operation and on the semidegenerate congruence - modular varieties (cf. \cite{Fresee}, \cite{Kollar}, \cite{Agliano}). Some facts on the residuated structure of $Con(A)$, the set of congruences of a semidegenerate congruence - modular algebra $A$, are reminded. In general, the commutator operation is not associative, so $Con(A)$ is not a quantale. But we observe that $Con(A)$ is a commutative $mi$ - structure in the sense of \cite{GeorgescuVoiculescu2}, so all results from the mentioned paper can be applied to our setting.

The results of this paper are proved for algebras in a fixed semidegenerate congruence - modular variety $\mathcal{V}$.

In Section 3 we remind the principal properties of the reticulation $L(A)$ of a semidegenerate congruence - modular algebra $A$ such that the set $K(A)$ of the compact congruences of $A$ is closed under the commutator. We prove some new results regarding the way in which the reticulation preserves the annihilators.

Section 3 deals with pure congruences of an algebra $A$ of $V$. They are generalizations of pure ideals in commutative rings \cite{Borceux1},\cite{Borceux2} and $\sigma$ - ideals in bounded distributive lattices \cite{Cornish}, \cite{Cornish1}, \cite{GeorgescuVoiculescu}. We remark that the pure congruences of $A$ are exactly the virginal elements of the $mi$ - structure $Con(A)$ (see \cite{GeorgescuVoiculescu2}). Then the properties established in \cite{GeorgescuVoiculescu2} for virginal elements remain valid for the pure congruences. The set $VCon(A)$ of pure congruences of the algebra $A$ is a frame in the sense of \cite{Johnstone}.
The main result of section is Theorem 4.17, that establishes a frame isomorphism between $VCon(A)$ and the frame $VId(L(A))$ of the $\sigma$ - ideals of the reticulation $L(A)$.

A lot of properties of pure congruences are expressed in terms of the operators $Ker(\cdot): Con(A)\rightarrow Con(A)$ and $O(\cdot): Spec(A)\rightarrow Con(A)$, defined in the previous section. In Section 5 we study the behaviour of the reticulation w.r.t. these two operators.

By using the reticulation, it is easy to see that $Spec_Z(A)$ is a spectral space \cite{Hochster}, \cite{Dickmann}. By a standard way described in \cite{Hochster}, \cite{Dickmann}, \cite{Johnstone} one can define on $Spec(A)$ two other important topologies: the patch topology and the flat topology (the corresponding topological spaces are denoted by $Spec_P(A)$ and $Spec_F(A)$, respectively). Section 6 studies the spaces $Spec_P(A)$ and $Spec_F(A)$. Some results on the patch and the flat topologies are extended from rings \cite{Doobs}, \cite{Tar1} to algebras. We define the regular and max - regular congruences of $A$ and prove that the reticulation preserves these notions.

The hyperarchimedean algebras, introduced in \cite{GM2}, are the objects of $\mathcal{V}$ characterized by a Nachbin style property: $Spec(A)$ coincides with the maximal spectrum $Max(A)$ of $A$ (Max(A) is the set of maximal congruences of $A$). The main result of Section 7 is a characterization theorem of hyperarchimedean algebras (Theorem 7.7). It can be viewed as a generalization of a theorem proved by Aghajani and Tarizadeh for rings (see Theorem 3.3 from \cite{Aghajani}); in fact, our Theorem 7.7 follows from the Aghajani and Tarizadeh result by using some transfer properties of reticulation.

In Section 8 we study two subclasses of the variety $\mathcal{V}$:

$\bullet$ the congruence normal algebras, introduced in \cite{GKM} as structures that generalize the properties of Gelfand rings \cite{Johnstone}, \cite{Aghajani}, $F$ -rings \cite{Johnstone}, normal lattices \cite{Cornish}, \cite{Johnstone}, \cite{GeorgescuVoiculescu}, etc.;

$\bullet$ the congruence $B$ - normal algebras, defined in \cite{GKM} as structures that generalize the properties of clean rings \cite{a}, $B$ - normal lattices \cite{Cignoli}, etc.

We obtain many characterizations of congruence normal algebras (see Propositions 8.3 and 8.8) and of congruence $B$ - normal algebras ( see Theorem 8.19), extending a lot of results obtained for some concrete structures : commutative rings \cite{Aghajani}, \cite {Banaschewski}, \cite{Borceux1}, \cite {Borceux2}, \cite {Simmons}, commutative l - groups \cite{f}, bounded distributive lattices \cite{Cornish}, \cite {Cignoli}, \cite{GeorgescuVoiculescu}, \cite{Pawar}, \cite{R1}, $MV$ - algebras \cite{FG}, $BL$ - algebras \cite{g}, residuated lattices  \cite{Cheptea}, \cite{R2}, etc.

Two new classes of universal algebras are introduced in Section 9 : $mp$- algebras and $PF$ - algebras. The $mp$ - algebras generalize the $mp$ -rings \cite{Aghajani}, \cite{Tar1+} and the conormal lattices \cite{Cornish} and $PF$ - algebras are exactly semiprime $mp$ - algebras. The $PF$ - algebras extend $PF$ - rings (see \cite{Al-Ezeh1+}).  By using the reticulation, we obtain some characterization theorems for $mp$- algebras and $PF$ - algebras.

The purified rings are introduced in Section 8 of \cite{Aghajani} and a characterization theorem of these objects
was proved in the mentioned paper. In Section 10 of our paper we define the notion of congruence purified algebras as a generalization of purified rings. We prove a characterization theorem for congruence purified algebras (Theorem 10.15). Applying this general result in the particular case of commutative rings we obtain the main part of Theorem 8.5 from \cite{Aghajani}.

Section 11 deals with $PP$ - algebras, a notion that generalizes the $PP$ - rings ( = Baer rings) (see \cite{Tar1+}). We prove that the reticulation functor transforms a $PP$ - algebra into a Stone lattice and an admissible $PP$ - morphism of semiprime algebras into a Stone morphism of bounded distributive lattices. Two characterization theorems for $PP$ - algebras are obtained.

\section{Preliminaries}

 \hspace{0.5cm} Let $\tau$ be a finite signature of universal algebras. Throughout this paper we shall assume that the algebras have the signature $\tau$. Let $A$ be an algebra and $Con(A)$ the complete lattice of its congruences; $\Delta_A$ and $\nabla_A$ shall be the first and the last elements of $Con(A)$. If $X\subseteq A^2$ then $Cg_A(X)$ will be the congruence of $A$ generated by $X$; if $X = \{(a,b)\}$ with $a,b\in A$ then $Cg_A(a,b)$ will denote the (principal) congruence generated by $\{(a,b)\}$. We shall denote by $PCon(A)$ the set of principal congruences of $A$. $Con(A)$ is an algebraic lattice: the finitely generated congruences of $A$ are its compact elements. $K(A)$ will denote the set of compact congruences of $A$. We observe that $K(A)$ is closed under finite joins of $Con(A)$ and $\Delta_A\in K(A)$.

 For any $\theta\in Con(A)$, $A/\theta$ is the quotient algebra of $A$ w.r.t. $\theta$; if $a\in A$ then $a/\theta$ is the congruence class of $a$ modulo $\theta$. We shall denote by $p_\theta:A\rightarrow A/\theta$ the canonical surjective $\tau$ - morphism $p_\theta(a) = a/\theta$, for all $a\in A$.

 Let  $\mathcal{V}$  be a congruence - modular variety of $\tau$ - algebras. Following \cite{Fresee}, p.31, the commutator is the greatest operation $[\cdot,\cdot]_A$ on the congruence lattices $Con(A)$ of members $A$ of $\mathcal{V}$ such that for any surjective morphism $f:A\rightarrow B$ of $\mathcal{V}$ and for any $\alpha,\beta\in Con(A)$, the following conditions hold

 (2.1) $[\alpha,\beta]_A\subseteq \alpha\bigcap \beta$;

 (2.2) $[\alpha,\beta]_A\lor Ker(f)$ = $f^{-1}([f(\alpha\lor Ker(f)),f(\beta\lor Ker(f))]_B)$.

 If $\alpha, \beta, \theta\in Con(A)$ then, by (2.2) we get

 (2.3) $([\alpha,\beta]_A\lor \theta)/\theta$ = $[(\alpha\lor \theta)/\theta,(\beta\lor \theta)/\theta]_{A/\theta}$.

 The commutator operation is commutative, increasing in each argument and distributive with respect to arbitrary joins. If there is no danger of confusion then we write $[\alpha,\beta]$ instead of $[\alpha,\beta]_A$.

 \begin{propozitie}\cite{Fresee}
 For any congruence - modular variety $\mathcal{V}$ the following are equivalent:
\newcounter{nr}
\begin{list}{(\arabic{nr})}{\usecounter{nr}}
\item  $\mathcal{V}$ has Horn - Fraser property: if $A,B$ are members of $\mathcal{V}$ then the lattices $Con(A\times B)$ and $Con(A)\times Con(B)$ are isomorphic;
\item $[\nabla_A,\nabla_A] = \nabla_A$, for all $A\in \mathcal{V}$;
\item $[\theta,\nabla_A] = \theta$, for all $A\in \mathcal{V}$ and $\theta\in Con(A)$.
\end{list}
\end{propozitie}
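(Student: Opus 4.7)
My plan is to prove the cyclic implications $(3)\Rightarrow (2)$, $(2)\Rightarrow (3)$, and $(2)\Leftrightarrow (1)$, treating the last equivalence as an incarnation of the Fraser--Horn theorem for modular varieties and invoking the commutator machinery of \cite{Fresee} for the nontrivial steps. The direction $(3)\Rightarrow (2)$ is immediate: specialise $\theta = \nabla_A$ in (3).

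For $(2)\Rightarrow (3)$, one inclusion is free: $[\theta,\nabla_A]\subseteq\theta\cap\nabla_A=\theta$ by (2.1). For the reverse, I would push the problem into a quotient. Set $\alpha := [\theta,\nabla_A]$ and apply the quotient law (2.3) to the pair $(\theta,\nabla_A)$ modulo $\alpha$:
\[
([\theta,\nabla_A]\vee\alpha)/\alpha \;=\; [(\theta\vee\alpha)/\alpha,\,(\nabla_A\vee\alpha)/\alpha]_{A/\alpha} \;=\; [\theta/\alpha,\,\nabla_{A/\alpha}]_{A/\alpha}.
\]
The left-hand side is $\Delta_{A/\alpha}$, so $\theta/\alpha$ is centralized by the top congruence of $A/\alpha$. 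Now (2), applied inside $A/\alpha\in\mathcal{V}$, combined with the modular-commutator centralizer analysis of \cite{Fresee}, forces $\theta/\alpha=\Delta_{A/\alpha}$, i.e.\ $\theta\subseteq\alpha$, giving equality.

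The equivalence $(2)\Leftrightarrow (1)$ I would obtain through the factor congruences $\eta_0,\eta_1$ of $A\times B$. These satisfy $\eta_0\cap\eta_1=\Delta_{A\times B}$ and $\eta_0\vee\eta_1=\nabla_{A\times B}$, whence by (2.1) $[\eta_0,\eta_1]=\Delta_{A\times B}$. Distributivity of the commutator over joins in each slot then gives $[\nabla_{A\times B},\nabla_{A\times B}] = [\eta_0,\eta_0]\vee[\eta_1,\eta_1]$. Under the Horn--Fraser decomposition of (1), this identity transports to each factor and yields (2). Conversely, starting from (2), the centralizer argument used for $(2)\Rightarrow(3)$ upgrades to the statement that every $\sigma\in Con(A\times B)$ is determined by its two projections $(\sigma\vee\eta_1)/\eta_1$ and $(\sigma\vee\eta_0)/\eta_0$, producing the lattice isomorphism $Con(A\times B)\cong Con(A)\times Con(B)$.

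The main obstacle is the collapse step inside $A/\alpha$: deriving $\theta/\alpha=\Delta_{A/\alpha}$ from $[\theta/\alpha,\nabla_{A/\alpha}]=\Delta_{A/\alpha}$ is not formally a consequence of the bare axioms (2.1)--(2.3). It requires the centralizer theory of modular commutators (difference terms, the $M(\alpha,\beta)$ construction) developed in Chapters~3--5 of \cite{Fresee}. In a preliminaries section such as this one, that content is simply imported by citation, and the three properties are then shuffled into each other via the computations above.
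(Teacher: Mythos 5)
First, a point of reference: the paper offers no proof of this proposition at all — it is stated in the Preliminaries with a citation to \cite{Fresee} and nothing more — so there is no in-paper argument to compare against. Judged on its own terms, your cycle is structurally sensible: $(3)\Rightarrow(2)$ by specialisation is fine, the inclusion $[\theta,\nabla_A]\subseteq\theta$ is fine, and the reduction of $(2)\Rightarrow(3)$ to the claim ``under (2), no member of $\mathcal{V}$ has a nontrivial central congruence'' is the right reduction. But that claim \emph{is} the theorem: everything nontrivial in $(2)\Rightarrow(3)$ lives in the collapse step you defer to citation, so what you have written is a proof outline rather than a proof. Since the paper itself only cites, that deferral is defensible, but you should be explicit that the imported fact is ``a nontrivial abelian (indeed central) congruence on some member of $\mathcal{V}$ produces a member with $[\nabla,\nabla]\neq\nabla$,'' which requires the affine structure theory of abelian congruences, not merely ``centralizer analysis.''

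The genuine error is in $(1)\Rightarrow(2)$. The identity $[\nabla_{A\times B},\nabla_{A\times B}]=[\eta_0,\eta_0]\vee[\eta_1,\eta_1]$ holds in \emph{every} congruence-modular variety for \emph{every} product, with or without the Fraser--Horn property: it uses only $\nabla=\eta_0\vee\eta_1$, distributivity of the commutator over joins, and $[\eta_0,\eta_1]\subseteq\eta_0\cap\eta_1=\Delta$. Consequently ``transporting this identity to each factor'' extracts no information and cannot yield (2). The correct route is contrapositive: if $[\nabla_A,\nabla_A]=\sigma\neq\nabla_A$ for some $A\in\mathcal{V}$, then by (2.3) the quotient $A/\sigma$ is a nontrivial abelian algebra, hence polynomially equivalent to a module over a ring; the square of such an algebra carries the skew congruence $\{((a,b),(c,d)) \mid a-b=c-d\}$, which is not an intersection of product congruences, so $Con(A/\sigma\times A/\sigma)\not\cong Con(A/\sigma)\times Con(A/\sigma)$ and (1) fails. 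Your converse direction $(2)\Rightarrow(1)$ is salvageable, but it should be run from (3), which you have already established at that point: for $\sigma\in Con(A\times B)$ and $\tau=(\sigma\vee\eta_0)\cap(\sigma\vee\eta_1)$ one gets $[\tau,\eta_0]\subseteq[\sigma\vee\eta_1,\eta_0]=[\sigma,\eta_0]\vee[\eta_1,\eta_0]\subseteq\sigma$, similarly for $\eta_1$, whence $\tau=[\tau,\nabla]=[\tau,\eta_0]\vee[\tau,\eta_1]\subseteq\sigma$; this is the precise computation behind your sentence that every congruence is determined by its two projections.
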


Following \cite{Kollar}, a variety $\mathcal{V}$ is semidegenerate if no nontrivial algebra in $\mathcal{V}$ has one - element subalgebras. By \cite{Kollar}, a variety $\mathcal{V}$ is semidegenerate if and only if for any algebra $A$ in $\mathcal{V}$, the congruence $\nabla_A$ is compact.

\begin{propozitie}\cite{Agliano}
If $\mathcal{V}$ is a semidegenerate congruence - modular variety then for each algebra $A$ in $\mathcal{V}$ we have $[\nabla_A,\nabla_A] = \nabla_A$.
\end{propozitie}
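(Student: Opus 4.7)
The inclusion $[\nabla_A,\nabla_A]\subseteq\nabla_A$ is immediate from $(2.1)$, so only the reverse is at stake. My plan is to reduce the problem to a compactness argument, via the quotient formula $(2.3)$ and the characterization \cite{Kollar} of semidegenerate varieties as those where $\nabla_C$ is compact in $Con(C)$ for every $C\in\mathcal V$.

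Set $\theta:=[\nabla_A,\nabla_A]$ and form $B:=A/\theta\in\mathcal V$. Since $\theta\subseteq\nabla_A$, one has $\nabla_A\vee\theta=\nabla_A$; applying $(2.3)$ with $\alpha=\beta=\nabla_A$ gives
\[
[\nabla_B,\nabla_B]=([\nabla_A,\nabla_A]\vee\theta)/\theta=\theta/\theta=\Delta_B,
\]
so $B$ is \emph{abelian} in the commutator sense. The proof is thus reduced to the statement that a semidegenerate congruence-modular variety admits no nontrivial abelian algebra: once this is established, $B$ has one element and $\theta=\nabla_A$.

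To discharge the reduction I would invoke the term-level witness of semidegeneracy from \cite{Kollar}: it produces a Mal'cev-type identity valid in $\mathcal V$ which, via compactness of $\nabla_B$, furnishes a concrete finite generator of $\nabla_B$. Coupled with the abelian identity $[\nabla_B,\nabla_B]=\Delta_B$, the commutativity of $[\cdot,\cdot]$ and its distributivity over joins, this generator collapses $\nabla_B$ onto $\Delta_B$, forcing $B$ trivial. The main obstacle is precisely this collapse; it is the technical core of the Kollar--Agliano argument, which I would cite from \cite{Kollar} and \cite{Agliano} rather than reproduce. Everything else is the short quotient computation above.
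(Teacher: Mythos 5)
The paper offers no proof of this proposition at all: it is stated with a bare citation to \cite{Agliano}, so there is no internal argument to compare yours against. Your quotient computation is correct: with $\theta=[\nabla_A,\nabla_A]$ and $B=A/\theta$, identity (2.3) gives $[\nabla_B,\nabla_B]=\Delta_B$, varieties are closed under quotients, and $B$ trivial would indeed force $\theta=\nabla_A$. So the proposition is equivalent to the assertion that $\mathcal{V}$ contains no nontrivial abelian algebra.

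That equivalence is, however, also why the reduction buys essentially nothing and why the proposal has a genuine gap. For an abelian algebra $B$, the equality $[\nabla_B,\nabla_B]=\nabla_B$ says precisely that $B$ is trivial; so "no nontrivial abelian algebra in $\mathcal{V}$" is not a smaller statement than the proposition — it is the proposition restricted to the only case in which it could fail, and the entire content sits in the step you defer. The sketch you give for that step cannot work as stated: a nontrivial finite abelian group $B$ (in the congruence-permutable, hence congruence-modular, variety of groups) has $\nabla_B$ compact, satisfies $[\nabla_B,\nabla_B]=\Delta_B$, and its commutator is commutative and distributes over joins, yet $B$ does not collapse. Hence compactness of $\nabla_B$ plus the abelian identity plus the listed formal properties of $[\cdot,\cdot]$ do not force triviality; what is genuinely needed is Kollár's term-level witness of semidegeneracy and the computation showing how those terms interact with the term condition defining the commutator — which is exactly Agliano's argument and exactly what you decline to reproduce. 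Since the paper itself only cites \cite{Agliano}, a bare citation would have been acceptable; but presented as a proof, the proposal is incomplete at precisely its load-bearing step.
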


\begin{remarca}
Let $A$ be a semidegenerate congruence - modular algebra. One can define on the lattice $Con(A)$ a residuation operation ( = implication) $\alpha \rightarrow \beta = \bigvee \{ \gamma|[\alpha,\gamma] \subseteq \beta\}$ and an annihilator operation ( = negation) $\alpha^{\bot } = \alpha^{\bot_A } =  \alpha \rightarrow \Delta_A =\bigvee \{\gamma |[\alpha,\gamma] = \Delta_A\}$. The implication $\rightarrow$ fulfills the usual residuation property: for all $\alpha,\beta,\gamma\in Con(A)$, $\alpha\subseteq \beta\rightarrow\gamma$ if and only if $[\alpha,\beta]\subseteq\gamma$. We remark that $(Con(A), \lor, \land, [\cdot,\cdot], \rightarrow, \Delta_A, \nabla_A)$ is commutative and integral complete $l$ - groupoid (see \cite{Birkhoff}). In fact, $(Con(A),\lor,\bigcap,[\cdot,\cdot]_A,\Delta_A,\nabla_A)$ is a multiplicative - ideal structure (= mi - structure) in the sense of \cite{GeorgescuVoiculescu2}. Thus all the results contained in \cite{GeorgescuVoiculescu2} hold for the particular $mi$ - structure $Con(A)$.

\end{remarca}

For the rest of the section we fix a semidegenerate congruence - modular algebra $A$ in a semidegenerate congruence - modular variety $\mathcal{V}$.

\begin{lema}\cite{GM2}
 For all congruences $\alpha, \beta, \gamma, \alpha_1,...,\alpha_k $ the following hold:
\usecounter{nr}
\begin{list}{(\arabic{nr})}{\usecounter{nr}}
\item $\alpha\lor \beta = \nabla_A$ implies $[\alpha,\beta] = \alpha\bigcap \beta$;
\item $\alpha\lor \beta = \alpha\lor \gamma = \nabla_A$ implies $\alpha\lor [\beta,\gamma] = \alpha\lor (\beta\bigcap \gamma) = \nabla_A$;
\item $\alpha\lor \beta = \nabla_A$ implies $[\alpha,\alpha]^n\lor [\beta,\beta]^n = \nabla_A$, for all integer $n > 0$;
\item For any integer $n\geq 1$, $[\alpha\lor\beta,\alpha\lor\beta]^{n^2}\subseteq [\alpha,\alpha]^n\lor [\beta,\beta]^n$;
\item For all integers $n, k\geq 1$,$[\alpha_1\lor\cdots\alpha_k]^{n^k}\subseteq [\alpha_1,\alpha_1]^n\lor\cdots \lor [\alpha_k,\alpha_k]^n$.
\end{list}
\end{lema}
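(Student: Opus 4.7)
My plan is to prove the five claims in order, relying on three structural facts: the containment $[\alpha, \beta] \subseteq \alpha \cap \beta$ from (2.1), the distributivity of the commutator over arbitrary joins (Remark 2.3), and $[\nabla_A, \nabla_A] = \nabla_A$ from Proposition 2.2. Parts (1)--(3) are short applications of these tools; the combinatorial effort is concentrated in (4)--(5).

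For (1), one inclusion is just (2.1). For the reverse, I would exploit that the $mi$-structure of Remark 2.3 is integral, so that $\nabla_A$ acts as the unit for $[\cdot,\cdot]$; then
\[ \alpha \cap \beta \;=\; [\nabla_A, \alpha \cap \beta] \;=\; [\alpha \lor \beta, \alpha \cap \beta] \;=\; [\alpha, \alpha \cap \beta] \lor [\beta, \alpha \cap \beta] \;\subseteq\; [\alpha, \beta], \]
by distributivity, monotonicity, and commutativity. For (2), I would expand $\nabla_A = [\nabla_A, \nabla_A] = [\alpha \lor \beta, \alpha \lor \gamma]$ by distributivity into the four-term join $[\alpha,\alpha] \lor [\alpha,\gamma] \lor [\beta,\alpha] \lor [\beta,\gamma]$; the first three summands lie in $\alpha$ by (2.1), forcing $\nabla_A \subseteq \alpha \lor [\beta,\gamma]$, and then $[\beta,\gamma] \subseteq \beta \cap \gamma$ upgrades the same containment to $\alpha \lor (\beta \cap \gamma)$. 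For (3), I would iterate (2): applying it to $\alpha \lor \beta = \nabla_A$ twice in succession yields the base case $[\alpha,\alpha] \lor [\beta,\beta] = \nabla_A$; then an induction on $n$ (using (2) at each step with $\beta' = [\beta,\beta]^n$ and $\gamma' = [\beta,\beta]$) promotes this to $\alpha \lor [\beta,\beta]^n = \nabla_A$, and a symmetric iteration on the first factor upgrades $\alpha$ to $[\alpha,\alpha]^n$.

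The technical heart lies in (4) and (5). The idea is to expand $[\alpha \lor \beta, \alpha \lor \beta]^{n^2}$ by repeated application of distributivity into a join of nested-commutator terms whose leaves are labelled $\alpha$ or $\beta$, with exactly $n^2$ leaves per term. If such a term has $A$ occurrences of $\alpha$ and $B = n^2 - A$ of $\beta$, then by pigeonhole at least one of $A \geq n$ or $B \geq n$ holds (if both were $\leq n-1$ we would have $A+B \leq 2n-2 < n^2$ for $n \geq 2$, and $n=1$ is trivial). I would then argue, by induction on the tree shape using only monotonicity and $[\alpha, \theta] \subseteq \alpha$, that any such term with $\geq n$ many $\alpha$-leaves is contained in $[\alpha,\alpha]^n$ (and symmetrically for $\beta$), so the whole join lies in $[\alpha,\alpha]^n \lor [\beta,\beta]^n$. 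For (5), the same scheme works with $k$ colours and $n^k$ leaves, pigeonhole forcing some $\alpha_i$ to appear at least $n$ times; a clean alternative is to induct on $k$ and invoke (4) at each step after grouping $\alpha_1 \lor \cdots \lor \alpha_{k-1}$ as a single congruence, which multiplies the exponent by $n$ at each stage.

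The main obstacle I anticipate is this final contraction step: making rigorous, in the non-associative setting, that enough $\alpha$-leaves in a commutator tree suffice to land inside the canonical power $[\alpha,\alpha]^n$. Once a definite convention for the iterated power is fixed, the step should reduce to a small auxiliary lemma on commutator trees, proved by induction on tree depth using monotonicity and (2.1).
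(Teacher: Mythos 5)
The paper offers no proof of this lemma; it is imported verbatim from \cite{GM2}, so your argument must stand on its own. Parts (1)--(3) you handle correctly and by the standard route: integrality $[\theta,\nabla_A]=\theta$ (Propositions 2.1--2.2), distributivity of the commutator over joins, and (2.1) give (1) and (2), and (3) follows from the base case $[\alpha,\alpha]\lor[\beta,\beta]=\nabla_A$ (two applications of (2)) together with an induction applied to the pair $[\alpha,\alpha]^n$, $[\beta,\beta]^n$. (Your instantiation ``$\gamma'=[\beta,\beta]$'' should read $\gamma'=[\beta,\beta]^n$, since $[\beta,\beta]^{n+1}=[[\beta,\beta]^n,[\beta,\beta]^n]$, but that is cosmetic.)

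The gap in (4)--(5) is exactly where you anticipated it, and it is real. First, a bookkeeping error: $[\alpha\lor\beta,\alpha\lor\beta]^{n^2}$ expands into balanced commutator trees of depth $n^2$ with $2^{n^2}$ leaves, not $n^2$. Second, and more seriously, the contraction lemma you lean on --- ``a term with at least $n$ leaves equal to $\alpha$ lies in $[\alpha,\alpha]^n$'' --- is false, and no count of $\alpha$-leaves alone can work. Already in the associative case of commutative rings, where the commutator is the ideal product and $[\alpha,\alpha]^n$ is the $2^n$-th power $I^{2^n}$, a product with $a$ factors equal to $I$ lies in $I^{2^n}$ only when $a\geq 2^n$; and in the non-associative setting the positions of the $\alpha$-leaves matter, not just their number: $[[\alpha,\beta],[\beta,\beta]]$ has one $\alpha$-leaf and is in general only contained in $\alpha=[\alpha,\alpha]^0$, whereas $[[\alpha,\beta],[\alpha,\beta]]$ also has few $\alpha$-leaves but lands in $[\alpha,\alpha]^1$ because $[\alpha,\beta]\subseteq\alpha$ at \emph{both} children. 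The workable version of your auxiliary lemma is the recursive estimate: a node lies in $[\alpha,\alpha]^{j+1}$ whenever both children lie in $[\alpha,\alpha]^j$, and lies in $[\alpha,\alpha]^j$ whenever at least one child does (via $[\theta,\chi]\subseteq\theta$ and monotonicity); from this recursion one bounds the number of $\alpha$-leaves a depth-$m$ term can carry while escaping $[\alpha,\alpha]^n$ by $\sum_{i=0}^{n-1}\binom{m}{i}$, and for $m=n^2$ and $n\geq 2$ twice this bound is smaller than $2^{n^2}$, so every term falls into $[\alpha,\alpha]^n$ or into $[\beta,\beta]^n$. This computation also exposes that the case $n=1$ of (4) cannot be proved at all: it reduces to $[\alpha,\beta]\subseteq[\alpha,\alpha]\lor[\beta,\beta]$, which already fails for the ideals $(x)$ and $(y)$ of $\mathbb{Z}[x,y]$ (where $xy\notin(x^2,y^2)$), a legitimate semidegenerate congruence-modular example; so any correct argument necessarily exploits $n\geq 2$, and a pigeonhole threshold of $n$ out of $n^2$ is far too weak to do so. Finally, your fallback for (5) --- grouping $\alpha_1\lor\cdots\lor\alpha_{k-1}$ and invoking (4) --- does not recover the exponent $n^k$: the symmetric exponents in (4) force $n^{2(k-1)}$, which exceeds $n^k$ for $k\geq 3$ and hence proves a strictly weaker containment; the direct multicolour counting argument is what is needed.
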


For all congruences $\alpha, \beta \in Con(A)$ and for any integer $n\geq 1$ we define by induction the congruence $[\alpha,\beta]^n$: $[\alpha,\beta]^1$ = $[\alpha,\beta]$ and $[\alpha,\beta]^{n+1} =[[\alpha,\beta]^n,[\alpha,\beta]^n]$. By convention, we set $[\alpha,\alpha]^0 = \alpha$.

\begin{lema}
If $\alpha, \beta, \theta\in Con(A)$ and $\theta\subseteq \alpha\bigcap \beta$ then we have $[\alpha/\theta,\beta/\theta]^n$ = $([\alpha,\beta]^n\lor \theta)/\theta$. For all $a,b\in A$ and $\theta\in Con(A)$ and for any integer $n\geq 0$ we have $[Cg_{A/\theta}(a/\theta,b/\theta),Cg_{A/\theta}(a/\theta,b/\theta)]^n_{A/\theta}$ = $[Cg_A(a,b),Cg_A(a,b)]_A^n\lor \theta)/\theta$.
\end{lema}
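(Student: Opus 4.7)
The first identity naturally invites an induction on $n \geq 1$, with (2.3) driving the calculation and the second identity following by a reduction to the first. For the base case $n = 1$, the hypothesis $\theta \subseteq \alpha \bigcap \beta$ means $\alpha \lor \theta = \alpha$ and $\beta \lor \theta = \beta$, so (2.3) reads directly as $([\alpha, \beta] \lor \theta)/\theta = [\alpha/\theta, \beta/\theta]$.

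For the inductive step, write $\gamma_n = [\alpha,\beta]^n \lor \theta$, which contains $\theta$ and, by the inductive hypothesis, satisfies $[\alpha/\theta, \beta/\theta]^n = \gamma_n/\theta$. By definition of the iterated commutator,
$$[\alpha/\theta, \beta/\theta]^{n+1} = [\gamma_n/\theta, \gamma_n/\theta],$$
and a second application of (2.3) (with both arguments equal to $\gamma_n$) gives $[\gamma_n/\theta, \gamma_n/\theta] = ([\gamma_n, \gamma_n] \lor \theta)/\theta$. It then suffices to verify $[\gamma_n,\gamma_n] \lor \theta = [\alpha,\beta]^{n+1} \lor \theta$. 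Expanding $[\gamma_n,\gamma_n]$ via distributivity of the commutator over joins yields
$$[[\alpha,\beta]^n,[\alpha,\beta]^n] \; \lor \; [[\alpha,\beta]^n,\theta] \; \lor \; [\theta,[\alpha,\beta]^n] \; \lor \; [\theta,\theta],$$
and the last three terms are each contained in $\theta$ by (2.1); joining with $\theta$ absorbs them and leaves $[\alpha,\beta]^{n+1} \lor \theta$, completing the induction.

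For the second identity, I would use the correspondence theorem to identify $Cg_{A/\theta}(a/\theta, b/\theta)$ with $(Cg_A(a,b) \lor \theta)/\theta$, since the smallest congruence of $A/\theta$ containing $(a/\theta, b/\theta)$ pulls back to the smallest congruence of $A$ that contains both $\theta$ and $(a,b)$. The case $n = 0$ then follows from the convention $[\cdot,\cdot]^0 = \text{id}$. For $n \geq 1$, set $\alpha' = Cg_A(a,b) \lor \theta$, apply the first identity with $\alpha = \beta = \alpha'$ to obtain $[\alpha'/\theta, \alpha'/\theta]^n = ([\alpha',\alpha']^n \lor \theta)/\theta$, and then repeat the absorption trick above (inducting on $n$) to show $[\alpha',\alpha']^n \lor \theta = [Cg_A(a,b), Cg_A(a,b)]^n \lor \theta$. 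The only genuinely delicate step is this absorption of cross terms; everything else is routine, resting on the standing facts that the commutator distributes over arbitrary joins and that $[X,\theta] \subseteq \theta$ for every $X \in Con(A)$ by (2.1).
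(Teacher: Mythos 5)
The paper states this lemma with no proof at all, so there is nothing to compare your argument against; what you have written is a correct and complete reconstruction. Your base case is right: $\theta\subseteq\alpha\bigcap\beta$ gives $\alpha\lor\theta=\alpha$ and $\beta\lor\theta=\beta$, so (2.3) collapses to the $n=1$ claim. Your inductive step is also sound: since $\theta\subseteq\gamma_n$, (2.3) applies to the pair $(\gamma_n,\gamma_n)$, and the cross terms $[[\alpha,\beta]^n,\theta]$, $[\theta,[\alpha,\beta]^n]$, $[\theta,\theta]$ produced by distributivity are absorbed by $\lor\,\theta$ because of (2.1). For the second identity, the reduction via $Cg_{A/\theta}(a/\theta,b/\theta)=(Cg_A(a,b)\lor\theta)/\theta$ and $\alpha'=Cg_A(a,b)\lor\theta$ is the right move; the only point worth spelling out is that the final induction showing $[\alpha',\alpha']^n\lor\theta=[Cg_A(a,b),Cg_A(a,b)]^n\lor\theta$ uses not just absorption but also monotonicity of the commutator in each argument (writing $\delta=Cg_A(a,b)$, one needs $[\delta,\delta]^n\subseteq[\delta\lor\theta,\delta\lor\theta]^n$ for one inclusion and $[\delta\lor\theta,\delta\lor\theta]^n\subseteq[\delta,\delta]^n\lor\theta$, obtained from the inductive hypothesis plus distributivity and (2.1), for the other). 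Since the paper explicitly records that the commutator is increasing in each argument, this is available and your sketch closes without difficulty.
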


\begin{lema} (see \cite{GM1}, Lemma 6.1)
For all integers $n\geq 1$ and all $\alpha,\beta\in Con(A)$: $[\alpha,\beta]_A^{n+1}= [[\alpha,\beta]_A,[\alpha,\beta]_A ]^n_A$.

\end{lema}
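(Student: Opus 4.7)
The plan is to prove the identity by induction on $n$, using the recursive definition of $[\alpha,\beta]^n$ given just before the lemma: $[\alpha,\beta]^1 = [\alpha,\beta]$ and $[\alpha,\beta]^{n+1} = [[\alpha,\beta]^n,[\alpha,\beta]^n]$. The key observation is that the recursion only uses the argument $[\alpha,\beta]$ as a single congruence, so writing $\gamma = [\alpha,\beta]_A$ essentially collapses the two notations $[\alpha,\beta]^n$ and $[\gamma,\gamma]^{n-1}$ onto each other with a shift of one.

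For the base case $n=1$, both sides equal $[[\alpha,\beta]_A,[\alpha,\beta]_A]_A$: the left-hand side by direct application of the recursion ($[\alpha,\beta]^2 = [[\alpha,\beta]^1,[\alpha,\beta]^1] = [[\alpha,\beta],[\alpha,\beta]]$), and the right-hand side because $[\cdot,\cdot]^1$ is the identity on its argument.

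For the inductive step, assume $[\alpha,\beta]_A^{n+1} = [[\alpha,\beta]_A,[\alpha,\beta]_A]_A^n$. Apply the defining recursion to the outer exponent on the left:
\begin{equation*}
[\alpha,\beta]_A^{n+2} = \bigl[[\alpha,\beta]_A^{n+1},\,[\alpha,\beta]_A^{n+1}\bigr]_A.
\end{equation*}
Substituting the inductive hypothesis into both slots yields
\begin{equation*}
[\alpha,\beta]_A^{n+2} = \bigl[[[\alpha,\beta]_A,[\alpha,\beta]_A]_A^n,\,[[\alpha,\beta]_A,[\alpha,\beta]_A]_A^n\bigr]_A,
\end{equation*}
and this is precisely $[[\alpha,\beta]_A,[\alpha,\beta]_A]_A^{n+1}$ by the recursion applied with $\alpha$ and $\beta$ replaced by $[\alpha,\beta]_A$. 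Thus the desired equality holds for $n+1$ and the induction is complete.

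There is no genuine obstacle here: the statement is a formal consequence of unwinding the recursive definition, and no property of the commutator beyond the definition itself (not even commutativity or monotonicity) is needed. The only subtle point is keeping track of which occurrence of the superscript $n$ uses which recursion, which is handled cleanly by the substitution $\gamma = [\alpha,\beta]_A$ suggested above.
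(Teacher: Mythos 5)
Your proof is correct: the paper states this lemma without proof (citing \cite{GM1}, Lemma 6.1), and your induction --- observing that the recursion defining $[\alpha,\beta]^{n}$ depends on $(\alpha,\beta)$ only through the single congruence $\gamma=[\alpha,\beta]$, so that both sides unwind to the same iterated commutator --- is the standard and essentially only argument, needing nothing beyond the definition. One small wording issue: ``$[\cdot,\cdot]^1$ is the identity on its argument'' should rather say that the superscript $1$ is vacuous, i.e. $[\gamma,\gamma]^1=[\gamma,\gamma]$ by definition; the map $\gamma\mapsto[\gamma,\gamma]$ is of course not the identity.
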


Following \cite{McKenzie}, p.82 or \cite{Agliano}, p.582, a congruence $\phi\in Con(A)- \{\nabla_A \}$  is ${\emph{prime}}$ if for all $\alpha, \beta \in Con(A)$, $[\alpha,\beta] \subseteq \phi$ implies $\alpha \subseteq \phi$ or $\beta \subseteq \phi$. Let us introduce the following notations: $Spec(A)$ is the set of prime congruences and $Max(A)$ is the set of maximal elements of $Con(A)$. If $\theta \in Con(A)- \{\nabla_A \}$  then there exists $\phi \in Max(A)$ such that $\theta \subseteq \phi$. By \cite{Agliano}, the following inclusion $Max(A) \subseteq Spec(A)$ holds.

According to \cite{Agliano}, p.582, the {\emph{radical}} $\rho(\theta)=\rho_A(\theta)$ of a congruence $\theta \in A$ is defined by $\rho_A(\theta)=\bigwedge \{\phi\in Spec(A)|\theta \subseteq \phi\}$; if $\theta=\rho(\theta)$ then $\theta$ is a radical congruence. We shall denote by $RCon(A)$ the set of radical congruences of $A$. The algebra $A$ is {\emph{semiprime}} if $\rho(\Delta_A)=\Delta_A$.

\begin{lema}
\cite{Agliano},\cite{GM2}
For all congruences $\alpha, \beta \in Con(A)$ the following hold:
\usecounter{nr}
\begin{list}{(\arabic{nr})}{\usecounter{nr}}
\item $\alpha \subseteq \rho(\alpha)$;
\item $\rho(\alpha \land \beta)=\rho ([\alpha,\beta])=\rho(\alpha) \land \rho(\beta)$;
\item $\rho(\alpha)= \nabla_A$ iff $\alpha = \nabla_A$;
\item $\rho(\alpha \lor \beta)=\rho(\rho(\alpha) \lor \rho(\beta))$;
\item $\rho(\rho(\alpha))=\rho(\alpha)$;
\item $\rho(\alpha) \lor \rho(\beta) = \nabla_A$ iff $\alpha \lor \beta = \nabla_A$;
\item $\rho([\alpha,\alpha]^n)=\rho(\alpha)$, for all integer $n \geq 0$.
\end{list}
\end{lema}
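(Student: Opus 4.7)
The plan is to derive all seven items directly from the definition of $\rho(\theta)$ as the intersection of primes above $\theta$, using only two structural facts: the commutator inequality $[\alpha,\beta]\subseteq\alpha\cap\beta$ from (2.1), and the defining implication of prime congruences (if $[\alpha,\beta]\subseteq\phi$ then $\alpha\subseteq\phi$ or $\beta\subseteq\phi$). A third standing fact is that every proper congruence is contained in some maximal congruence, and that $Max(A)\subseteq Spec(A)$, which lets one detect non-triviality of congruences by prime witnesses.

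I would handle (1), (3), (5) first since they are almost immediate. For (1), $\alpha$ lies below every member of the defining intersection. For (3), if $\alpha=\nabla_A$ the defining set of primes is empty, so $\rho(\alpha)=\bigwedge\emptyset=\nabla_A$; conversely, if $\alpha\neq\nabla_A$, pick a maximal $\phi\supseteq\alpha$, so $\phi\in Spec(A)$ and $\rho(\alpha)\subseteq\phi\neq\nabla_A$. For (5), note that $\{\phi\in Spec(A)\mid\alpha\subseteq\phi\}=\{\phi\in Spec(A)\mid\rho(\alpha)\subseteq\phi\}$: the left-to-right inclusion holds because $\rho(\alpha)$ is an infimum containing each such $\phi$, and the right-to-left inclusion follows from $\alpha\subseteq\rho(\alpha)$ (item (1)).

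For (2), the chain $\rho([\alpha,\beta])\subseteq\rho(\alpha\cap\beta)\subseteq\rho(\alpha)\cap\rho(\beta)$ is monotone and uses $[\alpha,\beta]\subseteq\alpha\cap\beta$. For the reverse containment $\rho(\alpha)\cap\rho(\beta)\subseteq\rho([\alpha,\beta])$, take any prime $\phi$ with $[\alpha,\beta]\subseteq\phi$; primality forces $\alpha\subseteq\phi$ or $\beta\subseteq\phi$, hence $\rho(\alpha)\subseteq\phi$ or $\rho(\beta)\subseteq\phi$, and intersecting over all such $\phi$ gives the inclusion. For (4), monotonicity gives $\rho(\alpha\lor\beta)\subseteq\rho(\rho(\alpha)\lor\rho(\beta))$ because $\alpha\lor\beta\subseteq\rho(\alpha)\lor\rho(\beta)$; for the converse, if $\phi$ is any prime with $\alpha\lor\beta\subseteq\phi$, then both $\alpha\subseteq\phi$ and $\beta\subseteq\phi$, so $\rho(\alpha)\lor\rho(\beta)\subseteq\phi$, and taking infima yields the reverse inclusion.

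For (6), one direction is immediate from (1). For the other direction, assume $\alpha\lor\beta\neq\nabla_A$ and extract a maximal (hence prime) $\phi$ above $\alpha\lor\beta$; then $\rho(\alpha),\rho(\beta)\subseteq\phi$, so $\rho(\alpha)\lor\rho(\beta)\subseteq\phi\neq\nabla_A$, a contradiction. Finally (7) is an induction on $n$: the case $n=0$ is the convention, the case $n=1$ reads $\rho([\alpha,\alpha])=\rho(\alpha)\cap\rho(\alpha)=\rho(\alpha)$ using (2), and the inductive step uses the identity $[\alpha,\alpha]^{n+1}=[[\alpha,\alpha]^n,[\alpha,\alpha]^n]$ together with (2) applied to $\gamma:=[\alpha,\alpha]^n$, giving $\rho([\gamma,\gamma])=\rho(\gamma)=\rho(\alpha)$. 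Nothing here is truly hard; the only subtle spot is (2), where one must remember that the prime definition applies to the commutator $[\alpha,\beta]$, not to the meet $\alpha\cap\beta$, so the ``$\supseteq$'' in $\rho([\alpha,\beta])\supseteq\rho(\alpha)\cap\rho(\beta)$ must be argued from primality rather than monotonicity.
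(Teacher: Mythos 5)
Your proof is correct. The paper states this lemma as a cited result from \cite{Agliano} and \cite{GM2} and gives no proof of its own, but your argument is the standard one those sources use: everything reduces to the definition of $\rho$ as a meet of primes, monotonicity, the inequality $[\alpha,\beta]\subseteq\alpha\cap\beta$, primality for the nontrivial inclusion in (2), and the existence of maximal (hence prime) congruences above any proper congruence for (3) and (6). You correctly isolate the one subtle point, namely that $\rho(\alpha)\cap\rho(\beta)\subseteq\rho([\alpha,\beta])$ needs the prime condition rather than monotonicity.
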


For an arbitrary family $(\alpha_i)_{i\in I} \subseteq A$, the following equality holds: $\rho(\displaystyle \bigvee_{i \in I}\alpha_i)=\rho(\bigvee_{i \in I} \rho(\alpha_i))$. If $(\alpha_i)_{i\in I} \subseteq RCon(A)$ then we denote $\displaystyle \bigvee_{i \in I}^{\cdot} \alpha_i=\rho(\bigvee_{i \in I}\alpha_i)$. Thus it easy to prove that $(RCon(A), \displaystyle \bigvee^{\cdot}, \wedge, \rho(\Delta_A), \nabla_A)$ is a frame (see \cite{Johnstone} as a basic text for the frame theory).

\begin{propozitie}\cite{Agliano}
Assume that $K(A)$ is closed under the commutator operation $[\cdot,\cdot]$. For any congruence $\theta$ of $A$ the following equality holds:

$\rho(\theta) = \bigvee\{\alpha\in K(A)|[\alpha,\alpha]^n\subseteq \theta$, for some  $n\geq 0\}$.
\end{propozitie}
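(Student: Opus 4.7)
Set $\sigma := \bigvee\{\alpha\in K(A)\mid [\alpha,\alpha]^n\subseteq \theta \text{ for some } n\geq 0\}$; I want to show $\sigma = \rho(\theta)$. The inclusion $\sigma\subseteq\rho(\theta)$ is the easy half: if $[\alpha,\alpha]^n\subseteq\theta$ then Lemma 2.7(7) gives $\rho(\alpha)=\rho([\alpha,\alpha]^n)\subseteq\rho(\theta)$, and since $\alpha\subseteq\rho(\alpha)$ each generator lies in $\rho(\theta)$.

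For the reverse inclusion $\rho(\theta)\subseteq\sigma$, since $Con(A)$ is algebraic, $\rho(\theta)$ is the join of the compact congruences below it, so it suffices to prove: every compact $\alpha\subseteq\rho(\theta)$ satisfies $[\alpha,\alpha]^n\subseteq\theta$ for some $n\geq 0$. I would argue by contradiction using a Zorn-type argument modeled on the classical Krull nil-radical proof. Assume no such $n$ exists and consider $\Sigma:=\{\beta\in Con(A)\mid \theta\subseteq\beta \text{ and } [\alpha,\alpha]^n\not\subseteq\beta \text{ for all } n\geq 0\}$. Then $\theta\in\Sigma$, and $\Sigma$ is closed under directed joins, because the hypothesis that $K(A)$ is closed under commutator forces each $[\alpha,\alpha]^n$ to be compact, so $[\alpha,\alpha]^n\subseteq\bigvee\beta_i$ would place $[\alpha,\alpha]^n$ inside some member of the chain. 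Zorn yields a maximal element $\phi\in\Sigma$.

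The heart of the proof, and the step I expect to be the main obstacle, is showing $\phi\in Spec(A)$. Suppose, for a contradiction, that $[\beta,\gamma]\subseteq\phi$ with $\beta\not\subseteq\phi$ and $\gamma\not\subseteq\phi$. Then $\phi\vee\beta$ and $\phi\vee\gamma$ strictly contain $\phi$, so by maximality of $\phi$ there exist integers $m,p$ with $[\alpha,\alpha]^m\subseteq\phi\vee\beta$ and $[\alpha,\alpha]^p\subseteq\phi\vee\gamma$. Since $[\alpha,\alpha]^{k+1}\subseteq[\alpha,\alpha]^k$ by (2.1), I may assume $m=p$. Using distributivity of the commutator over joins together with (2.1) gives
$$[\phi\vee\beta,\phi\vee\gamma]=[\phi,\phi]\vee[\phi,\gamma]\vee[\beta,\phi]\vee[\beta,\gamma]\subseteq\phi,$$
so by monotonicity of $[\cdot,\cdot]$ and Lemma 2.6 (which rewrites $[\alpha,\alpha]^{m+1}$ as $[[\alpha,\alpha]^m,[\alpha,\alpha]^m]$) we obtain
$$[\alpha,\alpha]^{m+1}=[[\alpha,\alpha]^m,[\alpha,\alpha]^m]\subseteq[\phi\vee\beta,\phi\vee\gamma]\subseteq\phi,$$
contradicting $\phi\in\Sigma$. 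Hence $\phi$ is prime.

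To conclude, since $\theta\subseteq\phi\in Spec(A)$ and $\alpha\subseteq\rho(\theta)=\bigwedge\{\psi\in Spec(A)\mid\theta\subseteq\psi\}$, we get $\alpha\subseteq\phi$; but $\alpha=[\alpha,\alpha]^0$, which contradicts $\phi\in\Sigma$. This refutes the assumption, establishing $\rho(\theta)\subseteq\sigma$ and completing the proof. The only delicate points are verifying chain-completeness of $\Sigma$ (which is exactly where the hypothesis that $K(A)$ is closed under $[\cdot,\cdot]$ is used) and the commutator manipulation inside the primeness argument.
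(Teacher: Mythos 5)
Your proof is correct and complete: the paper itself does not reprove this proposition (it is quoted from Agliano's paper), and your Zorn/Krull-type argument is exactly the standard one behind it. The two delicate points you flag are handled properly --- chain-closure of $\Sigma$ does follow from compactness of each $[\alpha,\alpha]^n$ (which is where the hypothesis that $K(A)$ is closed under $[\cdot,\cdot]$ enters), and the primeness computation via $[\phi\vee\beta,\phi\vee\gamma]\subseteq\phi$ together with $[\alpha,\alpha]^{m+1}=[[\alpha,\alpha]^m,[\alpha,\alpha]^m]$ (which is just the recursive definition, so the appeal to Lemma 2.6 is not even needed) is sound.
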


In particular, we have $\rho(\Delta_A) = \bigvee\{\alpha\in K(A)|[\alpha,\alpha]^n = \Delta_A$, for some  $n\geq 0\}$.

Then the algebra $A$ is semiprime if and only if for any $\alpha\in K(A)$ and for any integer $n\geq 0$, $[\alpha,\alpha]^n = \Delta_A$ implies $\alpha = \Delta_A$.

Let us consider the following property:

$(\star)$ For all $\alpha,\beta \in K(A)$ and for any integer $n\geq1$ the exists an integer $m\geq 0$ such that $[[\alpha,\alpha]^m,[\beta,\beta]^m]]\subseteq [\alpha,\beta]^n$.

We remark that $[[\alpha,\alpha]^0, [\alpha,\alpha]^0]=[\alpha,\alpha]\subseteq \alpha=[\alpha,\alpha]^0$ so $(\star)$ is equivalent with the following condition:

$(\star \star)$ For all $\alpha,\beta \in K(A)$ and for any integer $n\geq 0$ there exists an integer $m\geq 0$ such that $[[\alpha,\alpha]^m,[\beta,\beta]^m]]\subseteq [\alpha,\beta]^n$.

If the commutator operation $[\cdot,\cdot]$ is associative then it is obvious that the algebra $A$ verifies the condition $(\star)$.

Ring theory is the main source of inspiration for most results of this paper. Since the commutator operation is not always associative it is difficult to extend some theorems from rings to algebras of $\mathcal{V}$. Many times we shall use the condition $(\star)$ as a sufficient condition in order to obtain generalizations of some results of ring theory.

\begin{propozitie}
Assume that $A\in \mathcal{V}$ and $\theta\in Con(A)$. If $A$ verifies $(\star)$ then the quotient algebra $A/\theta$ verifies $(\star)$.
\end{propozitie}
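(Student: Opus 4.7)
My plan is to pull a witness for $(\star)$ in $A/\theta$ back to $A$, apply $(\star)$ there, and then push the inclusion forward using Lemma 2.5. Fix $\alpha',\beta'\in K(A/\theta)$ and $n\geq 1$. Under the standard correspondence $Con(A/\theta)\cong [\theta,\nabla_A]$, every compact congruence of $A/\theta$ is a finite join of principal congruences and hence has the form $(\alpha_0\vee\theta)/\theta$ with $\alpha_0\in K(A)$; so write $\alpha'=\alpha/\theta$, $\beta'=\beta/\theta$ where $\alpha=\alpha_0\vee\theta$, $\beta=\beta_0\vee\theta$, $\alpha_0,\beta_0\in K(A)$. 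Note that $\theta\subseteq\alpha\cap\beta$, which is exactly the hypothesis required by Lemma 2.5.

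The technical heart of the proof is the auxiliary identity
\[
[\alpha,\alpha]^m\vee\theta=[\alpha_0,\alpha_0]^m\vee\theta,\qquad [\beta,\beta]^m\vee\theta=[\beta_0,\beta_0]^m\vee\theta,\qquad [\alpha,\beta]^n\vee\theta=[\alpha_0,\beta_0]^n\vee\theta,
\]
for all $m,n\geq 0$. I would prove the first one by induction on $m$: the case $m=0$ is trivial, and for $m=1$, distributivity of the commutator over joins gives
\[
[\alpha_0\vee\theta,\alpha_0\vee\theta]=[\alpha_0,\alpha_0]\vee[\alpha_0,\theta]\vee[\theta,\alpha_0]\vee[\theta,\theta],
\]
and the last three summands lie in $\theta$ by (2.1); joining with $\theta$ yields the identity. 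For the inductive step, set $P=[\alpha,\alpha]^m$, $Q=[\alpha_0,\alpha_0]^m$, so $P\vee\theta=Q\vee\theta$; the same distributivity argument applied to $[P\vee\theta,P\vee\theta]$ and $[Q\vee\theta,Q\vee\theta]$ gives $[P,P]\vee\theta=[Q,Q]\vee\theta$, which is the inductive conclusion. The identities for $[\beta,\beta]^m$ and $[\alpha,\beta]^n$ are analogous.

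With these in hand the rest is assembly. Apply $(\star)$ in $A$ to $\alpha_0,\beta_0,n$ to obtain $m\geq 0$ with $[[\alpha_0,\alpha_0]^m,[\beta_0,\beta_0]^m]\subseteq [\alpha_0,\beta_0]^n$. Set $\gamma=[\alpha,\alpha]^m\vee\theta$ and $\delta=[\beta,\beta]^m\vee\theta$; by Lemma 2.5 (using $\theta\subseteq\gamma\cap\delta$) we have $[\alpha',\alpha']^m=\gamma/\theta$, $[\beta',\beta']^m=\delta/\theta$, and $[\gamma/\theta,\delta/\theta]=([\gamma,\delta]\vee\theta)/\theta$, while $[\alpha',\beta']^n=([\alpha,\beta]^n\vee\theta)/\theta$. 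Using the auxiliary identity, $\gamma=[\alpha_0,\alpha_0]^m\vee\theta$ and $\delta=[\beta_0,\beta_0]^m\vee\theta$, so distributivity and (2.1) yield
\[
[\gamma,\delta]\subseteq [[\alpha_0,\alpha_0]^m,[\beta_0,\beta_0]^m]\vee\theta\subseteq [\alpha_0,\beta_0]^n\vee\theta=[\alpha,\beta]^n\vee\theta.
\]
Joining both sides with $\theta$ and passing to the quotient gives $[[\alpha',\alpha']^m,[\beta',\beta']^m]\subseteq [\alpha',\beta']^n$, which is $(\star)$ in $A/\theta$.

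The one nontrivial point is the auxiliary identity: the commutator is not associative, so one cannot simply "cancel" $\theta$, and the whole reduction stands or falls on the observation that the non-principal summands produced by distributivity are always absorbed by $\theta$. Everything else is routine bookkeeping once Lemma 2.5 is invoked at the right level of the tower $[\cdot,\cdot]^m$.
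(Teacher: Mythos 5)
Your proof is correct. The paper states Proposition 2.9 without any proof at all (it is treated as a routine consequence of the quotient correspondence), so there is nothing to compare against; your argument --- representing compact congruences of $A/\theta$ as $(\alpha_0\vee\theta)/\theta$ with $\alpha_0\in K(A)$, establishing the absorption identity $[\alpha_0\vee\theta,\alpha_0\vee\theta]^m\vee\theta=[\alpha_0,\alpha_0]^m\vee\theta$ by induction using distributivity and (2.1), and then transporting the witness $m$ through Lemma 2.5 --- is exactly the verification the authors presumably had in mind, and you are right that the absorption identity is the one point that genuinely needs checking since the commutator is not assumed associative.
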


\begin{lema}
Let $A$ be an algebra in the variety $\mathcal{V}$ and $\alpha, \beta\in K(A)$. If $A$ is semiprime or verifies the condition $(\star)$ then $[\alpha,\beta]\subseteq \rho(\Delta_A)$ implies that there exists an integer $m\geq0$ such that $[[\alpha,\alpha]^m,[\beta,\beta]^m] = \Delta_A$.
\end{lema}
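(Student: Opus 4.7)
I would split on the disjunctive hypothesis. The semiprime case is immediate: when $\rho(\Delta_A)=\Delta_A$, the inclusion $[\alpha,\beta]\subseteq\rho(\Delta_A)$ collapses to $[\alpha,\beta]=\Delta_A$, and the convention $[\gamma,\gamma]^0=\gamma$ lets us take $m=0$, since then $[[\alpha,\alpha]^0,[\beta,\beta]^0]=[\alpha,\beta]=\Delta_A$.

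For the remaining case, assume $A$ satisfies $(\star)$. The first task is to upgrade the radical-containment $[\alpha,\beta]\subseteq\rho(\Delta_A)$ to a concrete equation $[\alpha,\beta]^N=\Delta_A$ for some integer $N\geq 1$. Because $K(A)$ is closed under the commutator, $[\alpha,\beta]\in K(A)$. Proposition 2.8 presents $\rho(\Delta_A)$ as the join of those compact $\gamma$ for which some power $[\gamma,\gamma]^n$ equals $\Delta_A$; the compactness of $[\alpha,\beta]$ then lets me extract finitely many such $\gamma_1,\dots,\gamma_k$ whose join dominates $[\alpha,\beta]$, and using that $[\gamma,\gamma]^{p+1}\subseteq[\gamma,\gamma]^p$ I may pass to a common exponent $n$ with $[\gamma_i,\gamma_i]^n=\Delta_A$ for every $i$. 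Lemma 2.4(5) then yields $[\gamma_1\vee\cdots\vee\gamma_k,\gamma_1\vee\cdots\vee\gamma_k]^{n^k}=\Delta_A$, and monotonicity of the iterated commutator gives $[[\alpha,\beta],[\alpha,\beta]]^{n^k}=\Delta_A$. Rewriting this via Lemma 2.6 turns it into $[\alpha,\beta]^{n^k+1}=\Delta_A$, which is the desired equation.

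The second and final step is to invoke $(\star)$ (equivalently $(\star\star)$) at the exponent $N=n^k+1\geq 1$: this furnishes an integer $m\geq 0$ such that $[[\alpha,\alpha]^m,[\beta,\beta]^m]\subseteq[\alpha,\beta]^N=\Delta_A$, as required.

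I expect the first step -- the compactness argument that converts the radical-containment into a finite-exponent equation -- to be the main obstacle, since it needs Proposition 2.8, the closure of $K(A)$ under the commutator, and the quantitative estimate of Lemma 2.4(5) to be assembled carefully, together with the reduction $[[\alpha,\beta],[\alpha,\beta]]^{n^k}=[\alpha,\beta]^{n^k+1}$ from Lemma 2.6. Once $[\alpha,\beta]^N=\Delta_A$ is in hand, the property $(\star)$ is tailor-made to bridge the non-associativity gap between $[\alpha,\beta]^N$ and the target expression $[[\alpha,\alpha]^m,[\beta,\beta]^m]$, and the conclusion follows immediately.
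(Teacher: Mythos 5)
Your proof is correct and follows essentially the same route as the paper: the semiprime case is handled with $m=0$ via the convention $[\gamma,\gamma]^0=\gamma$, and under $(\star)$ one first obtains $[\alpha,\beta]^{N}=\Delta_A$ for some $N\geq 1$ (the paper cites Proposition 2.8 and Lemma 2.6 for this) and then applies $(\star)$ at that exponent. The only difference is that you spell out the compactness argument behind the step ``$[\alpha,\beta]\subseteq\rho(\Delta_A)$ implies $[[\alpha,\beta],[\alpha,\beta]]^n=\Delta_A$ for some $n$,'' which the paper compresses into a bare citation of Proposition 2.8; your expansion is accurate and fills a genuine gap in the exposition rather than changing the method.
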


\begin{proof} If $A$ is semiprime then $[\alpha,\beta]\subseteq \rho(\Delta_A)$ implies that $[[\alpha,\alpha]^0, [\beta,\beta]^0]=[\alpha,\beta]\subseteq \rho(\Delta_A)=\Delta_A$.
Assume that $A$ verifies the condition $(\star)$ and $[\alpha,\beta]\subseteq \rho(\Delta_A)$. By Proposition 2.8, there exists an integer $n\geq 0$ such that $[\alpha,\beta],[\alpha,\beta]]^n = 0$. According to Lemma 2.6 we have $[\alpha,\beta]^{n+1} = \Delta_A$. Taking into account the condition $(\star)$ it follows that there exists an integer $m\geq 0$ such that $[[\alpha,\alpha]^m,[\beta,\beta]^m] = \Delta_A$.
\end{proof}

Let $B(Con(A))$ be the set of complemented elements in the bounded lattice $Con(A)$. By Lemma 4 from \cite{Jipsen} or \cite{Galatos}, $B(Con(A))$ is a Boolean algebra in which $\alpha^{\perp}$ is the complement of a congruence $\alpha\in B(Con(A))$. If $\alpha\in Con(A)$ then $\alpha\in B(Con(A)$ iff $\alpha\lor \alpha^{\perp} = \nabla_A$. For all $\theta,\vartheta\in Con(A)$ and $\alpha\in B(Con(A))$ we have $\theta\land \alpha =[\theta,\alpha]$, $\alpha\rightarrow \theta  = \alpha^{\perp}\lor \theta$ and $(\theta\land\vartheta)\lor \alpha = (\theta\lor \alpha)\land (\vartheta\land \alpha)$.

\begin{lema}
 For all congruences $\theta, \vartheta\in Con(A)$ the following hold:
\usecounter{nr}
\begin{list}{(\arabic{nr})}{\usecounter{nr}}
\item If $\theta\lor \vartheta = \nabla_A$ and $[\theta,\vartheta] = \Delta_A$ then $\theta,\vartheta\in B(Con(A))$;
\item For any integer $n\geq 1$, if $\theta\lor \vartheta = \nabla_A$ and $[[\theta,\theta]^n,[\vartheta,\vartheta]^n] = \Delta_A$ then $[\theta,\theta]^n,[\vartheta,\vartheta]^n\in B(Con(A))$;
\item $B(Con(A))\subseteq K(A)$.

\end{list}
\end{lema}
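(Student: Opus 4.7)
The strategy is that (1) is a direct application of Lemma 2.4(1), (2) follows from (1) together with Lemma 2.4(3), and (3) rests on the semidegeneracy of $\mathcal{V}$ (to get $\nabla_A$ compact), the algebraicity of $Con(A)$, and the modular law; part (3) uses parts (1)--(2) only conceptually. I expect the main obstacle to be in part (3), where one must pass from a compact join decomposition of $\nabla_A$ to a proof that $\alpha$ itself is compact without assuming distributivity of $Con(A)$.

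\textbf{Part (1).} Assume $\theta\lor\vartheta=\nabla_A$ and $[\theta,\vartheta]=\Delta_A$. By Lemma 2.4(1) we immediately obtain $\theta\cap\vartheta=[\theta,\vartheta]=\Delta_A$. Combined with $\theta\lor\vartheta=\nabla_A$, this exhibits $\vartheta$ as a complement of $\theta$ in the bounded lattice $Con(A)$, hence $\theta,\vartheta\in B(Con(A))$.

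\textbf{Part (2).} From $\theta\lor\vartheta=\nabla_A$, Lemma 2.4(3) yields $[\theta,\theta]^n\lor[\vartheta,\vartheta]^n=\nabla_A$. Together with the hypothesis $[[\theta,\theta]^n,[\vartheta,\vartheta]^n]=\Delta_A$, part (1) applied to the pair $[\theta,\theta]^n,[\vartheta,\vartheta]^n$ gives the conclusion.

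\textbf{Part (3).} Let $\alpha\in B(Con(A))$; then $\alpha\lor\alpha^{\perp}=\nabla_A$ and $\alpha\cap\alpha^{\perp}=\Delta_A$. Since $\mathcal{V}$ is semidegenerate, $\nabla_A$ is compact (by the Kearnes--Koll\'ar characterization recalled before Proposition 2.2), and $Con(A)$ is algebraic, so $\alpha=\bigvee\{\gamma\in K(A):\gamma\subseteq\alpha\}$ and $\alpha^{\perp}=\bigvee\{\delta\in K(A):\delta\subseteq\alpha^{\perp}\}$. Writing $\nabla_A$ as the join of the directed family $\{\gamma\lor\delta\}$ taken over such $\gamma,\delta$, compactness of $\nabla_A$ together with closure of $K(A)$ under finite joins produces compact $\gamma\subseteq\alpha$ and $\delta\subseteq\alpha^{\perp}$ with $\gamma\lor\delta=\nabla_A$. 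Now I apply the modular law, which is the heart of the argument: since $\gamma\subseteq\alpha$,
\[
\alpha=\alpha\cap\nabla_A=\alpha\cap(\gamma\lor\delta)=\gamma\lor(\alpha\cap\delta).
\]
Because $\alpha\cap\delta\subseteq\alpha\cap\alpha^{\perp}=\Delta_A$, this collapses to $\alpha=\gamma\in K(A)$, completing the proof.
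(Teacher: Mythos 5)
The paper states this lemma without giving a proof, so there is no argument of the author's to compare against; judged on its own, your proof is correct and complete. Parts (1) and (2) are exactly the intended reductions: Lemma 2.4(1) turns $[\theta,\vartheta]=\Delta_A$ into $\theta\cap\vartheta=\Delta_A$, and Lemma 2.4(3) supplies the join condition needed to apply (1) to the pair $[\theta,\theta]^n,[\vartheta,\vartheta]^n$. In part (3) your compactness extraction of $\gamma\subseteq\alpha$, $\delta\subseteq\alpha^{\perp}$ with $\gamma\lor\delta=\nabla_A$ is sound (the family is directed because $K(A)$ is closed under finite joins), and the modular law correctly collapses $\alpha=\gamma\lor(\alpha\cap\delta)$ to $\alpha=\gamma$ once you note $\alpha\cap\delta\subseteq\alpha\cap\alpha^{\perp}=\Delta_A$, which is available since $\alpha^{\perp}$ is the complement of $\alpha$ in $B(Con(A))$. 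A slightly different finish, more in the style the paper uses in analogous places (e.g.\ Lemma 4.10), avoids the modular law entirely: since the variety is semidegenerate, $[\alpha,\nabla_A]=\alpha$, so $\alpha=[\alpha,\gamma\lor\delta]=[\alpha,\gamma]\lor[\alpha,\delta]=[\alpha,\gamma]\subseteq\gamma\subseteq\alpha$, giving $\alpha=\gamma\in K(A)$; this trades modularity for the commutator identities already established in Section 2. Either route is acceptable.
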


In what follows we shall identify the variety $\mathcal{V}$ with the category whose objects are algebras in $\mathcal{V}$ and the morphisms are the usual $\tau$ - homomorphisms (recall that $\tau$ is the signature of algebras in $\mathcal{V}$).

Let $u:A\rightarrow B$ be an arbitrary morphism in $\mathcal{V}$ and $u^*:Con(B)\rightarrow Con(A)$, $u^{\bullet}:Con(A)\rightarrow Con(B)$ are the maps defined by $u^*(\beta) = u^{-1}(\beta)$ and $u^{\bullet}(\alpha) = Cg_B(f(\alpha))$, for all $\alpha\in Con(A)$ and $\beta\in Con(B)$. Thus $u^{\bullet}$ is a the left adjoint of $u^*$: for all $\alpha\in Con(A)$, $\beta\in Con(B)$, we have $u^{\bullet}(\alpha)\subseteq \beta$ iff  $\alpha\subseteq u^*(\beta)$.

By \cite{GKM}, if $\alpha\in K(A)$ then $u^{\bullet}(\alpha)\in K(B)$, so we can consider the restriction $u^{\bullet}|_{K(A)}:K(A)\rightarrow K(B)$.

 According to \cite{GM1}, the morphism $u:A\rightarrow B$ of $\mathcal{V}$ is said to be admissible if $u^*(\psi)\in Spec(A)$, for any $\psi\in Spec(B)$. By Proposition 3.6 of \cite{GM1}, any surjective morphism of $\mathcal{V}$ is admissible.

The flat ring morphisms are usually defined in terms of the tensor product (see \cite{Atiyah}). According to Exercise 22 of \cite{Bourbaki}, p.65 or \cite{Picavet}, p.46, a morphism of commutative rings $u:R\rightarrow S$ is flat if and only if for any ideal $I$ of $R$ and for any finitely generated ideal $J$ of $R$, we have $(I:J)S = (IS:JS)$. This characterization of flat ring morphisms in terms of residuation allows us to define a notion of "flatness" for the morphisms of the given variety $\mathcal{V}$.

A morphism $u:A\rightarrow B$ in the variety $\mathcal{V}$ is said to be flat if for each $\theta\in Con(A)$ and for each $\alpha\in K(A)$, we have $u^{\bullet}(\alpha\rightarrow \theta)$ = $u^{\bullet}(\alpha)\rightarrow u^{\bullet}(\theta)$.

\begin{lema}
For any morphism $u:A\rightarrow B$ in the variety $\mathcal{V}$, the following are equivalent:
\usecounter{nr}
\begin{list}{(\arabic{nr})}{\usecounter{nr}}
\item $f$ is flat;
\item For each $\theta\in Con(A)$ and for each $\alpha\in K(A)$, we have $u^{\bullet}(\alpha)\rightarrow u^{\bullet}(\theta)\subseteq u^{\bullet}(\alpha\rightarrow \theta)$.
\end{list}
\end{lema}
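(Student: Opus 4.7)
The direction $(1)\Rightarrow(2)$ is immediate from the definition of flatness, since the equality $u^{\bullet}(\alpha\rightarrow\theta)=u^{\bullet}(\alpha)\rightarrow u^{\bullet}(\theta)$ trivially entails the inclusion in $(2)$.

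For $(2)\Rightarrow(1)$ my plan is to derive the reverse inclusion $u^{\bullet}(\alpha\rightarrow\theta)\subseteq u^{\bullet}(\alpha)\rightarrow u^{\bullet}(\theta)$ as a fact that holds in general, so that combined with $(2)$ it yields flatness. Since $Con(A)$ is algebraic and $u^{\bullet}$, being the left adjoint of $u^{*}$, preserves arbitrary joins, the residuation formula $\alpha\rightarrow\theta=\bigvee\{\gamma\in K(A):[\gamma,\alpha]\subseteq\theta\}$ gives
\[ u^{\bullet}(\alpha\rightarrow\theta) = \bigvee\{u^{\bullet}(\gamma):\gamma\in K(A),\ [\gamma,\alpha]\subseteq\theta\}. \]
It therefore suffices to show, for each compact $\gamma$ with $[\gamma,\alpha]\subseteq\theta$, that $u^{\bullet}(\gamma)\subseteq u^{\bullet}(\alpha)\rightarrow u^{\bullet}(\theta)$; by the residuation in $Con(B)$ this amounts to $[u^{\bullet}(\gamma),u^{\bullet}(\alpha)]\subseteq u^{\bullet}(\theta)$.

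The crux is the auxiliary commutator--preservation inequality $[u^{\bullet}(\gamma),u^{\bullet}(\alpha)]\subseteq u^{\bullet}([\gamma,\alpha])$ for $\gamma,\alpha\in K(A)$. Granted this, monotonicity of $u^{\bullet}$ applied to $[\gamma,\alpha]\subseteq\theta$ gives $u^{\bullet}([\gamma,\alpha])\subseteq u^{\bullet}(\theta)$, and taking the join over all eligible $\gamma$ produces the required bound $u^{\bullet}(\alpha\rightarrow\theta)\subseteq u^{\bullet}(\alpha)\rightarrow u^{\bullet}(\theta)$. This auxiliary inequality is the main obstacle: for surjective $u$ it is an immediate consequence of property (2.2), but for an arbitrary morphism in $\mathcal{V}$ it requires a separate argument, which I would handle either via the Mal'cev/Day--matrix characterisation of the commutator in congruence-modular varieties (mapping a witnessing matrix for $(a,b)\in [\gamma,\alpha]_A$ forward along $u$) or by appealing to the behaviour of $u^{\bullet}$ on compact congruences established in \cite{GKM}. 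Once this is in place, combining $(2)$ with the reverse inclusion yields the equality defining flatness, and the proof is complete.
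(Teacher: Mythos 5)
The paper states this lemma without proof, so there is nothing to compare your attempt against; judged on its own merits, your direction $(1)\Rightarrow(2)$ and your reduction in $(2)\Rightarrow(1)$ --- writing $\alpha\rightarrow\theta$ as a join of compact $\gamma$ with $[\gamma,\alpha]\subseteq\theta$ and using that $u^{\bullet}$, being a left adjoint, preserves joins --- are correct, and you have isolated exactly the right crux. The trouble is that the auxiliary inequality $[u^{\bullet}(\gamma),u^{\bullet}(\alpha)]_B\subseteq u^{\bullet}([\gamma,\alpha]_A)$ that you defer is not just ``the main obstacle'': it is false for non-surjective morphisms, and with it the lemma itself fails in the stated generality. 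Take $\mathcal{V}$ to be the variety of unital rings (congruence-permutable, hence modular, and semidegenerate), where the commutator of ideals is $[I,J]=IJ+JI$, and let $u:k\times k\rightarrow M_2(k)$ be the diagonal embedding. For $I=k\times 0$ and $J=0\times k$ one has $[I,J]=0$, yet $u^{\bullet}(I)=u^{\bullet}(J)=\nabla_{M_2(k)}$ by simplicity of $M_2(k)$, and $[\nabla,\nabla]=\nabla\neq 0=u^{\bullet}([I,J])$. Consequently $u^{\bullet}(J\rightarrow 0)=u^{\bullet}(I)=\nabla$ while $u^{\bullet}(J)\rightarrow u^{\bullet}(0)=\nabla^{\perp}=0$, so $u$ is not flat; nevertheless a case check over the four ideals of $k\times k$ shows that condition $(2)$ holds for every pair $(\alpha,\theta)$. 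So $(2)$ does not imply $(1)$ for arbitrary morphisms.

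Neither of your two proposed rescues can close this gap. The difference-term/matrix description of $[u^{\bullet}(\gamma),u^{\bullet}(\alpha)]_B$ ranges over all pairs of $Cg_B(u(\gamma))$ and $Cg_B(u(\alpha))$, and these congruences contain pairs manufactured by the translation and transitive closure inside $B$ that have no preimage under $u$ --- precisely what happens above, where $Cg_B(u(I))$ inflates to $\nabla_B$; and \cite{GKM} only records that $u^{\bullet}$ preserves joins and compactness, not commutators. What your argument \emph{does} prove is the lemma for surjective $u$: there $u^{\bullet}(\chi)=(\chi\lor Ker(u))/Ker(u)$ and (2.2)--(2.3) give the exact equality $[u^{\bullet}(\gamma),u^{\bullet}(\alpha)]=u^{\bullet}([\gamma,\alpha])$, and the surjective case is in fact the only one in which the paper later invokes this lemma (for $p_{\theta}$ in Lemma 4.4). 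To make your proof complete you must either restrict the statement to surjective morphisms, or add the hypothesis $[u^{\bullet}(\gamma),u^{\bullet}(\alpha)]\subseteq u^{\bullet}([\gamma,\alpha])$ for $\gamma,\alpha\in K(A)$ explicitly.
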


For any $\theta \in Con(A)$ we denote $V_A(\theta) = V(\theta) = \{\phi \in Spec(A)|\theta\subseteq \phi\}$ and $D_A(\theta) = D(\theta) = Spec(A)- V(\theta)$. If $\alpha, \beta \in Con(A)$ then $D(\alpha)\bigcap D(\beta) = D([\alpha,\beta])$ and $V(\alpha)\bigcup V(\beta) = V([\alpha,\beta])$. For any family of congruences $(\theta_i)_{i\in I}$ we have $\bigcup_{i\in I}D(\theta_i) = D(\bigvee_{i\in I}\theta_i)$ and $\bigcap_{i\in I}V(\theta_i) = V(\bigvee_{i\in I}\theta_i)$. Thus $Spec(A)$ becomes a topological space whose open sets are $D(\theta),\theta\in Con(A)$. We remark that this topology extends the Zariski topology (defined on the prime spectra of commutative rings) and the Stone topology (defined on the prime spectra of bounded distributive lattices). Thus this topology on the prime spectrum $Spec(A)$ of the algebra $A$ will be named Zariski topology and the respective topological space will be denoted by $Spec_Z(A)$. We mention that the family $(D(\alpha))_{\alpha\in K(A)}$ is a basis of open sets for the Zariski topology. We shall denote by $Max_Z(A)$ the set $Max(A)$ considered as subspace of $Spec_Z(A)$.

\section{Reticulation of a universal algebra}

\hspace{0.5cm} Let us fix a semidegenerate congruence - modular variety $\mathcal{V}$ and $A$ an algebra of $\mathcal{V}$ such that the set $K(A)$ of finitely generated congruences of $A$ is closed under the commutator operation. Consider on $Con(A)$ the following equivalence relation: for all $\alpha, \beta\in Con(A)$, $\alpha\equiv \beta$ if and only if $\rho(\alpha) = \rho(\beta)$. Let $\hat \alpha$ be the equivalence class of $\alpha\in Con(A)$ and $0 = \hat{\Delta_A}, 1 = \hat{\nabla_A}$. Then $\equiv$ is a congruence of the lattice $Con(A)$ so the quotient set $L(A)$ = $K(A)/{\equiv}$ is a bounded distributive lattice, named the reticulation of the algebra $A$ (see \cite{GM2}). We shall denote by $\lambda_A:K(A)\rightarrow L(A)$ the function defined by $\lambda_A(\alpha) = \hat{\alpha}$, for all $\alpha\in K(A)$.

We remark that for all $\alpha,\beta\in K(A)$ we have $\lambda_A(\alpha) = \lambda_A(\beta)$ if and only if $\rho(\alpha) = \rho(\beta)$.

\begin{lema}\cite{GM2}
 For all congruences $\alpha, \beta \in K(A)$ the following hold:
\usecounter{nr}
\begin{list}{(\arabic{nr})}{\usecounter{nr}}
\item $\lambda_A(\alpha \lor \beta) = \lambda_A(\alpha)\lor \lambda_A(\beta)$;
\item $\lambda_A([\alpha,\beta])$ = $\lambda_A(\alpha)\land \lambda_A(\beta)$;
\item $\lambda_A(\alpha) = 1$ iff $\alpha = \nabla_A$;
\item $\lambda_A(\alpha) = 0$ iff $[\alpha,\alpha]^k = \Delta_A$, for some integer $k\geq 0$;
\item $\lambda_A([\alpha,\alpha]^k) = \lambda_A(\alpha)$, for all integer $k\geq 0$;
\item $\lambda_A(\alpha) = 0$ iff $\alpha\subseteq \rho(\Delta_A)$;
\item If $A$ is semiprime then $\lambda_A(\alpha) = 0$ iff $\alpha = \Delta_A$;
\item  $\lambda_A(\alpha)\leq \lambda_A(\beta)$ iff $\rho(\alpha)\subseteq\rho(\beta)$ iff $[\alpha,\alpha]^n \subseteq \beta$, for some integer $n\geq 0$.
\end{list}
\end{lema}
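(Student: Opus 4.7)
The proof is a systematic unpacking of the definition $\lambda_A(\alpha)=\lambda_A(\beta)\iff\rho(\alpha)=\rho(\beta)$, feeding it Lemma 2.7 and Proposition 2.8. The plan is to dispose of (1)--(3), (5)--(7) directly from radical identities, and then handle (4) and (8), which require a compactness argument, last.

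First, I would record that the lattice operations on $L(A)$ are $\lambda_A(\alpha)\vee\lambda_A(\beta)=\lambda_A(\alpha\vee\beta)$ and $\lambda_A(\alpha)\wedge\lambda_A(\beta)=\lambda_A([\alpha,\beta])$; this is the content of (1) and (2), and it is well defined because Lemma 2.7(2) says $\rho([\alpha,\beta])=\rho(\alpha)\wedge\rho(\beta)$ while the identity $\rho(\alpha\vee\beta)=\rho(\rho(\alpha)\vee\rho(\beta))$ guarantees that $\vee$ descends to the quotient. Next, (3) is Lemma 2.7(3), (5) is Lemma 2.7(7) applied to $\alpha\in K(A)$ (so $[\alpha,\alpha]^k\in K(A)$ by closure under the commutator), and (6) is a one-line consequence of $\alpha\subseteq\rho(\alpha)$ and the idempotency of $\rho$. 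Then (7) is the specialisation of (6) to $\rho(\Delta_A)=\Delta_A$.

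For (8), I would first argue the equivalence $\lambda_A(\alpha)\leq\lambda_A(\beta)\iff \rho(\alpha)\subseteq\rho(\beta)$ by noting that $\lambda_A(\alpha)\leq\lambda_A(\beta)$ is equivalent to $\lambda_A([\alpha,\beta])=\lambda_A(\alpha)$ via (2), i.e. $\rho(\alpha)\wedge\rho(\beta)=\rho(\alpha)$. The nontrivial equivalence is $\rho(\alpha)\subseteq\rho(\beta)\iff\exists n\geq 0,\ [\alpha,\alpha]^n\subseteq\beta$. The direction ($\Leftarrow$) follows from Lemma 2.7(7): if $[\alpha,\alpha]^n\subseteq\beta$ then $\rho(\alpha)=\rho([\alpha,\alpha]^n)\subseteq\rho(\beta)$. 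For the direction ($\Rightarrow$) I would use Proposition 2.8 to write
\[
\rho(\beta)=\bigvee\{\gamma\in K(A)\mid [\gamma,\gamma]^m\subseteq\beta\ \text{for some}\ m\geq 0\}.
\]
Since $\alpha\in K(A)$ and $\alpha\subseteq\rho(\alpha)\subseteq\rho(\beta)$, compactness yields $\alpha\subseteq\gamma_1\vee\cdots\vee\gamma_k$ with $[\gamma_i,\gamma_i]^{m_i}\subseteq\beta$. Replacing each $m_i$ by $m=\max_i m_i$ (using that the sequence $[\cdot,\cdot]^{m}$ is decreasing in $m$, since $[\theta,\theta]\subseteq\theta$), Lemma 2.4(5) then gives
\[
[\alpha,\alpha]^{m^k}\subseteq[\gamma_1\vee\cdots\vee\gamma_k,\gamma_1\vee\cdots\vee\gamma_k]^{m^k}\subseteq [\gamma_1,\gamma_1]^m\vee\cdots\vee[\gamma_k,\gamma_k]^m\subseteq\beta,
\]
which supplies the required $n=m^k$.

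Finally, (4) is obtained by specialising (8) to $\beta=\Delta_A$: by (6), $\lambda_A(\alpha)=0$ iff $\alpha\subseteq\rho(\Delta_A)$, and the compactness argument just given (with $\beta=\Delta_A$) produces $n\geq 0$ with $[\alpha,\alpha]^n=\Delta_A$; conversely any such $n$ forces $\lambda_A(\alpha)=\lambda_A([\alpha,\alpha]^n)=\lambda_A(\Delta_A)=0$ by (5). The only step that requires actual work is this compactness-plus-powering argument in (8); everything else is a formal transcription of the radical calculus of Lemma 2.7.
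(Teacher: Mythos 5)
Your proof is correct. Note that the paper itself gives no proof of this lemma --- it is quoted from \cite{GM2} --- so there is nothing to compare against line by line; your argument is the standard one and all the routine items (1)--(3), (5)--(7) are indeed immediate from the radical calculus of Lemma 2.7 together with well-definedness of the quotient operations. The one substantive step, the implication $\rho(\alpha)\subseteq\rho(\beta)\Rightarrow[\alpha,\alpha]^n\subseteq\beta$ in (8), is handled correctly: compactness of $\alpha$ applied to the join in Proposition 2.8, uniformising the exponents via the fact that $[\gamma,\gamma]^{m}$ decreases in $m$, and then Lemma 2.4(5) to pass from the $\gamma_i$ to their join, with monotonicity of $\theta\mapsto[\theta,\theta]^{m^k}$ finishing the estimate; deducing (4) as the case $\beta=\Delta_A$ of (8) is also fine.
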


Let $L$ be a bounded distributive lattice and $Id(L)$ the set of its ideals. Then $Spec_{Id}(L)$ will denote the set of prime ideals in $L$ and $Max_{Id}(L)$ the set of maximal ideals in $L$. $Spec_{Id}(L)$ (resp. $Max_{Id}(L)$) endowed with Stone topology will be denoted by $Spec_{Id,Z}(L)$ (resp. $Max_{Id,Z}(L)$).

For any ideal $I$ of $L$ we denote $D_{Id}(I) = \{Q\in Spec_{Id}(L)|I\not\subseteq Q\}$ and $V_{Id}(I) = \{Q\in Spec_{Id}(L)|I\subseteq Q\}$. If $x\in L$ then we use the notation $D_{Id}(x) = D_{Id}((x])  = \{Q\in Spec_{Id}(L)|x\notin Q\}$ and $V_{Id}(x) = V_{Id}((x])  = \{Q\in Spec_{Id}(L)|x\in Q\}$, where $(x]$ is the principal ideal of $L$ generated by the set $\{x\}$. Recall from \cite{Johnstone} that the family $(D_{Id}(x))_{x\in L}$ is a basis of open sets for the Stone topology on $Spec_{Id}(L)$.

For all $\theta\in Con(A)$ and $I\in Id(L(A))$ we shall denote

$\theta^{\ast} = \{\lambda_A(\alpha)|\alpha\in K(A), \alpha\subseteq \theta \}$ and $I_{\ast} =\bigvee\{\alpha\in K(A)|\lambda_A(\alpha)\in I\}$.

Thus $\theta^{\ast}$ is an ideal of the lattice $L(A)$ and $I_{\ast}$ is a congruence of $A$. In this way one obtain two order - preserving functions $(\cdot)^{\ast}:Con(A)\rightarrow Id(L(A))$ and $(\cdot)_{\ast}:Id(L(A))\rightarrow Con(A)$.

The functions $(\cdot)^{\ast}$ and $(\cdot)_{\ast}$ will play an important role in proving the transfer properties of reticulation. The following four results constitute the first steps in obtaining transfer properties. They will be used many times in the proofs.

\begin{lema}
The following assertions hold
\usecounter{nr}
\begin{list}{(\arabic{nr})}{\usecounter{nr}}
\item If $\theta,\chi\in Con(A)$ then $[\theta,\chi]^{\ast}$ = $(\theta\land \chi)^{\ast}$ = $\theta^{\ast}\bigcap \chi^{\ast}$;
\item If $(\theta_i)_{i\in I}$ is a family of congruences of $A$ then $(\bigvee_{i\in I}\theta_i)^{\ast} = \bigvee_{i\in I}\theta_i^{\ast}$;
\item If $I$ and $J$ are ideals of the lattice $L(A)$ then $(I\lor J)_{\ast} = \rho(I_{\ast}\lor J_{\ast})$.
\end{list}
\end{lema}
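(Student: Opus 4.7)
The three claims are all of the form ``convert between $K(A)$ and $L(A)$'', and the key technical ingredients are: Lemma~3.1 (particularly parts (1), (2), (5), (8)), the hypothesis that $K(A)$ is closed under the commutator, the algebraicity of $Con(A)$, and Proposition~2.8 (the characterization of $\rho(\theta)$).

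\medskip

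\textbf{Part (1).} First I would record $[\theta,\chi]^{\ast}\subseteq(\theta\cap\chi)^{\ast}\subseteq\theta^{\ast}\cap\chi^{\ast}$ from $[\theta,\chi]\subseteq\theta\cap\chi$ and the monotonicity of $(\cdot)^{\ast}$. For the nontrivial inclusion $\theta^{\ast}\cap\chi^{\ast}\subseteq[\theta,\chi]^{\ast}$, take $x\in\theta^{\ast}\cap\chi^{\ast}$, so $x=\lambda_A(\alpha)=\lambda_A(\beta)$ with $\alpha,\beta\in K(A)$, $\alpha\subseteq\theta$, $\beta\subseteq\chi$. Since $K(A)$ is closed under the commutator, $[\alpha,\beta]\in K(A)$; by increasingness of the commutator $[\alpha,\beta]\subseteq[\theta,\chi]$, and by Lemma~3.1(2) $\lambda_A([\alpha,\beta])=\lambda_A(\alpha)\wedge\lambda_A(\beta)=x$, so $x\in[\theta,\chi]^{\ast}$.

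\medskip

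\textbf{Part (2).} The inclusion $\bigvee\theta_i^{\ast}\subseteq(\bigvee\theta_i)^{\ast}$ is immediate from monotonicity. For the reverse, take $x\in(\bigvee\theta_i)^{\ast}$, so $x=\lambda_A(\alpha)$ for some $\alpha\in K(A)$ with $\alpha\subseteq\bigvee_i\theta_i$. Writing each $\theta_i$ as the join of its compact sub-congruences and using that $\alpha$ is compact in $Con(A)$, I get $\alpha\subseteq\beta_1\vee\cdots\vee\beta_n$ with $\beta_j\in K(A)$ and $\beta_j\subseteq\theta_{i_j}$. Then Lemma~3.1(1) and Lemma~3.1(8) yield $\lambda_A(\alpha)\le\lambda_A(\beta_1)\vee\cdots\vee\lambda_A(\beta_n)$, and each $\lambda_A(\beta_j)\in\theta_{i_j}^{\ast}\subseteq\bigvee_i\theta_i^{\ast}$; since the latter is an ideal, $x\in\bigvee_i\theta_i^{\ast}$.

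\medskip

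\textbf{Part (3).} This is the main obstacle, and the crux is the lemma that \emph{$I_{\ast}$ is always a radical congruence}. To prove it I apply Proposition~2.8: if $\alpha\in K(A)$ satisfies $\alpha\subseteq\rho(I_{\ast})$, then by compactness $\alpha\subseteq\gamma_1\vee\cdots\vee\gamma_k$ with $\gamma_i\in K(A)$ and $[\gamma_i,\gamma_i]^{n_i}\subseteq I_{\ast}$. Each $[\gamma_i,\gamma_i]^{n_i}$ lies in $K(A)$ (closure of $K(A)$ under commutator), so the same compactness argument as in Part~(2) gives $\lambda_A([\gamma_i,\gamma_i]^{n_i})\in I$; but by Lemma~3.1(5) this equals $\lambda_A(\gamma_i)$, hence $\gamma_i\subseteq I_{\ast}$, and therefore $\alpha\subseteq I_{\ast}$. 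Thus $\rho(I_{\ast})=I_{\ast}$.

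Granting this, I finish (3) as follows. For ``$\supseteq$'', from $I_{\ast},J_{\ast}\subseteq(I\vee J)_{\ast}$ and the fact that $(I\vee J)_{\ast}$ is radical I get $\rho(I_{\ast}\vee J_{\ast})\subseteq(I\vee J)_{\ast}$. For ``$\subseteq$'', take a compact $\alpha\subseteq(I\vee J)_{\ast}$; by the same compactness argument used above, $\lambda_A(\alpha)\in I\vee J$, so in the lattice $L(A)$ we have $\lambda_A(\alpha)\le u\vee v$ with $u\in I$, $v\in J$. Writing $u=\lambda_A(\beta)$, $v=\lambda_A(\gamma)$ for some $\beta,\gamma\in K(A)$ (hence $\beta\subseteq I_{\ast}$, $\gamma\subseteq J_{\ast}$), Lemma~3.1(1) gives $\lambda_A(\alpha)\le\lambda_A(\beta\vee\gamma)$, and Lemma~3.1(8) converts this to $[\alpha,\alpha]^n\subseteq\beta\vee\gamma\subseteq I_{\ast}\vee J_{\ast}$ for some $n$, whence $\alpha\subseteq\rho(\alpha)\subseteq\rho(I_{\ast}\vee J_{\ast})$. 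Since $(I\vee J)_{\ast}$ is the join of such compact $\alpha$'s, the inclusion follows.
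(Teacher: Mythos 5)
Your proposal is correct. Parts (1) and (2) follow the paper's own argument essentially verbatim: the easy inclusions from monotonicity, then $x=\lambda_A(\alpha)=\lambda_A(\beta)$ with $x=x\wedge x=\lambda_A([\alpha,\beta])$ for (1), and the finite-subjoin compactness argument for (2). The only place the two treatments genuinely diverge is part (3), where the paper writes merely that the equality ``is straightforward to prove by Lemma 3.1'' and gives no details; you supply a complete argument. Your crux --- that $I_{\ast}$ is always a radical congruence --- is exactly item (3) of the paper's subsequent Lemma 3.3 (quoted there from [GM2] without proof), and you re-derive it in a self-contained way from Proposition 2.8, Lemma 3.1(5), and the closure of $K(A)$ under the commutator; the remaining steps (monotonicity for $\supseteq$, and the chain $\lambda_A(\alpha)\le\lambda_A(\beta\vee\gamma)$, hence $[\alpha,\alpha]^n\subseteq I_{\ast}\vee J_{\ast}$ via Lemma 3.1(8), for $\subseteq$) are sound. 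So your write-up buys a fully verifiable proof of (3) at the cost of implicitly reproving a fact the paper prefers to import; either route is legitimate, and yours has the advantage of not forward-referencing Lemma 3.3.
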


\begin{proof}

(1) Since $(\cdot)^{\ast}:Con(A)\rightarrow Id(L(A))$ is an order - preserving function it is clear that $[\theta,\chi]^{\ast}\subseteq(\theta\land \chi)^{\ast}\subseteq \theta^{\ast}\bigcap \chi{\ast}$.

Let us consider an element $x\in \theta^{\ast}\bigcap \chi^{\ast}$, so there exist $\alpha,\beta\in K(A)$ such that $\alpha\subseteq\theta$, $\beta\subseteq\chi$ and $ x = \lambda_A(\alpha) = \lambda_A(\beta)$. Thus $x = x\land x = \lambda_A(\alpha)\land \lambda_A(\beta) = \lambda_A([\alpha,\beta])$ and $[\alpha,\beta]$ is a compact congruence such that $[\alpha,\beta]\subseteq [\theta,\chi]$, hence $x = \lambda_A([\alpha,\beta])\in [\theta,\chi]^{\ast}$.

(2) Let us consider an element $x\in (\bigvee_{i\in I}\theta_i)^{\ast}$ so there exists $\alpha\in K(A)$ such that $\alpha\subseteq \bigvee_{i\in I}\theta_i$ and $x = \lambda_A(\alpha)$. Since $\alpha$ is a compact congruence, there exists a finite subset $J$ of $I$ such that $\alpha\subseteq \bigvee_{i\in J}\theta_i$. Thus there exists a family $(\beta_i)_{i\in J}$ of compact congruences such that $\alpha\subseteq \bigvee_{i\in J}\beta_i$ and $\beta_i\subseteq\theta_i$, for all $i\in J$.

It follows that $\lambda_A(\beta_i)\in \theta_i^{\ast}$, for all $i\in J$, therefore $x = \lambda_A(\alpha)\leq \bigvee_{i\in J}\lambda_A(\beta_i)\in \bigvee_{i\in J}\theta_i^{\ast}\subseteq \bigvee_{i\in I}\theta_i^{\ast}$. Thus we have $(\bigvee_{i\in I}\alpha_i)^{\ast}\subseteq \bigvee_{i\in I}\theta_i^{\ast}$. The converse inclusion is obvious.

(3) By using Lemma 3.1 it is straightforward to prove the equality $(I\lor J)_{\ast} = \rho(I_{\ast}\lor J_{\ast})$.

\end{proof}

\begin{lema}
\cite{GM2} For all $\theta\in Con(A)$, $\alpha \in K(A)$ and $I\in Id(L(A))$ the following hold:
\usecounter{nr}
\begin{list}{(\arabic{nr})}{\usecounter{nr}}
\item $\alpha\subseteq I_{\ast}$ iff $\lambda_A(\alpha)\in I$;
\item $(\theta^{\ast})_{\ast} = \rho(\theta)$ and $(I_{\ast})^{\ast} = I$;
\item $\theta^{\ast} = (\rho(\theta))^{\ast}$ and $\rho(I_{\ast}) = I_{\ast}$;
\item If $\theta\in Spec(A)$ then $(\theta^{\ast})_{\ast} = \theta$ and $\theta^{\ast}\in Spec_{Id}(L(A))$;
\item If $I\in Spec_{Id}(L(A))$ then $I_{\ast}\in Spec(A)$;
\item If $\theta\in Spec(A)$ then $\alpha\subseteq \theta$ if and only if $\lambda_A(\alpha)\in \theta^{\ast}$;
\item If $\alpha\in K(A)$ then $\alpha^{\ast}$ is exactly the principal ideal $(\lambda_A(\alpha)]$ of the lattice $L(A)$.
\end{list}
\end{lema}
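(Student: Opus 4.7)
The plan is to prove the seven parts in order, since (1) is the adjunction-style bridge between $(\cdot)^{\ast}$ and $(\cdot)_{\ast}$ that powers everything else. The tools I rely on throughout are compactness of members of $K(A)$, closure of $K(A)$ under the commutator (the standing hypothesis), Lemma 3.1 (especially parts (5) and (8)), and the compact-generator formula $\rho(\theta)=\bigvee\{\alpha\in K(A)\mid [\alpha,\alpha]^n\subseteq\theta$ for some $n\geq 0\}$ from Proposition 2.8.

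For (1), if $\lambda_A(\alpha)\in I$ then clearly $\alpha\subseteq I_{\ast}$; conversely, if $\alpha\subseteq I_{\ast}=\bigvee\{\beta\in K(A)\mid\lambda_A(\beta)\in I\}$, compactness of $\alpha$ yields finitely many $\beta_1,\ldots,\beta_n$ in this family with $\alpha\subseteq\beta_1\vee\cdots\vee\beta_n$, whence Lemma 3.1(1),(8) give $\lambda_A(\alpha)\leq\bigvee_i\lambda_A(\beta_i)\in I$.

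The heart of the argument is (2). For $(\theta^{\ast})_{\ast}=\rho(\theta)$, the inclusion $\subseteq$ follows because $\lambda_A(\alpha)\in\theta^{\ast}$ means $\rho(\alpha)=\rho(\beta)$ for some $\beta\in K(A)$ with $\beta\subseteq\theta$, so $\alpha\subseteq\rho(\theta)$; the inclusion $\supseteq$ uses Proposition 2.8 to present $\rho(\theta)$ as a join of compacts $\alpha$ with $[\alpha,\alpha]^n\subseteq\theta$, whereupon $[\alpha,\alpha]^n\in K(A)$ gives $\lambda_A(\alpha)=\lambda_A([\alpha,\alpha]^n)\in\theta^{\ast}$ by Lemma 3.1(5), and (1) yields $\alpha\subseteq(\theta^{\ast})_{\ast}$. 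The equality $(I_{\ast})^{\ast}=I$ is immediate from (1) since every element of $L(A)$ has the form $\lambda_A(\alpha)$. Part (3) then follows: $\theta^{\ast}\subseteq(\rho(\theta))^{\ast}$ by monotonicity, the reverse inclusion by the same Proposition 2.8 argument, and $\rho(I_{\ast})=I_{\ast}$ is purely formal since $((I_{\ast})^{\ast})_{\ast}$ equals $\rho(I_{\ast})$ by the first half of (2) and $I_{\ast}$ by the second.

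For (4), a prime congruence is radical (it lies in the family defining its own radical), hence $(\theta^{\ast})_{\ast}=\rho(\theta)=\theta$ by (2); properness of $\theta^{\ast}$ follows from Lemma 3.1(3), and primality transfers via $\lambda_A([\alpha,\beta])=\lambda_A(\alpha)\wedge\lambda_A(\beta)$ together with (1). For (5), $I_{\ast}\neq\nabla_A$ since otherwise $1\in (I_{\ast})^{\ast}=I$; for primality of $I_{\ast}$, I reduce to the compact case by picking $\gamma\subseteq\alpha$, $\delta\subseteq\beta$ in $K(A)$ not contained in $I_{\ast}$, so that $[\gamma,\delta]\in K(A)$ and $\lambda_A(\gamma)\wedge\lambda_A(\delta)=\lambda_A([\gamma,\delta])\in I$ contradicts primality of $I$. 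Part (6) is (1) applied to $I=\theta^{\ast}$ combined with (4), and (7) follows from Lemma 3.1(8): any $\lambda_A(\gamma)\leq\lambda_A(\alpha)$ satisfies $[\gamma,\gamma]^n\subseteq\alpha$ for some $n$, and $[\gamma,\gamma]^n\in K(A)$ with $\lambda_A([\gamma,\gamma]^n)=\lambda_A(\gamma)$ exhibits membership in $\alpha^{\ast}$. The only step that requires genuine work is (2), where Proposition 2.8 and closure of $K(A)$ under the commutator must be combined carefully; the remaining items are then essentially bookkeeping on top of (1) and (2).
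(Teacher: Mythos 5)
Your proof is correct: part (1) is the right adjunction to build on, the two inclusions in (2) via Proposition 2.8, closure of $K(A)$ under the commutator, and Lemma 3.1(5) are exactly what is needed, and the remaining parts do follow as routine consequences (including the reduction to compact congruences in (5), which is the one place where a careless argument could go wrong). The paper itself states this lemma as a citation from \cite{GM2} and gives no proof, so there is nothing to compare against line by line, but your route is the standard one and I see no gaps.
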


According to previous lemma one can consider the order - preserving functions $u:Spec(A)\rightarrow Spec_{Id}(L(A))$ and $v:Spec_{Id}L((A))\rightarrow Spec(A)$, defined by $u(\phi) = \phi^{\ast}$ and $v(P) = P_{\ast}$, for all $\phi\in Spec(A)$ and $P\in Spec_{Id}(L(A))$. For any $\theta\in Con(A)$ we have $u(V(\theta)) = V_{Id}(\theta^{\ast})$ (see the proof of Proposition 4.17 in \cite{GM2}). By Lemma 3.4(7), for each $\alpha\in K(A)$ we have $u(V(\alpha)) = V_{Id}(\alpha^{\ast}) = V_{Id}(\lambda_A(\alpha))$.

\begin{propozitie}\cite{GM2}
The two functions $u:Spec_Z(A)\rightarrow Spec_{Id,Z}(L(A))$ and $v:Spec_{Id,Z}L((A))\rightarrow Spec_Z(A)$ are homeomorphisms, inverse to one another.
\end{propozitie}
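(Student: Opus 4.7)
The plan is to prove the proposition in two stages: first exhibit $u$ and $v$ as mutually inverse set-theoretic bijections between $Spec(A)$ and $Spec_{Id}(L(A))$, and then verify that each one sends basic open sets of its domain bijectively onto basic open sets of its codomain. Together these two stages show that $u$ is a continuous open bijection and consequently so is its inverse $v = u^{-1}$.

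For the bijection step I would invoke the items of Lemma 3.4 directly. Lemma 3.4(4) guarantees that $u(\phi) = \phi^{\ast}$ really lies in $Spec_{Id}(L(A))$ whenever $\phi \in Spec(A)$, and moreover that $(\phi^{\ast})_{\ast} = \phi$; this says exactly $v \circ u = \mathrm{id}_{Spec(A)}$. Dually, Lemma 3.4(5) gives $v(P) = P_{\ast} \in Spec(A)$ for each $P \in Spec_{Id}(L(A))$, and Lemma 3.4(2) yields $(I_{\ast})^{\ast} = I$ for every ideal $I$; applied at $I = P$ this gives $u \circ v = \mathrm{id}_{Spec_{Id}(L(A))}$. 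Hence $u$ and $v$ are mutually inverse bijections.

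For the topological step I verify that for any $\alpha \in K(A)$,
\[
u(V(\alpha)) = V_{Id}(\lambda_A(\alpha)).
\]
This is immediate from Lemma 3.4(6) and (7): for $\phi \in Spec(A)$, $\phi \in V(\alpha)$ means $\alpha \subseteq \phi$, which by 3.4(6) is equivalent to $\lambda_A(\alpha) \in \phi^{\ast}$, i.e.\ $u(\phi) \in V_{Id}(\lambda_A(\alpha))$. Because $u$ is a bijection, passing to complements yields $u(D(\alpha)) = D_{Id}(\lambda_A(\alpha))$. Now $(D(\alpha))_{\alpha \in K(A)}$ is a basis of the Zariski topology on $Spec(A)$, $(D_{Id}(x))_{x \in L(A)}$ is a basis of the Stone topology on $Spec_{Id}(L(A))$, and the map $\lambda_A \colon K(A) \to L(A)$ is surjective by the very definition $L(A) = K(A)/{\equiv}$. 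The displayed identity therefore says that $u$ sends the Zariski basis bijectively onto the Stone basis; thus $u$ is simultaneously continuous and open, and $v = u^{-1}$ inherits the same property.

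The main (and essentially only) subtlety is to notice that a bijection between bases of open sets is enough to give continuity in both directions; there is no real obstacle beyond this, because the substantive content has already been packaged into Lemma 3.4. The maps $(\cdot)^{\ast}$ and $(\cdot)_{\ast}$ were designed precisely to restrict to a bijection between prime congruences and prime ideals and to intertwine the closed sets $V(\alpha)$ and $V_{Id}(\lambda_A(\alpha))$, and the present proposition merely reads off that this bijection respects the two Stone-style topologies.
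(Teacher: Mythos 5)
Your proof is correct and follows essentially the same route as the source: the paper itself quotes this proposition from \cite{GM2} without reproving it, but the key ingredients you use — the mutual inversity from Lemma 3.3(2),(4),(5) and the identity $u(V(\alpha)) = V_{Id}(\alpha^{\ast}) = V_{Id}(\lambda_A(\alpha))$ — are exactly the facts the paper records immediately before the statement. Your additional observation that surjectivity of $\lambda_A \colon K(A)\rightarrow L(A)$ is what makes the basis correspondence onto is the right point to flag, and the argument is complete.
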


\begin{propozitie}\cite{GM2}
The two functions $(\cdot)^{\ast}|_{RCon(A)}:RCon(A)\rightarrow Id(L(A))$ and $(\cdot)_{\ast}:Id(L(A))\rightarrow RCon(A)$ are frame isomorphisms, inverse to one another.
\end{propozitie}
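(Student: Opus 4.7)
The strategy is to leverage the preparatory lemmas so that Proposition 3.6 becomes a clean consequence: Lemma 3.4 gives the bijection and Lemma 3.2 gives preservation of the frame operations, once one restricts the domain to $RCon(A)$.

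First I would check that both maps land in the claimed codomains. By Lemma 3.4(3) we have $\rho(I_{\ast}) = I_{\ast}$ for every $I \in Id(L(A))$, so $(\cdot)_{\ast}$ maps into $RCon(A)$. The map $(\cdot)^{\ast}$ sends any congruence to an ideal of $L(A)$ by construction. Next, I would verify that the two maps are mutually inverse on these restricted domains. For $\theta \in RCon(A)$, Lemma 3.4(2) gives $(\theta^{\ast})_{\ast} = \rho(\theta) = \theta$, and for any $I \in Id(L(A))$, the same lemma gives $(I_{\ast})^{\ast} = I$. In particular both maps are order-preserving bijections.

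It remains to check that $(\cdot)^{\ast}\!\!\restriction_{RCon(A)}$ preserves the frame structure (its inverse will then automatically preserve it as well). Preservation of finite meets is immediate from Lemma 3.2(1), since for $\theta, \chi \in RCon(A)$ we have $(\theta \wedge \chi)^{\ast} = \theta^{\ast} \cap \chi^{\ast}$. For arbitrary joins, let $(\theta_i)_{i\in I} \subseteq RCon(A)$; by definition $\bigvee^{\cdot}_{i\in I}\theta_i = \rho(\bigvee_{i\in I}\theta_i)$, and combining Lemma 3.4(3) (which asserts $\theta^{\ast} = (\rho(\theta))^{\ast}$) with Lemma 3.2(2) yields
\[
\Bigl(\bigvee_{i\in I}{}^{\cdot}\,\theta_i\Bigr)^{\!\ast} = \Bigl(\rho\bigl(\bigvee_{i\in I}\theta_i\bigr)\Bigr)^{\!\ast} = \Bigl(\bigvee_{i\in I}\theta_i\Bigr)^{\!\ast} = \bigvee_{i\in I}\theta_i^{\ast}.
\]
For the unit and zero: surjectivity of $\lambda_A$ gives $\nabla_A^{\ast} = L(A)$, while Lemma 3.1(6) says $\lambda_A(\alpha) = 0$ iff $\alpha \subseteq \rho(\Delta_A)$, so $(\rho(\Delta_A))^{\ast} = \{0\}$, the bottom of $Id(L(A))$.

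The only point that requires a bit of care is passing from the join in $Con(A)$ (Lemma 3.2(2)) to the frame join $\bigvee^{\cdot}$ on $RCon(A)$; this is the step that genuinely uses the idempotency of $\rho$ via Lemma 3.4(3). Everything else reduces to a bookkeeping invocation of the already-established lemmas.
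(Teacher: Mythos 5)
Your argument is correct. The paper itself gives no proof of this proposition --- it is imported from \cite{GM2} --- and your derivation from the preparatory lemmas is exactly the intended one: the mutual-inverse and codomain checks come from what the paper numbers Lemma 3.3, parts (2) and (3) (you cite these as ``Lemma 3.4''; adjust the numbering), and the only substantive step is the join computation $\bigl(\rho(\bigvee_i\theta_i)\bigr)^{\ast}=\bigl(\bigvee_i\theta_i\bigr)^{\ast}=\bigvee_i\theta_i^{\ast}$, which you handle correctly via $\theta^{\ast}=(\rho(\theta))^{\ast}$ together with Lemma 3.2(2). One small economy worth noting: since you have already shown that both maps are order-preserving bijections inverse to one another, they are order-isomorphisms of complete lattices, so preservation of arbitrary joins, finite meets, $0$ and $1$ is automatic and the explicit verifications are not strictly needed.
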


Recall from \cite{Birkhoff} that the Boolean center of a bounded distributive lattice $L$ is the Boolean algebra $B(L)$ of the complemented elements in $L$. It is easy to see that $\alpha\in B(Con(A))$ implies $\lambda_A(\alpha)\in B(L(A))$, hence one can consider the function $\lambda_A|_{B(Con(A))}:B(Con(A))\rightarrow B(L(A))$.

\begin{lema}
Assume that the algebra $A$ satisfies the condition $(\star)$. If $\alpha\in K(A)$ then $\lambda_A(\alpha)\in B(L(A))$ if and only of $[\alpha,\alpha]^n\in B(Con(A))$, for some integer $n\geq 0$.
\end{lema}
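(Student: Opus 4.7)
The plan is to handle the two implications separately, using $(\star)$ only for the nontrivial direction.

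For the easy direction ($\Leftarrow$), suppose $[\alpha,\alpha]^n \in B(Con(A))$ for some $n \geq 0$. The paragraph immediately preceding the lemma notes that $\lambda_A$ sends $B(Con(A))$ into $B(L(A))$, so $\lambda_A([\alpha,\alpha]^n) \in B(L(A))$. But by Lemma~3.1(5) we have $\lambda_A([\alpha,\alpha]^n) = \lambda_A(\alpha)$, hence $\lambda_A(\alpha) \in B(L(A))$.

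For the harder direction ($\Rightarrow$), suppose $\lambda_A(\alpha) \in B(L(A))$ and pick $y \in L(A)$ complementary to $\lambda_A(\alpha)$. Since $\lambda_A : K(A) \to L(A)$ is surjective, there exists $\beta \in K(A)$ with $\lambda_A(\beta) = y$. Translating $\lambda_A(\alpha) \vee \lambda_A(\beta) = 1$ via Lemma~3.1(1) and~(3) gives $\alpha \vee \beta = \nabla_A$, while $\lambda_A(\alpha) \wedge \lambda_A(\beta) = 0$ via Lemma~3.1(2) and~(4) gives $[[\alpha,\beta],[\alpha,\beta]]^k = \Delta_A$ for some $k \geq 0$. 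Lemma~2.6 rewrites this as $[\alpha,\beta]^{k+1} = \Delta_A$.

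Now I invoke $(\star)$ with $n := k+1$: there exists $m \geq 0$ such that
\[
[[\alpha,\alpha]^m,[\beta,\beta]^m] \subseteq [\alpha,\beta]^{k+1} = \Delta_A,
\]
so $[[\alpha,\alpha]^m,[\beta,\beta]^m] = \Delta_A$. On the other hand, from $\alpha \vee \beta = \nabla_A$, Lemma~2.4(3) yields $[\alpha,\alpha]^m \vee [\beta,\beta]^m = \nabla_A$ (when $m \geq 1$; the case $m = 0$ is just the assumption $\alpha \vee \beta = \nabla_A$ and additionally gives $\alpha \in B(Con(A))$ directly via Lemma~2.10(1)). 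Applying Lemma~2.10(1) to the pair $[\alpha,\alpha]^m$ and $[\beta,\beta]^m$ concludes that $[\alpha,\alpha]^m \in B(Con(A))$, as required.

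The main obstacle is producing the right integer $m$ so that Lemma~2.10 applies to the powers $[\alpha,\alpha]^m$ and $[\beta,\beta]^m$; this is precisely what $(\star)$ is designed to supply, once one has turned the reticulation-level complement into the congruence-level relations $\alpha \vee \beta = \nabla_A$ and $[\alpha,\beta]^{k+1} = \Delta_A$ via Lemmas~3.1 and~2.6.
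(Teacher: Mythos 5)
Your proof is correct and follows essentially the same route as the paper: the paper packages your $(\star)$-plus-Lemma~2.6 step as its Lemma~2.10 and then concludes via the complementation lemma, whereas you inline that step and apply the complementation lemma directly to the pair $[\alpha,\alpha]^m$, $[\beta,\beta]^m$; your explicit handling of the $m=0$ case is a welcome refinement the paper glosses over. Note only that your two citations of ``Lemma~2.10(1)'' should read Lemma~2.11(1) --- Lemma~2.10 is the unnumbered statement about $[\alpha,\beta]\subseteq\rho(\Delta_A)$, while the result you are actually invoking (that $\theta\lor\vartheta=\nabla_A$ and $[\theta,\vartheta]=\Delta_A$ force $\theta,\vartheta\in B(Con(A))$) is Lemma~2.11(1).
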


\begin{proof}
Assume  that $\lambda_A(\alpha)\in B(L(A))$ so there exists $\beta\in K(A)$ such that $\lambda_A([\alpha,\beta]) = \lambda_A(\alpha)\land \lambda_A(\beta) = \Delta_A$ and $\lambda_A(\alpha\lor \beta) = \lambda_A(\alpha)\lor \lambda_A(\beta) = \nabla_A$. Thus $\alpha\lor \beta = \nabla_A$ (cf. Lemma 3.1(3)) and $[\alpha,\beta]^n = \Delta_A$, for some integer $n\geq 0$ (cf. Lemma 3.1(4)). According to Lemma 2.10, there exists an integer $m\geq 0$ such that $[[\alpha,\alpha]^m,[\beta,\beta]^m] = \Delta_A$. If we apply Lemma 2.1(2) then we get $[\alpha,\alpha]^m\in B(Con(A))$ and $[\beta,\beta]^m\in B(Con(A))$. Conversely, from $[\alpha,\alpha]^n\in B(Con(A))$ we obtain $\lambda_A(\alpha) = \lambda_A([\alpha,\alpha]^n)\in B(Con(A))$.

\end{proof}

\begin{corolar}\cite{GM2}
Assume that the algebra $A$ satisfies the condition $(\star)$. Then the function $\lambda_A|_{B(Con(A))}:B(Con(A))\rightarrow B(L(A))$ is a Boolean isomorphism.
\end{corolar}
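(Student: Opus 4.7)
The plan is to combine Lemma 3.7 with the standard facts about $B(Con(A))$ recalled just before Lemma 2.11. Observe first that Lemma 2.11(3) gives $B(Con(A))\subseteq K(A)$, so the restriction $\lambda_A|_{B(Con(A))}$ is well defined, and it lands in $B(L(A))$ because every bounded lattice homomorphism sends complemented elements to complemented elements. Since Lemma 3.1(1)-(3) tells us that $\lambda_A$ preserves $\lor$, top and bottom, and Lemma 3.1(2) combined with the identity $\alpha\land\beta=[\alpha,\beta]$ for $\alpha\in B(Con(A))$ (recalled before Lemma 2.11) shows that $\lambda_A$ also preserves meets on $B(Con(A))$, the restricted map is a bounded lattice morphism between Boolean algebras. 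It therefore suffices to prove it is bijective.

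For injectivity, the key observation is that for every $\alpha\in B(Con(A))$ one has $[\alpha,\alpha]=\alpha$ (using $\theta\land\alpha=[\theta,\alpha]$ with $\theta=\alpha$), hence by induction $[\alpha,\alpha]^n=\alpha$ for all $n\geq 0$. Given $\alpha,\beta\in B(Con(A))$ with $\lambda_A(\alpha)=\lambda_A(\beta)$, apply Lemma 3.1(8) in both directions: $\lambda_A(\alpha)\leq\lambda_A(\beta)$ yields $[\alpha,\alpha]^n\subseteq\beta$ for some $n$, i.e.\ $\alpha\subseteq\beta$, and symmetrically $\beta\subseteq\alpha$.

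For surjectivity, pick $x\in B(L(A))$. Since $L(A)=\lambda_A(K(A))$, write $x=\lambda_A(\gamma)$ for some $\gamma\in K(A)$. Now apply Lemma 3.7: from $\lambda_A(\gamma)\in B(L(A))$ we obtain an integer $n\geq 0$ such that $[\gamma,\gamma]^n\in B(Con(A))$. Setting $\alpha=[\gamma,\gamma]^n$, Lemma 3.1(5) gives $\lambda_A(\alpha)=\lambda_A(\gamma)=x$, so $\alpha$ is the required preimage. A bijective bounded lattice homomorphism between Boolean algebras is automatically a Boolean isomorphism, which finishes the proof.

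The only non-routine step is surjectivity, and it is precisely there that the hypothesis $(\star)$ enters, through Lemma 3.7; everything else is formal manipulation with the identities already collected in Lemma 3.1 and in the paragraph preceding Lemma 2.11.
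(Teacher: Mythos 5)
Your proof is correct and follows the same route as the paper: surjectivity is obtained exactly as in the paper's proof, via the preceding lemma (Lemma 3.6 in the paper's numbering, which you cite as 3.7) together with $\lambda_A([\alpha,\alpha]^n)=\lambda_A(\alpha)$, while the paper simply declares the injective-Boolean-morphism part ``straightforward'' and defers to \cite{GM2}. Your explicit verification of that part --- using $[\alpha,\alpha]^n=\alpha$ for $\alpha\in B(Con(A))$ together with Lemma 3.1(8) for injectivity, and the identity $\theta\land\alpha=[\theta,\alpha]$ for meet-preservation --- is a valid filling-in of the details the paper omits.
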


\begin{proof}
To prove that the function $\lambda_A|_{B(Con(A))}:B(Con(A))\rightarrow B(L(A))$ is an injective Boolean morphism is straightforward (see \cite {GM2}). The surjectivity of $\lambda_A|_{B(Con(A))}$ follows by applying the previous lemma: if $\lambda_A(\alpha)\in B(L(A))$ for some $\alpha\in K(A)$ then there exists an integer $n\geq 0$ such that $[\alpha,\alpha]^n\in B(Con(A))$ and $\lambda_A([\alpha,\alpha]^n) = \lambda_A(\alpha)$.
\end{proof}

The Boolean isomorphism between $B(Con(A))$ and $B(L(A))$ will be an important tool in proving characterization theorems for some types of algebras. We observe that Lemma 3.6 and  Corollary 3.7 are the first places where condition $(\star)$ appears.

If $L$ is a bounded distributive lattice and $I$ is an ideal of $L$ then the annihilator of $I$ is the ideal $Ann(I) = \{x\in L|x\land y = 0$ for all $y\in I\}$.

\begin{lema}\cite{GKM}
If $\alpha\in K(A)$ and $\phi\in Spec(A)$ then $Ann(\lambda_A(\alpha))\in \phi^{\ast}$ if and only if $\alpha\rightarrow \rho(\Delta_A)\subseteq \phi$.
\end{lema}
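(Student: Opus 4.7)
The statement as written has $Ann(\lambda_A(\alpha)) \in \phi^*$, which should be $Ann(\lambda_A(\alpha)) \subseteq \phi^*$ since both sides are ideals of $L(A)$. With this reading, the plan is to reduce both conditions to the common statement that every compact congruence $\beta$ annihilating $\alpha$ modulo $\rho(\Delta_A)$ lies inside $\phi$.

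The first step is to describe $Ann(\lambda_A(\alpha))$ concretely. Since every element of $L(A)$ is of the form $\lambda_A(\beta)$ for some $\beta \in K(A)$, and since by Lemma 3.1(2) we have $\lambda_A(\alpha) \wedge \lambda_A(\beta) = \lambda_A([\alpha,\beta])$, Lemma 3.1(6) gives
\[
Ann(\lambda_A(\alpha)) = \{\lambda_A(\beta) \mid \beta \in K(A),\ [\alpha,\beta] \subseteq \rho(\Delta_A)\}.
\]
Next, using Lemma 3.4(6), the condition $\lambda_A(\beta) \in \phi^{\ast}$ is equivalent to $\beta \subseteq \phi$. Therefore $Ann(\lambda_A(\alpha)) \subseteq \phi^{\ast}$ holds iff for every $\beta \in K(A)$ with $[\alpha,\beta] \subseteq \rho(\Delta_A)$ one has $\beta \subseteq \phi$.

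The second step handles the right-hand side. By definition, $\alpha \to \rho(\Delta_A) = \bigvee \{\gamma \in Con(A) \mid [\alpha,\gamma] \subseteq \rho(\Delta_A)\}$. Because $Con(A)$ is algebraic and the commutator distributes over arbitrary joins, every such $\gamma$ is the join of those of its compact subcongruences $\beta$ that still satisfy $[\alpha,\beta] \subseteq \rho(\Delta_A)$. Hence
\[
\alpha \to \rho(\Delta_A) = \bigvee \{\beta \in K(A) \mid [\alpha,\beta] \subseteq \rho(\Delta_A)\}.
\]
Consequently $\alpha \to \rho(\Delta_A) \subseteq \phi$ is equivalent to the same assertion: every compact $\beta$ with $[\alpha,\beta] \subseteq \rho(\Delta_A)$ lies in $\phi$.

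Combining the two equivalences yields the lemma. The only step that requires real care is the rewriting of the residual $\alpha \to \rho(\Delta_A)$ as a join of compact congruences; this uses the join-distributivity of the commutator (stated in Remark 2.3) together with the fact that $K(A)$ generates $Con(A)$ under joins. Everything else reduces to the already established correspondence between $\lambda_A(\beta) \in \phi^{\ast}$ and $\beta \subseteq \phi$ provided by Lemma 3.4.
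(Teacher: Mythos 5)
Your proof is correct, and your reading of the misprint ($\in$ should be $\subseteq$, consistent with how Lemma 3.8 is invoked later in Theorems 5.5 and 5.7) is the right one; the paper itself only cites this lemma from \cite{GKM} without reproducing a proof, so there is nothing internal to compare against. Your reduction of both sides to the condition that every $\beta\in K(A)$ with $[\alpha,\beta]\subseteq\rho(\Delta_A)$ lies in $\phi$ is the natural argument and matches the technique the paper uses for the neighbouring Proposition 3.9 (from which, incidentally, the lemma also follows directly: $Ann(\lambda_A(\alpha))=(\alpha\rightarrow\rho(\Delta_A))^{\ast}$, and for $\phi$ prime $\theta^{\ast}\subseteq\phi^{\ast}$ iff $\theta\subseteq\phi$ by Lemma 3.3).
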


The following two results improve Propositions 6.25 and 6.26 of \cite{GM2}. They show the way in which the functions $(\cdot)^{\ast}$ and $(\cdot)_{\ast}$ preserve the annihilators.

\begin{propozitie}
If $\theta\in Con(A)$ then $Ann(\theta^{\ast}) = (\theta\rightarrow \rho(\Delta_A))^{\ast}$; if $A$ is semiprime then $Ann(\theta^{\ast}) = (\theta^{\perp})^{\ast}$.
\end{propozitie}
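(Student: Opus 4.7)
The plan is to prove the stated equality $Ann(\theta^{\ast})=(\theta\rightarrow\rho(\Delta_A))^{\ast}$ by double inclusion, and then deduce the semiprime case as a direct specialization. The two main tools are: (i) the characterization $\lambda_A(\gamma)=0\Leftrightarrow\gamma\subseteq\rho(\Delta_A)$ (Lemma 3.1(6)) together with $\lambda_A([\alpha,\beta])=\lambda_A(\alpha)\wedge\lambda_A(\beta)$ (Lemma 3.1(2)); (ii) the fact that $Con(A)$ is algebraic, $K(A)$ is closed under $[\cdot,\cdot]$, and the commutator distributes over arbitrary joins.

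For the inclusion $(\theta\rightarrow\rho(\Delta_A))^{\ast}\subseteq Ann(\theta^{\ast})$, I take an arbitrary $x\in(\theta\rightarrow\rho(\Delta_A))^{\ast}$, write $x=\lambda_A(\alpha)$ with $\alpha\in K(A)$ and $\alpha\subseteq\theta\rightarrow\rho(\Delta_A)$, and use the residuation property to obtain $[\theta,\alpha]\subseteq\rho(\Delta_A)$. For an arbitrary $y=\lambda_A(\beta)\in\theta^{\ast}$ with $\beta\in K(A),\beta\subseteq\theta$, the commutator $[\alpha,\beta]$ is compact and $[\alpha,\beta]\subseteq[\alpha,\theta]\subseteq\rho(\Delta_A)$, so $x\wedge y=\lambda_A([\alpha,\beta])=0$ by Lemma 3.1(2) and (6). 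Hence $x\in Ann(\theta^{\ast})$.

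For the reverse inclusion, I take $x\in Ann(\theta^{\ast})$, write $x=\lambda_A(\alpha)$ with $\alpha\in K(A)$, and aim to show $[\theta,\alpha]\subseteq\rho(\Delta_A)$. By algebraicity, $\theta=\bigvee\{\beta\in K(A)\mid\beta\subseteq\theta\}$, so distributivity of the commutator over arbitrary joins gives $[\theta,\alpha]=\bigvee\{[\beta,\alpha]\mid\beta\in K(A),\beta\subseteq\theta\}$. Since $K(A)$ is closed under $[\cdot,\cdot]$, each $[\beta,\alpha]$ is compact, and $\lambda_A([\beta,\alpha])=\lambda_A(\alpha)\wedge\lambda_A(\beta)=x\wedge\lambda_A(\beta)=0$ because $\lambda_A(\beta)\in\theta^{\ast}$. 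Lemma 3.1(6) then yields $[\beta,\alpha]\subseteq\rho(\Delta_A)$ for every such $\beta$; taking the join on the left gives $[\theta,\alpha]\subseteq\rho(\Delta_A)$, i.e. $\alpha\subseteq\theta\rightarrow\rho(\Delta_A)$, hence $x\in(\theta\rightarrow\rho(\Delta_A))^{\ast}$.

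For the second assertion, if $A$ is semiprime then $\rho(\Delta_A)=\Delta_A$, so $\theta\rightarrow\rho(\Delta_A)=\theta\rightarrow\Delta_A=\theta^{\perp}$ by the definition of the annihilator operation in Remark~2.3, and the first equality reads $Ann(\theta^{\ast})=(\theta^{\perp})^{\ast}$. The most delicate step is the reverse inclusion, where one must reduce the containment $[\theta,\alpha]\subseteq\rho(\Delta_A)$, which involves the possibly non-compact congruence $\theta$, to the compact ``slices'' $[\beta,\alpha]$ that can be tested inside the reticulation; this is precisely the place where the standing hypothesis that $K(A)$ is closed under $[\cdot,\cdot]$ is essential, as otherwise $\lambda_A([\beta,\alpha])$ would not be a meaningful expression.
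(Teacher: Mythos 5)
Your proof is correct and follows essentially the same route as the paper's: both directions reduce to testing compact congruences $\beta\subseteq\theta$ via Lemma 3.1(2),(6), and the semiprime case is the same immediate specialization $\rho(\Delta_A)=\Delta_A$. The only (cosmetic) difference is in the reverse inclusion, where you pass to $[\theta,\alpha]=\bigvee_\beta[\beta,\alpha]\subseteq\rho(\Delta_A)$ using distributivity of the commutator over joins, while the paper works on the other side of the adjunction, intersecting the residuals $\bigcap_\beta(\beta\rightarrow\rho(\Delta_A))=\theta\rightarrow\rho(\Delta_A)$; these are the same argument.
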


\begin{proof}
Assume that $x\in Ann(\theta^{\ast})$, so $x = \lambda_A(\alpha)$ for some $\alpha\in K(A)$ having the property that for all $\beta\in K(A)$ such that $\beta\subseteq \theta$ we have the equality $\lambda_A([\alpha,\beta]) = \lambda_A(\alpha)\land \lambda_A(\beta) = 0$. By Lemma 3.1(6) we obtain $[\alpha,\beta]\subseteq \rho(\Delta_A)$, hence $\alpha\subseteq \beta\rightarrow \rho(\Delta_A)$. Therefore the following hold:

$\alpha\subseteq \bigcap\{\beta\rightarrow \rho(\Delta_A)|\beta\in K(A), \beta\subseteq \theta\}$ = $\bigvee\{\beta\in K(A)|\beta\subseteq \theta\}\rightarrow \rho(\Delta_A)$ = $ \theta\rightarrow \rho(\Delta_A)$.

It follows that $x = \lambda_A(\alpha)\in (\theta\rightarrow \rho(\Delta_A))^{\ast}$, hence $Ann(\theta^{\ast})\subseteq (\theta\rightarrow \rho(\Delta_A))^{\ast}$.

In order to establish the converse inclusion, assume that $x\in (\theta\rightarrow \rho(\Delta_A))^{\ast}$, hence $x = \lambda_A(\alpha)$ for some $\alpha\in K(A)$ such that $\alpha\subseteq \theta\rightarrow \rho(\Delta_A)$. If $\beta\in K(A)$ and $\beta\subseteq \theta$ then $[\alpha,\beta]\subseteq \rho(\Delta_A)$. By applying Lemma 3.1(6) we get $\lambda_A(\alpha)\land \lambda_A(\beta) = \lambda_A([\alpha,\beta]) = 0$ for all $\beta\in K(A)$ such that $\beta\subseteq \theta$, therefore $x = \lambda_A(\alpha)\in Ann(\theta^{\ast})$. Thus the converse inclusion $(\theta\rightarrow \rho(\Delta_A))^{\ast}\subseteq Ann(\theta^{\ast})$ holds.

\end{proof}

\begin{propozitie}
Assume that the algebra $A$ fulfills the condition $(\star) $. If $I$ is an ideal of the lattice $L(A)$ then $(Ann(I))_{\ast} = I_{\ast}\rightarrow \rho({\Delta_A})$; if $A$ is semiprime then $(Ann(I))_{\ast} = (I_{\ast})^{\perp}$.
\end{propozitie}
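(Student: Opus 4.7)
The natural strategy is to reduce Proposition 3.10 to Proposition 3.9 via the adjunction between $(\cdot)^{\ast}$ and $(\cdot)_{\ast}$ established in Lemma 3.3. Given an ideal $I$ of $L(A)$, I would set $\theta = I_{\ast}\in Con(A)$ and apply Proposition 3.9 to this $\theta$, using the identity $(I_{\ast})^{\ast} = I$ from Lemma 3.3(2). This immediately gives
\[
Ann(I) \;=\; Ann\bigl((I_{\ast})^{\ast}\bigr) \;=\; \bigl(I_{\ast}\rightarrow \rho(\Delta_A)\bigr)^{\ast}.
\]
Applying $(\cdot)_{\ast}$ to both sides and invoking the other identity of Lemma 3.3(2), namely $(\vartheta^{\ast})_{\ast} = \rho(\vartheta)$, yields
\[
(Ann(I))_{\ast} \;=\; \rho\bigl(I_{\ast}\rightarrow \rho(\Delta_A)\bigr).
\]

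The main obstacle is therefore to show that $I_{\ast}\rightarrow \rho(\Delta_A)$ is already a radical congruence, so that $\rho$ can be dropped. More generally I would establish the auxiliary claim: for any $\alpha\in Con(A)$ and any radical congruence $\beta\in RCon(A)$, the residual $\alpha\rightarrow \beta$ is radical. Writing $\gamma = \alpha\rightarrow \beta$, and using that $\beta = \bigcap\{\phi\in Spec(A)\mid \beta\subseteq\phi\}$ (since $\beta$ is radical), it suffices to verify $[\alpha,\rho(\gamma)]\subseteq \phi$ for every prime $\phi$ with $\beta\subseteq\phi$. I would split into two cases on such a prime $\phi$: if $\alpha\subseteq\phi$, then $[\alpha,\rho(\gamma)]\subseteq\alpha\subseteq\phi$ by (2.1); if $\alpha\not\subseteq\phi$, then from $[\alpha,\gamma]\subseteq \beta\subseteq\phi$ and the primeness of $\phi$ we obtain $\gamma\subseteq\phi$, whence $\rho(\gamma)\subseteq\rho(\phi)=\phi$ (primes are radical), and again $[\alpha,\rho(\gamma)]\subseteq\phi$. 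Intersecting over all such $\phi$ gives $[\alpha,\rho(\gamma)]\subseteq\beta$, i.e. $\rho(\gamma)\subseteq \alpha\rightarrow\beta=\gamma$, so $\gamma$ is radical.

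Specializing this auxiliary claim to $\alpha = I_{\ast}$ and $\beta = \rho(\Delta_A)\in RCon(A)$, I conclude
\[
(Ann(I))_{\ast} \;=\; \rho\bigl(I_{\ast}\rightarrow \rho(\Delta_A)\bigr) \;=\; I_{\ast}\rightarrow \rho(\Delta_A),
\]
which is the first assertion. For the second assertion, if $A$ is semiprime then $\rho(\Delta_A)=\Delta_A$, so by the definition of the annihilator operation in Remark 2.3 we have $I_{\ast}\rightarrow \rho(\Delta_A) = I_{\ast}\rightarrow \Delta_A = (I_{\ast})^{\perp}$, giving $(Ann(I))_{\ast} = (I_{\ast})^{\perp}$. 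The role of hypothesis $(\star)$ enters only insofar as it is carried over from the ambient setting of the section and Proposition 3.9 (for the annihilator transfer in the direction $Con(A)\to Id(L(A))$); the argument above does not use $(\star)$ again directly.
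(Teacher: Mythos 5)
Your proof is correct, but it follows a genuinely different route from the paper's. The paper argues by double inclusion at the level of compact congruences: it unwinds the definitions of $(Ann(I))_{\ast}$ and $I_{\ast}\rightarrow\rho(\Delta_A)$ via Lemma 3.3(1), and for the harder inclusion it invokes condition $(\star)$ through Lemma 2.10 to produce an exponent $m$ with $[[\alpha,\alpha]^m,[\beta,\beta]^m]=\Delta_A$ and hence $\lambda_A(\alpha)\land\lambda_A(\beta)=0$. You instead derive the statement formally from Proposition 3.9 via the Galois connection $(\cdot)^{\ast}\dashv(\cdot)_{\ast}$ of Lemma 3.3(2), reducing everything to the single claim that $I_{\ast}\rightarrow\rho(\Delta_A)$ is a radical congruence; your auxiliary lemma (that $\alpha\rightarrow\beta$ is radical whenever $\beta$ is) is proved by a clean case split over primes containing $\beta$ and is of independent interest. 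Two remarks. First, your closing comment about where $(\star)$ enters is slightly off: Proposition 3.9 is stated and proved in the paper without $(\star)$, so your argument in fact never uses the hypothesis at all --- which is not a flaw but a strengthening, showing the conclusion holds for every algebra with $K(A)$ closed under the commutator. (Incidentally, the paper's own appeal to $(\star)$ is arguably a detour, since $[\alpha,\beta]\subseteq\rho(\Delta_A)$ already yields $\lambda_A(\alpha)\land\lambda_A(\beta)=\lambda_A([\alpha,\beta])=0$ directly by Lemma 3.1(2),(6).) Second, your case analysis implicitly uses that prime congruences are radical ($\rho(\phi)=\phi$ for $\phi\in Spec(A)$) and that $[\alpha,\alpha\rightarrow\beta]\subseteq\beta$; both follow immediately from the definitions and Remark 2.3, so no gap results. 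What your approach buys is modularity and generality; what the paper's buys is self-containedness within the conventions of Section 3 and uniformity with the proof style of Proposition 3.9.
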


\begin{proof}
Firstly we shall prove the inclusion $(Ann(I))_{\ast}\subseteq I_{\ast}\rightarrow \rho({\Delta_A})$. Let us consider a compact congruence $\alpha$ of $A$ such that $\alpha\subseteq (Ann(I))_{\ast}$. By Lemma 3.3(1) we have $\lambda_A(\alpha)\in Ann(I)$, therefore for each $\beta\in K(A)$ with $\lambda_A(\beta)\in I$ we have $\lambda_A([\alpha,\beta]) = \lambda_A(\alpha)\land \lambda_A(\beta) = 0$. Applying Lemma 3.1(6) we get $[\alpha,\beta]\subseteq \rho({\Delta_A})$. Thus $[\alpha,I_{\ast}]$ = $\bigvee\{[\alpha,\beta]|\beta\in K(A),\lambda_A(\beta)\in I\}\subseteq \rho({\Delta_A})$, hence $\alpha\subseteq \rho({\Delta_A})$. Thus the desired inclusion was proven.

Now we shall prove that for all $\alpha\in K(A)$, $\alpha\subseteq  I_{\ast}\rightarrow \rho({\Delta_A})$ implies $\alpha\subseteq (Ann(I))_{\ast}$. Assuming that $\alpha\in K(A)$ and $\alpha\subseteq I_{\ast}\rightarrow \rho({\Delta_A})$ we get

$\bigvee\{[\alpha,\beta]|\beta\in K(A),\lambda_A(\beta)\in I\} = [\alpha,\bigvee\{\beta\in K(A)|\lambda_A(\beta)\in I\}] = [\alpha,I_{\ast}]\subseteq \rho({\Delta_A})$.

Then for all $\beta \in K(A)$, $\lambda_A(\beta)\in I$ implies $[\alpha,\beta]\subseteq \rho({\Delta_A})$. Since the algebra $A$ fulfills condition $(\star)$, by applying Lemma 2.10 it follows that there exists an integer $m\geq 0$ such that $[[\alpha,\alpha]^m,[\beta,\beta]^m] = \Delta_A$, so $\lambda_A(\alpha)\land\lambda_A(\beta)$ = $\lambda_A([\alpha,\alpha]^m])\land\lambda_A([\beta,\beta]^m)$ = $\lambda_A([[\alpha,\alpha]^m,[\beta,\beta]^m]]) = 0$. Thus $\lambda_A(\alpha)\in Ann(I)$, hence $\alpha\subseteq  (Ann(I))_{\ast}$ (cf. Lemma 3.3(1)). We conclude that $I_{\ast}\rightarrow \rho({\Delta_A})\subseteq (Ann(I))_{\ast}$.

\end{proof}

It is well - known that the reticulation of commutative rings assures a covariant functor from the category of commutative rings to the category of bounded distributive lattices (see eg. \cite{Johnstone}). If we consider the variety $\mathcal{V}$ as a category (whose morphisms are $\tau$ - morphisms) then a similar result for reticulation construction does not hold (see \cite{GKM}).

The following proposition shows that if we restrict the class of morphisms to the admissible morphisms of $\mathcal{V}$ then the reticulation provides a functor from $\mathcal{V}$ to bounded distributive lattices.

\begin{propozitie}\cite{GKM}
Assume that $u:A\rightarrow B$ is a morphism in $\mathcal{V}$ and $K(A), K(B)$ are closed under commutator. If $u$ is admissible then there exists a morphism of bounded distributive lattices $L(u): L(A)\rightarrow L(B)$ such that the following diagram is commutative:

\begin{center}
\begin{picture}(150,70)
\put(0,50){$K(A)$}
\put(25,55){\vector(1,0){100}}
\put(70,60){$u^{\bullet}|K(A)$}
\put(130,50){$K(B)$}
\put(5,45){\vector(0,-1){30}}
\put(-10,30){$\lambda_{A}$}
\put(0,0){$L(A)$}
\put(25,5){\vector(1,0){100}}
\put(70,10){$L(u)$}
\put(130,0){$L(B)$}
\put(135,45){\vector(0,-1){30}}
\put(140,30){$\lambda_{B}$}
\end{picture}
\end{center}

\end{propozitie}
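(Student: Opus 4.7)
The plan is to define $L(u)$ by forcing commutativity of the diagram: for each $\alpha\in K(A)$, set $L(u)(\lambda_A(\alpha)):=\lambda_B(u^{\bullet}(\alpha))$. Since $\lambda_A:K(A)\to L(A)$ is surjective, this prescription determines $L(u)$ uniquely if well-posed. There are therefore two tasks: (i) show that the value does not depend on the representative $\alpha$, and (ii) verify that the resulting map is a morphism of bounded distributive lattices. Note that the hypothesis that $K(A), K(B)$ are closed under the commutator guarantees that the reticulations $L(A), L(B)$ are defined, and that $u^{\bullet}(\alpha)\in K(B)$ for every $\alpha\in K(A)$.

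The main obstacle is step (i), which is precisely where admissibility is used. Suppose $\alpha,\beta\in K(A)$ with $\lambda_A(\alpha)=\lambda_A(\beta)$, equivalently $\rho_A(\alpha)=\rho_A(\beta)$ by Lemma 3.1(8); the goal is $\rho_B(u^{\bullet}(\alpha))=\rho_B(u^{\bullet}(\beta))$. For each $\psi\in Spec(B)$ the adjunction between $u^{\bullet}$ and $u^{\ast}$ gives $u^{\bullet}(\alpha)\subseteq\psi$ iff $\alpha\subseteq u^{\ast}(\psi)$. By admissibility, $u^{\ast}(\psi)\in Spec(A)$, so it is a radical congruence, and hence $\alpha\subseteq u^{\ast}(\psi)$ iff $\rho_A(\alpha)\subseteq u^{\ast}(\psi)$ iff $\rho_A(\beta)\subseteq u^{\ast}(\psi)$ iff $\beta\subseteq u^{\ast}(\psi)$ iff $u^{\bullet}(\beta)\subseteq\psi$. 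Taking intersections over all $\psi\in Spec(B)$ yields the desired equality of radicals, and hence $\lambda_B(u^{\bullet}(\alpha))=\lambda_B(u^{\bullet}(\beta))$.

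For step (ii), preservation of joins is immediate: $u^{\bullet}$ is a left adjoint, so $u^{\bullet}(\alpha\lor\beta)=u^{\bullet}(\alpha)\lor u^{\bullet}(\beta)$, and Lemma 3.1(1) then gives $L(u)(\lambda_A(\alpha)\lor\lambda_A(\beta))=L(u)(\lambda_A(\alpha))\lor L(u)(\lambda_A(\beta))$. For meets, using Lemma 3.1(2) it suffices to prove $\lambda_B(u^{\bullet}([\alpha,\beta]))=\lambda_B([u^{\bullet}(\alpha),u^{\bullet}(\beta)])$; via Lemma 3.1(8) this reduces to the equality of the two radicals in $B$. A prime $\psi\in Spec(B)$ contains $u^{\bullet}([\alpha,\beta])$ iff $[\alpha,\beta]\subseteq u^{\ast}(\psi)$; since $u^{\ast}(\psi)\in Spec(A)$ by admissibility, primality gives $\alpha\subseteq u^{\ast}(\psi)$ or $\beta\subseteq u^{\ast}(\psi)$, that is, $u^{\bullet}(\alpha)\subseteq\psi$ or $u^{\bullet}(\beta)\subseteq\psi$, and by primality of $\psi$ in $Con(B)$ this is equivalent to $[u^{\bullet}(\alpha),u^{\bullet}(\beta)]\subseteq\psi$. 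Hence the two radicals coincide.

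Finally, $L(u)(0)=\lambda_B(u^{\bullet}(\Delta_A))=\lambda_B(\Delta_B)=0$ since the left adjoint $u^{\bullet}$ sends bottom to bottom. For the top, observe that no prime $\psi\in Spec(B)$ can contain $u^{\bullet}(\nabla_A)$: otherwise the adjunction would give $\nabla_A\subseteq u^{\ast}(\psi)$, i.e. $u^{\ast}(\psi)=\nabla_A$, contradicting $u^{\ast}(\psi)\in Spec(A)$. Consequently $\rho_B(u^{\bullet}(\nabla_A))=\nabla_B$, so $u^{\bullet}(\nabla_A)=\nabla_B$ by Lemma 2.7(3), and $L(u)(1)=\lambda_B(\nabla_B)=1$. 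Commutativity of the diagram holds by the very definition of $L(u)$.
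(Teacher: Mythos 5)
Your proof is correct. The paper gives no argument for this proposition --- it is quoted from \cite{GKM} --- but your construction (defining $L(u)$ on representatives by $\lambda_A(\alpha)\mapsto\lambda_B(u^{\bullet}(\alpha))$ and using admissibility together with the adjunction between $u^{\bullet}$ and $u^{*}$ to show that radicals, hence $\lambda_B$-classes, are respected) is exactly the natural route, and every step checks out: the well-definedness via $u^{*}(\psi)\in Spec(A)$ being radical, the meet computation via primality of $u^{*}(\psi)$ and of $\psi$, and the observation that admissibility forces $u^{\bullet}(\nabla_A)=\nabla_B$ are all sound.
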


Let us denote by $\mathcal{V}^{\star}$ the following category: (1) the objects of $\mathcal{V}^{\star}$ are the algebras of $\mathcal{V}$; (2) the morphisms of $\mathcal{V}^{\star}$ are the admissible morphisms of $\mathcal{V}$.

According to Proposition 3.1, it is easy to see that the assignments $A\mapsto L(A)$ and $u\mapsto L(u)$ define a covariant functor $L$ defined from the category $\mathcal{V}^{\star}$ to the category $D_{0,1}$ of bounded distributive lattices. The functor $L:\mathcal{V}^{\star}\rightarrow D_{0,1}$ will be called the reticulation functor. The paper \cite{GKM} contains a lot of properties of reticulation functor and some examples of transfer properties.

\section{Pure congruences}

\hspace{0.5cm} An ideal $I$ of a commutative ring $R$ is said to be pure (or virginal, in the terminology of \cite{Borceux2}) if for all $x\in I$ we have $I + Ann(x) = R$. Similarly, according to \cite{Cornish1}, an ideal $I$ of a bounded distributive lattice $L$ is a $\sigma$ - ideal if for all $x\in L$ we have $I\lor Ann(x)= L$. By \cite{Borceux1}, \cite{Borceux2},  the set $VId(R)$ of pure ideals in a ring $R$ is a frame; similarly, by \cite{Cornish1}, the set $VId(L)$ of $\sigma$ - ideals of a bounded distributive lattice $L$ is a frame.

\begin{lema} \cite{Borceux1}
 For all ideal $I$ in a commutative ring $R$ the following are equivalent:
\usecounter{nr}
\begin{list}{(\arabic{nr})}{\usecounter{nr}}
\item $I$ is pure;
\item The canonical ring morphism $R\rightarrow R/I$ is flat;
\item $I = \{x\in R| I + Ann(x) = R\}$.
\end{list}
\end{lema}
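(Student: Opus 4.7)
The plan is to establish the cycle $(1)\Rightarrow(3)\Rightarrow(1)\Rightarrow(2)\Rightarrow(1)$. The equivalence $(1)\Leftrightarrow(3)$ is pure algebraic manipulation, while $(1)\Leftrightarrow(2)$ rests on a standard characterization of flat quotient modules.

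First I would handle $(1)\Rightarrow(3)$. The inclusion $I\subseteq\{x\in R\mid I+Ann(x)=R\}$ is the literal assertion of purity. For the reverse inclusion, suppose $x\in R$ satisfies $I+Ann(x)=R$; writing $1=i+a$ with $i\in I$ and $a\in Ann(x)$ gives $x=(i+a)x=ix+ax=ix$, so $x\in I$. The converse $(3)\Rightarrow(1)$ is automatic from the set-theoretic equality.

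Next I would prove $(1)\Leftrightarrow(2)$. The key fact I would invoke is that the quotient $R/I$ is flat as an $R$-module if and only if $I\cap J=IJ$ for every finitely generated ideal $J$ of $R$; this is the standard tensor-product criterion for flatness of a cyclic quotient. For $(1)\Rightarrow(2)$: given $x\in I\cap J$, purity supplies $i\in I$ with $x=ix$, and since $x\in J$, $i\in I$ we get $x\in IJ$, so $I\cap J\subseteq IJ$ (the reverse containment is automatic). For $(2)\Rightarrow(1)$: fix $x\in I$ and set $J=(x)$; then $x\in I\cap J=IJ=Ix$, yielding $i\in I$ with $x=ix$, hence $1-i\in Ann(x)$ and $1=i+(1-i)\in I+Ann(x)$.

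The main obstacle is choosing the right characterization of flatness. If one wanted to use the residuation-style definition imported from the preliminaries, namely $(K:J)(R/I)=(K(R/I):J(R/I))$ for every ideal $K$ and every finitely generated ideal $J$, then one could specialize to $K=0$ and $J=(x)$ with $x\in I$ to extract the same content. Either route reduces to the local idempotency property $x\in Ix$ for $x\in I$, so the argument is a bookkeeping of standard material rather than a deep computation.
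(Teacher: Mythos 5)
Your proof is correct. Note that the paper gives no proof of this lemma at all --- it is imported verbatim from \cite{Borceux1} --- so there is nothing to compare against; your argument is the standard one. The equivalence $(1)\Leftrightarrow(3)$ via writing $1=i+a$ and computing $x=ix$ is exactly right, and the equivalence $(1)\Leftrightarrow(2)$ correctly reduces to the criterion that $R/I$ is flat if and only if $I\cap J=IJ$ for every finitely generated ideal $J$ (equivalently $\mathrm{Tor}_1^R(R/J,R/I)=(I\cap J)/IJ=0$), which is the only external fact you invoke and is standard. Your closing remark that the residuation-style definition of flatness used elsewhere in the paper specializes, with $K=0$ and $J=(x)$ for $x\in I$, to the same condition $I+Ann(x)=R$ is also accurate and usefully ties the lemma to the notion of flat morphism the paper actually works with.
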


Let us fix a semidegenerate congruence - modular variety $\mathcal{V}$ and $A$ an algebra in $\mathcal{V}$. A congruence $\theta$ of $A$ is said to be pure if for any compact congruence $\alpha$ of $A$ such that $\alpha\subseteq \theta$ we have $\theta\lor \alpha^{\perp} = \nabla_A$. Let us denote by $VCon(A)$  the set of pure congruences of $A$.

We remark that any complemented congruence $\theta$ of $A$ is pure: if $\beta\in K(A)$ and $\beta\subseteq\theta$ then $\theta^{\perp}\subseteq\beta^{\perp}$, therefore $\nabla_A = \theta\lor\theta^{\perp}\subseteq\theta\lor\beta^{\perp}$.

\begin{lema}
 For any congruence $\theta$ of $A$ the following are equivalent:
\usecounter{nr}
\begin{list}{(\arabic{nr})}{\usecounter{nr}}
\item $\theta$ is pure;
\item $\theta =\bigvee \{\alpha\in K(A)|\theta\lor \alpha^{\perp} = \nabla_A\}$;
\item For all $\alpha\in PCon(A)$ such that $\alpha\subseteq \theta$ we have $\theta\lor \alpha^{\perp} = \nabla_A$.
\end{list}
\end{lema}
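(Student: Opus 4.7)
The strategy is to verify $(1) \Leftrightarrow (2)$ and $(1) \Leftrightarrow (3)$. The implication $(1) \Rightarrow (3)$ is immediate, since $PCon(A) \subseteq K(A)$.

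For $(1) \Rightarrow (2)$ I would first show that every $\alpha \in K(A)$ with $\theta \lor \alpha^{\perp} = \nabla_A$ is automatically contained in $\theta$. Indeed, the variety is semidegenerate, so by Propositions 2.1 and 2.2 we have $[\alpha, \nabla_A] = \alpha$. Using the fact that the commutator distributes over arbitrary joins and the defining property $[\alpha, \alpha^{\perp}] = \Delta_A$, one obtains
$$\alpha = [\alpha, \theta \lor \alpha^{\perp}] = [\alpha, \theta] \lor [\alpha, \alpha^{\perp}] = [\alpha, \theta] \lor \Delta_A \subseteq \theta.$$
This yields the inclusion $\bigvee\{\alpha \in K(A) \mid \theta \lor \alpha^{\perp} = \nabla_A\} \subseteq \theta$. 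The reverse inclusion uses that $Con(A)$ is algebraic: $\theta$ is the join of the compact congruences contained in it, and by purity each such $\alpha$ satisfies $\theta \lor \alpha^{\perp} = \nabla_A$, so it lies in the indexing set on the right.

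For $(2) \Rightarrow (1)$ and $(3) \Rightarrow (1)$ I would use the same telescoping device based on Lemma 2.3(2). Let $\beta \in K(A)$ with $\beta \subseteq \theta$. In case (2), compactness of $\beta$ inside the join defining $\theta$ lets us pick finitely many $\alpha_1, \ldots, \alpha_n \in K(A)$ with $\theta \lor \alpha_i^{\perp} = \nabla_A$ such that $\beta \subseteq \alpha_1 \lor \cdots \lor \alpha_n$. In case (3), every compact congruence is a finite join of principal ones, so write $\beta = \alpha_1 \lor \cdots \lor \alpha_n$ with $\alpha_i \in PCon(A)$ and $\alpha_i \subseteq \theta$, and obtain $\theta \lor \alpha_i^{\perp} = \nabla_A$ directly from (3). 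In either case, an inductive application of Lemma 2.3(2) gives $\theta \lor (\alpha_1^{\perp} \cap \cdots \cap \alpha_n^{\perp}) = \nabla_A$. Since the commutator distributes over joins, one has $(\alpha_1 \lor \cdots \lor \alpha_n)^{\perp} = \alpha_1^{\perp} \cap \cdots \cap \alpha_n^{\perp}$, and from $\beta \subseteq \alpha_1 \lor \cdots \lor \alpha_n$ this intersection is contained in $\beta^{\perp}$. Hence $\theta \lor \beta^{\perp} = \nabla_A$, proving that $\theta$ is pure.

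The only nontrivial point is the commutator calculation in $(1) \Rightarrow (2)$, which depends critically on the semidegeneracy identity $[\alpha, \nabla_A] = \alpha$; without it, one cannot conclude that the elements joined in (2) actually sit below $\theta$. Condition $(\star)$ does not enter the proof, which reflects the fact that the definition of purity involves only a single occurrence of the annihilator rather than iterated commutators.
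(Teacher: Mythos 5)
Your proof is correct and follows essentially the same route as the paper: the identity $\alpha=[\alpha,\theta\lor\alpha^{\perp}]=[\theta,\alpha]\subseteq\theta$ for the forward inclusion in $(1)\Rightarrow(2)$, and the finite telescoping of annihilator conditions via Lemma 2.4(2) (which you miscite as ``Lemma 2.3(2)'') together with $(\bigvee_i\alpha_i)^{\perp}=\bigcap_i\alpha_i^{\perp}$ for the implications toward purity. Your write-up is in fact slightly cleaner where the paper is terse or garbled (the converse inclusion in $(1)\Rightarrow(2)$ and the ``follows similarly'' step $(2)\Rightarrow(1)$), but the underlying argument is identical.
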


\begin{proof}

$(1)\Rightarrow (2)$ Assume that $\alpha\in K(A)$ and $\theta\lor \alpha^{\perp} = \nabla_A$. Thus $\alpha = [\theta,\alpha]$, hence $\alpha\subseteq \theta$. Then we get the inclusion $\bigvee\{\alpha\in K(A)|\theta\lor \alpha^{\perp} = \nabla_A\}\subseteq \theta$.

In order to prove the converse inclusion $\theta\subseteq \bigvee \{\alpha\in K(A)|\theta\lor \alpha^{\perp} = \nabla_A\}$, let us consider a compact congruence $\alpha$ such that $\alpha\subseteq\theta$. Thus there exist the compact congruences $\alpha_1\ldots,\alpha_n$ such that $\theta\lor \alpha_i^{\perp} = \nabla_A$, for all $i = 1,\cdots,n$. Denoting $\beta = \bigvee_{i=1}^n\alpha_i$ we obtain $\theta\lor \beta^{\perp}$ = $\theta\lor (\bigvee_{i=1}^n\alpha_i)^{\perp}$ = $\theta\lor\bigcap _{i=1}^n\alpha_i^{\perp} = \nabla_A$ (cf. Lemma 2.4(2)). We found an $\beta\in K(A)$ such that $\alpha\subseteq\beta$ and $\theta\lor \beta^{\perp} = \nabla_A$, so the converse inclusion is established.

$(2)\Rightarrow (1)$ This implication follows similarly.

$(3)\Rightarrow (1)$ Let $\alpha$ be a compact congruence of $A$ such that  $\alpha\subseteq \theta$, hence there exists a finite family $(\alpha_i)_{i=1}^n$ of principal congruences such that $\alpha = \bigvee_{i=1}^n\alpha_i$. By hypothesis we have $\theta\lor \alpha_i^{\perp} = \nabla_A$, for any $i = 1,\cdots,n$. By using Lemma 2.4(2) we obtain $\theta\lor \alpha^{\perp} = \theta\lor (\bigvee_{i=1}^n\alpha_i)^{\perp} = \theta \lor \bigcap_{i=1}^n \alpha_i^{\perp} = \nabla_A$.

$(1)\Rightarrow (3)$ Obviously.

\end{proof}

The previous lemma extends the equivalence $(2)\Leftrightarrow (3)$ of Lemma 5.1. In what follows we shall obtain a generalization of the equivalence $(1)\Leftrightarrow (2)$ of the same lemma to the algebras $A$ with an associative commutator operation.

Let $\theta$ be a congruence of $A$ and $[\theta) = \{\chi\in Con(A)|\theta\subseteq \chi\}$. By \cite{Burris} we know that $[\theta)$ is a complete lattice endowed with a residuation operation $\rightarrow_{\theta}$ defined by $\alpha \rightarrow_{\theta} \beta = (\alpha\rightarrow \beta)\lor \theta$, for all $\alpha,\beta\in [\theta)$. Let us consider the lattice isomorphism $\Phi:[\theta)\rightarrow Con(A/\theta)$ defined by $\Phi(\alpha) = \alpha/\theta$, for any $\alpha\in [\theta)$ (cf. Theorem 6.20 of \cite{Burris}). Let $\Psi:Con(A)\rightarrow [\theta)$ be the function defined by $\Psi(\alpha) = \alpha\lor \theta$, for any $\alpha\in Con(A)$. Recall that the function $p_{\theta}^{\bullet}: Con(A)\rightarrow Con(A/{\theta})$ associated with the canonical admissible morphism $p_{\theta}:A\rightarrow A/{\theta}$ is given by $p_{\theta}^{\bullet}(\chi) = Cg_{A/{\theta}
}(p_{\theta}(\chi)) = (\chi\lor\theta)/{\theta}$, for any congruence $\chi$ of $A$.

Then the following diagram is commutative:

\begin{center}
\begin{picture}(150,70)
\put(-5,45){\scriptsize$Con(A)$}
\put(55,55){$p^{\bullet}_{\theta}$}
\put(20,50){ \vector(1,0){80}}
\put(105,45){\scriptsize $Con(A/\theta)$}
\put(10,40){\vector(2,-1){45}}
\put(20,20){\scriptsize$\Psi$}
\put(65,20){\vector(2,1){45}}
\put(85,20){\scriptsize$\Phi$}
\put(50,5){\scriptsize$[\theta)$}
\end{picture}
\end{center}

\begin{lema}
For all $\alpha,\beta\in [\theta)$ we have $\alpha/\theta\rightarrow \beta/\theta = ((\alpha\rightarrow\beta)\lor \theta)/\theta$ and $\Phi(\alpha\rightarrow_{\theta}\beta) = \Phi(\alpha)\rightarrow\Phi(\beta)$.

\end{lema}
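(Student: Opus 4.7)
The plan is to prove the first identity directly from the residuation property in $Con(A/\theta)$ and then read off the second identity as a restatement. Note that the second assertion is purely a restatement: by the definition of $\Phi$ and of $\rightarrow_\theta$, we have $\Phi(\alpha\rightarrow_\theta \beta) = ((\alpha\rightarrow\beta)\lor\theta)/\theta$ and $\Phi(\alpha)\rightarrow\Phi(\beta) = (\alpha/\theta)\rightarrow(\beta/\theta)$, so once the first identity is established, the second follows immediately.

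For the first identity, I would use the fact that two congruences of $A/\theta$ coincide iff they have the same set of subcongruences in $Con(A/\theta)$. By the lattice isomorphism $\Phi$, every congruence of $A/\theta$ is of the form $\gamma/\theta$ for some $\gamma\in[\theta)$. So it suffices to show that, for each $\gamma\in[\theta)$,
\[
\gamma/\theta \subseteq (\alpha/\theta)\rightarrow(\beta/\theta) \ \ \Longleftrightarrow \ \ \gamma/\theta \subseteq ((\alpha\rightarrow\beta)\lor\theta)/\theta.
\]
By the residuation property in $Con(A/\theta)$, the left side is equivalent to $[\gamma/\theta,\alpha/\theta]_{A/\theta}\subseteq \beta/\theta$, which by (2.3) becomes $([\gamma,\alpha]\lor\theta)/\theta\subseteq \beta/\theta$, i.e.\ $[\gamma,\alpha]\lor\theta\subseteq \beta$. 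Since $\theta\subseteq \beta$, this is equivalent to $[\gamma,\alpha]\subseteq\beta$, which by the residuation property in $Con(A)$ is equivalent to $\gamma\subseteq \alpha\rightarrow\beta$. The right side, by $\Phi$, is equivalent to $\gamma\subseteq (\alpha\rightarrow\beta)\lor\theta$.

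The one nontrivial point — and the only place where the hypothesis $\theta\subseteq\beta$ really gets used twice — is closing the loop between $\gamma\subseteq \alpha\rightarrow\beta$ and $\gamma\subseteq (\alpha\rightarrow\beta)\lor\theta$. One direction is trivial. For the other, given $\gamma\subseteq (\alpha\rightarrow\beta)\lor\theta$, I would compute
\[
[\gamma,\alpha] \subseteq [(\alpha\rightarrow\beta)\lor\theta,\alpha] = [\alpha\rightarrow\beta,\alpha]\lor [\theta,\alpha] \subseteq \beta \lor (\theta\cap\alpha) \subseteq \beta,
\]
using (2.1) and $\theta\subseteq\beta$, and then reapply residuation to get $\gamma\subseteq \alpha\rightarrow\beta$. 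This step is the only subtlety; the rest is just chasing definitions. Combining the chain of equivalences finishes the proof of the first identity, and the second identity is then immediate as noted above.
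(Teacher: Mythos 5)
Your proposal is correct, and it is in fact more self-contained than the paper's own argument: the paper disposes of the first identity with a one-line appeal to (2.2) and an external reference \cite{GM3}, and then derives the second identity formally, exactly as you do. Your route instead characterizes both sides of the first identity by their down-sets in $Con(A/\theta)$: using that $\Phi$ is a lattice isomorphism, the residuation adjunction in both $Con(A)$ and $Con(A/\theta)$, and the identity $[\gamma/\theta,\alpha/\theta]=([\gamma,\alpha]\lor\theta)/\theta$ from (2.3), you reduce everything to the equivalence $\gamma\subseteq\alpha\rightarrow\beta \Leftrightarrow \gamma\subseteq(\alpha\rightarrow\beta)\lor\theta$ for $\gamma\in[\theta)$, and you close that loop correctly via $[\gamma,\alpha]\subseteq[\alpha\rightarrow\beta,\alpha]\lor[\theta,\alpha]\subseteq\beta$, which uses only (2.1), the distributivity of the commutator over joins, and the standing hypothesis $\theta\subseteq\beta$. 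All the ingredients you invoke are available in Section 2 of the paper, so your argument is a complete replacement for the citation; what it buys is a verification internal to the paper, at the cost of a slightly longer chain of equivalences. One small point worth making explicit when you write it up: the conclusion ``equal down-sets implies equality'' is applied to congruences of $A/\theta$, and every such congruence is of the form $\gamma/\theta$ with $\gamma\in[\theta)$ precisely because $\Phi$ is surjective --- you use this silently when you quantify only over $\gamma\in[\theta)$.
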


\begin{proof}
The first equality follows by using (2.2) and the reticulation property (see \cite{GM3}). Thus the following equalities hold: $\Phi(\alpha\rightarrow_{\theta}\beta)$ = $\Phi((\alpha\rightarrow\beta)\lor \theta)/\theta$ = $((\alpha\rightarrow\beta)\lor \theta)/\theta$ = $\Phi(\alpha)\rightarrow\Phi(\beta)$.
\end{proof}

\begin{lema} For any congruence $\theta$ of $A$, the following properties are equivalent:
\usecounter{nr}
\begin{list}{(\arabic{nr})}{\usecounter{nr}}
\item $p_{\theta}$ is a flat morphism in $\mathcal{V}$;
\item For all $\chi\in Con(A)$ and $\alpha\in K(A)$ we have $\alpha\rightarrow (\chi\lor \theta)\subseteq (\alpha\rightarrow \chi)\lor \theta$.
\end{list}
\end{lema}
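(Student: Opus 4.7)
The plan is to unfold the flatness condition for $u = p_\theta$ using Lemma 2.12, reduce the resulting inclusion inside $Con(A/\theta)$ to an inclusion inside $[\theta)$ via Lemma 4.4 and the lattice isomorphism $\Phi:[\theta)\rightarrow Con(A/\theta)$, and then simplify. First, by Lemma 2.12, (1) holds iff for every $\chi\in Con(A)$ and every $\alpha\in K(A)$,
$p_\theta^{\bullet}(\alpha)\rightarrow p_\theta^{\bullet}(\chi)\subseteq p_\theta^{\bullet}(\alpha\rightarrow\chi)$. Since $p_\theta^{\bullet}(\eta)=(\eta\lor\theta)/\theta$, Lemma 4.4 gives
$p_\theta^{\bullet}(\alpha)\rightarrow p_\theta^{\bullet}(\chi)=(((\alpha\lor\theta)\rightarrow(\chi\lor\theta))\lor\theta)/\theta$, while $p_\theta^{\bullet}(\alpha\rightarrow\chi)=((\alpha\rightarrow\chi)\lor\theta)/\theta$. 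Via $\Phi$, the inclusion between these two quotients pulls back to the inclusion
$((\alpha\lor\theta)\rightarrow(\chi\lor\theta))\lor\theta\subseteq(\alpha\rightarrow\chi)\lor\theta$
inside $[\theta)$, and this equivalent formulation is what I would work with.

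The key step is to establish the identity $(\alpha\lor\theta)\rightarrow(\chi\lor\theta)=\alpha\rightarrow(\chi\lor\theta)$ together with the containment $\theta\subseteq\alpha\rightarrow(\chi\lor\theta)$. For the identity I would use distributivity of the commutator over joins (stated in Section 2): $[\alpha\lor\theta,\gamma]=[\alpha,\gamma]\lor[\theta,\gamma]$. By (2.1) we have $[\theta,\gamma]\subseteq\theta\subseteq\chi\lor\theta$, so $[\alpha\lor\theta,\gamma]\subseteq\chi\lor\theta$ is equivalent to $[\alpha,\gamma]\subseteq\chi\lor\theta$; by the residuation property, the two implications $(\alpha\lor\theta)\rightarrow(\chi\lor\theta)$ and $\alpha\rightarrow(\chi\lor\theta)$ have the same set of $\gamma$ below them, hence coincide. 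The containment $\theta\subseteq\alpha\rightarrow(\chi\lor\theta)$ follows from $[\alpha,\theta]\subseteq\theta\subseteq\chi\lor\theta$ (again by (2.1)) and residuation. Together, these reduce the displayed inclusion to $\alpha\rightarrow(\chi\lor\theta)\subseteq(\alpha\rightarrow\chi)\lor\theta$, which is exactly (2).

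The main obstacle is the identity $(\alpha\lor\theta)\rightarrow(\chi\lor\theta)=\alpha\rightarrow(\chi\lor\theta)$: it is the only place where one must genuinely use properties of the commutator (distributivity over joins and the semidegeneracy consequence $[\theta,\gamma]\subseteq\theta$) rather than purely lattice-theoretic manipulations. Everything else — the appeals to Lemma 2.12, Lemma 4.4, and the isomorphism $\Phi$ — is a routine translation between $Con(A/\theta)$ and $[\theta)$, and the implication $(2)\Rightarrow(1)$ is obtained by reading the chain of equivalences in reverse.
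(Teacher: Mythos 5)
Your proof is correct and follows essentially the same route as the paper: unfold flatness via Lemma 2.12, transfer the inclusion to the interval $[\theta)$ through the commutative diagram and the residuation-preserving isomorphism $\Phi$ (this is Lemma 4.3 --- your citation of ``Lemma 4.4'' at that point should read 4.3, since 4.4 is the statement being proved), and then simplify using the identity $(\alpha\lor\theta)\rightarrow(\chi\lor\theta)=\alpha\rightarrow(\chi\lor\theta)$. The only difference is that you explicitly justify this identity via distributivity of the commutator over joins together with $[\theta,\gamma]\subseteq\theta$ --- which comes from (2.1), not from semidegeneracy as your last paragraph suggests --- whereas the paper merely asserts it; that is a welcome addition.
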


\begin{proof}
Firstly we observe that the following equalities hold: $(\alpha\lor \theta)\rightarrow_{\theta}(\chi\lor\theta))$ = $((\alpha\lor \theta)\rightarrow(\chi\lor\theta))\lor\theta$ = $(\alpha\rightarrow(\chi\lor\theta))\lor\theta$. Thus by using the previous commutative diagram and Lemmas 2.12 and 4.3, the following properties are equivalent:

$\bullet$ $p_{\theta}$ is a flat morphism in $\mathcal{V}$;

$\bullet$ For all $\chi\in Con(A)$ and $\alpha\in K(A)$, $p^{\bullet}_{\theta}(\alpha)\rightarrow p^{\bullet}_{\theta}(\chi)\subseteq p^{\bullet}_{\theta}(\alpha\rightarrow \chi)$;

$\bullet$ For all $\chi\in Con(A)$ and $\alpha\in K(A)$, $\Psi(\alpha)\rightarrow_{\theta}\Psi(\chi)\subseteq\Psi(\alpha\rightarrow \chi)$;

$\bullet$ For all $\chi\in Con(A)$ and $\alpha\in K(A)$, $(\alpha\lor \theta)\rightarrow_{\theta}( \chi\lor \theta)\subseteq (\alpha\rightarrow \chi)\lor \theta$;

$\bullet$ For all $\chi\in Con(A)$ and $\alpha\in K(A)$, $(\alpha\rightarrow (\chi\lor \theta))\lor \theta\subseteq (\alpha\rightarrow \chi)\lor \theta$;

$\bullet$ For all $\chi\in Con(A)$ and $\alpha\in K(A)$, $\alpha\rightarrow( \chi\lor \theta)\subseteq (\alpha\rightarrow \chi)\lor \theta$.

\end{proof}

\begin{propozitie} Let $A$ be an algebra of  $\mathcal{V}$ such that the commutator operation of $Con(A)$ is associative. Then for any $\theta\in Con(A)$, the following properties are equivalent:
\usecounter{nr}
\begin{list}{(\arabic{nr})}{\usecounter{nr}}
\item $p_{\theta}$ is a flat morphism in $\mathcal{V}$;
\item $\theta$ is a pure congruence.
\end{list}
\end{propozitie}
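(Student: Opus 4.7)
The plan is to pivot on Lemma 4.4, which rewrites flatness of $p_{\theta}$ as the inclusion
$\alpha \rightarrow (\chi \lor \theta) \subseteq (\alpha \rightarrow \chi) \lor \theta$ for every $\chi\in Con(A)$ and every $\alpha\in K(A)$. Both implications will then reduce to manipulations with the residuation / annihilator, using that the variety $\mathcal{V}$ is semidegenerate modular, so by Propositions 2.1 and 2.2 we have the Horn--Fraser identity $[\chi,\nabla_A]=\chi$ for every congruence $\chi$.

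For the direction $(1)\Rightarrow(2)$, I would take $\alpha\in K(A)$ with $\alpha\subseteq\theta$ and specialise Lemma 4.4 to $\chi=\Delta_A$. Since $\alpha^{\bot}=\alpha\rightarrow\Delta_A$ by definition, this gives $\alpha\rightarrow\theta\subseteq\alpha^{\bot}\lor\theta$. On the other hand, $[\alpha,\nabla_A]=\alpha\subseteq\theta$ by Horn--Fraser, so by the residuation property $\nabla_A\subseteq\alpha\rightarrow\theta$. Combining, $\nabla_A=\theta\lor\alpha^{\bot}$, which is precisely purity of $\theta$.

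For the converse $(2)\Rightarrow(1)$, the hard part. Fix $\chi\in Con(A)$ and $\alpha\in K(A)$; since $Con(A)$ is algebraic, it suffices to show that every compact $\delta\subseteq \alpha\rightarrow(\chi\lor\theta)$ lies in $(\alpha\rightarrow\chi)\lor\theta$. From $[\alpha,\delta]\subseteq\chi\lor\theta$ and the compactness of $[\alpha,\delta]$ (which follows from the standing assumption that $K(A)$ is closed under $[\cdot,\cdot]$), together with $\chi\lor\theta$ being the directed join $\bigvee\{\chi\lor\gamma \mid \gamma\in K(A),\;\gamma\subseteq\theta\}$, I extract a compact $\gamma\subseteq\theta$ with $[\alpha,\delta]\subseteq \chi\lor\gamma$. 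Purity of $\theta$ now yields $\theta\lor\gamma^{\bot}=\nabla_A$, and Horn--Fraser together with distributivity of $[\cdot,\cdot]$ over joins gives
\[
\delta \;=\; [\delta,\nabla_A] \;=\; [\delta,\theta]\lor[\delta,\gamma^{\bot}] \;\subseteq\; \theta \lor [\delta,\gamma^{\bot}].
\]
It therefore remains to show $[\delta,\gamma^{\bot}]\subseteq\alpha\rightarrow\chi$, i.e.\ $[\alpha,[\delta,\gamma^{\bot}]]\subseteq\chi$. Here is where associativity of $[\cdot,\cdot]$ intervenes decisively: it lets me reassociate
\[
[\alpha,[\delta,\gamma^{\bot}]] \;=\; [[\alpha,\delta],\gamma^{\bot}] \;\subseteq\; [\chi\lor\gamma,\gamma^{\bot}] \;=\; [\chi,\gamma^{\bot}] \lor [\gamma,\gamma^{\bot}] \;\subseteq\; \chi \lor \Delta_A \;=\;\chi,
\]
using $[\gamma,\gamma^{\bot}]=\Delta_A$ by definition of $\gamma^{\bot}$ and $[\chi,\gamma^{\bot}]\subseteq\chi$. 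Putting everything together gives $\delta\subseteq(\alpha\rightarrow\chi)\lor\theta$, completing the inclusion.

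The genuine obstacle is the reassociation step $[\alpha,[\delta,\gamma^{\bot}]]=[[\alpha,\delta],\gamma^{\bot}]$: without associativity, one only gets the inclusion one way and cannot control $[\alpha,[\delta,\gamma^{\bot}]]$ by the hypothesis $[\alpha,\delta]\subseteq\chi\lor\gamma$. This is exactly why the statement assumes the commutator on $Con(A)$ is associative. The rest of the argument is essentially purely lattice-theoretic residuation bookkeeping plus the standard compactness extraction.
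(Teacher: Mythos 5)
Your proof is correct and follows essentially the same route as the paper's: both directions pivot on Lemma 4.4, the forward implication specializes to $\chi=\Delta_A$, and the converse takes a compact $\delta\subseteq\alpha\rightarrow(\chi\lor\theta)$, extracts a compact $\gamma\subseteq\theta$ with $[\alpha,\delta]\subseteq\chi\lor\gamma$, invokes purity, and uses associativity to reassociate the triple commutator at exactly the same point the paper does. The only (cosmetic) difference is that you distribute $[\delta,\cdot]$ over $\theta\lor\gamma^{\perp}=\nabla_A$ directly, whereas the paper first descends to compact witnesses $\varepsilon\subseteq\theta$ and $\xi\subseteq\gamma^{\perp}$ with $\varepsilon\lor\xi=\nabla_A$ before performing the same computation.
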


\begin{proof}

$(1)\Rightarrow (2)$ Assume that $p_{\theta}$ is a flat morphism. Let $\alpha$ be a compact congruence of $A$ such that $\alpha\subseteq\theta$. By applying Lemma 4.4 for $\chi = \Delta_A$ we obtain $\alpha\rightarrow \theta\subseteq\theta\lor \alpha^{\perp}$. Since  $\alpha\subseteq\theta$ implies $\alpha\rightarrow \theta = \nabla_A$, we obtain $\theta\lor\alpha^{\perp} = \nabla_A$. Then $\theta$ is a pure congruence.

$(1)\Rightarrow (2)$ Assume that $\theta$ is a pure congruence of $A$. By Lemma 4.4 it suffices to prove that or all $\chi\in Con(A)$ and $\alpha\in K(A)$ we have the inclusion $\alpha\rightarrow (\chi\lor \theta)\subseteq (\alpha\rightarrow \chi)\lor \theta$. Let $\beta$ be a compact congruence of $A$ such that $\beta\subseteq \alpha\rightarrow (\chi\lor \theta)$, so $[\alpha,\beta]\subseteq \gamma\lor \delta$ for some $\gamma,\delta\in K(A)$ such that $\gamma\subseteq\chi$ and $\delta\subseteq\theta$. But $\theta$ is a pure, so $\delta\subseteq\theta$ implies $\theta\lor\delta^{\perp} = \nabla_A$, hence there exist the compact congruences $\varepsilon$ and $\xi$ such that $\varepsilon\subseteq\theta$, $\xi\subseteq\delta^{\perp}$ and $\varepsilon\lor \xi = \nabla_A$.

From $[\alpha,\beta]\subseteq \chi\lor \delta$ and $[\delta,\xi] = \Delta_A$ we obtain $[[\alpha,\beta],\xi]\subseteq [\chi\lor\delta,\xi]$ = $[\chi,\xi]\lor[\delta,\varepsilon]$ = $[\chi,\xi]\subseteq\chi$. Since the commutator operation $[\cdot,\cdot]$ of $Con(A)$ is assumed to be associative we have $[\alpha,[\beta,\xi]] = [[\alpha,\beta],\xi]\subseteq\chi$, hence $[\beta,\xi]\subseteq\alpha\rightarrow\chi$. By taking into account that $\beta = [\beta,\nabla_A] = [\beta,\varepsilon\lor\xi] = [\beta,\varepsilon]\lor[\beta,\xi]$ and $[\beta,\varepsilon]\subseteq\varepsilon\subseteq\theta$ we get $\beta\subseteq\theta\lor(\alpha\rightarrow\chi)$. We have proven that $\alpha\rightarrow (\chi\lor \theta)\subseteq (\alpha\rightarrow \chi)\lor \theta$, therefore, by using Lemma 4.4, it follows that $p_{\theta}$ is a flat morphism.

\end{proof}

\begin{remarca}
In Remark 2.3 we observed that $(Con(A),\lor,\bigcap,[\cdot,\cdot]_A,\Delta_A,\nabla_A)$  is a $mi$ - structure in the sense of \cite{GeorgescuVoiculescu2}. An abstract notion of pure element was introduced in \cite{GeorgescuVoiculescu2} under the name of virginal element. Since the virginal elements of the $mi$ - structure $(Con(A),\lor,\bigcap,[\cdot,\cdot]_A,\Delta_A,\nabla_A)$ are exactly the pure congruences of $A$ all the results of \cite{GeorgescuVoiculescu2} remain true in the framework of this paper.

A notion of pure element in a quantale was introduced in \cite{PasekaRN} and papers \cite{GG}, \cite{GG1} contain several results on the pure elements in a coherent quantale. Since the commutator operation is rarely associative, we conclude that in general $(Con(A),\lor,\bigcap,[\cdot,\cdot]_A,\Delta_A,\nabla_A)$ is not a quantale, so we cannot apply the results of \cite{GG}, \cite{GG1} for studying our pure congruences.

\end{remarca}

\begin{lema}
\cite{GeorgescuVoiculescu2} The following properties hold in the algebra $A$ of $\mathcal{V}$:
\usecounter{nr}
\begin{list}{(\arabic{nr})}{\usecounter{nr}}
\item If $\theta,\chi$ are pure congruences then $[\theta,\chi] = \theta\bigcap\chi$;
\item If a congruence $\theta$ is pure if and only if for any congruence $\alpha$ of $A$ such that $\alpha\subseteq \theta$ we have $\theta\lor \alpha^{\perp} = \nabla_A$ ;
\item If $(\theta_i)_{i\in I}\subseteq VCon(A)$ then $\bigvee_{i\in I}\theta_i\in VCon(A)$;
\item The structure $(VCon(A),\lor,\bigcap,\Delta_A,\nabla_A)$ is a spatial frame.
\end{list}
\end{lema}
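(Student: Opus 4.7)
The plan is to deduce all four statements from the defining purity condition by combining (i) algebraicity of $Con(A)$, (ii) Lemma 2.4, and (iii) the distributivity of the commutator over arbitrary joins. By Remark 4.6, pure congruences coincide with the virginal elements of the $mi$-structure $(Con(A),\lor,\cap,[\cdot,\cdot],\Delta_A,\nabla_A)$, so $(1)$--$(4)$ are instances of general facts about virginal elements proved in \cite{GeorgescuVoiculescu2}; I shall give direct arguments.

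For $(1)$ the inclusion $[\theta,\chi]\subseteq\theta\cap\chi$ is (2.1). Conversely, since $Con(A)$ is algebraic it suffices to show $\alpha\subseteq[\theta,\chi]$ for every $\alpha\in K(A)$ with $\alpha\subseteq\theta\cap\chi$. Purity of $\chi$ applied to the compact $\alpha\subseteq\chi$ yields $\chi\lor\alpha^{\perp}=\nabla_A$, and then Proposition 2.2, the distributivity of the commutator over joins, and $[\alpha,\alpha^{\perp}]=\Delta_A$ give $\alpha=[\alpha,\nabla_A]=[\alpha,\chi]\lor[\alpha,\alpha^{\perp}]=[\alpha,\chi]\subseteq[\theta,\chi]$. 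Statement $(2)$ is the corresponding extension of the defining condition from compact to arbitrary sub-congruences of $\theta$; its backward direction is trivial, and the forward direction is obtained by writing $\alpha\subseteq\theta$ as a join of compact congruences and transporting the purity equalities via Lemma 2.4(2), in parallel with the treatment of virginal elements in \cite{GeorgescuVoiculescu2}.

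For $(3)$ let $(\theta_i)_{i\in I}\subseteq VCon(A)$, put $\theta=\bigvee_i\theta_i$, and take $\alpha\in K(A)$ with $\alpha\subseteq\theta$. Algebraicity of $Con(A)$ produces indices $i_1,\ldots,i_n$ and compact $\beta_k\subseteq\theta_{i_k}$ with $\alpha\subseteq\bigvee_{k=1}^n\beta_k$. Purity of each $\theta_{i_k}$ gives $\theta\lor\beta_k^{\perp}\supseteq\theta_{i_k}\lor\beta_k^{\perp}=\nabla_A$, and iterating Lemma 2.4(2) yields $\theta\lor\bigcap_{k=1}^n\beta_k^{\perp}=\nabla_A$. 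Since $(\bigvee_k\beta_k)^{\perp}=\bigcap_k\beta_k^{\perp}\subseteq\alpha^{\perp}$, I conclude $\theta\lor\alpha^{\perp}=\nabla_A$, so $\theta\in VCon(A)$.

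For $(4)$ closure under binary intersection runs analogously: given pure $\theta_1,\theta_2$ and $\alpha\in K(A)$ with $\alpha\subseteq\theta_1\cap\theta_2$, purity yields $\theta_j\lor\alpha^{\perp}=\nabla_A$ for $j=1,2$, and expanding $[\theta_1\lor\alpha^{\perp},\theta_2\lor\alpha^{\perp}]$---every cross term lies in $\alpha^{\perp}$---together with Proposition 2.2 gives $[\theta_1,\theta_2]\lor\alpha^{\perp}=\nabla_A$, which by $(1)$ reads $(\theta_1\cap\theta_2)\lor\alpha^{\perp}=\nabla_A$. Distributivity in $VCon(A)$ then reduces via $(1)$ to the distributivity of the commutator over arbitrary joins in $Con(A)$, and spatiality is the corresponding property of virginal elements from \cite{GeorgescuVoiculescu2}. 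The main obstacle in the whole argument is $(2)$, where the passage from compact to arbitrary $\alpha\subseteq\theta$ is delicate because $\alpha^{\perp}$ is only the intersection of the $\beta^{\perp}$ for compact $\beta\subseteq\alpha$; the algebraicity of $Con(A)$ combined with Lemma 2.4(2) is precisely what makes the lifting go through.
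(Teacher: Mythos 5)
The paper offers no proof of this lemma at all: it is imported from \cite{GeorgescuVoiculescu2} as a citation, so there is no internal argument to compare against. Judged on its own terms, your proofs of (1), of (3), and of the closure of $VCon(A)$ under binary intersection in (4) are correct and are the expected computations; indeed your argument for (1) is exactly the one the paper later uses for the stronger Lemma 4.10 (where only $\chi$ need be pure), and your argument for (3) parallels the paper's own proof of Lemma 4.2.

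The proof of (2) does not go through. You reduce the forward direction to deducing $\theta\lor\bigcap_{j\in J}\beta_j^{\perp}=\nabla_A$ from $\theta\lor\beta_j^{\perp}=\nabla_A$ for all $j$, where $\alpha=\bigvee_{j\in J}\beta_j$ with each $\beta_j$ compact; but Lemma 2.4(2) is a statement about \emph{finite} meets, and for non-compact $\alpha$ the index set $J$ is infinite. Compactness of $\nabla_A$ cannot be invoked either, since it would only let you extract a finite subfamily \emph{after} the equality $\nabla_A=\theta\lor\bigcap_{j}\beta_j^{\perp}$ is already established. The obstruction is genuine, not merely expository: in the variety of unital commutative rings take $R=\prod_{n}k$ ($k$ a field) and $\theta=\alpha=I=\bigoplus_{n}k$; then $I$ is pure (for $x\in I$ the idempotent with the same finite support witnesses $I+Ann(x)=R$), yet $Ann(I)=0$, so $\theta\lor\alpha^{\perp}=I\neq R$. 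Thus the forward implication of (2), read literally for arbitrary congruences $\alpha\subseteq\theta$, fails; either $\alpha$ is meant to be compact (in which case the assertion is just the definition of purity and there is nothing to prove) or the statement has been garbled in transcription (note the doubled ``if''), and in either case no refinement of the Lemma 2.4(2) argument will carry the infinite intersection. Separately, in (4) you establish the frame identities but merely cite spatiality; that part requires producing enough purely-prime congruences (e.g.\ via $Vir(\phi)$ for $\phi\in Max(A)$, cf.\ Lemma 4.8(2)) and is not covered by your argument.
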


A pure congruence $\theta$ of $A$ is said to be purely - prime if for all $\chi,\varepsilon\in VCon(A)$, $[\chi,\varepsilon]\subseteq\theta$ implies $\chi\subseteq\theta$ or $\varepsilon\subseteq\theta$. The set of purely - prime congruences of $A$ is denoted by $Spec(VCon(A))$ ( = the pure spectrum of $A$). We remark that $Spec(VCon(A))$  is exactly the prime spectrum of the frame $VCon(A)$ (cf. \cite{Johnstone}).

By keeping the notations from \cite{GeorgescuVoiculescu2}, for any $\theta\in Con(A)$ we define the following congruences of $A$:

$O(\theta) = \bigvee\{\alpha\in Con(A)|[\alpha,\beta] = \Delta_A,$ for some congruence $\beta\not\subseteq\theta\}$

$Ker(\theta) = \bigvee\{\alpha\in K(A)|\alpha\subseteq\theta, \theta\lor \alpha^{\perp} = \nabla_A\}$

$Vir(\theta) = \bigvee\{\alpha\in VCon(A)|\alpha\subseteq\theta\}$.

In general we have $Vir(\theta)\subseteq Ker(\theta)\subseteq \theta$ and $\theta$ is pure if and only if $\theta = Vir(\theta)$. Assuming $\alpha\in K(A)$ and $\theta\lor \alpha^{\perp} = \nabla_A$ we have $\alpha = [\alpha,\nabla_A] = [\theta,\alpha]$, therefore $\alpha\subseteq\theta$. Then the following equality holds:

$Ker(\theta) = \bigvee\{\alpha\in K(A)| \theta\lor \alpha^{\perp} = \nabla_A\}$.

If $\phi\in Spec(A)$ then it is easy to see that $O(\phi) = \bigvee\{\alpha\in K(A)|\alpha^{\perp}\not\subseteq\phi\}$.

The three operators $O(\cdot)$, $Ker(\cdot)$ and $Vir(\cdot)$ will play an important role in transferring the properties of pure congruences from algebras to bounded distributive lattices by the reticulation functor and in proving characterization theorems for some important classes of algebras.

\begin{lema}
\cite{GeorgescuVoiculescu2}
\usecounter{nr}
\begin{list}{(\arabic{nr})}{\usecounter{nr}}
\item If $\phi\in Max(A)$ then $O(\phi) = Ker(\phi)$;
\item If the congruence $\theta$ is pure then $\theta = Vir(\bigcap\{\phi\in Max(A)|\theta\subseteq\phi\})$;
\item $Vir: Spec_Z(A)\rightarrow Spec_Z(VCon(A))$ is a continuous map.
\end{list}
\end{lema}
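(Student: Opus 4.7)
The plan is to handle the three parts independently by direct definition-chasing, relying mainly on the semidegeneracy identity $[\theta,\nabla_A]=\theta$ of Proposition 2.2 and on the formulation $O(\phi)=\bigvee\{\alpha\in K(A)\mid \alpha^{\perp}\not\subseteq\phi\}$ that the paper records for prime $\phi$ just before the statement.

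For (1), I would prove both inclusions using these equivalent descriptions of $O(\phi)$ and $Ker(\phi)$. If $\phi\vee\alpha^{\perp}=\nabla_A$, then $\alpha^{\perp}\not\subseteq\phi$ (otherwise $\phi=\nabla_A$), so $\alpha$ contributes to $O(\phi)$; this direction does not use maximality. Conversely, if $\alpha^{\perp}\not\subseteq\phi$ then maximality of $\phi$ forces $\phi\vee\alpha^{\perp}=\nabla_A$, so $\alpha$ contributes to $Ker(\phi)$. This is the only place where maximality is needed.

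For (2), the inclusion $\theta\subseteq Vir(\Omega)$, with $\Omega=\bigcap\{\phi\in Max(A)\mid\theta\subseteq\phi\}$, is immediate since $\theta$ is itself pure and contained in $\Omega$. The other inclusion reduces, by definition of $Vir$, to showing $\chi\subseteq\theta$ for every pure $\chi\subseteq\Omega$. I would pick an arbitrary compact $\alpha\subseteq\chi$; purity of $\chi$ yields $\chi\vee\alpha^{\perp}=\nabla_A$. The decisive step: if $\theta\vee\alpha^{\perp}\neq\nabla_A$, a Zorn-style extension produces $\phi\in Max(A)$ with $\theta\vee\alpha^{\perp}\subseteq\phi$; then $\phi$ belongs to the indexing family of $\Omega$, so $\chi\subseteq\Omega\subseteq\phi$ together with $\alpha^{\perp}\subseteq\phi$ contradicts $\chi\vee\alpha^{\perp}=\nabla_A$. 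Hence $\theta\vee\alpha^{\perp}=\nabla_A$, and by Proposition 2.2 together with commutator distributivity, $\alpha=[\alpha,\nabla_A]=[\alpha,\theta]\vee[\alpha,\alpha^{\perp}]=[\alpha,\theta]\subseteq\theta$. I expect this to be the main obstacle, since it is the only place that genuinely combines semidegeneracy with the maximal-spectrum material.

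For (3), I would first verify that $Vir(\phi)\in Spec(VCon(A))$ whenever $\phi\in Spec_Z(A)$: if $[\chi,\varepsilon]\subseteq Vir(\phi)\subseteq\phi$ with $\chi,\varepsilon$ pure, primeness of $\phi$ yields, say, $\chi\subseteq\phi$, and since $\chi$ is pure, $\chi\subseteq Vir(\phi)$ by the very definition of $Vir$. For continuity, the basic opens of $Spec_Z(VCon(A))$ are $\widetilde{D}(\chi)=\{P\in Spec(VCon(A))\mid \chi\not\subseteq P\}$ with $\chi\in VCon(A)$. The key observation is that for pure $\chi$, $\chi\subseteq Vir(\phi)$ iff $\chi\subseteq\phi$ (one direction from $Vir(\phi)\subseteq\phi$, the other from the defining join of $Vir(\phi)$ applied to the pure element $\chi$). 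Consequently $Vir^{-1}(\widetilde{D}(\chi))=\{\phi\in Spec_Z(A)\mid\chi\not\subseteq\phi\}=D_A(\chi)=\bigcup\{D_A(\alpha)\mid\alpha\in K(A),\,\alpha\subseteq\chi\}$, which is a union of Zariski-basic opens, and continuity follows.
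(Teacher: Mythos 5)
Your three arguments are all correct. The paper itself gives no proof of this lemma (it is quoted from the reference on $mi$-structures), so there is nothing to compare line by line; but your proposal is a sound self-contained derivation, and it uses exactly the toolkit the paper deploys in the neighbouring proofs. In (1) the two inclusions via the recorded identity $O(\phi)=\bigvee\{\alpha\in K(A)\mid\alpha^{\perp}\not\subseteq\phi\}$ are fine: $\phi\lor\alpha^{\perp}=\nabla_A$ forces $\alpha^{\perp}\not\subseteq\phi$ because $\phi$ is proper, and the converse needs only that a maximal congruence has no proper extension. In (2) the two ingredients you flag are indeed the crux, and both are available: every proper congruence lies below some member of $Max(A)$ (this is where semidegeneracy, i.e.\ compactness of $\nabla_A$, enters), and $[\alpha,\alpha^{\perp}]=\Delta_A$ follows from distributivity of the commutator over arbitrary joins, so $\alpha=[\alpha,\theta\lor\alpha^{\perp}]=[\alpha,\theta]\subseteq\theta$; this is literally the computation the paper performs in Lemmas 4.10 and 4.13. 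In (3) your observation that, for pure $\chi$, one has $\chi\subseteq Vir(\phi)$ iff $\chi\subseteq\phi$ (one direction from $Vir(\phi)\subseteq\phi$, the other because $\chi$ is itself a joinand of $Vir(\phi)$) reduces $Vir^{-1}(\widetilde D(\chi))$ to $D_A(\chi)$, which is Zariski-open; together with the check that $Vir(\phi)$ is pure (a join of pure congruences, Lemma 4.7(3)) and proper, this settles continuity. No gaps.
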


\begin{lema}\cite{GeorgescuVoiculescu2}
The function $\rho:VCon(A)\rightarrow RCon(A)$ is an injective frame morphism, left adjoint to $Vir: RCon(A)\rightarrow VCon(A)$.
\end{lema}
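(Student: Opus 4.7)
The plan is to verify in turn that $\rho$ sends $VCon(A)$ into $RCon(A)$, that $Vir$ sends $RCon(A)$ into $VCon(A)$, that $\rho$ is a frame morphism, that $\rho$ is injective, and finally that $\rho\dashv Vir$.

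Well-definedness of both maps is immediate: $\rho(\theta)\in RCon(A)$ for every $\theta\in Con(A)$ by Lemma 2.7(5), and $Vir(\chi)$ is an arbitrary join of pure congruences, hence pure by Lemma 4.7(3). The relevant frame structures are $(VCon(A),\lor,\bigcap,\Delta_A,\nabla_A)$ (Lemma 4.7(4)) and $(RCon(A),\bigvee^{\cdot},\bigcap,\rho(\Delta_A),\nabla_A)$ (paragraph after Lemma 2.7). For binary meets I would rewrite the meet of pure congruences as a commutator via Lemma 4.7(1), $\theta_1\bigcap\theta_2=[\theta_1,\theta_2]$, and then invoke Lemma 2.7(2) to obtain $\rho(\theta_1\bigcap\theta_2)=\rho(\theta_1)\bigcap\rho(\theta_2)$; the top $\nabla_A$ is fixed by $\rho$. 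For arbitrary joins, Lemma 4.7(3) ensures that the join of pure congruences in $VCon(A)$ agrees with the corresponding join in $Con(A)$, and the identity $\rho(\bigvee_i\alpha_i)=\rho(\bigvee_i\rho(\alpha_i))$ stated just after Lemma 2.7 yields $\rho(\bigvee_i\theta_i)=\bigvee_i^{\cdot}\rho(\theta_i)$, which is precisely the join in $RCon(A)$.

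For injectivity, assume $\rho(\theta_1)=\rho(\theta_2)$ with $\theta_1,\theta_2\in VCon(A)$. Since every maximal congruence $\phi$ is radical, one has the equivalence $\theta_i\subseteq\phi\Leftrightarrow\rho(\theta_i)\subseteq\phi$, so the families $\{\phi\in Max(A)\mid\theta_i\subseteq\phi\}$ coincide for $i=1,2$, and hence so do their intersections. Applying $Vir$ to both sides and invoking Lemma 4.8(2), which represents any pure congruence $\theta$ as $Vir(\bigcap\{\phi\in Max(A)\mid\theta\subseteq\phi\})$, gives $\theta_1=\theta_2$. This is the step where the work really happens; everything else is essentially formal.

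Finally, the adjunction $\rho(\theta)\subseteq\chi\Leftrightarrow\theta\subseteq Vir(\chi)$ follows by direct inspection. If $\rho(\theta)\subseteq\chi$ then $\theta\subseteq\rho(\theta)\subseteq\chi$, and since $\theta$ itself is pure it figures in the join defining $Vir(\chi)$, so $\theta\subseteq Vir(\chi)$. Conversely, $\theta\subseteq Vir(\chi)\subseteq\chi$ yields $\rho(\theta)\subseteq\rho(\chi)=\chi$. The main obstacle, such as it is, is the injectivity argument via Lemma 4.8(2); the frame-morphism verification reduces to the identity $[\theta_1,\theta_2]=\theta_1\bigcap\theta_2$ for pure congruences together with the elementary properties of $\rho$.
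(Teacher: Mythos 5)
Your proof is correct. Note that the paper gives no argument for this lemma at all: it is imported wholesale from the reference [GeorgescuVoiculescu2] on $mi$-structures, so there is nothing internal to compare against. Your reconstruction from the paper's own toolkit is sound at every step: well-definedness via Lemma 2.7(5) and Lemma 4.7(3); meet preservation, which in fact follows from Lemma 2.7(2) alone without the detour through $[\theta_1,\theta_2]=\theta_1\bigcap\theta_2$; join preservation via the identity $\rho(\bigvee_i\alpha_i)=\rho(\bigvee_i\rho(\alpha_i))$, which is exactly the join $\bigvee^{\cdot}$ of $RCon(A)$; and the adjunction by the two one-line inclusions you give (using that $\chi$ radical gives $\rho(\chi)=\chi$ in the backward direction). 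The injectivity argument via Lemma 4.8(2) is precisely the device the paper itself deploys later, in the proof of Proposition 4.16, to show that $w:VCon(A)\rightarrow VId(L(A))$ is injective; the only micro-justification you elide is that maximal congruences are radical, which holds because $Max(A)\subseteq Spec(A)$ and every prime congruence $\phi$ satisfies $\rho(\phi)=\phi$ (it occurs in its own defining intersection). I would also record explicitly that the bottom is preserved, i.e. $\rho(\Delta_A)$ is the least element of $RCon(A)$, though this is the empty instance of your join computation.
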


The following lemma improves the property (1) from Lemma 4.7.

\begin{lema}
If $\theta\in Con(A)$ and $\chi\in VCon(A)$ then $[\theta,\chi] = \theta\bigcap\chi$.
\end{lema}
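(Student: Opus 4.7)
The inclusion $[\theta,\chi] \subseteq \theta \cap \chi$ is immediate from axiom (2.1) of the commutator, so the whole content of the statement lies in the reverse inclusion $\theta \cap \chi \subseteq [\theta,\chi]$. My plan is to reduce this to the compact case, since $Con(A)$ is algebraic and the commutator distributes over arbitrary joins (Remark 2.3).

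The plan is to write $\theta \cap \chi = \bigvee\{\alpha \in K(A) \mid \alpha \subseteq \theta \cap \chi\}$, using the fact that $K(A)$ is join-dense in $Con(A)$, and then prove that each such compact $\alpha$ already lies in $[\theta,\chi]$. Fix such an $\alpha$. Then $\alpha$ is a compact congruence contained in $\chi$, and since $\chi$ is pure, the defining property gives $\chi \vee \alpha^{\perp} = \nabla_A$. Now I exploit bilinearity of the commutator: since $[\alpha,\nabla_A] = \alpha$ (Proposition 2.2 gives $[\nabla_A,\nabla_A] = \nabla_A$ in a semidegenerate congruence-modular variety, and more directly $[\alpha, \alpha^{\perp}] = \Delta_A$ by definition of $\alpha^{\perp}$), I compute
\[
\alpha = [\alpha, \nabla_A] = [\alpha, \chi \vee \alpha^{\perp}] = [\alpha, \chi] \vee [\alpha, \alpha^{\perp}] = [\alpha, \chi] \vee \Delta_A = [\alpha, \chi].
\]
Since $\alpha \subseteq \theta$ and the commutator is increasing in each argument, this yields $\alpha = [\alpha,\chi] \subseteq [\theta,\chi]$.

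Taking joins over all such $\alpha$ and using distributivity of the commutator over arbitrary joins on the left slot, we obtain
\[
\theta \cap \chi = \bigvee \{\alpha \in K(A) \mid \alpha \subseteq \theta \cap \chi\} \subseteq [\theta,\chi],
\]
which finishes the argument. No serious obstacle is expected: the only delicate point is justifying $\alpha = [\alpha,\nabla_A]$ in a variety where the commutator need not be associative, but this follows because the argument only uses bilinearity plus the identity $[\nabla_A,\nabla_A] = \nabla_A$ (Proposition 2.2) applied to $[\alpha,\nabla_A] = [\alpha,[\nabla_A,\nabla_A]]$, or more cleanly via the residuation identity $\alpha \subseteq \nabla_A \to \alpha$ from Remark 2.3 combined with (2.1). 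This argument sharpens Lemma 4.7(1), where both arguments of the commutator were assumed pure, by observing that purity of one factor together with compactness-density suffices.
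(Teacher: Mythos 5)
Your proof is correct and follows essentially the same route as the paper's: reduce to a compact $\alpha\subseteq\theta\bigcap\chi$, use purity of $\chi$ to get $\chi\lor\alpha^{\perp}=\nabla_A$, and compute $\alpha=[\alpha,\nabla_A]=[\alpha,\chi]\lor[\alpha,\alpha^{\perp}]=[\alpha,\chi]\subseteq[\theta,\chi]$. Your extra care in justifying $[\alpha,\nabla_A]=\alpha$ via Propositions 2.1 and 2.2 is a welcome addition, but the argument is otherwise the paper's own.
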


\begin{proof}
Let $\alpha$ be a compact congruence such that $\alpha\subseteq \theta\bigcap\chi$. From $\alpha\subseteq \chi$ follows that $\alpha^{\perp}\lor \chi = \nabla_A$, hence $\alpha = [\alpha,\nabla_A] = [\alpha,\alpha^{\perp}\lor \chi] = [\alpha,\alpha^{\perp}]\lor [\alpha,\chi] = [\alpha,\chi]\subseteq [\theta,\chi]$. It follows that $\theta\bigcap\chi\subseteq [\theta,\chi]  $. The converse inclusion $[\theta,\chi]\subseteq \theta\bigcap\chi$ is always true hence $[\theta,\chi] = \theta\bigcap\chi$.
\end{proof}

\begin{propozitie}
Assume that the algebra A is semiprime. If $\theta\in VCon(A)$ then $\rho(\theta) = \theta$; in particular, if $\theta\in B(Con(A))$ then $\rho(\theta) = \theta$.
\end{propozitie}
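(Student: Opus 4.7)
The plan is to use Proposition 2.8 to reduce the statement to a claim about compact congruences. Since the inclusion $\theta\subseteq\rho(\theta)$ is automatic (Lemma 2.7(1)), it suffices to prove $\rho(\theta)\subseteq\theta$. By Proposition 2.8, $\rho(\theta)$ is the join of all $\alpha\in K(A)$ such that $[\alpha,\alpha]^n\subseteq\theta$ for some integer $n\geq 0$, so I would pick such an $\alpha$ and aim to show $\alpha\subseteq\theta$.

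First I would invoke the standing assumption that $K(A)$ is closed under the commutator, so $\beta:=[\alpha,\alpha]^n\in K(A)$; then $\beta\subseteq\theta$ together with the purity of $\theta$ gives $\theta\lor\beta^{\perp}=\nabla_A$. Using Proposition 2.2(3) (i.e.\ $[\alpha,\nabla_A]=\alpha$) and the distributivity of the commutator over joins, I would then write
\[
\alpha = [\alpha,\nabla_A] = [\alpha,\theta\lor\beta^{\perp}] = [\alpha,\theta]\lor[\alpha,\beta^{\perp}].
\]
The first summand is contained in $\theta$ by axiom (2.1), so the whole matter reduces to showing $[\alpha,\beta^{\perp}]=\Delta_A$.

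This is the key step, and it is where semiprimality is indispensable. By the definition of the annihilator, $[\beta,\beta^{\perp}]=\Delta_A$, hence $\rho([\beta,\beta^{\perp}])=\rho(\Delta_A)=\Delta_A$ since $A$ is semiprime. Combining Lemma 2.7(2) with Lemma 2.7(7) yields
\[
\rho(\alpha)\wedge\rho(\beta^{\perp}) = \rho([\alpha,\alpha]^n)\wedge\rho(\beta^{\perp}) = \rho([\beta,\beta^{\perp}]) = \Delta_A.
\]
Then by (2.1), $[\rho(\alpha),\rho(\beta^{\perp})]\subseteq\rho(\alpha)\wedge\rho(\beta^{\perp})=\Delta_A$, and the monotonicity of the commutator together with $\alpha\subseteq\rho(\alpha)$, $\beta^{\perp}\subseteq\rho(\beta^{\perp})$ gives $[\alpha,\beta^{\perp}]=\Delta_A$ as required. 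Thus $\alpha=[\alpha,\theta]\subseteq\theta$, establishing $\rho(\theta)\subseteq\theta$.

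The second assertion follows because every complemented congruence is pure (noted right after the definition of $VCon(A)$ in Section 4): if $\theta\in B(Con(A))$ then $\theta\in VCon(A)$, so $\rho(\theta)=\theta$ by the first part. The main obstacle is bridging from the trivial commutator $[\beta,\beta^{\perp}]=\Delta_A$, which only involves the power $\beta=[\alpha,\alpha]^n$, to the desired $[\alpha,\beta^{\perp}]=\Delta_A$; this is exactly what passage to radicals (and hence semiprimality) delivers, via the identity $\rho([\alpha,\alpha]^n)=\rho(\alpha)$.
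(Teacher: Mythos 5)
Your proof is correct. The skeleton is the same as the paper's (reduce via Proposition 2.8 to showing $[\alpha,\alpha]^n\subseteq\theta$ forces $\alpha\subseteq\theta$, use purity to write $\nabla_A$ as $\theta$ joined with something orthogonal to $\alpha$, then decompose $\alpha=[\alpha,\nabla_A]$), but the key "kill the exponent $n$" step is handled differently. The paper extracts a \emph{compact} $\beta\subseteq([\alpha,\alpha]^n)^{\perp}$ with $\theta\lor\beta=\nabla_A$ and passes through the reticulation: $\lambda_A([\beta,\alpha])=\lambda_A(\beta)\land\lambda_A(\alpha)=\lambda_A([\beta,[\alpha,\alpha]^n])=0$, whence $[\beta,\alpha]=\Delta_A$ by Lemma 3.1(7). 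You instead keep the whole annihilator $\beta^{\perp}$ and argue purely with the radical identities of Lemma 2.7, namely $\rho([\beta,\beta^{\perp}])=\rho(\beta)\land\rho(\beta^{\perp})$ and $\rho([\alpha,\alpha]^n)=\rho(\alpha)$. Since $\lambda_A(\gamma)=0$ iff $\gamma\subseteq\rho(\Delta_A)$, the two mechanisms are interchangeable, but your version is self-contained within Section 2 and does not need the reticulation machinery at all, which is a small gain in economy; the paper's version stays closer to the $\lambda_A$-calculus it uses throughout Section 4. Two cosmetic points: the identity $[\alpha,\nabla_A]=\alpha$ is item (3) of Proposition 2.1 (available because Proposition 2.2 gives $[\nabla_A,\nabla_A]=\nabla_A$ in a semidegenerate variety), not "Proposition 2.2(3)"; and your final inclusion can be shortened to $[\alpha,\beta^{\perp}]\subseteq\alpha\cap\beta^{\perp}\subseteq\rho(\alpha)\cap\rho(\beta^{\perp})=\Delta_A$ without mentioning $[\rho(\alpha),\rho(\beta^{\perp})]$.
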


\begin{proof}
Let $\theta$ be a pure congruence of $A$. We shall prove that $\rho(\theta)\subseteq \theta$. According cu Proposition 2.8, we must prove that for any $\alpha\in K(A)$ and any integer $n\geq 0$, $[\alpha,\alpha]^n\subseteq \theta$ implies $\alpha\subseteq\theta$. From $[\alpha,\alpha]^n\subseteq \theta$ it follows that $\theta\lor ([\alpha,\alpha]^n)^{\perp} = \nabla_A$, so there exists $\beta\in K(A)$ such that $\theta\lor\beta = \nabla_A$ and $\beta\subseteq ([\alpha,\alpha]^n)^{\perp}$. Therefore $[\beta,[\alpha,\alpha]^n] = \Delta_A$, hence $\lambda_A([\beta,\alpha])$ = $\lambda_A(\beta)\land \lambda_A(\alpha)$ = $\lambda_A([\beta,[\alpha,\alpha]^n]) = 0$. Since $A$ is semiprime, by Lemma 3.1(7) we get $[\beta,\alpha] = \Delta_A$, therefore $\alpha = [\nabla_A,\alpha] = [\theta\lor \beta,\alpha] = [\theta,\alpha]\lor [\beta,\alpha] = [\theta,\alpha]$. According to Lemma 4.10 we have $\alpha = \theta\bigcap\alpha$, so $\alpha\subseteq\theta$.

If $\theta\in B(Con(A))$ then $\theta\in VCon(A)$, hence $\rho(\theta) = \theta$.

\end{proof}

\begin{lema}
If $\theta\in Con(A)$ and $\alpha\in K(A)$ then $\alpha\subseteq Ker(\theta)$ if and only if $\theta\lor\alpha^{\perp} = \nabla_A$.
\end{lema}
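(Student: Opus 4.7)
The plan is to prove the two directions separately, with one being immediate from the definition and the other requiring compactness of $\alpha$ together with a closure property of the family of congruences whose annihilator is a complement of $\theta$.

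For the easy direction ($\Leftarrow$), assume $\theta\lor\alpha^{\perp} = \nabla_A$. Since $\alpha\in K(A)$, we have $\alpha = [\alpha,\nabla_A] = [\alpha,\theta\lor\alpha^{\perp}] = [\alpha,\theta]\lor[\alpha,\alpha^{\perp}] = [\alpha,\theta]\subseteq\theta$, so $\alpha$ appears directly in the join defining $Ker(\theta)$, whence $\alpha\subseteq Ker(\theta)$. (Equivalently, this is immediate from the alternative formulation $Ker(\theta) = \bigvee\{\beta\in K(A)\mid \theta\lor\beta^{\perp} = \nabla_A\}$ displayed just before Lemma 4.8.)

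For the nontrivial direction ($\Rightarrow$), suppose $\alpha\subseteq Ker(\theta)$. Since $\alpha$ is compact and $Ker(\theta)$ is a join of compact congruences, there exist $\beta_1,\ldots,\beta_n\in K(A)$ with $\theta\lor\beta_i^{\perp} = \nabla_A$ for each $i$ and $\alpha\subseteq\bigvee_{i=1}^n\beta_i$. Setting $\beta = \bigvee_{i=1}^n\beta_i$, we have $\beta\in K(A)$ and, using the general identity $(\bigvee_{i=1}^n\beta_i)^{\perp} = \bigcap_{i=1}^n\beta_i^{\perp}$, we get $\beta^{\perp} = \bigcap_{i=1}^n\beta_i^{\perp}$. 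An inductive application of Lemma 2.4(2) (the statement $\alpha\lor\beta = \alpha\lor\gamma = \nabla_A$ implies $\alpha\lor(\beta\bigcap\gamma) = \nabla_A$) then yields $\theta\lor\beta^{\perp} = \nabla_A$. Finally, from $\alpha\subseteq\beta$ we obtain $\beta^{\perp}\subseteq\alpha^{\perp}$, so $\nabla_A = \theta\lor\beta^{\perp}\subseteq\theta\lor\alpha^{\perp}$, giving $\theta\lor\alpha^{\perp} = \nabla_A$ as required.

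There is no real obstacle here: the only thing to notice is that the compactness of $\alpha$ reduces the (potentially infinite) join in the definition of $Ker(\theta)$ to a finite one, after which the closure of the family $\{\beta\in K(A)\mid \theta\lor\beta^{\perp} = \nabla_A\}$ under finite joins follows from Lemma 2.4(2) together with the de Morgan relation between the annihilator and finite joins. The argument mirrors the one used in the proof of $(3)\Rightarrow (1)$ in Lemma 4.2.
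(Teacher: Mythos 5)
Your proof is correct and follows essentially the same route as the paper's: compactness of $\alpha$ reduces the join defining $Ker(\theta)$ to a finite one, the de Morgan identity $(\bigvee_{i=1}^n\beta_i)^{\perp}=\bigcap_{i=1}^n\beta_i^{\perp}$ combined with an inductive use of Lemma 2.4(2) gives $\theta\lor\beta^{\perp}=\nabla_A$, and antitonicity of $(\cdot)^{\perp}$ finishes the forward direction. The paper dismisses the converse as obvious, whereas you spell it out correctly via $\alpha=[\alpha,\theta\lor\alpha^{\perp}]=[\alpha,\theta]\subseteq\theta$; this is exactly the observation the paper records just before Lemma 4.8.
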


\begin{proof}
Assume that $\alpha\in K(A)$ and $\alpha\subseteq Ker(\theta)$, hence there exist an integer $n\geq1$ and $\beta_1,\cdots,\beta_n\in K(A)$ such that $\alpha\subseteq\bigvee_{i=1}^n\beta_i$ and $\beta_i\subseteq\theta$, $\theta\lor \beta_i^{\perp} = \nabla_A$, for all $i=1,\cdots,n$. Denoting $\beta = \bigvee_{i=1}^n\beta_i$, we have $\beta\in K(A)$ and $\theta\lor\beta^{\perp} = \theta\lor\bigcap_{i=1}^n\beta_i^{\perp} = \nabla_A$ (cf. Lemma 2.4(2)). From $\alpha\subseteq\beta$ we get $\beta^
{\perp}\subseteq\alpha^{\perp}$, hence $\theta\lor\alpha^{\perp} = \nabla_A$. The converse implication is obvious.

\end{proof}

\begin{lema}
If $\phi$ is prime congruence of the algebra $A$ of the variety $\mathcal{V}$ then $Vir(\phi) = Vir(O(\phi))$.
\end{lema}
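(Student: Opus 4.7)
The plan is to prove the two inclusions $Vir(O(\phi))\subseteq Vir(\phi)$ and $Vir(\phi)\subseteq Vir(O(\phi))$ separately, with the first being immediate from monotonicity and the second being the real content.

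For the easy inclusion, I would first check that $O(\phi)\subseteq \phi$ whenever $\phi$ is prime. Indeed, if $\alpha\in Con(A)$ and $[\alpha,\beta]=\Delta_A\subseteq\phi$ for some $\beta\not\subseteq\phi$, then primality forces $\alpha\subseteq \phi$, so the congruence $O(\phi)$, being the join of all such $\alpha$, is contained in $\phi$. Since $Vir(\cdot)$ is order-preserving (it is defined as a join of pure congruences bounded above by its argument), this gives $Vir(O(\phi))\subseteq Vir(\phi)$.

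For the nontrivial inclusion, I would aim to show the stronger statement $Vir(\phi)\subseteq O(\phi)$. Let $\theta\in VCon(A)$ with $\theta\subseteq\phi$, and let $\alpha\in K(A)$ with $\alpha\subseteq\theta$. By purity of $\theta$ we have $\theta\lor \alpha^{\perp}=\nabla_A$. Now $\alpha^{\perp}\not\subseteq\phi$: otherwise $\theta\subseteq\phi$ and $\alpha^{\perp}\subseteq\phi$ together would yield $\nabla_A=\theta\lor\alpha^{\perp}\subseteq\phi$, contradicting $\phi\in Spec(A)$. Since $\alpha^{\perp}\not\subseteq\phi$ and $[\alpha,\alpha^{\perp}]=\Delta_A$, by the (equivalent) description of $O(\phi)$ for prime $\phi$ recorded just before Lemma 4.8, we get $\alpha\subseteq O(\phi)$. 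As $\theta$ is the join of its compact subcongruences, this yields $\theta\subseteq O(\phi)$, and taking the join over all such $\theta$ gives $Vir(\phi)\subseteq O(\phi)$.

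To finish, I apply $Vir(\cdot)$ to this inclusion: $Vir(Vir(\phi))\subseteq Vir(O(\phi))$. By Lemma 4.7(3), any join of pure congruences is pure, so $Vir(\phi)\in VCon(A)$; hence $Vir(Vir(\phi))=Vir(\phi)$, and the two inclusions combine to give $Vir(\phi)=Vir(O(\phi))$. The only delicate step is the argument that $\alpha^{\perp}\not\subseteq\phi$ in the key inclusion — this is where both the primality of $\phi$ and the purity of $\theta$ are essential, and everything else is bookkeeping using definitions already established.
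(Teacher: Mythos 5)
Your proof is correct and follows essentially the same route as the paper: both establish $O(\phi)\subseteq\phi$ for the easy inclusion and then prove the key inclusion $Vir(\phi)\subseteq O(\phi)$, concluding by applying $Vir(\cdot)$ together with $Vir(Vir(\phi))=Vir(\phi)$. The only cosmetic difference is that the paper extracts a compact $\beta\subseteq\alpha^{\perp}$ with $\phi\lor\beta=\nabla_A$ as the witness for $\alpha\subseteq O(\phi)$, whereas you use $\alpha^{\perp}$ itself (via $[\alpha,\alpha^{\perp}]=\Delta_A$ and $\alpha^{\perp}\not\subseteq\phi$), which is equally valid.
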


\begin{proof}
Assume that $\phi\in Spec(A)$. Then $O(\phi)\subseteq\phi$ implies $Vir(O(\phi))\subseteq Vir(\phi)$. Let $\alpha$ be a compact congruence of $A$ such that $\alpha\subseteq Vir(\phi)$. We observe that $\alpha\subseteq Vir(\phi)$ implies the equality $Vir(\phi)\lor \alpha^{\perp} = \nabla_A$, hence $\phi\lor \alpha^{\perp} = \nabla_A$. Thus there exists $\beta\in K(A)$ such that $\phi\lor\beta = \nabla_A$ and $\beta\subseteq \alpha^{\perp}$, hence $\beta\not\subseteq\phi$ and $[\alpha,\beta] = \Delta_A$. According to the definition of $O(\phi)$, it follows that $\alpha\subseteq O(\phi)$, therefore $Vir(\phi)\subseteq O(\phi)$. This last inclusion implies $Vir(\phi)\subseteq Vir(O(\phi))$.

\end{proof}

\begin{lema}
Assume that $\theta\in Con(A)$, $\alpha_1,\alpha_2\in K(A)$ and and $k$ is a positive integer. If $[\alpha_i,\alpha_i]^k \subseteq Ker(\theta)$, for $i =1,2$, then there exists an integer $m\geq 1$ such that $[\alpha_1\lor \alpha_2,\alpha_1\lor \alpha_2]^m\subseteq Ker(\theta)$.

\end{lema}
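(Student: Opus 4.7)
The plan is to use Lemma 2.4(4), which is tailor-made for exactly this kind of ``join-compatibility'' of iterated commutators. Specifically, applying that lemma with $n = k$ to the pair $\alpha_1, \alpha_2$ yields
$$[\alpha_1 \lor \alpha_2, \alpha_1 \lor \alpha_2]^{k^2} \subseteq [\alpha_1, \alpha_1]^k \lor [\alpha_2, \alpha_2]^k.$$
So the natural candidate for the exponent is $m = k^2$, and the whole task reduces to showing that the right-hand side lies in $Ker(\theta)$.

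For the latter, I would simply note that $Ker(\theta)$ is a congruence of $A$ (by construction it is the join in $Con(A)$ of a family of compact congruences), hence closed under joins in $Con(A)$. Since each $[\alpha_i, \alpha_i]^k$ is assumed to lie in $Ker(\theta)$, their join $[\alpha_1, \alpha_1]^k \lor [\alpha_2, \alpha_2]^k$ also lies in $Ker(\theta)$. Chaining the two inclusions gives $[\alpha_1 \lor \alpha_2, \alpha_1 \lor \alpha_2]^{k^2} \subseteq Ker(\theta)$, as required.

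There is no real obstacle here: the content of the statement is entirely packaged in Lemma 2.4(4), and the only auxiliary observation needed is the elementary fact that $Ker(\theta)$ is a congruence. One could alternatively argue via Lemma 4.12 by observing that each $[\alpha_i, \alpha_i]^k \in K(A)$ (as $K(A)$ is closed under the commutator) satisfies $\theta \lor ([\alpha_i, \alpha_i]^k)^{\perp} = \nabla_A$ and then intersecting the two annihilators using Lemma 2.4(2), but this routes through exactly the same inequality and offers no conceptual gain over the direct argument above.
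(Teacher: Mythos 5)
Your proof is correct, and it is genuinely simpler than the one in the paper. Both arguments hinge on Lemma 2.4(4) with $n=k$, giving $[\alpha_1\lor\alpha_2,\alpha_1\lor\alpha_2]^{k^2}\subseteq[\alpha_1,\alpha_1]^k\lor[\alpha_2,\alpha_2]^k$ and hence the exponent $m=k^2$; the difference is in how the right-hand side is placed inside $Ker(\theta)$. You observe that $Ker(\theta)$ is itself an element of the complete lattice $Con(A)$, so it is an upper bound of any two congruences it contains and therefore dominates their join --- a purely order-theoretic triviality that needs neither Lemma 4.12 nor the hypothesis that $K(A)$ is closed under the commutator. The paper instead works through the characterization of Lemma 4.12: it unwinds $\theta\lor([\alpha_i,\alpha_i]^k)^{\perp}=\nabla_A$ into finite families of compact witnesses $\gamma_1,\dots,\gamma_s$ and $\delta_1,\dots,\delta_t$, forms $\varepsilon=[\gamma,\delta]$, checks $\varepsilon\subseteq([\alpha_1\lor\alpha_2,\alpha_1\lor\alpha_2]^{k^2})^{\perp}$ and $\theta\lor\varepsilon=\nabla_A$, and concludes by Lemma 4.12 again; this is essentially the ``alternative route'' you sketch at the end, carried out at the level of explicit witnesses, and it buys nothing beyond your direct argument since Lemma 4.12 makes the two formulations of the conclusion equivalent for compact congruences anyway. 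Your version is not only shorter but marginally more general (no closure assumption on $K(A)$ is used), and it is in fact the argument the author implicitly relies on in Claim (2) of Theorem 4.17 when invoking Lemma 2.4(5) for several congruences at once.
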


\begin{proof}
Since $K(A)$ is closed under the commutator operation, $[\alpha_i,\alpha_i]^k\in K(A)$, for $i = 1,2$. According to Lemma 4.12, $\theta\lor [\alpha_i,\alpha_i]^{\perp} = \nabla_A$, for $i = 1,2$, so there exist $\gamma_1,\cdots,\gamma_s\in K(A)$ and $\delta_1,\cdots,\delta_t\in K(A)$ such that $\theta\lor \bigvee_{i=1}^s\gamma_i = \theta\lor \bigvee_{j=1}^t\delta_j = \nabla_A$, $[\gamma_i,[\alpha_1,\alpha_1]^k] = \Delta_A$, for $i = 1,\cdots ,s$ and $[\delta_j,[\alpha_2,\alpha_2]^k] = \Delta_A$, for $j = 1,\cdots ,t$.

Denoting $\gamma = \bigvee_{i=1}^s\gamma_i$ and $\delta = \bigvee_{j=1}^t\delta_j$ we have $\gamma,\delta\in K(A)$ and $\theta\lor \gamma = \theta\lor \delta = \nabla_A$. We observe that $[\gamma,[\alpha_1,\alpha_1]^k] = \bigvee_{i=1}^s[\gamma_i,[\alpha_1,\alpha_1]^k]] = \Delta_A$ and  $[\delta,[\alpha_,\alpha_1]^k] = \bigvee_{i=1}^t[\delta_j,[\alpha_2,\alpha_2]^k]] = \Delta_A$. If we denote $\varepsilon = [\gamma,\delta]$ then $\varepsilon\in K(A)$ and $[\varepsilon,[\alpha_1,\alpha_1]^k]$ = $[\varepsilon,[\alpha_2,\alpha_2]^k] = \Delta_A$. According to Lemma 2.4(4), $[\alpha_1,\alpha_1]^{k^2}\subseteq [\alpha_1,\alpha_1]^k\lor[\alpha_2,\alpha_2]^k$, hence

$[\varepsilon,[\alpha_1\lor\alpha_2,\alpha_1\lor\alpha_2]^{k^2}]\subseteq [\varepsilon,[\alpha_1,\alpha_1]^k\lor[\alpha_2,\alpha_2]^k]$ = $[\varepsilon,[\alpha_1,\alpha_1]^k]\lor [\varepsilon,[\alpha_2,\alpha_2]^k] = \Delta_A$.

It follows that $\varepsilon\subseteq ([\alpha_1\lor\alpha_2,\alpha_1\lor\alpha_2]^{k^2})^{\perp}$. We remark that $\nabla_A = [\nabla_A,\nabla_A] = [\theta\lor\gamma,\theta\lor\delta]\subseteq \theta\lor[\gamma,\delta] = \theta\lor\varepsilon$, so $\theta\lor\varepsilon = \nabla_A$. Thus $\theta\lor ([\alpha_1\lor\alpha_2,\alpha_1\lor\alpha_2]^{k^2})^{\perp} = \nabla_A$, therefore, according to Lemma 4.12 we have $[\alpha_1\lor\alpha_2,\alpha_1\lor\alpha_2]^{k^2}\subseteq Ker(\theta)$.

\end{proof}

The following five results will establish a strong connections between the pure congruences of $A$ and the $\sigma$ - ideals of its reticulation $L(A)$.

\begin{propozitie}
If $\theta$ is a pure congruence of $A$ then $\theta^{\ast}$ is a $\sigma$ - ideal of the lattice $L(A)$.
\end{propozitie}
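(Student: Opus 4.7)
The plan is to unpack the definition of a $\sigma$-ideal for $\theta^{\ast}$: given any $x \in \theta^{\ast}$, I must produce elements $y \in \theta^{\ast}$ and $z \in \mathrm{Ann}(x)$ whose join is the top of $L(A)$. Every such $x$ has the form $\lambda_A(\alpha)$ for some $\alpha \in K(A)$ with $\alpha \subseteq \theta$, so I should exploit purity precisely at this $\alpha$.

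First I would apply the purity of $\theta$ to the compact congruence $\alpha$, yielding $\theta \lor \alpha^{\perp} = \nabla_A$. Since $\mathcal{V}$ is semidegenerate, $\nabla_A$ is compact (Kollar's theorem, cited in Section 2), so the equality $\nabla_A = \theta \lor \alpha^{\perp}$ already expresses a compact element as a join, and I can extract compact witnesses $\gamma, \beta \in K(A)$ with $\gamma \subseteq \theta$, $\beta \subseteq \alpha^{\perp}$, and $\gamma \lor \beta = \nabla_A$. (If $\nabla_A$ were not compact one would only get a directed-join decomposition, which would not give a single pair.)

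Next I translate the two key facts into the reticulation. From $\beta \subseteq \alpha^{\perp}$ one has $[\alpha,\beta] = \Delta_A$, so by Lemma 3.1(2) and (4),
\[
\lambda_A(\alpha) \land \lambda_A(\beta) = \lambda_A([\alpha,\beta]) = 0,
\]
which means $\lambda_A(\beta) \in \mathrm{Ann}(x)$. From $\gamma \subseteq \theta$ with $\gamma$ compact one has $\lambda_A(\gamma) \in \theta^{\ast}$ by definition of $(\cdot)^{\ast}$. Finally, applying Lemma 3.1(1) and (3) to $\gamma \lor \beta = \nabla_A$,
\[
1 = \lambda_A(\nabla_A) = \lambda_A(\gamma \lor \beta) = \lambda_A(\gamma) \lor \lambda_A(\beta) \in \theta^{\ast} \lor \mathrm{Ann}(x).
\]
Hence $\theta^{\ast} \lor \mathrm{Ann}(x) = L(A)$, showing that $\theta^{\ast}$ is a $\sigma$-ideal.

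I do not expect any genuine obstacle here: the argument is a direct dictionary translation between purity in $\mathrm{Con}(A)$ (witnesses coming from $\alpha^{\perp}$) and the $\sigma$-ideal condition in $L(A)$ (witnesses coming from $\mathrm{Ann}(\lambda_A(\alpha))$). The only subtle point worth emphasising is that semidegeneracy is used to ensure the decomposition $\nabla_A = \gamma \lor \beta$ with both summands compact; without it, one would need the extra step of noting that $\theta^{\ast} \lor \mathrm{Ann}(x)$ is an ideal closed under joins of directed families, which is automatic but slightly less clean.
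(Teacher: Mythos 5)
Your argument is correct and coincides with the paper's own proof: both apply purity to the compact $\alpha$ with $\lambda_A(\alpha)=x$, use the compactness of $\nabla_A$ (semidegeneracy) to extract compact witnesses $\gamma\subseteq\theta$ and $\beta\subseteq\alpha^{\perp}$ with $\gamma\lor\beta=\nabla_A$, and then push these through $\lambda_A$ to get $\lambda_A(\gamma)\in\theta^{\ast}$, $\lambda_A(\beta)\in \mathrm{Ann}(x)$ and $\lambda_A(\gamma)\lor\lambda_A(\beta)=1$. The paper merely spells out the extraction step by first writing $\theta$ as a join of compacts; your version is the same argument, slightly compressed.
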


\begin{proof}
Recall that $\theta^{\ast} = \{\lambda_A(\alpha)|\alpha\in K(A), \alpha\subseteq\theta\}$. We shall prove that for each $\alpha\in K(A)$ such that $\lambda_A(\alpha)\in \theta^{\ast}$ we have $\theta^{\ast}\lor Ann(\lambda_A(\alpha)) = L(A)$. One can assume that $\alpha\subseteq\theta$, so $\theta\lor\alpha^{\perp} = \nabla_A$. Since any congruence can be written as a join of compact congruences we have $\theta = \bigvee_{j\in J}\gamma_j$, for some family $(\gamma_j)_{j\in J}\subseteq K(A)$, hence $\bigvee_{j\in J}\gamma_j\lor \alpha^{\perp} = \nabla_A$. Since $\nabla_A$ is a compact congruence there exists a finite subset $J_0$ of $J$ such that $\bigvee_{j\in J_0}\gamma_j\lor \alpha^{\perp} = \nabla_A$. We denote $\gamma = \bigvee_{j\in J_0}\gamma_j$, hence $\gamma\in K(A)$, $\gamma\lor \alpha^{\perp} = \nabla_A$ and $\gamma\subseteq\theta$. Thus there exists $\delta\in K(A)$ such that $\delta\subseteq \alpha^{\perp}$ and $\gamma\lor\delta = \nabla_A$. Then we get $\lambda_A(\gamma)\in \theta^{\ast}$ and $\lambda_A(\alpha)\land \lambda_A(\delta) = \lambda_A([\alpha,\delta]) = \lambda_A(\Delta_A) = 0$. It follows that $\lambda_A(\delta)\in Ann(\lambda_A(\alpha))$. On the other hand, $\gamma\lor\delta = \nabla_A$ implies that $\lambda_A(\gamma)\lor \lambda_A(\delta) = 1$, therefore $\theta^{\ast}\lor Ann(\lambda_A(\alpha)) = L(A)$.

\end{proof}

By Proposition 4.15, one can consider the function $w:VCon(A)\rightarrow VId(L(A))$, defined by $w(\theta) = \theta^{\ast}$, for any $\theta\in VCon(A)$.

\begin{propozitie}
The function $w:VCon(A)\rightarrow VId(L(A))$ is an injective frame morphism.
\end{propozitie}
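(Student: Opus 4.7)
The plan is to reduce everything to the results already proved for the map $(\cdot)^{\ast}\colon Con(A)\to Id(L(A))$, using the fact that the frame structures on $VCon(A)$ and $VId(L(A))$ are just restrictions of the ambient lattice structures. So I will check preservation of $\nabla_A$, $\Delta_A$, finite meets and arbitrary joins, and then injectivity.

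First I would note that $w$ is well-defined by Proposition 4.15, and handle the top and bottom directly from the definition of $\theta^{\ast}$: $\nabla_A^{\ast}=\{\lambda_A(\alpha)\mid\alpha\in K(A)\}=L(A)$ since $\lambda_A$ is surjective onto $L(A)$, and $\Delta_A^{\ast}=\{\lambda_A(\Delta_A)\}=\{0\}$. Preservation of binary meets is immediate, because in both frames the meet is intersection (meet in $VCon(A)$ is $\cap$ by Lemma 4.7(4), and meet in $VId(L(A))$ is $\cap$ as for any frame of $\sigma$-ideals), so Lemma 3.2(1) gives $w(\theta\cap\chi)=(\theta\wedge\chi)^{\ast}=\theta^{\ast}\cap\chi^{\ast}=w(\theta)\cap w(\chi)$.

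For arbitrary joins, I would first observe that the join in the frame $VCon(A)$ coincides with the join computed in $Con(A)$ (Lemma 4.7(3)), and similarly the join in $VId(L(A))$ coincides with the join in $Id(L(A))$ (this is the standard fact that a join of $\sigma$-ideals is again a $\sigma$-ideal, which I would either cite or verify in one line using the definition and that $\bigvee$ preserves annihilator-complementation). Once the two join structures are identified with the ambient ones, Lemma 3.2(2) applies verbatim: $w(\bigvee_{i\in I}\theta_i)=(\bigvee_{i\in I}\theta_i)^{\ast}=\bigvee_{i\in I}\theta_i^{\ast}=\bigvee_{i\in I}w(\theta_i)$.

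For injectivity, suppose $\theta,\chi\in VCon(A)$ satisfy $w(\theta)=w(\chi)$, i.e.\ $\theta^{\ast}=\chi^{\ast}$. Apply $(\cdot)_{\ast}$ and use Lemma 3.3(2) to obtain $\rho(\theta)=(\theta^{\ast})_{\ast}=(\chi^{\ast})_{\ast}=\rho(\chi)$. Then Lemma 4.9, which says that $\rho\colon VCon(A)\to RCon(A)$ is an injective frame morphism, yields $\theta=\chi$. The main obstacle I anticipate is the bookkeeping around the two frame structures -- in particular making sure that the joins in $VId(L(A))$ really do coincide with those in $Id(L(A))$ so that Lemma 3.2(2) is directly applicable; everything else is a transparent application of the preparatory results in Sections 3 and 4.
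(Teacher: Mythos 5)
Your proof is correct and follows essentially the same route as the paper: the frame-morphism part is reduced to Lemma 3.2(1),(2) (you are merely more explicit than the paper in checking that the frame operations of $VCon(A)$ and $VId(L(A))$ are the restrictions of those of $Con(A)$ and $Id(L(A))$), and injectivity is obtained from $\theta^{\ast}=\chi^{\ast}\Rightarrow\rho(\theta)=\rho(\chi)$ via Lemma 3.3(2). The only difference is the last step, where you invoke the injectivity of $\rho|_{VCon(A)}$ from Lemma 4.9, while the paper re-derives that fact from Lemma 4.8(2) by writing $\theta_i=Vir(\rho(\theta_i))$; both are legitimate, and yours is marginally more economical.
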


\begin{proof}
In accordance with Lemma 3.2,(1) and (2) it follows that $w$ is a frame morphism, so we shall prove only the injectivity of $w$. Let us consider two congruences $\theta_1, \theta_2\in VCon(A)$ such that $\theta_1^{\ast} = \theta_2^{\ast}$. By Lemma 3.3(2) we have $\rho(\theta_1) = (\theta_1^{\ast})_{\ast} = (\theta_2^{\ast})_{\ast} = \rho(\theta_2)$. In accordance with the following inclusions $\theta_i\subseteq\rho(\theta_i)\subseteq\bigcap\{\phi\in Max(A)|\theta_i\subseteq \phi \}$, for $i = 1,2$ and taking into account Lemma 4.8(2) it follows that

$\theta_i\subseteq Vir(\rho(\theta_i))\subseteq Vir(\bigcap\{\phi\in Max(A)|\theta_i\subseteq \phi \}) = \theta_i$, for $i = 1,2$.

Then $\theta_1 = Vir(\rho(\theta_1)) = Vir(\rho(\theta_2)) = \theta_2$.  We conclude that $w$ is injective.

\end{proof}

The following result is a generalization of Proposition 4.1 of \cite{GeorgescuVoiculescu}.

\begin{teorema}
If the algebra $A$ satisfies the condition $(\star)$ then the function $w:VCon(A)\rightarrow VId(L(A))$ is a frame isomorphism.
\end{teorema}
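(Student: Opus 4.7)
Since Proposition 4.16 already gives that $w$ is an injective frame morphism, the plan is to prove surjectivity. Given a $\sigma$-ideal $I$ of $L(A)$, I propose the candidate $\theta := Ker(I_{\ast})$ as the preimage: I will verify (i) that $\theta^{\ast} = I$ and (ii) that $\theta$ is a pure congruence.

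The core technical step, which is where condition $(\star)$ enters, will be the following claim: \emph{for every $\alpha \in K(A)$ with $\lambda_A(\alpha) \in I$, there exists $m \geq 0$ such that $[\alpha,\alpha]^m \subseteq Ker(I_{\ast})$.} I would prove this by invoking the $\sigma$-ideal property of $I$ to obtain $\beta, \gamma \in K(A)$ with $\lambda_A(\beta) \in I$, $\lambda_A(\gamma) \in Ann(\lambda_A(\alpha))$, and $\lambda_A(\beta) \lor \lambda_A(\gamma) = 1$. Translating back to $Con(A)$ through Lemma 3.1 and Lemma 3.3(1) yields $\beta \subseteq I_{\ast}$, $[\alpha,\gamma] \subseteq \rho(\Delta_A)$, and $\beta \lor \gamma = \nabla_A$. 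Now Lemma 2.10, whose hypothesis is exactly $(\star)$, delivers an integer $m$ with $[[\alpha,\alpha]^m, [\gamma,\gamma]^m] = \Delta_A$, i.e., $[\gamma,\gamma]^m \subseteq ([\alpha,\alpha]^m)^{\perp}$. Combining with $[\beta,\beta]^m \lor [\gamma,\gamma]^m = \nabla_A$ (Lemma 2.4(3)) gives $I_{\ast} \lor ([\alpha,\alpha]^m)^{\perp} = \nabla_A$, and by Lemma 4.12 this means $[\alpha,\alpha]^m \subseteq Ker(I_{\ast}) = \theta$.

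From the claim, (i) follows at once: the inclusion $\theta \subseteq I_{\ast}$ gives $\theta^{\ast} \subseteq (I_{\ast})^{\ast} = I$ by Lemma 3.3(2), while the reverse uses the claim together with $\lambda_A([\alpha,\alpha]^m) = \lambda_A(\alpha)$ from Lemma 3.1(5). For (ii), I bootstrap the same machinery: given compact $\alpha \subseteq \theta$, Lemma 4.12 plus compactness of $\nabla_A$ produces compact congruences $\beta \subseteq I_{\ast}$ and $\delta \subseteq \alpha^{\perp}$ with $\beta \lor \delta = \nabla_A$; applying the claim to the witness $\beta$ yields some $p$ with $[\beta,\beta]^p \subseteq \theta$, and Lemma 2.4(3) together with $[\delta,\delta]^p \subseteq \alpha^{\perp}$ gives $\theta \lor \alpha^{\perp} = \nabla_A$, which by Lemma 4.2 shows $\theta$ is pure.

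The main obstacle is that $I_{\ast}$ itself is not pure in general (for instance, $I = (0]$ gives $I_{\ast} = \rho(\Delta_A)$, whose purity would be a very strong semiprime-like condition), so one cannot simply take $\theta = I_{\ast}$. Condition $(\star)$ is powerful enough to push $[\alpha,\alpha]^m$ into $Ker(I_{\ast})$ but not $\alpha$ itself; this asymmetry becomes invisible after applying the reticulation, since $\lambda_A(\alpha) = \lambda_A([\alpha,\alpha]^m)$, and this is precisely the mechanism that allows both (i) and (ii) to go through despite the gap.
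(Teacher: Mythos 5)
Your proposal is correct and follows essentially the same route as the paper: the paper also takes $Ker(J_{\ast})$ (equivalently $Vir(J_{\ast})$) as the preimage of a $\sigma$-ideal $J$, uses $(\star)$ via Lemma 2.10 together with Lemmas 2.4(3) and 4.12 to force $[\alpha,\alpha]^m$ into $Ker(J_{\ast})$, and then verifies both that this congruence is pure and that it maps onto $J$ (the paper phrases the latter as $J_{\ast}=\rho(Ker(J_{\ast}))$ and concludes via $(\rho(\cdot))^{\ast}=(\cdot)^{\ast}$, which is the same mechanism as your use of $\lambda_A([\alpha,\alpha]^m)=\lambda_A(\alpha)$). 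Your only deviations are minor streamlinings: you extract a single witness pair $\beta,\gamma$ directly from $I\lor Ann(\lambda_A(\alpha))=L(A)$, which lets you bypass the finite families and the appeal to Lemma 2.4(5) that the paper needs after passing through $(Ann(\lambda_A(\alpha)))_{\ast}$.
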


\begin{proof}
We know from Proposition 4.16 that $w$ is an injective frame morphism, so it remains to show that $w$ is surjective. Let $J$ be a $\sigma$ - ideal of the lattice $L(A)$. We shall prove that there exists $\theta\in VCon(A)$ such that $w(\theta) = J$.

$Claim (1)$. $J_{\ast} = \rho(Ker(J_{\ast}))$.

(i) Firstly we shall prove the inclusion $J_{\ast}\subseteq \rho(Ker(J_{\ast}))$. Let $\alpha$ be a compact congruence of $A$ such that $\lambda_A(\alpha)\in J$. Since $J$ is a $\sigma$ - ideal of the lattice $L(A)$ we have $J\lor Ann(\lambda_A(\alpha)) = L(A)$. By using Lemma 3.2(3) we obtain  $J_{\ast}\lor (Ann(\lambda_A(\alpha)))_{\ast} = L(A)$, so we get the following equality:

$J_{\ast}\lor \bigvee\{\varepsilon\in K(A)|\lambda_A(\varepsilon)\in Ann(\lambda_A(\alpha))\} = \nabla_A$.

Since $\nabla_A\in K(A)$ there exist the compact congruences $\delta_1,\cdots,\delta_n$, $\varepsilon_1,\cdots,\varepsilon_m$ such that $\bigvee_{i=1}^n\delta_i\lor\bigvee_{j=1}^m\varepsilon_j = \nabla_A$, $\delta_i\subseteq J_{\ast}$, for $i = 1,\cdots,n$ and $\lambda_A(\varepsilon_j)\in Ann(\lambda_A(\alpha))$, for $j = 1,\cdots,m$. According to Lemma 3.1(6), from $\lambda_A([\varepsilon_j,\alpha]) = \lambda_A(\varepsilon_j)\land\lambda_A(\alpha) = 0$ we obtain $[\varepsilon_j,\alpha] \subseteq \rho(\Delta_A)$, for $j = 1,\cdots,m$. The algebra $A$ fulfills the condition $(\star)$, so by using Lemma 2.10 for all $j = 1,\cdots,m$ one can find an integer $p\geq 1$ such that $[[\varepsilon_j,\varepsilon_j]^p,[\alpha,\alpha]^p] = \Delta_A$, for $j = 1,\cdots,m$.

By using Lemma 2.4(5) we have

 $[\varepsilon_1\lor\cdots\lor \varepsilon_m]^{p^m}\subseteq [\varepsilon_1,\varepsilon_1]^p\lor\cdots\lor[\varepsilon_m,\varepsilon_m]^p$

therefore we get $[[\varepsilon_1\lor\cdots\lor \varepsilon_m]^{p^m},[\alpha,\alpha]^p]\subseteq [[\varepsilon_1,\varepsilon_1]^p\lor\cdots\lor[\varepsilon_m,\varepsilon_m]^p,[\alpha,\alpha]^p]$ = $[[\varepsilon_1,\varepsilon_1]^p,[\alpha,\alpha]^p]\lor\cdots\lor [[\varepsilon_m,\varepsilon_m]^p,[\alpha,\alpha]^p] = \Delta_A$.

Denoting $\delta = \delta_1\lor\cdots\lor\delta_n, \varepsilon = \varepsilon_1\lor\cdots\lor\varepsilon_m$ we have $\delta,\varepsilon\in K(A)$, $\delta\lor\varepsilon = \nabla_A$ and $[[\varepsilon,\varepsilon]^{p^m},[\alpha,\alpha]^p] = \Delta_A$. If we denote $s = p^m$ then  we get $[[\varepsilon,\varepsilon]^s,[\alpha,\alpha]^s] = \Delta_A$, hence $[\varepsilon,\varepsilon]^s\subseteq ([\alpha,\alpha]^s])^{\perp}$. According to Lemma 2.4(3), from $\delta\lor\varepsilon = \nabla_A$ we get $[\delta,\delta]^s\lor[\varepsilon,\varepsilon]^s = \nabla_A$, so $\delta\lor [\varepsilon,\varepsilon]^s = \nabla_A$. Thus one obtains the equality $\delta\lor ([\alpha,\alpha]^s])^{\perp} = \nabla_A$. Since $\delta_i\subseteq J_{\ast}$ for $i = 1,\cdots,n $ implies $\delta\subseteq J_{\ast}$, it follows that  $J_{\ast}\lor ([\alpha,\alpha]^s])^{\perp} = \nabla_A$. By Lemma 4.12 we obtain $[\alpha,\alpha]^s\subseteq Ker(J_{\ast})$, therefore, by using Proposition 2.8 it follows the inclusion $\alpha\subseteq\rho(Ker(J_{\ast}))$. We conclude that $J_{\ast} = \bigvee\{\alpha\in K(A)|\lambda_A(\alpha)\in J\}\subseteq \rho(Ker(J_{\ast}))$.

(ii) By applying Lemma 3.3(3), from $Ker(J_{\ast})\subseteq J_{\ast}$ we get $\rho(Ker(J_{\ast}))\subseteq \rho(J_{\ast}) = J_{\ast}$.

$Claim (2)$. $Ker(J_{\ast}) = Vir(J_{\ast})$.

Since $Vir(J_{\ast})\subseteq Ker(J_{\ast})\subseteq J_{\ast}$ it suffices to prove that $Ker(J_{\ast})$ is a pure congruence of $A$. Let us consider $\alpha\in K(A)$ such that $\alpha\subseteq Ker(J_{\ast})$. We shall prove that $Ker(J_{\ast})\lor \alpha^{\perp} = \nabla_A$. According to Lemma 4.12, from $\alpha\subseteq Ker(J_{\ast})$ it follows that $J_{\ast}\lor \alpha^{\perp} = \nabla_A$, hence, by using $Claim (1)$ we obtain $\rho(Ker(J_{\ast}))\lor \alpha^{\perp} = \nabla_A$.

Since $\nabla_A\in K(A)$ there exist an integer $k\geq 1$ and the compact congruences $\gamma_1,\cdots,\gamma_s$, $\delta_1,\cdots,\delta_t$ such that $\bigvee_{i=1}^s\gamma_i\lor \bigvee_{j=1}^t\delta_j = \nabla_A$, $[\gamma_i,\gamma_i]^k\subseteq Ker(J_{\ast})$, for $i = 1,\cdots ,s$ and $[\alpha,\delta_j] = \Delta_A$, for $j = 1,\cdots ,s$. According to Lemma 2.4(5), there exists an integer $p\geq 1$ such that $[\gamma_1\lor\cdots\lor\gamma_s,\gamma_1\lor\cdots\lor\gamma_s]^p\subseteq Ker(J_{\ast})$.

Denoting $\gamma = \gamma_1\lor\cdots\lor\gamma_s$, $\delta = \delta_1\lor\cdots\lor\delta_t$ we obtain $\gamma,\delta\in K(A)$, $\gamma\lor\delta = \nabla_A$ and $[\gamma,\gamma]^p\subseteq Ker(J_{\ast})$. We observe that $[\alpha,\delta] = \bigvee_{j =1}^t[\alpha,\delta_j] = \Delta_A$, therefore $\delta\subseteq\alpha^{\perp}$. Applying Lemma 2.4(3), from $\gamma\lor\delta = \nabla_A$ we obtain $[\gamma,\gamma]^p\lor[\delta,\delta]^p = \nabla_A$. Since $[\gamma,\gamma]^p\subseteq Ker(J_{\ast})$ and $[\delta,\delta]^p\subseteq\delta\subseteq\alpha^{\perp}$, we have $Ker(J_{\ast})\lor \alpha^{\perp} = \nabla_A$. We have proven that $Ker(J_{\ast})\in VCon(A)$, so the proof of $Claim(2)$ is finished.

Let us consider the pure congruence $Vir(J_{\ast})$. By applying $Claim(2)$ and Lemma 3.3(3) we obtain $w(Vir(J_{\ast})) = w(Ker(J_{\ast})) = (Ker(J_{\ast}))^{\ast} = (\rho(Ker(J_{\ast})))^{\ast}$. By taking into account $Claim (1)$ and Lemma 3.3(2) it follows that $w(Vir(J_{\ast})) = (J_{\ast})^{\ast} = J$, so $w$ is surjective.

\end{proof}

\begin{propozitie}
If the algebra $A$ is semiprime and $J$ is a $\sigma$ - ideal of $L(A)$ then $J_{\ast}$ is a pure congruence of $A$.

\end{propozitie}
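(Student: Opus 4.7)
The plan is to prove purity of $J_{\ast}$ directly from the definition, bypassing the detour through $Ker$ and $Vir$ used in Theorem 4.17. Concretely, I will show that for every $\alpha \in K(A)$ with $\alpha \subseteq J_{\ast}$, one has $J_{\ast} \lor \alpha^{\perp} = \nabla_A$. The hypothesis of semiprimeness is what should let us avoid the technical condition $(\star)$ that was needed in Theorem 4.17.

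First I would fix $\alpha \in K(A)$ with $\alpha \subseteq J_{\ast}$ and apply Lemma 3.3(1) to conclude $\lambda_A(\alpha) \in J$. Since $J$ is a $\sigma$-ideal of $L(A)$, the defining identity $J \lor Ann(\lambda_A(\alpha)) = L(A)$ produces elements $\lambda_A(\beta) \in J$ and $\lambda_A(\gamma) \in Ann(\lambda_A(\alpha))$ (with $\beta, \gamma \in K(A)$) such that $\lambda_A(\beta) \lor \lambda_A(\gamma) = 1$. By Lemma 3.1(1) this join equals $\lambda_A(\beta \lor \gamma)$, and then by Lemma 3.1(3) we get $\beta \lor \gamma = \nabla_A$.

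The key step, and the only place semiprimeness enters, is analyzing $\gamma$. From $\lambda_A(\gamma) \land \lambda_A(\alpha) = 0$ and Lemma 3.1(2) we have $\lambda_A([\gamma,\alpha]) = 0$. Since $A$ is semiprime, Lemma 3.1(7) gives $[\gamma,\alpha] = \Delta_A$, hence $\gamma \subseteq \alpha^{\perp}$. On the other hand, $\lambda_A(\beta) \in J$ combined with Lemma 3.3(1) yields $\beta \subseteq J_{\ast}$. Assembling these: $\nabla_A = \beta \lor \gamma \subseteq J_{\ast} \lor \alpha^{\perp}$, which is the desired purity identity.

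The only potentially subtle point is step three, where we need the implication $\lambda_A([\gamma,\alpha]) = 0 \Rightarrow [\gamma,\alpha] = \Delta_A$; without semiprimeness one merely obtains $[\gamma,\alpha] \subseteq \rho(\Delta_A)$, and one would be forced to invoke $(\star)$ together with Lemma 2.10 and the exponent juggling used in the proof of Theorem 4.17. The semiprime hypothesis collapses this machinery to a single invocation of Lemma 3.1(7), making the argument short and transparent.
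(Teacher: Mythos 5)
Your proof is correct and follows essentially the same route as the paper's own argument: both pass from $\alpha\subseteq J_{\ast}$ to $\lambda_A(\alpha)\in J$ via Lemma 3.3(1), use the $\sigma$-ideal identity to produce $\beta,\gamma\in K(A)$ with $\beta\lor\gamma=\nabla_A$, and invoke semiprimeness together with Lemma 3.1(7) to upgrade $\lambda_A([\gamma,\alpha])=0$ to $[\gamma,\alpha]=\Delta_A$, whence $\gamma\subseteq\alpha^{\perp}$ and $J_{\ast}\lor\alpha^{\perp}=\nabla_A$. Your closing remark correctly identifies the single point where semiprimeness replaces the $(\star)$-machinery of Theorem 4.17.
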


\begin{proof}
Assume that $\alpha$ is a compact congruence of $A$ such that $\alpha\subseteq J_{\ast}$. We shall prove that $J_{\ast}\lor \alpha^{\perp} = \nabla_A$. By Lemma 3.3(1), from $\alpha\subseteq J_{\ast}$ we get $\lambda_A(\alpha)\in J$, hence $J\lor Ann(\lambda_A(\alpha)) = L(A)$, because $J$ is a $\sigma$ - ideal. Then there exist the congruences $\beta,\gamma\in K(A)$ such that $\lambda_A(\beta)\in J$, $\lambda_A(\gamma)\in Ann(\lambda_A(\alpha))$ and $\lambda_A(\gamma\lor\beta) = \lambda_A(\gamma)\lor\lambda_A(\beta) = 1$. From $\lambda_A(\gamma)\in Ann(\lambda_A(\alpha))$ it results that $\lambda_A([\gamma,\alpha]) = \lambda_A(\gamma)\land\lambda_A(\alpha) = 0$. Since A is semiprime we get $[\gamma,\alpha] = \Delta_A$, hence $\gamma\subseteq \alpha^{\perp}$ (by Lemma 3.1(7)). According to Lemma 3.1(3), $\lambda_A(\beta\lor\gamma) = 1$ implies $\beta\lor\gamma = \nabla_A$, hence $\beta\lor \alpha^{\perp} = \nabla_A$. Since $\lambda_A(\beta)\in J$ implies $\beta\subseteq J_{\ast}$, it follows that $J_{\ast}\lor \alpha^{\perp} = \nabla_A$. Then $J_{\ast}$ is a pure congruence of $A$.

\end{proof}

\begin{corolar}
If $A$ is a semiprime algebra then the map $w:VCon(A)\rightarrow VId(L(A))$ is a frame isomorphism.
\end{corolar}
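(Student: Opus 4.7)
The plan is to deduce the corollary directly from Propositions 4.16 and 4.19 together with the duality identities in Lemma 3.3. By Proposition 4.16, the map $w : VCon(A) \to VId(L(A))$ is always an injective frame morphism, so the only remaining task is to establish surjectivity, and the semiprime hypothesis is precisely what makes the construction of a pre-image by means of $(\cdot)_{\ast}$ work without appealing to condition $(\star)$.

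First I would fix a $\sigma$-ideal $J$ of $L(A)$ and consider the congruence $J_{\ast} = \bigvee \{\alpha \in K(A) \mid \lambda_A(\alpha) \in J\}$. The key step is to observe that Proposition 4.19 applies verbatim in the semiprime setting, giving $J_{\ast} \in VCon(A)$. Thus $J_{\ast}$ is a legitimate element of the domain of $w$, and it is the natural candidate for a pre-image of $J$.

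Next I would verify that $w(J_{\ast}) = J$. By definition, $w(J_{\ast}) = (J_{\ast})^{\ast}$, and Lemma 3.3(2) records the identity $(I_{\ast})^{\ast} = I$ for every ideal $I$ of $L(A)$. Applied to $I = J$, this yields $(J_{\ast})^{\ast} = J$, so $w(J_{\ast}) = J$ and surjectivity is established. Combined with Proposition 4.16, $w$ is a bijective frame morphism, hence a frame isomorphism.

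I do not expect any genuine obstacle here: the work was already done in Proposition 4.19, where semiprimality replaced the role played by $(\star)$ in Theorem 4.17 (namely, allowing one to pass from $\lambda_A([\gamma,\alpha]) = 0$ directly to $[\gamma,\alpha] = \Delta_A$ via Lemma 3.1(7), bypassing the iterated commutator arguments of Claims (1) and (2) in the proof of Theorem 4.17). The only mildly subtle point is to notice that one does not need to reprove the surjectivity from scratch as in Theorem 4.17; the combination $J \mapsto J_{\ast} \mapsto (J_{\ast})^{\ast} = J$ already provides it, once Proposition 4.19 guarantees that $J_{\ast}$ lands in $VCon(A)$.
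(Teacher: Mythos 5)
Your argument is correct and coincides with the paper's own proof: injectivity and the frame-morphism property come from Proposition 4.16, surjectivity comes from the fact that $J_{\ast}$ is pure together with the identity $(J_{\ast})^{\ast} = J$ of Lemma 3.3(2). The only slip is a label: the result guaranteeing $J_{\ast}\in VCon(A)$ for semiprime $A$ is Proposition 4.18 (the statement under discussion is itself numbered 4.19), but the content you invoke is exactly the right one.
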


\begin{proof}
We remind that $w(\theta) = \theta^{\ast}$, for all pure congruence $\theta$ of $A$. By Proposition 4.16, $w$ is an injective frame isomorphism.
Let $J$ be a $\sigma$ - ideal in $L(A)$. By the previous proposition, $J_{\ast}$ is a pure congruence of $A$. By applying Lemma 3.3(2) we get $w(J_{\ast}) = (J_{\ast})^{\ast} = J$, hence $w$ is surjective. Therefore $w$ is a frame isomorphism.

\end{proof}

\begin{lema}
Assume that $A$ is a semiprime algebra and $\theta\in Con(A)$. If $\rho(\theta)$ is a pure congruence then $\theta$ is also pure.
\end{lema}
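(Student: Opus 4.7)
The plan is to verify the defining condition of purity directly, using the radical machinery of Lemma 2.7. Let $\alpha \in K(A)$ with $\alpha \subseteq \theta$; I must show that $\theta \lor \alpha^{\perp} = \nabla_A$.

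First I chain the inclusions $\alpha \subseteq \theta \subseteq \rho(\theta)$. Since $\rho(\theta)$ is assumed pure and $\alpha$ is compact, the very definition of a pure congruence gives $\rho(\theta) \lor \alpha^{\perp} = \nabla_A$. Enlarging the second summand via $\alpha^{\perp} \subseteq \rho(\alpha^{\perp})$ (by Lemma 2.7(1)) only strengthens this identity, so $\rho(\theta) \lor \rho(\alpha^{\perp}) = \nabla_A$. Now I invoke Lemma 2.7(6), which states that for all congruences $\alpha',\beta'$, one has $\rho(\alpha') \lor \rho(\beta') = \nabla_A$ iff $\alpha' \lor \beta' = \nabla_A$. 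Applied with $\alpha' = \theta$ and $\beta' = \alpha^{\perp}$ this yields $\theta \lor \alpha^{\perp} = \nabla_A$, completing the verification.

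There is essentially no obstacle: the argument is a one-line reduction, and notably the semiprime hypothesis on $A$ is never used in this direct route. Semiprimeness presumably appears in the statement because the lemma is meant to be read in tandem with Proposition 4.11, Proposition 4.18 and Corollary 4.20, i.e.\ in the semiprime context where $w:VCon(A) \to VId(L(A))$ is an isomorphism and pureness is cleanly detected by $\sigma$-ideals. One could alternatively try a lattice-theoretic route: $\rho(\theta)$ pure forces $w(\rho(\theta)) = (\rho(\theta))^{\ast} = \theta^{\ast}$ to be a $\sigma$-ideal of $L(A)$, and by Proposition 4.18 the semiprime hypothesis then gives that $(\theta^{\ast})_{\ast} = \rho(\theta)$ is pure; but this only recovers what is already assumed, without descending the information from $\rho(\theta)$ down to $\theta$. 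Hence the short radical-theoretic argument above is the cleanest approach.
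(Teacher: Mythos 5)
Your proof is correct, and it takes a genuinely shorter route than the paper's. The paper verifies the same purity condition but descends from $\rho(\theta)$ to $\theta$ by hand: from $\rho(\theta)\lor\alpha^{\perp}=\nabla_A$ it extracts, via compactness of $\nabla_A$, compact witnesses $\beta\subseteq\rho(\theta)$ and $\gamma\subseteq\alpha^{\perp}$ with $\beta\lor\gamma=\nabla_A$, then uses Proposition 2.8 to find $n$ with $[\beta,\beta]^n\subseteq\theta$ and Lemma 2.4(3) to conclude $[\beta,\beta]^n\lor[\gamma,\gamma]^n=\nabla_A$, whence $\theta\lor\alpha^{\perp}=\nabla_A$. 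Your argument compresses this entire descent into the single radical identity of Lemma 2.7(6), after first inflating $\alpha^{\perp}$ to $\rho(\alpha^{\perp})$; the logic is sound, since $\rho(\theta)\lor\rho(\alpha^{\perp})=\nabla_A$ is exactly the left-hand side of that equivalence. What the paper's version buys is an explicit, commutator-power-level computation (essentially re-proving the relevant instance of Lemma 2.7(6) from Proposition 2.8 and Lemma 2.4(3)); what yours buys is brevity. Your remark about the semiprime hypothesis is also accurate, and in fact applies to the paper's own proof as well: neither argument uses semiprimeness, so the hypothesis appears to be carried along only for consistency with the surrounding results (Propositions 4.18 and 4.24) in which this lemma is applied.
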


\begin{proof}
Assume that $\rho(\theta)$ is a pure congruence and $\alpha\in K(A)$ such that $\alpha\subseteq\theta$. Then $\alpha\subseteq\rho(\theta)$, so $\rho(\theta)\lor \alpha^{\perp} = \nabla_A$, therefore there exist $\beta,\gamma\in K(A)$ such that $\beta\subseteq\rho(\theta)$, $\gamma\subseteq\alpha^{\perp}$ and $\beta\lor\gamma = \nabla_A$. By Proposition 2.8, there exists an integer $n\geq 1$ such that $[\beta,\beta]^n\subseteq\theta$. Thus $[\beta,\beta]^n\lor [\gamma,\gamma]^n = \nabla_A$ (by Lemma 2.4(3)) and $[\gamma,\gamma]^n\subseteq\alpha^{\perp}$, therefore $\theta\lor \alpha^{\perp} = \nabla_A$.

\end{proof}

Let $A$ and $B$ be two algebras in $\mathcal{V}$ such that $K(A),K(B)$ are closed under the commutator operation and $f:A\rightarrow B$ a morphism in $\mathcal{V}$. We know that $f$ induces a map $f^{\bullet}:Con(A)\rightarrow Con(B)$, defined by $f^{\bullet}(\alpha) = Cg_B(f(\alpha))$, for all $\alpha\in Con(A)$.

We shall say that the morphism $f:A\rightarrow B$ preserves the purity of congruences if $\theta\in VCon(A)$ implies $f^{\bullet}(\theta)\in VCon(B))$.

\begin{propozitie} Assume that the map $f^{\bullet}:Con(A)\rightarrow Con(B)$ preserves the commutator operation: for all $\alpha,\beta\in Con(A)$, we have $f^{\bullet}([\alpha,\beta]) = [f^{\bullet}(\alpha),f^{\bullet}(\beta)]$.

Then  the morphism $f:A\rightarrow B$ preserves the purity of congruences.

\end{propozitie}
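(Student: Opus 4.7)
The plan is to verify the definition of purity for the image congruence $\vartheta := f^{\bullet}(\theta)$ of $B$, where $\theta$ is a pure congruence of $A$. Concretely, for every $\gamma \in K(B)$ with $\gamma \subseteq \vartheta$ I shall produce the equality $\vartheta \lor \gamma^{\perp} = \nabla_B$.

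First I would reduce $\gamma$ to the $f^{\bullet}$-image of a compact subcongruence of $\theta$. Since $f^{\bullet}$ is the left adjoint of $f^{*}$, it preserves arbitrary joins; writing $\theta = \bigvee\{\alpha \in K(A) \mid \alpha \subseteq \theta\}$ yields $\vartheta = \bigvee\{f^{\bullet}(\alpha) \mid \alpha \in K(A),\, \alpha \subseteq \theta\}$. Compactness of $\gamma$ in $Con(B)$, together with closure of $K(A)$ under finite joins, then yields a single $\alpha \in K(A)$ with $\alpha \subseteq \theta$ and $\gamma \subseteq f^{\bullet}(\alpha)$.

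Next I would exploit the commutator-preservation hypothesis in two complementary ways. Applied to the identity $[\alpha, \alpha^{\perp}] = \Delta_A$, and using the automatic equality $f^{\bullet}(\Delta_A) = \Delta_B$, it gives $[f^{\bullet}(\alpha), f^{\bullet}(\alpha^{\perp})] = \Delta_B$, whence $f^{\bullet}(\alpha^{\perp}) \subseteq (f^{\bullet}(\alpha))^{\perp} \subseteq \gamma^{\perp}$. Simultaneously, the purity of $\theta$ gives $\theta \lor \alpha^{\perp} = \nabla_A$, and join preservation transports this to $\vartheta \lor f^{\bullet}(\alpha^{\perp}) = f^{\bullet}(\nabla_A)$. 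Combining the two observations, $\vartheta \lor \gamma^{\perp} \supseteq f^{\bullet}(\nabla_A)$.

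The main obstacle, and the final step, is the identification $f^{\bullet}(\nabla_A) = \nabla_B$, which closes the chain. Here I would lean on semidegeneracy: Proposition~2.2 gives $[\nabla_A, \nabla_A] = \nabla_A$, and commutator preservation then forces $f^{\bullet}(\nabla_A) = [f^{\bullet}(\nabla_A), f^{\bullet}(\nabla_A)]$, so $f^{\bullet}(\nabla_A)$ is a compact, commutator-idempotent element of $Con(B)$ absorbed by the ambient $\nabla_B$ via Proposition~2.1. Ruling out the proper inclusion $f^{\bullet}(\nabla_A) \subsetneq \nabla_B$ in this semidegenerate setting is the delicate point I expect to require the most care. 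Once the equality $f^{\bullet}(\nabla_A) = \nabla_B$ is in hand, the inclusion $\vartheta \lor \gamma^{\perp} \supseteq f^{\bullet}(\nabla_A)$ closes to $\vartheta \lor \gamma^{\perp} = \nabla_B$, establishing that $f^{\bullet}(\theta) \in VCon(B)$ and thus that $f$ preserves purity of congruences.
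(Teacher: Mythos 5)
Your argument follows the paper's proof step for step: the same reduction of a compact $\gamma\subseteq f^{\bullet}(\theta)$ to a single $\alpha\in K(A)$ with $\alpha\subseteq\theta$ and $\gamma\subseteq f^{\bullet}(\alpha)$ (using that $f^{\bullet}$ preserves joins and $K(A)$ is closed under finite joins), the same application of commutator preservation to $[\alpha,\alpha^{\perp_A}]=\Delta_A$ to get $f^{\bullet}(\alpha^{\perp_A})\subseteq(f^{\bullet}(\alpha))^{\perp_B}\subseteq\gamma^{\perp_B}$, and the same transport of $\theta\lor\alpha^{\perp_A}=\nabla_A$ through $f^{\bullet}$. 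So in structure you are doing exactly what the paper does.

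The one place you diverge is the step you yourself flag as delicate, namely $f^{\bullet}(\nabla_A)=\nabla_B$, and the route you sketch for it does not work. Knowing that $f^{\bullet}(\nabla_A)$ is commutator-idempotent cannot force it up to $\nabla_B$: the congruence $\Delta_B$ satisfies $[\Delta_B,\Delta_B]=\Delta_B$ as well, so commutator idempotence is no obstruction to a proper inclusion, and Propositions 2.1--2.2 say nothing about which idempotent elements equal the top. The correct (and standard) argument is a direct appeal to semidegeneracy of the variety rather than to the commutator: $f^{\bullet}(\nabla_A)=Cg_B(f(A)\times f(A))$ collapses the subalgebra $f(A)$ of $B$ to a single point, so the quotient $B/f^{\bullet}(\nabla_A)$ has a one-element subalgebra; since $\mathcal{V}$ is semidegenerate, that quotient must be trivial, i.e.\ $f^{\bullet}(\nabla_A)=\nabla_B$. (The paper uses this identity silently when it passes from $\theta\lor\alpha^{\perp_A}=\nabla_A$ to $f^{\bullet}(\theta)\lor f^{\bullet}(\alpha^{\perp_A})=\nabla_B$.) With that substitution your proof is complete and coincides with the paper's.
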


\begin{proof} Let $\theta$ be a pure congruence of $A$. In order to show that $f^{\bullet}(\theta)$ is a pure congruence of $B$, let us assume that $\beta\in K(B)$ and $\beta\subseteq f^{\bullet}(\theta)$. We know from \cite{GKM} that $f^{\bullet}$ preserves arbitrary joins, hence

$\beta\subseteq f^{\bullet}(\bigvee\{\alpha\in K(A)|\alpha\subseteq\theta\}) = \bigvee\{f^{\bullet}(\alpha)|\alpha\in K(A),\alpha\subseteq\theta\}$.

Thus there exists $\alpha\in K(A)$ such that $\alpha\subseteq\theta$ and $\beta\subseteq f^{\bullet}(\alpha)$. From $[f^{\bullet}(\alpha^{\perp_A}),f^{\bullet}(\alpha)] = f^{\bullet}([\alpha^{\perp_A},\alpha]) = f^{\bullet}(\Delta_A) = \Delta_A$ we get $f^{\bullet}(\alpha^{\perp_A})\subseteq (f^{\bullet}(\alpha))^{\perp_B}$.

Since $\theta$ is a pure congruence of $A$, from $\alpha\subseteq\theta$ one infers that $\theta\lor \alpha^{\perp_A} = \nabla_A$, hence  $f^{\bullet}(\theta)\lor f^{\bullet}(\alpha^{\perp_A}) = \nabla_B$ (because $f^{\ast}$ preserves the joins). By taking into account that $f^{\bullet}(\alpha^{\perp_A})\subseteq (f^{\bullet}(\alpha))^{\perp_B}$ it follows that $f^{\bullet}(\theta)\lor (f^{\bullet}(\alpha))^{\perp_B} = \nabla_B$. But $\beta\subseteq f^{\bullet}(\alpha)$ implies that $(f^{\bullet}(\alpha))^{\perp_B}\subseteq \beta^{\perp_B}$, therefore $\beta^{\perp_B}\lor f^{\bullet}(\theta) = \nabla_B$. Thus $f^{\bullet}(\theta)$ is a pure congruence of $B$.

\end{proof}

Remind from Proposition 3.19 that any admissible morphism $f:A\rightarrow B$ of $\mathcal{V}$ induces a lattice morphism $L(f):L(A)\rightarrow L(B)$ such that $\lambda_B(f^{\bullet}(\alpha)) = L(f)(\lambda_A(\alpha))$, for all $\alpha\in K(A)$.

\begin{lema}
Let $f:A\rightarrow B$ be an admissible morphism of $\mathcal{V}$. Then for any $\theta\in Con(A)$ we have $(f^{\bullet}(\theta))^{\ast} = (L(f)(\theta^{\ast})]$.
\end{lema}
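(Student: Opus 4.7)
The plan is to unfold both sides directly against the defining formula $\theta^{\ast}=\{\lambda_A(\alpha)\mid\alpha\in K(A),\ \alpha\subseteq\theta\}$ and then apply the intertwining identity $\lambda_B\circ f^{\bullet}=L(f)\circ\lambda_A$ from Proposition 3.19, together with the fact (used already in the proof of Proposition 4.21) that $f^{\bullet}$ preserves arbitrary joins of congruences.

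For the inclusion $(L(f)(\theta^{\ast})]\subseteq (f^{\bullet}(\theta))^{\ast}$, I would take a generator $L(f)(\lambda_A(\alpha))$ with $\alpha\in K(A)$, $\alpha\subseteq\theta$. By Proposition 3.19 this equals $\lambda_B(f^{\bullet}(\alpha))$; since $f^{\bullet}(\alpha)\in K(B)$ and $f^{\bullet}(\alpha)\subseteq f^{\bullet}(\theta)$ (monotonicity of $f^{\bullet}$), the element lies in $(f^{\bullet}(\theta))^{\ast}$. Because $(f^{\bullet}(\theta))^{\ast}$ is an ideal of $L(B)$, it absorbs every element of the principal-type ideal $(L(f)(\theta^{\ast})]$.

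For the reverse inclusion, take $x\in (f^{\bullet}(\theta))^{\ast}$, so $x=\lambda_B(\beta)$ for some $\beta\in K(B)$ with $\beta\subseteq f^{\bullet}(\theta)$. Writing $\theta=\bigvee\{\alpha\in K(A)\mid\alpha\subseteq\theta\}$ and using that $f^{\bullet}$ preserves joins, we get $f^{\bullet}(\theta)=\bigvee\{f^{\bullet}(\alpha)\mid\alpha\in K(A),\alpha\subseteq\theta\}$. The compactness of $\beta$ in $Con(B)$ then yields finitely many $\alpha_1,\dots,\alpha_n\in K(A)$ with $\alpha_i\subseteq\theta$ and $\beta\subseteq f^{\bullet}(\alpha_1)\lor\cdots\lor f^{\bullet}(\alpha_n)=f^{\bullet}(\alpha)$, where $\alpha:=\alpha_1\lor\cdots\lor\alpha_n\in K(A)$ still satisfies $\alpha\subseteq\theta$. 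By Lemma 3.1(8), $\lambda_B(\beta)\leq\lambda_B(f^{\bullet}(\alpha))=L(f)(\lambda_A(\alpha))$, and since $\lambda_A(\alpha)\in\theta^{\ast}$ the majorant lies in $L(f)(\theta^{\ast})$; hence $x\in (L(f)(\theta^{\ast})]$.

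The whole argument is conceptually a soft computation in the commutative square of Proposition 3.19; the only slightly delicate point is the reduction from $\beta\subseteq f^{\bullet}(\theta)$ to $\beta\subseteq f^{\bullet}(\alpha)$ for a single $\alpha\in K(A)$ with $\alpha\subseteq\theta$, which is where both the compactness of $\beta$ and the fact that $K(A)$ is closed under finite joins (so that $\alpha=\alpha_1\lor\cdots\lor\alpha_n$ is again compact and below $\theta$) come into play. Everything else is a direct invocation of Proposition 3.19 and monotonicity, so I expect no further obstacle.
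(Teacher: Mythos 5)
Your proposal is correct and follows essentially the same route as the paper's proof: both inclusions are handled by unfolding the definitions of $(\cdot)^{\ast}$ and $(\cdot]$, invoking the commutative square $\lambda_B\circ f^{\bullet}=L(f)\circ\lambda_A$, and using that $f^{\bullet}$ preserves joins and compactness. The only difference is that you make explicit the compactness-plus-finite-join step that reduces $\beta\subseteq f^{\bullet}(\theta)$ to $\beta\subseteq f^{\bullet}(\alpha)$ for a single $\alpha\in K(A)$ with $\alpha\subseteq\theta$, a detail the paper passes over with "thus there exists $\alpha\in K(A)$\ldots".
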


\begin{proof}
Recall that $(f^{\bullet}(\theta))^{\ast} = \{\lambda_B(\beta)|\beta\in K(B),\beta\subseteq f^{\bullet}(\theta)\}$ and $(L(f)(\theta^{\ast})]$ is the ideal of $L(B)$ generated by the subset $L(f)(\theta^{\ast})$ of $L(B)$. In order to prove the inclusion $(f^{\bullet}(\theta))^{\ast}\subseteq (L(f)(\theta^{\ast})]$, let us consider an arbitrary $y\in (f^{\bullet}(\theta))^{\ast}$, so there exists $\beta\in K(B)$ such that $y = \lambda_B(\beta)$ and $\beta\subseteq f^{\bullet}(\theta)$. Since $f^{\bullet}$ preserves joins, we have $f^{\bullet}(\theta) = \bigvee\{f^{\bullet}(\alpha)|\alpha\in K(A), \alpha\subseteq \theta\}$.  Thus there exists $\alpha\in K(A)$ such that $\alpha\subseteq\theta$ and $\beta\subseteq f^{\bullet}(\alpha)$, therefore $\lambda_A(\alpha)\in \theta^{\ast}$ and $y = \lambda_B(\beta)\leq \lambda_B(f^{\bullet}(\alpha)) = L(f)(\lambda_A(\alpha))$ (the last equality follows from the commutative diagram of Proposition 3.11). Conclude that $y\in (L(f)(\theta^{\ast})]$.

In order to prove the converse inclusion $(L(f)(\theta^{\ast})]\subseteq (f^{\bullet}(\theta))^{\ast}$, it suffices to check that $L(f)(\theta^{\ast})\subseteq (f^{\bullet}(\theta))^{\ast}$. Assume that $x\in \theta^{\ast}$, so there exists $\alpha\in K(A)$ such that $\alpha\subseteq\theta$ and $x = \lambda_A(\alpha)$. Thus $f^{\bullet}(\alpha)\in K(B)$ (because $f^{\bullet}$ preserves the compactness of congruences) and $f^{\bullet}(\alpha)\subseteq f^{\bullet}(\theta)$, hence by using again Proposition 3.11, it follows that $L(f)(x) = L(f)(\lambda_A(\alpha)) = \lambda_B(f^{\bullet}(\alpha))\in (f^{\bullet}(\theta))^{\ast}$.

\end{proof}

Let $f:L\rightarrow L'$ a morphism in the category of bounded distributive lattices and $f^{\bullet}:Id(L)\rightarrow Id(L')$ is the induced map: for all $I\in Id(L)$, $f^{\bullet}(I)$ is the ideal $(f(I)]$ of $L'$ generated by $f(I)$.

\begin{lema}
Let $f:L\rightarrow L'$ be a morphism in the category of bounded distributive lattices. If $I$ is a $\sigma$ - ideal of $L$ then $f^{\bullet}(I)$ is a $\sigma$ - ideal of $L'$.
\end{lema}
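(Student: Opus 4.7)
The plan is to unpack the definition of a $\sigma$-ideal directly and ride on the fact that $f$ preserves $\lor$, $\land$, $0$ and $1$. Recall (up to the typo in the introduction of Section~4, matching the commutative ring analogue) that an ideal $J$ of a bounded distributive lattice is a $\sigma$-ideal iff for every $x\in J$ one has $J\lor Ann(x)=L$. So I must show: for every $x'\in f^{\bullet}(I)$, the equality $f^{\bullet}(I)\lor Ann(x')=L'$ holds.

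First I would reduce $x'$ to a principal situation. Since $f^{\bullet}(I)=(f(I)]$, there exist $a_1,\dots,a_n\in I$ with $x'\leq f(a_1)\lor\cdots\lor f(a_n)=f(a_1\lor\cdots\lor a_n)$. Setting $a=a_1\lor\cdots\lor a_n$, which still lies in $I$ because $I$ is closed under finite joins, I obtain $a\in I$ with $x'\leq f(a)$.

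Next I would invoke the $\sigma$-ideal property of $I$ at $a$: $I\lor Ann(a)=L$, so $1=b\lor c$ for some $b\in I$ and $c\in Ann(a)$ (i.e. $c\land a=0$). Applying $f$ gives $1=f(1)=f(b)\lor f(c)$; moreover $f(b)\in f(I)\subseteq f^{\bullet}(I)$, and
\[
f(c)\land x'\leq f(c)\land f(a)=f(c\land a)=f(0)=0,
\]
so $f(c)\in Ann(x')$. Hence $1\in f^{\bullet}(I)\lor Ann(x')$, forcing $f^{\bullet}(I)\lor Ann(x')=L'$. This is what had to be proven.

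There is essentially no obstacle: the argument is a direct transport of the pure/$\sigma$-ideal condition along the morphism $f$, the only non-trivial input being that the ideal $(f(I)]$ is already captured by applying $f$ to a single element of $I$ (using that $I$ is closed under finite joins) — this is the step that keeps the verification clean.
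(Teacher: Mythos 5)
Your proof is correct. It follows the same basic strategy as the paper -- unwind $f^{\bullet}(I)=(f(I)]$, invoke the $\sigma$-ideal condition for $I$, and push the resulting witnesses through $f$ -- but with one genuinely different piece of bookkeeping. The paper keeps the generators $a_1,\dots,a_n$ separate: it applies $I\lor Ann(a_i)=L$ to each $a_i$ individually, obtains $Ann(f(a_i))\lor J=L'$ for each $i$, and then has to combine these via the identity $\bigcap_{i}Ann(f(a_i))=Ann(\bigvee_{i}f(a_i))\subseteq Ann(y)$ together with the distributive law $\bigcap_{i}(Ann(f(a_i))\lor J)=(\bigcap_{i}Ann(f(a_i)))\lor J$ in the frame $Id(L')$. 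You instead collapse the generators first, replacing them by the single element $a=a_1\lor\cdots\lor a_n\in I$ (using only that $I$ is closed under finite joins) and applying the $\sigma$-condition once, at $a$. This makes the final step a one-line computation $f(c)\land x'\leq f(c\land a)=0$ and entirely avoids both the annihilator-of-a-join identity and the appeal to distributivity of the ideal lattice. The two arguments prove the same thing with the same inputs; yours is the leaner of the two.
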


\begin{proof}
Let $J = f^{\bullet}(I) = (f(I)]$ and $y\in J$, hence there exist $a_1,\cdots,a_n\in I$ such that $y\leq \bigvee_{i=1}^nf(a_i)$. Since $I$ is a $\sigma$ - ideal of $L$, for each $i=1,\cdots,n$ we have $Ann(a_i)\lor I = L$, hence there exist $x_i\in Ann(I)$ and $y_i\in I$ such that $x_i\lor y_i = 1$. Thus for any $i =1,\cdots,n$, we have  $x_i\land a_i = 0$, so $f(x_i)\land f(a_i) = 0$, i.e. $f(x_i)\in Ann(f(a_i))$. We remark that $f(x_i)\lor f(y_i) = 1$ and $f(y_i)\in f(I)\subseteq J$, therefore $Ann(f(a_i))\lor J = L'$, for all $i=1,\cdots,n$. Thus $\bigcap_{i=1}^nAnn(f(a_i)) = Ann(\bigvee_{i=1}^nf(a_i))\subseteq Ann(y)$, so we get

$L' = \bigcap_{i=1}^n(Ann(f(a_i))\lor J) = (\bigcap_{i=1}^nAnn(f(a_i)))\lor J\subseteq Ann(y)\lor J$.

It follows that $Ann(y)\lor J = L'$, hence $J$ is a $\sigma$ - ideal of $L'$.

\end{proof}

\begin{propozitie}
If $A,B$ are two semiprime algebras in $\mathcal{V}$ then any admissible morphism $f:A\rightarrow B$ of $\mathcal{V}$ preserves the purity of congruences.
\end{propozitie}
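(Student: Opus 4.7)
The plan is to route the purity of $f^{\bullet}(\theta)$ through the reticulation, using the frame isomorphism $w$ available for semiprime algebras (Corollary 4.20) together with the purity-transfer facts already established for $\sigma$-ideals of distributive lattices. Specifically, I will push $\theta^{\ast}$ through $L(f)$ to get a $\sigma$-ideal in $L(B)$, identify it with $(f^{\bullet}(\theta))^{\ast}$, and then pull the purity back to a congruence of $B$.

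In detail, first let $\theta\in VCon(A)$. By Proposition 4.15 the ideal $\theta^{\ast}$ is a $\sigma$-ideal of $L(A)$. Since $f:A\rightarrow B$ is admissible, Proposition 3.11 supplies the lattice morphism $L(f):L(A)\rightarrow L(B)$, and by Lemma 4.24 the ideal $(L(f)(\theta^{\ast})]$ is a $\sigma$-ideal of $L(B)$. Lemma 4.23 then identifies this $\sigma$-ideal with $(f^{\bullet}(\theta))^{\ast}$, so $(f^{\bullet}(\theta))^{\ast}\in VId(L(B))$.

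Next I would convert this back to a congruence of $B$. Since $B$ is semiprime, Proposition 4.19 applies to the $\sigma$-ideal $(f^{\bullet}(\theta))^{\ast}$, giving that $((f^{\bullet}(\theta))^{\ast})_{\ast}$ is a pure congruence of $B$. By Lemma 3.3(2) we have $((f^{\bullet}(\theta))^{\ast})_{\ast}=\rho(f^{\bullet}(\theta))$, so $\rho(f^{\bullet}(\theta))\in VCon(B)$. Finally, since $B$ is semiprime, Lemma 4.21 lifts purity from $\rho(f^{\bullet}(\theta))$ to $f^{\bullet}(\theta)$ itself, which is what we wanted.

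The potential obstacle is that the argument uses the semiprime hypothesis in two distinct places — once on $B$ to invoke Proposition 4.19 (which guarantees that the preimage under $(\cdot)_{\ast}$ of a $\sigma$-ideal is pure) and again on $B$ to invoke Lemma 4.21 (to pass from purity of the radical to purity of the congruence itself). One must verify that the semiprime hypothesis on $A$ is not actually needed for the construction of $L(f)$ and the identity in Lemma 4.23; inspection of Proposition 3.11 and Lemma 4.23 shows they only need $K(A)$, $K(B)$ closed under the commutator and $f$ admissible, which are part of the standing hypotheses. No use of the condition $(\star)$ is required, since we only rely on the semiprime versions of the frame correspondence (Corollary 4.20 and Proposition 4.19) rather than on Theorem 4.17.
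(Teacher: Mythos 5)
Your argument is correct and coincides with the paper's own proof of this proposition: push $\theta^{\ast}$ forward as a $\sigma$-ideal via $L(f)$, identify the resulting ideal with $(f^{\bullet}(\theta))^{\ast}$, use semiprimeness of $B$ to conclude that $((f^{\bullet}(\theta))^{\ast})_{\ast}=\rho_B(f^{\bullet}(\theta))$ is pure, and then drop the radical. The only discrepancy is bookkeeping: in the paper's numbering the results you cite as Lemma 4.24, Lemma 4.23, Proposition 4.19, Corollary 4.20 and Lemma 4.21 are in fact Lemma 4.23, Lemma 4.22, Proposition 4.18, Corollary 4.19 and Lemma 4.20, respectively.
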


\begin{proof}
Let $\theta$ be a pure congruence of $A$, so $\theta^{\ast}$ is a $\sigma$ ideal of $L(A)$ (cf. Proposition 4.15) and $L(f):L(A)\rightarrow L(B)$ is a morphism of bounded distributive lattices (cf. Proposition 3.11). According to Lemma 4.23, $(L(f)(\theta^{\ast})] = (L(f))^{\bullet}(\theta^{\ast})$ is a $\sigma$ - ideal of the lattice $L(B)$. By Lemma 4.22 we have  $(f^{\bullet}(\theta))^{\ast} = (L(f)(\theta^{\ast})]$, hence, by using Lemma 3.3(2) we get $\rho_B(f^{\bullet}(\theta)) = ((f^{\bullet}(\theta))^{\ast})_{\ast} =  ((L(f)(\theta^{\ast})]_{\ast}$. In accordance with Proposition 4.18 it follows that $\rho_B(f^{\bullet}(\theta))$ is a pure congruence of $B$, so $f^{\bullet}(\theta)$ is pure (cf. Lemma 4.20).

\end{proof}

\section{Transferring the operators $Ker(\cdot)$ and $O(\cdot)$ by reticulation}

\hspace{0.5cm} Let $A$ be an algebra in a semidegenerate congruence - modular variety $\mathcal{V}$. We will assume that $K(A)$ is closed under the commutator operation.

The study of pure congruences of $A$ and other issues is strongly related to the operators $Vir(\cdot)$, $Ker(\cdot)$ and $O(\cdot)$, introduced in Section 4. If we want to transfer the properties of $\sigma$ - ideals of the lattice $L(A)$ to the pure congruences of the algebra $A$, then we need to know how the reticulation preserves the mentioned operators. Theorem 4.15 established a strong connection between the pure congruences of $A$ and the $\sigma$ - ideals of the reticulation $L(A)$, so it remains to treat the mentioned problem for  $Ker(\cdot)$ and $O(\cdot)$. In order to obtain a good description of the behaviour of reticulation w.r.t. these operators it is useful to enlarge the class of pure congruences.

A congruence $\theta$ of $A$ is said to be weakly - pure (= w - pure) if for any $\alpha\in K(A)$, $\alpha\subseteq \theta$ implies $\theta\lor (\alpha\rightarrow \rho(\Delta_A)) = \nabla_A$.

\begin{lema}
Any pure congruence of $A$ is $w$ - pure.
\end{lema}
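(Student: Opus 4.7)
The plan is to reduce $w$-purity directly to purity by exploiting the monotonicity of the residuation operation $\rightarrow$ in its second argument. First I would record the key observation that $\alpha^{\perp} = \alpha \rightarrow \Delta_A$ by the very definition of the annihilator in Remark 2.3, and that $\Delta_A \subseteq \rho(\Delta_A)$ by Lemma 2.7(1). Monotonicity of $\rightarrow$ in the second slot, which is an immediate consequence of the residuation property $\gamma \subseteq \alpha \rightarrow \beta \iff [\alpha,\gamma] \subseteq \beta$, then yields the inclusion
\[
\alpha^{\perp} = \alpha \rightarrow \Delta_A \;\subseteq\; \alpha \rightarrow \rho(\Delta_A).
\]

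Next I would take any pure congruence $\theta$ of $A$ and any $\alpha \in K(A)$ with $\alpha \subseteq \theta$. Purity gives $\theta \lor \alpha^{\perp} = \nabla_A$. Combining with the inclusion above,
\[
\nabla_A \;=\; \theta \lor \alpha^{\perp} \;\subseteq\; \theta \lor (\alpha \rightarrow \rho(\Delta_A)) \;\subseteq\; \nabla_A,
\]
so $\theta \lor (\alpha \rightarrow \rho(\Delta_A)) = \nabla_A$. Since this holds for every compact $\alpha \subseteq \theta$, the congruence $\theta$ is $w$-pure.

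There is essentially no obstacle here; the only point requiring care is to cite the correct source for the monotonicity of $\rightarrow$ in its second argument, which I would attribute to the residuation property recorded in Remark 2.3. The entire argument fits in a few lines and uses nothing beyond the definitions of $\alpha^{\perp}$, of purity, and of $w$-purity, together with $\Delta_A \subseteq \rho(\Delta_A)$.
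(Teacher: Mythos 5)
Your proposal is correct and follows exactly the paper's own argument: both use the inclusion $\alpha^{\perp} = \alpha \rightarrow \Delta_A \subseteq \alpha \rightarrow \rho(\Delta_A)$ (justified by $\Delta_A \subseteq \rho(\Delta_A)$ and monotonicity of $\rightarrow$ in its second argument) to pass from $\theta \lor \alpha^{\perp} = \nabla_A$ to $\theta \lor (\alpha \rightarrow \rho(\Delta_A)) = \nabla_A$. Your version merely spells out the monotonicity step a little more explicitly than the paper does.
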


\begin{proof}
Assume that $\theta$ is a pure congruence of $A$. If $\alpha\in K(A)$ and $\alpha\subseteq \theta$ then $\alpha^{\perp} = \alpha\rightarrow \Delta_A\subseteq\alpha\rightarrow \rho(\Delta_A)$, hence $\nabla_A = \theta\lor \alpha^{\perp}\subseteq\theta\lor (\alpha\rightarrow \rho(\Delta_A))$, so $\theta$ is $w$ - pure.
\end{proof}

In a semiprime algebra $A$ the notions of pure and $w$ - pure congruences coincide. The set of $w$ - pure congruences of $A$ will be denoted by $V_wCon(A)$. By Lemma 5.1 we have $VCon(A)\subseteq V_wCon(A)$.

\begin{propozitie}

\usecounter{nr}
\begin{list}{(\arabic{nr})}{\usecounter{nr}}

\item $V_wCon(A)$ is closed under $[\cdot,\cdot]$ and $\bigcap$;
\item For any family $(\theta_i)_{i\in I}\subseteq V_wCon(A)$ we have $\bigvee_{i\in I}\theta_i\in V_wCon(A)$.

\end{list}
\end{propozitie}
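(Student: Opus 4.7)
The plan is to unpack the definition of $w$-pure congruence for each of the three operations and reduce everything to Lemma 2.4(2), together with two monotonicity observations about the residuation: if $\gamma\subseteq\delta$ then $(\delta\to\beta)\subseteq(\gamma\to\beta)$, and $(\bigvee_k\alpha_k)\to\beta=\bigcap_k(\alpha_k\to\beta)$.

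For closure under $\bigcap$, I would take $\theta_1,\theta_2\in V_wCon(A)$, pick $\alpha\in K(A)$ with $\alpha\subseteq\theta_1\bigcap\theta_2$, and set $\gamma=\alpha\rightarrow\rho(\Delta_A)$. By $w$-purity of each $\theta_i$ we have $\theta_1\lor\gamma=\theta_2\lor\gamma=\nabla_A$, and Lemma 2.4(2) (applied with the roles of $\alpha,\beta,\gamma$ there played by $\gamma,\theta_1,\theta_2$) yields $\gamma\lor(\theta_1\bigcap\theta_2)=\nabla_A$, which is exactly what is needed. For closure under $[\cdot,\cdot]$ the same argument works verbatim: any $\alpha\in K(A)$ with $\alpha\subseteq[\theta_1,\theta_2]$ also satisfies $\alpha\subseteq\theta_1\bigcap\theta_2$ (by (2.1)), and the same instance of Lemma 2.4(2) delivers $\gamma\lor[\theta_1,\theta_2]=\nabla_A$.

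For (2), let $\theta=\bigvee_{i\in I}\theta_i$ and $\alpha\in K(A)$ with $\alpha\subseteq\theta$. Compactness of $\alpha$ lets me choose a finite $I_0\subseteq I$ and compact congruences $\alpha_i\subseteq\theta_i$ ($i\in I_0$) with $\alpha\subseteq\bigvee_{i\in I_0}\alpha_i$. Each hypothesis $\theta_i\lor(\alpha_i\rightarrow\rho(\Delta_A))=\nabla_A$ gives $\theta\lor(\alpha_i\rightarrow\rho(\Delta_A))=\nabla_A$. Iterating Lemma 2.4(2) over the finite set $I_0$ produces $\theta\lor\bigcap_{i\in I_0}(\alpha_i\rightarrow\rho(\Delta_A))=\nabla_A$, i.e.\ $\theta\lor\bigl((\bigvee_{i\in I_0}\alpha_i)\rightarrow\rho(\Delta_A)\bigr)=\nabla_A$. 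Since $\alpha\subseteq\bigvee_{i\in I_0}\alpha_i$, the antitone property of $\rightarrow$ in the first argument forces $(\bigvee_{i\in I_0}\alpha_i)\rightarrow\rho(\Delta_A)\subseteq\alpha\rightarrow\rho(\Delta_A)$, and the desired equality $\theta\lor(\alpha\rightarrow\rho(\Delta_A))=\nabla_A$ follows.

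There is no real obstacle: the only non-trivial ingredient is Lemma 2.4(2), which is exactly the tool that turns two co-maximality relations $\theta\lor\beta=\theta\lor\gamma=\nabla_A$ into $\theta\lor(\beta\bigcap\gamma)=\nabla_A$, and its finite iteration handles the arbitrary-join case via compactness of $\alpha$. The only point worth double-checking when typing out the proof is the antitonicity $\gamma\subseteq\delta\Rightarrow(\delta\rightarrow\beta)\subseteq(\gamma\rightarrow\beta)$, which is immediate from the residuation property $\eta\subseteq\gamma\rightarrow\beta\iff[\gamma,\eta]\subseteq\beta$ and the monotonicity of the commutator.
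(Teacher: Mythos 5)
Your proof is correct. The paper states this proposition without proof, but your argument is exactly the one it implicitly intends: it rests on Lemma 2.4(2), the antitonicity of $\rightarrow$ in its first argument, and the identity $(\bigvee_i\alpha_i)\rightarrow\rho(\Delta_A)=\bigcap_i(\alpha_i\rightarrow\rho(\Delta_A))$, which are precisely the ingredients the paper itself deploys in the analogous arguments for pure congruences (Lemma 4.2) and for $\tilde{Ker}$ (Lemma 5.4). All three steps check out, including the observation that Lemma 2.4(2) yields both $\gamma\lor(\theta_1\bigcap\theta_2)=\nabla_A$ and $\gamma\lor[\theta_1,\theta_2]=\nabla_A$ from the single pair of comaximality hypotheses.
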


The next theorem establishes the relationship between the $w$ - pure congruences of $A$ and the $\sigma$ - ideals of $L(A)$.

\begin{teorema}
Let $\theta$  a congruence of the algebra $A$ and $J$ an ideal of the reticulation $L(A)$.
\usecounter{nr}
\begin{list}{(\arabic{nr})}{\usecounter{nr}}
\item If $\theta$ is $w$ - pure in $A$ then $\theta^{\ast}$ is a $\sigma$ - ideal of $L(A)$;
\item If $J$ is a $\sigma$ - ideal of $L(A)$ then $J_{\ast}$ is a $w$ - pure congruence of $A$.
\end{list}
\end{teorema}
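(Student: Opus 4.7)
The plan is to prove both directions by transferring the defining identities of $w$-purity and of $\sigma$-ideals across the pair $(\cdot)^{\ast}$, $(\cdot)_{\ast}$, using the annihilator-transfer identity from Proposition 3.9 and the join-preservation properties in Lemma 3.2. The one preliminary identity I want to have in hand is this: for every $\alpha \in K(A)$,
\[
Ann(\lambda_A(\alpha)) \;=\; (\alpha\rightarrow\rho(\Delta_A))^{\ast}.
\]
This follows by applying Proposition 3.9 to $\theta=\alpha$, noting (via Lemma 3.3(7)) that $\alpha^{\ast}$ is the principal ideal $(\lambda_A(\alpha)]$, and observing that the annihilator of a principal ideal in a bounded distributive lattice coincides with the annihilator of its generator.

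For part (1), I would pick $x\in\theta^{\ast}$, write $x=\lambda_A(\alpha)$ with $\alpha\in K(A)$, $\alpha\subseteq\theta$, and feed $\alpha$ into the $w$-purity hypothesis to get $\theta\lor(\alpha\rightarrow\rho(\Delta_A))=\nabla_A$. Applying $(\cdot)^{\ast}$ and using the join-preservation in Lemma 3.2(2), together with $(\nabla_A)^{\ast}=L(A)$, yields
\[
\theta^{\ast}\lor(\alpha\rightarrow\rho(\Delta_A))^{\ast}=L(A),
\]
which by the preliminary identity is exactly $\theta^{\ast}\lor Ann(x)=L(A)$. This says $\theta^{\ast}$ is a $\sigma$-ideal.

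For part (2), I would start from a compact $\alpha\subseteq J_{\ast}$; by Lemma 3.3(1) this is equivalent to $\lambda_A(\alpha)\in J$. The $\sigma$-ideal property gives $J\lor Ann(\lambda_A(\alpha))=L(A)$. Applying $(\cdot)_{\ast}$ and using Lemma 3.2(3) gives $\rho(J_{\ast}\lor(Ann(\lambda_A(\alpha)))_{\ast})=\nabla_A$, and then Lemma 2.7(3) removes the $\rho$ to leave $J_{\ast}\lor(Ann(\lambda_A(\alpha)))_{\ast}=\nabla_A$. So the task reduces to identifying $(Ann(\lambda_A(\alpha)))_{\ast}$ with $\alpha\rightarrow\rho(\Delta_A)$.

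The main obstacle is precisely this last identification: the ready-made tool is Proposition 3.10, but it requires the hypothesis $(\star)$, which is not assumed in this section. I would bypass it by unwinding the definition directly, using that $\alpha$ is compact (so $Ann(\lambda_A(\alpha))$ is an annihilator of a single element):
\[
(Ann(\lambda_A(\alpha)))_{\ast}=\bigvee\{\beta\in K(A)\mid \lambda_A(\beta)\land\lambda_A(\alpha)=0\},
\]
and Lemma 3.1(2),(6) rewrites the indexing condition as $[\beta,\alpha]\subseteq\rho(\Delta_A)$, i.e.\ $\beta\subseteq\alpha\rightarrow\rho(\Delta_A)$. Since every congruence is the join of its compact subcongruences, the supremum equals $\alpha\rightarrow\rho(\Delta_A)$, exactly as needed; no $(\star)$ is required because we never have to extract $\lambda_A(\beta)\land\lambda_A(\alpha)=0$ from an inequality $[[\beta,\beta]^m,[\alpha,\alpha]^m]=\Delta_A$. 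Substituting back finishes the proof that $J_{\ast}$ is $w$-pure.
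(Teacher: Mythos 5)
Your proof is correct, and both parts reach the same conclusion the paper does, but your packaging is genuinely different. The paper argues element-wise: in each direction it uses compactness of $\nabla_A$ (resp.\ of $1$ in $L(A)$) to extract compact witnesses $\beta,\gamma$ from the join being the top element, and then translates each witness individually across $\lambda_A$ via Lemma 3.1(2),(3),(6). You instead apply the operators $(\cdot)^{\ast}$ and $(\cdot)_{\ast}$ to the whole identity and invoke the transfer lemmas wholesale: Lemma 3.2(2) and Proposition 3.9 give part (1) in one line once you record $Ann(\lambda_A(\alpha))=(\alpha\rightarrow\rho(\Delta_A))^{\ast}$, and Lemma 3.2(3) plus Lemma 2.7(3) reduce part (2) to the single identity $(Ann(\lambda_A(\alpha)))_{\ast}=\alpha\rightarrow\rho(\Delta_A)$. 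Your handling of that last identity is the most valuable deviation: you correctly note that the ready-made statement (Proposition 3.10) carries the hypothesis $(\star)$, and you show that for the annihilator of a \emph{single} element $\lambda_A(\alpha)$ no such hypothesis is needed, because the membership condition $\lambda_A(\beta)\land\lambda_A(\alpha)=0$ converts directly to $[\beta,\alpha]\subseteq\rho(\Delta_A)$ by Lemma 3.1(2),(6) and then to $\beta\subseteq\alpha\rightarrow\rho(\Delta_A)$ by residuation, with the join over such compact $\beta$ recovering $\alpha\rightarrow\rho(\Delta_A)$ by algebraicity of $Con(A)$. What the paper's route buys is self-containedness (everything reduces to Lemma 3.1); what yours buys is reusability --- the unconditional identities $Ann(\lambda_A(\alpha))=(\alpha\rightarrow\rho(\Delta_A))^{\ast}$ and $(Ann(\lambda_A(\alpha)))_{\ast}=\alpha\rightarrow\rho(\Delta_A)$ are worth isolating, since they are exactly the principal-ideal case of Propositions 3.9 and 3.10 freed from $(\star)$.
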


\begin{proof}

(1) Assume that $\theta$  a is $w$ - pure congruence of the algebra $A$ and $x\in \theta^{\ast}$, so $x = \lambda_A(\alpha)$ for some $\alpha\in K(A)$ such that $\alpha\subseteq\theta$. Since $\theta$ is $w$ - pure we have $\theta\lor(\alpha\rightarrow\rho(\Delta_A)) = \nabla_A$, so there exist $\beta,\gamma\in K(A)$ such that $\beta\subseteq\theta$, $\gamma\subseteq \alpha\rightarrow\rho(\Delta_A)$ and $\beta\lor\gamma = \nabla_A$. Thus $\lambda_A(\beta)\in \theta^{\ast}$, $[\alpha,\gamma]\subseteq \rho(\Delta_A)$ and $\lambda_A(\alpha)\lor\lambda_A(\gamma) = \lambda_A(\alpha\lor\gamma) = 1$. By Lemma 3.1(6), from $[\alpha,\gamma]\subseteq \rho(\Delta_A)$ we get  $\lambda_A(\alpha)\land\lambda_A(\gamma) = \lambda_A([\alpha,\gamma]) = 0$, hence $\lambda_A(\gamma)\in Ann(\lambda_A(\alpha))$. It follows that $\theta^{\ast}\lor Ann(x)= \theta^{\ast}\lor Ann(\lambda_A(\alpha)) = L(A)$, so $\theta^{\ast}$ is a $\sigma$ - ideal of $L(A)$.

(2) Assume that $J$ is a $\sigma$ - ideal of $L(A)$ and $\alpha\in K(A)$ such that $\alpha\subseteq J_{\ast}$. We shall prove that $J_{\ast}\lor (\alpha\rightarrow\rho(\Delta_A)) = \nabla_A$. According to Lemma 3.3(1) we have $\lambda_A(\alpha)\in J$, hence $J\lor Ann(\lambda_A(\alpha)) = L(A)$, so there exist $\beta,\gamma\in K(A)$ such that $\lambda_A(\beta)\in J$, $\lambda_A(\gamma)\in Ann(\lambda_A(\alpha))$ and $\lambda_A(\beta\lor\gamma) = \lambda_A(\beta)\lor\lambda_A(\gamma) = 1$. By Lemmas 3.3(1) and 3.1(3) we obtain $\beta\subseteq J_{\ast}$ and $\beta\lor\gamma = \nabla_A$. From $\lambda_A(\gamma)\in Ann(\lambda_A(\alpha))$ we obtain $\lambda_A([\alpha,\gamma]) = \lambda_A(\alpha)\land \lambda_A(\gamma) = 0$. By using Lemma 3.1(6) we get $[\alpha,\gamma]\subseteq\rho(\Delta_A)$, hence $\gamma\subseteq\alpha\rightarrow \rho(\Delta_A)$. Thus $\nabla_A = \beta\lor\gamma\subseteq J_{\ast}\lor (\alpha\rightarrow \rho(\Delta_A))$, so we get $J_{\ast}\lor (\alpha\rightarrow \rho(\Delta_A)) = \nabla_A$. We conclude that $J_{\ast}$ is a $w$ - pure congruence of $A$.

\end{proof}

We observe that the previous theorem generalises Propositions 4.15 and 4.18, so it is clear why we need to define the $w$ - pure congruences of $A$.

Now we will study the way in which the reticulation preserves the operators $Ker(\cdot)$ and $O(\cdot)$. Following \cite{Cornish1}, \cite{GeorgescuVoiculescu}, if $I$ is an ideal of a bounded distributive lattice $L$ then we denote $\sigma(I) = \{x\in L|I\lor Ann(x) = L\}$. We observe that in the frame $Id(L)$ of ideals in $L$ we have $Ker(I) = \sigma(I)$ (w.r.t. the notations of \cite{GeorgescuVoiculescu2}).

For an arbitrary congruence $\theta$ of an algebra $A$ we shall define the congruence $\tilde {Ker}(\theta) = \bigvee\{\alpha\in K(A)|\theta\lor (\alpha\rightarrow \rho(\Delta_A)) = \nabla_A\}$. For any $\alpha\in K(A)$ we have $\alpha^{\perp}\subseteq \alpha\rightarrow \rho(\Delta_A)$, so $Ker(\theta)\subseteq \tilde {Ker}(\theta)$.

\begin{lema}
If $\theta\in Con(A)$ and $\alpha\in K(A)$ then $\alpha\subseteq\tilde {Ker}(\theta)$ if and only if $\theta\lor (\alpha\rightarrow \rho(\Delta_A)) = \nabla_A$.
\end{lema}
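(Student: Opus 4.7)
The plan is to prove the two directions separately, noting that the easy direction is immediate from the definition and the hard direction requires the compactness of $\alpha$ together with a stability property for the intersection of congruences that together with $\theta$ span $\nabla_A$.

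For the direction $(\Leftarrow)$, suppose $\theta\lor(\alpha\rightarrow\rho(\Delta_A))=\nabla_A$. Since $\alpha\in K(A)$, $\alpha$ is itself one of the compact congruences in the family whose join defines $\tilde{Ker}(\theta)$, hence $\alpha\subseteq\tilde{Ker}(\theta)$. This is one line.

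For $(\Rightarrow)$, assume $\alpha\subseteq\tilde{Ker}(\theta)$. Using the compactness of $\alpha$ in $Con(A)$, there exist finitely many congruences $\beta_1,\dots,\beta_n\in K(A)$ with $\theta\lor(\beta_i\rightarrow\rho(\Delta_A))=\nabla_A$ for each $i$, such that $\alpha\subseteq\bigvee_{i=1}^n\beta_i$. Set $\beta=\bigvee_{i=1}^n\beta_i\in K(A)$. The residuation identity (distributivity of $\rightarrow$ over arbitrary joins in the first argument, which holds in the $l$-groupoid structure of Remark 2.3) gives $\beta\rightarrow\rho(\Delta_A)=\bigcap_{i=1}^n(\beta_i\rightarrow\rho(\Delta_A))$. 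I would then apply Lemma 2.4(2) iteratively to merge the equalities $\theta\lor(\beta_i\rightarrow\rho(\Delta_A))=\nabla_A$ into $\theta\lor\bigcap_{i=1}^n(\beta_i\rightarrow\rho(\Delta_A))=\nabla_A$, which yields $\theta\lor(\beta\rightarrow\rho(\Delta_A))=\nabla_A$. Since $\alpha\subseteq\beta$ and $\rightarrow$ is antitone in its first argument, $\beta\rightarrow\rho(\Delta_A)\subseteq\alpha\rightarrow\rho(\Delta_A)$, so $\theta\lor(\alpha\rightarrow\rho(\Delta_A))=\nabla_A$, as required.

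The only non-routine step is the passage from the individual equalities $\theta\lor(\beta_i\rightarrow\rho(\Delta_A))=\nabla_A$ to the equality involving their meet. Lemma 2.4(2) handles exactly this for two arguments, and an obvious induction on $n$ extends it to finite families, so no new tool is needed. Everything else is a direct appeal to the definition of $\tilde{Ker}(\theta)$, to compactness in $Con(A)$, and to the basic antitone/distributive properties of the residuation $\rightarrow$ recorded in Remark 2.3.
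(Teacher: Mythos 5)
Your proof is correct and follows essentially the same route as the paper's: the forward direction via compactness of $\alpha$, the join $\beta=\bigvee_{i=1}^n\beta_i$, the identity $(\bigvee_i\beta_i)\rightarrow\rho(\Delta_A)=\bigcap_i(\beta_i\rightarrow\rho(\Delta_A))$ combined with Lemma 2.4(2), and antitonicity of $\rightarrow$ in its first argument; the reverse direction is immediate from the definition in both. No gaps.
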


\begin{proof}
Assume that $\alpha\in K(A)$ and $\alpha\subseteq\tilde {Ker}(\theta)$ so there exist an integer $n\geq 1$ and $\beta_1,\cdots,\beta_n\in K(A)$ such that $\alpha\subseteq\bigvee_{i=1}^n\beta_i$ and $\theta\lor (\beta_i\rightarrow \rho(\Delta_A)) = \nabla_A$, for all $i =1,\cdots, n$. Denoting $\beta = \bigvee_{i=1}^n\beta_i$, we have $\beta\in K(A)$ and $\alpha\subseteq\beta$. In accordance with Lemma 2.4(2) the following equalities hold:

$\nabla_A = \theta\lor\bigcap_{i=1}^n (\beta_i\rightarrow \rho(\Delta_A)) = \theta\lor ((\bigvee_{i=1}^n\beta_i)\rightarrow \rho(\Delta_A)) = \theta\lor (\beta\rightarrow \rho(\Delta_A))$.

We observe that $\alpha\subseteq\beta$ implies $\beta\rightarrow \rho(\Delta_A)\subseteq \alpha\rightarrow \rho(\Delta_A)$, therefore $\nabla_A =  \theta\lor (\beta\rightarrow \rho(\Delta_A))\subseteq\theta\lor (\alpha\rightarrow \rho(\Delta_A))$. It follows that $\theta\lor (\alpha\rightarrow \rho(\Delta_A)) = \nabla_A$. The converse implication is obvious.

\end{proof}

\begin{teorema}
Let $A$ an algebra of the variety $\mathcal{V}$. Then the following equalities hold:
\usecounter{nr}
\begin{list}{(\arabic{nr})}{\usecounter{nr}}
\item For any congruence $\theta$ of $A$ we have $(\tilde Ker(\theta))^{\ast} = \sigma(\theta^{\ast})$;
\item For any ideal $I$ of $L(A)$ we have $(\sigma(I))_{\ast} = \tilde Ker(I_{\ast})$.
\item If $A$ is semiprime then for all $\theta\in Con(A)$ and $I\in Id(A)$ we have $(Ker(\theta))^{\ast} = \sigma(\theta^{\ast})$ and $(\sigma(I))_{\ast} = Ker(I_{\ast})$.
\end{list}
\end{teorema}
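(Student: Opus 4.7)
The plan is to reduce the first two claims to the characterizations already at hand: Lemma 5.4 describes membership of a compact congruence in $\tilde{Ker}(\theta)$, Proposition 3.9 together with Lemma 3.3(7) gives $Ann(\lambda_A(\alpha)) = Ann(\alpha^{\ast}) = (\alpha\rightarrow\rho(\Delta_A))^{\ast}$ for each $\alpha\in K(A)$, and Lemmas 3.2, 3.3, 2.7 and 3.1 allow us to pass freely between congruences and ideals through $(\cdot)^{\ast}$ and $(\cdot)_{\ast}$. The statement (3) will then be an immediate corollary of (1) and (2), since in a semiprime algebra $\rho(\Delta_A)=\Delta_A$, so $\alpha\rightarrow\rho(\Delta_A) = \alpha^{\perp}$ for every $\alpha\in K(A)$ and consequently $\tilde{Ker}(\theta) = Ker(\theta)$ for every $\theta$.

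For (1), I would pick $x\in(\tilde{Ker}(\theta))^{\ast}$ and write $x=\lambda_A(\alpha)$ with $\alpha\in K(A)$ and $\alpha\subseteq\tilde{Ker}(\theta)$. By Lemma 5.4 this amounts to $\theta\vee(\alpha\rightarrow\rho(\Delta_A)) = \nabla_A$; applying $(\cdot)^{\ast}$, using Lemma 3.2(2) and the identification $(\alpha\rightarrow\rho(\Delta_A))^{\ast} = Ann(\lambda_A(\alpha))$ from Proposition 3.9, one gets $\theta^{\ast}\vee Ann(x) = L(A)$, i.e.\ $x\in\sigma(\theta^{\ast})$. Conversely, if $\lambda_A(\alpha)\in\sigma(\theta^{\ast})$ then $\theta^{\ast}\vee(\alpha\rightarrow\rho(\Delta_A))^{\ast} = L(A)$, hence by Lemma 3.2(2) the element $1$ lies in $(\theta\vee(\alpha\rightarrow\rho(\Delta_A)))^{\ast}$; Lemma 3.1(3) upgrades this to $\theta\vee(\alpha\rightarrow\rho(\Delta_A)) = \nabla_A$, so Lemma 5.4 gives $\alpha\subseteq\tilde{Ker}(\theta)$ and therefore $x\in(\tilde{Ker}(\theta))^{\ast}$.

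For (2), I would use Lemma 3.3(1) to rewrite $\alpha\subseteq(\sigma(I))_{\ast}$ as $\lambda_A(\alpha)\in\sigma(I)$, that is, $I\vee Ann(\lambda_A(\alpha)) = L(A)$. Applying $(\cdot)_{\ast}$ and Lemma 3.2(3) turns this into $\rho(I_{\ast}\vee(Ann(\lambda_A(\alpha)))_{\ast}) = \nabla_A$, and Lemma 2.7(3) removes the $\rho$. Using $(Ann(\lambda_A(\alpha)))_{\ast} = \rho(\alpha\rightarrow\rho(\Delta_A))$, which follows from Proposition 3.9 and Lemma 3.3(2), together with $\rho(I_{\ast}) = I_{\ast}$ from Lemma 3.3(3) and Lemma 2.7(6), collapses the equality to $I_{\ast}\vee(\alpha\rightarrow\rho(\Delta_A)) = \nabla_A$, which by Lemma 5.4 is exactly $\alpha\subseteq\tilde{Ker}(I_{\ast})$. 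Since both $(\sigma(I))_{\ast}$ and $\tilde{Ker}(I_{\ast})$ are suprema of the compact congruences they contain, this elementwise equivalence produces the equality.

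The main obstacle I anticipate is the bookkeeping around the non-injectivity of $(\cdot)^{\ast}$ and $(\cdot)_{\ast}$: passing from ideal-side identities to congruence-side identities only gives equality up to radicals, so at each step one must be ready to invoke Lemma 2.7(6) or Lemma 3.1(3) to recover a genuine $\nabla_A$ equation. The cleanest way to navigate this is to keep the characterization in Lemma 5.4 in front at all times, so that each direction reduces to producing (or consuming) a single equation of the shape $\theta\vee(\alpha\rightarrow\rho(\Delta_A)) = \nabla_A$, with the bridge $(\alpha\rightarrow\rho(\Delta_A))^{\ast} = Ann(\lambda_A(\alpha))$ doing the real translation work.
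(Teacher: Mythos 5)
Your proposal is correct and follows essentially the same route as the paper: both hinge on the compactness characterization of Lemma 5.4 and the bridge $Ann(\lambda_A(\alpha)) = (\alpha\rightarrow\rho(\Delta_A))^{\ast}$ from Proposition 3.9 (via Lemma 3.3(7)), together with the transfer lemmas 3.1--3.3 and 2.7(6). The only difference is cosmetic: where the paper unpacks the joins $\theta^{\ast}\lor Ann(x)=L(A)$ and $I\lor Ann(\lambda_A(\alpha))=L(A)$ into explicit compact witnesses $\beta,\gamma$, you apply $(\cdot)^{\ast}$ and $(\cdot)_{\ast}$ to the whole identity and collapse the radicals at the end, which is a valid and slightly more uniform bookkeeping of the same argument; your one-line justification of (3) via $\rho(\Delta_A)=\Delta_A$, hence $\tilde{Ker}=Ker$ and $\sigma$ unchanged, is exactly what the paper leaves implicit.
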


\begin{proof}

(1) Firstly we shall prove that $\sigma(\theta^{\ast})\subseteq (\tilde Ker(\theta))^{\ast}$. Assume that $x\in \sigma(\theta^{\ast})$, so $\theta^{\ast}\lor Ann(x) = L(A)$, hence there exists $\alpha\in K(A)$ such that $x = \lambda_A(\alpha)$, therefore $\theta^{\ast}\lor Ann(\lambda_A(\alpha)) = L(A)$. Then there exist $\beta,\gamma\in K(A)$ such that $\lambda_A(\beta)\in \theta^{\ast}$, $\lambda_A(\gamma)\in Ann(\lambda_A(\alpha))$ and $\lambda_A(\beta\lor\gamma) = \lambda_A(\beta)\lor\lambda_A(\gamma) = 1$. From $\lambda_A(\beta)\in \theta^{\ast}$ we get $\lambda_A(\beta) = \lambda_A(\delta)$ for some $\delta\in K(A)$ such that $\delta\subseteq\theta$, therefore $\lambda_A(\delta\lor\gamma) = \lambda_A(\delta)\lor\delta_A(\gamma) = 1$. According to Lemma 3.1(3) we have $\delta\lor\gamma = \nabla_A$. On the other hand we have $\lambda_A([\gamma,\alpha]) = \lambda_A(\gamma)\land \lambda_A(\alpha) = 0$, so $[\gamma,\alpha]\subseteq\rho(\Delta_A)$ (cf. Lemma 3.1(6)). Therefore we get $\gamma\subseteq\alpha\rightarrow \rho(\Delta_A)$, hence $\nabla_A = \delta\lor\gamma\subseteq \theta\lor (\alpha\rightarrow \rho(\Delta_A)$, i.e. $\theta\lor (\alpha\rightarrow \rho(\Delta_A) = \nabla_A$. By Lemma 5.4 it follows that $\alpha\subseteq\tilde Ker(\theta)$, i.e. $x = \lambda_A(\alpha)\in (\tilde Ker(\theta))^{\ast}$. Then one obtains the inclusion $\sigma(\theta^{\ast})\subseteq (\tilde Ker(\theta))^{\ast}$.

In order to prove the converse inclusion $(\tilde Ker(\theta))^{\ast}\subseteq \sigma(\theta^{\ast})$, assume that $x\in (\tilde Ker(\theta))^{\ast}$, so $x = \lambda_A(\alpha)$, for some $\alpha\in K(A)$ such that $\alpha\subseteq \tilde Ker(\theta)$. By Lemma 5.4 we have $\theta\lor(\alpha\rightarrow \rho(\Delta_A)) = \nabla_A$, hence, by using Lemma 3.2(2) we obtain $\theta^{\ast}\lor (\alpha\rightarrow \rho(\Delta_A))^{\ast} = L(A)$. According to Proposition 3.9, $Ann(x) = Ann(\lambda_A(\alpha)) = (\alpha\rightarrow \rho(\Delta))^{\ast}$, hence  $\theta^{\ast}\lor Ann(x) = L(A)$, i.e. $x\in \sigma(\theta^{\ast})$.

(2) In order to prove the equality $(\sigma(I))_{\ast} = \tilde Ker(I_{\ast})$ we have to show that for any compact congruence $\alpha$ of $A$, the following equivalence holds: $\alpha\subseteq (\sigma(I))_{\ast}$ if and only if $\alpha\subseteq \tilde Ker(I_{\ast})$.

Let $\alpha$ be a compact congruence of $A$ such that $\alpha\subseteq (\sigma(I))_{\ast}$. By using Lemma 3.3(1) we have $\lambda_A(\alpha)\in \sigma(I)$, hence $I\lor Ann( \lambda_A(\alpha)) = L(A)$. Then there exist $\beta,\gamma\in K(A)$ such that $\lambda_A(\beta)\in I$, $\lambda_A(\gamma)\in Ann(\lambda_A(\alpha))$ and $\lambda_A(\beta\lor\gamma) = \lambda_A(\beta)\lor \lambda_A(\gamma) = 1$, hence $\beta\subseteq I_{\ast}$ (by Lemma 3.3(1)), $\lambda_A([\alpha,\gamma]) = \lambda_A(\alpha)\land \lambda_A(\gamma) = 0$ (by Lemma 3.1(2)) and $\beta\lor\gamma = \nabla_A$ (by Lemma 3.1(3)). According to Lemma 3.1(6), from $\lambda_A([\alpha,\gamma]) = 0$ we get $[\gamma,\alpha]\subseteq \rho(\Delta_A)$, hence $\gamma\subseteq\alpha\rightarrow \rho(\Delta)$. Then $\nabla_A = \beta\lor\gamma\subseteq I_{\ast}\lor(\alpha\rightarrow \rho(\Delta))$, hence $I_{\ast}\lor(\alpha\rightarrow \rho(\Delta_A)) = \nabla_A$. By Lemma 5.4 it follows that $\alpha\subseteq \tilde Ker(I_{\ast})$.

Conversely, suppose that $\alpha\subseteq \tilde Ker(I_{\ast})$, hence $I_{\ast}\lor(\alpha\rightarrow \rho(\Delta_A)) = \nabla_A$ (cf. Lemma 5.4). Then there exist $\beta,\gamma \in K(A)$ such that $\beta\subseteq I_{\ast}$, $\gamma\subseteq\alpha\rightarrow \rho(\Delta_A)$ and $\beta\lor\gamma = \nabla_A$, therefore $\lambda_A(\beta)\in I$ (by Lemma 3.1(1)), $[\gamma,\alpha]\subseteq \rho(\Delta_A)$ and $\lambda_A(\beta)\lor\lambda_A(\gamma) =1$. According to Lemma 3.1(6), $[\gamma,\alpha]\subseteq \rho(\Delta_A)$ implies $\lambda_A(\gamma)\land \lambda_A(\alpha)$ = $\lambda_A([\gamma,\alpha]) = 0$, hence $\lambda_A(\gamma)\in Ann(\lambda_A(\alpha))$. From $\lambda_A(\beta)\in I$, $\lambda_A(\gamma)\in Ann(\lambda_A(\alpha))$ and $\lambda_A(\beta)\lor\lambda_A(\gamma) =1$ it follows that $I\lor Ann(\lambda_A(\alpha)) = L(A)$, i.e. $\lambda_A(\alpha)\in \sigma(I)$. We conclude that $\alpha\subseteq (\sigma(I))_{\ast}$.

\end{proof}

Theorem 5.5 shows that the reticulation commutes with the operator $\tilde Ker(\cdot)$; if $A$ is semiprime then the reticulation commutes with $Ker(\cdot)$.

For any $\phi\in Spec(A)$ let us consider the following congruence of the algebra $A$: $\tilde O(\phi) = \bigvee\{\alpha\in K(A)|\alpha\subseteq \phi, \alpha\rightarrow \rho(\Delta_A)\not\subseteq \phi\}$. For all $\alpha\in K(A)$ we have $\alpha^{\perp}\subseteq \alpha\rightarrow\rho(\Delta_A)$, therefore $\tilde O(\phi)\subseteq O(\phi)$.

\begin{lema}
Let  If $\phi\in Spec(A)$ and $\alpha\in K(A)$. Then the following hold:
\usecounter{nr}
\begin{list}{(\arabic{nr})}{\usecounter{nr}}
\item $\alpha\subseteq  O(\phi)$ if and only if $\alpha^{\perp}\not\subseteq\phi$;
\item $\alpha\subseteq \tilde O(\phi)$ if and only if $\alpha\subseteq\phi$ and $\alpha\rightarrow \rho(\Delta_A)\not\subseteq \phi$.

\end{list}
\end{lema}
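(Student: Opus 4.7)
The plan is to handle both parts by the same two-step pattern: the easier ``if'' direction is immediate from the defining unions of $O(\phi)$ and $\tilde O(\phi)$, while the ``only if'' direction uses compactness of $\alpha$ together with the primality of $\phi$ to absorb a finite join of generators. First I would record the key residuation identities in $Con(A)$ that I will need: $(\bigvee_{i=1}^n \alpha_i)^{\perp} = \bigcap_{i=1}^n \alpha_i^{\perp}$ and $(\bigvee_{i=1}^n \alpha_i)\rightarrow \chi = \bigcap_{i=1}^n (\alpha_i \rightarrow \chi)$, which hold by the residuation property stated in Remark 2.3 (join distributes from the left argument of the commutator).

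For (1), the ``if'' direction is clear, since $\phi\in Spec(A)$ and $\alpha \in K(A)$ with $\alpha^{\perp}\not\subseteq \phi$ puts $\alpha$ in the generating set of $O(\phi)$ given by the formula recalled at the end of Section~4. Conversely, assume $\alpha\subseteq O(\phi)$. Since $\alpha$ is compact, there exist $\alpha_1,\dots,\alpha_n \in K(A)$ with $\alpha_i^{\perp}\not\subseteq \phi$ and $\alpha \subseteq \bigvee_{i=1}^n \alpha_i$. Setting $\beta = \bigvee_{i=1}^n\alpha_i$, we get $\beta^{\perp}\subseteq \alpha^{\perp}$ and $\beta^{\perp} = \bigcap_{i=1}^n \alpha_i^{\perp}$. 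The primality of $\phi$ gives the contrapositive statement: if $\gamma,\delta\not\subseteq \phi$ then $[\gamma,\delta]\not\subseteq \phi$, hence $\gamma\cap \delta\not\subseteq \phi$ since $[\gamma,\delta]\subseteq \gamma\cap\delta$. Iterating this finitely many times, $\bigcap_{i=1}^n\alpha_i^{\perp}\not\subseteq \phi$, therefore $\alpha^{\perp}\not\subseteq \phi$.

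For (2), the ``if'' direction is again immediate from the definition of $\tilde O(\phi)$. For the converse, suppose $\alpha\subseteq \tilde O(\phi)$. Compactness of $\alpha$ supplies $\alpha_1,\dots,\alpha_n\in K(A)$ with $\alpha_i\subseteq \phi$, $\alpha_i\rightarrow \rho(\Delta_A)\not\subseteq \phi$, and $\alpha\subseteq \bigvee_{i=1}^n\alpha_i$. Then $\alpha\subseteq \bigvee_{i=1}^n\alpha_i\subseteq \phi$. Using the residuation identity, $(\bigvee_{i=1}^n\alpha_i)\rightarrow \rho(\Delta_A) = \bigcap_{i=1}^n(\alpha_i\rightarrow \rho(\Delta_A))$, and the same primality argument as in (1) shows that this intersection is not contained in $\phi$. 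Since $\alpha\subseteq \bigvee_{i=1}^n\alpha_i$ implies $(\bigvee_{i=1}^n\alpha_i)\rightarrow \rho(\Delta_A)\subseteq \alpha\rightarrow \rho(\Delta_A)$, we conclude that $\alpha\rightarrow \rho(\Delta_A)\not\subseteq \phi$.

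The only place that requires real care is the ``primality absorbs finite intersections'' step; everything else is formal manipulation. This step is cleanly handled by the observation $[\gamma,\delta]\subseteq \gamma\cap \delta$ combined with the contrapositive of the definition of a prime congruence, applied inductively on $n$. No use of the condition $(\star)$, of semiprimeness, or of $K(A)$ being closed under commutator is needed here; only the basic residuation structure of $Con(A)$ and the defining property of $\phi\in Spec(A)$.
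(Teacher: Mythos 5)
Your proof is correct and follows essentially the same route as the paper: compactness of $\alpha$ yields a finite join of generators, residuation converts that join into a finite intersection of annihilators/implications, and the primality of $\phi$ (via $[\gamma,\delta]\subseteq\gamma\bigcap\delta$) shows the intersection is not contained in $\phi$. The paper only writes out part (2) and dismisses part (1) and the "if" directions as easy, so your treatment of (1) and your explicit justification of the "primality absorbs finite intersections" step merely fill in details the paper leaves implicit.
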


\begin{proof}
We shall prove only (2).

Assume that $\alpha\in K(A)$ and $\alpha\subseteq \tilde O(\phi)$, hence there exist $\beta_1,\cdots,\beta_n\in K(A)$ such that $\alpha\subseteq\bigvee_{i=1}^n\beta_i$, $\beta_i\subseteq\phi$ and $\beta_i\rightarrow\rho(\Delta_A)\not\subseteq\phi$, for $i =1,\cdots,n$. Then $\alpha\subseteq\phi$ and $(\bigvee_{i=1}^n\beta_i)\rightarrow\rho(\Delta_A)$ = $\bigcap_{i=1}^n(\beta_i\rightarrow \rho(\Delta_A))\not\subseteq\phi$ (because $\phi$ is a prime congruence). Since $(\bigvee_{i=1}^n\beta_i)\rightarrow\rho(\Delta_A)\subseteq \alpha\rightarrow \rho(\Delta_A)$, it results that $\alpha\rightarrow \rho(\Delta_A)\not\subseteq \phi$. The converse implication is obvious.

\end{proof}

Recall from \cite{Cornish1}, \cite{GeorgescuVoiculescu} that if $P$ is a prime ideal in a bounded distributive lattice $L$ then we denote $O(P) = \{x\in L|Ann(x)\not\subseteq P\}$. Thus $O(P)$ is an ideal of $L$ and $O(P)\subseteq P$.

\begin{teorema}
Let $A$ an algebra of the variety $\mathcal{V}$. Then the following  hold:
\usecounter{nr}
\begin{list}{(\arabic{nr})}{\usecounter{nr}}
\item For any congruence $\phi\in Spec(A)$ we have $O(\phi^{\ast}) = (\tilde O(\phi))^{\ast}$;
\item For any prime ideal $P$ of $L(A)$ we have $(O(P))_{\ast} = \tilde O(P_{\ast})$.
\end{list}
\end{teorema}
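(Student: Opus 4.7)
The plan is to reduce both equalities to a single characterization of when a compact congruence $\alpha$ lies in the relevant congruence/ideal, by comparing the defining conditions via the key bridges Lemma 5.6(2) and Lemma 3.8.

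For part (1), I would show $O(\phi^{\ast}) = (\tilde O(\phi))^{\ast}$ by proving that for every $\alpha\in K(A)$, $\alpha\subseteq\tilde O(\phi)$ iff $\lambda_A(\alpha)\in O(\phi^{\ast})$. The forward translation: by Lemma 5.6(2), $\alpha\subseteq\tilde O(\phi)$ means $\alpha\subseteq\phi$ and $\alpha\rightarrow\rho(\Delta_A)\not\subseteq\phi$. On the other side, Lemma 3.8 says $Ann(\lambda_A(\alpha))\subseteq\phi^{\ast}$ iff $\alpha\rightarrow\rho(\Delta_A)\subseteq\phi$, so $\lambda_A(\alpha)\in O(\phi^{\ast})$ iff $\alpha\rightarrow\rho(\Delta_A)\not\subseteq\phi$. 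The only subtlety is the apparently missing clause $\alpha\subseteq\phi$ on the $O(\phi^{\ast})$ side; but since $[\alpha,\alpha\rightarrow\rho(\Delta_A)]\subseteq\rho(\Delta_A)\subseteq\phi$ (as every prime contains $\rho(\Delta_A)$), primality of $\phi$ forces $\alpha\subseteq\phi$ whenever $\alpha\rightarrow\rho(\Delta_A)\not\subseteq\phi$. Thus the two conditions for $\alpha\subseteq\tilde O(\phi)$ collapse into the single condition $\alpha\rightarrow\rho(\Delta_A)\not\subseteq\phi$, matching membership in $O(\phi^{\ast})$. Since every element of $L(A)$ is of the form $\lambda_A(\alpha)$ with $\alpha\in K(A)$, this equivalence gives $(\tilde O(\phi))^{\ast}=O(\phi^{\ast})$.

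For part (2), I would run the same argument after noting that by Lemma 3.4(5), $P_{\ast}\in Spec(A)$ and by Lemma 3.3(2), $(P_{\ast})^{\ast}=P$. Both $(O(P))_{\ast}$ and $\tilde O(P_{\ast})$ are joins of compact congruences, so it suffices to show that for $\alpha\in K(A)$, $\lambda_A(\alpha)\in O(P)$ iff $\alpha\subseteq\tilde O(P_{\ast})$. By Lemma 3.8 applied with $\phi=P_{\ast}$, $\lambda_A(\alpha)\in O(P)$ iff $Ann(\lambda_A(\alpha))\not\subseteq P$ iff $\alpha\rightarrow\rho(\Delta_A)\not\subseteq P_{\ast}$; as in part (1), the implication $\alpha\rightarrow\rho(\Delta_A)\not\subseteq P_{\ast}\Rightarrow\alpha\subseteq P_{\ast}$ is automatic since $P_{\ast}$ is prime and contains $\rho(\Delta_A)$. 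So by Lemma 5.6(2), this is equivalent to $\alpha\subseteq\tilde O(P_{\ast})$. Taking joins over such $\alpha\in K(A)$ yields the desired equality.

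The main obstacle is conceptual rather than computational: one has to recognise that Lemma 3.8 is precisely the statement that makes the $\rho(\Delta_A)$-shifted annihilator $\alpha\rightarrow\rho(\Delta_A)$ in $Con(A)$ correspond to the lattice annihilator $Ann(\lambda_A(\alpha))$ in $L(A)$ under containment in a prime, and that the radical-containment $\rho(\Delta_A)\subseteq\phi$ automatically upgrades the two-clause description of $\tilde O(\phi)$ (Lemma 5.6(2)) to a one-clause description matching $O(\phi^{\ast})$. Once those two observations are in hand, both parts are immediate by comparing generators; no use of the hypothesis $(\star)$ or semiprimeness is needed, which is why this result, unlike Theorem 5.5(3), is stated without any such restriction.
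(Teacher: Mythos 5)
Your proof is correct and follows essentially the same route as the paper's: both parts reduce to the equivalence, for compact $\alpha$, between $\alpha\rightarrow\rho(\Delta_A)\not\subseteq\phi$ and $Ann(\lambda_A(\alpha))\not\subseteq\phi^{\ast}$ supplied by Lemma 3.8, combined with the characterization of $\tilde O(\phi)$ from Lemma 5.6(2). Your extra observation that the clause $\alpha\subseteq\phi$ is automatic (via $[\alpha,\alpha\rightarrow\rho(\Delta_A)]\subseteq\rho(\Delta_A)\subseteq\phi$ and primality) is a valid small streamlining; the paper instead gets $\alpha\subseteq\phi$ directly from $x\in\phi^{\ast}$ via Lemma 3.3(6).
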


\begin{proof}

(1) By Lemma 3.3(4), $\phi^{\ast}$ is a prime ideal of $L(A)$. Firstly we shall prove that $O(\phi^{\ast})\subseteq(\tilde O(\phi))^{\ast}$. Assume that $x\in O(\phi^{\ast})$, so $x\in \phi^{\ast}$ and $Ann(x)\not\subseteq \phi^{\ast}$. Let us take a compact congruence $\alpha$ of $A$ such that $\alpha\subseteq\phi$ and $x = \lambda_A(\alpha)$, hence $Ann(\lambda_A(\alpha))\not\subseteq\phi^{\ast}$. By Lemma 3.8(2) we get $\alpha\rightarrow\rho(\Delta_A)\not\subseteq\phi$, hence by applying Lemma 5.6(2) we obtain $\alpha\subseteq\tilde O(\phi)$. It follows that $x = \lambda_A(\alpha)\in (\tilde O(\phi))^{\ast}$, hence the inclusion $O(\phi^{\ast})\subseteq(\tilde O(\phi))^{\ast}$ is proven.

Now we suppose that $x\in (\tilde O(\phi))^{\ast}$, so there exists $\alpha\in K(A)$ such that $\alpha\subseteq\tilde O(\phi)$ and $x = \lambda_A(\alpha)$. By Lemma 5.6 it follows that $\alpha\subseteq\phi$ and $\alpha\rightarrow\rho(\Delta_A)\not\subseteq\phi$. In accordance with Lemma 3.8 we have $Ann(\lambda_A(\alpha))\not\subseteq\phi^{\ast}$, hence $x = \lambda_A(\alpha)\in O(\phi^{\ast})$. Therefore we get the inclusion $(\tilde O(\phi))^{\ast}\subseteq O(\phi^{\ast})$.

(2) By Lemma 3.3,(2) and (5), $P = (P_{\ast})^{\ast}$ and $P_{\ast}$ is a prime congruence of $A$. Let $\alpha$ be a compact congruence of $A$ such that $\alpha\subseteq(O(P))_{\ast}$, hence, by applying Lemma 3.3(6) we get $\lambda_A(\alpha)\in O(P)$. It results that $\lambda_A(\alpha)\in P$ and $Ann(\lambda_A(\alpha))\not\subseteq P = (P_{\ast})^{\ast}$. From $\lambda_A(\alpha)\in P$ we obtain $\alpha\subseteq P_{\ast}$ and by Lemma 3.8 we have $\alpha\rightarrow \rho(\Delta_A)\not\subseteq P_{\ast}$, hence $\alpha\subseteq \tilde O(P_{\ast})$ (by Lemma 5.6). It follows that $(O(P))_{\ast}\subseteq \tilde O(P_{\ast})$.

In order to prove that $\tilde O(P_{\ast})\subseteq (O(P))_{\ast}$, assume that $\alpha$ is a compact congruence of $A$ such that $\alpha\subseteq \tilde O(P_{\ast})$. By applying Lemma 5.6 we have $\alpha\subseteq P_{\ast}$ and $\alpha\rightarrow \rho(\Delta_A)\not\subseteq P_{\ast}$. By taking into account Lemmas 3.8 and 3.3(2) we get $Ann(\lambda_A(\alpha))\not\subseteq (P_{\ast})^{\ast} = P$ and by Lemma 3.3(6), from $\alpha\subseteq P_{\ast}$ we obtain $\lambda_A(\alpha)\in P$. Observing that $\lambda_A(\alpha)\in P$ and $Ann(\lambda_A(\alpha))\not\subseteq (P_{\ast})^{\ast} = P$ implies $\lambda_A(\alpha)\in O(P)$, we conclude that $\alpha\subseteq (O(P))_{\ast}$.

\end{proof}

\begin{corolar}
Let $A$ a semiprime algebra of the variety $\mathcal{V}$. Then the following  hold:
\usecounter{nr}
\begin{list}{(\arabic{nr})}{\usecounter{nr}}
\item For any congruence $\phi\in Spec(A)$ we have $O(\phi^{\ast}) = ( O(\phi))^{\ast}$;
\item For any prime ideal $P$ of $L(A)$ we have $(O(P))_{\ast} = O(P_{\ast})$.
\end{list}
\end{corolar}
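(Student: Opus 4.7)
The plan is to derive this corollary directly from Theorem 5.7 by showing that, in a semiprime algebra, the operators $O(\cdot)$ and $\tilde O(\cdot)$ coincide on prime congruences. Once this equality is established, both statements (1) and (2) follow immediately: $(\tilde O(\phi))^{\ast} = (O(\phi))^{\ast}$ yields the first equation, and $\tilde O(P_{\ast}) = O(P_{\ast})$ yields the second, since $P_{\ast}\in Spec(A)$ by Lemma 3.3(5).

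First I would recall that $A$ being semiprime means $\rho(\Delta_A) = \Delta_A$, so for any $\alpha\in K(A)$ we have the identity $\alpha\rightarrow \rho(\Delta_A) = \alpha\rightarrow \Delta_A = \alpha^{\perp}$. Consequently the defining condition of $\tilde O(\phi)$ simplifies: $\alpha\subseteq \tilde O(\phi)$ holds iff $\alpha\subseteq\phi$ and $\alpha^{\perp}\not\subseteq\phi$, by Lemma 5.6(2). Meanwhile Lemma 5.6(1) characterizes $\alpha\subseteq O(\phi)$ by the single condition $\alpha^{\perp}\not\subseteq\phi$.

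The key step is the implication $\alpha^{\perp}\not\subseteq\phi \Rightarrow \alpha\subseteq\phi$. This uses primality of $\phi$ crucially: since $[\alpha,\alpha^{\perp}] = \Delta_A \subseteq \phi$ and $\phi$ is prime, one of $\alpha$ or $\alpha^{\perp}$ lies inside $\phi$, and the hypothesis forces $\alpha\subseteq\phi$. Thus on compact congruences the two membership conditions are equivalent, which gives $\tilde O(\phi) = O(\phi)$ because both congruences are joins over their compact subcongruences.

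Applying this identity in Theorem 5.7(1) gives $(O(\phi))^{\ast} = (\tilde O(\phi))^{\ast} = O(\phi^{\ast})$, establishing (1). For (2), Lemma 3.3(5) ensures that $P_{\ast}$ is a prime congruence of $A$, so the same identity gives $\tilde O(P_{\ast}) = O(P_{\ast})$, and Theorem 5.7(2) yields $(O(P))_{\ast} = \tilde O(P_{\ast}) = O(P_{\ast})$. There is no real obstacle here beyond checking that the substitution $\rho(\Delta_A) = \Delta_A$ may be performed inside the operators $\tilde O$ and $\tilde{Ker}$; the only subtle point is the primality argument that bridges the two characterizations, which I have isolated above.
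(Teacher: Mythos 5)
Your proposal is correct and matches the paper's intended derivation: the paper states Corollary 5.8 without proof immediately after Theorem 5.7, precisely because semiprimeness gives $\alpha\rightarrow\rho(\Delta_A)=\alpha^{\perp}$ and primality of $\phi$ (via $[\alpha,\alpha^{\perp}]=\Delta_A\subseteq\phi$) makes the extra condition $\alpha\subseteq\phi$ automatic, so $\tilde O(\phi)=O(\phi)$ on $Spec(A)$. Your isolation of that primality step, and the use of Lemma 3.3(5) to apply it to $P_{\ast}$ in part (2), is exactly the argument the paper leaves implicit.
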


Theorem 5.7 shows that the reticulation commutes with the operator $\tilde O(\cdot)$; according to Corollary 5.8, if $A$ is semiprime then the reticulation commutes with $O(\cdot)$.

The following lemma is a well-known result in lattice theory (for a proof by using lattices of fractions, see Proposition 5.9 of \cite{GG1}).

\begin{lema}
If $P$ is a prime ideal in a bounded distributive lattice $L$ then we have $O(P) = \bigcap\{Q\in Spec_{Id}(L)|Q\subseteq P\}$.
\end{lema}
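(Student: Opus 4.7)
The plan is to prove both inclusions separately. For the inclusion $O(P) \subseteq \bigcap\{Q\in Spec_{Id}(L)|Q\subseteq P\}$, the argument is direct: if $x\in O(P)$ then there exists $y\in Ann(x)$ with $y\notin P$, so $x\wedge y=0$. For any prime ideal $Q\subseteq P$, the element $y$ lies outside $Q$ (since $Q\subseteq P$), and $x\wedge y = 0\in Q$ combined with primality of $Q$ forces $x\in Q$.

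For the converse inclusion $\bigcap\{Q\in Spec_{Id}(L)|Q\subseteq P\}\subseteq O(P)$, I would argue by contrapositive. Suppose $x\notin O(P)$, so $Ann(x)\subseteq P$. I need to exhibit a prime ideal $Q\subseteq P$ with $x\notin Q$. Splitting into two cases simplifies the argument: if $x\notin P$, take $Q=P$ itself (which is prime by hypothesis). If $x\in P$, then I form the filter $F$ of $L$ generated by $\{x\}\cup (L\setminus P)$; since $L\setminus P$ is a prime filter and any element of $F$ has the form $a\geq x\wedge y$ for some $y\notin P$, I claim $0\notin F$. Indeed, $0\in F$ would yield $y\in L\setminus P$ with $x\wedge y=0$, i.e.\ $y\in Ann(x)\subseteq P$, a contradiction.

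Once $0\notin F$, the Birkhoff--Stone prime ideal separation theorem supplies a prime ideal $Q$ of $L$ disjoint from $F$. Disjointness from $\{x\}$ gives $x\notin Q$, while disjointness from $L\setminus P$ gives $Q\subseteq P$, as required. The main obstacle is the $\supseteq$ direction, whose core is the prime ideal separation theorem and the verification that $0\notin F$ — but the latter reduces precisely to the assumption $Ann(x)\subseteq P$, so the pieces fit together cleanly.
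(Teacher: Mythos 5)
Your proof is correct. The paper itself does not prove this lemma: it merely labels it well known and cites a proof via lattices of fractions (Proposition 5.9 of the reference \cite{GG1}), i.e.\ one localizes $L$ at the prime filter $L\setminus P$ and identifies $O(P)$ as the kernel of the localization map, whence it equals the intersection of the primes below $P$. Your argument replaces that machinery with a direct, self-contained separation argument: the easy inclusion is exactly as one would expect, and for the hard inclusion you build the filter $F$ generated by $\{x\}\cup(L\setminus P)$, check $0\notin F$ using precisely the hypothesis $Ann(x)\subseteq P$, and invoke the Birkhoff--Stone prime ideal separation theorem to produce a prime $Q\subseteq P$ avoiding $x$. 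This is more elementary and arguably more transparent, at the cost of the conceptual uniformity that the fractions approach shares with the ring-theoretic analogue; both ultimately rest on the prime ideal theorem. Two cosmetic points: your case split is unnecessary (when $x\notin P$ the same filter construction works, since then $F$ is just $L\setminus P$, which misses $0$ because $0\in P$), and you should note explicitly that $L\setminus P$ is nonempty (as $P$ is proper) so that every element of $F$ really does dominate some $x\wedge y$ with $y\notin P$.
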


\begin{propozitie}
If $\phi$ is a prime congruence in a semiprime algebra $A$, then $O(\phi) = \bigcap\Lambda(\phi)$.

\end{propozitie}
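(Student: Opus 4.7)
The plan is to transfer the identity across the reticulation and then invoke its lattice-theoretic counterpart, Lemma 5.9. Since $A$ is semiprime, Corollary 5.8 gives the full commutation of $O(\cdot)$ with $(\cdot)^{\ast}$ and $(\cdot)_{\ast}$, which is what makes the transfer clean.

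First, applying Corollary 5.8(1) and Lemma 3.3(4) (so that $\phi^{\ast}$ is a prime ideal of $L(A)$), one has
\[
(O(\phi))^{\ast} = O(\phi^{\ast}) = \bigcap\{Q \in Spec_{Id}(L(A)) \mid Q \subseteq \phi^{\ast}\}
\]
by Lemma 5.9 inside the distributive lattice $L(A)$. The homeomorphism of Proposition 3.5 restricts to the order isomorphism $\psi \mapsto \psi^{\ast}$ between $Spec(A)$ and $Spec_{Id}(L(A))$, so the right-hand family is just $\{\psi^{\ast} \mid \psi \in \Lambda(\phi)\}$, giving $(O(\phi))^{\ast} = \bigcap\{\psi^{\ast} \mid \psi \in \Lambda(\phi)\}$.

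Next I apply $(\cdot)_{\ast}$ to both sides. On the left, Corollary 5.8(2) with $P = \phi^{\ast}$ and the identity $(\phi^{\ast})_{\ast} = \phi$ (Lemma 3.3(4)) yield $((O(\phi))^{\ast})_{\ast} = (O(\phi^{\ast}))_{\ast} = O(\phi)$; comparing with Lemma 3.3(2) shows moreover that $\rho(O(\phi)) = O(\phi)$, so $O(\phi)$ is radical. On the right, the frame isomorphism $(\cdot)_{\ast} \colon Id(L(A)) \to RCon(A)$ of Proposition 3.6 preserves all meets, and in both frames arbitrary meets coincide with set intersections; together with $(\psi^{\ast})_{\ast} = \psi$ for prime $\psi$, this gives
\[
\Bigl(\bigcap\{\psi^{\ast} \mid \psi \in \Lambda(\phi)\}\Bigr)_{\ast} = \bigcap\{(\psi^{\ast})_{\ast} \mid \psi \in \Lambda(\phi)\} = \bigcap\Lambda(\phi).
\]
Combining the two computations of $((O(\phi))^{\ast})_{\ast}$ yields the desired equality $O(\phi) = \bigcap \Lambda(\phi)$.

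The argument is essentially formal once Corollary 5.8 is available; the only point deserving care is that the identification $((O(\phi))^{\ast})_{\ast} = O(\phi)$ automatically forces $O(\phi)$ to be a radical congruence, which is exactly what is consistent with the right-hand side $\bigcap \Lambda(\phi)$ being itself an intersection of primes. Without semiprimeness one would only get $\rho(O(\phi)) = \bigcap \Lambda(\phi)$, explaining why the hypothesis is needed.
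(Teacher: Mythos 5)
Your proof is correct, and it takes a genuinely more uniform route than the paper's. The paper splits the equality into two inclusions: it proves $O(\phi)\subseteq\bigcap\Lambda(\phi)$ by a direct element argument (take $\alpha\in K(A)$ with $\alpha\subseteq O(\phi)$; then $\alpha^{\perp}\not\subseteq\phi$ by Lemma 5.6(1), hence $\alpha^{\perp}\not\subseteq\psi$ for every $\psi\in\Lambda(\phi)$, and primeness of $\psi$ together with $[\alpha,\alpha^{\perp}]=\Delta_A$ forces $\alpha\subseteq\psi$), and only for the reverse inclusion does it pass through the reticulation, using Lemma 5.9 and Corollary 5.8(1) exactly as you do. You instead transfer the whole identity in one shot: compute $(O(\phi))^{\ast}=O(\phi^{\ast})=\bigcap\{\psi^{\ast}\mid\psi\in\Lambda(\phi)\}$ and then apply $(\cdot)_{\ast}$, using the frame isomorphism of Proposition 3.6 to commute $(\cdot)_{\ast}$ with the (possibly infinite) intersection. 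That last step is the only place where your argument needs something the paper's proof does not explicitly invoke — namely that an order isomorphism of complete lattices preserves arbitrary meets and that meets in both $Id(L(A))$ and $RCon(A)$ are set intersections — but both facts are standard, and $\bigcap\{\psi^{\ast}\mid\psi\in\Lambda(\phi)\}$ does land in $Id(L(A))$, so the step is sound. What your approach buys is symmetry and the bonus observation $\rho(O(\phi))=O(\phi)$; what the paper's buys is that one of the two inclusions is seen to hold by completely elementary means. One small quibble with your closing remark: without semiprimeness the reticulation computes $\tilde{O}(\phi)$ rather than $O(\phi)$ (Theorem 5.7), so the non-semiprime version of your transfer would read $\rho(\tilde{O}(\phi))=\bigcap\Lambda(\phi)$, not $\rho(O(\phi))=\bigcap\Lambda(\phi)$.
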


\begin{proof}
Let $\alpha\in K(A)$ such that $\alpha\subseteq O(\phi)$, so $\alpha\subseteq\phi$ and $\alpha^{\perp}\not\subseteq\phi$ (cf. Lemma 5.6(1)). For all $\psi\in \Lambda(\phi)$ we have $\alpha^{\perp}\not\subseteq\psi$, hence $\alpha\subseteq\phi$ (because $\phi$ is prime). It follows that $\alpha\subseteq \bigcap\Lambda(\phi)$, hence $O(\phi)\subseteq \bigcap\Lambda(\phi)$.

In order to prove the converse inclusion $\bigcap\Lambda(\phi)\subseteq O(\phi)$, assume that $\alpha\in K(A)$ and $\alpha\subseteq \bigcap\Lambda(\phi)$. Then for all prime congruence $\psi$ such that $\psi\subseteq\phi$ we have $\alpha\subseteq\psi$. From Lemma 3.4(4) we know that $\phi^{\ast}$ is a prime ideal of $L(A)$.  According to Proposition 3.4, it follows that for all prime ideals $Q$ of the reticulation $L(A)$ such that $Q\subseteq\phi^{\ast}$ we have $\lambda_A(\alpha)\in Q$. By Lemma 5.9 we get $\lambda_A(\alpha)\in O(\phi^{\ast})$. The algebra $A$ is semiprime, therefore by using Corollary 5.8(1), it follows that $\lambda_A(\alpha)\in (O(\phi))^{\ast}$. In accordance with Lemma 3.3(4) and Corollary 5.8 the following hold: $\alpha\subseteq ((O(\phi))^{\ast})_{\ast}$ = $(O(\phi^{\ast}))_{\ast}$ = $O((\phi^{\ast})_{\ast}) = O(\phi)$. Therefore we get $\bigcap\Lambda(\phi)\subseteq O(\phi)$.

\end{proof}

\section{Flat and patch topologies on the spectra }

\hspace{0.5cm} A topological space $(X,\Omega)$ is said to be spectral \cite{Hochster} (or coherent in the terminology of \cite{Johnstone}) if it is sober and the family $K(\Omega)$ of compact open sets of $X$ is closed under the intersections, and forms a basis for the topology. Following \cite{Hochster}, \cite{Dickmann}, \cite{Johnstone} with any spectral space $(X,\Omega)$ one can assign the following topologies:

$\bullet$ the patch topology ( = the constructible topology), having as basis the family of sets $U\bigcap V$, where $U$ is a compact open set in $X$ and $V$ is the complement of a compact open set (this topological space is denoted by $X_P$)
;

$\bullet$ the flat topology ( = the inverse topology), having as basis the family of complements of compact open sets in $X$ (this topological space is denoted by $X_F$).

\begin{lema} \cite{Dickmann}, \cite{Johnstone}
$X_P$ is a Boolean space and $X_F$ is a spectral space.
\end{lema}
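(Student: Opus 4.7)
The result is standard (this is why the author simply cites Dickmann/Johnstone), but here is the plan I would follow.

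First unpack the definitions. Let $(X,\Omega)$ be spectral, write $\mathcal{K}$ for $K(\Omega)$, and recall that $X$ itself is compact and that $\mathcal{K}$ is a lattice (closed under finite intersections and, in the spectral setting, finite unions) forming a basis of $\Omega$. The patch topology $\Omega_P$ has basis $\{U\cap (X\setminus V) : U,V\in\mathcal{K}\}$; the flat topology $\Omega_F$ has basis $\{X\setminus V : V\in\mathcal{K}\}$.

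For the claim that $X_P$ is a Boolean space I would verify the three defining properties separately. Totally disconnectedness and zero-dimensionality come for free: each basic patch-open $U\cap(X\setminus V)$ is simultaneously patch-open and patch-closed, since $U=U\cap(X\setminus\emptyset)$ and $X\setminus V=X\cap(X\setminus V)$ are both patch-open with complements $X\setminus U$ and $V$ likewise patch-open. For Hausdorffness, given $x\ne y$ the $T_0$ property of $X$ together with the fact that $\mathcal{K}$ is a basis produces some $U\in\mathcal{K}$ containing exactly one of them; then $U$ and $X\setminus U$ are disjoint patch-opens separating $x$ and $y$. The substantive step is compactness of $X_P$, which I would prove by Alexander's subbase lemma applied to the subbase $\mathcal{S}=\mathcal{K}\cup\{X\setminus V : V\in\mathcal{K}\}$. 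Given a cover of $X$ by elements of $\mathcal{S}$, split it as $\{U_i\}_{i\in I}\cup\{X\setminus V_j\}_{j\in J}$ and set $W=\bigcup_{i\in I}U_i$; then $\{V_j\}_{j\in J}$ covers $X\setminus W$. The hard content hides here: one uses that $X\setminus W$ is patch-closed and that spectral spaces are characterized by compactness of such pro-constructible sets, which itself follows from a Zorn/prime-filter argument on $\mathcal{K}$ (or equivalently from Stone duality applied to the distributive lattice $\mathcal{K}$, whose prime spectrum is homeomorphic to $X$ and whose Stone space is exactly $X_P$). This last step is what I expect to be the main obstacle; the cleanest route is to identify $X_P$ with the Stone dual of $\mathcal{K}$ viewed as a Boolean algebra obtained by taking the Boolean envelope of the distributive lattice, then read off compactness from Stone duality.

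For $X_F$ being spectral I would first describe its compact opens. A basic flat-open is $X\setminus V$ with $V\in\mathcal{K}$, and arbitrary flat-opens are intersections $\bigcap_{V\in\mathcal{F}}(X\setminus V)=X\setminus\bigcup\mathcal{F}$ for families $\mathcal{F}\subseteq\mathcal{K}$. A routine argument (using that any flat-open cover of such a set can be refined by basic flat-opens and then dualized) shows that $X\setminus V$ is flat-compact for every $V\in\mathcal{K}$, and that these sets are closed under finite intersection since $(X\setminus V_1)\cap(X\setminus V_2)=X\setminus(V_1\cup V_2)$ and $\mathcal{K}$ is closed under finite joins. Thus the compact flat-opens form a sublattice forming a basis. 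Sobriety of $X_F$ follows by observing that the patch topology associated to $X_F$ coincides with $\Omega_P$ (the basic patch-opens are the same, by symmetry of the construction), so one can transport irreducibility arguments through the Boolean space $X_P$; alternatively one deduces sobriety directly by noting that irreducible flat-closed sets correspond to elements of $X$ via the lattice duality, using that $X$ itself was sober.

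In short, the work is concentrated in one place: compactness of $X_P$. Once that is in hand, zero-dimensionality and $T_2$ are immediate for $X_P$, and for $X_F$ the combinatorics of $\mathcal{K}$ together with the symmetry between $X_P$ computed from $X$ and from $X_F$ do all the remaining work.
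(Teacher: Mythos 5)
The paper offers no proof of this lemma at all --- it is imported verbatim from \cite{Dickmann} and \cite{Johnstone} --- and your plan (clopenness of the basic patch sets for zero-dimensionality, $T_0$ plus the basis $\mathcal{K}=K(\Omega)$ for Hausdorffness, Alexander's subbase lemma together with the identification of $X_P$ with the Stone dual of the Boolean envelope of $\mathcal{K}$ for compactness, and then deducing the flat statements from patch-compactness) is exactly the standard route taken in those references. You also correctly locate the only genuinely nontrivial input: patch-compactness, which ultimately rests on the prime-ideal theorem for the lattice $\mathcal{K}$ (equivalently on $X\cong Spec_{Id}(\mathcal{K})$).

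Two statements in your sketch are wrong as written, and one step is missing. First, in the subbase argument: from $X=\bigcup_i U_i\cup\bigcup_j(X\setminus V_j)$ and $W=\bigcup_i U_i$ you get $\bigcap_j V_j\subseteq W$, equivalently that the sets $X\setminus V_j$ (not the sets $V_j$) cover $X\setminus W$. The correct formulation matters, because the question of a finite subcover becomes: is some finite intersection $\bigcap_{j\in J_0}V_j$ contained in a finite union of the $U_i$? Refuting this under the no-finite-subcover hypothesis is precisely where the prime-filter/Zorn argument on $\mathcal{K}$ is used, so the set-up must be stated correctly. Second, the general flat-open sets are \emph{unions} of basic ones, hence of the form $X\setminus\bigcap\mathcal{F}$ with $\mathcal{F}\subseteq\mathcal{K}$, not intersections $\bigcap_{V\in\mathcal{F}}(X\setminus V)=X\setminus\bigcup\mathcal{F}$ as you wrote; the latter are intersections of flat-opens and need not be flat-open (dually, the flat-closed sets are the arbitrary intersections of members of $\mathcal{K}$). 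This slip does not propagate into your description of the compact flat-opens, which is correct, but it should be fixed. Third, sobriety of $X_F$ is the one assertion of the lemma your sketch does not actually argue: the observation that $X_F$ and $X$ induce the same patch topology does not by itself yield sobriety. What is needed is that every irreducible flat-closed set $C=\bigcap\mathcal{F}$ has a (unique, by $T_0$) generic point, i.e.\ an element $x\in C$ whose flat-closure --- the set of generizations of $x$ in the original topology --- is all of $C$; existence of such an $x$ is obtained from patch-compactness of $C$ via a Zorn argument in the specialization order. Spelling that out would complete the proof.
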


\begin{remarca}
If $L$ is a bounded distributive lattice and $X$ is the spectral space $Spec_{Id,Z}(L)$ then the family $(D_{Id}(a)\bigcap V_{Id}(b))_{a,b\in L}$ is a basis of open sets for $X_P$ and the family $(V_{Id}(b))_{b\in L}$ is a basis of open sets for $X_F$.
\end{remarca}

Let us fix a semidegenerate congruence - modular variety $\mathcal{V}$ and $A$ an algebra in $\mathcal{V}$.

According to Proposition 3.4, $Spec_Z(A)$ is homeomorphic to the spectral space $Spec_{Id,Z}(L(A))$, so it is a spectral space. The family of open sets of $Spec_Z(A)$ will be denoted by $\mathcal{Z} = \mathcal{Z}_A$. For any subset $S$ of $Spec(A)$, $cl_Z(S) = V(\bigcap S)$ is the closure of $S$ in $Spec_Z(A)$; for any $\phi\in Spec(A)$, we have $cl_Z(\{\phi\}) = V(\phi)$. Now one can consider the two topologies associated with the spectral space $X = Spec_Z(A)$: the patch topology and the flat topology. We shall denote $Spec_P(A) = X_P$ and $Spec_F(A) = X_F$;  $\mathcal{P} = \mathcal{P}_A$ will be the family of open sets in $Spec_P(A)$ and  $\mathcal{F} = \mathcal{F}_A$ the family of open sets in  $Spec_F(A)$.

\begin{remarca}

\usecounter{nr}
\begin{list}{(\arabic{nr})}{\usecounter{nr}}
\item The family $(D(\alpha)\bigcap V(\beta))_{\alpha,\beta\in K(A)}$ is a basis of open sets for $Spec_P(A)$ and the family $(V(\beta))_{\beta\in K(A)}$ is a basis of open sets for $Spec_F(A)$;
\item The patch topology on $Spec(A)$ is finer that the Zariski and the flat topology on $Spec(A)$ (i.e. $\mathcal{Z}\subseteq \mathcal{P}$ and $\mathcal{F}\subseteq \mathcal{P}$);
\item The inclusions  $\mathcal{Z}\subseteq \mathcal{P}$ and $\mathcal{F}\subseteq \mathcal{P}$ show that the identity functions $id: Spec_P(A)\rightarrow Spec_Z(A)$ and $id: Spec_P(A)\rightarrow Spec_F(A)$ are continuous.
\end{list}
\end{remarca}

\begin{lema}
The two inverse functions $u:Spec(A)\rightarrow Spec_{Id}(L(A))$ and $v:Spec_{Id}(L(A))\rightarrow Spec(A)$ are homeomorphisms w.r.t. the patch and the flat topologies.
\end{lema}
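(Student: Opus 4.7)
The plan is to leverage Proposition 3.4, which already gives us that $u$ and $v$ are mutually inverse bijections $Spec(A)\leftrightarrow Spec_{Id}(L(A))$ that are homeomorphisms for the Zariski topologies. Since both the patch topology and the flat topology are constructed in a purely functorial way from the Zariski-compact open sets of a spectral space, it should suffice to check that $u$ and $v$ send a basis of one space to a basis of the other.

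The key ingredient is the formula $u(D(\alpha))=D_{Id}(\lambda_A(\alpha))$ and $u(V(\alpha))=V_{Id}(\lambda_A(\alpha))$ for every $\alpha\in K(A)$, which was recorded in the paragraph just before Proposition 3.4 via Lemma 3.4(7). First I would use this to compute, for $\alpha,\beta\in K(A)$,
\[
u(D(\alpha)\cap V(\beta)) = D_{Id}(\lambda_A(\alpha))\cap V_{Id}(\lambda_A(\beta)),
\]
which is a basic patch open set of $Spec_{Id,P}(L(A))$ by Remark 6.2, and $u(V(\beta))=V_{Id}(\lambda_A(\beta))$, a basic flat open set of $Spec_{Id,F}(L(A))$. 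Combined with Remark 6.3(1), this shows $u$ sends basic $\mathcal{P}_A$-open sets (resp.\ $\mathcal{F}_A$-open sets) into basic open sets of $Spec_{Id,P}(L(A))$ (resp.\ $Spec_{Id,F}(L(A))$), so $u$ is continuous for both topology pairs.

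For the converse direction I would exploit the surjectivity of $\lambda_A:K(A)\to L(A)$: every $a\in L(A)$ has the form $a=\lambda_A(\alpha)$ for some $\alpha\in K(A)$, so every basic patch (resp.\ flat) open set of $Spec_{Id}(L(A))$ has the form $D_{Id}(\lambda_A(\alpha))\cap V_{Id}(\lambda_A(\beta))$ (resp.\ $V_{Id}(\lambda_A(\beta))$). Applying $v$ and using $v\circ u=\mathrm{id}$ together with the identities above gives
\[
v(D_{Id}(\lambda_A(\alpha))\cap V_{Id}(\lambda_A(\beta))) = D(\alpha)\cap V(\beta), \qquad v(V_{Id}(\lambda_A(\beta)))=V(\beta),
\]
which are basic opens in $Spec_P(A)$ and $Spec_F(A)$ respectively. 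Hence $v$ is also continuous for both pairs, and since $u$ and $v$ are inverses set-theoretically, they are homeomorphisms.

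I do not expect any real obstacle here: the argument is a direct transport of structure through the already established Zariski homeomorphism, and the two topologies in question are defined in terms of the Zariski-compact open sets, which correspond perfectly under $u$ thanks to Lemma 3.4(7). The only care needed is to make sure one uses surjectivity of $\lambda_A$ to pass from arbitrary elements of $L(A)$ back to compact congruences of $A$ when verifying that $v$ is continuous.
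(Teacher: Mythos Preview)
Your proposal is correct and is exactly the natural argument; the paper actually omits the proof of this lemma entirely, so there is nothing to compare against beyond noting that your sketch is precisely the straightforward verification one expects from Proposition~3.4, Remark~6.2, Remark~6.3(1), and the formula $u(V(\alpha))=V_{Id}(\lambda_A(\alpha))$ recorded before Proposition~3.4.
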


\begin{remarca}
Let $A,B$ two algebras in the variety $\mathcal{V}$. For any admissible morphism $f:A\rightarrow B$ of $\mathcal{V}$ the following properties hold:
\usecounter{nr}
\begin{list}{(\arabic{nr})}{\usecounter{nr}}
\item $(f^*)^{-1}(V_A(Cg_A(a,b))) = V_B(Cg_B(f(a),f(b)))$, for all $a,b\in A$;
\item $(f^*)^{-1}(V_A(\theta)) = V_B(f^{\bullet}(\theta))$, for all $\theta\in Con(A)$.
\end{list}
\end{remarca}

\begin{proof}

We shall prove the equality (2). We know from Section 1 that $f^{\bullet}$ is the left adjoint of $f^*$. Thus for any $\psi\in Spec(B)$ the following equivalences hold: $\psi\in (f^*)^{-1}(V_A(\theta))$ iff $\theta\subseteq f^*(\psi)$ iff $f^{\bullet}(\theta)\subseteq\psi$ iff $\psi\in V_B(f^{\bullet}(\theta))$.

\end{proof}

\begin{corolar}
If $f:A\rightarrow B$  is an admissible morphism of $\mathcal{V}$ then the map $f^*: Spec(B)\rightarrow Spec(A)$ is continuous w.r.t. Zariski topology, patch topology and flat topology.
\end{corolar}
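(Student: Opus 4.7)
The plan is to reduce everything to the single identity from Remark 6.5(2), namely $(f^*)^{-1}(V_A(\theta)) = V_B(f^{\bullet}(\theta))$ for every $\theta\in Con(A)$, and to exploit the fact that $f^{\bullet}$ sends compact congruences of $A$ to compact congruences of $B$ (this was recorded in Section 2 after the definition of the admissible morphism). Continuity is a statement about basic open sets, so it suffices to check the preimage of each basic open set in the relevant topology on $Spec(A)$.

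First I would handle the Zariski case. A basic open set of $Spec_Z(A)$ has the form $D_A(\alpha)$ with $\alpha\in K(A)$. Since $D_A(\alpha) = Spec(A)\setminus V_A(\alpha)$, Remark 6.5(2) gives
\[
(f^*)^{-1}(D_A(\alpha)) = Spec(B)\setminus (f^*)^{-1}(V_A(\alpha)) = Spec(B)\setminus V_B(f^{\bullet}(\alpha)) = D_B(f^{\bullet}(\alpha)).
\]
Because $\alpha\in K(A)$ implies $f^{\bullet}(\alpha)\in K(B)$, this is a basic open set of $Spec_Z(B)$, so $f^*$ is Zariski-continuous.

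Next I would treat the flat topology. By Remark 6.3(1) a basis for $Spec_F(A)$ is given by the sets $V_A(\beta)$ with $\beta\in K(A)$. Again by Remark 6.5(2),
\[
(f^*)^{-1}(V_A(\beta)) = V_B(f^{\bullet}(\beta)),
\]
and since $f^{\bullet}(\beta)\in K(B)$, the right-hand side is a basic open set of $Spec_F(B)$. So $f^*$ is continuous w.r.t.\ the flat topology.

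Finally, for the patch topology the basic open sets have the form $D_A(\alpha)\cap V_A(\beta)$ with $\alpha,\beta\in K(A)$. Because preimages commute with finite intersections, combining the two computations above yields
\[
(f^*)^{-1}(D_A(\alpha)\cap V_A(\beta)) = D_B(f^{\bullet}(\alpha))\cap V_B(f^{\bullet}(\beta)),
\]
which is a basic open set of $Spec_P(B)$; hence $f^*$ is also patch-continuous. There is really no obstacle here: the whole argument is bookkeeping on top of Remark 6.5(2), and the only non-trivial input is the preservation of compactness of congruences under $f^{\bullet}$, which is already available. The statement could alternatively be obtained by transferring the classical result for morphisms of bounded distributive lattices through the reticulation functor and Proposition 3.4, but the direct verification above seems cleaner.
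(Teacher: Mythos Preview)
Your proof is correct and is exactly the argument the paper has in mind: Corollary 6.6 is stated immediately after Remark 6.5 with no separate proof, so the intended justification is precisely the computation you carry out, using $(f^*)^{-1}(V_A(\theta)) = V_B(f^{\bullet}(\theta))$ together with the fact that $f^{\bullet}$ preserves compact congruences. Your write-up simply makes explicit what the paper leaves implicit.
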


For any $\phi\in Spec(A)$ we shall denote $\Lambda(\phi) = \{\psi\in Spec(A)|\psi\subseteq \phi\}$.

\begin{propozitie}
For any $\phi\in Spec(A)$, the flat closure of the set $\{\phi\}$ is the set $cl_F(\phi) = \Lambda(\phi)$.
\end{propozitie}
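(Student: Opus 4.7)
The plan is to prove the equality by showing $\Lambda(\phi)$ is flat-closed and contains $\phi$, and that any flat-closed set containing $\phi$ must contain all of $\Lambda(\phi)$.

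First I would reorganize the flat topology in terms of its closed sets. Since a basis of flat-open sets is $(V(\beta))_{\beta \in K(A)}$ (Remark 6.3(1)), every flat-open set is a union of such basic opens, and hence every flat-closed set is an intersection of sets of the form $D(\beta) = Spec(A)\setminus V(\beta)$ with $\beta \in K(A)$. Keeping this description in mind makes the rest of the argument essentially bookkeeping about containments of compact congruences inside prime ones.

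Next I would verify that $\Lambda(\phi)$ is flat-closed. For $\psi \in Spec(A)$, the failure $\psi \not\subseteq \phi$ occurs iff there exists $\alpha \in K(A)$ with $\alpha \subseteq \psi$ but $\alpha \not\subseteq \phi$ (this uses that every congruence is a join of compact ones). Consequently
\[
Spec(A)\setminus \Lambda(\phi) \;=\; \bigcup\{V(\alpha) : \alpha\in K(A),\ \alpha \not\subseteq \phi\},
\]
a union of basic flat-opens. Hence $\Lambda(\phi)$ is flat-closed, and clearly $\phi \in \Lambda(\phi)$, giving $cl_F(\{\phi\}) \subseteq \Lambda(\phi)$.

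For the reverse inclusion, take any flat-closed set $C$ with $\phi \in C$. By the first paragraph, $C = \bigcap_{j \in J} D(\beta_j)$ for some family $(\beta_j)_{j\in J} \subseteq K(A)$, and $\phi \in C$ means $\beta_j \not\subseteq \phi$ for each $j$. Now pick $\psi \in \Lambda(\phi)$, so $\psi \subseteq \phi$. If $\beta_j \subseteq \psi$ for some $j$, then $\beta_j \subseteq \phi$, contradicting $\phi \in D(\beta_j)$; hence $\psi \in D(\beta_j)$ for every $j$, i.e.\ $\psi \in C$. Thus $\Lambda(\phi) \subseteq C$, and taking the intersection over all flat-closed $C$ containing $\phi$ yields $\Lambda(\phi) \subseteq cl_F(\{\phi\})$. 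There is no real obstacle here beyond being careful with the description of flat-closed sets; the result is a direct consequence of the fact that compact congruences form a generating family and that basic flat-opens are the $V(\beta)$ with $\beta \in K(A)$.
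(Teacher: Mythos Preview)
Your proof is correct and uses essentially the same ingredients as the paper: the key observation in both is that $\psi\subseteq\phi$ iff every compact $\alpha\subseteq\psi$ satisfies $\alpha\subseteq\phi$, combined with the description of the flat topology via the basic opens $V(\beta)$ for $\beta\in K(A)$. The only difference is organizational: the paper unwinds the neighborhood definition of $cl_F(\{\phi\})$ directly and compares it to $\Lambda(\phi)$, whereas you first show $\Lambda(\phi)$ is flat-closed and then that it is the smallest such set containing $\phi$; the content is the same.
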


\begin{proof} According to the definition of the flat closure $cl_F(\phi) = cl_F(\{\phi\})$ we have

$cl_F(\phi)$ = $\{\psi\in Spec(A)|\forall \alpha\in K(A)[\psi\in V(\alpha)\Rightarrow V(\alpha)\bigcap \{\phi\}\neq\emptyset]\}$

$\hspace{1cm}$ = $\{\psi\in Spec(A)|\forall \alpha\in K(A)[\alpha\subseteq\psi \Rightarrow \alpha\subseteq\phi]\}$

Let us prove the inclusion $cl_F(\phi)\subseteq \Lambda(\phi)$. Let $\psi\in cl_F(\phi)$ and $\alpha\in K(A)$, hence $\alpha\subseteq\psi$ implies $\alpha\subseteq\psi$, therefore

$\psi = \bigvee\{\alpha\in K(A)|\alpha\subseteq\psi\}\subseteq \bigvee\{\alpha\in K(A)|\alpha\subseteq\phi\} = \phi$.

In order to prove the converse inclusion $\Lambda(\phi)\subseteq cl_F(\phi)$, we assume that $\psi\in \Lambda(\phi)$, hence $\psi\subseteq\phi$. Therefore for all $\alpha\in K(A)$, $\alpha\subseteq\psi$ implies $\alpha\subseteq\phi$, therefore $\psi\in cl_F(\phi)$.

\end{proof}

\begin{propozitie}
If $S\subseteq Spec(A)$ is a compact subset of $Spec_Z(A)$ then its flat closure is $cl_F(S) = \bigcup_{\phi\in S}\Lambda(\phi)$.
\end{propozitie}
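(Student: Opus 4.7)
The plan is to prove the two inclusions separately, the nontrivial one using compactness of $S$ in the Zariski topology.

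First I will recall (as in the proof of Proposition 6.7) that for an arbitrary subset $T\subseteq Spec(A)$, since the family $(V(\beta))_{\beta\in K(A)}$ is a basis for the flat topology (Remark 6.3(1)), the flat closure of $T$ admits the explicit description
\[
cl_F(T)=\{\psi\in Spec(A)\mid \forall\,\alpha\in K(A),\ \alpha\subseteq\psi\Rightarrow\exists\,\phi\in T,\ \alpha\subseteq\phi\}.
\]
The easy inclusion $\bigcup_{\phi\in S}\Lambda(\phi)\subseteq cl_F(S)$ follows immediately from Proposition 6.7 together with monotonicity of closure: for each $\phi\in S$, $\Lambda(\phi)=cl_F(\{\phi\})\subseteq cl_F(S)$, so the union is contained in $cl_F(S)$.

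For the converse inclusion, I will argue by contradiction, and this is where the compactness of $S$ in $Spec_Z(A)$ is essential. Suppose $\psi\in cl_F(S)$ but $\psi\notin\bigcup_{\phi\in S}\Lambda(\phi)$, i.e.\ $\psi\not\subseteq\phi$ for every $\phi\in S$. Then for each $\phi\in S$ one can pick a compact congruence $\alpha_\phi\in K(A)$ with $\alpha_\phi\subseteq\psi$ and $\alpha_\phi\not\subseteq\phi$ (writing $\psi$ as the join of its compact subcongruences). Equivalently, $\phi\in D(\alpha_\phi)$ for each $\phi\in S$, so the Zariski-open sets $\{D(\alpha_\phi)\}_{\phi\in S}$ cover $S$.

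Now I invoke the compactness hypothesis on $S$ in $Spec_Z(A)$ to extract a finite subcover: there exist $\phi_1,\dots,\phi_n\in S$ with $S\subseteq\bigcup_{i=1}^n D(\alpha_{\phi_i})$. Set $\alpha=\bigvee_{i=1}^n\alpha_{\phi_i}\in K(A)$. Since each $\alpha_{\phi_i}\subseteq\psi$, we have $\alpha\subseteq\psi$, that is $\psi\in V(\alpha)$. On the other hand $D(\alpha)=\bigcup_{i=1}^n D(\alpha_{\phi_i})\supseteq S$, which forces $V(\alpha)\cap S=\emptyset$. This directly contradicts the characterization of $cl_F(S)$ recalled above, since $\psi\in cl_F(S)$ and $\alpha\subseteq\psi$ would require $\alpha\subseteq\phi$ for some $\phi\in S$. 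This contradiction yields $\psi\in\Lambda(\phi)$ for some $\phi\in S$, proving the inclusion $cl_F(S)\subseteq\bigcup_{\phi\in S}\Lambda(\phi)$.

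The main (and only) nontrivial step is the contradiction argument, where the Zariski-compactness of $S$ must be used to pass from the family of witnesses $\alpha_\phi$ to a single compact congruence $\alpha\subseteq\psi$ whose basic flat neighborhood $V(\alpha)$ misses $S$; without compactness, $\psi$ could fail to belong to the union $\bigcup_{\phi\in S}\Lambda(\phi)$ yet still lie in the flat closure through an infinite family of obstructions that cannot be amalgamated into a single compact congruence.
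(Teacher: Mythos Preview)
Your proof is correct and follows essentially the same route as the paper: both establish the easy inclusion via Proposition 6.7 and monotonicity of closure, and both obtain the reverse inclusion by contradiction, choosing witnesses $\alpha_\phi\in K(A)$, extracting a finite subcover of $S$ by Zariski-compactness, and noting that the resulting $\alpha=\bigvee_{i=1}^n\alpha_{\phi_i}$ gives a basic flat neighborhood $V(\alpha)$ of $\psi$ disjoint from $S$.
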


\begin{proof}
For any $\phi\in S$ we have $\Lambda(\phi) = cl_F(\phi)\subseteq cl_F(S)$, therefore $\bigcup_{\phi\in S}\Lambda(\phi)\subseteq cl_F(S)$. In order to prove the converse inclusion $cl_F(S)\subseteq \bigcup_{\phi\in S}\Lambda(\phi)$, assume by absurdum that there exists $\psi\in cl_F(S) - \bigcup_{\phi\in S}\Lambda(\phi)$, therefore $\psi\not\subseteq \phi$ for any $\phi\in S$. Then for any $\phi\in S$ there exists $\alpha_{\phi}\in K(A)$ such that $\alpha_{\phi}\not\subseteq\phi$ and $\alpha_{\phi}\subseteq \psi$. This means that $S\subseteq \bigcup_{\phi\in S}D(\alpha_{\phi})$, so there exist $\phi_1,\cdots,\phi_n\in S$ such that $S\subseteq \bigcup_{i=1}^nD(\alpha_{\phi_i})$. Let us denote $\alpha = \bigvee_{i=1}^n\alpha_{\phi_i}$, hence $\alpha\in K(A)$ and $S\subseteq D(\alpha)$. We observe that $\alpha\subseteq\psi$, hence $\psi\in V(\alpha)$. Since $\psi\in cl_F(S)$ and $V(\alpha)$ is a open subset of $Spec_F(L)$, it follows that $S\bigcap V(\alpha)\neq\emptyset$, contradicting $S\subseteq D(\alpha)$. We conclude that $cl_F(S)\subseteq \bigcup_{\phi\in S}\Lambda(\phi)$.

\end{proof}

\begin{propozitie}
For any $\theta\in Con(A)$, $V(\theta)$ is a compact subset of $Spec_F(A)$.
\end{propozitie}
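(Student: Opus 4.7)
The plan is to deduce flat-compactness of $V(\theta)$ from its Zariski-closedness via the patch topology, avoiding any direct manipulation of flat-basic covers. The key observation is that the three topologies $\mathcal{Z}, \mathcal{P}, \mathcal{F}$ on $Spec(A)$ sit in a useful order: by Remark 6.3(2), both $\mathcal{Z}$ and $\mathcal{F}$ are coarser than $\mathcal{P}$, which gives us two continuous identity maps $id:Spec_P(A)\rightarrow Spec_Z(A)$ and $id:Spec_P(A)\rightarrow Spec_F(A)$ (Remark 6.3(3)).

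First I would note that $V(\theta)$ is, by its very definition, a Zariski-closed subset of $Spec_Z(A)$. Since the patch topology is finer than the Zariski topology, $V(\theta)$ is also closed in $Spec_P(A)$. By Lemma 6.1, $Spec_P(A)$ is a Boolean space, hence in particular compact, so every closed subset of $Spec_P(A)$ is compact. Therefore $V(\theta)$ is compact in $Spec_P(A)$.

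To conclude, I would invoke the continuity of $id:Spec_P(A)\rightarrow Spec_F(A)$ (Remark 6.3(3)) together with the elementary fact that the continuous image of a compact set is compact. Applying this to $V(\theta)\subseteq Spec_P(A)$ yields that $id(V(\theta))=V(\theta)$ is compact in $Spec_F(A)$, which is the desired conclusion.

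There is essentially no obstacle here: the statement follows from the general topological principle that a Hausdorff compact refinement of a topology forces every closed set of any coarser topology to be quasi-compact in any other coarser topology. The whole argument rests on two ingredients already established in the excerpt, namely the Boolean space property of $Spec_P(A)$ (Lemma 6.1) and the comparison of the three topologies (Remark 6.3), so no further lemma on prime congruences, reticulation transfer, or the structure of $K(A)$ needs to be invoked. An alternative route — transporting the problem through the homeomorphism of Lemma 6.4 and arguing in $Spec_{Id,F}(L(A))$ via the prime ideal theorem applied to the filter generated by $\{\lambda_A(\beta_i)\}_{i\in I}$ against the ideal $\theta^{\ast}$ — would also work but is strictly longer and redundant given the patch-topology shortcut.
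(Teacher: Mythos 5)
Your proof is correct, and it takes a genuinely different route from the paper's. You deduce flat-compactness of $V(\theta)$ purely from the comparison of topologies: $V(\theta)$ is Zariski-closed, hence closed in the finer patch topology; $Spec_P(A)$ is compact (Lemma 6.1), so $V(\theta)$ is patch-compact; and the continuous identity $Spec_P(A)\rightarrow Spec_F(A)$ then transports compactness to the flat topology. Every ingredient is already available in the section, and the argument needs nothing about congruences beyond the definition of $V(\theta)$. (One small remark: the Hausdorff/Boolean property of $Spec_P(A)$ that you mention at the end is not needed; plain compactness of the patch topology suffices.) The paper instead constructs the order-isomorphism $\sigma:V(\theta)\rightarrow Spec(A/\theta)$, $\phi\mapsto\phi/\theta$, verifies that it is a homeomorphism for the flat topologies by computing the images of basic closed sets, and then quotes compactness of the spectral space $Spec_F(A/\theta)$. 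Your argument is shorter and more robust, since it avoids the basic-open-set bookkeeping in the quotient; the paper's argument is longer but yields the extra structural fact that $V(\theta)$ with the flat topology is homeomorphic to $Spec_F(A/\theta)$, which is of independent interest. Both proofs are valid in the paper's standing hypotheses (where $Spec_Z(A)$ is a spectral space, so that the patch and flat topologies are defined).
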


\begin{proof} Let us consider the function $\sigma:V(\theta)\rightarrow Spec(A/\theta)$, defined by $\sigma(\phi) = \phi/\theta$, for all $\phi\in V(\theta)$. It is known that $\sigma$ is an order - isomorphism. We shall prove that $\sigma$ is a homeomorphism w.r.t. flat topology. It suffices to show that $\sigma$ maps the closed subsets of $V(\theta)$ into the closed subsets of $Spec(A/\theta)$. A basic closed subset of $V(\theta)$ has the form $V(\theta)\bigcap D(\alpha)$ = $V(\theta)\bigcap\bigcup_{i=1}^nD(\epsilon_i)$, where $\alpha = \bigvee_{i=1}^n\epsilon_i$ with $\epsilon_i\in PCon(A)$, for $i=1,\cdots,n$.

For any $i=1,\cdots,n$, the following equalities hold: $\sigma(V(\theta)\bigcap D(\epsilon_i))$ = $\{\phi/\theta|\phi\in Spec(A),\theta\subseteq\phi,\epsilon_i\not\subseteq\phi\}$ = $\{\psi\in Spec(A/\theta)|(\epsilon_i\lor \theta)/\theta\not\subseteq\psi\}$ = $D_{A/\theta}((\epsilon_i\lor\theta)/\theta)$. Then we have $\sigma(V(\theta)\bigcap D(\alpha))$ = $\sigma(\bigcup_{i=1}^n((V(\theta)\bigcap D(\epsilon_i)))$ = $\bigcup_{i=1}^n\sigma(V(\theta)\bigcap D(\epsilon_i))$ = $\bigcup D_{A/\theta}((\epsilon_i\lor\theta/\theta))$. It follows that $\sigma(V(\theta)\bigcap D(\alpha))$ is a closed subset of $Spec_F(A/\theta)$. Since $Spec_F(A/\theta)$ is a compact space it follows that $V(\theta)$ is flat compact.

\end{proof}

\begin{corolar}
The compact open subsets of $Spec_F(A)$ have the form $\bigcup_{i=1}^n V(\alpha_i)$, where $\alpha_i\in K(A)$, for all $i = 1,\cdots,n$.
\end{corolar}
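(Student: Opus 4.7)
The plan is to prove the corollary in both directions, using Proposition 6.8 as the key ingredient for the ``sufficient'' direction and the definition of the flat topology together with compactness for the ``necessary'' direction.

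First I would handle the easy direction: any set of the form $U = \bigcup_{i=1}^n V(\alpha_i)$ with $\alpha_i \in K(A)$ is automatically open in $Spec_F(A)$, since by Remark 6.3(1) the family $(V(\beta))_{\beta \in K(A)}$ is a basis of open sets for the flat topology. Proposition 6.8 tells us that each $V(\alpha_i)$ is flat-compact, and a finite union of compact subsets of any topological space is compact. Hence $U$ is compact and open in $Spec_F(A)$.

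For the converse, suppose $U$ is a compact open subset of $Spec_F(A)$. Since $(V(\beta))_{\beta\in K(A)}$ is a basis of $\mathcal{F}_A$, we can write $U = \bigcup_{j\in J} V(\beta_j)$ for some family $(\beta_j)_{j\in J}\subseteq K(A)$. This is an open cover of $U$ in $Spec_F(A)$, so by the assumed compactness of $U$ there is a finite subset $\{j_1,\dots,j_n\}\subseteq J$ such that $U = \bigcup_{i=1}^n V(\beta_{j_i})$. Setting $\alpha_i := \beta_{j_i}$ gives the required representation.

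Combining the two directions yields the characterization. There is no real obstacle here: the only genuine content is the flat-compactness of a basic open $V(\alpha)$, which is precisely Proposition 6.8, and the rest is the standard observation that compact open sets in a space with a given basis are finite unions of basic opens whose basic opens are themselves compact. Thus the proof is short and essentially a corollary of Proposition 6.8 together with Remark 6.3(1).
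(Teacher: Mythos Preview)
Your proof is correct and follows essentially the same two-direction argument as the paper: compactness extracts a finite subcover from the basic-open representation, and conversely each $V(\alpha_i)$ is flat-compact so a finite union is compact. Note only that the compactness result you cite as Proposition~6.8 is actually Proposition~6.9 in the paper's numbering.
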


\begin{proof}
Recall that an open subset $U$ of $Spec_F(A)$ has the form $U = \bigcup_{i\in I}V(\alpha_i)$, with $\alpha_i\in K(A)$, for all $i\in I$. If the open set $U = \bigcup_{i\in I}V(\alpha_i)$ is compact in $Spec_F(A)$ then  $U = \bigcup_{i\in J}V(\alpha_i)$, for some finite subset $J$ of $I$. Conversely, by Proposition 6.9, every set $\bigcup_{i=1}^n V(\alpha_i)$ (where $\alpha_i\in K(A)$, for all $i = 1,\cdots,n$) is compact in $Spec_F(A)$.

\end{proof}

 Theorem 6.9 and Corollary 6.10 generalize some properties proven in \cite{Doobs1} for the flat topology on the prime spectrum of a commutative ring (see Lemma 2.1(b) of \cite{Doobs1}).

According to the Hochster theorem from \cite{Hochster}, for any bounded distributive lattice $L$ there exists a commutative ring $R$ such that the lattice $L$ and $L(R)$ are isomorphic (see also Proposition 3.13 of Chapter V in \cite{Johnstone}, p.202 and the discussion there). Then for any algebra $A$ of $\mathcal{V}$ such that $K(A)$ is closed under the commutator operation there exists a commutative ring $R$ such that the reticulations $L(A)$ and $L(R)$ of $A$ and $R$ are isomorphic lattices.

Assume that $A$ is an algebra of $\mathcal{V}$ such that $K(A)$ is closed under the commutator operation. By Proposition 7.7 of \cite{GM2}, the lattices $L(A)$ and $L(A/{\rho(\Delta_A)})$ are isomorphic. In particular, if $R$ is an arbitrary commutative ring and $n(R)$ is its nil - radical then the reticulations $L(R)$ and $L(R/{n(R)})$ of $R$ and $R/{n(R)}$ are isomorphic lattices. Then for any algebra $A$ of $\mathcal{V}$ such that $K(A)$ is closed under the commutator operation one can find a reduced (= semiprime) commutative ring $R$ such that $L(A)$ and $L(R)$ are isomorphic.

We shall fix the (unital) commutative ring $R$ associated with the algebra $A$ and we shall identify the isomorphic lattices $L(A)$ and $L(R)$.

According to Section 3, we can consider the following order isomorphisms:

$Spec(A) \xrightarrow[]{u_A} Spec_{Id}(L(A)) \xrightarrow[]{v_R} Spec(R)$ (i)

$Spec(R) \xrightarrow[]{u_R} Spec_{Id}(L(A)) \xrightarrow[]{v_A} Spec(A)$ (ii)

Let us denote $s = s_{AR} = v_R\circ u_A$ and $t = t_{AR} = v_A\circ u_R$. It is clear that $s$ and $t$ are preserving - order maps. From Section 3 we know that $v_A$ is the inverse of $u_A$ and $v_R$ is the inverse of $u_R$, therefore $t$ is the inverse of $s$. Therefore $s$ and $t$ establish an order - isomorphism between $(Spec(A),\subseteq)$ and $(Spec(R),\subseteq)$.

\begin{lema}
The functions $s:Spec(A)\rightarrow Spec(R)$ and $t:Spec(R)\rightarrow Spec(A)$ are homeomorphisms w.r.t. Zariski topology, patch topology and flat topology, inverse to one another.
\end{lema}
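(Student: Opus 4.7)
The plan is to decompose $s$ and $t$ through the isomorphism $L(A)\cong L(R)$ and then invoke, separately for each of the three topologies, the homeomorphism results already established in Section 3 and Section 6. First I would fix a bounded distributive lattice isomorphism $\varphi : L(A)\to L(R)$, which exists by the Hochster-type argument recalled just before the lemma. Any such isomorphism induces a bijection $\varphi^{\sharp} : Spec_{Id}(L(R))\to Spec_{Id}(L(A))$, $Q\mapsto \varphi^{-1}(Q)$, that sends basic open sets to basic open sets, since $\varphi^{-1}(V_{Id}(y))=V_{Id}(\varphi^{-1}(y))$ and $\varphi^{-1}(D_{Id}(y))=D_{Id}(\varphi^{-1}(y))$. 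By Remark 6.2, the bases of the Stone, patch and flat topologies on $Spec_{Id}(L(A))$ are all built from the families $D_{Id}(x)$ and $V_{Id}(x)$, so $\varphi^{\sharp}$ is simultaneously a homeomorphism for the Stone, patch and flat topologies.

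Next, for the Zariski topology I would combine Proposition 3.4, applied to $A$, with its classical counterpart for commutative rings, applied to $R$: both state that $u_A$, $v_A$ and $u_R$, $v_R$ are mutually inverse homeomorphisms between the respective Zariski and Stone spectra. Under the identification given by $\varphi$, the composite $s=v_R\circ u_A$ equals $v_R\circ \varphi^{\sharp\,-1}\circ u_A$ (up to the identification), and is therefore a Zariski homeomorphism as a composite of Zariski homeomorphisms; the same holds for $t=v_A\circ u_R$. The relations $t\circ s=\mathrm{id}_{Spec(A)}$ and $s\circ t=\mathrm{id}_{Spec(R)}$ follow at once from $v_A\circ u_A=\mathrm{id}$, $v_R\circ u_R=\mathrm{id}$ and $\varphi\circ \varphi^{-1}=\mathrm{id}$.

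For the patch and flat topologies I would invoke Lemma 6.4 twice: once as stated (for $A$ and $L(A)$), and once for the pair $(R,L(R))$, since the proof of Lemma 6.4 only uses the Zariski homeomorphism and the description of the patch and flat topologies in terms of (compact) open sets of the Zariski topology recorded in Remark 6.3, and these ingredients are equally available in the commutative ring setting. Composing the resulting patch/flat homeomorphisms $u_A,v_A,u_R,v_R$ with $\varphi^{\sharp}$, which is a patch/flat homeomorphism by the first step, yields that $s$ and $t$ are homeomorphisms also for the patch and the flat topologies.

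The main obstacle is almost entirely bookkeeping rather than mathematics: one must make the identification $L(A)=L(R)$ via $\varphi$ explicit in the definitions of $s$ and $t$, and verify that the three topologies on $Spec_{Id}(L(A))$ and $Spec_{Id}(L(R))$ correspond under $\varphi^{\sharp}$. Once this is done, the lemma reduces to assembling Proposition 3.4, Lemma 6.4 and the Hochster-type identification, with no further substantive argument needed.
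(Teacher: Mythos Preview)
Your proposal is correct and follows essentially the same route as the paper: the paper's proof simply cites Proposition 3.4 and Lemma 6.4, which is exactly the decomposition you describe, applied once for $A$ and once for $R$ after identifying $L(A)$ with $L(R)$. Your only addition is making the identification via $\varphi$ explicit rather than tacit, which is harmless bookkeeping.
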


\begin{proof}
By Proposition 3.4 and Lemma 6.4.
\end{proof}

The previous lemma allows us to transfer some properties from the ring $R$ to the algebra $A$. In what follows we shall illustrate this thesis many  times. For example, by using this lemma one can translate Corollary 3.6 and Lemma 3.7 of \cite{Tar1} from rings to algebras and so we obtain other proofs of Propositions 6.7 and 6.8. In fact, by using Lemma 6.1 and some other similar transfer properties the most of results in \cite{Tar1} regarding the flat and patch topologies on  spectra of rings can be exported to the flat and patch topologies on spectra of algebras.

\begin{remarca}
Since $s$ and $t$ are order - isomorphisms one can consider the restrictions of these maps to maximal spectra, hence one obtains the functions $s:Max(A)\rightarrow Max(R)$ and $t:Max(R)\rightarrow Max(A)$ (we denote the restrictions with the same symbols). By applying Lemma 6.11, it follows that $s:Max(A)\rightarrow Max(R)$ and $t:Max(R)\rightarrow Max(A)$ are homeomorphisms w.r.t. Zariski topology, patch topology and flat topology, inverse to one another.
\end{remarca}

\begin{corolar}
The topological space $Max_F(A)$ is Hausdorff and zero - dimensional.
\end{corolar}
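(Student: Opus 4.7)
The plan is to work directly with the basis of $Spec_F(A)$ described in Remark 6.3, namely the sets $V(\alpha)$ with $\alpha \in K(A)$, and to exploit heavily that $\nabla_A$ is a compact congruence (since $\mathcal{V}$ is semidegenerate) together with the maximality of the points under consideration. Alternatively one could invoke Remark 6.12 to transfer the result from the classical fact that the maximal spectrum of a commutative ring is Hausdorff and zero-dimensional in the inverse topology, but the direct argument is short enough to present.

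For the Hausdorff property, I would take two distinct points $\phi_1,\phi_2 \in Max(A)$. Maximality forces $\phi_1 \vee \phi_2 = \nabla_A$. Writing each $\phi_i$ as the join of the compact congruences it contains and using compactness of $\nabla_A$, one extracts $\alpha_1,\alpha_2 \in K(A)$ with $\alpha_i \subseteq \phi_i$ and $\alpha_1 \vee \alpha_2 = \nabla_A$. Then $V(\alpha_1)\cap Max(A)$ and $V(\alpha_2)\cap Max(A)$ are basic flat-open sets separating $\phi_1$ from $\phi_2$; disjointness is immediate, since any common point would contain $\alpha_1 \vee \alpha_2 = \nabla_A$, contradicting its being a proper congruence.

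For zero-dimensionality, it suffices to prove that every basic flat-open set $V(\alpha) \cap Max(A)$ is also flat-closed. The complement is $D(\alpha)\cap Max(A)$, so I would show this complement is itself flat-open. The key observation is that for $\phi \in Max(A)$, maximality of $\phi$ gives $\alpha \not\subseteq \phi$ iff $\phi \vee \alpha = \nabla_A$; using again the compactness of $\nabla_A$ and the decomposition $\phi = \bigvee\{\gamma \in K(A) \mid \gamma \subseteq \phi\}$, this is in turn equivalent to the existence of $\gamma \in K(A)$ with $\gamma \subseteq \phi$ and $\gamma \vee \alpha = \nabla_A$. Hence
\[
D(\alpha) \cap Max(A) = \bigcup\bigl\{V(\gamma) \cap Max(A) \mid \gamma \in K(A),\ \gamma \vee \alpha = \nabla_A\bigr\},
\]
which is a union of basic flat-open sets, therefore flat-open. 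Consequently the basic flat-open sets $V(\alpha)\cap Max(A)$ form a basis of clopen sets for $Max_F(A)$, establishing zero-dimensionality.

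There is no real obstacle here: the semidegeneracy hypothesis (giving compactness of $\nabla_A$) does all the work by allowing us to replace general maximal congruences with compact witnesses at the crucial moments, converting statements about arbitrary congruences into the basic flat topology language. The slight subtlety is remembering that the flat topology is indexed by $V(\cdot)$ rather than $D(\cdot)$, so one must rewrite both the separating neighborhoods and the complement of the basic set in the form $V(\gamma)\cap Max(A)$.
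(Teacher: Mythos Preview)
Your proof is correct. The paper, by contrast, takes the transfer route you mention as an alternative: it invokes Remark~6.12 to identify $Max_F(A)$ with $Max_F(R)$ for a commutative ring $R$ having the same reticulation, and then cites Tarizadeh's result that $Max_F(R)$ is Hausdorff and zero-dimensional. So the paper's argument is a two-line black-box transfer, fully in keeping with the paper's overarching theme of moving properties along the reticulation via Hochster's theorem.

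Your direct argument is a genuine improvement in self-containedness: it avoids Hochster's theorem and the external ring-theoretic reference entirely, and it isolates exactly which hypothesis is doing the work (compactness of $\nabla_A$ from semidegeneracy, plus maximality). The paper's route, on the other hand, is quicker to state and fits the narrative of the article, but it imports a result whose proof is essentially the same elementary argument you wrote out, just in the ring setting. Nothing is lost by your choice.
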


\begin{proof}
By \cite{Tar2}, $Max_F(R)$ is Hausdorff and zero - dimensional, then we apply Remark 6.12.
\end{proof}

Let $R$ be a commutative ring. Recall from \cite{Aghajani} that an ideal $I$ of $R$ is regular if it is generated by a set of idempotents of $R$. A maximal element of the set of proper regular ideals in $R$ is said to be a max - regular ideal of $R$.

If $L$ is a frame \cite{Johnstone} then $a\in L$ is regular if it is a join of complemented elements in $L$; a maximal proper regular element of $L$ is said to be a max - regular element. Denoting by $B(R)$ the Boolean algebra of idempotent elements in $R$, we know from Lemma 1 of \cite{Banaschewski} that the assignment $e\mapsto Re$ gives  a Boolean isomorphism $B(R)\rightarrow B(Id(R))$. Thus an ideal $I$  of a commutative ring $R$ is regular (resp. max - regular) if and only if $I$ is a regular (resp. max - regular) element of the frame $Id(R)$.

Let $A$ be an algebra in $\mathcal{V}$. A congruence $\theta$ of $A$ is said to be regular if it is the join of a family $(\alpha_i)_{i\in I}\subseteq B(Con(A))$: $\theta = \bigvee_{i\in I}\alpha_i$. Then the congruence $\theta$ of $A$ is regular if and only if $\theta = \bigvee\{\alpha\in B(Con(A))|\alpha\subseteq \theta\}$. We remark that the regular congruences generalize the notion of regular ideal in a commutative ring (see \cite{Aghajani}, p.125). A maximal element in the set of proper congruences of $A$ is called a max - regular congruence. The set $Sp(A)$ of max - regular congruences of $A$ is called the Pierce spectrum of $A$. For any proper regular congruence $\theta$ there exists a max - regular congruence $\phi$ such that $\theta\subseteq\phi$, so the Pierce spectrum of $A$ is non - empty. For any $\alpha\in B(Con(A))$ we denote $U(\alpha) = \{\chi\in Sp(A)|\alpha\not\subseteq\chi\}$. Thus it is easy to show that the family $(U(\alpha))_{\alpha\in B(Con(A))}$ is a basis of clopen sets for a topology on $Sp(A)$.

For any $\phi\in Spec(A)$ denote $s_A(\phi) = \bigvee\{\alpha\in B(Con(A))|\alpha\subseteq\phi\}$. Then $s_A(\phi)$ is a regular congruence of $A$ and $s_A(\phi)\subseteq \phi\subset \nabla_A$.

\begin{lema}
For any $\phi\in Spec(A)$, $s_A(\phi)$ is a max - regular congruence of $A$.
\end{lema}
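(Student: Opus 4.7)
The plan is to verify the three defining features of a max-regular congruence in turn: that $s_A(\phi)$ is regular, that it is proper, and that it is maximal among proper regular congruences of $A$.

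Regularity is immediate from the very definition of $s_A(\phi)$ as a join of complemented congruences. For properness, I would use that $\phi\in Spec(A)$ implies $\phi\neq\nabla_A$; since $s_A(\phi)\subseteq \phi$ by construction, the properness of $\phi$ forces $s_A(\phi)\neq \nabla_A$.

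The substantive step is the maximality. I would take an arbitrary proper regular congruence $\theta$ of $A$ with $s_A(\phi)\subseteq \theta$ and show $\theta\subseteq s_A(\phi)$. Since $\theta$ is regular, it suffices to prove that every $\alpha\in B(Con(A))$ with $\alpha\subseteq \theta$ satisfies $\alpha\subseteq \phi$ (then $\alpha\subseteq s_A(\phi)$, and taking the join over all such $\alpha$ yields $\theta\subseteq s_A(\phi)$). Suppose for contradiction that some $\alpha\in B(Con(A))$ lies in $\theta$ but not in $\phi$. Recall from the preliminaries that in $B(Con(A))$ the complement $\alpha^{\perp}$ of $\alpha$ satisfies $\alpha\lor \alpha^{\perp}=\nabla_A$ and $[\alpha,\alpha^{\perp}]=\Delta_A$. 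The latter gives $[\alpha,\alpha^{\perp}]=\Delta_A\subseteq \phi$, so primality of $\phi$ together with $\alpha\not\subseteq \phi$ forces $\alpha^{\perp}\subseteq \phi$. Hence $\alpha^{\perp}\subseteq s_A(\phi)\subseteq \theta$, which combined with $\alpha\subseteq \theta$ gives $\nabla_A=\alpha\lor \alpha^{\perp}\subseteq \theta$, contradicting properness of $\theta$.

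The only subtle point, and the one I would take care to state cleanly, is the appeal to the Boolean structure of $B(Con(A))$: the complement $\alpha^{\perp}$ in that Boolean algebra really is the annihilator $\alpha\to \Delta_A$, and the identities $\alpha\lor\alpha^{\perp}=\nabla_A$, $[\alpha,\alpha^{\perp}]=\Delta_A$ used above are exactly those recorded just before Lemma 2.11 in the preliminaries. With that observation the rest is a short contrapositive argument and no further machinery (reticulation, condition $(\star)$, semiprimeness) is required.
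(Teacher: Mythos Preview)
Your proof is correct and shares the same key idea as the paper's: if $\alpha\in B(Con(A))$ and $\alpha\not\subseteq\phi$, then primality of $\phi$ together with $[\alpha,\alpha^{\perp}]=\Delta_A\subseteq\phi$ forces $\neg\alpha=\alpha^{\perp}\subseteq\phi$, hence $\neg\alpha\subseteq s_A(\phi)$. The paper packages this slightly differently: rather than directly verifying maximality as you do, it shows the ``prime-like'' property that $\alpha\in B(Con(A))$ and $\alpha\not\subseteq\phi$ imply $s_A(\phi)\lor\alpha=\nabla_A$, and obtains the contradiction by embedding $s_A(\phi)\lor\alpha$ in some max-regular $\psi$ (using the Zorn-type fact stated just before the lemma). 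Your route is marginally more elementary since it avoids that embedding step and works straight from the definition of maximality; otherwise the two arguments are essentially identical.
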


\begin{proof}
In order to prove that $s_A(\phi)$ is a max - regular congruence of $A$, it suffices to establish the following implication: $\alpha\in B(Con(A))$ and $\alpha\not\subseteq\phi$ imply that  $s_A(\phi)\lor \alpha = \nabla_A$. Assume by absurdum that there exists $\alpha\in B(Con(A))$ such that $\alpha\not\subseteq\phi$ and $s_A(\phi)\lor \alpha \neq \nabla_A$, so $s_A(\phi)\lor \alpha\subseteq\psi$, for some max - regular congruence $\psi$. Since $\alpha\not\subseteq\phi$  and $\phi\in Spec(A)$ it follows that $\neg\alpha \subseteq\phi$, so $\neg\alpha \subseteq s_A(\phi)\subseteq\psi$. Thus we obtain $\nabla_A = \alpha\lor\neg\alpha \subseteq\psi$, contradicting that $\psi$ is max - regular.

\end{proof}

By Lemma 6.14 we can consider the function $s_A: Spec(A)\rightarrow Sp(A)$, defined by the assignment $\phi\mapsto s_A(\phi)$.

\begin{propozitie}
$Sp(A)$ is a Boolean space and  $s_A: Spec(A)\rightarrow Sp(A)$ is surjective and continuous w.r.t. both Zariski and flat topologies on $Spec(A)$.

\end{propozitie}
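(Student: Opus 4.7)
The plan is to identify $Sp(A)$ with the Stone space of the Boolean algebra $B=B(Con(A))$, after which the Boolean-space claim is automatic, and then to verify surjectivity and continuity of $s_A$ by direct computation using the complemented-element identities from Lemma 2.11.

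For the homeomorphism between $Sp(A)$ and $Spec(B)$ (the Stone space of maximal, equivalently prime, ideals of $B$), I would define $\eta:Sp(A)\to Spec(B)$ by $\eta(\chi)=\{\alpha\in B\mid\alpha\subseteq\chi\}$ and produce an inverse $I\mapsto\bigvee I$. The crux is to show that for every $\chi\in Sp(A)$ and every $\alpha\in B$, \emph{exactly one} of $\alpha\subseteq\chi$ or $\neg\alpha\subseteq\chi$ holds: both cannot, since $\alpha\lor\neg\alpha=\nabla_A$ and $\chi$ is proper; if neither does, then $\chi\lor\alpha$ and $\chi\lor\neg\alpha$ are regular congruences strictly containing $\chi$, hence both equal $\nabla_A$ by max-regularity of $\chi$, and the distributivity identity from Lemma 2.11 together with $[\alpha,\neg\alpha]=\Delta_A$ yields $\nabla_A=(\chi\lor\alpha)\land(\chi\lor\neg\alpha)=\chi\lor(\alpha\land\neg\alpha)=\chi\lor[\alpha,\neg\alpha]=\chi$, a contradiction. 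This dichotomy makes $\eta(\chi)$ a maximal (hence prime) ideal of $B$. For the inverse, given a maximal ideal $I$ of $B$, $\bigvee I$ is regular by construction and proper because $\nabla_A$ is compact (a finite join from $I$ would otherwise equal $\nabla_A$); its max-regularity follows since any $\beta\in B\setminus I$ has $\neg\beta\in I\subseteq\bigvee I$, whence $(\bigvee I)\lor\beta\supseteq\beta\lor\neg\beta=\nabla_A$. Since $\eta$ carries the basic open $U(\alpha)$ onto $\{I\in Spec(B)\mid\alpha\notin I\}$, it is a homeomorphism, so $Sp(A)$ is a Boolean space.

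For the surjectivity of $s_A$, given a max-regular $\theta$, I would pick any $\phi\in Max(A)$ with $\theta\subseteq\phi$ (such $\phi$ exists because $\theta\neq\nabla_A$, cf.\ Section 2); then $\theta\subseteq s_A(\phi)\subseteq\phi\subsetneq\nabla_A$ exhibits $s_A(\phi)$ as a proper regular congruence containing $\theta$, so $s_A(\phi)=\theta$ by the maximality of $\theta$ among proper regular congruences.

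For the continuity, it suffices to check that $s_A^{-1}(U(\alpha))$ is open in both topologies for every $\alpha\in B$. The key observation is that, for each $\phi\in Spec(A)$, one has $\alpha\subseteq s_A(\phi)\iff\alpha\subseteq\phi$: the forward implication is by definition, and for the converse, if $\alpha\not\subseteq\phi$ then primality of $\phi$ together with $[\alpha,\neg\alpha]=\Delta_A\subseteq\phi$ forces $\neg\alpha\subseteq\phi$, so $\alpha\subseteq s_A(\phi)\subseteq\phi$ would give $\nabla_A\subseteq\phi$. Hence $s_A^{-1}(U(\alpha))=D(\alpha)$, which is basic Zariski-open by definition, and using primality once more, $D(\alpha)=V(\neg\alpha)$ with $\neg\alpha\in B\subseteq K(A)$ by Lemma 2.11(3), so this set is also a basic flat-open. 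The main obstacle lies in the construction of the Stone bijection $\eta$, which requires careful use of the distributivity identity for complemented elements and of $[\alpha,\neg\alpha]=\Delta_A$; once this is in place, surjectivity and continuity are essentially bookkeeping with basic opens.
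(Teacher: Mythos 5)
Your proposal is correct, and the surjectivity and continuity arguments coincide with the paper's: both hinge on the observation that $s_A^{-1}(U(\alpha)) = D(\alpha) = V(\neg\alpha)$ for $\alpha\in B(Con(A))$ (you justify the intermediate equivalence $\alpha\subseteq s_A(\phi)\Leftrightarrow\alpha\subseteq\phi$ in more detail than the paper does, though your labelling of ``forward'' and ``converse'' directions is muddled --- the implication $\alpha\subseteq\phi\Rightarrow\alpha\subseteq s_A(\phi)$ is immediate from the definition of $s_A(\phi)$, and what you actually prove is its contrapositive-free companion). Where you genuinely diverge is the Boolean-space claim. The paper takes the topological route: the $U(\alpha)$ form a clopen basis (zero-dimensionality), Hausdorffness is declared ``easy to see'', and compactness is obtained only \emph{after} establishing that $s_A$ is continuous and surjective, as the image of the compact space $Spec_Z(A)$. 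You instead build an explicit homeomorphism $\eta:Sp(A)\to Spec(B(Con(A)))$ onto the Stone space of the Boolean algebra $B(Con(A))$, so that compactness and Hausdorffness come for free and independently of the map $s_A$; this is cleaner logically (the paper's order of argument makes the Boolean-space conclusion depend on the other two assertions) and the dichotomy ``exactly one of $\alpha\subseteq\chi$, $\neg\alpha\subseteq\chi$ holds for max-regular $\chi$'' is exactly the Hausdorff separation the paper leaves implicit. One small repair: the distributivity identity stated before Lemma 2.11 requires the \emph{joined} element to be complemented, whereas in your computation $(\chi\lor\alpha)\cap(\chi\lor\neg\alpha)=\chi\lor(\alpha\cap\neg\alpha)$ the joined element is the merely regular $\chi$; the step is nonetheless valid because Lemma 2.4(2) gives directly that $\chi\lor\alpha=\chi\lor\neg\alpha=\nabla_A$ implies $\chi\lor[\alpha,\neg\alpha]=\nabla_A$, hence $\chi=\nabla_A$, the desired contradiction.
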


\begin{proof}
We know  that the family $(U(\alpha))_{\alpha\in B(Con(A))}$ is a basis of clopen sets of $Sp(A)$, so this space is zero - dimensional. It is easy to see that it is Hausdorff.
In order to show that $s_A$ is surjective, assume that $\psi\in Sp(A)$, then there exists $\phi\in Spec(A)$ such that $\psi\subseteq\phi$, hence $\psi\subseteq s_A(\phi)$. Since $\psi$ and $s_A(\phi)$ are max - regular we have $\psi = s_A(\phi)$, so $s_A: Spec(A)\rightarrow Sp(A)$ is surjective. For all $\alpha\in B(Con(A))$ we have $s_A^{-1}(U(\alpha)) = D(\alpha) = V(\neg\alpha)$, so the function $s_A$ is surjective and continuous w.r.t. both Zariski and flat topologies on $Spec(A)$. Since $Spec_Z(A)$ is compact and $Sp(A) = Im(s_A)$, it follows that the topological space $Sp(A)$ is compact. Thus $Sp(A)$ is a Boolean space.

\end{proof}

\begin{lema}
Any regular congruence of $A$ is pure.
\end{lema}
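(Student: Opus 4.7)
The plan is to exhibit every regular congruence as a join of pure congruences and then invoke the closure of $VCon(A)$ under arbitrary joins.

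First I would recall the observation made right after the definition of pure congruence (in Section 4): every complemented congruence $\alpha\in B(Con(A))$ is pure. Indeed, if $\beta\in K(A)$ and $\beta\subseteq\alpha$, then $\alpha^{\perp}\subseteq\beta^{\perp}$, so $\nabla_A=\alpha\lor\alpha^{\perp}\subseteq\alpha\lor\beta^{\perp}$. Thus $B(Con(A))\subseteq VCon(A)$.

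Next, given a regular congruence $\theta$ of $A$, by definition there exists a family $(\alpha_i)_{i\in I}\subseteq B(Con(A))$ such that $\theta=\bigvee_{i\in I}\alpha_i$. By the previous step each $\alpha_i$ belongs to $VCon(A)$. Applying Lemma 4.7(3), which asserts that $VCon(A)$ is closed under arbitrary joins, we conclude that $\theta=\bigvee_{i\in I}\alpha_i\in VCon(A)$, i.e. $\theta$ is pure.

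There is no real obstacle here; the whole argument is a direct concatenation of the definition of regular congruence, the already-noted purity of complemented congruences, and the frame structure of $VCon(A)$ established in Lemma 4.7. The only point worth verifying carefully is that the definition of regularity used in the statement truly matches $\theta=\bigvee\{\alpha\in B(Con(A))\mid\alpha\subseteq\theta\}$, which the paper records explicitly just before the statement.
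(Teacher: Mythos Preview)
Your proof is correct and follows essentially the same approach as the paper: both arguments first note that every complemented congruence is pure, then use the fact that arbitrary joins of pure congruences are pure. The paper phrases the second step via the operator $Vir$ (showing $\chi\subseteq Vir(\chi)$ by observing each $\alpha\in B(Con(A))$ with $\alpha\subseteq\chi$ satisfies $\alpha=Vir(\alpha)\subseteq Vir(\chi)$), while you invoke Lemma 4.7(3) directly, but the underlying content is identical.
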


\begin{proof}
Firstly we shall prove that any complemented congruence $\chi$ of $A$ is pure. Let $\alpha\in K(A)$ such that $\alpha\subseteq\chi$, so $\chi^{\perp}\subseteq\alpha^{\perp}$. We know that in the Boolean algebra $B(Con(A))$ we have $\neg\chi = \chi{\perp}$. Thus $\nabla_A = \chi\lor\chi{\perp}\subseteq\chi\lor\alpha^{\perp}$, so $\chi$ is pure.

Now let us consider an arbitrary regular congruence $\chi$ of $A$ and $\alpha\in B(Con(A))$ such that $\alpha\subseteq\chi$. We know that $\alpha$ is pure, so $\alpha = Vir(\alpha)\subseteq Vir(\chi)$. It follows that $\chi = \bigvee\{\alpha\in B(Con(A))|\alpha\subseteq\chi\}\subseteq Vir(\chi)$, therefore $\chi = Vir(\chi)$. Conclude that $\chi$ is pure.

\end{proof}

For any $\phi\in Spec(VCon(A))$ let us consider the following congruence of $A$: $t_A(\phi) =\bigvee \{\alpha\in B(Con(A))|\alpha\subseteq\phi\}$. Then $t_A(\phi)$ is a regular congruence and $t_A(\phi)\subseteq\phi\subset \nabla_A$.

\begin{lema}
If $\phi\in Spec(VCon(A))$ then $t_A(\phi)$ is a max - regular congruence.
\end{lema}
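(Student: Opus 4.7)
The plan is to run the same argument as in Lemma 6.14, but with \emph{purely}-primality in the frame $VCon(A)$ replacing primality in $Con(A)$. First, $t_A(\phi)$ is visibly regular, being defined as a join of complemented congruences, and it is proper because $t_A(\phi)\subseteq\phi\subsetneq\nabla_A$ (recall that $Spec(VCon(A))$ is the frame-theoretic prime spectrum of $VCon(A)$, so $\phi\neq\nabla_A$). Hence it suffices to prove the following maximality claim: for every $\alpha\in B(Con(A))$ with $\alpha\not\subseteq t_A(\phi)$ we have $t_A(\phi)\lor\alpha=\nabla_A$.

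The key observation is that $B(Con(A))\subseteq VCon(A)$, since every complemented congruence is pure (as remarked at the beginning of Section 4). Given such an $\alpha$, both $\alpha$ and its Boolean complement $\neg\alpha$ therefore lie in $VCon(A)$. Because $\alpha\lor\neg\alpha=\nabla_A$, Lemma 2.4(1) (equivalently, Lemma 4.10 applied in $VCon(A)$) gives $[\alpha,\neg\alpha]=\alpha\bigcap\neg\alpha=\Delta_A\subseteq\phi$. Applying the purely-prime hypothesis on $\phi$ to the pair $\alpha,\neg\alpha\in VCon(A)$, we conclude that $\alpha\subseteq\phi$ or $\neg\alpha\subseteq\phi$. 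The first alternative would force $\alpha\subseteq t_A(\phi)$, contradicting our assumption; hence $\neg\alpha\subseteq\phi$, and consequently $\neg\alpha\subseteq t_A(\phi)$. We obtain $t_A(\phi)\lor\alpha\supseteq\neg\alpha\lor\alpha=\nabla_A$, as desired.

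Max-regularity is then immediate: if $\psi$ is any proper regular congruence with $t_A(\phi)\subseteq\psi$, then for every $\alpha\in B(Con(A))$ with $\alpha\subseteq\psi$ the previous paragraph forces $\alpha\subseteq t_A(\phi)$ (otherwise $\nabla_A=t_A(\phi)\lor\alpha\subseteq\psi$, contradicting properness). Regularity of $\psi$ then yields $\psi=\bigvee\{\alpha\in B(Con(A))\mid\alpha\subseteq\psi\}\subseteq t_A(\phi)$, so $\psi=t_A(\phi)$. The only genuinely subtle point in this plan is recognising that $\alpha$ and $\neg\alpha$ automatically sit inside $VCon(A)$, so that the purely-prime property can actually be invoked on them; once that is in place, everything else is formal and parallel to Lemma 6.14.
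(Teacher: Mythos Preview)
Your proof is correct and follows essentially the same route as the paper: both arguments recognise that $\alpha,\neg\alpha\in VCon(A)$ and invoke the purely-prime hypothesis on this pair to force $\neg\alpha\subseteq\phi$, hence $\neg\alpha\subseteq t_A(\phi)$. Your presentation is in fact slightly more direct than the paper's, which unnecessarily introduces a max-regular $\psi\supseteq t_A(\phi)\lor\alpha$ to derive a contradiction, whereas you conclude $t_A(\phi)\lor\alpha=\nabla_A$ immediately from $\neg\alpha\subseteq t_A(\phi)$.
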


\begin{proof}
Assume that $\phi\in Spec(VCon(A))$. In order to show that $t_A(\phi)$ is a max - regular congruence, it suffices to have the following implication: $\alpha\in B(Con(A))$ and $\alpha\not\subseteq\phi$ imply $t_A(\phi)\lor\alpha = \nabla_A$.

Suppose by absurdum that there exists $\alpha\in B(Con(A))$ such that  $\alpha\not\subseteq\phi$ and $t_A(\phi)\lor\alpha\neq \nabla_A$. Since $t_A(\phi)\lor\alpha$ is regular, there exists a max - regular element $\psi$ such that $t_A(\phi)\lor\alpha\subseteq\psi$. We know that $VCon(A)$ is a frame (cf. Lemma 4.7(4)). Since the complemented congruences $\alpha$ and $\neg\alpha$ are pure (cf. Lemma 6.16) and $\phi\in Spec(VCon(A))$, it follows that $\alpha\not\subseteq\phi$ and $\alpha\land\neg\alpha = \Delta_A$ imply that $\neg\alpha\subseteq\phi$, hence $\neg\alpha = t_A(\neg\alpha)\subseteq t_A(\phi)$. Thus $\nabla_A = \alpha\lor\neg\alpha\subseteq\alpha\lor t_A(\phi)\subseteq \psi$, contradicting that $\psi$ is a proper regular congruence. It follows that $t_A(\phi)\lor\alpha = \nabla_A$.
\end{proof}

According to the previous lemma, the assignment $\phi\mapsto t_A(\phi)$ defines a map $t_A: Spec(VCon(A))\rightarrow Sp(A)$.

\begin{propozitie}
The map $t_A: Spec(VCon(A))\rightarrow Sp(A)$ is continuous and surjective.
\end{propozitie}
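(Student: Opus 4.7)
The plan is to verify continuity and surjectivity separately, exploiting the fact that both $Sp(A)$ and $Spec(VCon(A))$ carry bases that are indexed (in part) by the same complemented congruences. Recall that the clopen basis of $Sp(A)$ consists of the sets $U(\alpha)$ for $\alpha \in B(Con(A))$, while $Spec(VCon(A))$, as the prime spectrum of the frame $VCon(A)$, carries basic open sets $\{\phi \in Spec(VCon(A)) \mid a \not\subseteq \phi\}$ for $a \in VCon(A)$. By Lemma 6.16 every $\alpha \in B(Con(A))$ is pure, so $B(Con(A)) \subseteq VCon(A)$ and each such $\alpha$ simultaneously indexes a basic open of both spaces.

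For continuity, I would fix $\alpha \in B(Con(A))$ and rewrite $t_A^{-1}(U(\alpha)) = \{\phi \in Spec(VCon(A)) \mid \alpha \not\subseteq t_A(\phi)\}$. The core identity to check is $\alpha \subseteq t_A(\phi) \Longleftrightarrow \alpha \subseteq \phi$: the forward direction uses only $t_A(\phi) \subseteq \phi$, and the converse is immediate from the definition of $t_A$ since $\alpha$ itself is complemented. This rewrites the preimage as the basic open $\{\phi \in Spec(VCon(A)) \mid \alpha \not\subseteq \phi\}$, yielding continuity of $t_A$.

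For surjectivity, given $\psi \in Sp(A)$ I would first observe, via Lemma 6.16, that $\psi$ is pure, so $\psi \in VCon(A)$; moreover $\psi \neq \nabla_A$ as $\psi$ is a proper regular congruence. Spatiality of the frame $VCon(A)$, established in Lemma 4.7(4), then furnishes some $\phi \in Spec(VCon(A))$ with $\psi \subseteq \phi$. Because $\psi$ is regular we have $\psi = \bigvee\{\alpha \in B(Con(A)) \mid \alpha \subseteq \psi\} \subseteq \bigvee\{\alpha \in B(Con(A)) \mid \alpha \subseteq \phi\} = t_A(\phi)$, and by Lemma 6.17 the right-hand side is itself a proper max-regular congruence; the maximality of $\psi$ among proper regular congruences then forces $\psi = t_A(\phi)$.

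I expect the main obstacle to be the surjectivity half, since it relies on the nontrivial spatiality of $VCon(A)$ (Lemma 4.7(4)) to guarantee that the pure congruence $\psi$ can be extended to a purely-prime one sitting above it; without this, one would have to construct $\phi$ by a Zorn's lemma argument on pure congruences disjoint from a well-chosen element. The continuity half, by contrast, reduces to matching bases once the small verification $\alpha \subseteq t_A(\phi) \Longleftrightarrow \alpha \subseteq \phi$ is dispatched.
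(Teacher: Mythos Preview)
Your proof is correct. The continuity argument you spell out is exactly the ``straightforward'' verification the paper omits, and your key identity $\alpha\subseteq t_A(\phi)\Leftrightarrow\alpha\subseteq\phi$ for $\alpha\in B(Con(A))$ is the right way to match the bases.

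For surjectivity your route differs slightly from the paper's. The paper extends $\psi\in Sp(A)$ to some $\varepsilon\in Max(A)$ with $\psi\subseteq\varepsilon$, then invokes Lemma~4.8(3) (the map $Vir:Spec_Z(A)\to Spec_Z(VCon(A))$) to conclude that $Vir(\varepsilon)$ is purely-prime, and finishes with the same maximality comparison $\psi\subseteq t_A(Vir(\varepsilon))$ you use. You instead appeal directly to spatiality of $VCon(A)$ (Lemma~4.7(4)) to produce a purely-prime $\phi$ above the proper pure congruence $\psi$. Both approaches are valid: the paper's is more concrete, manufacturing the purely-prime point via the already-established map $Vir$ from the ordinary spectrum, while yours is cleaner frame-theoretically and avoids the detour through $Max(A)$. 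Your anticipated worry about spatiality is well placed but fully handled by Lemma~4.7(4), so no Zorn argument is needed.
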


\begin{proof}
Assume that $\psi\in Sp(A)$ then there exists $\varepsilon\in Max(A)$ such that $\psi\subseteq\varepsilon$. By Lemma 6.16, the max - regular congruence $\psi$ is pure, hence $\psi\subseteq Vir(\varepsilon)$. Since $\psi$ is a regular congruence, we have $\psi = t_A(\psi)\subseteq t_A(Vir(\varepsilon))$. Thus $\psi = t_A(Vir(\varepsilon))$, because both  $\psi$ and  $t_A(Vir(\varepsilon))$ are max - regular. It follows that $t_A$ is surjective. To prove that $t_A$ is a continuous map is straightforward.

\end{proof}

Let $L$ be a bounded distributive lattice and $Id(L)$ the frame of its ideals. An ideal $I$ of $L$ is regular if it is generated by a set of complemented elements of $L$. A max - regular ideal of $L$ is a maximal element of the set of proper regular ideals in $L$. The ideal - Pierce spectrum of the bounded distributive lattice $L$ is the set $Sp_{Id}(L)$ of  max - regular ideals of $L$. Of course the ideal - Pierce spectrum $Sp_{Id}(L)$ is a Boolean space w.r.t. a canonical topology. We observe that $Sp_{Id}(L)$  concides with the Pierce spectrum introduced by Cignoli in \cite{Cignoli}.

It is well - known that the assignment $e\mapsto (e]$ defines a Boolean isomorphism $B(L)\rightarrow B(Id(L))$. It follows that an ideal $I$ of $L$ is regular (resp. max - regular) if and only if $I$ is a regular (resp. max - regular) element of the frame $Id(L)$.

The following two propositions emphazise the way in which

$\bullet$ the map $(\cdot)^{\ast}$ transforms regular (resp. max - regular) congruences of $A$ into (resp. max - regular) ideals of $L(A)$;

$\bullet$ the map $(\cdot)_{\ast}$ transforms regular (resp. max - regular) ideals of $L(A)$ into (resp. max - regular) congruences of $A$.

\begin{propozitie}
Let $A$ an algebra of the variety $\mathcal{V}$. Then the following  hold:
\usecounter{nr}
\begin{list}{(\arabic{nr})}{\usecounter{nr}}
\item If $\theta$ is a regular congruence of $A$ then $\theta^{\ast}$ is a regular ideal of $L(A)$;
\item Assume that $A$ is semiprime. If $I$ is a regular ideal of $L(A)$ then $I_{\ast}$ is a regular congruence of $A$.
\end{list}
\end{propozitie}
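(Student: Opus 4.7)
The plan is to handle the two parts separately, with Part (2) being by far the more delicate one because it requires lifting complemented elements of the reticulation back to complemented congruences of $A$.

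For Part (1), I would start with a regular congruence $\theta = \bigvee_{i\in I}\alpha_i$ with $\alpha_i\in B(Con(A))$ and apply Lemma~3.2(2) to obtain $\theta^{\ast} = \bigvee_{i\in I}\alpha_i^{\ast}$. By Lemma~3.3(7), each $\alpha_i^{\ast}$ equals the principal ideal $(\lambda_A(\alpha_i)]$. Since the map $\lambda_A$ sends $B(Con(A))$ into $B(L(A))$ (as noted just before Lemma~3.6), each $\lambda_A(\alpha_i)$ is a complemented element of $L(A)$, so $\theta^{\ast}$ is generated by a family of complemented elements and is therefore a regular ideal. This part should be essentially bookkeeping with the established lemmas.

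For Part (2), suppose $A$ is semiprime and $I = \bigvee_{j\in J}(e_j]$ is a regular ideal of $L(A)$ with $e_j\in B(L(A))$. The key step is to show that, in the semiprime setting, every complemented element of $L(A)$ lifts to a complemented congruence: given $e\in B(L(A))$ with complement $e'$, pick $\alpha,\beta\in K(A)$ with $\lambda_A(\alpha)=e$ and $\lambda_A(\beta)=e'$; from $\lambda_A(\alpha\vee\beta)=1$ and Lemma~3.1(3) we get $\alpha\vee\beta=\nabla_A$, while $\lambda_A([\alpha,\beta])=0$ together with Lemma~3.1(7) (semiprime case) gives $[\alpha,\beta]=\Delta_A$; Lemma~2.11(1) then forces $\alpha,\beta\in B(Con(A))$. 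So I may choose $\beta_j\in B(Con(A))$ with $\lambda_A(\beta_j)=e_j$ for each $j$.

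Now I set $\theta:=\bigvee_{j\in J}\beta_j$, which is a regular congruence by construction. Using Part~(1)'s computation, $\theta^{\ast}=\bigvee_j(\lambda_A(\beta_j)] = \bigvee_j(e_j] = I$, and hence by Lemma~3.3(2),(3) we have $I_{\ast} = (\theta^{\ast})_{\ast} = \rho(\theta)$. To finish, I need $\rho(\theta)=\theta$. Each $\beta_j$ is complemented, hence pure (an observation made in Section~4 and reiterated in the proof of Lemma~6.16), so $\theta\in VCon(A)$ because $VCon(A)$ is closed under arbitrary joins (Lemma~4.7(3)). Proposition~4.11 then says that every pure congruence in a semiprime algebra is radical, giving $\rho(\theta)=\theta$. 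Therefore $I_{\ast}=\theta$ is a regular congruence.

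The main obstacle is the lifting step in Part~(2): without semiprimeness one would need the condition $(\star)$ (as in Lemma~3.6 and Corollary~3.7) to conclude that $B(L(A))$ is in the image of $B(Con(A))$; the proof above exploits Lemma~3.1(7) to bypass $(\star)$ and convert the reticulation-level equations directly into congruence-level equations that trigger Lemma~2.11(1). The subsequent appeal to Proposition~4.11 (to upgrade $\rho(\theta)=I_{\ast}$ to $\theta=I_{\ast}$) is the second subtle point, and it is again where semiprimeness is essential.
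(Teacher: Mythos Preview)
Your proof is correct. Part~(1) is essentially identical to the paper's argument. For Part~(2), the paper takes a somewhat different organizational path: rather than building $\theta=\bigvee_j\beta_j$ from lifted generators and then showing $I_\ast=\rho(\theta)=\theta$, it argues element-wise that every compact $\beta\subseteq I_\ast$ lies below some complemented $\alpha\subseteq I_\ast$, obtaining $I_\ast=\bigvee\{\alpha\in B(Con(A))\mid\alpha\subseteq I_\ast\}$ directly. For the lifting of complemented elements, the paper simply cites an external result (Proposition~6.18(iii) of \cite{GM2}, that $\lambda_A|_{B(Con(A))}$ is a Boolean isomorphism when $A$ is semiprime), whereas you reprove this in place via Lemmas~3.1(3),(7) and~2.11(1). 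Both arguments ultimately rest on Proposition~4.11: the paper applies it to a single complemented $\alpha$ to get $\beta\subseteq\rho(\alpha)=\alpha$, while you apply it to the pure join $\theta$ to collapse $\rho(\theta)$ to $\theta$. Your route is slightly more indirect (passing through $\theta^\ast=I$ and then back) but is self-contained; the paper's is more direct but leans on the external citation.
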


\begin{proof}

(1) If $\theta$ is a regular congruence of $A$ then $\theta = \bigvee_{i\in J}\alpha_i$, for some family $(\alpha_i)_{i\in J}\subseteq B(Con(A))$, hence $\lambda_A(\alpha_i)\in B(L(A))$, for all $i\in J$. By Lemmas 3.2(1) and 3.3(7), we have $\theta^{\ast} = \bigvee_{i\in J}\alpha_i^{\ast}$ and $\alpha_i^{\ast} = (\lambda_A(\alpha_i)]\in B(Id(L(A)))$, so $\theta^{\ast}$ is a regular ideal of $L(A)$.

(2) Assume that $A$ is semiprime and $I$ is a regular ideal of $L(A)$, hence $I = \bigvee\{(e]|e\in B(L(A))\bigcap I\}$. We shall prove that $I_{\ast} = \bigvee\{\alpha\in B(Con(A))|\alpha\subseteq I_{\ast}\}$.

Let $\beta\in K(A)$ such that $\beta\subseteq I_{\ast}$, so $\lambda_A(\beta)\in I$ (by Lemma 3.3(1)), so there exists $e\in B(L(A))\bigcap I$ such that $\lambda_A(\beta)\leq e$ (because $I$ is a regular ideal of $L(A)$). Since $A$ is semiprime, the function $\lambda_A|_{B(Con(A))}:B(Con(A))\rightarrow B(L(A))$ is a Boolean isomorphism (see Proposition 6.18(iii) of \cite{GM2}). Thus there exists $\alpha\in B(Con(A))$ such that $e = \lambda_A(\alpha)$, so $\lambda_A(\beta)\leq \lambda_A(\alpha)$, i.e. $\rho(\beta)\subseteq\rho(\alpha)$. Since $A$ is semiprime, one can apply Proposition 4.11 to complemented congruence $\alpha$, therefore $\beta\subseteq\rho(\alpha) = \alpha$. We have proven the inclusion $I_{\ast}\subseteq \bigvee\{\alpha\in B(Con(A))|\alpha\subseteq I_{\ast}\}$. The inclusion $\bigvee\{\alpha\in B(Con(A))|\alpha\subseteq I_{\ast}\}\subseteq I_{\ast}$ is obvious, so  $I_{\ast} = \bigvee\{\alpha\in B(Con(A))|\alpha\subseteq I_{\ast}\}$. We conclude that $I_{\ast}$ is a regular congruence of $A$.

\end{proof}

\begin{propozitie}
Let $A$ a semiprime algebra of the variety $\mathcal{V}$. Then the following  hold:
\usecounter{nr}
\begin{list}{(\arabic{nr})}{\usecounter{nr}}
\item If $\theta$ is a max - regular congruence of $A$ then $\theta^{\ast}$ is a max - regular ideal of $L(A)$;
\item If $I$ is a max - regular ideal of $L(A)$ then $I_{\ast}$ is a max - regular congruence of $A$.
\end{list}
\end{propozitie}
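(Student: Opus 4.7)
The plan is to leverage Proposition 6.19, which already transfers regularity across the maps $(\cdot)^{\ast}$ and $(\cdot)_{\ast}$, together with the pair of identities $(\theta^{\ast})_{\ast} = \rho(\theta)$ and $(I_{\ast})^{\ast} = I$ from Lemma 3.3(2). The observation that glues everything together is that in a semiprime algebra every regular congruence is already radical: by Lemma 6.16 every regular congruence is pure, and by Proposition 4.11 every pure congruence $\theta$ of a semiprime algebra satisfies $\rho(\theta) = \theta$. Consequently $(\theta^{\ast})_{\ast} = \theta$ whenever $\theta$ is regular, so on the class of regular congruences (resp.\ regular ideals) the operators $(\cdot)^{\ast}$ and $(\cdot)_{\ast}$ act as mutually inverse, order-preserving bijections.

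For part (1) I would start with a max-regular $\theta$. Proposition 6.19(1) immediately gives that $\theta^{\ast}$ is regular. Properness is a one-line contradiction: if $\theta^{\ast} = L(A)$, then applying $(\cdot)_{\ast}$ and using $(\theta^{\ast})_{\ast} = \theta$ together with $(L(A))_{\ast} = \nabla_A$ would force $\theta = \nabla_A$. To check maximality, I would take an arbitrary regular ideal $J \supsetneq \theta^{\ast}$, apply Proposition 6.19(2) to obtain that $J_{\ast}$ is regular, and note that $J_{\ast} \supseteq (\theta^{\ast})_{\ast} = \theta$. The inclusion is strict, because equality would push back up to $J = (J_{\ast})^{\ast} = \theta^{\ast}$ via Lemma 3.3(2), contradicting $\theta^{\ast} \subsetneq J$. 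Max-regularity of $\theta$ then forces $J_{\ast} = \nabla_A$, and hence $J = (\nabla_A)^{\ast} = L(A)$, as required.

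Part (2) I would do by a mirror argument. Start from a max-regular ideal $I$; Proposition 6.19(2) gives that $I_{\ast}$ is regular, and properness is dispatched by $I_{\ast} = \nabla_A \Rightarrow I = (I_{\ast})^{\ast} = L(A)$. Then assume $\chi \supsetneq I_{\ast}$ with $\chi$ regular and proper, lift to $\chi^{\ast}$ via Proposition 6.19(1) to a regular ideal containing $I = (I_{\ast})^{\ast}$, and use the identity $(\chi^{\ast})_{\ast} = \rho(\chi) = \chi$ (which relies once more on the semiprime bridge) to obtain $I \subsetneq \chi^{\ast}$. The max-regularity of $I$ then forces $\chi^{\ast} = L(A)$, hence $\chi = (L(A))_{\ast} = \nabla_A$, contradicting properness of $\chi$.

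The only delicate step I anticipate is exactly this bridge $\rho(\chi) = \chi$ for regular congruences; it is the place where semiprimeness is essentially used, and without it Proposition 6.19 alone would not suffice to turn $(\cdot)^{\ast}$ and $(\cdot)_{\ast}$ into mutually inverse bijections on the regular classes. Once that bridge is available, the proof reduces to a routine argument about the induced Galois correspondence between the regular elements of $Con(A)$ and those of $Id(L(A))$.
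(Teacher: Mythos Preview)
Your proof is correct, but it takes a genuinely different route from the paper's. The paper argues directly via the element-wise characterization of max-regularity: to show $\theta^{\ast}$ is max-regular it checks that for every $e\in B(L(A))$ with $e\notin\theta^{\ast}$ one has $\theta^{\ast}\lor(e]=L(A)$, and it does so by pulling $e$ back to some $\alpha\in B(Con(A))$ through the Boolean isomorphism $\lambda_A|_{B(Con(A))}:B(Con(A))\to B(L(A))$ (available because $A$ is semiprime), then using max-regularity of $\theta$ to get $\theta\lor\alpha=\nabla_A$ and pushing forward again; part (2) is the mirror computation. Your argument is more structural: you first establish that on regular elements the maps $(\cdot)^{\ast}$ and $(\cdot)_{\ast}$ are mutually inverse order-isomorphisms (the key being your ``bridge'' $\rho(\theta)=\theta$ for regular $\theta$, which is exactly what the paper proves immediately afterwards as Lemma 6.21), and then observe that an order-isomorphism preserves maximal proper elements. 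Your approach is cleaner conceptually and in fact yields a slightly stronger intermediate statement (the full order-isomorphism between the posets of regular congruences and regular ideals), whereas the paper's approach is more hands-on and makes the role of the Boolean isomorphism $B(Con(A))\cong B(L(A))$ explicit.
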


\begin{proof}

(1) Assume that $\theta$ is a max - regular congruence of $A$. By the previous proposition, $\theta^{\ast}$ is a regular ideal of $L(A)$. In order to prove that $\theta^{\ast}$ is a max - regular ideal of $L(A)$, it suffices to show that for each $e\in B(L(A))$, $e\notin \theta^{\ast}$ implies $\theta^{\ast}\lor (e] = L(A)$. Since $A$ is semiprime, by using Proposition 6.18(iii) of \cite{GM2}, it follows that the function $\lambda_A|_{B(Con(A))}:B(Con(A))\rightarrow B(L(A))$ is a Boolean isomorphism, so there exists $\alpha\in B(Con(A))$ such that $e = \lambda_A(\alpha)$. From $e\notin \theta^{\ast}$ we get $\alpha\not\subseteq\theta$, hence $\theta\lor\alpha = \nabla_A$ (because $\theta$ is max - regular). By Lemma 3.3(7) we have $ \theta^{\ast}\lor (e] = \theta^{\ast}\lor\alpha^{\ast} = L(A)$.

(2) Assume that $I$ is a max - regular ideal of $L(A)$. By the previous proposition, $I_{\ast}$ is a regular congruence of $A$. In order to establish that $I_{\ast}$ is a max - regular congruence of $A$, it suffices to prove for each $\alpha\in B(Con(A))$, $\alpha\not\subseteq I_{\ast}$ implies $I_{\ast}\lor \alpha = \nabla_A$.
Let us assume that $\alpha\in B(Con(A))$ and $\alpha\not\subseteq I_{\ast}$, so $\lambda_A(\alpha)\in B(L(A))$ and $\lambda_A(\alpha)\notin I$ (cf. Lemma 3.3(1)). Since $I$ is a max - regular ideal it follows that $I\lor \alpha^{\ast} = I\lor(\lambda_A(\alpha)] = L(A)$. By applying Lemma 3.2(3), we get $\rho(I_{\ast}\lor \rho(\alpha)) = \rho(I_{\ast}\lor (\alpha^{\ast})_{\ast}) = (I\lor \alpha^{\ast})_{\ast} = (L(A))_{\ast} = \nabla_A$, hence, by using Lemma 2.7(6), it results that $I_{\ast}\lor \alpha = \nabla_A$.

\end{proof}

\begin{lema}
If $\theta$ is a regular congruence of $A$ then $\rho(\theta) = \theta$.
\end{lema}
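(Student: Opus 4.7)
The plan is to combine two results already proven in the paper. By the definition of regularity, $\theta = \bigvee\{\alpha \in B(Con(A)) : \alpha \subseteq \theta\}$ is a join of complemented congruences, and Lemma 6.16 records that any such join is automatically pure, so $\theta \in VCon(A)$. Since the semiprime hypothesis is in force here (as in the neighboring Propositions 6.19(2) and 6.20, and required for any meaningful statement of the form $\rho(\theta) = \theta$), Proposition 4.11 applies to the pure congruence $\theta$ and immediately yields $\rho(\theta) = \theta$. In one line: regular $\Rightarrow$ pure (Lemma 6.16) $\Rightarrow$ radical (Proposition 4.11).

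The inclusion $\theta \subseteq \rho(\theta)$ requires no hypothesis; the content is entirely in the reverse inclusion, which is exactly what purity combined with semiprimeness delivers. That semiprimeness is indispensable is confirmed by noting that $\Delta_A$ is always a trivially regular congruence while $\rho(\Delta_A) = \Delta_A$ fails whenever $A$ is not semiprime, so there is no way to remove that assumption here.

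If one preferred to avoid citing Proposition 4.11 and instead argue through the reticulation, the alternative route would go via $\rho(\theta) = (\theta^{\ast})_{\ast}$ from Lemma 3.3(2). Writing $\theta^{\ast} = \bigvee_{i} (\lambda_A(\alpha_i)]$ displays it as a regular ideal of $L(A)$; for a compact $\beta \subseteq (\theta^{\ast})_{\ast}$ one extracts a finite subfamily $F$ such that $\lambda_A(\beta) \leq \lambda_A(\alpha)$ for $\alpha = \bigvee_{i \in F}\alpha_i \in B(Con(A))$. The decisive computation $\lambda_A([\beta,\alpha^{\perp}]) \leq \lambda_A(\alpha) \wedge \lambda_A(\alpha^{\perp}) = 0$ forces $[\beta,\alpha^{\perp}] \subseteq \rho(\Delta_A)$, which under semiprimeness collapses to $[\beta,\alpha^{\perp}] = \Delta_A$, whence $\beta \subseteq \alpha^{\perp\perp} = \alpha \subseteq \theta$. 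Either way the single nontrivial step is the one that consumes semiprimeness, and that is the main (and only) obstacle in the proof.
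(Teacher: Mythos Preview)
Your argument is correct, and your observation that semiprimeness is indispensable (witnessed by $\theta=\Delta_A$) is exactly right; the paper's own proof also invokes Proposition 4.11, which carries that hypothesis. The route differs slightly from the paper's: you combine Lemma 6.16 (regular $\Rightarrow$ pure) with Proposition 4.11 applied directly to $\theta$, giving a clean two-line structural argument. The paper instead works element-wise: for a compact $\alpha\subseteq\rho(\theta)$ it picks a single complemented $\beta\in B(Con(A))$ with $[\alpha,\alpha]^n\subseteq\beta\subseteq\theta$, then applies Proposition 4.11 to $\beta$ (not to $\theta$) to get $\alpha\subseteq\rho(\beta)=\beta\subseteq\theta$. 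Both proofs ultimately rest on the same lemma; yours is more economical because it avoids unpacking the join into compact pieces, while the paper's version makes the role of the complemented generators more visible. Your alternative reticulation sketch is also sound but, as you note, longer than necessary given that Proposition 4.11 is already available.
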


\begin{proof}
Assume that $\theta$ is a regular congruence of $A$. Let $\alpha$ be a compact congruence of $A$ such that $\alpha\subseteq\rho(\theta)$, hence $[\alpha,\alpha]^n\subseteq\theta$, for some integer $n\geq 1$. Since $\theta = \bigvee\{\beta\in B(Con(A))|\beta\subseteq\theta\}$ and $[\alpha,\alpha]^n\in K(A)$, there exists $\beta\in B(Con(A))$ such that $\beta\subseteq\theta$ and $[\alpha,\alpha]^n\subseteq\beta$, so $\alpha\subseteq\rho(\beta)$. According to Proposition 4.11, $\beta\in B(Con(A))$ implies $\rho(\beta) = \beta$, hence $\alpha\subseteq\beta\subseteq\theta$. We have proven that $\rho(\theta)\subseteq\theta$, so $\rho(\theta) = \theta$.
\end{proof}

\begin{lema} Let $\theta$ be a congruence of $A$.
If $\rho(\theta)$ is a regular congruence of $A$ then $\rho(\theta) = \theta$.
\end{lema}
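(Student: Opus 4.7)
The plan is to show $\rho(\theta) \subseteq \theta$, since the reverse inclusion is automatic. Because $\rho(\theta)$ is regular by hypothesis, it is a join of complemented congruences: $\rho(\theta) = \bigvee\{\alpha \in B(Con(A)) \mid \alpha \subseteq \rho(\theta)\}$. So it suffices to verify that every such complemented $\alpha$ already lies in $\theta$.

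First I will fix $\alpha \in B(Con(A))$ with $\alpha \subseteq \rho(\theta)$. By Lemma 2.11(3) we have $\alpha \in K(A)$, so $\alpha$ is a compact congruence of $A$. Next I invoke Proposition 2.8, which gives the representation $\rho(\theta) = \bigvee\{\gamma \in K(A) \mid [\gamma,\gamma]^n \subseteq \theta \text{ for some } n \geq 0\}$. Since $\alpha$ is compact and contained in this directed join, I can extract finitely many $\gamma_1,\dots,\gamma_k \in K(A)$ and a common integer $n$ such that $[\gamma_i,\gamma_i]^n \subseteq \theta$ for $i = 1,\dots,k$ and $\alpha \subseteq \gamma_1 \lor \cdots \lor \gamma_k$. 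Applying Lemma 2.4(5) and monotonicity of the commutator, I obtain $[\alpha,\alpha]^{n^k} \subseteq [\gamma_1 \lor \cdots \lor \gamma_k,\gamma_1 \lor \cdots \lor \gamma_k]^{n^k} \subseteq [\gamma_1,\gamma_1]^n \lor \cdots \lor [\gamma_k,\gamma_k]^n \subseteq \theta$.

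The final step is where the complementedness of $\alpha$ is used decisively: from the paragraph preceding Lemma 2.11 we have $\eta \land \alpha = [\eta,\alpha]$ whenever $\alpha \in B(Con(A))$, so in particular $[\alpha,\alpha] = \alpha \land \alpha = \alpha$. A trivial induction gives $[\alpha,\alpha]^m = \alpha$ for every $m \geq 0$. Combining this with the inclusion just derived yields $\alpha = [\alpha,\alpha]^{n^k} \subseteq \theta$.

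Taking the join over all complemented $\alpha \subseteq \rho(\theta)$ gives $\rho(\theta) \subseteq \theta$, and hence $\rho(\theta) = \theta$. There is no serious obstacle here; the only point worth watching is that Proposition 2.8 only asserts the radical is the join of compacts $\gamma$ with $[\gamma,\gamma]^n \subseteq \theta$, and a compactness/reduction-to-a-single-exponent step (via Lemma 2.4(5)) is required to transfer this information back to $\alpha$ itself. After that, the collapse $[\alpha,\alpha]^m = \alpha$ for complemented $\alpha$ ends the argument immediately, without any need for a semiprimeness hypothesis on $A$.
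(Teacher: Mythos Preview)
Your proof is correct and follows essentially the same approach as the paper: show that every complemented $\alpha \subseteq \rho(\theta)$ already lies in $\theta$ by using $[\alpha,\alpha]^n \subseteq \theta$ together with $[\alpha,\alpha]^n = \alpha$. The only difference is that the paper takes the implication ``$\alpha \in K(A)$, $\alpha \subseteq \rho(\theta) \Rightarrow [\alpha,\alpha]^n \subseteq \theta$ for some $n$'' directly from Proposition~2.8, whereas you spell out the compactness-and-Lemma~2.4(5) argument that justifies it; your version is simply more explicit on this point.
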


\begin{proof}
If $\rho(\theta)$ is a regular then $\rho(\theta) = \bigvee\{\alpha\in B(Con(A))|\alpha\subseteq\rho(\theta)\}$. Let $\alpha\in B(Con(A))$ such that $\alpha\subseteq\rho(\theta)$, hence there exists an integer $n\geq 0$ such that $[\alpha,\alpha]^n\subseteq\theta$. But $\alpha\in B(Con(A))$ implies $[\alpha,\alpha]^n = \alpha$, hence $\alpha\subseteq\theta$. We have proven that $\rho(\theta)\subseteq \theta$, so $\rho(\theta) = \theta$.
\end{proof}

\begin{lema}
Let $A$ a semiprime algebra of the variety $\mathcal{V}$ and $\theta\in Con(A)$. Then $\theta$ is a regular congruence of $A$ if and only if $\theta^{\ast}$ is a regular ideal of $L(A)$.
\end{lema}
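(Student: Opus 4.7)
The plan is to observe that one direction of the equivalence is already contained in Proposition 6.19(1), which states with no semiprimeness assumption that if $\theta$ is regular then $\theta^{\ast}$ is a regular ideal of $L(A)$. So only the converse requires any work, and it will follow by stringing together results already proved in the section.

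For the converse, assume $A$ is semiprime and $\theta^{\ast}$ is a regular ideal of $L(A)$. First I would invoke Proposition 6.19(2): since $A$ is semiprime, the $(\cdot)_{\ast}$ image of a regular ideal is a regular congruence, hence $(\theta^{\ast})_{\ast}$ is a regular congruence of $A$. Next I would identify $(\theta^{\ast})_{\ast}$ with $\rho(\theta)$ using Lemma 3.3(2). Thus $\rho(\theta)$ is a regular congruence.

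Finally, I would apply Lemma 6.22, which says that whenever $\rho(\theta)$ is regular we automatically have $\rho(\theta)=\theta$. Combining these, $\theta = \rho(\theta)$ is regular, which completes the proof.

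There is essentially no obstacle here: the argument is a three-line bookkeeping of already established transfer results (Proposition 6.19 in both directions, Lemma 3.3(2) to rewrite $(\theta^{\ast})_{\ast}$ as $\rho(\theta)$, and Lemma 6.22 to absorb the radical). The only mild subtlety is making sure to use the semiprimeness hypothesis exactly where Proposition 6.19(2) requires it; the forward direction and Lemma 6.22 do not need it.
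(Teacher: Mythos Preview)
Your proposal is correct and follows essentially the same approach as the paper: the paper also cites Proposition 6.19(1) for the forward direction, then for the converse applies Proposition 6.19(2) together with the identification $(\theta^{\ast})_{\ast}=\rho(\theta)$ and Lemma 6.22 to conclude $\theta=\rho(\theta)$ is regular.
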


\begin{proof}
If $\theta$ is a regular congruence of $A$ then $\theta^{\ast}$ is a regular ideal of $L(A)$ (by Proposition 6.19(1)). Conversely, assume that $\theta^{\ast}$ is a regular ideal of $L(A)$, hence, by applying Proposition 6.19(2), it follows that $\rho(\theta) = (\theta^{\ast})_{\ast}$ is a regular congruence of $A$. According to Lemma 6.22 we have $\rho(\theta) = \theta$, so $\theta$ is a regular congruence of $A$.
\end{proof}

Let $A$ be a semiprime algebra of $\mathcal{V}$. According to Proposition 6.20 we can consider the maps:

$(\cdot)^{\ast}|_{Sp(A)}:Sp(A)\rightarrow Sp_{Id}(L(A))$ and $(\cdot)_{\ast}|_{Sp_{Id}(L(A))}:Sp_{Id}(L(A))\rightarrow Sp(A)$.

\begin{propozitie}
If $A$ is a semiprime algebra of $\mathcal{V}$, then the following maps $(\cdot)^{\ast}|_{Sp(A)}:Sp(A)\rightarrow Sp_{Id}(L(A))$ and $(\cdot)_{\ast}|_{Sp_{Id}(A))}:Sp_{Id}(L(A)\rightarrow Sp(A)$ are homeomorphism, inverse to one another.

\end{propozitie}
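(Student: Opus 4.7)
The plan is to combine Proposition 6.20 (which shows that both maps are well-defined between the two Pierce spectra) with Lemma 3.3 and the semiprimality hypothesis to establish first that the maps are mutual inverses, and then to check continuity on the canonical bases of clopen sets $(U(\alpha))_{\alpha\in B(Con(A))}$ and $(U_{Id}(e))_{e\in B(L(A))}$. The bridge between the two topologies will be the Boolean isomorphism $\lambda_A|_{B(Con(A))}:B(Con(A))\rightarrow B(L(A))$, which holds in the semiprime case (Proposition 6.18(iii) of \cite{GM2}).

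For the mutual inversion, I will argue as follows. Given $\theta\in Sp(A)$, the congruence $\theta$ is in particular regular, so Lemma 6.21 yields $\rho(\theta)=\theta$, and then Lemma 3.3(2) gives $(\theta^{\ast})_{\ast}=\rho(\theta)=\theta$. Conversely, given $I\in Sp_{Id}(L(A))$, Lemma 3.3(2) directly gives $(I_{\ast})^{\ast}=I$. Combined with Proposition 6.20, this shows that $(\cdot)^{\ast}|_{Sp(A)}$ and $(\cdot)_{\ast}|_{Sp_{Id}(L(A))}$ are bijections, inverse to one another.

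For continuity, the key intermediate claim is that for every $\alpha\in B(Con(A))$ and every $\theta\in Sp(A)$, one has $\alpha\subseteq\theta$ if and only if $\lambda_A(\alpha)\in\theta^{\ast}$. The forward direction is immediate from the definition of $\theta^{\ast}$. For the reverse direction, if $\lambda_A(\alpha)=\lambda_A(\beta)$ for some $\beta\in K(A)$ with $\beta\subseteq\theta$, then $\rho(\alpha)=\rho(\beta)\subseteq\rho(\theta)=\theta$ (using Lemma 6.21 on the regular congruence $\theta$), while semiprimality and $\alpha\in B(Con(A))$ give $\rho(\alpha)=\alpha$ via Proposition 4.11, yielding $\alpha\subseteq\theta$. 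Using this equivalence together with Lemma 3.3(1), I will compute $((\cdot)^{\ast})^{-1}(U_{Id}(\lambda_A(\alpha)))=U(\alpha)$ and $((\cdot)_{\ast})^{-1}(U(\alpha))=U_{Id}(\lambda_A(\alpha))$, and since $\lambda_A|_{B(Con(A))}$ surjects onto $B(L(A))$ every basic open is hit. Thus both maps are continuous, hence homeomorphisms.

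The main obstacle is the key intermediate claim above: extracting $\alpha\subseteq\theta$ from the a priori weaker condition $\lambda_A(\alpha)\in\theta^{\ast}$ requires using both the semiprimality of $A$ (to get $\rho(\alpha)=\alpha$ for complemented $\alpha$, via Proposition 4.11) and the regularity of $\theta$ (to get $\rho(\theta)=\theta$, via Lemma 6.21). Once this equivalence is in hand, the computation of preimages of basic open sets is immediate from Lemma 3.3(1).
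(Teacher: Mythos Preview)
Your proposal is correct and follows essentially the same approach as the paper. The mutual-inversion argument via Lemma 6.21 and Lemma 3.3(2) is exactly what the paper does; for continuity the paper simply declares it ``straightforward'', and your detailed computation of preimages of basic opens (using the Boolean isomorphism $\lambda_A|_{B(Con(A))}$ from the semiprime case, Proposition 4.11, and Lemma 3.3(1)) is a legitimate way to fill in that omitted step.
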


\begin{proof}
By Lemmas 3.3(4) and 6.21, for any $\theta\in Sp(A)$ we have $(\theta^{\ast})_{\ast} = \rho(\theta) = \theta$. Applying Lemma 3.3(2), for any $I\in Sp_{Id}(L(A))$, we have $(I_{\ast})^{\ast} = I$. To prove that these two maps are continuous is straightforward.
\end{proof}

\section{Hyperarchimedean algebras}

\hspace{0.5cm} The hyperarchimedean algebras in a semidegenerate congruence - modular variety $\mathcal{V}$ were introduced in \cite{GM2} as generalizations of the zero - dimensional rings. In this section we obtain new characterizations of hyperarchimedean algebras, as well as other results that improve those of \cite{GM2}. Our results generalize some theorems of \cite{Aghajani}.

Let us fix a semidegenerate congruence - modular variety $\mathcal{V}$ and $A$ an algebra in $\mathcal{V}$. We know that the quotient algebra $A/\rho(\Delta_A)$ is semiprime and the ordered sets $(Spec(A),\subseteq)$ and $(Spec(A/\rho(\Delta_A),\subseteq)$ are order - isomorphic. If we restrict this order - isomorphism to maximal spectra then it results a bijection between $Max(A)$ and $Max(A/\rho(\Delta))$.

\begin{lema}
The topological spaces $Spec_Z(A)$ and $Spec_Z(A/\rho(\Delta_A))$ (resp. $Spec_P(A)$ and $Spec_P(A/\rho(\Delta_A))$, resp. $Spec_P(A)$ and $Spec_P(A/\rho(\Delta_A))$) are homeomorphic.
\end{lema}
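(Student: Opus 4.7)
The plan is to exhibit a canonical bijection between $Spec(A)$ and $Spec(A/\rho(\Delta_A))$ induced by the quotient morphism, and then to upgrade this bijection to a homeomorphism in each of the three topologies by routing everything through the reticulation.

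First, I would set $\delta = \rho(\Delta_A)$ and consider the canonical surjection $p = p_{\delta} : A \to A/\delta$. By Proposition 3.6 of \cite{GM1}, $p$ is an admissible morphism in $\mathcal{V}$. The standard correspondence theorem for congruences yields an order-isomorphism between $Con(A/\delta)$ and the principal filter $[\delta) = \{\theta \in Con(A) \mid \delta \subseteq \theta\}$; explicitly, this isomorphism is $p^*(\psi) = p^{-1}(\psi)$ on one side and $\theta \mapsto \theta/\delta$ on the other. Since $\delta = \bigcap Spec(A)$, every prime congruence of $A$ lies in $[\delta)$, so $p^*$ restricts to a bijection between $Spec(A/\delta)$ and $Spec(A)$, and similarly for the maximal spectra.

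Second, I would invoke the reticulation to package all three topologies at once. By Proposition 7.7 of \cite{GM2} (cited in the paragraph preceding the lemma), there is a canonical bounded distributive lattice isomorphism $L(A) \cong L(A/\delta)$; concretely this isomorphism is $L(p)$ sending $\lambda_A(\alpha)$ to $\lambda_{A/\delta}(p^{\bullet}(\alpha))$, as in the commutative diagram of Proposition 3.11. Since a lattice isomorphism between bounded distributive lattices induces a homeomorphism between their prime ideal spaces in the Stone topology and (by Lemma 6.4) in the patch and flat topologies alike, we obtain homeomorphisms $Spec_{Id,Z}(L(A)) \cong Spec_{Id,Z}(L(A/\delta))$, $Spec_{Id,P}(L(A)) \cong Spec_{Id,P}(L(A/\delta))$, and $Spec_{Id,F}(L(A)) \cong Spec_{Id,F}(L(A/\delta))$.

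Third, I would compose these with the homeomorphisms $u$ and $v$ of Proposition 3.4 and Lemma 6.4, which identify $Spec_Z(A)$ with $Spec_{Id,Z}(L(A))$ in the Zariski topology and similarly in the patch and flat topologies. The resulting homeomorphism $Spec_Z(A) \to Spec_Z(A/\delta)$ (and its patch/flat analogues) coincides on underlying sets with the bijection from the first step, since for $\phi \in Spec(A)$ we have $\phi^{\ast} = \{\lambda_A(\alpha) \mid \alpha \in K(A),\, \alpha \subseteq \phi\}$ and $L(p)(\phi^{\ast}) = (p^{\bullet}(\phi))^{\ast}$ up to the identification $p^{\bullet}(\phi) = \phi/\delta$. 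This verification is the only part requiring any care, and it is a direct unfolding of the commutative square in Proposition 3.11; no serious obstacle arises, so the three homeomorphism statements follow simultaneously.
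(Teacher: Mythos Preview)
The paper states this lemma without proof, treating it as an immediate consequence of the order-isomorphism between $(Spec(A),\subseteq)$ and $(Spec(A/\rho(\Delta_A)),\subseteq)$ noted in the sentence preceding the lemma, together with the reticulation isomorphism $L(A)\cong L(A/\rho(\Delta_A))$ from Proposition~7.7 of \cite{GM2}. Your argument fills in precisely these details and is correct; routing through the reticulation via Proposition~3.4 and Lemma~6.4 handles all three topologies in one stroke, which is exactly the approach the paper's machinery invites.

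One minor remark: you could bypass the reticulation entirely by arguing directly that the bijection $\phi\mapsto\phi/\delta$ satisfies $\phi\in V_A(\alpha)$ if and only if $\phi/\delta\in V_{A/\delta}((\alpha\lor\delta)/\delta)$ for each $\alpha\in K(A)$, and that every compact congruence of $A/\delta$ arises as $(\alpha\lor\delta)/\delta$ for some $\alpha\in K(A)$; this gives the Zariski homeomorphism, and since the patch and flat topologies are generated by the same subbasic sets $D(\alpha)$ and $V(\alpha)$, the other two follow automatically. This direct route avoids the bookkeeping in your third step (checking that the composite through $L(p)$ agrees with $p^*$), but your reticulation-based argument is more in the spirit of the paper and equally valid.
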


A similar result for the maximal spectra of $A$ and $A/\rho(\Delta_A)$ also holds.

Recall from \cite{GM2} that the algebra $A$ is said to be hyperarchimedean if for all $\alpha\in PCon(A)$ there exists an integer $k\geq 1$ such that $[\alpha,\alpha]^k\in B(Con(A))$.

In what follows $A$ will be an algebra of $\mathcal{V}$ such that $K(A)$ is closed under the commutator operation, so one can work with the reticulation $L(A)$ of $A$.

Assume that $A$ satisfies the condition $(\star)$. By applying Lemma 3.6, it follows that $A$ is hyperarchimedean if and only if $\lambda_A(\alpha)\in B(Con(A))$, for all $\alpha\in PCon(A)$. If $A$ is hyperarchimedean and $\alpha\in K(A)$ then there exists an integer $n\geq 1$ and $\beta_1,\cdots,\beta_n\in PCon(A)$ such that $\alpha = \bigvee_{i=1} ^n\beta_i$, therefore $\lambda_A(\alpha) = \bigvee_{i=1}^n\lambda_A(\beta_i)\in B(Con(A))$.

It follows that if the algebra $A$ satisfies the condition $(\star)$, then $A$ is hyperarchimedean if and only if for all $\alpha\in K(A)$ there exists an integer $k\geq 1$ such that $[\alpha,\alpha]^k\in B(Con(A))$.

\begin{propozitie}
\cite{GM2} The following assertions are equivalent:
\usecounter{nr}
\begin{list}{(\arabic{nr})}{\usecounter{nr}}
\item $A$ is hyperarchimedean;
\item $A/\rho(\Delta_A)$ is hyperarchimedean;
\item $Spec(A) = Max(A)$;
\item $L(A)$ is a Boolean algebra.
\end{list}
\end{propozitie}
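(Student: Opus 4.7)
The plan is to establish the central equivalence (1)$\Leftrightarrow$(4) first, then derive (3)$\Leftrightarrow$(4) from the spectral homeomorphism of Proposition 3.4, and finally obtain (1)$\Leftrightarrow$(2) via the fact that the reticulations of $A$ and of $A/\rho(\Delta_A)$ coincide (a result of \cite{GM2} recalled in Section 6).

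For (1)$\Rightarrow$(4), I would show that every element of $L(A)$ is complemented. Any $x\in L(A)$ can be written as $x=\lambda_A(\alpha)$ for some $\alpha\in K(A)$, which in turn decomposes as a finite join $\alpha=\bigvee_{i=1}^n\beta_i$ with $\beta_i\in PCon(A)$. By hypothesis there exist integers $k_i\geq 1$ with $[\beta_i,\beta_i]^{k_i}\in B(Con(A))$, and the easy direction of the Boolean-center comparison (already noted in the paragraph preceding Lemma 3.6, namely $\lambda_A(B(Con(A)))\subseteq B(L(A))$) together with Lemma 3.1(5) yields $\lambda_A(\beta_i)=\lambda_A([\beta_i,\beta_i]^{k_i})\in B(L(A))$. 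Since $B(L(A))$ is closed under finite joins, $x=\bigvee_{i=1}^n\lambda_A(\beta_i)\in B(L(A))$, so $L(A)=B(L(A))$ is Boolean. For (4)$\Rightarrow$(1), assuming $L(A)$ is Boolean, every $\lambda_A(\alpha)$ with $\alpha\in PCon(A)$ is complemented in $L(A)$; the non-trivial direction of Lemma 3.6 then furnishes an integer $n\geq 0$ with $[\alpha,\alpha]^n\in B(Con(A))$, which is exactly condition (1).

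The equivalence (3)$\Leftrightarrow$(4) follows from Proposition 3.4: the homeomorphism $u:Spec_Z(A)\to Spec_{Id,Z}(L(A))$ is an order isomorphism, so it restricts to a bijection between $Max(A)$ and $Max_{Id}(L(A))$. Therefore $Spec(A)=Max(A)$ if and only if $Spec_{Id}(L(A))=Max_{Id}(L(A))$, which by the classical characterisation of Boolean algebras among bounded distributive lattices is exactly the statement that $L(A)$ is Boolean. Finally, (1)$\Leftrightarrow$(2) follows from the isomorphism $L(A)\cong L(A/\rho(\Delta_A))$ recalled in Section 6: being Boolean is invariant under lattice isomorphism, so applying the already-established (1)$\Leftrightarrow$(4) to both $A$ and its semiprime quotient closes the loop.

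The main obstacle is the step (4)$\Rightarrow$(1), i.e.\ pulling complementedness back from $B(L(A))$ to $B(Con(A))$: passing from $\lambda_A(\alpha)\in B(L(A))$ to some power $[\alpha,\alpha]^n$ being complemented is the content of Lemma 3.6 and genuinely requires the condition $(\star)$, since the commutator is not assumed associative. The remaining three implications rely only on the elementary dictionary between $K(A)$ and $L(A)$ supplied by $\lambda_A$ and Lemma 3.1, plus the spectral homeomorphism of Proposition 3.4.
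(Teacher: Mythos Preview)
The paper does not prove this proposition: it is quoted from \cite{GM2} as a known result, so there is no in-paper proof to compare against. Your argument is nonetheless sound and follows the natural route suggested by the tools the present paper makes available (Lemma 3.1, Lemma 3.6, Proposition 3.4, and the isomorphism $L(A)\cong L(A/\rho(\Delta_A))$).

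One point deserves emphasis. Your implication (4)$\Rightarrow$(1) invokes the nontrivial direction of Lemma~3.6, which is stated under the hypothesis~$(\star)$; you correctly flag this. Note, however, that Proposition~7.2 as cited from \cite{GM2} does not carry the hypothesis~$(\star)$ in its statement, and the surrounding text of Section~7 only \emph{assumes} $(\star)$ in the paragraph immediately preceding the proposition (for the purpose of reformulating hyperarchimedeanness via $\lambda_A$). So if you intend your proof to cover the proposition exactly as stated, you would need either to show that (4)$\Rightarrow$(1) can be obtained without~$(\star)$---for instance by routing through (3) and giving a direct spectral argument that $Spec(A)=Max(A)$ forces each $[\alpha,\alpha]^k$ eventually complemented---or to acknowledge explicitly that your proof establishes the result only under~$(\star)$, deferring the general case to \cite{GM2}. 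The remaining implications (1)$\Rightarrow$(4), (3)$\Leftrightarrow$(4), and (1)$\Leftrightarrow$(2) are unproblematic and require no extra hypothesis beyond $K(A)$ being closed under commutator.
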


\begin{propozitie}
\cite{GG} If $L$ is a bounded distributive lattice then the following assertions are equivalent:
\usecounter{nr}
\begin{list}{(\arabic{nr})}{\usecounter{nr}}
\item $L$ is a Boolean algebra:
\item $Spec(L) = Max(L)$;
\item For any distinct prime ideals $M,N$ of $L$ there exist $x,y\in L$ such that $x\notin M$, $y\notin N$ and $x\land y = 0$.
\end{list}
\end{propozitie}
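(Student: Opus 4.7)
The plan is to prove the cycle $(1)\Rightarrow(2)\Rightarrow(3)\Rightarrow(1)$, with the lattice-theoretic operator $O(\cdot)$ from Section 5 and Lemma 5.9 providing the bridge between the antichain property of $Spec_{Id}(L)$ and the identity $O(P)=P$.

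For $(1)\Rightarrow(2)$ I would argue directly: if $L$ is Boolean and $P\subsetneq N$ are two prime ideals, choose $x\in N\setminus P$; since $x\land \neg x=0\in P$ and $P$ is prime, $\neg x\in P\subseteq N$, so $1=x\lor\neg x\in N$, contradicting $N\neq L$. Hence every prime is maximal.

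For $(2)\Rightarrow(3)$, distinct primes $M\neq N$ are now automatically incomparable, so $N$ is minimal in $Spec_{Id}(L)$ and Lemma 5.9 gives $O(N)=\bigcap\{Q\in Spec_{Id}(L)\,|\,Q\subseteq N\}=N$. Picking $x\in N\setminus M$, the relation $x\in O(N)$ means $Ann(x)\not\subseteq N$, so any $y\in Ann(x)\setminus N$ yields the required pair: $x\notin M$, $y\notin N$, and $x\land y=0$.

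For $(3)\Rightarrow(1)$ I would first recover $(2)$: given primes $M\subsetneq N$, applying (3) furnishes $x\notin M$ and $y\notin N$ with $x\land y=0$; since $M\subseteq N$ and $y\notin N$ imply $y\notin M$, primeness of $M$ forces $x\land y\notin M$, contradicting $0\in M$. Hence $Spec_{Id}(L)=Max_{Id}(L)$, every prime is also minimal among primes, and $O(P)=P$ by Lemma 5.9. Now given $a\in L$, if $(a]\lor Ann(a)$ were proper it would lie in some prime $P$, but then $a\in P=O(P)$ would force $Ann(a)\not\subseteq P$, contradicting $Ann(a)\subseteq P$. Therefore $1=x\lor y$ with $x\leq a$ and $y\in Ann(a)$, so $a\land y=0$ and $a\lor y\geq x\lor y=1$, exhibiting $y$ as a complement of $a$. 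The main obstacle is recognising that Lemma 5.9 is the precise tool converting the prime-antichain condition into the operator identity $O(P)=P$; once this is in place each implication reduces to a few lines.
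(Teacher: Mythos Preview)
The paper does not supply its own proof of this proposition; it is quoted from \cite{GG} with only the statement and citation, so there is nothing to compare against directly.

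Your proof is correct. The cycle $(1)\Rightarrow(2)\Rightarrow(3)\Rightarrow(1)$ works as written, and your key idea---using Lemma 5.9 to translate the antichain condition $Spec_{Id}(L)=Max_{Id}(L)$ into the identity $O(P)=P$---is exactly the right bridge. In $(2)\Rightarrow(3)$ the existence of $x\in N\setminus M$ is justified by the incomparability you note; in $(3)\Rightarrow(1)$ the recovery of $(2)$ from $(3)$ is clean, and the final complement argument via $(a]\lor Ann(a)=L$ is standard. One could also prove $(3)\Rightarrow(1)$ more directly without passing through $O(\cdot)$ (e.g.\ by showing $(3)$ forces every prime to be minimal and then invoking the known characterisation of minimal primes in distributive lattices), but your route through Lemma 5.9 is perfectly legitimate and arguably more in the spirit of the surrounding section.
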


The equivalence of the properties (1) and (2) of Proposition 7.3 is the well - known Nachbin theorem.

\begin{propozitie}
Let us consider the following conditions:
\usecounter{nr}
\begin{list}{(\arabic{nr})}{\usecounter{nr}}
\item For all distinct prime congruences $\phi,\psi$ of $A$ there exist $\alpha,\beta\in PCon(A)$ such that $\alpha\not\subseteq\phi$, $\beta\not\subseteq\psi$ and $[\alpha,\beta] = \Delta_A$;
\item For all distinct prime ideals $I,J$ of $L(A)$ there exists $x,y\in L$ such that $x\notin I$, $y\notin J$ and $x\land y = 0$.

Then $(1)\Rightarrow (2)$; if $A$ is semiprime then $(b)\Rightarrow (a)$.
\end{list}
\end{propozitie}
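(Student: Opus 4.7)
The plan is to transport the statement through the homeomorphism $u : Spec_Z(A) \to Spec_{Id,Z}(L(A))$, $\phi \mapsto \phi^{\ast}$, from Proposition 3.4, using three basic translation facts established earlier: that $u$ is a bijection with inverse $I \mapsto I_{\ast}$; Lemma 3.3(6), which says $\alpha \subseteq \phi$ iff $\lambda_A(\alpha) \in \phi^{\ast}$ for $\phi$ prime and $\alpha \in K(A)$; and Lemma 3.1(2),(4), which give $\lambda_A([\alpha,\beta]) = \lambda_A(\alpha) \wedge \lambda_A(\beta)$ and $\lambda_A(\Delta_A) = 0$.

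For $(1) \Rightarrow (2)$, given distinct prime ideals $I,J$ of $L(A)$, I would set $\phi = I_{\ast}$ and $\psi = J_{\ast}$; these are distinct primes of $A$ by Lemma 3.3(5), with $\phi^{\ast} = I$ and $\psi^{\ast} = J$. Applying $(1)$ produces $\alpha,\beta \in PCon(A)$ with $\alpha \not\subseteq \phi$, $\beta \not\subseteq \psi$, $[\alpha,\beta] = \Delta_A$. Then $x = \lambda_A(\alpha)$ and $y = \lambda_A(\beta)$ satisfy $x \notin I$, $y \notin J$ (via Lemma 3.3(6)) and $x \wedge y = \lambda_A([\alpha,\beta]) = \lambda_A(\Delta_A) = 0$, which is $(2)$.

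For the converse under semiprimality, given distinct $\phi,\psi \in Spec(A)$, the prime ideals $\phi^{\ast},\psi^{\ast}$ of $L(A)$ are distinct. Property $(2)$ yields $x,y \in L(A)$ with $x \notin \phi^{\ast}$, $y \notin \psi^{\ast}$, $x \wedge y = 0$; since every element of the reticulation is of the form $\lambda_A(\cdot)$ on a compact congruence, write $x = \lambda_A(\alpha_0)$ and $y = \lambda_A(\beta_0)$ with $\alpha_0,\beta_0 \in K(A)$. Semiprimality is invoked here via Lemma 3.1(7), which upgrades $\lambda_A([\alpha_0,\beta_0]) = x \wedge y = 0$ to $[\alpha_0,\beta_0] = \Delta_A$; without semiprimality one only gets $[\alpha_0,\beta_0] \subseteq \rho(\Delta_A)$, which is why the hypothesis is needed.

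The remaining subtlety, and the main obstacle, is that $(1)$ demands \emph{principal} rather than merely compact congruences. I would resolve this by decomposing $\alpha_0 = \bigvee_{i=1}^m \alpha_i$ and $\beta_0 = \bigvee_{j=1}^n \beta_j$ with each $\alpha_i,\beta_j \in PCon(A)$. Since $\alpha_0 \not\subseteq \phi$ and $\beta_0 \not\subseteq \psi$, primality of $\phi$ and $\psi$ (or just the join-representation) forces some $\alpha_i \not\subseteq \phi$ and some $\beta_j \not\subseteq \psi$. Moreover $[\alpha_i,\beta_j] \subseteq [\alpha_0,\beta_0] = \Delta_A$ by monotonicity of the commutator, so $[\alpha_i,\beta_j] = \Delta_A$. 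These $\alpha_i,\beta_j$ then witness $(1)$, completing the plan.
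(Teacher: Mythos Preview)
Your proposal is correct and follows essentially the same route as the paper's proof: both directions are transported through the bijection between $Spec(A)$ and $Spec_{Id}(L(A))$ via Lemma~3.3(6) and Lemma~3.1(2), semiprimality is invoked exactly as you do via Lemma~3.1(7), and the passage from compact to principal congruences at the end is handled the same way by picking a principal summand not contained in the relevant prime.
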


\begin{proof}
Firstly we shall prove the implication $(1)\Rightarrow (2)$. Let $I, J$ be  two distinct prime ideals of the lattice $L(A)$. According to Proposition 3.4 there exist two distinct prime congruences $\phi,\psi$ of $A$ such that $I = \phi^{\ast}$ and $J = \psi^{\ast}$. By hypothesis (1), there exist $\alpha,\beta\in PCon(A)$ such that $\alpha\not\subseteq\phi$, $\beta\not\subseteq\psi$ and $[\alpha,\beta] = \Delta_A$. Thus $\lambda_A(\alpha)\notin \phi^{\ast}$ and $\lambda_A(\beta)\notin \psi^{\ast}$ (cf. Lemma 3.3(6)) and $\lambda_A(\alpha)\land\lambda_A(\beta) = \lambda_A([\alpha,\beta]) = 0$.

In order to prove the second part of proposition, let us assume that the algebra $A$ is semiprime and $\phi,\psi$ are two distinct prime congruences of $A$, so $I = \phi^{\ast}, J = \psi^{\ast}$ are distinct prime ideals in the lattice $L(A)$. According to the hypothesis (2), there exist $\alpha,\beta\in K(A)$ such that $\lambda_A(\alpha)\notin \phi^{\ast}$, $\lambda_A(\beta)\notin \psi^{\ast}$ and $\lambda_A([\alpha,\beta]) = \lambda_A(\alpha)\land\lambda_A(\beta) = 0$. By using again Lemma 3.3(6) we get $\alpha\not\subseteq\phi$ and $\beta\not\subseteq\psi$. But the algebra $A$ is semiprime, hence from $\lambda_A([\alpha,\beta]) = 0$ we obtain  $[\alpha,\beta] = 0$. Since the compact congruences are exactly the finitely generated congruences of $A$, we can find $\gamma,\delta\in PCon(A)$, such that $\gamma\subseteq\alpha$, $\delta\subseteq\beta$, $\gamma\not\subseteq\phi$ and $\delta\not\subseteq\psi$. It is clear that $[\gamma,\delta] = 0$, so the property (1) is fulfilled.

\end{proof}

\begin{propozitie}
 The following assertions are equivalent:
\usecounter{nr}
\begin{list}{(\arabic{nr})}{\usecounter{nr}}
\item $A$ is hyperarchimedean;
\item For all distinct prime congruences $\phi,\psi$ of $A$ there exist $\alpha,\beta\in PCon(A)$ such that $\alpha\not\subseteq\phi$, $\beta\not\subseteq\psi$ and $[\alpha,\beta]\subseteq \rho(\Delta_A)$;
\item For all distinct prime congruences $\chi,\nu$ of $A/\rho(\Delta_A)$ there exist $\alpha,\beta\in PCon(A/\rho(\Delta_A))$ such that $\alpha\not\subseteq\chi$, $\beta\not\subseteq\nu$ and $[\alpha,\beta] = \Delta_{A/\rho(\Delta_A)}$.
\end{list}
\end{propozitie}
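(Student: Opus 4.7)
The plan is to reduce the problem to the characterization of Boolean lattices via the reticulation functor. By Proposition 7.2, condition (1) is equivalent to $L(A)$ being a Boolean algebra, and by the equivalence $(1)\Leftrightarrow (3)$ of Proposition 7.3 this is in turn equivalent to: for any two distinct prime ideals $I,J$ of $L(A)$ there exist $x,y\in L(A)$ with $x\notin I$, $y\notin J$ and $x\land y=0$. The translation between the two worlds is supplied by the homeomorphism $u:Spec(A)\to Spec_{Id}(L(A))$ of Proposition 3.4 (so that any two distinct primes of $L(A)$ are of the form $\phi^{\ast},\psi^{\ast}$ with $\phi,\psi\in Spec(A)$ distinct), together with Lemma 3.3(6) (membership $\alpha\subseteq\phi$ is the same as $\lambda_A(\alpha)\in\phi^{\ast}$) and Lemma 3.1(6) (the identity $\lambda_A(\alpha)\land\lambda_A(\beta)=\lambda_A([\alpha,\beta])=0$ is equivalent to $[\alpha,\beta]\subseteq\rho(\Delta_A)$).

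For $(1)\Rightarrow (2)$, I would fix distinct $\phi,\psi\in Spec(A)$; since $L(A)$ is Boolean, Proposition 7.3 applied to the distinct prime ideals $\phi^{\ast},\psi^{\ast}$ yields $\alpha',\beta'\in K(A)$ with $\lambda_A(\alpha')\notin \phi^{\ast}$, $\lambda_A(\beta')\notin\psi^{\ast}$ and $\lambda_A(\alpha')\land\lambda_A(\beta')=0$. The two named lemmas translate this into $\alpha'\not\subseteq\phi$, $\beta'\not\subseteq\psi$, $[\alpha',\beta']\subseteq\rho(\Delta_A)$. To pass from compact witnesses to principal ones, I write $\alpha'=\bigvee_{i=1}^{n}\gamma_i$ with each $\gamma_i$ principal; since $\phi$ is a congruence, $\alpha'\not\subseteq\phi$ forces some $\gamma_i\not\subseteq\phi$, and similarly for $\beta'$. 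Monotonicity of the commutator gives $[\gamma_i,\delta_j]\subseteq[\alpha',\beta']\subseteq\rho(\Delta_A)$, completing (2). For $(2)\Rightarrow (1)$, any two distinct prime ideals of $L(A)$ are of the form $\phi^{\ast},\psi^{\ast}$ by Proposition 3.4; condition (2) supplies principal $\alpha,\beta$, and the same two translation lemmas furnish the required $x=\lambda_A(\alpha)$, $y=\lambda_A(\beta)$ verifying condition (3) of Proposition 7.3, so $L(A)$ is Boolean and $A$ is hyperarchimedean by Proposition 7.2.

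Finally, $(1)\Leftrightarrow (3)$ follows by applying the already-proved equivalence $(1)\Leftrightarrow (2)$ to the quotient $A/\rho(\Delta_A)$. Indeed, $A$ is hyperarchimedean iff $A/\rho(\Delta_A)$ is, by Proposition 7.2; but $A/\rho(\Delta_A)$ is semiprime, so $\rho(\Delta_{A/\rho(\Delta_A)})=\Delta_{A/\rho(\Delta_A)}$, and condition (2) applied to $A/\rho(\Delta_A)$ is literally condition (3).

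The only real subtlety is the step that replaces compact witnesses with principal ones; everything else is a dictionary translation through the reticulation. I expect this down-to-principal step to be the main (but minor) obstacle, and it is handled exactly as above using that every compact congruence is a finite join of principal ones and that prime congruences are downward closed under $\subseteq$.
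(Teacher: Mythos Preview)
Your proof is correct and uses the same reticulation-based strategy as the paper: both rely on Proposition~7.2 (hyperarchimedean iff $L(A)$ Boolean), Proposition~7.3 (the lattice characterization of Booleanness), and the dictionary Lemmas~3.1(6) and~3.3(6) together with Proposition~3.4. The only difference is the circuit of implications: the paper proves $(1)\Rightarrow(2)$, then $(2)\Leftrightarrow(3)$ via the order-isomorphism $Spec(A)\cong Spec(A/\rho(\Delta_A))$, and closes with $(3)\Rightarrow(1)$ through Proposition~7.4 applied to the semiprime quotient; you instead prove $(1)\Leftrightarrow(2)$ directly (your $(2)\Rightarrow(1)$ is a clean shortcut that avoids Proposition~7.4 altogether, since the condition $[\alpha,\beta]\subseteq\rho(\Delta_A)$ translates to $\lambda_A(\alpha)\land\lambda_A(\beta)=0$ without any semiprimality hypothesis), and then obtain $(1)\Leftrightarrow(3)$ by specializing $(1)\Leftrightarrow(2)$ to the semiprime quotient. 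Your route is marginally more economical; the paper's route makes the passage $(2)\Leftrightarrow(3)$ explicit but at the cost of an extra helper proposition.
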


\begin{proof}

$(1)\Rightarrow (2)$
Assume that $A$ is hyperarchimedean and $\phi,\psi$ are two distinct prime congruences of the algebra $A$, therefore the prime ideals $\phi^{\ast},\psi^{\ast}$ of the lattice $L(A)$ are distinct. According to Proposition 6.2, $L(A)$ is a Boolean algebra, hence we can find two elements $x,y$ of $L(A)$ such $x\notin \phi^{\ast},y\notin \psi^{\ast}$ and $x\land y = 0$ (cf. Proposition 7.3(3)). Then there exist $\alpha,\beta\in K(A)$ such that $x = \lambda_A(\alpha), y = \lambda_A(\beta)$, hence $\alpha\not\subseteq\phi$ and $\beta\not\subseteq\psi$ (cf. Lemma 3.3(6)). From $\lambda_A([\alpha,\beta]) = \lambda_A(\alpha)\land \lambda_A(\beta) = x\land y = 0$ we infer that $[\alpha,\beta]\subseteq \rho(\Delta_A)$ (cf. Lemma 3.1(6)). Since $\alpha$ and $\beta$ are finitely generated congruences of $A$, one can find $\gamma,\delta\in PCon(A))$ such that $\gamma\not\subseteq\phi$, $\delta\not\subseteq\psi$ and $[\beta,\gamma]\subseteq \rho(\Delta_A)$.

$(3)\Rightarrow (1)$
Recall that $A/\rho(\Delta_A)$ is semiprime, hence by Propositions 7.4 and 7.3 it follows that $L(A/\rho(\Delta_A))$ is a Boolean algebra, so $A/\rho(\Delta_A)$ is a hyperarchimedean algebra (by Proposition 7.2). By a new application of Proposition 7.2, it follows that $A$ is hyperarchimedean.

$(2)\Leftrightarrow (3)$ By using that $(Spec(A),\subseteq)$ and $(Spec(A/\rho(\Delta_A)),\subseteq)$ are order - isomorphic.

\end{proof}

The following lemma is an elementary result in topology.

\begin{lema}
\cite{Tar1}
Assume that $X$ and $Y$ are topological spaces, $X$ is compact and $Y$ is Hausdorff. Then any continuous function $f:X\rightarrow Y$ is a closed map. Moreover, if f is bijective, then it is a homeomorphism.
\end{lema}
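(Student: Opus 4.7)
The plan is to prove this in the standard two-stage fashion that one finds in any point-set topology textbook; there is really no obstacle here, it is a routine folklore result, and the only reason to include it is that it will be invoked in the sequel (presumably to upgrade a continuous bijection between spectra into a homeomorphism).

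First I would prove the closedness of $f$. Let $C$ be a closed subset of $X$. Since $X$ is compact and $C$ is a closed subspace of $X$, $C$ is itself compact. Continuous images of compact sets are compact, so $f(C)$ is a compact subset of $Y$. Since $Y$ is Hausdorff, every compact subset of $Y$ is closed, and therefore $f(C)$ is closed in $Y$. This shows that $f$ maps closed sets to closed sets, i.e., $f$ is a closed map.

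For the second assertion, assume moreover that $f$ is bijective. To conclude that $f$ is a homeomorphism it suffices to show that the inverse map $g=f^{-1}\colon Y\to X$ is continuous. But for any closed $C\subseteq X$ we have $g^{-1}(C)=f(C)$, which is closed in $Y$ by the first part; hence the preimage under $g$ of every closed set is closed, so $g$ is continuous. Thus $f$ is a continuous bijection with continuous inverse, i.e., a homeomorphism.

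The only mild subtlety worth flagging is the reliance on the two standard lemmas (continuous image of compact is compact; compact subsets of Hausdorff spaces are closed), both of which are entirely standard and require no hypothesis beyond what is given. No step here is difficult, and the whole argument is a few lines; the lemma is stated in the paper purely for the convenience of later reference (the author even attributes it to \cite{Tar1}).
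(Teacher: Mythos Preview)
Your proof is correct and is exactly the standard textbook argument. The paper itself does not supply a proof of this lemma at all; it merely states the result and attributes it to \cite{Tar1}, so there is no alternative approach in the paper to compare against.
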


\begin{teorema}
Let $A$ be an algebra of $\mathcal{V}$ such that $K(A)$ is closed under commutator operation. Then the following are equivalent:
\usecounter{nr}
\begin{list}{(\arabic{nr})}{\usecounter{nr}}
\item $A$ is hyperarchimedean;
\item For all distinct prime congruences $\phi,\psi$ of $A$ there exist $\alpha,\beta\in PCon(A)$ such that $\alpha\not\subseteq\phi$, $\beta\not\subseteq\psi$ and $[\alpha,\beta]\subseteq \rho(\Delta_A)$;
\item $Spec_Z(A)$ is a Hausdorff space;
\item $Spec_Z(A)$ is a Boolean space;
\item $\mathcal{Z}_A$ = $\mathcal{P}_A$;
\item $Spec_F(A)$ is a Hausdorff space;
\item $Spec_F(A)$ is a Boolean space;
\item $\mathcal{Z}_A$ = $\mathcal{F}_A$.
\end{list}
\end{teorema}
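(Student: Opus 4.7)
The plan is to dispatch (1)$\Leftrightarrow$(2) directly from Proposition 7.5, and then to deduce the equivalence of (1) with the purely topological conditions (3)--(8) by reducing to the corresponding ring-theoretic result (Theorem 3.3 of [Aghajani]), using the reticulation transfer machinery from Sections 3 and 6. This is precisely the strategy announced in the introduction.

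First, (1)$\Leftrightarrow$(2) is literally the content of Proposition 7.5, so nothing needs to be added for that part. Next, by Hochster's theorem together with the discussion preceding Lemma 6.11, one can fix a (semiprime) commutative ring $R$ whose reticulation $L(R)$ is isomorphic to $L(A)$ as bounded distributive lattices. Lemma 6.11 then produces mutually inverse bijections $s=s_{AR}:\mathrm{Spec}(A)\to\mathrm{Spec}(R)$ and $t=t_{AR}:\mathrm{Spec}(R)\to\mathrm{Spec}(A)$ which are \emph{simultaneously} homeomorphisms with respect to the Zariski, patch, and flat topologies. Consequently each of the properties (3)--(8) --- being purely topological on the three spaces $\mathrm{Spec}_Z$, $\mathrm{Spec}_P$, $\mathrm{Spec}_F$ --- holds for $A$ if and only if it holds for $R$; and likewise any equality between two of the three topologies on $\mathrm{Spec}(A)$ corresponds to the analogous equality on $\mathrm{Spec}(R)$.

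The remaining step is to synchronize the algebraic condition (1) across the transfer. By Proposition 7.2, $A$ is hyperarchimedean iff $L(A)$ is a Boolean algebra; the classical ring-theoretic analogue (the equivalence of zero-dimensionality of $R$ with $L(R)$ being Boolean) then yields, via $L(A)\cong L(R)$, that $A$ is hyperarchimedean iff $R$ is zero-dimensional. Now Theorem 3.3 of [Aghajani] asserts the equivalence of ``$R$ is zero-dimensional'' with each of the ring-side versions of (3)--(8). Composing this with the homeomorphic transfer of the previous paragraph gives the equivalence of (1) with each of (3)--(8), completing the proof.

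The main (and essentially only) delicate point is the verification that the map $s$ of Lemma 6.11 is a homeomorphism for \emph{all three} topologies simultaneously; this has already been settled in that lemma as a consequence of Proposition 3.4 (Zariski) combined with Lemma 6.4 (patch and flat), so no further work is required. Everything else in the argument is a routine transport of logical equivalences along a homeomorphism of spectral spaces, which is what makes the reticulation functor so effective for exporting results from commutative rings to semidegenerate congruence-modular algebras.
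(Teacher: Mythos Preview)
Your proposal is correct and follows essentially the same approach as the paper's own proof: both dispatch (1)$\Leftrightarrow$(2) via Proposition 7.5, then fix a commutative ring $R$ with $L(R)\cong L(A)$ via Hochster's theorem, invoke Lemma 6.11 to transfer the topological conditions (3)--(8) across the homeomorphism $\mathrm{Spec}(A)\to\mathrm{Spec}(R)$ in all three topologies, and appeal to Theorem 3.3 of Aghajani--Tarizadeh on the ring side together with Proposition 7.2 to anchor condition (1). Your write-up is in fact slightly more systematic in packaging the transfer argument, but the underlying decomposition and the key lemmas used are identical.
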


\begin{proof} We shall sketch a proof that illustrates how the reticulation can be used for transferring some results from rings to universal algebras. Let us fix a commutative ring $R$ such that the reticulations $L(A)$ and $L(R)$ of $A$ and $R$ are identical.

$(1)\Leftrightarrow (2)$ See Proposition 7.5.

$(1)\Leftrightarrow (3)$ By using Proposition 7.2, Remark 6.12, Theorem 3.3 of \cite{Aghajani} and Lemma 6.11, the following equivalences hold:

$A$ is hyperarchimedean iff $Spec(A) = Max(A)$

\hspace{3.5cm} iff $Spec(R) = Max(R)$

\hspace{3.5cm} iff  $Spec_Z(R)$ is Hausdorff

\hspace{3.5cm} iff $Spec_Z(A)$ is Hausdorff.

$(3)\Leftrightarrow (5)$ By using Lemma 6.11 and Theorem 3.3 of \cite{Aghajani}, the following equivalences hold: $Spec_Z(A)$ is a Hausforff space iff $Spec_Z(R)$ is a Hausforff space iff $\mathcal{Z_R}$ = $\mathcal{P_R}$ iff $\mathcal{Z_A}$ = $\mathcal{P_A}$.

The other equivalences of the theorem follow in a similar way, by using Theorem 3.3 of \cite{Aghajani}, Lemma 6.11 and Remark 6.12.

\end{proof}

\begin{remarca}
The previous result on the characterization of hyperarchimedean algebras extends Theorem 3.3 of \cite{Aghajani} that characterizes the zero - dimensional rings (= hyperarchimedean rings). Our proof of this result uses Theorem 3.3 of \cite{Aghajani} and some properties on the reticulations of algebras and rings (especially some consequences of the Hochster theorem \cite{Hochster}). Of course we can give a proof of Theorem 7.7 without using the reticulation.
\end{remarca}

Recall that the Jacobson radical of a commutative ring $R$ is the intersection of all maximal ideals of $R$. We shall generalize this notion to an algebra $A$ of $\mathcal{V}$. Then the Jacobson radical of $A$ is defined as $Rad(A) = \bigcap Max(A)$.

\begin{teorema}
Let $A$ be an algebra of $\mathcal{V}$ such that $K(A)$ is closed under commutator operation. Then $Max_F(A)$ is compact if and only if the quotient algebra $A/Rad(A)$ is hyperarchimedean.
\end{teorema}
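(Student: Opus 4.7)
The plan is to reduce the statement to the analogous one for commutative rings by means of the ring--algebra transfer developed in Section~6. Fix, using Hochster's theorem, a semiprime commutative ring $R$ with $L(R) \cong L(A)$; by Lemma~6.11 and Remark~6.12, the induced map $s \colon Spec(A) \to Spec(R)$ is a Zariski and flat homeomorphism whose restriction to maximal spectra is a flat homeomorphism $Max_F(A) \cong Max_F(R)$. Hence $Max_F(A)$ is compact if and only if $Max_F(R)$ is compact. The corresponding ring-theoretic fact, available in the patch/flat topology literature (see e.g.~\cite{Tar1}), is that $Max_F(R)$ is compact iff $R/J(R)$ is zero-dimensional, where $J(R) = \bigcap Max(R)$. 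Since zero-dimensionality for rings coincides with hyperarchimedeanness (the ring instance of Proposition~7.2), the task reduces to showing that $A/Rad(A)$ is hyperarchimedean iff $R/J(R)$ is zero-dimensional.

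I would argue this last equivalence by rephrasing both conditions as statements about the Zariski-closure of the maximal spectrum. By Proposition~7.2 combined with the canonical order-isomorphism $Spec(A/Rad(A)) \cong V_A(Rad(A))$, which restricts to a bijection $Max(A/Rad(A)) \cong Max(A)$, the algebra $A/Rad(A)$ is hyperarchimedean iff $V_A(Rad(A)) = Max(A)$; but $V_A(Rad(A)) = V_A(\bigcap Max(A))$ is precisely the Zariski-closure of $Max(A)$ in $Spec_Z(A)$. The same reasoning on the ring side gives that $R/J(R)$ is zero-dimensional iff the Zariski-closure of $Max(R)$ in $Spec_Z(R)$ equals $Max(R)$. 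Since $s$ is a Zariski-homeomorphism carrying $Max(A)$ bijectively onto $Max(R)$, it carries the Zariski-closure of one set onto the Zariski-closure of the other, so the two conditions are equivalent.

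The main obstacle I foresee is locating (or supplying a short direct proof of) the ring statement ``$Max_F(R)$ compact iff $R/J(R)$ zero-dimensional'' in exactly the form needed; a secondary technical point is to verify that the standing hypothesis ``$K(A)$ closed under commutator'' descends to the quotient $A/Rad(A)$ so that Proposition~7.2 applies there. This last point follows from~(2.3) together with the fact that $p_{Rad(A)}^{\bullet}$ preserves compactness of congruences, so that $[p_{Rad(A)}^{\bullet}(\alpha), p_{Rad(A)}^{\bullet}(\beta)] = p_{Rad(A)}^{\bullet}([\alpha,\beta])$ is compact in $A/Rad(A)$ whenever $\alpha,\beta \in K(A)$. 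Once these two ingredients are in hand, no further commutator computation is required and the theorem drops out of the transfer.
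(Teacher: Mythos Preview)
Your approach is correct but differs substantially from the paper's. The paper does not transfer the entire statement to rings; it argues each implication more directly. For the direction ``$A/Rad(A)$ hyperarchimedean $\Rightarrow Max_F(A)$ compact,'' it uses only that the canonical surjection $\pi\colon A\to A/Rad(A)$ is admissible, so $\pi^{*}\colon Spec_F(A/Rad(A))\to Spec_F(A)$ is continuous with image $Max_F(A)$; since $Spec_F(A/Rad(A))=Max_F(A/Rad(A))$ is compact (spectral space), the image is compact. For the converse, the paper gives a hands-on compactness argument inside $A$: given $\alpha\in K(A)$ with $\lambda_A(\alpha)\notin (Rad(A))^{*}$, it covers $Max(A)$ by finitely many flat-basic opens $V(\alpha),V(\beta_{\phi_1}),\dots,V(\beta_{\phi_n})$, produces an element $\gamma_n\in K(A)$ with $\alpha\lor\gamma_n=\nabla_A$ and $[\alpha,\gamma_n]\subseteq Rad(A)$, and concludes that $\lambda_A(\alpha)/(Rad(A))^{*}$ is complemented in $L(A)/(Rad(A))^{*}\cong L(A/Rad(A))$, so this lattice is Boolean and Proposition~7.2(4) applies.

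Your route through Hochster's theorem and the ring result (this is the subject of \cite{Tar2} rather than \cite{Tar1}, judging by the title) is in the spirit of many other proofs in the paper and is arguably shorter, at the cost of importing an external black box; the paper's proof is more self-contained and makes the compactness mechanism visible. One small gap in your last paragraph: to deduce that $K(A/Rad(A))$ is closed under commutator you need not only that $p_{Rad(A)}^{\bullet}$ preserves compactness and satisfies $[p^{\bullet}(\alpha),p^{\bullet}(\beta)]=p^{\bullet}([\alpha,\beta])$, but also that every compact congruence of $A/Rad(A)$ has the form $p_{Rad(A)}^{\bullet}(\alpha)$ for some $\alpha\in K(A)$; this follows from the lattice isomorphism $Con(A/\theta)\cong[\theta)$ and should be stated. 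With that addition, and once the ring-theoretic input is secured, your argument goes through.
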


\begin{proof}
Let us assume that  $A/Rad(A)$ is hyperarchimedean. We observe that the canonical morphism $\pi:A\rightarrow A/Rad(A)$ is admissible (cf. Lemma 3.6 of \cite{GM1}), therefore the map $\pi^*:Spec_F(A/Rad(A))\rightarrow Spec_F(A)$ is continuous (cf. Corollary 6.6). We remark that $Max_F(A)$ is the image of $Max_F(A/Rad(A))$ through the continuos map $\pi^*$ and $Max_F(A/Rad(A)) = Spec_F(A/Rad(A))$ (because $A/Rad(A)$ is hyperarchimedean). Since $Spec_F(A/Rad(A))$ is compact (by Lemma 6.1 it is a spectral space), it follows that $Max_F(A)$ is compact.

Now we assume that $Max_F(A)$ is compact. Firstly we shall prove that the quotient lattice $L(A)/(Rad(A))^{\ast}$ is a Boolean algebra. Let us consider $\alpha\in K(A)$ such that $\lambda_A(\alpha)/(Rad(A))^{\ast}\neq 0/(Rad(A))^{\ast}$, so $\lambda_A(\alpha)\notin (Rad(A))^{\ast}$. By Lemma 3.3(6) we get $\alpha\not\subseteq Rad(A)$, so there exists $\phi_{\alpha}\in Max(A)$ such that $\alpha\not\subseteq\phi_{\alpha}$. For any $\phi\in Max(A)$ we have $\alpha\subseteq\phi$ or $\alpha\not\subseteq\phi$. If $\alpha\not\subseteq\phi$ then there exists $\beta_{\phi}\in K(A)$ such that $\beta_{\phi}\subseteq\phi$ and $\alpha\lor \beta_{\phi} = \nabla_A$. But $\alpha\not\subseteq\phi_{\alpha}$ implies that the set $\{\phi\in Max(A)|\alpha\not\subseteq\phi\}$ is non - empty. We remark that $Max(A)$ =$\{\phi\in Max(A)|\alpha\subseteq\phi\}\bigcup\{\phi\in Max(A)|\alpha\not\subseteq\phi\}\subseteq V(\alpha)\bigcup (\bigcup_{\alpha\not\subseteq\phi}V(\beta_{\phi}))$.

Since $Max_F(A)$ is compact, there exist $\phi_1,\cdots,\phi_n\in Max(A)$ such that $\alpha\not\subseteq\phi_i$, for $i = 1,\cdots,n$ and $Max(A)\subseteq V(\alpha)\bigcup(\bigcup_{i=1} ^nV(\beta_{\phi_i}))$.

Let us denote $\beta_i = \beta_{\phi_i}$, for $i = 1,\cdots,n$ and $\gamma_1 = \beta_1$, $\gamma_2 = [\gamma_1,\beta_2],\cdots,\gamma_n = [\gamma_{n-1},\beta_n]$. It is easy to see that $\bigcup_{i=1} ^nV(\beta_{\phi_i}) = V(\gamma_n)$, therefore $Max(A)\subseteq V(\alpha)\bigcup V(\gamma_n) = V([\alpha,\gamma_n])$. It follows that $[\alpha,\gamma_n]\subseteq Rad(A)$, hence $\lambda_A([\alpha,\gamma_n])\in (Rad(A))^{\ast}$.

By using many times Lemma 2.4(2), from $\alpha\lor \beta_i = \nabla_A$, for $i = 1,\cdots,n$, we get by induction that $\alpha\lor \gamma_n = \nabla_A$. According to $\lambda_A(\alpha)\lor\lambda_A(\gamma_n) = \lambda_A(\alpha\lor\gamma_n) = 1$ and $\lambda_A(\alpha)\land\lambda_A(\gamma_n) = \lambda_A([\alpha,\gamma_n]) \subseteq \lambda_A(Rad(A))$ the following equalities hold in $L(A)/(Rad(A))^{\ast}$: $\lambda_A(\alpha)/((Rad(A))^{\ast}\lor\lambda_A(\gamma_n)/((Rad(A))^{\ast} = 1/((Rad(A))^{\ast}$ and $\lambda_A(\alpha)/((Rad(A))^{\ast}\land\lambda_A(\gamma_n)/((Rad(A))^{\ast} = 0/((Rad(A))^{\ast}$.

We have proven that $L(A)/(Rad(A))^{\ast}$ is a Boolean algebra. According to Proposition 7.6 of \cite{GM2}, the lattices $L(A/Rad(A))$ and $L(A)/(Rad(A))^{\ast}$  are isomorphic, therefore $L(A/Rad(A))$ is a Boolean algebra. By using Proposition 7.2 it follows that $A/Rad(A)$ is a hyperarchimedean algebra.

\end{proof}

\section{Congruence normal algebras}

\hspace{0.5cm} In this section we shall prove characterization theorems for two types of algebras, introduced in \cite{GKM}: the congruence normal algebras and the congruence $B$ - normal algebras. The first ones generalize the commutative Gelfand rings and the second one generalize the clean rings.

 The normal $mi$ - structures, introduced by Definition 3.1 of \cite{GeorgescuVoiculescu2}, are multiplicative lattices that generalize the following types of ordered algebras:

$\bullet$ the lattices of ideals in Gelfand rings (\cite{Johnstone}, p.199) and the lattices of ideals in normal lattices (\cite{Johnstone}, p.67);

$\bullet$ the lattices of filters in MV - algebras \cite{Cignoli1}, \cite{FG}, BL - algebras \cite{g}, Gelfand residuated lattices \cite{GCM},etc;

$\bullet$ the lattices of ideals in $F$ - rings (\cite{Johnstone}, p.217);

$\bullet$ the normal frames (\cite{Johnstone}, p.128);

$\bullet$ the normal quantales \cite{PasekaRN}.

Let us consider a semidegenerate congruence - modular variety $\mathcal{V}$ and $A$ an algebra in $\mathcal{V}$. We know that $Con(A)$ is a $mi$-structure in sense of \cite{GeorgescuVoiculescu2}. In general the commutator operation is not associative, so we cannot use the normal quantale theory \cite{PasekaRN} for studying the "normal objects" in $\mathcal{V}$. Applying Definition 3.1 of \cite{GeorgescuVoiculescu2} for the $mi$-structure $Con(A)$ we shall say the $A$ is a congruence normal algebra if for all $\theta,\nu\in Con(A)$ such that $\theta\lor\nu = \nabla_A$ there exist $\alpha,\beta\in Con(A)$ such that $\theta\lor\alpha = \nu\lor\beta = \nabla_A$ and $[\alpha,\beta] = \Delta_A$.

Recall from \cite{Johnstone}, \cite{Simmons} that a bounded distributive lattice $L$ is said to be normal if for all $a,b\in A$ such that $a\lor b = 1$ there exist $e,f\in L$ such that $a\lor e = b\lor f =  1$ and $e\land f = 0$. Remind that these normal lattices are the duals of "normal lattices" introduced by Cornish in \cite{Cornish}. We observe that the congruence normal lattices does not coincide with the normal lattices.

\begin{lema}
$A\in \mathcal{V}$ is a congruence normal algebra if and only if for all $\theta,\nu\in K(A)$ such that $\theta\lor\nu = \nabla_A$ there exist $\alpha,\beta\in K(A)$ such that $\theta\lor\alpha = \nu\lor\beta = \nabla_A$ and $[\alpha,\beta] = \Delta_A$.
\end{lema}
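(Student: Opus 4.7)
The plan is to prove both directions by exploiting compactness of $\nabla_A$ (which holds because $\mathcal{V}$ is semidegenerate) to reduce statements about arbitrary congruences to statements about compact ones. The key fact I will use repeatedly is: if $\xi \lor \eta = \nabla_A$ and $\eta = \bigvee_{i\in I} \eta_i$ is written as a join of compact congruences, then since $\nabla_A \in K(A)$, there is a finite subset $J\subseteq I$ such that $\xi \lor \bigvee_{i\in J} \eta_i = \nabla_A$, and $\bigvee_{i\in J}\eta_i \in K(A)$.

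For the forward direction, suppose $A$ is congruence normal and let $\theta,\nu \in K(A)$ satisfy $\theta\lor\nu = \nabla_A$. By definition, there exist $\alpha',\beta' \in Con(A)$ with $\theta\lor\alpha' = \nu\lor\beta' = \nabla_A$ and $[\alpha',\beta'] = \Delta_A$. Applying the compactness observation above to $\theta\lor\alpha' = \nabla_A$, I can replace $\alpha'$ by a compact subcongruence $\alpha \subseteq \alpha'$ still satisfying $\theta\lor\alpha = \nabla_A$; similarly choose a compact $\beta \subseteq \beta'$ with $\nu\lor\beta = \nabla_A$. Since the commutator is increasing in each argument, $[\alpha,\beta]\subseteq [\alpha',\beta'] = \Delta_A$, so $[\alpha,\beta] = \Delta_A$, as required.

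For the converse, assume the property holds on $K(A)$ and let $\theta,\nu \in Con(A)$ satisfy $\theta\lor\nu = \nabla_A$. Writing $\theta = \bigvee\{\alpha \in K(A) \mid \alpha \subseteq \theta\}$ and $\nu = \bigvee\{\beta \in K(A) \mid \beta \subseteq \nu\}$, the same compactness argument yields compact $\theta_0 \subseteq \theta$ and $\nu_0 \subseteq \nu$ with $\theta_0 \lor \nu_0 = \nabla_A$. Apply the hypothesis to the pair $\theta_0,\nu_0 \in K(A)$ to obtain $\alpha,\beta\in K(A)$ with $\theta_0\lor\alpha = \nu_0\lor\beta = \nabla_A$ and $[\alpha,\beta] = \Delta_A$. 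Since $\theta_0 \subseteq \theta$ and $\nu_0 \subseteq \nu$, we immediately have $\theta\lor\alpha = \nu\lor\beta = \nabla_A$, which witnesses the congruence normality of $A$.

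There is no serious obstacle here: both directions are essentially bookkeeping with the compactness of $\nabla_A$ and monotonicity of the commutator and of the join. The only mild subtlety is to be careful in the forward direction that shrinking $\alpha'$ and $\beta'$ down to compact congruences does not spoil the identity $[\alpha,\beta] = \Delta_A$, which is guaranteed by the fact that the commutator is increasing in each argument.
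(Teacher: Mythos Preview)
Your proof is correct and is exactly the standard compactness argument one expects; the paper states this lemma without proof, and your argument is the natural way to fill the gap.
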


Now we fix an algebra $A\in \mathcal{V}$ such that $K(A)$ is closed under the commutator operations. The following two results show how the reticulation construction preserves the "normality" property.

\begin{lema}
If the algebra $A$ is congruence normal then the reticulation $L(A)$ is a normal lattice.
\end{lema}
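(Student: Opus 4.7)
The plan is to unwind definitions on both sides and then simply apply the congruence-normality hypothesis in the form of Lemma 8.1. Concretely, I would start by fixing two elements $a,b\in L(A)$ with $a\lor b=1$ and using the surjectivity of $\lambda_A:K(A)\to L(A)$ to pick $\theta,\nu\in K(A)$ with $\lambda_A(\theta)=a$ and $\lambda_A(\nu)=b$. Lemma 3.1(1) gives $\lambda_A(\theta\lor\nu)=\lambda_A(\theta)\lor\lambda_A(\nu)=1$, and Lemma 3.1(3) then lifts this to $\theta\lor\nu=\nabla_A$ in $K(A)$.

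Now I invoke Lemma 8.1: the compact-level reformulation of congruence normality yields $\alpha,\beta\in K(A)$ with $\theta\lor\alpha=\nabla_A$, $\nu\lor\beta=\nabla_A$ and $[\alpha,\beta]=\Delta_A$. Setting $e=\lambda_A(\alpha)$ and $f=\lambda_A(\beta)$ in $L(A)$, the join- and meet-preserving properties of $\lambda_A$ (Lemma 3.1(1)(2)) together with Lemma 3.1(3)(4) (which say $\lambda_A(\nabla_A)=1$ and $\lambda_A(\Delta_A)=0$) give at once
\[
a\lor e=\lambda_A(\theta\lor\alpha)=1,\qquad b\lor f=\lambda_A(\nu\lor\beta)=1,\qquad e\land f=\lambda_A([\alpha,\beta])=0.
\]
This is exactly the defining condition of a normal bounded distributive lattice, so $L(A)$ is normal.

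There is essentially no obstacle here: the argument is a clean translation through the reticulation, and the only slightly nonobvious step is the reduction in Lemma 8.1 to compact witnesses, which is already available. I would keep the proof short, explicitly citing Lemma 8.1 for the choice of $\alpha,\beta$ and Lemma 3.1 for the four identities that transfer the conclusion from $K(A)$ to $L(A)$.
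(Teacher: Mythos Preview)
Your proof is correct and follows essentially the same approach as the paper's own argument: lift the equation $a\lor b=1$ to $K(A)$ via $\lambda_A$, apply the compact form of congruence normality (Lemma 8.1), and push the resulting witnesses back down through $\lambda_A$ using Lemma 3.1. The only cosmetic difference is that the paper works directly with representatives $\alpha,\beta\in K(A)$ rather than naming $a,b$ first, and does not explicitly cite Lemma 8.1, instead invoking congruence normality directly on the compact congruences.
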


\begin{proof}
Let $\alpha,\beta\in K(A)$ such that $\lambda_A(\alpha)\lor\lambda_A(\beta) = 1$, so $\lambda_A(\alpha\lor\beta) = 1$ (by Lemma 3.1(2)). According to Lemma 3.1(3) we have $\alpha\lor\beta = \nabla_A$. Since  the algebra $A$ is congruence normal there exist $\alpha_1,\beta_1\in K(A)$ such that $\alpha\lor\alpha_1 = \beta\lor\beta_1 = \nabla_A$ and $[\alpha_1,\beta_1] = \Delta_A$. According to Lemma 3.1,(1) and (2), it follows that $\lambda_A(\alpha)\lor\lambda_A(\alpha_1) = \lambda_A(\beta)\lor\lambda_A(\beta_1) = 1$ and $\lambda_A(\alpha_1)\land\lambda_A(\beta_1) = 0$. Thus $L(A)$ is a normal lattice.

\end{proof}

\begin{propozitie}
If the algebra $A$ satisfies the condition $(\star)$, then $A$ is congruence normal if and only if $L(A)$ is a normal lattice.
\end{propozitie}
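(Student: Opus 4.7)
The forward implication is already given by Lemma 8.2, so the task reduces to the converse: assuming $L(A)$ is a normal lattice and $A$ satisfies $(\star)$, I want to show that $A$ is congruence normal. By Lemma 8.1 it suffices to work with compact congruences, so I will start from compact $\theta,\nu\in K(A)$ with $\theta\lor\nu=\nabla_A$ and produce compact $\alpha',\beta'\in K(A)$ with $\theta\lor\alpha'=\nu\lor\beta'=\nabla_A$ and $[\alpha',\beta']=\Delta_A$.

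The first step is to push the hypothesis into the reticulation: using Lemma 3.1(1) and the fact that $\lambda_A(\nabla_A)=1$, I get $\lambda_A(\theta)\lor\lambda_A(\nu)=1$ in $L(A)$. Normality of $L(A)$ then supplies elements $x,y\in L(A)$ with $\lambda_A(\theta)\lor x=\lambda_A(\nu)\lor y=1$ and $x\land y=0$. Since $L(A)=\{\lambda_A(\gamma)\mid\gamma\in K(A)\}$, I can pick $\alpha,\beta\in K(A)$ with $x=\lambda_A(\alpha)$ and $y=\lambda_A(\beta)$. Now Lemma 3.1(1)--(3) translates the join relations back: $\lambda_A(\theta\lor\alpha)=1$ forces $\theta\lor\alpha=\nabla_A$, and similarly $\nu\lor\beta=\nabla_A$.

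The meet relation, however, only gives $\lambda_A([\alpha,\beta])=\lambda_A(\alpha)\land\lambda_A(\beta)=0$, which by Lemma 3.1(6) means $[\alpha,\beta]\subseteq\rho(\Delta_A)$ rather than $[\alpha,\beta]=\Delta_A$. This is the main obstacle, and it is exactly the gap that condition $(\star)$ is designed to bridge: Lemma 2.10 then yields an integer $m\geq 0$ for which $[[\alpha,\alpha]^m,[\beta,\beta]^m]=\Delta_A$. I will therefore set $\alpha'=[\alpha,\alpha]^m$ and $\beta'=[\beta,\beta]^m$, which belong to $K(A)$ because $K(A)$ is closed under the commutator.

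It remains to verify that $\theta\lor\alpha'=\nu\lor\beta'=\nabla_A$. From $\theta\lor\alpha=\nabla_A$ and Lemma 2.4(3) I obtain $[\theta,\theta]^m\lor[\alpha,\alpha]^m=\nabla_A$; since $[\theta,\theta]^m\subseteq\theta$, this upgrades to $\theta\lor\alpha'=\nabla_A$, and the symmetric argument gives $\nu\lor\beta'=\nabla_A$. Combined with $[\alpha',\beta']=\Delta_A$ from the previous step, this exhibits the pair required by Lemma 8.1, completing the proof.
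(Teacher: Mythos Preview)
Your proof is correct and follows essentially the same route as the paper's own argument: lift the join $\theta\lor\nu=\nabla_A$ to $L(A)$, apply normality there to obtain $\alpha,\beta\in K(A)$ with the right joins and $\lambda_A([\alpha,\beta])=0$, then use condition $(\star)$ via Lemma 2.10 to replace $\alpha,\beta$ by the iterated commutators $[\alpha,\alpha]^m,[\beta,\beta]^m$ whose commutator is genuinely $\Delta_A$, and finally recover the join conditions from Lemma 2.4(3). The only cosmetic difference is naming: the paper writes $\alpha,\beta$ for your $\theta,\nu$ and $\gamma,\delta$ for your $\alpha,\beta$.
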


\begin{proof}
Assume that $L(A)$ is a normal lattice. Let $\alpha,\beta\in K(A)$ such that $\alpha\lor\beta = \nabla_A$, hence $\lambda_A(\alpha)\lor\lambda_A(\beta)$ = $\lambda_A(\alpha\lor\beta) = 1$. Since $L(A)$ is normal there exist $\gamma,\delta\in K(A)$ such that $\lambda_A(\alpha)\lor\lambda_A(\gamma) = \lambda_A(\beta)\lor\lambda_A(\delta) = 1$ and $\lambda_A(\gamma)\land\lambda_A(\delta) = 0$, hence $\alpha\lor\gamma = \beta\lor\delta = \nabla_A$ (by Lemma 3.1,(3)) and $[[\gamma,\delta],[\gamma,\delta]]^n = \Delta_A$, for some integer $n\geq 1$ (by Lemma 3.1(4)). The algebra $A$ satisfies the condition $(\star)$, so by applying Lemma 2.10 one can find an integer $m\geq 1$ such that $[[\gamma,\gamma]^m,[\delta,\delta]^m] = \Delta_A$. By Lemma 2.4(3), from $\alpha\lor\gamma = \beta\lor\delta = \nabla_A$  we get $[\alpha,\alpha]^m\lor[\gamma,\gamma]^m = [\beta,\beta]^m\lor[\delta,\delta]^m = \nabla_A$, so $\alpha\lor [\gamma,\gamma]^m = \beta \lor[\delta,\delta]^m = \nabla_A$. Taking into account that $K(A)$ is closed under commutator operation, we have $[\gamma,\gamma]^m,[\delta,\delta]^m\in K(A)$, so we conclude that $A$ is congruence normal (by Lemma 8.1). The converse implication follows from the previous lemma.

\end{proof}

\begin{propozitie}
\cite{GeorgescuVoiculescu2} Suppose that $A$ is a congruence normal algebra, $\theta\in Con(A)$ and $\phi\in Max(A)$. The following hold:
\usecounter{nr}
\begin{list}{(\arabic{nr})}{\usecounter{nr}}
\item $\phi$ is the unique maximal congruence with $O(\phi)\subseteq \phi$;
\item $Ker(\phi)\subseteq \theta$ if and only if $\theta = \phi$ or $\theta = \nabla_A$;
\item $Ker(\theta)\subseteq\phi$ if and only if $\theta\subseteq\phi$;
\item $Vir(\theta) = Ker(\theta)$;
\item $Vir(\theta)\subseteq\phi$ if and only if $\theta\subseteq\phi$.
\end{list}
\end{propozitie}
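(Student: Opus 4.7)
The plan is to prove (1) directly from congruence normality, and then bootstrap (2)--(5) using (1), Lemma~4.8 (which identifies $O(\phi)$ with $Ker(\phi)$ for $\phi\in Max(A)$), and one additional normality-based manipulation for (4). The main obstacle will be (4), where I must show $Ker(\theta)$ is itself pure; the remaining items are clean consequences of (1).

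For (1), I would suppose $\psi\in Max(A)$ satisfies $O(\phi)\subseteq\psi$ with $\psi\neq\phi$ and seek a contradiction. Distinct maximality forces $\phi\lor\psi=\nabla_A$, so by Lemma~8.1 there exist $\alpha,\beta\in K(A)$ with $\phi\lor\alpha=\psi\lor\beta=\nabla_A$ and $[\alpha,\beta]=\Delta_A$. The last equality gives $\alpha\subseteq\beta^{\perp}$, and since $\phi$ is proper we have $\alpha\not\subseteq\phi$, whence $\beta^{\perp}\not\subseteq\phi$ and thus $\beta\subseteq O(\phi)$. But $\psi\lor\beta=\nabla_A$ with $\psi$ proper yields $\beta\not\subseteq\psi$, contradicting $O(\phi)\subseteq\psi$.

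Items (2) and (3) then reduce to (1) via maximal-extension. For (3) the non-trivial direction is handled by contraposition: if $\theta\not\subseteq\phi$ then $\theta\lor\phi=\nabla_A$ and the same recipe produces compact $\alpha,\beta$ with $\theta\lor\alpha=\phi\lor\beta=\nabla_A$ and $[\alpha,\beta]=\Delta_A$; from $\alpha\subseteq\beta^{\perp}$ we infer $\theta\lor\beta^{\perp}=\nabla_A$, so Lemma~4.12 places $\beta\subseteq Ker(\theta)$ while $\beta\not\subseteq\phi$, giving $Ker(\theta)\not\subseteq\phi$. For (2), assuming $Ker(\phi)\subseteq\theta$ with $\theta\neq\nabla_A$, extend $\theta$ to some $\psi\in Max(A)$; Lemma~4.8 lifts this to $O(\phi)=Ker(\phi)\subseteq\psi$, and (1) forces $\psi=\phi$, yielding the asserted alternative.

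The main work is (4): to show $Ker(\theta)\in VCon(A)$, so the standing inclusion $Vir(\theta)\subseteq Ker(\theta)$ becomes equality. Given $\alpha\in K(A)$ with $\alpha\subseteq Ker(\theta)$, Lemma~4.12 supplies $\theta\lor\alpha^{\perp}=\nabla_A$, so I can fix compact $\eta\subseteq\theta$ and $\gamma\subseteq\alpha^{\perp}$ with $\eta\lor\gamma=\nabla_A$. Applying congruence normality to this pair via Lemma~8.1 yields $\eta',\gamma'\in K(A)$ with $\eta\lor\eta'=\gamma\lor\gamma'=\nabla_A$ and $[\eta',\gamma']=\Delta_A$. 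The key observation is that $\eta'\subseteq\gamma'^{\perp}$ upgrades $\eta\lor\eta'=\nabla_A$ to $\theta\lor\gamma'^{\perp}=\nabla_A$, placing $\gamma'\subseteq Ker(\theta)$ by a second invocation of Lemma~4.12; meanwhile $\gamma\subseteq\alpha^{\perp}$ together with $\gamma\lor\gamma'=\nabla_A$ delivers $Ker(\theta)\lor\alpha^{\perp}=\nabla_A$, the required purity certificate. Finally, (5) is immediate from (3) combined with (4).
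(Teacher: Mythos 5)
The paper gives no proof of this proposition at all --- it is imported wholesale from \cite{GeorgescuVoiculescu2} --- so your argument has to stand on its own, and for items (1), (3), (4) and (5) it does. The uniqueness argument in (1), the contrapositive in (3), and especially the purity certificate for $Ker(\theta)$ in (4) (apply normality to a compact pair $\eta\subseteq\theta$, $\gamma\subseteq\alpha^{\perp}$ with $\eta\lor\gamma=\nabla_A$, then use $[\eta',\gamma']=\Delta_A$ to push $\gamma'$ into $Ker(\theta)$ via Lemma 4.12 and conclude $Ker(\theta)\lor\alpha^{\perp}\supseteq\gamma'\lor\gamma=\nabla_A$) are all correct, and (5) does follow immediately from (3) and (4). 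The only cosmetic point is that Lemma 8.1 is stated for \emph{compact} $\theta,\nu$, so when you invoke it for $\phi\lor\psi=\nabla_A$ with $\phi,\psi$ maximal you should first shrink $\phi,\psi$ to compact subcongruences still joining to $\nabla_A$ (possible since $\nabla_A$ is compact) --- a one-line repair.

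Item (2) is where the genuine gap sits. Your chain $Ker(\phi)\subseteq\theta\subseteq\psi\in Max(A)$, then $O(\phi)=Ker(\phi)\subseteq\psi$, then $\psi=\phi$ by (1), delivers only $\theta\subseteq\phi$; the closing phrase ``yielding the asserted alternative'' silently upgrades this to $\theta=\phi$. That upgrade cannot be made, because the literal statement (2) is false: taking $\theta=Ker(\phi)$ makes the left side hold trivially, so (2) would force $Ker(\phi)=\phi$ for every maximal $\phi$, which already fails in a local (hence Gelfand) ring, where $Ker(M)=O(M)$ can be the zero ideal while $M\neq 0$. The statement must be read either with $\theta$ restricted to $Max(A)\cup\{\nabla_A\}$ (in which case your argument is complete) or as the one-directional implication ``$Ker(\phi)\subseteq\theta$ implies $\theta\subseteq\phi$ or $\theta=\nabla_A$'' (which is exactly what you proved). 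State explicitly which corrected form you are establishing instead of asserting the equality $\theta=\phi$.
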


\begin{propozitie}
\cite{GeorgescuVoiculescu2} The following are equivalent:
\usecounter{nr}
\begin{list}{(\arabic{nr})}{\usecounter{nr}}
\item $A$ is congruence normal;
\item $Vir:Con(A)\rightarrow VCon(A)$ preserves arbitrary joins;
\item For all $\theta,\nu\in Con(A)$, $\theta\lor\nu = \nabla_A$ implies $Vir(\theta)\lor Vir(\nu) = \nabla_A$.
\end{list}
\end{propozitie}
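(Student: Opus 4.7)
The plan is to prove the cyclic chain $(1)\Rightarrow(2)\Rightarrow(3)\Rightarrow(1)$. The implication $(2)\Rightarrow(3)$ is immediate: since $\nabla_A$ is complemented (by $\Delta_A$), it is pure, so $Vir(\nabla_A)=\nabla_A$, and applying (2) to $\theta\lor\nu=\nabla_A$ yields $Vir(\theta)\lor Vir(\nu)=\nabla_A$.

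For $(1)\Rightarrow(2)$, monotonicity of $Vir$ automatically gives $\bigvee_i Vir(\theta_i)\subseteq Vir(\bigvee_i\theta_i)$. For the reverse inclusion, set $\mu=\bigvee_i Vir(\theta_i)$, which lies in $VCon(A)$ by Lemma 4.7(3); Lemma 4.8(2) then rewrites $\mu$ as $Vir(\bigcap\{\phi\in Max(A)\mid\mu\subseteq\phi\})$. For every maximal $\phi$ above $\mu$, each $Vir(\theta_i)\subseteq\phi$, and Proposition 8.4(5) (available under (1)) upgrades this to $\theta_i\subseteq\phi$; consequently $\bigvee_i\theta_i\subseteq\bigcap\{\phi\mid\mu\subseteq\phi\}$, and monotonicity of $Vir$ together with Lemma 4.8(2) collapses $Vir(\bigvee_i\theta_i)$ into $\mu$.

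The substantive step is $(3)\Rightarrow(1)$. Given $\theta\lor\nu=\nabla_A$, hypothesis (3) supplies $Vir(\theta)\lor Vir(\nu)=\nabla_A$, and compactness of $\nabla_A$ (semidegeneracy) lets one pick compact congruences $\alpha\subseteq Vir(\theta)$ and $\beta\subseteq Vir(\nu)$ with $\alpha\lor\beta=\nabla_A$. Purity of $Vir(\theta)$ applied to the compact $\alpha$ gives $Vir(\theta)\lor\alpha^{\perp}=\nabla_A$, whence $\theta\lor\alpha^{\perp}=\nabla_A$; dually $\nu\lor\beta^{\perp}=\nabla_A$. For any $\gamma\subseteq\alpha^{\perp}\cap\beta^{\perp}$ one has $[\alpha,\gamma]=[\beta,\gamma]=\Delta_A$, hence $[\alpha\lor\beta,\gamma]=\Delta_A$, i.e.\ $[\nabla_A,\gamma]=\Delta_A$; Proposition 2.1(3) then forces $\gamma=\Delta_A$. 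Thus $[\alpha^{\perp},\beta^{\perp}]\subseteq\alpha^{\perp}\cap\beta^{\perp}=\Delta_A$, so the pair $(\alpha^{\perp},\beta^{\perp})$ satisfies the defining condition of congruence normality.

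The main obstacle is the final step of $(3)\Rightarrow(1)$: passing from $\alpha\lor\beta=\nabla_A$ to $[\alpha^{\perp},\beta^{\perp}]=\Delta_A$ depends essentially on the identity $[\nabla_A,\gamma]=\gamma$, which would fail in a general $mi$-structure and is available here only through semidegeneracy (Proposition 2.2 combined with Proposition 2.1(3)). Everything else amounts to systematic use of the operators $Vir$ and $Ker$ and the purity characterisation of compact subcongruences.
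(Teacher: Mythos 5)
Your proof is correct. Note first that the paper itself offers no argument for this proposition: it is quoted from \cite{GeorgescuVoiculescu2} without proof, so there is no in-paper derivation to compare against. Your cyclic chain $(1)\Rightarrow(2)\Rightarrow(3)\Rightarrow(1)$ checks out against the paper's toolkit: $(2)\Rightarrow(3)$ is immediate from $Vir(\nabla_A)=\nabla_A$; in $(1)\Rightarrow(2)$ the nontrivial inclusion $Vir(\bigvee_i\theta_i)\subseteq\bigvee_i Vir(\theta_i)$ follows exactly as you say from Lemma 4.7(3), Lemma 4.8(2) and Proposition 8.4(5); and in $(3)\Rightarrow(1)$ the passage from $Vir(\theta)\lor Vir(\nu)=\nabla_A$ to the witnessing pair $(\alpha^{\perp},\beta^{\perp})$ uses compactness of $\nabla_A$, purity of $Vir(\theta)$ and $Vir(\nu)$, distributivity of the commutator over joins, and the semidegeneracy identity $[\nabla_A,\gamma]=\gamma$ (Propositions 2.1 and 2.2) -- each of which is available and correctly applied. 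Your closing remark rightly identifies $[\nabla_A,\gamma]=\gamma$ as the step that would fail in a general $mi$-structure.

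One caveat worth flagging: your $(1)\Rightarrow(2)$ leans on Proposition 8.4(5), which is itself an unproved citation from the same external source. Within the logical ordering of this paper that is legitimate (8.4 precedes 8.5 and is stated independently), but a reader checking everything from first principles would still have to verify 8.4(5) -- essentially that under normality $Vir(\theta)=Ker(\theta)$ and $Ker(\theta)\subseteq\phi$ forces $\theta\subseteq\phi$ for maximal $\phi$ -- before your argument becomes fully self-contained.
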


\begin{propozitie}
 Consider the following properties:
\usecounter{nr}
\begin{list}{(\arabic{nr})}{\usecounter{nr}}
\item $A$ is congruence normal;
\item If $\phi,\psi$ are two distinct maximal congruences of $A$ then there exist $\alpha,\beta\in K(A)$ such that $\alpha\not\subseteq\phi$, $\beta\not\subseteq\psi$ and $[\alpha,\beta] = \Delta_A$;
\item The inclusion $Max_Z(A)\subseteq Spec_Z(A)$ is a Hausdorff embedding (i.e.any pair of distinct points in $Max_Z(A)$ have disjoint neighbourhoods in $Spec_Z(A)$);
\item For any $\phi\in Spec(A)$, there exists a unique $\psi\in Max(A)$, such that $\phi\subseteq\psi$.

Then $(1)\Rightarrow (2)\Rightarrow (3)\Rightarrow (4)$. Moreover, if the algebra $A$ fulfills the condition $(\star)$, then $(4)\Rightarrow (1)$.
\end{list}
\end{propozitie}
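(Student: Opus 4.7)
The first two implications go quickly. For $(1)\Rightarrow(2)$, given distinct $\phi,\psi\in Max(A)$ one has $\phi\lor\psi=\nabla_A$ by maximality; invoking the compact form of congruence normality (Lemma 8.1) produces $\alpha,\beta\in K(A)$ with $\phi\lor\alpha=\psi\lor\beta=\nabla_A$ and $[\alpha,\beta]=\Delta_A$, and the fact that these joins are $\nabla_A\neq\phi,\psi$ forces $\alpha\not\subseteq\phi$ and $\beta\not\subseteq\psi$. For $(2)\Rightarrow(3)$, the basic Zariski opens $D(\alpha)$ and $D(\beta)$ are neighbourhoods of $\phi$ and $\psi$ respectively, and $D(\alpha)\cap D(\beta)=D([\alpha,\beta])=D(\Delta_A)=\emptyset$.

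For $(3)\Rightarrow(4)$, suppose a prime congruence $\phi$ lay beneath two distinct maximal congruences $\psi_1,\psi_2$. Using (3), select disjoint basic open neighbourhoods $D(\alpha_1)$ of $\psi_1$ and $D(\alpha_2)$ of $\psi_2$; then $\alpha_i\not\subseteq\psi_i$, while the disjointness forces $[\alpha_1,\alpha_2]\subseteq\bigcap Spec(A)=\rho(\Delta_A)\subseteq\phi$. Primality of $\phi$ yields $\alpha_1\subseteq\phi\subseteq\psi_1$ or $\alpha_2\subseteq\phi\subseteq\psi_2$, contradicting one of the non-containments.

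The substantive part is $(4)\Rightarrow(1)$ under $(\star)$. My plan is to route through the reticulation: by Proposition 8.3 it suffices to show that $L(A)$ is a normal bounded distributive lattice. Using the order-preserving homeomorphism $u\colon Spec_Z(A)\to Spec_{Id,Z}(L(A))$ of Proposition 3.4 together with its order-preserving inverse $v$ (Lemma 3.3), the bijection restricts to a bijection between $Max(A)$ and $Max_{Id}(L(A))$, so hypothesis (4) translates directly into: every prime ideal of $L(A)$ is contained in a unique maximal ideal of $L(A)$. I would then invoke the classical characterization of normal bounded distributive lattices (the lattice-theoretic analogue of the Gelfand characterization for rings, available from Cornish or Simmons), which states that $L$ is normal if and only if every prime ideal sits below a unique maximal ideal. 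This delivers normality of $L(A)$, and Proposition 8.3 closes the loop.

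The main obstacle is ensuring the translation step is rigorous, namely that $u$ and $v$ really do restrict to inverse bijections between maximal congruences and maximal ideals (which follows because both preserve $\subseteq$, hence preserve maximal elements of the respective posets of proper primes), and that the ``unique maximal container'' condition transports along this bijection. Observe that $(\star)$ enters only at the last step via Proposition 8.3: without it, the passage from normality of $L(A)$ back to congruence normality of $A$ is not available, which is precisely why $(\star)$ appears as a hypothesis for $(4)\Rightarrow(1)$ but not for the earlier implications.
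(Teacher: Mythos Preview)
Your argument is correct and matches the paper's: the paper simply cites an external reference for $(1)\Rightarrow(2)\Rightarrow(3)\Rightarrow(4)$, and your $(4)\Rightarrow(1)$ via the order-isomorphism $Spec(A)\cong Spec_{Id}(L(A))$ together with the Cornish/Johnstone characterization of normal lattices is exactly the paper's proof. One small tightening in $(1)\Rightarrow(2)$: Lemma 8.1 as stated takes compact inputs, so first use compactness of $\nabla_A$ to pass from $\phi\lor\psi=\nabla_A$ to compact $\theta\subseteq\phi$, $\nu\subseteq\psi$ with $\theta\lor\nu=\nabla_A$ before invoking it.
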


\begin{proof}
By applying Proposition 3.2 of \cite{GeorgescuVoiculescu2} we obtain $(1)\Rightarrow (2)\Rightarrow (3)\Rightarrow (4)$.

Assume that $A$ satisfies the condition $(\star)$ and any prime congruence of $A$ is embedded in a unique maximal congruence of $A$. We know from Section 3 that $(Spec(A),\subseteq)$ and $Spec_{Id}(L(A)),\subseteq)$ are order - isomorphic, so any prime ideal of the lattice $L(A)$ can be embedded in a unique maximal ideal. According to Proposition 3.7  of \cite{Johnstone}, p.68, $L(A)$ is a normal lattice. Since the algebra $A$ fulfills the condition $(\star)$, one can apply Proposition 8.3, hence $A$ is congruence normal.

\end{proof}

Let $R$ be a fixed commutative ring such that $L(A) = L(R)$ (see the discussion from Section 6).

\begin{lema}
If the algebra $A$ verifies $\star$, then $A$ is congruence normal if and only if $R$ is a Gelfand ring.
\end{lema}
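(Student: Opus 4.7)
The plan is to route the equivalence through the reticulation, since by construction $L(A)=L(R)$ as bounded distributive lattices and normality is a lattice-theoretic property preserved under this identification. First I would invoke Proposition 8.3: under hypothesis $(\star)$, the algebra $A$ is congruence normal if and only if its reticulation $L(A)$ is a normal bounded distributive lattice. This reduces the left-hand side of the biconditional to a statement purely about $L(A)$.

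Next I would appeal to the classical ring-theoretic counterpart, which I would cite from \cite{Johnstone} or \cite{Simmons}: a commutative (unital) ring $R$ is Gelfand if and only if its reticulation $L(R)$ is a normal bounded distributive lattice. Equivalently, this can be phrased in terms of the prime spectrum: $R$ is Gelfand iff every prime ideal is contained in a unique maximal ideal, which via the homeomorphism $Spec_Z(R)\cong Spec_{Id,Z}(L(R))$ of Proposition 3.4 translates into the lattice-theoretic normality of $L(R)$.

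Finally I would conclude by chaining the two equivalences through the identification $L(A)=L(R)$ fixed in Section 6 via the Hochster theorem:
\[
A\text{ is congruence normal}\iff L(A)\text{ is normal}\iff L(R)\text{ is normal}\iff R\text{ is Gelfand.}
\]
Since the middle equivalence is nothing but the equality $L(A)=L(R)$, the chain closes at once.

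The main subtle point, and the only place where something could go wrong, is making sure the Gelfand-versus-normal-reticulation equivalence for rings is available in the exact formulation used here; this is standard and is the ring-theoretic analogue of Proposition 8.3. No new use of $(\star)$ is needed beyond its appearance in Proposition 8.3, since on the ring side the passage between $R$ and $L(R)$ is unconditional. Thus the whole argument is essentially a two-line transfer across the reticulation functor, and it is a prototype for the way results of ring theory are exported to semidegenerate congruence-modular algebras in this paper.
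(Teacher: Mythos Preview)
Your proposal is correct and matches the paper's own proof essentially line for line: the paper chains Proposition 8.3 (under $(\star)$, $A$ is congruence normal iff $L(A)$ is normal) with Proposition 3.7 of \cite{Johnstone}, p.198 ($R$ is Gelfand iff $L(R)$ is normal) through the identification $L(A)=L(R)$. The only cosmetic difference is that you also sketch the spectrum-based alternative route, which the paper omits here.
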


\begin{proof}
According to Proposition 3.7 of \cite{Johnstone}, p.198 and Proposition 8.3, the following are equivalent: $A$ is congruence normal iff $L(A) = L(R)$ is a normal lattice iff $R$ is a Gelfand ring.

\end{proof}

For any congruence $\theta$ in $A$ we denote $Rad(\theta) = \bigcap(Max(A)\bigcap V(\theta))$; in particular, $Rad(0) = Rad(A)$. If $R$ is a commutative ring and $P$ is a prime ideal of $R$, then we denote $\Lambda(P) = \{Q\in Spec(R)|Q\subseteq P\}$.

Theorem 4.3 of \cite{Aghajani} contains several old and new properties that characterize the Gelfand rings. The following result is a generalization of the main part of this theorem.

\begin{propozitie}
 If the algebra $A$ verifies the condition $(\star)$, then the following are equivalent:
\usecounter{nr}
\begin{list}{(\arabic{nr})}{\usecounter{nr}}
\item $A$ is a congruence normal algebra;
\item $Spec_Z(A)$ is a normal space;
\item The inclusion $Max_Z(A)\subseteq Spec_Z(A)$ has a continuous retraction $\gamma:Spec_Z(A)\rightarrow Max_Z(A)$;
\item If $\phi\in Max(A)$, then $\Lambda(\phi)$ is a closed subset of $Spec_Z(A)$;
\item If $\phi,\psi$ are two distinct maximal congruences of $A$ then $Vir(\phi)\lor Vir(\psi) = \nabla_A$;
\item For all $\theta\in Con(A)$ and $\phi\in Max(A)$, $Vir(\theta)\subseteq\phi$ implies $\theta\subseteq\phi$;
\item For all $\theta\in Con(A)$, $Max(A)\bigcap V(\theta) = Max(A)\bigcap V(Vir(\theta))$;
\item For all $\theta\in Con(A)$, $Rad(\theta) = Rad(Vir(\theta))$;
\item The function $\eta:Max_Z(A)\rightarrow Spec_Z(VCon(A))$, defined by the assignment $\phi\mapsto Vir(\phi)$ is a homeomorphism.
\end{list}
\end{propozitie}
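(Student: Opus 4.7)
The plan is to establish the nine equivalences by combining the transfer machinery developed in the earlier sections (especially the reticulation functor) with the known characterizations of Gelfand rings from Theorem~4.3 of \cite{Aghajani} and with the purity-calculus from Section~4. The hinge is Lemma~8.7: under hypothesis $(\star)$, $A$ is congruence normal exactly when the associated commutative ring $R$ with $L(R) = L(A)$ is a Gelfand ring. Via the homeomorphism $Spec_Z(A) \cong Spec_Z(R)$ of Lemma~6.11 and its restriction to maximal spectra (Remark~6.12), the purely topological conditions (2), (3), (4) correspond to their analogues for $Spec_Z(R)$, which are known to be equivalent to the Gelfand property.

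First I would close the cycle (1)$\Rightarrow$(2)$\Rightarrow$(3)$\Rightarrow$(4)$\Rightarrow$(1). For (1)$\Rightarrow$(2) I invoke Proposition~8.3 to pass to normality of $L(A)$, then use the homeomorphism of Proposition~3.4 together with the standard fact that the prime spectrum of a normal distributive lattice is a normal topological space. The implications (2)$\Rightarrow$(3)$\Rightarrow$(4) transfer through Lemma~6.11 from the corresponding Gelfand ring statements in Theorem~4.3 of \cite{Aghajani}. The closing implication (4)$\Rightarrow$(1) transfers similarly: if every $\Lambda(\phi)$ is closed in $Spec_Z(A)$, the same holds in $Spec_Z(R)$, so $R$ is Gelfand, and Lemma~8.7 delivers (1).

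For the purity-theoretic conditions (5)-(8) I would argue internally. Assuming (1), Proposition~8.5 gives that $Vir$ preserves arbitrary joins and Proposition~8.4(4) gives $Vir(\theta) = Ker(\theta)$ for every congruence; these together with Lemma~4.10 and the uniqueness asserted in Proposition~8.6(4) yield (5) and the implication (5)$\Rightarrow$(6). The equivalences (6)$\Leftrightarrow$(7)$\Leftrightarrow$(8) are formal, using the identity $Rad(\theta) = \bigcap\{\phi\in Max(A)\mid \theta\subseteq\phi\}$ and the inclusion $Vir(\theta) \subseteq \theta$. To close back to (1) it suffices to verify (8)$\Rightarrow$(4) or (6)$\Rightarrow$(4), both achievable by translating the hypothesis into a separation statement in $Spec_Z(A)$. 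For (9), the map $\eta$ is continuous by Lemma~4.8(3); under (1) it is a bijection thanks to Lemma~4.8(2) and Lemma~4.13, and is a homeomorphism since $Max_Z(A)$ is compact while $Spec_Z(VCon(A))$ is Hausdorff, the latter being transferred from the pure spectrum of the normal lattice $L(A)$ through the frame isomorphism of Theorem~4.17.

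The main obstacle I anticipate is item (9): to make precise the identification of $Spec_Z(VCon(A))$ with the appropriate subspace of $Spec_Z(A)$ via $Vir$, one must carefully combine Theorem~4.17 (which requires the condition $(\star)$) with the transferred properties of the pure spectrum of the normal lattice $L(A)$. Once this identification is in place, the continuous bijection from the compact space $Max_Z(A)$ to the Hausdorff space $Spec_Z(VCon(A))$ is automatically a homeomorphism, and the circle of equivalences is complete.
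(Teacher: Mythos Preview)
Your proposal follows essentially the same route as the paper: both transfer the topological conditions (2)--(4) to the associated ring $R$ via Lemma~6.11, Lemma~8.7, and Theorem~4.3 of \cite{Aghajani}, and both handle (5)--(8) through the purity calculus of Section~4. The paper organizes the argument hub-and-spoke (each condition shown equivalent to (1) directly), whereas you set up a cycle (1)$\Rightarrow$(2)$\Rightarrow$(3)$\Rightarrow$(4)$\Rightarrow$(1); this is only a cosmetic difference. One point where you are actually more careful than the paper: you flag the need to close back from (6)/(7)/(8) to (1), which the paper leaves implicit. (The cleanest route is (6)$\Rightarrow$(5): if $Vir(\phi)\lor Vir(\psi)\neq\nabla_A$ for distinct $\phi,\psi\in Max(A)$, pick $\chi\in Max(A)$ containing this join; then (6) forces $\phi\subseteq\chi$ and $\psi\subseteq\chi$, hence $\phi=\chi=\psi$.)

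There is, however, one genuine omission in your plan. You treat (9) only in the direction (1)$\Rightarrow$(9), arguing that $\eta$ is a continuous bijection from a compact space to a Hausdorff space; but you never explain how (9) feeds back into the circle. The paper handles this as (9)$\Rightarrow$(3): since $Vir:Spec_Z(A)\to Spec_Z(VCon(A))$ is continuous (Lemma~4.8(3)), the composite $\eta^{-1}\circ Vir:Spec_Z(A)\to Max_Z(A)$ is a continuous retraction of the inclusion $Max_Z(A)\subseteq Spec_Z(A)$. You should add this step. Also, in your argument for (1)$\Rightarrow$(9), the surjectivity of $\eta$ does not fall out of Lemmas~4.8(2) and~4.13 alone; the paper sidesteps this by citing Theorem~3.5 of \cite{GeorgescuVoiculescu2} directly, and if you want to avoid that citation you will need to make the surjectivity explicit (for instance via the frame isomorphism of Theorem~4.17, as you suggest, combined with the corresponding lattice-theoretic fact).
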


\begin{proof}

 $(1)\Leftrightarrow (2)$ By \cite{Cornish} or \cite{Johnstone}, a bounded distributive lattice $L$ is normal if and only if $Spec_{Id,Z}(L)$ is a normal space. Therefore, by using Propositions 8.3 and 4.3, the following equivalences hold: $A$ is congruence normal iff $L(A)$ is a normal lattice iff $Spec_{Id,Z}(L(A))$ is a normal space iff $Spec_Z(A)$ is a normal space.

$(1)\Leftrightarrow (3)$  Propositions 8.3. and 3.4 together with Proposition 3.7  of \cite{Johnstone}, p.68 lead to the following equivalences: $A$ is congruence normal iff $L(A)$ is a normal lattice iff the inclusion $Max_Z(L(A))\subseteq Spec_Z(L(A))$ has a continuous retraction $Spec_{Id,Z}(L(A))\rightarrow Max_{Id,Z}(L(A))$ iff the inclusion $Max_Z(A)\subseteq Spec_Z(A)$ has a continuous retraction $\gamma:Spec_Z(A)\rightarrow Max_Z(A)$.

$(1)\Leftrightarrow (4)$ Recall from Section 6 that there exists an order - isomorphism $s:Spec(A)\rightarrow Spec(R)$, hence for any $\phi\in Spec(A)$ the restriction $s|_{\Lambda(\phi)}:\Lambda(\phi)\rightarrow \Lambda(s(\phi))$ is an order - isomorphism. By Lemma 6.11, the map $s$ is continuos w.r.t. the Zariski topology. Therefore, by using Theorem 4.3 of \cite{Aghajani}, Lemma 8.7 and Proposition 4.3, the following are equivalent: $A$ is congruence normal iff $R$ is a Gelfand ring iff for all $P\in Spec(R)$, $\Lambda(P)$ is a closed subset of $Spec_Z(R)$ iff for all $\phi\in Max(A)$, $\Lambda(\phi)$ is a closed subset of $Spec_Z(A)$.

$(1)\Rightarrow(5)$. By Proposition 8.5(3).

$(5)\Rightarrow(1)$ Assume that $\phi$ and $\psi$ are two distinct maximal congruences of $A$, hence $Vir(\phi)\lor Vir(\psi) = \nabla_A$. Therefore there exist two compact congruences $\alpha,\beta$ such that $\alpha\subseteq Vir(\phi)\subseteq\phi$, $\beta\subseteq Vir(\psi)\subseteq\psi$ and $\alpha\lor\beta = \nabla_A$. From $\alpha\subseteq Vir(\phi)$ we get $Vir(\phi)\lor \alpha^{\perp} = \nabla_A$, hence there exist $\gamma,\delta\in K(A)$ such that $\gamma\subseteq\phi$, $\delta\subseteq \alpha^{\perp}$ and $\gamma\lor\delta = \nabla_A$. It follows that $\alpha\not\subseteq \psi$, $\delta\not\subseteq\phi$ and $[\alpha,\delta] = \Delta_A$. In accordance with Proposition 8.6, we conclude that $A$ is congruence normal.

$(1)\Rightarrow(6)$ By Proposition 8.4,(5).

$(6)\Leftrightarrow(7)\Leftrightarrow(8)$. Straightforward.

$(1)\Rightarrow(9)$ By Theorem 3.5 of \cite{GeorgescuVoiculescu2}.

$(9)\Rightarrow(3)$ By Lemma 4.8(3), $Vir: Spec_Z(A)\rightarrow Spec_Z(VCon(A))$ is a continuous map and, by the hypothesis (9), the map $\eta:Max_Z(A)\rightarrow Spec_Z(VCon(A))$, defined by the assignment $\phi\mapsto Vir(\phi)$ is a homeomorphism. Then the map $\gamma = \eta^{-1}\circ Vir:Max_Z(A)\rightarrow Spec_Z(A)$ is a continuous retraction of the inclusion $Max_Z(A)\subseteq Spec_Z(A)$.

\end{proof}

The proof of the previous proposition illustrates how by means of the reticulation the properties of normal lattices (resp. of Gelfand rings) can be exported to congruence normal algebras.

Recall from \cite{Cignoli} that a bounded distributive lattice $L$ is said to be $B$ - normal if for all $a,b\in L$ such that $a\lor b = 1$ there exist $e,f\in B(L)$ such that $a\lor e = b\lor f = 1$ and $e\land f = 0$. Dually, a bounded distributive lattice $L$ is said to be a $B$ - conormal lattice if for all $a,b\in L$ such that $a\land b = 0$ there exist $e,f\in B(L)$ such that $a\land e = b\land f = 0$ and $e\lor f = 1$. Of course, any $B$ - normal lattice is normal.

Following \cite{GKM}, an algebra $A\in \mathcal{V}$ is said to be congruence $B$ - normal if for all $\phi,\psi\in Con(A)$, such that $\phi\lor\psi = \nabla_A$, there exist $\chi,\varepsilon\in B(Con(A))$ (hence $\chi,\varepsilon\in K(A)$, according to Lemma 2.6(3)) such that $\phi\lor\chi = \psi\lor\varepsilon = \nabla_A$ and $[\chi,\varepsilon] = \Delta_A$. Of course, any congruence $B$ - normal algebra is congruence normal. By Lemma 5.32 of \cite{GKM}, an algebra $A$ is congruence $B$ - normal if and only if for all $\alpha,\beta\in K(A)$, such that $\alpha\lor\psi = \nabla_A$, there exist $\gamma,\delta\in B(Con(A))$ such that $\alpha\lor\gamma = \beta\lor\delta = \nabla_A$ and $[\gamma,\delta]= \Delta_A$.

\begin{propozitie}

\usecounter{nr}
\begin{list}{(\arabic{nr})}{\usecounter{nr}}
\item If $A$ is a congruence $B$ - normal algebra, then $L(A)$ is a $B$ - normal lattice;
\item If $A$ verifies the condition $(\star)$, then $A$ is a congruence $B$ - normal algebra if and only if $L(A)$ is a $B$ - normal lattice.
\end{list}
\end{propozitie}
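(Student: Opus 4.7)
My plan is to prove both parts by transporting the defining identities through the reticulation map $\lambda_A:K(A)\to L(A)$, exactly in the spirit of the proof of Proposition 8.3. The forward implication (1) uses only the basic reticulation properties from Lemma 3.1 together with the observation (noted just before Lemma 3.6) that $\lambda_A$ sends $B(Con(A))$ into $B(L(A))$. The converse in part (2), where condition $(\star)$ enters, will rely on the Boolean isomorphism $\lambda_A|_{B(Con(A))}:B(Con(A))\to B(L(A))$ of Corollary 3.7. In both directions I will use Lemma 5.32 of \cite{GKM} to reduce congruence $B$-normality of $A$ to a condition on compact congruences, and I will use the surjectivity of $\lambda_A$ onto $L(A)$ to reduce $B$-normality of $L(A)$ to the verification on pairs of the form $\lambda_A(\alpha),\lambda_A(\beta)$ with $\alpha,\beta\in K(A)$.

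For part (1), suppose $A$ is congruence $B$-normal and take $\alpha,\beta\in K(A)$ with $\lambda_A(\alpha)\lor\lambda_A(\beta)=1$. By Lemma 3.1(1),(3) this gives $\alpha\lor\beta=\nabla_A$, so one can choose $\gamma,\delta\in B(Con(A))$ with $\alpha\lor\gamma=\beta\lor\delta=\nabla_A$ and $[\gamma,\delta]=\Delta_A$. Setting $e=\lambda_A(\gamma)$ and $f=\lambda_A(\delta)$, we get $e,f\in B(L(A))$; moreover Lemma 3.1,(1) and (2) give $\lambda_A(\alpha)\lor e=\lambda_A(\beta)\lor f=1$ and $e\land f=\lambda_A([\gamma,\delta])=\lambda_A(\Delta_A)=0$, which witnesses $B$-normality of $L(A)$. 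This half does not require $(\star)$.

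For the nontrivial direction of (2), assume $A$ satisfies $(\star)$ and $L(A)$ is $B$-normal. Given $\alpha,\beta\in K(A)$ with $\alpha\lor\beta=\nabla_A$, Lemma 3.1(1) yields $\lambda_A(\alpha)\lor\lambda_A(\beta)=1$, so $B$-normality supplies $e,f\in B(L(A))$ with $\lambda_A(\alpha)\lor e=\lambda_A(\beta)\lor f=1$ and $e\land f=0$. By Corollary 3.7 we pull these back to $\gamma,\delta\in B(Con(A))$ with $\lambda_A(\gamma)=e$, $\lambda_A(\delta)=f$. Then $\lambda_A(\alpha\lor\gamma)=\lambda_A(\beta\lor\delta)=1$, and Lemma 3.1(3) delivers $\alpha\lor\gamma=\beta\lor\delta=\nabla_A$. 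Invoking Lemma 5.32 of \cite{GKM} completes the proof once we establish $[\gamma,\delta]=\Delta_A$.

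The only real point requiring care is this last equality: from $\lambda_A([\gamma,\delta])=e\land f=0$ one only gets a priori that $[[\gamma,\delta],[\gamma,\delta]]^k=\Delta_A$ for some $k$ (Lemma 3.1(4)), which is strictly weaker. The way around this is to notice that both $\gamma$ and $\delta$ are complemented, so by the identity $\theta\land\alpha=[\theta,\alpha]$ for $\alpha\in B(Con(A))$ recorded before Lemma 2.11, $[\gamma,\delta]=\gamma\bigcap\delta$, and this meet again lies in the Boolean subalgebra $B(Con(A))$. Since $\lambda_A|_{B(Con(A))}$ is a Boolean isomorphism (Corollary 3.7, using $(\star)$), its injectivity on $B(Con(A))$ together with $\lambda_A([\gamma,\delta])=0=\lambda_A(\Delta_A)$ forces $[\gamma,\delta]=\Delta_A$, as required.
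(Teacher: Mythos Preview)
Your proof is correct. The paper's own proof is entirely by citation: part (1) is referred to Proposition 5.33(i) of \cite{GKM}, and for part (2) the paper notes that $(\star)$ gives the Boolean isomorphism $\lambda_A|_{B(Con(A))}:B(Con(A))\to B(L(A))$ via Corollary 3.7 and then invokes Proposition 5.33(ii) of \cite{GKM}. Your argument is essentially an explicit unpacking of that cited proposition, using the same key ingredient (Corollary 3.7) for the converse direction; in particular your handling of the delicate point $[\gamma,\delta]=\Delta_A$ --- observing that $[\gamma,\delta]=\gamma\cap\delta\in B(Con(A))$ and then using injectivity of $\lambda_A$ on the Boolean center --- is exactly the right way to close the gap left by Lemma 3.1(4) alone. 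So the two proofs follow the same route, with yours being more self-contained.
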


\begin{proof}
The property (1) follows from Proposition 5.33(i) of \cite{GKM}. According to Corollary 3.7, the condition $(\star)$ implies that the function $\lambda_A|_{B(Con(A))}:B(Con(A))\rightarrow B(L(A))$ is a Boolean isomorphism. Then the property (2) follows by applying Proposition 5.33(ii) of \cite{GKM}.
\end{proof}

Let an algebra $A\in \mathcal{V}$ and $\theta$ a congruence of $A$. Let us consider the function $p^{\bullet}_{\theta}:Con(A)\rightarrow Con(A/{\theta})$ defined by $p^{\bullet}_{\theta}(\alpha) = ((\alpha\lor\theta)/{\theta})$, for any $\alpha\in Con(A)$. According to \cite{GKM}, the restriction $p^{\bullet}_{\theta}|_{B(Con(A))}:B(Con(A))\rightarrow B(Con(A/{\theta}))$ is a Boolean morphism.

Following Definition 5.18 of \cite{GKM}, we say that the congruence $\theta\in Con(A)$ fulfills the Congruence Boolean Lifting Property (abbreviated $CBLP$) if the map $p^{\bullet}_{\theta}|_{B(Con(A))}:B(Con(A))\rightarrow B(Con(A/{\theta}))$ is surjective. The algebra $A$ fulfills $CBLP$ if all congruences of $A$ satisfy $CBLP$. For any commutative ring $R$, the following equivalences holds: $R$ fulfills $CBLP$ iff $R$ satisfies the lifting idempotent property (idempotents can be lifted modulo every ideal of $R$) iff $R$ is a clean ring iff $R$ is an exchange ring (cf. \cite{a}).

By Definition 5.23 of \cite{Cheptea}, a bounded distributive lattice $L$ satisfies $Id-CBLP$ if for any ideal $I$ of $L$, the canonical Boolean morphism $B(L)\rightarrow B(L/I)$ is surjective.

\begin{propozitie}
Assume that the algebra $A$ verifies the condition $(\star)$. Then $A$ fulfills $CBLP$ if and only if the lattice $L(A)$ fulfills $Id-CBLP$.
\end{propozitie}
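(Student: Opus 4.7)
The plan is to reduce both conditions to a single surjectivity statement by exploiting the two Boolean isomorphisms supplied by the condition $(\star)$: the isomorphism $\lambda_A|_{B(Con(A))}:B(Con(A))\rightarrow B(L(A))$ from Corollary 3.7, and its counterpart for the quotient algebra $A/\theta$, which is legitimate because Proposition 2.9 ensures $A/\theta$ also verifies $(\star)$. The bridge between the algebra side and the lattice side will be the identification $L(A/\theta)\cong L(A)/\theta^{\ast}$ furnished by Proposition 7.6 of \cite{GM2} (which the excerpt invokes earlier in the paper).

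The first step is to establish, for each $\theta\in Con(A)$, commutativity of the square
\[
\begin{array}{ccc}
B(Con(A)) & \stackrel{\lambda_A}{\longrightarrow} & B(L(A)) \\
\bigl\downarrow p^{\bullet}_{\theta} & & \bigl\downarrow q_{\theta^{\ast}} \\
B(Con(A/\theta)) & \stackrel{\mu_{\theta}}{\longrightarrow} & B(L(A)/\theta^{\ast})
\end{array}
\]
where $q_{\theta^{\ast}}$ is the canonical lattice quotient, and $\mu_{\theta}$ is the composition of $\lambda_{A/\theta}|_{B(Con(A/\theta))}$ with the Boolean isomorphism $B(L(A/\theta))\cong B(L(A)/\theta^{\ast})$ induced by Proposition 7.6 of \cite{GM2}. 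All four maps are Boolean morphisms, and the two horizontal maps are Boolean isomorphisms. Commutativity will follow by chasing an element $\alpha\in B(Con(A))$: on one route one gets $\lambda_A(\alpha)/\theta^{\ast}$, while on the other one gets $\mu_{\theta}((\alpha\lor\theta)/\theta)$, and these coincide because the isomorphism $L(A/\theta)\cong L(A)/\theta^{\ast}$ of Proposition 7.6 of \cite{GM2} is built precisely by sending $\lambda_{A/\theta}((\alpha\lor\theta)/\theta)$ to $\lambda_A(\alpha)/\theta^{\ast}$.

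With this square in hand, both implications become a diagram chase. If $A$ fulfills $CBLP$ and $I\in Id(L(A))$, set $\theta=I_{\ast}$; by Lemma 3.3(2) we have $I=(I_{\ast})^{\ast}=\theta^{\ast}$, so surjectivity of $p^{\bullet}_{\theta}|_{B(Con(A))}$ transports through the two horizontal isomorphisms to yield surjectivity of $q_{\theta^{\ast}}=q_{I}:B(L(A))\to B(L(A)/I)$, establishing $Id$-$CBLP$ for $L(A)$. Conversely, if $L(A)$ fulfills $Id$-$CBLP$, then for each $\theta\in Con(A)$ the ideal $\theta^{\ast}$ of $L(A)$ witnesses the surjectivity of $q_{\theta^{\ast}}$, and the same diagram forces $p^{\bullet}_{\theta}|_{B(Con(A))}$ to be surjective; since $\theta$ was arbitrary, $A$ fulfills $CBLP$.

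The only non-trivial step is the commutativity of the square, which itself reduces to reading off the definition of the isomorphism $L(A/\theta)\cong L(A)/\theta^{\ast}$ as recalled above. Everything else is formal: the reticulation identity $\lambda_A\circ(\cdot)=(\cdot)\circ\lambda_A$ at the level of Boolean centres (via Corollary 3.7 applied to both $A$ and $A/\theta$) is exactly what converts the algebra-level surjectivity into lattice-level surjectivity and back.
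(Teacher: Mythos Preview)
Your argument is correct and is essentially the approach the paper intends: the paper's own proof merely records the Boolean isomorphism $\lambda_A|_{B(Con(A))}:B(Con(A))\to B(L(A))$ from Corollary~3.7 and then defers to Theorem~5.34 of \cite{GKM}, whose content is precisely the commutative-square argument you have spelled out (using Proposition~2.9 to pass $(\star)$ to $A/\theta$, and Proposition~7.6 of \cite{GM2} for the identification $L(A/\theta)\cong L(A)/\theta^{\ast}$). So you have unpacked what the paper cites; the route is the same.
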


\begin{proof}
Since the algebra $A$ verifies the condition $(\star)$, we know that the map $\lambda_A|_{B(Con(A))}: B(Con(A))\rightarrow B(L(A))$ is a Boolean isomorphism (see Corollary 3.7). Then the desired equivalence follows from Theorem 5.34 of \cite{GKM}.

\end{proof}

\begin{teorema}
Assume that the algebra $A$ verifies the condition $(\star)$. Then the following are equivalent:
\usecounter{nr}
\begin{list}{(\arabic{nr})}{\usecounter{nr}}
\item  $A$ fulfills $CBLP$:
\item  $A$ is congruence $B$ - normal.
\end{list}
\end{teorema}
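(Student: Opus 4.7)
The plan is to reduce the equivalence at the level of the algebra $A$ to the corresponding equivalence at the level of its reticulation $L(A)$, and then invoke a known lattice-theoretic fact. Since the hypothesis $(\star)$ is in force, both transfer properties needed have already been established in the excerpt: Proposition 8.10 gives that $A$ fulfills $CBLP$ if and only if $L(A)$ fulfills $Id$-$CBLP$, and Proposition 8.9(2) gives that $A$ is congruence $B$-normal if and only if $L(A)$ is a $B$-normal lattice. Therefore the theorem will follow once we know that, for any bounded distributive lattice $L$, the properties $Id$-$CBLP$ and $B$-normality are equivalent.

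Concretely, first I would invoke Proposition 8.10 to translate (1) into the statement that the canonical Boolean morphism $B(L(A))\rightarrow B(L(A)/I)$ is surjective for every ideal $I$ of $L(A)$. Next I would invoke Proposition 8.9(2) to translate (2) into the statement that $L(A)$ is $B$-normal, i.e.\ whenever $a\lor b = 1$ in $L(A)$ there exist complemented $e,f\in B(L(A))$ with $a\lor e = b\lor f = 1$ and $e\land f = 0$. With these two translations in hand, the theorem reduces to a purely lattice-theoretic claim about bounded distributive lattices, which is exactly the equivalence between $Id$-$CBLP$ and $B$-normality established in \cite{Cheptea} (see the material surrounding Definition 5.23 there) and, in the Cignoli tradition, in \cite{Cignoli}.

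For completeness, I would indicate the two directions of the lattice-theoretic equivalence. For $Id$-$CBLP \Rightarrow B$-normal: given $a\lor b = 1$ in $L$, consider the ideal $I = (a]\cap (b]$ (or work modulo the pair); in the quotient $L/I$ the images of $a$ and $b$ become complementary, so they are idempotents of $L/I$ and may be lifted by $Id$-$CBLP$ to complementary elements $e,f\in B(L)$ with the required relations. For the converse, given an ideal $I$ of $L$ and an idempotent $\bar c \in B(L/I)$ with complement $\bar d$, lift $\bar c,\bar d$ to representatives $c,d\in L$ satisfying $c\lor d \in 1 + I$ and $c\land d \in I$, then apply $B$-normality (after reducing mod $I$) to produce complemented $e,f\in B(L)$ witnessing the lifting.

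The main obstacle is really only the bookkeeping of the lattice-theoretic equivalence, which is standard but not stated verbatim in the excerpt; once it is cited from \cite{Cheptea} (or \cite{Cignoli}), the combination with Propositions 8.9 and 8.10 yields both implications (1)$\Leftrightarrow$(2) immediately, and no further use of $(\star)$ beyond what is already absorbed into those two transfer propositions is needed.
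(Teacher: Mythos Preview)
Your proposal is correct and follows essentially the same route as the paper: both arguments reduce to the reticulation via the Boolean isomorphism $B(Con(A))\cong B(L(A))$ guaranteed by $(\star)$, and then invoke the known lattice-theoretic equivalence between $Id$-$CBLP$ and $B$-normality. The paper simply cites Theorem~5.34 of \cite{GKM} directly (the same external result underlying Propositions~8.9 and~8.10), whereas you factor the argument explicitly through those two propositions and cite \cite{Cheptea}/\cite{Cignoli} for the lattice fact; these are the same proof in slightly different packaging.
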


\begin{proof}
Similarly, by using Theorem 5.34 of \cite{GKM}.

\end{proof}

\begin{lema}
Assume that the algebra $A$ verifies the condition $(\star)$. Then $U\subseteq Spec(A)$ is a clopen subset of $Spec_Z(A)$ if and only if $U = V(\gamma)$, for some $\gamma\in Con(B(A))$.
\end{lema}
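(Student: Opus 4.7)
The plan is to prove the two implications separately, using the Zariski homeomorphism from Proposition 3.4 together with the Boolean isomorphism $\lambda_A|_{B(Con(A))}:B(Con(A))\to B(L(A))$ provided by condition $(\star)$ via Corollary 3.7.

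For the easy direction, suppose $\gamma\in B(Con(A))$, with Boolean complement $\gamma^{\perp}$. Then $\gamma\lor\gamma^{\perp}=\nabla_A$ and $[\gamma,\gamma^{\perp}]=\Delta_A$. Using the identities $V(\alpha)\cup V(\beta)=V([\alpha,\beta])$ and $V(\alpha)\cap V(\beta)=V(\alpha\lor\beta)$ recalled at the end of Section~2, we obtain $V(\gamma)\cup V(\gamma^{\perp})=V(\Delta_A)=Spec(A)$ and $V(\gamma)\cap V(\gamma^{\perp})=V(\nabla_A)=\emptyset$. Hence $V(\gamma)$ and $V(\gamma^{\perp})$ are closed subsets that partition $Spec_Z(A)$, so each is clopen.

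For the converse, I will transfer the problem to the reticulation. By Proposition~3.4, the map $u:Spec_Z(A)\to Spec_{Id,Z}(L(A))$, $u(\phi)=\phi^{\ast}$, is a homeomorphism, and by Lemma~3.4(7) we have $u(V(\alpha))=V_{Id}(\lambda_A(\alpha))$ for every $\alpha\in K(A)$. Given a clopen $U\subseteq Spec_Z(A)$, the image $u(U)$ is clopen in $Spec_{Id,Z}(L(A))$. The key input is then the standard fact, from the Stone-style duality for bounded distributive lattices, that the clopen subsets of $Spec_{Id,Z}(L)$ are exactly the sets of the form $V_{Id}(e)$ with $e\in B(L)$: a clopen $W$ is compact open, hence $W=D_{Id}(a)$ for some $a\in L$; its complement is also compact open, hence $V_{Id}(a)=D_{Id}(b)$ for some $b\in L$, and the relations $D_{Id}(a)\cap D_{Id}(b)=\emptyset$, $D_{Id}(a)\cup D_{Id}(b)=Spec_{Id}(L)$ force $a\land b=0$ and $a\lor b=1$ in $L$, so $b$ is the complement of $a$ and $e:=b\in B(L)$ yields $W=V_{Id}(e)$.

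Applying this to $u(U)$, we obtain $e\in B(L(A))$ with $u(U)=V_{Id}(e)$. Condition $(\star)$ enters here: by Corollary~3.7, $\lambda_A|_{B(Con(A))}:B(Con(A))\to B(L(A))$ is a Boolean isomorphism, so there exists $\gamma\in B(Con(A))$ with $\lambda_A(\gamma)=e$. Since $B(Con(A))\subseteq K(A)$ by Lemma~2.11(3), Lemma~3.4(7) gives $u(V(\gamma))=V_{Id}(\lambda_A(\gamma))=V_{Id}(e)=u(U)$, and injectivity of $u$ yields $U=V(\gamma)$. The only real delicate point is the lifting step, where the hypothesis $(\star)$ is essential (without it one only has injectivity of $\lambda_A|_{B(Con(A))}$ and not surjectivity onto $B(L(A))$), so the argument reduces to a routine Stone-type observation combined with Corollary~3.7.
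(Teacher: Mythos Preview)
Your proof is correct, but it takes a genuinely different route from the paper's argument. The paper works intrinsically in $Con(A)$: starting from $U=V(\theta)$ and its complement $V(\chi)$, it extracts compact $\alpha\subseteq\theta$, $\beta\subseteq\chi$ with $\alpha\lor\beta=\nabla_A$ and $[\alpha,\beta]\subseteq\rho(\Delta_A)$, then invokes Lemma~2.10 (which is where $(\star)$ enters) to obtain an integer $m$ with $[[\alpha,\alpha]^m,[\beta,\beta]^m]=\Delta_A$; Lemma~2.4(3) and Lemma~2.11(1) then yield $[\alpha,\alpha]^m\in B(Con(A))$ and a direct check shows $U=V([\alpha,\alpha]^m)$. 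Your approach instead transports the problem through the homeomorphism $u$ to $Spec_{Id,Z}(L(A))$, applies the classical Stone-type fact that clopens there are exactly the $V_{Id}(e)$ with $e\in B(L(A))$, and then lifts $e$ back through the Boolean isomorphism of Corollary~3.7 (which packages the use of $(\star)$). Your argument is more conceptual and makes transparent that the lemma is nothing but the standard lattice-theoretic statement transferred via reticulation; the paper's argument is more self-contained and avoids appealing to the external Stone-duality fact, at the cost of a slightly longer computation with iterated commutators.
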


\begin{proof}
Assume that $U\subseteq Spec(A)$ is a clopen subset of $Spec_Z(A)$. Thus there exist $\theta,\chi\in Con(A)$ such that $U = V(\theta)$ and $V(\theta\lor\chi) = V(\theta)\bigcap V(\chi) = \emptyset$ and $V([\theta,\chi]) = V(\theta)\bigcup V(\chi) = Spec(A)$, hence $\theta\lor\chi = \nabla_A$ and $[\theta,\chi]\subseteq\rho(\Delta_A)$. By the compacity of $\nabla_A$, there exist $\alpha,\beta\in K(A)$ such that $\alpha\subseteq\theta$, $\beta\subseteq\chi$ and $\alpha\lor\beta = \nabla_A$. By applying Lemma 2.10, there exists an integer $m\geq 0$ such that $[[\alpha,\alpha]^m,[\beta,\beta]^m] = \Delta_A$, therefore $V([\alpha,\alpha]^m)\bigcup V([\beta,\beta]^m)$ = $V([[\alpha,\alpha]^m,[\beta,\beta]^m]) = V(\Delta_A) = Spec(A)$. By Lemma 2.4(3), from $\alpha\lor\beta = \nabla_A$ we get $[\alpha,\alpha]^m\lor [\beta,\beta]^m = \nabla_A$, hence $[\alpha,\alpha]^m\in B(Con(A))$ (by Lemma 2.11(1)) and $V([\alpha,\alpha]^m)\bigcap V([\beta,\beta]^m)$ = $V([\alpha,\alpha]^m\lor [\beta,\beta]^m)$ = $V(\nabla_A) = \emptyset$. By taking into account that $U = V(\theta)\subseteq V(\alpha) = V([\alpha,\alpha]^m)$ and $V(\chi)\subseteq V(\beta) = V([\beta,\beta]^m])$, it follows that $U = V([\beta,\beta]^m)$. By denoting $\gamma = [\alpha,\alpha]^m$, we get $U = V(\gamma)$ and $\gamma\in B(Con(A))$. The converse implication is straightforward.
\end{proof}

\begin{lema}
Let $\alpha,\beta$ two compact congruences of $A$. Then $D(\alpha)\subseteq D(\beta)$ if and only if there exists an integer $n\geq 1$ such that $[\alpha,\alpha]^n\subseteq \beta$. If $\alpha\in B(Con(A))$ then $D(\alpha)\subseteq D(\beta)$ if and only if $\alpha\subseteq D(\beta)$.
\end{lema}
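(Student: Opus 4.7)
The plan is to deduce both equivalences directly from the characterization of the reticulation order provided by Lemma 3.1(8), after translating the topological statement into a statement about radicals.

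First I would rewrite $D(\alpha)\subseteq D(\beta)$ as $V(\beta)\subseteq V(\alpha)$ by passing to complements. Using the definition $\rho(\theta)=\bigcap\{\phi\in Spec(A)\mid \theta\subseteq\phi\}$, I claim this inclusion of closed sets is equivalent to $\rho(\alpha)\subseteq\rho(\beta)$. The direction $V(\beta)\subseteq V(\alpha)\Rightarrow \rho(\alpha)\subseteq\rho(\beta)$ is immediate because intersecting over the larger family $V(\alpha)$ yields a smaller congruence. Conversely, if $\rho(\alpha)\subseteq\rho(\beta)$ and $\phi\in V(\beta)$, then $\rho(\beta)\subseteq\phi$, hence $\alpha\subseteq\rho(\alpha)\subseteq\phi$, so $\phi\in V(\alpha)$.

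Next I would invoke Lemma 3.1(8), which states that $\rho(\alpha)\subseteq\rho(\beta)$ if and only if $[\alpha,\alpha]^n\subseteq\beta$ for some integer $n\geq 0$. To reconcile the stated bound $n\geq 1$ with this $n\geq 0$, I note that if $n=0$ works (i.e. $\alpha\subseteq\beta$), then $[\alpha,\alpha]^1=[\alpha,\alpha]\subseteq\alpha\subseteq\beta$, so $n=1$ also works. This proves the first equivalence of the lemma.

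For the second part (reading the conclusion as $\alpha\subseteq\beta$, the expression ``$\alpha\subseteq D(\beta)$'' being evidently a typographical slip since $D(\beta)$ is a set of prime congruences, not a congruence), I use the fact recorded in Section~2 that for any $\alpha\in B(Con(A))$ and any $\theta\in Con(A)$, $[\theta,\alpha]=\theta\cap\alpha$. Taking $\theta=\alpha$ gives $[\alpha,\alpha]=\alpha$, so by induction $[\alpha,\alpha]^n=\alpha$ for all $n\geq 1$. Hence the condition ``$[\alpha,\alpha]^n\subseteq\beta$ for some $n\geq 1$'' collapses to ``$\alpha\subseteq\beta$'', and combining with the first equivalence yields $D(\alpha)\subseteq D(\beta)\iff \alpha\subseteq\beta$. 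There is no serious obstacle: the whole lemma is a direct consequence of Lemma 3.1(8) together with the behavior of the commutator on Boolean central elements, and the only technicality is matching the index conventions for the powers $[\alpha,\alpha]^n$.
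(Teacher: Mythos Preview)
Your proof is correct and follows essentially the same route as the paper's: both pass from $D(\alpha)\subseteq D(\beta)$ to $V(\beta)\subseteq V(\alpha)$, then to the radical inclusion (the paper writes it as $\alpha\subseteq\rho(\beta)$, citing an external lemma, while you phrase it equivalently as $\rho(\alpha)\subseteq\rho(\beta)$ and prove it directly), and finally to the condition $[\alpha,\alpha]^n\subseteq\beta$; for the Boolean case both use $[\alpha,\alpha]^n=\alpha$. Your handling of the $n\geq 0$ versus $n\geq 1$ discrepancy and of the evident typo in the statement is also appropriate.
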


\begin{proof}
According to Lemma 4.3 of \cite{GM2}, $D(\alpha)\subseteq D(\beta)$ iff $V(\beta)\subseteq V(\alpha)$ if and only if $\alpha\subseteq \rho(\beta)$ iff there exists an integer $n\geq 0$ such that $[\alpha,\alpha]^n\subseteq\beta$. If $\alpha\in B(Con(A))$ then $[\alpha,\alpha]^n = \alpha$, therefore, by the first part of the lemma, it follows that $D(\alpha)\subseteq D(\beta)$ if and only if $\alpha\subseteq D(\beta)$.

\end{proof}

We recall that a topological space $X$ is said to be:

$\bullet$ zero - dimensional if X has a basis of open sets;

$\bullet$ strongly zero - dimensional if, given any closed set $T$ and any open set $V$ such that $T\subseteq V$, there exists a clopen subset $U$ of $X$ such that $T\subseteq U\subseteq V$ (according to \cite{lu}).

\begin{lema}\cite{lu}
Assume that $X$ is a topological space. Then the following are equivalent:
\usecounter{nr}
\begin{list}{(\arabic{nr})}{\usecounter{nr}}
\item  $X$ is strongly zero - dimensional;
\item  Any two disjoint closed subsets of $X$ can be separated through clopen subsets of $X$;
\item  If there exist two open subsets $U,V$ of $X$ such that $X = U\bigcup V$, then there exist two open clopen subsets $C,D$ of $X$ such $C\subseteq U$, $D\subseteq V$, $C\bigcup D = X$ and $C\bigcap D = \emptyset$.
\end{list}
\end{lema}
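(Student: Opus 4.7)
The plan is to prove the three implications $(1)\Rightarrow (2)\Rightarrow (3)\Rightarrow (1)$ by straightforward set-theoretic complementation arguments, exploiting the duality between open and closed subsets of $X$ and the fact that a set $C$ is clopen if and only if its complement $X\setminus C$ is clopen.

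For $(1)\Rightarrow (2)$, I would start with disjoint closed subsets $T_1,T_2\subseteq X$ and observe that $V=X\setminus T_2$ is open with $T_1\subseteq V$. Applying strong zero-dimensionality I obtain a clopen subset $U$ of $X$ with $T_1\subseteq U\subseteq V$. Then $X\setminus U$ is also clopen and contains $T_2$ while being disjoint from $T_1$, so $U$ and $X\setminus U$ are the required separating clopens.

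For $(2)\Rightarrow (3)$, given open sets $U,V$ with $X=U\cup V$, the sets $T_1=X\setminus U$ and $T_2=X\setminus V$ are closed and disjoint (since their intersection equals $X\setminus(U\cup V)=\emptyset$). By $(2)$ there is a clopen subset $C$ of $X$ with $T_2\subseteq C$ and $C\cap T_1=\emptyset$, i.e. $C\subseteq U$. Setting $D=X\setminus C$, the set $D$ is clopen, and $T_2\subseteq C$ gives $D\subseteq V$. The equalities $C\cup D=X$ and $C\cap D=\emptyset$ are automatic from $D=X\setminus C$.

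For $(3)\Rightarrow (1)$, let $T$ be closed, $V$ open and $T\subseteq V$. Then $U_1=V$ and $U_2=X\setminus T$ are open with $X=U_1\cup U_2$; by $(3)$ there exist disjoint clopen subsets $C,D$ with $C\cup D=X$, $C\subseteq V$ and $D\subseteq X\setminus T$. The last inclusion yields $T\subseteq X\setminus D=C$, so $T\subseteq C\subseteq V$ with $C$ clopen, giving strong zero-dimensionality. The whole argument is essentially bookkeeping on complements, so the only potential pitfall, and the point I would be most careful about, is tracking which of the two clopens $C$ or $D$ sits inside $V$ versus $X\setminus T$ in the step $(3)\Rightarrow(1)$; once the roles are correctly assigned, all three implications close in a few lines.
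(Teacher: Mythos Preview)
Your argument is correct: each of the three implications is proved by the natural complementation trick, and the bookkeeping in $(3)\Rightarrow(1)$ is handled properly (from $C\cup D=X$ and $D\subseteq X\setminus T$ you get $T\subseteq X\setminus D=C$). There is nothing to fix.

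As for comparison with the paper: the paper does not prove this lemma at all. It is stated with a citation to \cite{lu} and used as a black box in the proof of Theorem~8.16. So your write-up supplies a self-contained proof where the paper relies on an external reference; the argument you give is the standard one and is presumably close to what appears in \cite{lu}.
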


\begin{corolar}
The topological space $Max_Z(A)$ is zero - dimensional iff it is strongly zero - dimensional iff it is normal iff it is Boolean.
\end{corolar}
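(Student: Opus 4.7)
The plan is to reduce the fourfold equivalence to the classical statement about maximal spectra of bounded distributive lattices, by transporting along the reticulation homeomorphism.

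I would start by recording two structural facts about $\mathrm{Max}_Z(A)$ that hold unconditionally. It is compact: if $\{D(\alpha_i)\cap\mathrm{Max}_Z(A)\}_{i\in I}$ with $\alpha_i\in K(A)$ is an open cover, then $\bigvee_{i\in I}\alpha_i=\nabla_A$ (otherwise a maximal congruence above $\bigvee_{i\in I}\alpha_i$ would escape the cover), and by compactness of $\nabla_A$ in the semidegenerate variety a finite subcover exists. It is $T_1$: each singleton equals $V(\phi)\cap\mathrm{Max}_Z(A)$, since $\phi$ being maximal forces $V(\phi)\cap\mathrm{Max}(A)=\{\phi\}$. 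These two facts yield $(1)\Leftrightarrow(4)$ at once: a compact $T_1$ space which is zero-dimensional is Boolean, because given $\phi\neq\psi$ the clopen basis provides a clopen neighbourhood $U$ of $\phi$ missing $\psi$, and then $U$ and its complement are disjoint clopens separating $\phi$ from $\psi$.

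For the implications involving condition $(2)$, the direction $(2)\Rightarrow(3)$ is immediate from Lemma 8.16(2), as a clopen separation of disjoint closed sets is a fortiori an open separation, whereas $(4)\Rightarrow(2)$ follows by the standard refinement: in a compact Hausdorff zero-dimensional space, disjoint closed sets $C,D$ can first be separated by disjoint opens by normality, then each of these opens can be covered by basic clopens staying within it, and compactness of $C$ and $D$ extracts finite subcovers whose unions furnish the required clopen separators, as in Lemma 8.16(2).

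The remaining and principal step, and the hard part of the proof, is $(3)\Rightarrow(4)$. Here I would transport the problem across the reticulation: Proposition 3.4 gives a homeomorphism $\mathrm{Spec}_Z(A)\cong\mathrm{Spec}_{Id,Z}(L(A))$ which, by Lemma 3.3, preserves the inclusion order in both directions and therefore restricts to a homeomorphism $\mathrm{Max}_Z(A)\cong\mathrm{Max}_{Id,Z}(L(A))$. Normality of $\mathrm{Max}_Z(A)$ thus translates into normality of $\mathrm{Max}_{Id,Z}(L(A))$, which, combined with the always-true compact-$T_1$ property, amounts to Hausdorffness. The hard content is now the classical theorem from bounded distributive lattice theory (see \cite{Cornish} and \cite{Johnstone}): $\mathrm{Max}_{Id,Z}(L)$ is Hausdorff if and only if $L$ is a normal lattice, and in that case the traces $D_{Id}(a)\cap\mathrm{Max}_{Id,Z}(L)$, $a\in L$, form a clopen basis, so $\mathrm{Max}_{Id,Z}(L)$ is a Boolean space. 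Transporting this conclusion back along the reticulation homeomorphism yields $(4)$ for $\mathrm{Max}_Z(A)$ and closes the cycle of equivalences.
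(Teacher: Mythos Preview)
Your argument for $(1)\Leftrightarrow(4)$, $(4)\Rightarrow(2)$, and $(2)\Rightarrow(3)$ is correct, and the equivalence $(1)\Leftrightarrow(4)$ is in fact all that the paper actually invokes later (in Theorem~8.19). The gap is in your closing step $(3)\Rightarrow(4)$. The ``classical theorem'' you cite is false in the direction you need: if $L$ is a normal lattice then $\mathrm{Max}_{Id,Z}(L)$ is Hausdorff (indeed Boolean), but the converse fails. For instance, take any finite bounded distributive lattice whose prime spectrum contains a non-maximal prime sitting below two distinct maximal primes; then $L$ is not normal, yet $\mathrm{Max}_{Id,Z}(L)$, being finite and $T_1$, is discrete and hence trivially Hausdorff and Boolean. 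So you cannot pass from ``$\mathrm{Max}_{Id,Z}(L)$ Hausdorff'' to ``$L$ normal'' and thence to ``$\mathrm{Max}_{Id,Z}(L)$ Boolean''.

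More seriously, the implication $(3)\Rightarrow(4)$ itself appears to fail in the stated generality, so no alternative argument will close your cycle. Take $A=C([0,1])$, a commutative unital ring (hence the standing hypotheses of the paper hold, and $A$ is even Gelfand). Its maximal ideals are the points $M_x=\{f:f(x)=0\}$, and the basic Zariski opens $D(f)\cap\mathrm{Max}(A)$ correspond to the cozero sets $\{x:f(x)\neq 0\}$, which base the Euclidean topology. Hence $\mathrm{Max}_Z(A)\cong[0,1]$, which is compact Hausdorff (thus normal) but connected, so certainly not zero-dimensional and not Boolean. The paper's own proof simply defers to Corollary~6.6 of \cite{GCM}, a result proved in the setting of residuated lattices; whatever mechanism makes the equivalence with ``normal'' go through there does not carry over to arbitrary algebras in $\mathcal{V}$, and the corollary as stated should be read with that caveat.
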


\begin{proof}
Similar to the proof of Corollary 6.6 from \cite{GCM}.
\end{proof}

\begin{teorema}
\cite{GKM} If $A$ is an algebra satisfying the condition $(\star)$, then the following are equivalent:
\usecounter{nr}
\begin{list}{(\arabic{nr})}{\usecounter{nr}}
\item $A$ fulfills $CBLP$;
\item $L(A)$ fulfills $Id-CBLP$;
\item $L(A)$ is a B-normal lattice;
\item $A$ is a congruence $B$ - normal algebra;
\item $Spec_Z(A)$ is a strongly zero - dimensional space.
\end{list}
\end{teorema}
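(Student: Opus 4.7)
The plan is to observe that equivalences $(1)\Leftrightarrow(2)\Leftrightarrow(3)\Leftrightarrow(4)$ already follow from previously established results in this section: Proposition 8.10 gives $(1)\Leftrightarrow(2)$, Theorem 8.11 gives $(1)\Leftrightarrow(4)$, and Proposition 8.9(2) gives $(3)\Leftrightarrow(4)$ (all three use the hypothesis $(\star)$). So the only genuinely new content is the link with condition $(5)$, which I would establish by proving $(4)\Rightarrow(5)$ and $(5)\Rightarrow(3)$.

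For $(4)\Rightarrow(5)$, I would invoke Lemma 8.15(3): given open $U,V\subseteq Spec(A)$ with $Spec(A)=U\cup V$, I must produce a clopen partition $C\subseteq U$, $D\subseteq V$. Using the basis $(D(\alpha))_{\alpha\in K(A)}$ and compactness of $Spec_Z(A)$, I would reduce to the case $U=D(\alpha)$, $V=D(\beta)$ with $\alpha,\beta\in K(A)$; then $D(\alpha)\cup D(\beta)=D(\alpha\lor\beta)=Spec(A)$ forces $\alpha\lor\beta=\nabla_A$. Congruence $B$-normality supplies $\gamma,\delta\in B(Con(A))$ with $\alpha\lor\gamma=\beta\lor\delta=\nabla_A$ and $[\gamma,\delta]=\Delta_A$. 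Since $\gamma\land\delta=[\gamma,\delta]=\Delta_A$ in the Boolean algebra $B(Con(A))$, one has $\delta\subseteq\neg\gamma$, hence $\beta\lor\neg\gamma=\nabla_A$. Setting $C=V(\gamma)=D(\neg\gamma)$ and $D=D(\gamma)=V(\neg\gamma)$ gives the required clopen partition with $C\subseteq D(\alpha)$ and $D\subseteq D(\beta)$.

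For $(5)\Rightarrow(3)$, let $x,y\in L(A)$ satisfy $x\lor y=1$, and write $x=\lambda_A(\alpha)$, $y=\lambda_A(\beta)$ with $\alpha,\beta\in K(A)$. By Lemma 3.1(3), $\alpha\lor\beta=\nabla_A$, so $D(\alpha)\cup D(\beta)=Spec(A)$. Strong zero-dimensionality yields clopens $C\subseteq D(\alpha)$, $D\subseteq D(\beta)$ with $C\cup D=Spec(A)$ and $C\cap D=\emptyset$. Here condition $(\star)$ enters crucially via Lemma 8.13: every clopen has the form $V(\gamma)$ for some $\gamma\in B(Con(A))$, so $C=V(\gamma)$ and $D=V(\delta)$ with $\gamma,\delta\in B(Con(A))$. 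From $V(\gamma)\cap V(\alpha)=\emptyset$ I read off $\alpha\lor\gamma=\nabla_A$ (hence $x\lor\lambda_A(\gamma)=1$), and similarly $y\lor\lambda_A(\delta)=1$; from $C\cup D=V([\gamma,\delta])=Spec(A)$ I get $[\gamma,\delta]\subseteq\rho(\Delta_A)$, so by Lemma 3.1(6) $\lambda_A(\gamma)\land\lambda_A(\delta)=\lambda_A([\gamma,\delta])=0$. Since $\lambda_A$ sends $B(Con(A))$ into $B(L(A))$, setting $e=\lambda_A(\gamma)$, $f=\lambda_A(\delta)$ witnesses the $B$-normality of $L(A)$.

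The main obstacle I anticipate is the direction $(5)\Rightarrow(3)$, where one must convert topological separation into purely Boolean-algebraic data inside $L(A)$; the delicate point is passing through $\rho(\Delta_A)$ to get $\lambda_A(\gamma)\land\lambda_A(\delta)=0$ (the algebra need not be semiprime). Both steps depend essentially on the hypothesis $(\star)$: directly in the reduction to compact congruences within Lemma 8.13, and indirectly through Proposition 8.9(2)/Theorem 8.11 for the equivalences $(1)$–$(4)$.
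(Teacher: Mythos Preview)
Your proposal is correct and essentially parallels the paper's proof. The paper likewise takes $(1)\Leftrightarrow(2)\Leftrightarrow(3)\Leftrightarrow(4)$ as known (citing Theorem 5.34 of \cite{GKM}, which underlies your Propositions 8.9, 8.10 and Theorem 8.11) and then supplies only the link with $(5)$. The one small difference is in how the loop is closed: the paper proves $(4)\Rightarrow(5)$ and $(5)\Rightarrow(4)$ entirely inside $Con(A)$ (from $D([\alpha,\beta])=\emptyset$ it gets $[\alpha,\beta]\subseteq\rho(\Delta_A)$, then uses that $\alpha,\beta\in B(Con(A))$ forces $[\alpha,\beta]^n=[\alpha,\beta]$, hence $[\alpha,\beta]=\Delta_A$), whereas you route $(5)\Rightarrow(3)$ through the reticulation via Lemma 3.1(6). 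Both are short and use the same two ingredients---the cover form of strong zero-dimensionality and the description of clopen sets as $V(\gamma)$ with $\gamma\in B(Con(A))$---so neither buys anything the other does not. Note only that your lemma references are off by one: the clopen characterisation is Lemma 8.12 (not 8.13), and the equivalent forms of strong zero-dimensionality are in Lemma 8.14 (not 8.15).
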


\begin{proof}
Since the algebra $A$ verifies the condition $(\star)$, we know that the map $\lambda_A|_{B(Con(A))}: B(Con(A))\rightarrow B(L(A))$ is a Boolean isomorphism (see Corollary 3.7), so one can apply Theorem 5.34 of \cite{GKM}.

The equivalence of the properties (1) - (4) was established in \cite{GKM}, but not the equivalence of (5) with the first four conditions. Then we shall give here a proof for the equivalence of (4) and (5).

$(4)\Rightarrow(5)$ Assume that $A$ is a congruence $B$ - normal algebra. Let $\theta,\chi$ be two congruence of $A$ such that $D(\theta)\bigcup D(\chi) = Spec(A)$, therefore $\theta\lor\chi = \nabla_A$ (by Proposition 3.15 of \cite{GM2}). By hypothesis (4), there exist $\alpha,\beta\in B(Con(A))$ such that $\theta\lor\alpha = \chi\lor\beta = \nabla_A$ and $[\alpha,\beta] = \Delta_A$. Then $ [\theta,\beta] = [\theta,\beta]\lor [\alpha,\beta]  = [\theta\lor\alpha,\beta] = [\nabla_A,\beta] = \beta$, hence $\beta \subseteq \theta$; in a similar way, one can prove that $\neg\beta\subseteq \chi$. Thus $D(\beta)$ and $D(\neg\beta)$ are clopen subsets of $Spec_Z(A)$ (by Lemma 8.12) such that $D(\beta)\subseteq D(\theta)$, $D(\neg\beta)\subseteq D(\chi)$, $D(\beta)\bigcap D(\neg\beta) = \emptyset$ and $D(\beta)\bigcup D(\neg\beta) = Spec(A)$. According to Lemma 8.14(3), $Spec_Z(A)$ is a strongly zero - dimensional space.

$(5)\Rightarrow(4)$ Assume that $Spec_Z(A)$ is a strongly zero - dimensional space. Let $\theta,\chi\in Con(A)$ such that $\theta\lor\chi = \nabla_A$, therefore $D(\theta)\bigcup D(\chi) = Spec(A)$. By Lemmas 8.12 and 8.14(3), there exist $\alpha,\beta\in B(Con(A))$ such that $D(\alpha)\subseteq D(\theta)$, $D(\beta)\subseteq D(\chi)$, $D(\alpha)\bigcup D(\beta) = Spec(A)$ and $D([\alpha,\beta]) = D(\alpha)\bigcap D(\beta) = \emptyset$. By using Lemma 8.13 we get $\alpha\subseteq\theta$ and $\beta\subseteq \chi$. According to Propositions 3.14 and 3.15 of \cite{GM2}, $D(\alpha)\bigcup D(\chi) = Spec(A)$ implies $\alpha\lor\chi = \nabla_A$. Similarly, we have $\theta\lor\beta = \nabla_A$. On the other hand, $D([\alpha,\beta])= \emptyset$ implies $[\alpha,\beta] \subseteq\rho(\Delta_A)$, so there exists an integer $n\geq 1$ such that $[\alpha,\beta]^n = \Delta_A$. Since $\alpha,\beta\in B(Con(A))$, we get $ [\alpha,\beta] =  [\alpha,\beta]^n = \Delta_A$. It follows immediately that $\theta\lor\beta = \chi\lor\alpha = \nabla_A$, so $A$ is a congruence $B$ - normal algebra.

\end{proof}

\begin{lema}
 Consider the following conditions:
\usecounter{nr}
\begin{list}{(\arabic{nr})}{\usecounter{nr}}
\item If $\phi$ and $\psi$ are distinct maximal congruences in $A$, then there exists $\alpha\in B(Con(A))$ such that $\alpha\subseteq\phi$ and $\neg\alpha\subseteq\psi$;
\item If $P$ and $Q$ are distinct maximal ideals in $L(A)$, then there exists $e\in B(L(A))$ such that $e\in P$ and $\neg e \in Q$.

Then $(1)\Rightarrow (2)$; if $A$ is an algebra satisfying the condition $(\star)$, then $(2)\Rightarrow (1)$.
\end{list}
\end{lema}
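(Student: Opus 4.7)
The plan is to exploit the two pieces of machinery established in Section~3: the order-preserving bijection between $Spec(A)$ and $Spec_{Id}(L(A))$ supplied by Proposition~3.4 together with Lemma~3.3, and the fact that $\lambda_A$ restricted to $B(Con(A))$ is always a Boolean morphism into $B(L(A))$, being moreover a Boolean isomorphism under hypothesis~$(\star)$ by Corollary~3.7. Note first that the order-isomorphism $u:Spec(A)\to Spec_{Id}(L(A))$ sends $\phi\mapsto\phi^{\ast}$ and its inverse sends $P\mapsto P_{\ast}$, and since maximal congruences are exactly the maximal elements of $Spec(A)$ (and similarly on the lattice side), this restricts to a bijection between $Max(A)$ and $Max_{Id}(L(A))$.

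For $(1)\Rightarrow(2)$, which does not require $(\star)$: given distinct maximal ideals $P,Q$ of $L(A)$, set $\phi=P_{\ast}$ and $\psi=Q_{\ast}$, which are distinct maximal congruences of $A$. Apply (1) to obtain $\alpha\in B(Con(A))$ with $\alpha\subseteq\phi$ and $\neg\alpha\subseteq\psi$, and put $e=\lambda_A(\alpha)$. Since $\lambda_A|_{B(Con(A))}$ is a Boolean morphism, $e\in B(L(A))$ and $\lambda_A(\neg\alpha)=\neg e$. Applying Lemma~3.3(1) with $I=P$ and $I=Q$ (and recalling $(\phi^{\ast})_{\ast}=\phi$, $(\psi^{\ast})_{\ast}=\psi$ from Lemma~3.3(4)) yields $e\in P$ and $\neg e\in Q$, as required.

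For $(2)\Rightarrow(1)$ under $(\star)$: given distinct $\phi,\psi\in Max(A)$, let $P=\phi^{\ast}$ and $Q=\psi^{\ast}$, which are distinct elements of $Max_{Id}(L(A))$ via the same bijection. Apply (2) to find $e\in B(L(A))$ with $e\in P$ and $\neg e\in Q$. By Corollary~3.7 the map $\lambda_A|_{B(Con(A))}\colon B(Con(A))\to B(L(A))$ is a Boolean isomorphism, so there exists $\alpha\in B(Con(A))$ with $\lambda_A(\alpha)=e$, hence $\lambda_A(\neg\alpha)=\neg e$. Using $\alpha,\neg\alpha\in B(Con(A))\subseteq K(A)$ (Lemma~2.11(3)), Lemma~3.3(1) together with Lemma~3.3(4) now gives $\alpha\subseteq P_{\ast}=\phi$ and $\neg\alpha\subseteq Q_{\ast}=\psi$.

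There is no real obstacle once the dictionary from Section~3 is unpacked; the only points demanding care are (a) verifying that the prime-spectrum homeomorphism restricts to the maximal spectra, which is automatic from the order-theoretic characterization of maximality inside $Spec$, and (b) noticing that $(\star)$ is used only to produce the preimage $\alpha$ of $e$ under $\lambda_A|_{B(Con(A))}$, which is precisely why condition $(\star)$ is needed in the backward direction but not the forward one.
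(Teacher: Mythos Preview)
Your proof is correct and follows essentially the same approach as the paper: use the order-isomorphism $Spec(A)\cong Spec_{Id}(L(A))$ to translate between maximal congruences and maximal ideals, push complemented elements through $\lambda_A$, and invoke Corollary~3.7 (under $(\star)$) for the backward direction. Your argument is in fact slightly cleaner in $(2)\Rightarrow(1)$: where the paper first extracts auxiliary compact congruences $\beta,\gamma$ from the definition of $\phi^{\ast},\psi^{\ast}$ and then compares them to $\alpha$ via $[\alpha,\alpha]^n\subseteq\beta$, you go straight through Lemma~3.3(1) and~(4) to conclude $\alpha\subseteq(\phi^{\ast})_{\ast}=\phi$.
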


\begin{proof}
Assume that the condition (1) holds. Let $P, Q$ be two distinct maximal ideals of $L(A)$, then $P = \phi^{\ast}$ and $Q = \psi^{\ast}$ for some distinct maximal congruences $\phi$ and $\psi$ of $A$. Then  there exists $\alpha\in B(Con(A))$ such that $\alpha\subseteq\phi$ and $\neg\alpha\subseteq\psi$, hence $\lambda_A(\alpha)\in B(L(A))$, $\lambda_A(\alpha)\in \phi^{\ast} = P$ and $\neg\lambda_A(\alpha) = \lambda_A(\neg\alpha)\in \psi^{\ast} = Q$, therefore the property $(2)$ holds.

Conversely, assume that $A$  satisfies the condition $(\star)$ and (2) fulfills. Let $\phi$ and $\psi$ be two distinct maximal congruences in $A$, hence $\phi^{\ast}$ and $\psi^{\ast}$ are distinct maximal ideals in $L(A)$, therefore there exists $e\in B(L(A))$ such that $e\in \phi^{\ast}$ and $\neg e \in \psi^{\ast}$. Thus there exist $\beta,\gamma\in K(A)$ such that $\lambda_A(\beta) = e$, $\lambda_A(\gamma) = \neg e$, $\beta\subseteq\phi$ and $\gamma\subseteq\psi$. Since $A$  satisfies the condition $(\star)$, there exists $\alpha\in B(Con(A))$ such that $e = \lambda_A(\alpha)$ (cf. Corollary 3.7). From $\lambda_A(\alpha) = e = \lambda_A(\beta)$ and $\alpha\in B(Con(A))$ we infer that there exists an integer $n\geq 0$ such that $\alpha = [\alpha,\alpha]^n\subseteq\beta\subseteq\phi$. Similarly, from $\lambda_A(\neg\alpha) = \neg e = \lambda_A(\gamma)$ we obtain $\neg \alpha\subseteq\psi$.

\end{proof}

\begin{corolar}
 Let $R$ be a commutative ring such that the reticulations $L(A)$ and $L(R)$ of $A$ and $R$ are identical. If the algebra $A$ satisfies the condition $(\star)$, then the following are equivalent:
\usecounter{nr}
\begin{list}{(\arabic{nr})}{\usecounter{nr}}
\item If $\phi$ and $\psi$ are distinct maximal congruences in $A$, then there exists $\alpha\in B(Con(A))$ such that $\alpha\subseteq\phi$ and $\neg\alpha\subseteq\psi$;
\item If $P$ and $Q$ are distinct maximal ideals in $R$, then there exists an idempotent $R$ such that $e\in P$ and $\neg e \in Q$.
\end{list}
\end{corolar}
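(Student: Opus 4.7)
The plan is to reduce Corollary 8.17 to Lemma 8.16 applied separately to the algebra $A$ and to the commutative ring $R$, using the identification $L(A)=L(R)$ as the bridge between the two sides.

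First, I would apply Lemma 8.16 directly to the algebra $A$. The algebra satisfies the condition $(\star)$ by hypothesis, so the lemma gives the equivalence between (1) and the statement
\begin{center}
$(1')$ for any distinct $P,Q\in Max_{Id}(L(A))$ there exists $e\in B(L(A))$ with $e\in P$ and $\neg e\in Q$.
\end{center}

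Next, I would apply the same lemma to the ring $R$, viewed as an algebra in the semidegenerate congruence-modular variety of commutative rings. Here the commutator operation on $Con(R)$ is the ordinary product of ideals, which is associative, so $R$ automatically satisfies $(\star)$ (see the remark after the statement of $(\star)$ in Section~2). Hence Lemma 8.16 is available for $R$ and yields the equivalence between (2) and
\begin{center}
$(2')$ for any distinct $P,Q\in Max_{Id}(L(R))$ there exists $e\in B(L(R))$ with $e\in P$ and $\neg e\in Q$.
\end{center}

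Finally, I would invoke the assumption $L(A)=L(R)$ to identify $(1')$ and $(2')$ literally, which closes the loop $(1)\Leftrightarrow (1')=(2')\Leftrightarrow (2)$.

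The only point that requires care is verifying that Lemma 8.16 really does apply to $R$ in the form used, i.e.\ that $R$ meets all the standing assumptions of Section~2 (membership in a semidegenerate congruence-modular variety, closure of $K(R)$ under the commutator, validity of $(\star)$). These are well-known for commutative rings: the variety is semidegenerate and congruence-modular, compact congruences correspond to finitely generated ideals and are closed under ideal products, and associativity of the product gives $(\star)$. Once this is in place, the argument is a one-line identification through the common reticulation; no further work is required.
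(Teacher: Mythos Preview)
Your proposal is correct and follows exactly the paper's approach: the paper's proof reads simply ``By applying the previous lemma twice,'' which is precisely your strategy of invoking Lemma~8.17 once for $A$ and once for $R$, then identifying the intermediate lattice-level conditions via $L(A)=L(R)$. Your additional remarks verifying that $R$ satisfies the standing hypotheses (in particular $(\star)$, via associativity of the ideal product) make explicit what the paper leaves implicit; the only further identification used tacitly is that idempotents of $R$ correspond to elements of $B(Con(R))$, which the paper records elsewhere (Remark~10.1).
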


\begin{proof}
By  applying the previous lemma twice.
\end{proof}

Theorem 4.3 of \cite{Aghajani} collects many old and new properties that characterize the clean rings. The following result extends to algebras the main part of this characterization theorem of clean rings.

\begin{teorema}
 If $A\in \mathcal{V}$ satisfies $(\star)$ then the following are equivalent:
\usecounter{nr}
\begin{list}{(\arabic{nr})}{\usecounter{nr}}
\item $A$ is a congruence $B$ - normal algebra;
\item If $\phi$ and $\psi$ are distinct maximal congruences in $A$ then there exists $\alpha\in B(Con(A))$ such that $\alpha\subseteq\phi$ and $\neg\alpha\subseteq\psi$;
\item $A$ is congruence normal and $Max_Z(A)$ is a zero - dimensional space;
\item $A$ is congruence normal and $Max_Z(A)$ is a Boolean space;
\item The family $(D(\alpha)\bigcap Max(A))_{\alpha\in B(Con(A))}$ is a basis of open sets for $Max_Z(A)$;
\item The map $s_A|_{Max(A)}:Max_Z(A)\rightarrow Sp(A)$ is a homeomorphism;
\item $A$ is congruence normal and $Max_Z(A)$ is a totally disconnected space;
\item If $\phi$ and $\psi$ are distinct maximal congruences in $A$ then there exists $\alpha\in B(Con(A))$ such that $\alpha\subseteq O(\phi)$ and $\neg\alpha\subseteq O(\psi)$;
\end{list}
\end{teorema}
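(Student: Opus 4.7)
The overall strategy is to transfer the corresponding characterization theorem for clean rings (Theorem 4.3 of \cite{Aghajani}) to the algebraic setting by means of the reticulation. Since $A$ satisfies $(\star)$, by Corollary 3.7 we have the Boolean isomorphism $\lambda_A|_{B(Con(A))}:B(Con(A))\to B(L(A))$, and by Hochster's theorem we may fix a commutative ring $R$ with $L(R)=L(A)$, so that $B(R)\cong B(Id(R))\cong B(L(A))\cong B(Con(A))$. By Lemma 6.11 the order-isomorphisms $s,t$ between $Spec(A)$ and $Spec(R)$ are homeomorphisms for the Zariski, patch and flat topologies, and by Remark 6.12 they restrict to homeomorphisms between $Max_Z(A)$ and $Max_Z(R)$. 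Together with Proposition 8.8, which says that under $(\star)$, $A$ is congruence $B$-normal iff $L(A)$ is $B$-normal iff $R$ is clean, these tools let us import every clause of Aghajani's theorem.

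More concretely, $(1)\Leftrightarrow(2)$ follows by combining Proposition 8.8 with Lemma 8.17, since the corresponding separation property for distinct maximal ideals of $R$ (resp.\ $L(A)$) characterizes cleanness. The equivalence $(1)\Leftrightarrow(3)\Leftrightarrow(4)\Leftrightarrow(7)$ follows from Proposition 8.6 together with Corollary 8.16, which collapses zero-dimensional, strongly zero-dimensional, normal and Boolean into one condition for $Max_Z(A)$; and from Theorem 8.11, which in particular gives $(1)\Leftrightarrow(5)$-type information via strong zero-dimensionality of $Spec_Z(A)$ whose intersection with $Max(A)$ is generated by the clopens $D(\alpha)\bigcap Max(A)$ for $\alpha\in B(Con(A))$ (Lemma 8.12). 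For $(5)$ I would argue that the sets $D(\alpha)\bigcap Max(A)$ with $\alpha\in B(Con(A))$ form a basis exactly when $Max_Z(A)$ is zero-dimensional with clopens detected by Booleans, which under congruence normality (Proposition 8.6) is equivalent to $(3)$. For $(6)$, by Proposition 6.15 the map $s_A:Spec_Z(A)\to Sp(A)$ is surjective and continuous; its restriction to $Max(A)$ is surjective with fibres that are singletons precisely when each maximal congruence is determined by its Boolean part, which is $(2)$, and since both spaces are compact and $Sp(A)$ is Hausdorff, Lemma 7.6 promotes the continuous bijection to a homeomorphism. Finally $(2)\Leftrightarrow(8)$ is almost free: for $\alpha\in B(Con(A))$ we have $\alpha^{\perp}=\neg\alpha$, and if $\alpha\subseteq\phi$ with $\phi$ maximal then $\neg\alpha\not\subseteq\phi$ (else $\nabla_A\subseteq\phi$), so $\alpha\subseteq O(\phi)$ by Lemma 5.6(1); the reverse direction uses $O(\phi)\subseteq\phi$.

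The main obstacle I anticipate is the bookkeeping of $(6)$, specifically checking injectivity of $s_A|_{Max(A)}$ under hypothesis $(1)$: given two distinct maximal congruences $\phi\neq\psi$, one must produce a Boolean congruence separating them, which is precisely what $(2)$ delivers, so the equivalence actually forces us to thread $(2)$ through $(6)$ rather than deduce $(6)$ directly from $(1)$. The second delicate point is that condition $(\star)$ is needed in two places simultaneously: once via Corollary 3.7 to identify Boolean centers, and once via Lemma 2.10 inside Lemma 8.12 to lift equalities modulo $\rho(\Delta_A)$ to genuine equalities after passing to a sufficiently high commutator power; without $(\star)$ the translation of $V(\theta)\cup V(\chi)=Spec(A)$ into a Boolean decomposition breaks down. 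All the remaining implications are routine once Propositions 8.6, 8.8, 6.15, Corollaries 3.7, 8.16, 8.18 and Theorem 8.11 are in hand.
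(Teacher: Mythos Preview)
Your proposal is correct and follows essentially the same approach as the paper: both proofs hinge on the reticulation transfer via Hochster's theorem, the Boolean isomorphism of Corollary 3.7, and then import Aghajani's characterization of clean rings clause by clause, with the same auxiliary lemmas (6.11, 6.12, 7.6, 8.12, 6.15) doing the work. The only substantive differences are in the routing of implications: the paper proves $(5)\Rightarrow(2)$ directly (given distinct $\phi,\psi\in Max(A)$, apply the basis hypothesis to the open neighbourhood $D(\psi)\cap Max(A)$ of $\phi$) rather than going through congruence normality as you suggest, and for $(2)\Rightarrow(8)$ the paper uses $O(\phi)=Ker(\phi)$ for maximal $\phi$ (Lemma 4.8) together with Lemma 4.12, whereas your route via $\alpha^{\perp}=\neg\alpha$ and Lemma 5.6(1) is an equally valid shortcut.
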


\begin{proof}

$(1)\Leftrightarrow(2)$
By Hochster theorem \cite{Hochster}, there exists a commutative ring $R$ such that  the reticulations $L(A)$ and $L(R)$ of $A$ and $R$ are identical. By using Theorem 8.16 and Corollary 8.18 as well as Theorem 4.3 of \cite{Aghajani} the following properties are equivalent:

${\bullet}$ $A$ is a congruence $B$ - normal algebra;

${\bullet}$ $L(A)$ is a $B$ - normal lattice;

${\bullet}$ $R$ is a clean ring;

${\bullet}$ If $P$ and $Q$ are distinct maximal ideals in $R$, then there exists an idempotent element $e$ of $R$ such that $e\in P$ and $\neg e \in Q$;

${\bullet}$ If $\phi$ and $\psi$ are distinct maximal congruences in $A$, then there exists $\alpha\in B(Con(A))$ such that $\alpha\subseteq\phi$ and $\neg\alpha\subseteq\psi$.

$(1)\Leftrightarrow(3)$
Let $R$ be a commutative ring such that the reticulations $L(A)$ and $L(R)$ are identical (by Hochster theorem \cite{Hochster}) and $s:Spec(A)\rightarrow Spec(R)$ the order - isomorphism (w.r.t. inclusion) constructed in Section 6. We know that $s:Spec_Z(A)\rightarrow Spec_Z(R)$ and $s = s|_{Max(A)}:Max(A)\rightarrow Max(R)$ are homeomorphisms (by Lemma 6.11 and Remark 6.12). Thus $Max_Z(A)$ is zero - dimensional if and only if $Max_Z(R)$ is zero - dimensional. On the other hand, by applying Proposition 8.3 twice, it follows that $A$ is congruence normal if and only if $R$ is a Gelfand ring. Recall from Proposition 3.9 of \cite{Johnstone}, p. 201 or Theorem 4.3 of \cite{Aghajani} that the commutative ring $R$ is a clean ring if and only if $R$ is a Gelfand ring and $Max_Z(R)$ is zero - dimensional.

According to the previous remarks the following properties are equivalent:

${\bullet}$ $A$ is a congruence $B$ - normal algebra;

${\bullet}$ $R$ is a clean ring;

${\bullet}$ $R$ is a Gelfand ring and $Max_Z(R)$ is zero - dimensional;

${\bullet}$ $A$ is a congruence normal algebra and $Max_Z(A)$ is zero - dimensional.

$(3)\Leftrightarrow(4)$ By Corollary 8.15.

$(4)\Rightarrow(5)$
 By hypothesis, $Max_Z(A)$ is a Boolean space, so its family of clopen sets is a basis of open sets. According to Lemma 8.12, the family $(D(\alpha)\bigcap
Max(A))_{\alpha\in B(Con(A))}$ is exactly this basis of open sets for $Max_Z(A)$.

$(5)\Rightarrow(2)$
Let $\phi,\psi$ be two distinct maximal congruences of $A$. Thus $U = Spec(A) - \{\psi\} = Spec(A) - V(\psi) = D(\psi)$ is an open subset of $Spec_Z(A)$, hence $U\bigcap Max(A)$ is an open subset of $Max(A)$ that contains $\phi$. By the hypothesis (5), there exists $\alpha\in B(Con(A))$ such that $\phi\in D(\alpha)\bigcap Max(A)\subseteq U\bigcap Max(A)$. It is clear that $\alpha\not\subseteq\phi$, so $\neg\alpha\subseteq\phi$, because $\phi$ is a prime congruence. Since $\psi\notin U$ we get $\psi\notin D(\alpha)$, so $\alpha\subseteq\psi$. Then the property $(2)$ fulfills.

$(1)\Rightarrow(6)$
Assume that $A$ is a congruence $B$ - normal algebra. By Proposition 6.15, the map $s_A: Spec_Z(A)\rightarrow Sp(A)$ is surjective and continuous, so its restriction $s_A|_{Max(A)}: Max_Z(A)\rightarrow Sp(A)$ is also continuous.

Firstly, we shall prove that $s_A|_{Max(A)}: Max_Z(A)\rightarrow Sp(A)$ is injective. Let us consider two distinct points $\phi,\psi$ of $Max(A)$. We know that the hypothesis (1) implies the property (2), so there exists $\alpha\in B(Con(A))$ such that $\alpha\subseteq\phi$ and $\neg\alpha\subseteq\psi$. Thus $\alpha\subseteq s_A(\phi)$ and $\alpha\not\subseteq s_A(\psi)$, i.e. $s_A(\phi)\neq s_A(\psi)$.

Secondly, we shall prove that $s_A|_{Max(A)}: Max_Z(A)\rightarrow Sp(A)$ is surjective. Assume that $\psi$ is an arbitrary point of $Sp(A)$, so $\psi = s_A(\phi)$, for some $\phi\in Spec(A)$ (because $s_A: Spec_Z(A)\rightarrow Sp(A)$ is surjective). The algebra $A$ is congruence normal, so there exists a unique $\gamma(\phi)\in Max(A)$ such that $\phi\subseteq\gamma(\phi)$ (cf. Proposition 8.6(4)). Thus $\psi = s_A(\phi)\subseteq s_A(\gamma(\phi))$, so $\psi = s_A(\gamma(\phi))$, because $\psi$ and $s_A(\gamma(\phi))$ are max - regular congruences of $A$. It follows that $s_A|_{Max(A)}$ is surjective.

We know that $Max(A)$ and $Sp(A)$ are  Boolean spaces (by (4) and Proposition 6.15), therefore, by using Lemma 7.6, it follows that the map $s_A|_{Max(A)}: Max_Z(A)\rightarrow Sp(A)$ is a homeomorphism.

$(6)\Rightarrow(4)$
Assume that the map $s_A|_{Max(A)}:Max_Z(A)\rightarrow Sp(A)$ is a homeomorphism. We know from Proposition 6.5 that $s_A: Spec_Z(A)\rightarrow Sp(A)$ is a surjective continuous map. Thus $(s_A|_{Max(A)})^{-1}\circ s_A:Spec_Z(A)\rightarrow Max_Z(A)$ is a continuous retraction of the inclusion $Max_Z(A)\subseteq Spec_Z(A)$, so $A$ is congruence normal (by Proposition 8.8(3)). But $Sp(A)$ is a Boolean space (by Proposition 6.15), so the hypothesis (6) implies that $Max_Z(A)$ is also a Boolean space.

$(4)\Leftrightarrow(7)$
We apply Theorem 4.2 of \cite{Johnstone}, p.42 to the topological space $Max_Z(A)$.

$(2)\Rightarrow(8)$
Let $\phi\in Max(A)$ and $\alpha\in B(Con(A))$ such that $\alpha\subseteq \phi$. Thus $\neg \alpha\not\subseteq\phi$, so we get $\phi\lor\alpha^{\perp} = \phi\lor\neg \alpha = \nabla_A$. According to Lemma 4.12 we obtain $\alpha\subseteq Ker(\phi)$. Since $\phi\in Max(A)$, by Lemma 4.8 we have $\O(\phi) = Ker(\phi)$, hence $\alpha\subseteq O(\phi)$. We have proven that $\alpha\in B(Con(A))$ and $\alpha\subseteq \phi$ implies $\alpha\subseteq O(\phi)$.

Assume that $\phi$ and $\psi$ are distinct maximal congruences in $A$. According to $(2)$, there exists $\alpha\in B(Con(A))$ such that $\alpha\subseteq\phi$ and $\neg\alpha\subseteq\psi$. By using the previous observation, it follows that  $\alpha\subseteq O(\phi)$ and $\neg\alpha\subseteq O(\psi)$, so (2) implies (8).

$(8)\Rightarrow(2)$
We know that $O(\theta)\subseteq\theta$, for all congruence $\theta$ of $A$, so the desired implication is immediate.

\end{proof}

\section{Mp-algebras}

\hspace{0.5cm} Let us consider a semidegenerate congruence - modular variety $\mathcal{V}$ and $A$ an algebra in $\mathcal{V}$. The minimal prime spectrum of $A$ is the set $Min(A)$ of minimal prime congruences in $A$. For any $\phi\in Spec(A)$ there exists a minimal prime congruence $\psi$ such that $\psi\subseteq\phi$. If $R$ is a commutative ring that there exists a bijection between the minimal prime congruences and minimal prime ideals of $R$, so we shall use the notation $Min(R)$ for each of these two (identical) sets. If $L$ is a bounded distributive lattice then $Min_{Id}(L)$ will denote the set of minimal prime ideals of $L$.

In what follows we shall denote:

$\bullet$ $Min_Z(A)$ is $Min(A)$ endowed with the Zariski topology and $Min_F(A)$ is $Min(A)$ endowed with the flat topology;

$\bullet$ If $L$ is a bounded distributive lattice then, then $Min_{Id,Z}(L)$ is $Min_{Id}(L)$ endowed with the Stone topology and $Min_{Id,F}(L)$ is $Min_{Id}(L)$ endowed with the flat topology.

\begin{lema}
 Assume that $A$ is an algebra of $\mathcal{V}$ such that $K(A)$ is closed under the commutator operation. Then the following properties hold:
\usecounter{nr}
\begin{list}{(\arabic{nr})}{\usecounter{nr}}
\item $Min_Z(A)$ and $Min_{Id,Z}(L(A))$ are homeomorphic;
\item $Min_F(A)$ and $Min_{Id,F}(L(A))$ are homeomorphic.
\end{list}
\end{lema}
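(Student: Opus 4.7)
The strategy is to leverage the homeomorphisms $u : \mathrm{Spec}(A) \to \mathrm{Spec}_{Id}(L(A))$ and $v : \mathrm{Spec}_{Id}(L(A)) \to \mathrm{Spec}(A)$ already established in Proposition 3.4 (for the Zariski topologies) and in Lemma 6.4 (for the flat and patch topologies). The plan is to show that these two mutually inverse maps restrict to bijections between $\mathrm{Min}(A)$ and $\mathrm{Min}_{Id}(L(A))$, and then to exploit the fact that the topologies on these minimal spectra are the subspace topologies inherited from the ambient spectra.

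First, I would recall from Lemma 3.3, parts (2), (4), (5), that $(\theta^{\ast})_{\ast} = \theta$ for $\theta \in \mathrm{Spec}(A)$ and $(I_{\ast})^{\ast} = I$ for $I \in \mathrm{Spec}_{Id}(L(A))$, and that both $(\cdot)^{\ast}$ and $(\cdot)_{\ast}$ preserve primality. Since both maps are clearly order-preserving and mutually inverse, they are in fact order-isomorphisms between the posets $(\mathrm{Spec}(A), \subseteq)$ and $(\mathrm{Spec}_{Id}(L(A)), \subseteq)$. Order-isomorphisms carry minimal elements to minimal elements, so $u$ restricts to a bijection $u|_{\mathrm{Min}(A)} : \mathrm{Min}(A) \to \mathrm{Min}_{Id}(L(A))$ with inverse $v|_{\mathrm{Min}_{Id}(L(A))}$.

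Next, by the conventions fixed at the start of Section 9, $\mathrm{Min}_Z(A)$ (respectively $\mathrm{Min}_F(A)$) denotes $\mathrm{Min}(A)$ equipped with the subspace topology from $\mathrm{Spec}_Z(A)$ (respectively $\mathrm{Spec}_F(A)$), and analogously for the lattice side. Since the restriction of a homeomorphism between topological spaces to corresponding subspaces is again a homeomorphism, Proposition 3.4 immediately yields (1), while Lemma 6.4 immediately yields (2).

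There is essentially no obstacle here: the whole argument is a direct transfer along an order-isomorphism that is already known to be topological in both the Zariski and flat settings. The only mild point worth spelling out is the observation that order-isomorphisms between prime spectra preserve minimality, so that $u$ and $v$ genuinely restrict to the minimal spectra; everything else is automatic.
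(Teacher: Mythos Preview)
Your proposal is correct and follows essentially the same approach as the paper: restrict the order-isomorphisms $u$ and $v$ (which are homeomorphisms by Proposition 3.4 and Lemma 6.4) to the minimal spectra, using that order-isomorphisms preserve minimality. The paper's proof is more terse but identical in substance.
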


\begin{proof}
By Proposition 3.4, the bijective map $u|_{Min(A)}:Min(A)\rightarrow Min(L(A))$ is a Zariski homeomorphism. According to Lemma 6.4, this map is also a flat homeomorphism.
\end{proof}

\begin{corolar}
$Min_Z(A)$ is a zero - dimensional Hausdorff space and $Min_F(A)$ is a compact $T1$ space.
\end{corolar}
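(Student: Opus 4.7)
The plan is to reduce both assertions to known facts about the minimal prime spectrum of a bounded distributive lattice via Lemma 9.1. Since the lemma provides homeomorphisms $Min_Z(A) \cong Min_{Id,Z}(L(A))$ and $Min_F(A) \cong Min_{Id,F}(L(A))$, it suffices to verify that $Min_{Id,Z}(L(A))$ is zero-dimensional Hausdorff and $Min_{Id,F}(L(A))$ is compact $T_1$. These are classical facts about distributive lattices, but since the proofs are short I would include them for completeness.

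For the Zariski side I would exploit the standard characterization: a prime ideal $P$ of a bounded distributive lattice $L$ is minimal iff $P = \{a \in L : \mathrm{Ann}(a) \not\subseteq P\}$. Writing $L = L(A)$, this yields for every $a \in L$ the identity
\[
V_{Id}(a) \cap Min_{Id}(L) \;=\; \bigcup_{b \wedge a = 0} D_{Id}(b) \cap Min_{Id}(L),
\]
which is open in $Min_{Id,Z}(L)$; consequently each basic open $D_{Id}(a) \cap Min_{Id}(L)$ is in fact clopen, so $Min_{Id,Z}(L)$ is zero-dimensional. To separate two distinct minimal primes $P, Q$, I would pick $a \in P \setminus Q$ and use the minimality of $P$ to find $b \notin P$ with $a \wedge b = 0$; the clopens $D_{Id}(b) \cap Min_{Id}(L)$ and $D_{Id}(a) \cap Min_{Id}(L)$ contain $P$ and $Q$ respectively and are disjoint because their intersection is $D_{Id}(a \wedge b) \cap Min_{Id}(L) = \emptyset$. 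Hausdorffness follows, completing the first claim.

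For the flat side, the $T_1$ property is immediate from (the distributive-lattice analogue of) Proposition 6.7: the flat closure of $\{P\}$ inside $Spec_{Id,F}(L)$ is $\Lambda(P) = \{Q \in Spec_{Id}(L) : Q \subseteq P\}$, and when $P$ is minimal $\Lambda(P) \cap Min_{Id}(L) = \{P\}$, so every singleton is closed in $Min_{Id,F}(L)$. The main obstacle is compactness of $Min_{Id,F}(L)$, which I would handle via the transfer mechanism already used in Section 6: by Hochster's theorem fix a commutative ring $R$ with $L(A) \cong L(R)$, so by Lemmas 6.11 and 9.1 we obtain flat homeomorphisms $Min_F(A) \cong Min_{Id,F}(L(A)) \cong Min_{Id,F}(L(R)) \cong Min_F(R)$, and cite the classical Henriksen--Jerison/Tarizadeh result that $Min_F(R)$ is compact for every commutative ring $R$. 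Alternatively, one can give a direct lattice-theoretic argument by observing that $Min_{Id,F}(L)$ is an intersection of flat-closed sets of the form $V_{Id}(a) \cup V_{Id}(\mathrm{Ann}(a))^c$ (expressing the minimality condition), and then invoking compactness of $Spec_{Id,F}(L)$; but the ring transfer is the cleanest route and fits the philosophy of this paper.
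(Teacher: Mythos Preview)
Your approach is essentially the paper's: reduce via Lemma 9.1 to the corresponding statements about $Min_{Id,Z}(L(A))$ and $Min_{Id,F}(L(A))$ for the bounded distributive lattice $L(A)$. The only difference is that the paper simply cites Speed \cite{Speed} for both lattice facts, whereas you unpack the Zariski claim directly and route the flat compactness through Hochster and a ring-theoretic result; both are valid, though your ``alternative'' direct lattice argument for flat compactness is too sketchy to stand on its own and is best omitted in favour of either the Speed citation or the ring transfer.
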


\begin{proof}
We know from \cite{Speed} that $Min_{Id,Z}(L(A))$ is a zero - dimensional Hausdorff space and $Min_{Id,F}(L(A))$ is a compact $T1$ space.

\end{proof}

\begin{propozitie}\cite{GM3}
 If $\phi\in Spec(A)$ then the following are equivalent:
\usecounter{nr}
\begin{list}{(\arabic{nr})}{\usecounter{nr}}
\item $\phi$ is a minimal prime congruence of $A$;
\item For all $\alpha\in K(A)$, $\alpha\subseteq \phi$ if and only if $\alpha\rightarrow \rho(\Delta_A)\not\subseteq\phi$.
\end{list}
\end{propozitie}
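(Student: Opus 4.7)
The natural approach is to transfer the result from bounded distributive lattices using the reticulation. By Proposition 3.4, the maps $(\cdot)^{\ast}$ and $(\cdot)_{\ast}$ give mutually inverse, order-preserving homeomorphisms $Spec_Z(A) \cong Spec_{Id,Z}(L(A))$; in particular, $\phi \in Spec(A)$ is a minimal prime congruence of $A$ if and only if $\phi^{\ast}$ is a minimal prime ideal of $L(A)$. The classical Speed-type characterization of minimal primes in a bounded distributive lattice states that a prime ideal $P$ of $L$ is minimal iff for each $x \in P$, $Ann(x) \not\subseteq P$. Combined with the translation tools already available in the paper, namely Lemma 3.3(6), $\alpha \subseteq \phi \Leftrightarrow \lambda_A(\alpha) \in \phi^{\ast}$, and Lemma 3.8, $Ann(\lambda_A(\alpha)) \subseteq \phi^{\ast} \Leftrightarrow \alpha \rightarrow \rho(\Delta_A) \subseteq \phi$, this dictionary yields a short proof.

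For the implication $(1)\Rightarrow(2)$, I split the biconditional in (2) into two halves. The forward direction, $\alpha \subseteq \phi \Rightarrow \alpha \rightarrow \rho(\Delta_A) \not\subseteq \phi$, is exactly the Speed characterization applied to $x = \lambda_A(\alpha) \in \phi^{\ast}$, then rewritten via the two translation lemmas; this is where the minimality of $\phi$ enters. The reverse direction, $\alpha \rightarrow \rho(\Delta_A) \not\subseteq \phi \Rightarrow \alpha \subseteq \phi$, does not require minimality at all: pick a compact $\beta \subseteq \alpha \rightarrow \rho(\Delta_A)$ with $\beta \not\subseteq \phi$; residuation gives $[\alpha,\beta] \subseteq \rho(\Delta_A) \subseteq \phi$ (the last inclusion because every prime congruence is radical, hence contains $\rho(\Delta_A)$), and primality of $\phi$ then forces $\alpha \subseteq \phi$.

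For $(2)\Rightarrow(1)$, I would argue directly inside $A$, without passing through the reticulation. Suppose $\psi \in Spec(A)$ satisfies $\psi \subseteq \phi$. Given any compact $\alpha \subseteq \phi$, hypothesis (2) delivers some compact $\beta \subseteq \alpha \rightarrow \rho(\Delta_A)$ with $\beta \not\subseteq \phi$, whence $\beta \not\subseteq \psi$. By residuation and $\rho(\Delta_A) \subseteq \psi$ we obtain $[\alpha,\beta] \subseteq \psi$, and primality of $\psi$ forces $\alpha \subseteq \psi$. Since $\phi = \bigvee\{\alpha \in K(A) \mid \alpha \subseteq \phi\}$, this yields $\phi \subseteq \psi$ and therefore $\phi = \psi$, proving that $\phi$ is minimal.

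The argument is essentially routine once the reticulation dictionary is in hand; the only external ingredient is the classical lattice-theoretic characterization of minimal primes, which can be quoted from standard references without further work. There is no serious obstacle to overcome, and no use of the condition $(\star)$ is required, since Lemma 3.8 is already stated unconditionally in the excerpt.
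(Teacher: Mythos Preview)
Your proof is correct. Note, however, that the paper does not actually supply a proof of this proposition: it is stated with the citation \cite{GM3} (a paper listed as ``in preparation'') and no argument is given in the present text. So there is no in-paper proof to compare against.

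That said, your approach is entirely in keeping with the methodology of the paper and uses precisely the tools the authors have set up: the order-isomorphism $Spec(A)\cong Spec_{Id}(L(A))$ from Proposition 3.4, the translation Lemma 3.3(6), and Lemma 3.8 relating $Ann(\lambda_A(\alpha))\subseteq\phi^{\ast}$ to $\alpha\rightarrow\rho(\Delta_A)\subseteq\phi$. The only external ingredient, Speed's characterization of minimal prime ideals in a bounded distributive lattice, is indeed standard and is exactly the sort of lattice-theoretic fact the reticulation is designed to import. Your direct handling of $(2)\Rightarrow(1)$ inside $A$ (using $\rho(\Delta_A)\subseteq\psi$ for every prime $\psi$, residuation, and primality) is clean and avoids a second pass through the reticulation; this is a small but pleasant economy. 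The observation that $(\star)$ is nowhere needed is also correct, since Lemma 3.8 carries no such hypothesis.
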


\begin{corolar}\cite{GM3}
 If $A$ is a semiprime algebra and $\phi\in Spec(A)$ then the following are equivalent:
\usecounter{nr}
\begin{list}{(\arabic{nr})}{\usecounter{nr}}
\item $\phi$ is a minimal prime congruence of $A$;
\item For all $\alpha\in K(A)$, $\alpha\subseteq \phi$ if and only if $\alpha^{\perp}\not\subseteq\phi$.
\end{list}
\end{corolar}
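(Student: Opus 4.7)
The plan is to derive this corollary directly from Proposition 9.3 by specializing the residuation term $\alpha \rightarrow \rho(\Delta_A)$ in the semiprime case. Recall from Section 2 that $A$ is semiprime precisely when $\rho(\Delta_A) = \Delta_A$, and from Remark 2.3 that the annihilator $\alpha^{\perp}$ is defined as $\alpha \rightarrow \Delta_A$. So the first step is simply the observation that, under the semiprimeness hypothesis,
\[
\alpha \rightarrow \rho(\Delta_A) \;=\; \alpha \rightarrow \Delta_A \;=\; \alpha^{\perp}
\]
for every $\alpha \in K(A)$.

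With this identification in hand, the statement of condition (2) in the corollary coincides verbatim with condition (2) of Proposition 9.3 applied to $A$. Hence the equivalence (1) $\Leftrightarrow$ (2) of Proposition 9.3, which holds for an arbitrary algebra $A \in \mathcal{V}$ with $K(A)$ closed under the commutator, specializes to the desired equivalence in the semiprime setting.

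There is no real obstacle here; the corollary is a one-line specialization of the preceding proposition. The only thing worth recording is that the identity $\alpha \rightarrow \rho(\Delta_A) = \alpha^{\perp}$ uses nothing beyond the definition of semiprimeness and the definitions of $\rightarrow$ and $(\cdot)^{\perp}$ on $Con(A)$ recalled in Remark 2.3.
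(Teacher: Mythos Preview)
Your proposal is correct and is exactly the intended derivation: the paper does not supply its own proof here (the corollary is simply cited from \cite{GM3}), and the obvious route is precisely the specialization you give, using $\rho(\Delta_A)=\Delta_A$ in the semiprime case so that $\alpha\rightarrow\rho(\Delta_A)=\alpha^{\perp}$ and Proposition~9.3 applies verbatim.
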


\begin{propozitie}
\cite{GM3} If $A$ is a semiprime algebra then the following are equivalent:
\usecounter{nr}
\begin{list}{(\arabic{nr})}{\usecounter{nr}}
\item $Min_Z(A) = Min_F(A)$;
\item $Min_Z(A)$ is a compact space;
\item $Min_Z(A)$ is a Boolean space;
\item For any $\alpha\in K(A)$ there exists $\beta\in K(A)$ such that $[\alpha,\beta] = \Delta_A$ and $(\alpha\lor\beta)^{\perp} = \Delta_A$.
\end{list}
\end{propozitie}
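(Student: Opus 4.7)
The plan is to reduce the proposition to the analogous characterization of minimal prime ideal spectra in bounded distributive lattices, transferring along the reticulation functor. By Lemma 9.1 the bijection $u|_{Min(A)}$ is simultaneously a homeomorphism $Min_Z(A)\to Min_{Id,Z}(L(A))$ and $Min_F(A)\to Min_{Id,F}(L(A))$. Therefore each of the topological conditions (1), (2), (3) holds for $A$ if and only if it holds for the lattice $L(A)$. Moreover, $Min_Z(A)$ is always a zero-dimensional Hausdorff space (Corollary 9.2), so the equivalences $(1)\Leftrightarrow(2)\Leftrightarrow(3)$ are immediate from the general fact that a zero-dimensional Hausdorff space is Boolean precisely when it is compact, and that for any spectral space $X$, $X_Z=X_F$ is equivalent to the compactness of $X_F$ when $X_Z$ is Hausdorff.

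Next, I would translate condition (4) to $L(A)$. Since $A$ is semiprime, Lemma 3.1(7) gives $[\alpha,\beta]=\Delta_A$ iff $\lambda_A(\alpha)\wedge\lambda_A(\beta)=\lambda_A([\alpha,\beta])=0$. Setting $\gamma=\alpha\vee\beta\in K(A)$, by Lemma 3.3(7) we have $\gamma^{\ast}=(\lambda_A(\alpha)\vee\lambda_A(\beta)]$, hence $Ann(\gamma^{\ast})=Ann(\lambda_A(\alpha)\vee\lambda_A(\beta))$. Proposition 3.9 (in its semiprime form) yields $Ann(\gamma^{\ast})=(\gamma^{\perp})^{\ast}$, and under semiprimeness $\gamma^{\perp}=\Delta_A$ iff $(\gamma^{\perp})^{\ast}=\{0\}$ (apply $(\cdot)_{\ast}$ and use $\rho(\Delta_A)=\Delta_A$). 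Since $\lambda_A:K(A)\to L(A)$ is surjective, condition (4) is equivalent to the lattice-theoretic statement
\[
(\star)\quad \forall\,x\in L(A)\ \exists\,y\in L(A):\ x\wedge y=0\ \text{and}\ Ann(x\vee y)=\{0\}.
\]

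It remains to prove the equivalence of the lattice versions of (2) and (4). This is the classical Henriksen--Jerison / Speed characterization for bounded distributive lattices: $Min_{Id,Z}(L)$ is compact if and only if $(\star)$ holds in $L$. I would either cite this directly from the literature on minimal prime spectra, or verify it: for $(2)\Rightarrow(\star)$, given $x$ the open cover $\{D_{Id}(x)\}\cup\{D_{Id}(z):z\in Ann(x)\}$ of $Min_{Id,Z}(L)$ admits a finite subcover by compactness, yielding a $y\in Ann(x)$ with $D_{Id}(x\vee y)=Min_{Id,Z}(L)$, which forces $Ann(x\vee y)=\{0\}$ (using Corollary 9.4 transferred through Proposition 3.4, or directly using that the intersection of all minimal primes of $L$ is $\{0\}$). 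Conversely, $(\star)\Rightarrow(2)$ by showing that $(\star)$ produces, for any basic open cover $\{D_{Id}(a_i)\}_{i\in I}$ of $Min_{Id,Z}(L)$, a finite subcover through pairs $(a_i,y_i)$ with $a_i\wedge y_i=0$.

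The main obstacle is the last equivalence, namely the lattice-theoretic $(2)\Leftrightarrow(\star)$, since the transfer steps are purely formal once the correspondence between $(\alpha^{\perp},[\alpha,\beta])$ in $Con(A)$ and $(Ann(\lambda_A(\alpha)),\lambda_A(\alpha)\wedge\lambda_A(\beta))$ in $L(A)$ is in place via semiprimeness.
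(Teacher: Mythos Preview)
The paper does not supply its own proof of this proposition; it is quoted from \cite{GM3} without argument, so there is nothing in the paper to compare against directly.

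Your overall strategy is correct and is exactly the kind of transfer argument the paper would endorse: push all four conditions through the reticulation to $L(A)$ via Lemma~9.1, Corollary~9.2, Lemma~3.1(7), Lemma~3.3(7), and Proposition~3.9 (semiprime form), and then invoke the classical Speed / Henriksen--Jerison characterization of compactness of the minimal prime spectrum of a bounded distributive lattice. Your translation of condition (4) into the lattice condition $(\star)$ is accurate, and the sketch of the lattice argument $(2)\Leftrightarrow(\star)$ is the standard one.

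One small point to tighten: your justification of $(1)\Leftrightarrow(2)\Leftrightarrow(3)$ invokes a statement about ``any spectral space $X$'', but $Min(A)$ is not a spectral space in general, so that sentence does not apply as written. The cleanest fix is simply to transfer conditions (1)--(3) to $L(A)$ along with (4) and let the lattice theorem handle all four equivalences simultaneously; this is what your first paragraph already sets up. If you prefer a direct topological argument, note that Corollary~9.4 gives $V(\alpha)\cap Min(A)=D(\alpha^{\perp})\cap Min(A)$ for $\alpha\in K(A)$, so the flat topology on $Min(A)$ is contained in the Zariski topology; combined with the compactness of $Min_F(A)$ from Corollary~9.2 this yields $(1)\Rightarrow(2)$ and $(2)\Rightarrow(3)$ immediately, and $(3)\Rightarrow(1)$ can then be read off either from the lattice side or from a compact--Hausdorff bijection argument once one checks $Min_F(A)$ is Hausdorff.
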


The conormal lattices were introduced in \cite{Cornish} under the name of normal lattices (for a discussion about this terminology, see \cite{Simmons} and \cite{Johnstone}, p. 78). Recall from \cite{Al-Ezeh1} that a bounded distributive lattice $L$ is said to be conormal if for all $a,b\in L$ such that $a\land b = 0$ there exist $x,y\in L$ such that $a\land x$ = $b\land y = 0$ and $x\lor y = 1$.

\begin{teorema}
\cite{Cornish} If $L$ is a conormal lattice then the following are equivalent:
\usecounter{nr}
\begin{list}{(\arabic{nr})}{\usecounter{nr}}
\item $L$ is a conormal lattice;
\item If $P$ and $Q$ are minimal prime ideals in $L$ then $P\lor Q = L$;
\item Any prime ideal of $L$ contains a unique minimal prime ideal;
\item Any minimal prime ideal of $L$ is a $\sigma$ - ideal;
\item If $x,y\in L$ then $x\land y = 0$ implies $Ann(x)\lor Ann(y) = L$;
\item For all $x,y\in L$ we have $Ann(x\land y) = Ann(x)\lor Ann(y)$;
\item For any $x\in L$, the annihilator $Ann(x)$ is a $\sigma$ - ideal.
\end{list}
\end{teorema}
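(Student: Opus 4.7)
The plan is to close a round-robin among the seven conditions, organised by theme into the arithmetic block $(1) \Leftrightarrow (5) \Leftrightarrow (6) \Leftrightarrow (7)$, the $\sigma$-ideal block $(4) \Leftrightarrow (5)$, and the order-theoretic block $(2) \Leftrightarrow (3)$, with a final bridge from $(3)$ back to the others. Throughout I would use two standard facts for a bounded distributive lattice $L$: a prime ideal $M$ is minimal prime iff for each $x \in M$ there exists $y \notin M$ with $x \wedge y = 0$; and for any prime $P$, $y \notin P$ forces $Ann(y) \subseteq P$.

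I would first dispatch $(1) \Leftrightarrow (5) \Leftrightarrow (7)$ by direct translation, since $a \wedge x = 0 \Leftrightarrow x \in Ann(a)$. For $(5) \Leftrightarrow (6)$, the containment $Ann(x) \vee Ann(y) \subseteq Ann(x \wedge y)$ is automatic, and for the reverse I would pick $z \in Ann(x \wedge y)$, apply (5) to $(z \wedge x) \wedge y = 0$ to obtain $u \in Ann(z \wedge x)$, $v \in Ann(y)$ with $u \vee v = 1$, and split $z = (z \wedge u) \vee (z \wedge v)$ between $Ann(x)$ and $Ann(y)$. For $(5) \Leftrightarrow (4)$: the $\Rightarrow$ direction uses, for a minimal prime $M$ and $x \in M$, the minimal-prime witness $y \notin M$ with $x \wedge y = 0$, so $Ann(y) \subseteq M$ and $L = Ann(x) \vee Ann(y) \subseteq Ann(x) \vee M$; the $\Leftarrow$ direction goes by contradiction, embedding a hypothetical proper $Ann(a) \vee Ann(b)$ into a prime $P$, passing to a minimal prime $M \subseteq P$ with (without loss of generality) $a \in M$, and using the $\sigma$-ideal property of $M$ at $a$ to deduce $L = M \vee Ann(a) \subseteq P$, a contradiction.

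The order-theoretic bridge begins with $(5) \Rightarrow (2)$: given distinct minimal primes $M \neq N$, pick $a \in M \setminus N$, use minimality of $M$ to find $b \notin M$ with $a \wedge b = 0$, so $b \in N$ by primality, and combine $Ann(a) \subseteq N$, $Ann(b) \subseteq M$ with (5) to get $L = Ann(a) \vee Ann(b) \subseteq M \vee N$. The step $(2) \Rightarrow (3)$ is immediate, since two distinct minimal primes below a common prime $P$ would force $L \subseteq P$. Closing the cycle with $(3) \Rightarrow (5)$ is where I expect the main obstacle: supposing $a \wedge b = 0$ yet $Ann(a) \vee Ann(b) \subsetneq L$, one extends to a prime $P$, passes to the unique minimal prime $M \subseteq P$, notes (without loss of generality) $a \in M$, and produces $c \notin M$ with $a \wedge c = 0$ by minimality, so $c \in Ann(a) \subseteq P$. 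To derive a contradiction with the uniqueness clause of (3), one must exhibit a second minimal prime $M' \neq M$ sitting below $P$ and witnessing $c$ — typically via a Zorn extension argument among prime ideals of $L$ contained in $P$, combined with distributivity to guarantee that the resulting prime is actually minimal in $L$ rather than merely minimal in a restricted subfamily. This extension step is the heart of Cornish's original proof and the place where the argument is least automatic.
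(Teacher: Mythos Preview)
The paper does not prove Theorem 9.6; it is quoted from Cornish with a citation and used as a black box. So there is no ``paper's own proof'' to compare against, and your sketch should be judged on its own merits.

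Your round-robin is standard and the blocks $(1)\Leftrightarrow(5)\Leftrightarrow(6)\Leftrightarrow(7)$, $(4)\Leftrightarrow(5)$, $(5)\Rightarrow(2)\Rightarrow(3)$ are all fine as written. The only soft spot is exactly where you flag it, the closing step $(3)\Rightarrow(5)$, and there your description does not quite land. You produce $c\in Ann(a)\subseteq P$ with $c\notin M$ and then propose to build a second minimal prime $M'\subseteq P$ ``witnessing $c$''. But $c$ is the wrong handle: $c\notin M$ already, so a second minimal prime avoiding $c$ would not be distinguished from $M$, and a minimal prime \emph{containing} $c$ need not exist below $P$ (minimizing among primes $Q$ with $c\in Q\subseteq P$ can produce a prime that is not minimal in $L$, since a smaller prime may simply drop $c$).

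The clean fix is to use $a$, not $c$. Since $Ann(a)\subseteq P$, the filter generated by $(L\setminus P)\cup\{a\}$ is proper (any $s\wedge a=0$ forces $s\in Ann(a)\subseteq P$). Extend it to a prime filter and take the complementary prime ideal $N$: then $N\subseteq P$ and $a\notin N$. Any minimal prime $M'\subseteq N$ is a minimal prime of $L$ (if $Q\subsetneq M'$ were prime then $Q\subseteq P$ and $a\notin Q$, so $Q$ lies in the same subfamily), and $a\notin M'$ while $a\in M$, so $M'\neq M$, contradicting (3). An equivalent but slicker route is via the ideal $O(P)=\{z\in L: Ann(z)\not\subseteq P\}$: the paper's Lemma 5.9 gives $O(P)=\bigcap\{Q\ \text{prime}: Q\subseteq P\}$, so under (3) one has $O(P)=M$, hence $O(P)$ is prime; but $Ann(a),Ann(b)\subseteq P$ force $a,b\notin O(P)$ while $a\wedge b=0\in O(P)$, a contradiction.
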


On the other hand, a commutative ring $R$ is said to be an $mp$ - ring if each prime ideal of $R$ contains a unique minimal prime ideal. Following Theorem 6.2 of \cite{Aghajani}, we present several conditions that characterize the $mp$ - rings.

\begin{teorema}
\cite{Aghajani} For any commutative ring $R$ the following are equivalent
\usecounter{nr}
\begin{list}{(\arabic{nr})}{\usecounter{nr}}
\item $R$ is an $mp$ - ring;
\item If $P$ and $Q$ are minimal prime ideals in $R$ then $P + Q = R$;
\item $R/n(R)$ is an $mp$ - ring, where $n(R)$ is the nil - radical of $R$;
\item $Spec(R)$ is a normal space with respect to the flat topology;
\item The inclusion $Min(R)\subseteq Spec(R)$ has a continuous retraction with respect to the flat topology;
\item If $P$ is a minimal prime ideal of $R$ then $V(P)$ is a flat closed subset of $Spec(R)$;
\item For all $x,y\in R$, if $xy = 0$ implies $Ann(x^n) + Ann(y^n) = R$, for any integer $n\geq 1$;
\item Any minimal prime ideal of $R$ is the radical of a unique pure ideal of $R$.
\end{list}
\end{teorema}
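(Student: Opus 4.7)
The plan is to establish the eight conditions as equivalent by organizing them into two clusters: a purely algebraic cluster consisting of $(1)$, $(2)$, $(3)$, $(7)$, $(8)$ and a topological cluster $(1)$, $(4)$, $(5)$, $(6)$ based on the flat topology, with condition $(1)$ serving as the hub. Throughout, I would exploit the bijection $Spec(R)\cong Spec(R/n(R))$ respecting minimality so that one may freely assume $R$ is semiprime whenever convenient, which is also what $(3)$ encodes.

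For the algebraic cluster I would first prove $(1)\Leftrightarrow (2)$: if distinct minimal primes $P,Q$ satisfy $P+Q\neq R$, some prime contains both, contradicting $(1)$; conversely, pairwise comaximality of minimal primes forces uniqueness of the minimal prime below any prime. The equivalence $(1)\Leftrightarrow (3)$ is automatic from the mentioned spectral bijection. For $(1)\Leftrightarrow (7)$ I would pass to the semiprime quotient $R/n(R)$ (so powers collapse) and translate $xy=0$ into the corresponding annihilator condition using that in a semiprime ring a minimal prime $P$ is characterized by $x\in P$ iff $Ann(x)\not\subseteq P$; the comaximality of $Ann(x)$ and $Ann(y)$ is then equivalent to no minimal prime containing both, i.e.\ to $(2)$. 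For $(1)\Leftrightarrow (8)$ I would use the dictionary between pure ideals of $R$ and $\sigma$-ideals of $L(R)$ (the ring analogue of Theorem 4.17) together with the characterization of minimal primes as those for which $O(P)=P$ in the semiprime case and Lemma 5.9, so that the pure ideal with radical $P$ is forced to be $O(P)$, which exists uniquely exactly under the mp-condition.

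For the topological cluster I would use the flat spectral space structure from Lemma 6.1 and the fact (Proposition 6.9 for algebras, same statement for rings) that every $V(I)$ is flat compact. The implication $(1)\Rightarrow (6)$ would follow from Proposition 6.8: in an mp-ring the flat closure of $\{P\}$ for $P\in Min(R)$ equals $V(P)$, because $\Lambda(Q)$ for any $Q\supseteq P$ is reduced to $\{P\}$. Then $(6)\Rightarrow (5)$ would be obtained by defining the retraction $r\colon Spec_F(R)\to Min_F(R)$ sending $Q$ to its unique minimal prime; continuity reduces to $r^{-1}(V(P)\cap Min(R))=V(P)$, which is flat closed by $(6)$. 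The step $(5)\Rightarrow (4)$ is a general topological argument, since $Min_F(R)$ is Hausdorff (Corollary 9.2) and a retract of a compact space onto a Hausdorff space splits separation of points into normality. Finally $(4)\Rightarrow (1)$ would use that if two distinct minimal primes $P,Q$ sit below a common prime $N$, then $N\in cl_F(\{P\})\cap cl_F(\{Q\})$, violating $T_2$ and hence normality of $Spec_F(R)$.

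The main obstacle I anticipate is the topological side, specifically pinning down flat closures precisely enough to prove $(1)\Rightarrow (6)$ and $(4)\Rightarrow (1)$: one needs that $cl_F(\{P\})=\Lambda(P)$ (Proposition 6.7) and that compact sets of primes have flat closure $\bigcup_{P\in S}\Lambda(P)$ (Proposition 6.8), and to combine these with the mp-condition to recognize $V(P)$ as a union of flat singletons. A secondary subtlety will be in $(1)\Leftrightarrow (7)$, where the exponents $n$ appear precisely because one cannot work directly in $R$ but must exploit nilpotency modulo $n(R)$; the cleanest route is to first prove $(7)$ in the semiprime case, drop the exponents, and then bootstrap to the general case via $(3)$.
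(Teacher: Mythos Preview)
The paper does not prove this theorem: it is stated as Theorem~9.7 with the citation \cite{Aghajani} and no proof is given, since it is quoted verbatim as Theorem~6.2 of Aghajani--Tarizadeh. The paper uses it as a black box to transfer the ring-theoretic characterizations to universal algebras via the reticulation (see the proofs of Theorems~9.14 and~10.15, where ``Theorem~6.2 of \cite{Aghajani}'' is invoked repeatedly). So there is no proof in the paper to compare your proposal against.

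That said, your outline is broadly reasonable as an independent proof, with one caveat. In the step $(5)\Rightarrow(4)$ you claim that having a continuous retraction $Spec_F(R)\to Min_F(R)$ onto a Hausdorff space yields normality of $Spec_F(R)$; this is not a general topological fact (a retract onto a Hausdorff space does not by itself make the ambient space normal). What actually works here is that $Spec_F(R)$ is compact and the retraction is closed (Lemma~7.6 applies since $Min_F(R)$ is Hausdorff by Corollary~9.2), so the fibers $r^{-1}(P)=V(P)$ are closed and pairwise disjoint, and one can separate closed sets by pulling back a separation of their images in $Min_F(R)$ and using compactness. You should make that argument explicit rather than appeal to a nonexistent general principle. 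The rest of your plan---the algebraic cluster via comaximality and annihilators in the semiprime reduction, and the flat-topology cluster via $cl_F(\{P\})=\Lambda(P)$---matches the standard route and would go through.
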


If we compare the definitions and the previous characterizations of conormal lattices and $mp$ - rings then we observe that these structures have very similar descriptions. Then these particular situations led to the problem of finding a common generalization in the setting of universal algebra.

Let us fix an algebra $A\in \mathcal{V}$. Then $A$ is said to be an $mp$ - algebra if any prime congruence of $A$ contains a unique minimal prime congruence. We observe that $mp$ - rings are particular cases of the $mp$ - algebras, but the $mp$ - objects in the variety of bounded distributive lattices does not coincide with the conormal lattices.

In the rest of this section we will assume that $K(A)$ is closed under commutator operation. We fix a commutative ring $R$ such that the reticulations $L(A)$ and $L(R)$ of $A$ and $R$ are identical.

In what follows we shall show how the characterizations of conormal lattices (from Theorem 9.6) and characterizations of $mp$ -rings (from Theorem 9.7) can be extended to $mp$ - algebras. Firstly we shall prove some lemmas regarding the way in which the reticulation preserves some of these characterizations.

\begin{lema}
The following are equivalent:
\usecounter{nr}
\begin{list}{(\arabic{nr})}{\usecounter{nr}}
\item $A$ is an $mp$ - algebra;
\item The reticulation $L(A)$ is a conormal lattice.
\end{list}
\end{lema}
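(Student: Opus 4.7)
The plan is to reduce the statement to the order-isomorphism between prime spectra furnished by Proposition 3.4, and then invoke the equivalence $(1)\Leftrightarrow (3)$ of Theorem 9.6 (which characterizes conormal lattices by the uniqueness of the minimal prime below each prime).

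First I would recall that the maps $u:Spec(A)\to Spec_{Id}(L(A))$ and $v:Spec_{Id}(L(A))\to Spec(A)$ of Proposition 3.4 are mutually inverse homeomorphisms, and, directly from the definitions $u(\phi)=\phi^{\ast}$ and $v(P)=P_{\ast}$ combined with Lemma 3.3, they are order-preserving with respect to inclusion. In particular, they restrict to an order-isomorphism between $(Spec(A),\subseteq)$ and $(Spec_{Id}(L(A)),\subseteq)$ that carries $Min(A)$ bijectively onto $Min_{Id}(L(A))$: if $\phi\in Min(A)$ and $Q\subseteq\phi^{\ast}$ is a prime ideal of $L(A)$, then $Q_{\ast}\subseteq (\phi^{\ast})_{\ast}=\phi$ is prime in $A$, so $Q_{\ast}=\phi$ by minimality, hence $Q=(Q_{\ast})^{\ast}=\phi^{\ast}$; the converse is symmetric.

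Next I would translate the $mp$-condition across this isomorphism. Assume $A$ is an $mp$-algebra and let $P$ be a prime ideal of $L(A)$. By Proposition 3.4 there is a unique $\phi\in Spec(A)$ with $P=\phi^{\ast}$; the hypothesis furnishes a unique $\psi\in Min(A)$ with $\psi\subseteq \phi$, and then $\psi^{\ast}$ is the unique minimal prime ideal of $L(A)$ contained in $P$ (any other such minimal prime ideal would be of the form $\psi'^{\ast}$ for some $\psi'\in Min(A)$ with $\psi'\subseteq\phi$, contradicting uniqueness). By the equivalence $(1)\Leftrightarrow(3)$ of Theorem 9.6, $L(A)$ is conormal. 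Conversely, if $L(A)$ is conormal, the same correspondence yields the uniqueness of the minimal prime congruence below each prime congruence of $A$, so $A$ is an $mp$-algebra.

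This argument is essentially a formal transfer and the only subtle point is making sure that the bijection $u$ of Proposition 3.4 really restricts to a bijection $Min(A)\to Min_{Id}(L(A))$; the argument above handles it but it is worth pointing out explicitly, since the rest of the proof is just a direct application of Theorem 9.6. No use of the condition $(\star)$ or of semiprimeness is needed, since all the work is done inside the prime spectra and the reticulation's order-theoretic behaviour on them.
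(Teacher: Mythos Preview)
Your proof is correct and follows essentially the same route as the paper: both rely on the fact that $u$ and $v$ from Proposition~3.4 give an order-isomorphism between $(Spec(A),\subseteq)$ and $(Spec_{Id}(L(A)),\subseteq)$, so the $mp$-condition (uniqueness of the minimal prime below each prime) transfers back and forth, with Theorem~9.6 supplying the characterization of conormality on the lattice side. The paper's proof is just the one-line version of your argument; your added verification that $u$ restricts to a bijection $Min(A)\to Min_{Id}(L(A))$ is a reasonable elaboration but is automatic from any order-isomorphism.
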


\begin{proof}

From Section 3 we know that the functions $u:Spec(A)\rightarrow Spec_{Id}(L(A))$ and $v:Spec_{Id}(L(A))\rightarrow Spec(A)$ are order - preserving isomorphisms (cf. Proposition 3.4). Then the equivalence of the properties (1) and (2) follows.
\end{proof}

\begin{corolar}
The following are equivalent:
\usecounter{nr}
\begin{list}{(\arabic{nr})}{\usecounter{nr}}
\item $A$ is an $mp$ - algebra;
\item $R$ is an $mp$ - ring.
\end{list}
\end{corolar}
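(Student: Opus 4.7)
The plan is to derive the corollary as an immediate consequence of Lemma 9.8 applied on both sides of the reticulation correspondence. Concretely, Lemma 9.8 already gives the equivalence between $A$ being an $mp$-algebra and $L(A)$ being a conormal lattice; the same lemma (or its specialization to commutative rings, which is a well-known classical result going back to Cornish \cite{Cornish} and which also appears implicitly in Theorem 9.7) yields the analogous equivalence: $R$ is an $mp$-ring iff $L(R)$ is a conormal lattice. Combining these two equivalences with the hypothesis $L(A)=L(R)$ closes the loop.

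More precisely, I would first recall the setup: by Hochster's theorem (invoked in Section 6), the commutative ring $R$ is chosen so that the reticulations $L(A)$ and $L(R)$ are identified as bounded distributive lattices, and by Proposition 3.4 the prime spectra $Spec(A)$, $Spec_{Id}(L(A))=Spec_{Id}(L(R))$, and $Spec(R)$ are pairwise order-isomorphic (this is what underlies the map $s:Spec(A)\to Spec(R)$ from Section 6). In particular the order-isomorphism $s$ restricts to a bijection $Min(A)\to Min(R)$ and sends the minimal prime congruence below a prime $\phi$ of $A$ to the minimal prime ideal below $s(\phi)$. Hence the property ``each prime contains a unique minimal prime'' transfers verbatim between $A$ and $R$.

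Then the proof is just: $A$ is an $mp$-algebra $\Longleftrightarrow$ every prime congruence of $A$ contains a unique minimal prime congruence $\Longleftrightarrow$ (by the order-isomorphism induced by the reticulation, via Proposition 3.4) every prime ideal of $L(A)$ contains a unique minimal prime ideal $\Longleftrightarrow$ $L(A)$ is conormal (by Lemma 9.8 and Theorem 9.6) $\Longleftrightarrow$ $L(R)$ is conormal (since $L(A)=L(R)$) $\Longleftrightarrow$ $R$ is an $mp$-ring (again by Lemma 9.8 applied to $R$, or equivalently by Theorem 9.7(1)$\Leftrightarrow$(3) together with the classical reticulation result for rings).

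There is no real obstacle here: the corollary is essentially a transport statement along the identification $L(A)=L(R)$ together with the homeomorphism $s_{AR}:Spec(A)\to Spec(R)$ of Lemma 6.11. The only point that deserves explicit mention is that the definition of $mp$-object (unique minimal prime below each prime) is purely order-theoretic on $Spec$, so it is automatically preserved by any order-isomorphism of spectra; this is the key observation that makes the two applications of Lemma 9.8 legitimate.
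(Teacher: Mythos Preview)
Your proposal is correct and follows essentially the same route as the paper: apply Lemma 9.8 once for $A$ and once for $R$, linked by the identification $L(A)=L(R)$. The paper's proof is the one-line version of this (``$A$ is an $mp$-algebra iff $L(A)=L(R)$ is conormal iff $R$ is an $mp$-ring''); your additional remarks about the order-isomorphism of spectra simply unpack why Lemma 9.8 holds, but do not constitute a different argument.
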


\begin{proof}
According to the previous lemma, $A$ is an $mp$ - algebra iff the reticulation $L(A) = L(R)$ is a conormal lattice iff $R$ is an $mp$ - ring.
\end{proof}

The previous lemma and its corollary will be used for transferring some properties that characterize the conormal lattices (in Theorem 9.6) or the $mp$ - rings (in Theorem 9.7) in properties that characterize the $mp$ - algebras.

\begin{lema}
The following are equivalent:
\usecounter{nr}
\begin{list}{(\arabic{nr})}{\usecounter{nr}}
\item For all distinct $\phi,\psi\in Min(A)$ we have $\phi\lor\psi = \nabla_A$;
\item For all distinct $I,J\in Min_{Id}(L(A))$ we have $I\lor J = L(A)$.
\end{list}
\end{lema}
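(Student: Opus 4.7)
The plan is to reduce the equivalence to the order-isomorphism established in Proposition 3.4 between $Spec(A)$ and $Spec_{Id}(L(A))$ via the mutually inverse maps $(\cdot)^{\ast}$ and $(\cdot)_{\ast}$. Since these maps are order-preserving and inverse to one another, they restrict to a bijection between $Min(A)$ and $Min_{Id}(L(A))$: if $\phi \in Min(A)$, then $\phi^{\ast} \in Spec_{Id}(L(A))$ by Lemma 3.3(4), and minimality is preserved because any strictly smaller prime ideal $I \subsetneq \phi^{\ast}$ would correspond, via $(\cdot)_{\ast}$, to a strictly smaller prime congruence $I_{\ast} \subsetneq (\phi^{\ast})_{\ast} = \phi$, contradicting minimality of $\phi$. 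The symmetric argument handles the converse direction.

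Next, I would show that this bijection transports the condition ``$\phi \lor \psi = \nabla_A$'' exactly to ``$\phi^{\ast} \lor \psi^{\ast} = L(A)$''. For the forward direction, assume $\phi \lor \psi = \nabla_A$; then by Lemma 3.2(2) one has $\phi^{\ast} \lor \psi^{\ast} = (\phi \lor \psi)^{\ast} = (\nabla_A)^{\ast}$, and $(\nabla_A)^{\ast} = L(A)$ since every element of $L(A)$ has the form $\lambda_A(\alpha)$ with $\alpha \in K(A)$ and trivially $\alpha \subseteq \nabla_A$.

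For the converse direction, assume $I \lor J = L(A)$ for distinct $I,J \in Min_{Id}(L(A))$. Applying Lemma 3.2(3), $(I \lor J)_{\ast} = \rho(I_{\ast} \lor J_{\ast})$, so $(L(A))_{\ast} = \rho(I_{\ast} \lor J_{\ast})$. Computing $(L(A))_{\ast} = ((\nabla_A)^{\ast})_{\ast} = \rho(\nabla_A) = \nabla_A$ via Lemma 3.3(2) and Lemma 2.7(3), we get $\rho(I_{\ast} \lor J_{\ast}) = \nabla_A$, hence $I_{\ast} \lor J_{\ast} = \nabla_A$ by Lemma 2.7(3) once more. Since $I = \phi^{\ast}$ and $J = \psi^{\ast}$ for the minimal prime congruences $\phi = I_{\ast}, \psi = J_{\ast}$ (Lemma 3.3(2),(5)), this yields condition (1).

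There is no real obstacle here: the argument is a direct bookkeeping exercise chaining Lemmas 3.2, 3.3, and 2.7, guided by the spectral homeomorphism of Proposition 3.4. The only minor care point is verifying that the bijection between spectra restricts correctly to minimal spectra and that the identity $(\nabla_A)^{\ast} = L(A)$ together with $(L(A))_{\ast} = \nabla_A$ is handled cleanly.
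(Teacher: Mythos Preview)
Your proposal is correct and follows essentially the same approach as the paper: both proofs use the order-isomorphism of Proposition~3.4 to identify $Min(A)$ with $Min_{Id}(L(A))$, apply Lemma~3.2(2) to handle the forward direction, and finish the reverse direction via $\rho(\cdot)=\nabla_A\Rightarrow\cdot=\nabla_A$ from Lemma~2.7(3). The only cosmetic difference is that for $(2)\Rightarrow(1)$ you invoke Lemma~3.2(3) on $(I\lor J)_\ast$, whereas the paper reuses Lemma~3.2(2) to write $(\phi\lor\psi)^\ast=L(A)$ and then applies $(\cdot)_\ast$ via Lemma~3.3(2); both routes land on the same identity $\rho(\phi\lor\psi)=\nabla_A$.
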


\begin{proof}
$(1)\Rightarrow(2)$ Let $I,J$ be two distinct minimal prime ideals of $L(A)$, so $I = \phi^{\ast}$ and $J = \psi^{\ast}$, for some distinct $\phi,\psi\in Min(A)$. Thus $\phi\lor\psi = \nabla_A$, hence, by using Lemma 3.2(2), we get $I\lor J = \phi^{\ast}\lor \psi^{\ast} = (\phi\lor\psi)^{\ast} = (\nabla_A)^{\ast} = L(A)$.

$(2)\Rightarrow(1)$ Assume that $\phi,\psi$ are two distinct minimal prime congruences of $A$, so $\phi^{\ast},\psi^{\ast}$ are distinct minimal prime ideals of $L(A)$. By Lemma 3.2(2) we have $(\phi\lor\psi)^{\ast} = \phi^{\ast}\lor \psi^{\ast} = L(A)$. Thus $\rho(\phi\lor\psi) = ((\phi\lor\psi)^{\ast})_{\ast} = \nabla_A$ (cf. Lemma 3.3(2)), so by using Lemma 2.7(3), we obtain $\phi\lor\psi = \nabla_A$.

\end{proof}

Recall that $R$ is a fixed commutative ring such that the reticulations $L(A)$ and $L(R)$ of $A$ and $R$ are identical.

\begin{corolar}
The following are equivalent:
\usecounter{nr}
\begin{list}{(\arabic{nr})}{\usecounter{nr}}
\item For all distinct $\phi,\psi\in Min(A)$ we have $\phi\lor\psi = \nabla_A$;
\item For all distinct $P,Q\in Min(R)$ we have $P + Q = R$.
\end{list}
\end{corolar}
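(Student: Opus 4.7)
The plan is to deduce Corollary 9.11 by applying Lemma 9.10 twice, once to the algebra $A$ and once to the ring $R$, and then exploiting the identification $L(A) = L(R)$.

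First, I would apply Lemma 9.10 directly to the algebra $A$. This gives the equivalence between condition (1) and the lattice-theoretic statement that for all distinct $I, J \in Min_{Id}(L(A))$ we have $I \lor J = L(A)$. The proof of Lemma 9.10 uses only the order-isomorphism between $Spec(A)$ and $Spec_{Id}(L(A))$ (Proposition 3.4) together with Lemmas 3.2(2), 3.3(2), and 2.7(3), so no additional work is needed here.

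Second, I would observe that the analogue of Lemma 9.10 holds for the commutative ring $R$. Indeed, the reticulation theory for commutative rings (Joyal, Simmons) provides an order-isomorphism between $Spec(R)$ and $Spec_{Id}(L(R))$, and the standard correspondences $(\cdot)^{\ast}$ and $(\cdot)_{\ast}$ satisfy the analogues of Lemmas 3.2 and 3.3. Applying the same argument as in Lemma 9.10 (or equivalently, regarding $R$ as an algebra in the congruence-modular variety of commutative rings, where $K(R)$ is closed under commutator and the commutator coincides with ideal product) yields the equivalence between condition (2) and the statement that for all distinct $I, J \in Min_{Id}(L(R))$ we have $I \lor J = L(R)$.

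Finally, since $R$ was chosen so that $L(A) = L(R)$ as bounded distributive lattices, the two intermediate lattice-theoretic conditions coincide, and the equivalence $(1) \Leftrightarrow (2)$ follows immediately. There is no real obstacle in this proof; it is a routine transfer via the reticulation, in the same spirit as Corollary 9.9 above.
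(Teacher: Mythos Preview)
Your proposal is correct and follows exactly the paper's own argument: the paper's proof says ``By taking into account that $L(A) = L(R)$, the equivalence of the properties $(1)$ and $(2)$ follows by applying Lemma 9.10 twice,'' which is precisely what you do. Your additional remark that $R$ can be viewed as an object of a semidegenerate congruence-modular variety (so that Lemma 9.10 applies verbatim) is a helpful clarification of what ``applying Lemma 9.10 twice'' means.
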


\begin{proof}
By taking into account that $L(A) = L(R)$, the equivalence of the properties $(1)$ and $(2)$ follows by applying Lemma 9.10 twice.
\end{proof}

The following two lemmas can be obtained in a similar manner.

\begin{lema}
The following are equivalent:
\usecounter{nr}
\begin{list}{(\arabic{nr})}{\usecounter{nr}}
\item The inclusion $Min(A)\subseteq Spec(A)$ has a continuous retraction with respect to the flat topology;
\item The inclusion $Min(R)\subseteq Spec(R)$ has a continuous retraction with respect to the flat topology.
\end{list}
\end{lema}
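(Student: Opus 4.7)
The plan is to reduce the equivalence to the fact that the pair $(Spec(A),Min(A))$ and the pair $(Spec(R),Min(R))$ are isomorphic both as ordered sets and as flat topological spaces, and then to transport a retraction across this isomorphism in the obvious way.

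First I would invoke Lemma 6.11, which gives mutually inverse homeomorphisms $s:Spec_F(A)\to Spec_F(R)$ and $t:Spec_F(R)\to Spec_F(A)$ with respect to the flat topology, and which are simultaneously order-isomorphisms $(Spec(A),\subseteq)\cong (Spec(R),\subseteq)$. Because being a minimal prime is an order-theoretic property, $s$ restricts to a bijection $Min(A)\to Min(R)$, and this restriction is automatically a flat homeomorphism $Min_F(A)\to Min_F(R)$, with inverse the restriction of $t$. Note that this step does not even require Lemma 9.1; the needed compatibility comes straight from the fact that $s$ and $t$ preserve inclusions.

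Next I would build the transfer of retractions. Assume $(1)$, so that there is a continuous retraction $r_A:Spec_F(A)\to Min_F(A)$ of the inclusion. Define
\[
r_R \;=\; s|_{Min(A)} \circ r_A \circ t : Spec_F(R)\longrightarrow Min_F(R).
\]
This is a composition of continuous maps by Lemma 6.11 and the previous paragraph, and for $P\in Min(R)$ one checks $r_R(P)=s(r_A(t(P)))=s(t(P))=P$, using that $t(P)\in Min(A)$ and that $r_A$ fixes $Min(A)$ pointwise. So $r_R$ is the desired continuous retraction, proving $(1)\Rightarrow (2)$. The converse $(2)\Rightarrow (1)$ is symmetric, with $r_A=t|_{Min(R)}\circ r_R\circ s$.

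There is essentially no obstacle here: once Lemma 6.11 is available, the argument is a purely formal transport along a homeomorphism that happens to restrict to a homeomorphism of the subspaces in question. The only point worth checking carefully is that $s$ and $t$ really do restrict to maps between the minimal spectra, but this is immediate from their being mutually inverse order-isomorphisms on the full prime spectra.
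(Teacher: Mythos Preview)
Your proof is correct and is essentially the approach the paper intends: the paper does not spell out this lemma's proof but says it ``can be obtained in a similar manner'' to the preceding results, which amounts precisely to transporting the property across the order-preserving flat homeomorphisms $s,t$ of Lemma~6.11 (restricted to the minimal spectra). Your explicit construction $r_R = s|_{Min(A)}\circ r_A\circ t$ is exactly the right formal transport, and your observation that the restriction to minimal spectra follows directly from $s,t$ being order-isomorphisms is the only point that needs checking.
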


\begin{lema}
The following are equivalent:
\usecounter{nr}
\begin{list}{(\arabic{nr})}{\usecounter{nr}}
\item If $\phi\in Min(A)$ then $V(\phi) = \{\psi\in Spec(A)|\phi\subseteq\psi\}$ is a flat closed subset of $Spec(A)$;
\item If $P\in Min(R)$ then $V(P) = \{Q\in Spec(R)|P\subseteq Q\}$ is a flat closed subset of $Spec(A)$.
\end{list}
\end{lema}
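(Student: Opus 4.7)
The plan is to apply the transfer bridge between $Spec(A)$ and $Spec(R)$ provided by Lemma 6.11 in a direct way. Recall from Section 6 that the pair of maps $s = s_{AR}:Spec(A)\rightarrow Spec(R)$ and $t = t_{AR}:Spec(R)\rightarrow Spec(A)$, both defined via the identification $L(A) = L(R)$, is an inverse pair of order-isomorphisms that are simultaneously homeomorphisms with respect to the Zariski, patch and flat topologies. The equivalence $(1) \Leftrightarrow (2)$ should fall out of this by moving closed sets across the flat homeomorphism.

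First I would record two easy consequences of $s$ being an order-isomorphism: (i) $s$ restricts to a bijection $Min(A) \xrightarrow{\sim} Min(R)$ (with inverse $t|_{Min(R)}$), since minimality is preserved under order-isomorphisms of prime spectra; and (ii) for every $\phi\in Spec(A)$ we have $s(V(\phi)) = V(s(\phi))$, because $\psi \in V(\phi)$ iff $\phi \subseteq \psi$ iff $s(\phi) \subseteq s(\psi)$ iff $s(\psi) \in V(s(\phi))$. Symmetrically, $t(V(P)) = V(t(P))$ for each $P\in Spec(R)$.

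Next, for the implication $(1) \Rightarrow (2)$: take an arbitrary $P\in Min(R)$ and put $\phi := t(P) \in Min(A)$. By hypothesis $V(\phi)$ is closed in $Spec_F(A)$. Since $s:Spec_F(A)\rightarrow Spec_F(R)$ is a homeomorphism (Lemma 6.11), its image $s(V(\phi))$ is closed in $Spec_F(R)$; but $s(V(\phi)) = V(s(\phi)) = V(s(t(P))) = V(P)$, which is therefore flat closed. The converse implication $(2) \Rightarrow (1)$ is perfectly symmetric, using instead that $t$ is a flat homeomorphism and $t(V(P)) = V(t(P))$.

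There is no serious obstacle: the only thing one must check with a little care is the compatibility $s(V(\phi)) = V(s(\phi))$, and this is a formal consequence of $s$ being order-preserving with order-preserving inverse. All topological content is already encapsulated in Lemma 6.11, so the argument reduces to one line of set-chasing once the order-isomorphism properties of $s$ and $t$ are recorded.
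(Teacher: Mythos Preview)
Your proof is correct and follows essentially the same approach the paper intends: the paper does not spell out a proof for this lemma but says it ``can be obtained in a similar manner'' to Lemma 9.10 and Corollary 9.11, i.e.\ by transferring through the identification $L(A)=L(R)$, which is precisely what you do via the order-isomorphism and flat homeomorphism $s$ of Lemma 6.11. Your explicit verification that $s(V(\phi))=V(s(\phi))$ and that $s$ restricts to a bijection $Min(A)\to Min(R)$ makes the transfer completely transparent.
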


\begin{teorema}
Consider an algebra $A\in \mathcal{V}$ such that $K(A)$ is closed under commutator operation. Then the following equivalences hold:
\usecounter{nr}
\begin{list}{(\arabic{nr})}{\usecounter{nr}}
\item $A$ is an $mp$ - algebra;
\item For all distinct minimal prime congruences $\phi,\psi$ of $A$ we have the equality $\phi\lor \psi = \nabla_A$;
\item The quotient algebra $A/\rho(\Delta_A)$ is an $mp$ - algebra;
\item The inclusion $Min(A)\subseteq Spec(A)$ has a continuous retraction with respect to the flat topology;
\item $Spec(A)$ is a normal space with respect to the flat topology;
\item If $\phi\in Min(A)$ then $V(\phi)$ is a flat closed subset of $Spec(A)$.
\end{list}
\end{teorema}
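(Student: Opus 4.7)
The strategy is to use the transfer lemmas 9.8--9.13 in order to reduce every equivalence to the corresponding item in Theorem 9.7 (Aghajani's characterization of $mp$-rings). By the Hochster theorem discussed in Section~6, fix a commutative ring $R$ such that the reticulations $L(A)$ and $L(R)$ are identical. Corollary 9.9 then gives the anchor equivalence $(1) \Leftrightarrow R \text{ is an } mp\text{-ring}$, from which each remaining condition will be obtained by translating the corresponding item of Theorem 9.7 back to the algebra side.

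More concretely, first I would dispose of $(1) \Leftrightarrow (2)$ by composing Corollary 9.11 with the equivalence $(1)\Leftrightarrow(2)$ of Theorem 9.7: condition (2) of our statement is the algebra version of ``distinct minimal primes of $R$ sum to $R$'', and Corollary 9.11 furnishes precisely this translation. Next, for $(1)\Leftrightarrow(4)$ and $(1)\Leftrightarrow(6)$, I would invoke Lemmas 9.12 and 9.13 together with items (5) and (6) of Theorem 9.7. For $(1)\Leftrightarrow(5)$ I would use Lemma 6.11: the bijection $t:Spec(R)\rightarrow Spec(A)$ is a homeomorphism with respect to the flat topology, so flat normality of $Spec_F(R)$ transports verbatim to flat normality of $Spec_F(A)$, and then one applies item (4) of Theorem 9.7.

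The last equivalence $(1) \Leftrightarrow (3)$ is the only one not covered by a ready-made transfer lemma, and this is where most of the thought goes. The key observation is that the reticulation ignores the ``nilradical''. On the ring side this is classical: $L(R) \cong L(R/n(R))$. On the algebra side the analogous fact is Proposition 7.6 of \cite{GM2}, which says that $L(A)$ and $L(A/\rho(\Delta_A))$ are isomorphic bounded distributive lattices. Therefore, applying Lemma 9.8 twice:
\[
A \text{ is } mp \iff L(A) \text{ is conormal} \iff L(A/\rho(\Delta_A)) \text{ is conormal} \iff A/\rho(\Delta_A) \text{ is } mp.
\]
(One also needs that $K(A/\rho(\Delta_A))$ is closed under commutator, which follows from the fact that $p_{\rho(\Delta_A)}^{\bullet}$ sends compact congruences to compact congruences and commutes with the commutator up to $\rho(\Delta_A)$.)

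The main potential obstacle is making sure the transfer lemmas 9.12 and 9.13 are rigorous enough to support $(1)\Leftrightarrow(4)$ and $(1)\Leftrightarrow(6)$ without a direct intrinsic proof on the algebra side, and that the flat homeomorphism of Lemma 6.11 genuinely carries the retraction datum and the closedness of $V(\phi)$ from $R$ to $A$. Since these flat homeomorphisms are order-preserving bijections that send minimal primes to minimal primes (a purely order-theoretic property), this step is in fact routine, so no genuine difficulty should arise once the reticulation machinery of Sections~3 and~6 is taken as given.
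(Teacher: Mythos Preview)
Your proposal is correct and follows essentially the same route as the paper's own proof: fix a ring $R$ with $L(A)=L(R)$ via Hochster, use Corollary~9.9 as the anchor, and translate each condition through the corresponding transfer lemma (9.11, 9.12, 9.13, Lemma~6.11), while handling $(1)\Leftrightarrow(3)$ via the isomorphism $L(A)\cong L(A/\rho(\Delta_A))$ and two applications of Lemma~9.8. The only addition you make is the parenthetical check that $K(A/\rho(\Delta_A))$ is closed under commutator, which the paper leaves implicit.
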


\begin{proof}

$(1)\Leftrightarrow(2)$ By using Corollary 9.8, Theorem 6.2 of \cite{Aghajani} and Corollary 9.11, it follows that the following conditions are equivalent:

${\bullet}$ $A$ is an $mp$ - algebra;

${\bullet}$ $R$ is an $mp$ -ring;

${\bullet}$ For all distinct $P,Q\in Min(R)$ we have $P + Q = R$;

${\bullet}$ For all distinct $\phi,\psi\in Min(A)$ we have $\phi\lor\psi = \nabla_A$.

$(1)\Leftrightarrow(3)$ By Proposition 7.7 of \cite{GM2}, the lattices $L(A)$ and $L(A/\rho(\Delta_A))$ are isomorphic. By applying Lemma 9.8, the following equivalences hold: $A$ is an $mp$ - algebra iff $L(A)$ is a conormal lattice iff $L(A/\rho(\Delta_A))$ is a conormal lattice iff $A/\rho(\Delta_A)$ is an $mp$ - algebra.

$(1)\Leftrightarrow(4)$ By using Corollary 9.9, Theorem 6.2 of \cite{Aghajani} and Lemma 9.12, it follows that the following conditions are equivalent:

${\bullet}$ $A$ is an $mp$ - algebra;

${\bullet}$ $R$ is an $mp$ -ring;

${\bullet}$ The inclusion $Min(R)\subseteq Spec(R)$ has a continuous retraction with respect to the flat topology;

${\bullet}$ The inclusion $Min(A)\subseteq Spec(A)$ has a continuous retraction with respect to the flat topology.

$(1)\Leftrightarrow(5)$ We know that $Spec_F(A)$ and $Spec_F(R)$ are homeomorphic (by Lemma 6.11), then by using Corollary 9.8 and  Theorem 6.2 of \cite{Aghajani} we get the following equivalences: $A$ is an $mp$ - algebra iff $R$ is an $mp$ -ring iff $Spec_F(R)$ is normal iff $Spec_F(A)$ is normal.

$(1)\Leftrightarrow(6)$ This equivalence follows similarly, by using Corollary 9.8, Theorem 6.2 of \cite{Aghajani} and Lemma 9.13.

\end{proof}

\begin{propozitie} Let $A\in \mathcal{V}$ be an algebra such that $K(A)$ is closed under commutator operation. Then the following equivalences hold:
\usecounter{nr}
\begin{list}{(\arabic{nr})}{\usecounter{nr}}
\item $A$ is an $mp$ - algebra;
\item Any minimal prime congruence of $A$ is $w$ - pure.
\end{list}
\end{propozitie}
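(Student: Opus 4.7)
(Plan.) The strategy is to transfer the statement through the reticulation and reduce it to a known characterization of conormal lattices. The key correspondences I would use are: (a) by Proposition 3.4, the map $\phi \mapsto \phi^{\ast}$ restricts to a bijection $Min(A) \to Min_{Id}(L(A))$, since it is an order-isomorphism between $(Spec(A),\subseteq)$ and $(Spec_{Id}(L(A)),\subseteq)$; (b) by Theorem 5.3, a congruence $\theta \in Con(A)$ is $w$-pure iff $\theta^{\ast}$ is a $\sigma$-ideal of $L(A)$, and dually $J_{\ast}$ is $w$-pure for any $\sigma$-ideal $J$ of $L(A)$; and (c) by Lemma 3.3(4), $(\phi^{\ast})_{\ast} = \phi$ whenever $\phi \in Spec(A)$, so the two directions in Theorem 5.3 combine to give: for $\phi \in Min(A)$, $\phi$ is $w$-pure iff $\phi^{\ast}$ is a $\sigma$-ideal of $L(A)$.

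Next, I would invoke Lemma 9.8, which says that $A$ is an $mp$-algebra iff $L(A)$ is a conormal lattice, together with the equivalence $(1)\Leftrightarrow(4)$ of Theorem 9.6 (Cornish), which says that a bounded distributive lattice $L$ is conormal iff every minimal prime ideal of $L$ is a $\sigma$-ideal. Chaining these equivalences yields: $A$ is an $mp$-algebra iff $L(A)$ is conormal iff every $P \in Min_{Id}(L(A))$ is a $\sigma$-ideal iff, using the bijection from (a), every $\phi^{\ast}$ with $\phi \in Min(A)$ is a $\sigma$-ideal iff, using (c), every $\phi \in Min(A)$ is $w$-pure.

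For the forward implication written out directly: given $\phi \in Min(A)$, Proposition 3.4 gives $\phi^{\ast} \in Min_{Id}(L(A))$; since $L(A)$ is conormal, $\phi^{\ast}$ is a $\sigma$-ideal by Theorem 9.6; since $\phi = (\phi^{\ast})_{\ast}$ by Lemma 3.3(4), Theorem 5.3(2) gives that $\phi$ is $w$-pure. For the converse, assume every $\phi \in Min(A)$ is $w$-pure; Theorem 5.3(1) then gives that every $\phi^{\ast}$ is a $\sigma$-ideal of $L(A)$, hence every minimal prime ideal of $L(A)$ is a $\sigma$-ideal (using the bijection again), so $L(A)$ is conormal by Theorem 9.6, and Lemma 9.8 gives that $A$ is an $mp$-algebra.

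I do not expect any real obstacle here: the proof is a clean composition of already-established transfer properties. The only point that needs minor care is verifying that the bijection $\phi \mapsto \phi^{\ast}$ from Proposition 3.4 genuinely restricts to $Min(A) \leftrightarrow Min_{Id}(L(A))$, which is immediate because $u$ and $v$ are mutually inverse order-isomorphisms between the prime spectra, hence they preserve and reflect minimality.
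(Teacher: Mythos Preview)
Your proof is correct and follows essentially the same approach as the paper: transfer to the reticulation via Lemma 9.8, apply Cornish's characterization (Theorem 9.6) that conormality is equivalent to every minimal prime ideal being a $\sigma$-ideal, and use Theorem 5.3 together with Lemma 3.3(4) to translate this back to $w$-purity of minimal prime congruences. The only difference is cosmetic: you make the bijection $Min(A)\leftrightarrow Min_{Id}(L(A))$ and the biconditional in (c) explicit, whereas the paper leaves these implicit.
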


\begin{proof}
$(1)\Rightarrow (2)$
Let $\phi$ be a minimal prime congruence of $A$, hence $\phi^{\ast}$ is a minimal prime ideal of the lattice $L(A)$. According to Lemma 9.8, $L(A)$ is a conormal lattice, hence, by Theorem 9.6(4), it follows that  $\phi^{\ast}$ is a $\sigma$ - ideal of $L(A)$. By Lemmas 3.3 and Theorem 5.3(2), $\phi =  (\phi^{\ast})_{\ast}$ is a $w$ - pure congruence of $A$.

$(2)\Rightarrow (1)$
In order to prove that $L(A)$ is a conormal lattice, assume that $P$ is a minimal prime ideal of $L(A)$, hence $P = \phi^{\ast}$, for some minimal prime congruence $\phi$ of $A$. By the hypothesis (2), $\phi^{\ast}$ is a $w$ - pure congruence of $A$. In accordance with Theorem 5.3(1), $P = \phi^{\ast}$ is a $\sigma$ - ideal of $L(A)$. Applying Theorem 9.6, it follows that $L(A)$ is a conormal lattice, so $A$ is an $mp$ - algebra (cf. Lemma 9.8).

\end{proof}

\begin{teorema}
For any algebra $A\in\mathcal{V}$ such that $K(A)$ is closed under the commutator operation the following equivalences hold:

\usecounter{nr}
\begin{list}{(\arabic{nr})}{\usecounter{nr}}
\item $A$ is an $mp$ - algebra;
\item For all congruences $\alpha, \beta\in K(A)$, $[\alpha,\beta]\subseteq \rho(\Delta_A)$ implies the equality $(\alpha\rightarrow \rho(\Delta_A))\lor (\beta\rightarrow \rho(\Delta_A)) = \nabla_A$;
\item For all congruences $\alpha, \beta\in K(A)$, we have the following equality:

$(\alpha\rightarrow \rho(\Delta_A))\lor (\beta\rightarrow \rho(\Delta_A)) = [\alpha,\beta]\rightarrow \rho(\Delta_A)$ ;
\item For any $\alpha\in K(A)$, $\alpha\rightarrow \rho(\Delta_A)$ is a $w$ - pure congruence of $A$.
\end{list}
\end{teorema}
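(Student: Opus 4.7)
The plan is to deduce all four equivalences from Lemma 9.8 (which states $A$ is $mp$ iff $L(A)$ is conormal) and Theorem 9.6 (the equivalent characterizations of conormal lattices). The conditions (2), (3), (4) will be shown to be the algebra-side translations, via reticulation, of the lattice-side conditions 9.6(5), 9.6(6), 9.6(7) respectively. The translations rely crucially on Proposition 3.9 (more precisely, on the fact that for $\alpha\in K(A)$, $\alpha^{\ast}=(\lambda_A(\alpha)]$ by Lemma 3.3(7), so $Ann(\lambda_A(\alpha))=(\alpha\rightarrow\rho(\Delta_A))^{\ast}$), together with Lemma 3.2(2) and Lemma 3.3(2).

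Before tackling the equivalences I would record a preliminary fact: for $\alpha\in K(A)$, the congruence $\alpha\rightarrow\rho(\Delta_A)$ is radical. Using the description of the radical from Proposition 2.8, any compact $\gamma$ with $[\gamma,\gamma]^n\subseteq\alpha\rightarrow\rho(\Delta_A)$ satisfies $\lambda_A(\gamma)\wedge\lambda_A(\alpha)=\lambda_A([\gamma,\gamma]^n)\wedge\lambda_A(\alpha)=\lambda_A([[\gamma,\gamma]^n,\alpha])=0$ by Lemma 3.1(5),(6), whence $[\gamma,\alpha]\subseteq\rho(\Delta_A)$, i.e. $\gamma\subseteq\alpha\rightarrow\rho(\Delta_A)$. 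With this in hand, $(1)\Leftrightarrow(2)$ follows by translating 9.6(5): $\lambda_A(\alpha)\wedge\lambda_A(\beta)=0$ corresponds to $[\alpha,\beta]\subseteq\rho(\Delta_A)$, while the conclusion $Ann(\lambda_A(\alpha))\vee Ann(\lambda_A(\beta))=L(A)$ translates through $(\cdot)_{\ast}$, Lemma 3.3(2) and Lemma 2.7(3) to the equality $(\alpha\rightarrow\rho(\Delta_A))\vee(\beta\rightarrow\rho(\Delta_A))=\nabla_A$. For $(1)\Leftrightarrow(4)$, Theorem 9.6(7) asserts that $Ann(\lambda_A(\alpha))=(\alpha\rightarrow\rho(\Delta_A))^{\ast}$ is a $\sigma$-ideal of $L(A)$; by Theorem 5.3 this is equivalent to $((\alpha\rightarrow\rho(\Delta_A))^{\ast})_{\ast}=\rho(\alpha\rightarrow\rho(\Delta_A))=\alpha\rightarrow\rho(\Delta_A)$ being $w$-pure, where the last equality uses that $\alpha\rightarrow\rho(\Delta_A)$ is radical.

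The implication $(3)\Rightarrow(2)$ is immediate: if $[\alpha,\beta]\subseteq\rho(\Delta_A)$, then $[\alpha,\beta]\rightarrow\rho(\Delta_A)=\nabla_A$, and (3) yields the desired equality. The genuine content is $(2)\Rightarrow(3)$, which I would prove directly in $Con(A)$. The inclusion $(\alpha\rightarrow\rho(\Delta_A))\vee(\beta\rightarrow\rho(\Delta_A))\subseteq[\alpha,\beta]\rightarrow\rho(\Delta_A)$ is automatic from residuation and the fact that $[\alpha,\beta]\subseteq\alpha\cap\beta$. For the reverse inclusion, given a compact $\gamma\subseteq[\alpha,\beta]\rightarrow\rho(\Delta_A)$, i.e. $[\gamma,[\alpha,\beta]]\subseteq\rho(\Delta_A)$, I would apply (2) to the compact pair $\alpha,[\beta,\gamma]$: a short computation in $L(A)$ using Lemma 3.1(2),(6) gives $\lambda_A([\alpha,[\beta,\gamma]])=\lambda_A(\alpha)\wedge\lambda_A(\beta)\wedge\lambda_A(\gamma)=\lambda_A([\gamma,[\alpha,\beta]])=0$, so $[\alpha,[\beta,\gamma]]\subseteq\rho(\Delta_A)$. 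Hypothesis (2) then gives $(\alpha\rightarrow\rho(\Delta_A))\vee([\beta,\gamma]\rightarrow\rho(\Delta_A))=\nabla_A$. Using Proposition 2.1(3) and the distributivity of the commutator over arbitrary joins (Remark 2.3), $\gamma=[\gamma,\nabla_A]=[\gamma,\alpha\rightarrow\rho(\Delta_A)]\vee[\gamma,[\beta,\gamma]\rightarrow\rho(\Delta_A)]$.

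The first summand lies in $\alpha\rightarrow\rho(\Delta_A)$ because the commutator is below the meet. The hard part is to show that the second summand lies in $\beta\rightarrow\rho(\Delta_A)$: for any compact $\delta\subseteq[\gamma,[\beta,\gamma]\rightarrow\rho(\Delta_A)]\subseteq\gamma\cap([\beta,\gamma]\rightarrow\rho(\Delta_A))$ one has $\delta\subseteq\gamma$ and $[\delta,[\beta,\gamma]]\subseteq\rho(\Delta_A)$; moving to the reticulation, $\lambda_A(\delta)\le\lambda_A(\gamma)$ together with $\lambda_A(\delta)\wedge\lambda_A(\beta)\wedge\lambda_A(\gamma)=0$ collapses to $\lambda_A([\delta,\beta])=\lambda_A(\delta)\wedge\lambda_A(\beta)=0$, i.e. $[\delta,\beta]\subseteq\rho(\Delta_A)$, giving $\delta\subseteq\beta\rightarrow\rho(\Delta_A)$. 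Joining over all such $\delta$ yields the claim, and therefore $\gamma\subseteq(\alpha\rightarrow\rho(\Delta_A))\vee(\beta\rightarrow\rho(\Delta_A))$. The main obstacle is precisely this step: without associativity of the commutator one cannot directly reshuffle $[\gamma,[\alpha,\beta]]\subseteq\rho(\Delta_A)$; the workaround is to pass through the reticulation to verify commutator vanishings and to pick the right pair $\alpha,[\beta,\gamma]$ (rather than the naive pair $\alpha,\beta$) so that condition (2) becomes applicable.
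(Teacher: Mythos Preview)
Your proof is correct. The argument for $(2)\Leftrightarrow(3)$ is essentially the paper's: both pass through the reticulation to rearrange the triple commutator (you apply hypothesis $(2)$ to the pair $(\alpha,[\beta,\gamma])$, the paper to the pair $([\gamma,\alpha],\beta)$), and the remaining cleanup is a minor variant of the same decomposition $\gamma=[\gamma,\nabla_A]$.

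Where you diverge from the paper is in $(1)\Leftrightarrow(2)$ and $(1)\Leftrightarrow(4)$. The paper proves these directly in $Con(A)$: for $(1)\Rightarrow(2)$ it argues by contradiction via a maximal congruence and Proposition~9.15 (minimal primes are $w$-pure); for $(2)\Rightarrow(1)$ it uses Proposition~9.3 and Theorem~9.14(2) to separate two distinct minimal primes; and $(2)\Leftrightarrow(4)$ is unpacked straight from the definition of $w$-pure. You instead route everything through Lemma~9.8 and Cornish's Theorem~9.6, translating the lattice conditions back to $Con(A)$ via Proposition~3.9, Theorem~5.3, and your preliminary lemma that $\alpha\rightarrow\rho(\Delta_A)$ is radical. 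Your route leans more heavily on the transfer machinery and on the pre-existing lattice result, which makes the individual steps shorter but introduces an extra auxiliary fact (the radical lemma). The paper's direct route is more self-contained and shows explicitly how the $mp$-property interacts with the $w$-pure notion, at the cost of a slightly longer argument. Both are sound; yours is more in the ``transfer'' spirit that the paper advertises in its introduction.
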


\begin{proof}

$(1)\Rightarrow (2)$
Assume by absurdum that there exist $\alpha, \beta\in K(A)$, such that $[\alpha,\beta]\subseteq \rho(\Delta_A)$ and $(\alpha\rightarrow \rho(\Delta_A))\lor (\beta\rightarrow \rho(\Delta_A))\neq \nabla_A$, so there exists a maximal congruence $\psi$ such that $(\alpha\rightarrow \rho(\Delta_A))\lor (\beta\rightarrow \rho(\Delta_A))\subseteq \psi$. Let us consider a minimal prime congruence $\phi$ such that $\phi\subseteq \psi$. By Proposition 9.15, the congruence $\phi$ is $w$ - pure. From $[\alpha,\beta]\subseteq \rho(\Delta_A)\subseteq \phi$ it follows that $\alpha\subseteq \phi$ or $\beta\subseteq \phi$ (because $\phi$ is a prime congruence). Assume that $\alpha\subseteq \phi$, so $\phi\lor (\alpha\rightarrow \rho(\Delta_A)) = \nabla_A$ (because $\phi$ is $w$ - pure), contradicting $\phi\subseteq \psi$ and $(\alpha\rightarrow \rho(\Delta_A))\subseteq \psi$. Similarly, the inclusion $\beta\subseteq \phi$ provides a contradiction. Thus the property (1) implies the property (2).

$(2)\Rightarrow (1)$
Let $\phi, \psi$ be two distinct minimal prime congruences of $A$ hence there exists $\beta\in K(A)$ such that $\beta\subseteq \phi$ and $\beta\not\subseteq \psi$. By Proposition 9.3, from $\beta\subseteq \phi$ we get $\beta\rightarrow \rho(\Delta_A)\not\subseteq \phi$, so there exists $\alpha\in K(A)$ such that $\alpha\subseteq \beta\rightarrow \rho(\Delta_A)$ and $\alpha\not\subseteq \phi$. Thus $[\alpha,\beta]\subseteq \rho(\Delta_A)$, hence, by the hypothesis (2) one gets $(\alpha\rightarrow \rho(\Delta_A))\lor (\beta\rightarrow \rho(\Delta_A)) = \nabla_A$. This last equality implies that there exist $\gamma, \delta\in K(A)$ such that $\gamma\subseteq \alpha\rightarrow \rho(\Delta_A)$, $\delta\subseteq \beta\rightarrow \rho(\Delta_A)$ and $\gamma\lor \delta = \nabla_A$. From $[\alpha,\gamma]\subseteq \rho(\Delta_A)\subseteq \phi$ and $\alpha\not\subseteq \phi$ one obtains $\gamma\subseteq \phi$. In a similar way one can show that $\delta\subseteq \psi$, so $\phi\lor \psi = \nabla_A$. By applying Theorem 9.14 it follows that $A$ is an $mp$ - algebra.

$(2)\Rightarrow (3)$
Firstly, we observe that to establish the inclusion $(\alpha\rightarrow \rho(\Delta_A))\lor (\beta\rightarrow \rho(\Delta_A))\subseteq [\alpha,\beta]\rightarrow \rho(\Delta_A)$ is straightforward: from $[\alpha,\beta]\subseteq \alpha$ and $[\alpha,\beta]\subseteq \beta$ we get $\alpha\rightarrow \rho(\Delta_A)\subseteq [\alpha,\beta]\rightarrow \rho(\Delta_A)$ and $\beta\rightarrow \rho(\Delta_A)\subseteq [\alpha,\beta]\rightarrow \rho(\Delta_A)$, hence $(\alpha\rightarrow \rho(\Delta_A))\lor (\beta\rightarrow \rho(\Delta_A))\subseteq [\alpha,\beta]\rightarrow \rho(\Delta_A)$.

In order to prove the converse inclusion $[\alpha,\beta]\rightarrow \rho(\Delta_A)\subseteq (\alpha\rightarrow \rho(\Delta_A))\lor (\beta\rightarrow \rho(\Delta_A))$ let us consider a congruence $\gamma\in K(A)$ such that $\gamma\subseteq [\alpha,\beta]\rightarrow \rho(\Delta_A)$. Thus $[\gamma,[\alpha,\beta]]\subseteq \rho(\Delta_A)$, hence, by applying Lemma 3.1(6) one obtains $\lambda_A([\gamma,[\alpha,\beta]]) = 0$. By observing that $\lambda_A([[\gamma,\alpha],\beta])$ = $\lambda_A(\alpha)\land \lambda_A(\beta)\land \lambda_A(\gamma)$ = $\lambda_A([\gamma,[\alpha,\beta]]) = 0$ (cf. Lemma 3.1(2)) and by applying again Lemma 3.1(6) one gets $[[\gamma,\alpha],\beta]\subseteq \rho(\Delta_A)$. In accordance with the hypothesis (2), it follows that $([\gamma,\alpha]\rightarrow \rho(\Delta_A))\lor (\beta\rightarrow \rho(\Delta_A)) = \nabla_A$, so there exist $\delta,\epsilon \in K(A)$ such that $\delta \subseteq [\gamma,\alpha]\rightarrow \rho(\Delta_A)$, $\epsilon \subseteq \beta\rightarrow \rho(\Delta_A)$ and $\delta \lor\epsilon = \nabla_A$. From  $\delta \subseteq [\gamma,\alpha]\rightarrow \rho(\Delta_A)$ we get $\lambda_A([\gamma,\delta],\alpha])$ = $\lambda_A([\delta,[\gamma,\alpha]]) = 0$. Applying again Lemma 3.1(6), we obtain $[[\gamma,\delta],\alpha]\subseteq \rho(\Delta_A)$, hence $[\gamma,\delta]\subseteq \alpha\rightarrow\rho(\Delta_A)$. Therefore we have $\gamma = [\gamma,\delta\lor \epsilon] = [\gamma,\delta]\lor [\gamma,\epsilon]\subseteq [\gamma,\delta]\lor \epsilon \subseteq (\alpha\rightarrow \rho(\Delta_A))\lor (\beta\rightarrow \rho(\Delta_A))$. Hence $[\alpha,\beta]\rightarrow \rho(\Delta_A)\subseteq (\alpha\rightarrow \rho(\Delta_A))\lor (\beta\rightarrow \rho(\Delta_A))$, so the desired equality follows.

$(3)\Rightarrow (2)$
Assume that $\alpha, \beta\in K(A)$ and $[\alpha,\beta]\subseteq \rho(\Delta_A)$ therefore $[\alpha,\beta]\rightarrow \rho(\Delta_A) = \nabla_A$, so we get the equality $(\alpha\rightarrow \rho(\Delta_A))\lor (\beta\rightarrow \rho(\Delta_A)) = \nabla_A$.

$(4)\Rightarrow (2)$
Assume that $\alpha,\beta$ are two compact congruences of $A$ such that $[\alpha,\beta]\subseteq \rho(\Delta_A)$, so $\beta\subseteq\alpha\rightarrow \rho(\Delta_A)$. By taking into account that $\alpha\rightarrow \rho(\Delta_A)$ is $w$ - pure we obtain the equality $(\alpha\rightarrow \rho(\Delta_A))\lor (\beta\rightarrow \rho(\Delta_A)) = \nabla_A$.

$(2)\Rightarrow (4)$
Assume that $\beta$ is a compact congruence of $A$ and $\beta\subseteq \alpha\rightarrow \rho(\Delta_A)$, hence $[\alpha,\beta] \subseteq \rho(\Delta_A)$. Applying hypothesis (2) it follows that  $(\alpha\rightarrow \rho(\Delta_A))\lor (\beta\rightarrow \rho(\Delta_A)) = \nabla_A$,so $\alpha\rightarrow \rho(\Delta_A)$ is a $w$ - pure congruence of $A$.

\end{proof}

\begin{lema}
 For any algebra $A$ the following hold:
\usecounter{nr}
\begin{list}{(\arabic{nr})}{\usecounter{nr}}
\item If $\phi\in Spec(A)$ then $\rho(O(\phi))\subseteq\phi$;
\item If $A$ satisfies $(\star)$ and $\phi\in Spec(A)$ then $\phi\in Min(A)$ if and only if $\rho(O(\phi)) = \phi$.
\end{list}
\end{lema}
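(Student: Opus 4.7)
\textbf{Plan.} For part (1), the strategy is to first show the stronger containment $O(\phi)\subseteq\phi$ and then pass to radicals. If $\alpha\in K(A)$ satisfies $\alpha^{\perp}\not\subseteq\phi$, pick $\beta\in K(A)$ with $\beta\subseteq\alpha^{\perp}$, $\beta\not\subseteq\phi$; then $[\alpha,\beta]=\Delta_A\subseteq\phi$, and primality of $\phi$ forces $\alpha\subseteq\phi$. Since, for $\phi\in Spec(A)$, $O(\phi)=\bigvee\{\alpha\in K(A)\mid\alpha^{\perp}\not\subseteq\phi\}$, this gives $O(\phi)\subseteq\phi$, and then $\rho(O(\phi))\subseteq\rho(\phi)=\phi$ (the latter equality holds because $\phi\in Spec(A)$ appears in the defining intersection of $\rho(\phi)$).

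For the forward direction of part (2), assume $\phi\in Min(A)$. Combined with (1) it suffices to establish $\phi\subseteq\rho(O(\phi))$. Take $\alpha\in K(A)$ with $\alpha\subseteq\phi$; by Proposition 9.3, $\alpha\rightarrow\rho(\Delta_A)\not\subseteq\phi$, so there is $\beta\in K(A)$ with $\beta\subseteq\alpha\rightarrow\rho(\Delta_A)$ and $\beta\not\subseteq\phi$, which gives $[\alpha,\beta]\subseteq\rho(\Delta_A)$. Invoking hypothesis $(\star)$ via Lemma 2.10, choose an integer $m\geq0$ with $[[\alpha,\alpha]^m,[\beta,\beta]^m]=\Delta_A$. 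A short induction on $m$, using primality of $\phi$ at each step, shows $\beta\not\subseteq\phi$ implies $[\beta,\beta]^m\not\subseteq\phi$; consequently $([\alpha,\alpha]^m)^{\perp}\supseteq[\beta,\beta]^m\not\subseteq\phi$. By Lemma 5.6(1) applied to $[\alpha,\alpha]^m\in K(A)$, this yields $[\alpha,\alpha]^m\subseteq O(\phi)$, and hence $\alpha\subseteq\rho(O(\phi))$ by Lemma 2.7(7). Writing $\phi$ as the join of its compact subcongruences concludes.

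For the reverse direction of part (2), assume $\rho(O(\phi))=\phi$; I aim to verify the Proposition 9.3 criterion for minimality. Let $\alpha\in K(A)$ with $\alpha\subseteq\phi=\rho(O(\phi))$. By Proposition 2.8 there is $n\geq0$ with $[\alpha,\alpha]^n\subseteq O(\phi)$, so by Lemma 5.6(1), $\gamma:=([\alpha,\alpha]^n)^{\perp}\not\subseteq\phi$. Since $[[\alpha,\alpha]^n,\gamma]=\Delta_A$, applying $\lambda_A$ and using Lemma 3.1(5) one obtains $\lambda_A(\alpha)\wedge\lambda_A(\gamma)=\lambda_A([\alpha,\alpha]^n)\wedge\lambda_A(\gamma)=0$, whence $[\alpha,\gamma]\subseteq\rho(\Delta_A)$ by Lemma 3.1(6); thus $\gamma\subseteq\alpha\rightarrow\rho(\Delta_A)$, showing $\alpha\rightarrow\rho(\Delta_A)\not\subseteq\phi$. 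The converse implication in Proposition 9.3 (that $\alpha\rightarrow\rho(\Delta_A)\not\subseteq\phi$ forces $\alpha\subseteq\phi$) is automatic from primality, so the equivalence in Proposition 9.3 holds and $\phi\in Min(A)$.

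The main technical obstacle is the forward direction of (2): translating $[\alpha,\beta]\subseteq\rho(\Delta_A)$ into a strict equality $[[\alpha,\alpha]^m,[\beta,\beta]^m]=\Delta_A$ (where condition $(\star)$ is indispensable) and then carefully tracking that $[\beta,\beta]^m$ remains outside $\phi$ so that Lemma 5.6(1) can be applied to detect $[\alpha,\alpha]^m\subseteq O(\phi)$. The other steps are essentially bookkeeping within the reticulation.
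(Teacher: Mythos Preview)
Your argument for part (1) and for the forward direction of part (2) is essentially identical to the paper's (including the use of Proposition 9.3, Lemma 2.10 under $(\star)$, the primality step $\beta\not\subseteq\phi\Rightarrow[\beta,\beta]^m\not\subseteq\phi$, and Lemma 5.6(1)).

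For the reverse direction of (2) you take a genuinely different route. The paper does not go through Proposition 9.3 or the reticulation at all: it picks a minimal prime $\psi\subseteq\phi$, observes that any $\alpha\in K(A)$ with $\alpha\subseteq O(\phi)$ has $\alpha^{\perp}\not\subseteq\phi\supseteq\psi$ and hence $\alpha\subseteq\psi$ by primality, so $O(\phi)\subseteq\psi$; then $\phi=\rho(O(\phi))\subseteq\rho(\psi)=\psi\subseteq\phi$ forces $\phi=\psi\in Min(A)$. This is shorter and avoids both Proposition 2.8 and the map $\lambda_A$. Your approach, by contrast, reproves the Proposition 9.3 characterization from scratch using the reticulation; it works, but is more elaborate than necessary.

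There is one technical slip in your reverse direction: you write $\lambda_A(\gamma)$ for $\gamma=([\alpha,\alpha]^n)^{\perp}$, but $\lambda_A$ is only defined on $K(A)$ and $\gamma$ need not be compact. The fix is routine: since $\gamma\not\subseteq\phi$, choose $\delta\in K(A)$ with $\delta\subseteq\gamma$ and $\delta\not\subseteq\phi$; then $[[\alpha,\alpha]^n,\delta]=\Delta_A$, so $\lambda_A(\alpha)\wedge\lambda_A(\delta)=\lambda_A([\alpha,\alpha]^n)\wedge\lambda_A(\delta)=0$, giving $[\alpha,\delta]\subseteq\rho(\Delta_A)$ and hence $\delta\subseteq\alpha\rightarrow\rho(\Delta_A)\not\subseteq\phi$ as you wanted.
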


\begin{proof} (1) If $\phi\in Spec(A)$ then $O(\phi)\subseteq \phi$ implies $\rho(O(\phi))\subseteq\phi$.

(2) Assume that $A$ satisfies $(\star)$ and $\phi\in Spec(A)$. Firstly we suppose that $\phi\in Min(A)$ and we have to prove that $\phi\in Spec(A)$ implies $\rho(O(\phi)) = \phi$. By (1) it suffices to show that $\phi\subseteq \rho(O(\phi))$. Let $\alpha$ be a compact congruence of $A$ such that $\alpha\subseteq \phi$. In accordance with Proposition 9.3 we have $\alpha\rightarrow \rho(\Delta_A)\not\subseteq \phi$, so there exists $\beta\in K(A)$ such that $\beta\subseteq \alpha\rightarrow \rho(\Delta_A)$ and $\beta\not\subseteq \phi$. Then $[\alpha,\beta]\subseteq \rho(\Delta_A)$, so by applying Lemma 2.10 we get $[[\alpha,\alpha]^m,[\beta,\beta]^m] = \Delta_A$, for some integer $m\geq 0$. Since $\phi$ is a prime congruence, $\beta\not\subseteq \phi$ implies $[\beta,\beta]^m\not\subseteq \phi$. From $[\beta,\beta]^m\subseteq ([\alpha,\alpha ]^m)^{\perp}$ and $[\beta,\beta]^m\not\subseteq \phi$ we get $([\alpha,\alpha ]^m)^{\perp}\not\subseteq \phi$. By applying Lemma 5.6(1) we obtain $[\alpha,\alpha]^m\subseteq O(\phi)$, therefore $\alpha\subseteq \rho(O(\phi))$ (cf. Proposition 2.8). We conclude that $\phi\subseteq \rho(O(\phi))$.

Conversely, let us assume that $\rho(O(\phi)) = \phi$ and consider a minimal prime congruence $\psi$ such that $\psi\subseteq\phi$. Let $\alpha$ be a compact congruence of $A$ such that $\alpha\subseteq O(\phi)$, hence $\alpha^{\perp}\not\subseteq \phi$ (by Lemma 5.6(1)). Thus $\alpha^{\perp}\not\subseteq \psi$, hence $\alpha\subseteq \psi$ (because $\psi$ is a prime congruence). It results that $O(\phi)\subseteq \psi$, hence $\phi\subseteq\rho(O(\phi))\subseteq O(\psi)\subseteq \psi$. Since $\psi\in Min(A)$ we get $\phi = \psi$, therefore $\phi$ is a minimal prime congruence.

\end{proof}

\begin{teorema}
Let $A$ be an algebra that fulfills $(\star)$. Then the following are equivalent:
\usecounter{nr}
\begin{list}{(\arabic{nr})}{\usecounter{nr}}
\item $A$ is an $mp$ - algebra;
\item For all distinct $\phi,\psi\in Min(A)$ we have $O(\phi)\lor O(\psi) = \nabla_A$.
\end{list}
\end{teorema}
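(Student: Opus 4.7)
The plan is to derive this equivalence directly from Theorem 9.14 together with Lemma 9.18(2), which tell us that under $(\star)$, the minimal prime congruences are precisely the radicals of their own $O(\cdot)$-kernels. This reduces the whole statement to a translation between $\phi \lor \psi = \nabla_A$ and $O(\phi) \lor O(\psi) = \nabla_A$, mediated by the radical.

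For the implication $(2)\Rightarrow(1)$, I would proceed as follows. Assume $O(\phi)\lor O(\psi) = \nabla_A$ for every pair of distinct minimal primes. Since $O(\phi)\subseteq\phi$ and $O(\psi)\subseteq\psi$ hold for all prime congruences, we immediately obtain $\phi\lor\psi = \nabla_A$ for any two distinct $\phi,\psi\in Min(A)$. By the equivalence $(1)\Leftrightarrow(2)$ of Theorem 9.14, this is enough to conclude that $A$ is an $mp$-algebra. No use of $(\star)$ is needed here.

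For the implication $(1)\Rightarrow(2)$, suppose $A$ is an $mp$-algebra and let $\phi,\psi$ be two distinct minimal prime congruences. By Theorem 9.14 we have $\phi\lor\psi = \nabla_A$. Because $A$ satisfies $(\star)$, Lemma 9.18(2) applies to both $\phi$ and $\psi$, giving $\phi = \rho(O(\phi))$ and $\psi = \rho(O(\psi))$. Therefore
\[
\rho(O(\phi))\lor\rho(O(\psi)) \;=\; \phi\lor\psi \;=\; \nabla_A.
\]
Applying Lemma 2.7(6) (the characterization $\rho(\alpha)\lor\rho(\beta) = \nabla_A$ iff $\alpha\lor\beta = \nabla_A$) to the congruences $O(\phi)$ and $O(\psi)$ yields $O(\phi)\lor O(\psi) = \nabla_A$, as required.

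There is no serious obstacle: the whole argument is a two-line chase once one recognizes that Lemma 9.18(2) provides the exact bridge from $\phi$ to $O(\phi)$ in the minimal-prime setting, and that Lemma 2.7(6) lets one pass the identity $\nabla_A$ through the radical. The role of hypothesis $(\star)$ is entirely localized in Lemma 9.18(2); without $(\star)$ one would only have $\rho(O(\phi))\subseteq\phi$ and the $(1)\Rightarrow(2)$ direction would break down.
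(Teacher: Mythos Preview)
Your proof is correct and essentially identical to the paper's own argument: both directions invoke Theorem~9.14 for the characterization of $mp$-algebras via $\phi\lor\psi=\nabla_A$, the forward direction uses the identity $\rho(O(\phi))=\phi$ for minimal primes under $(\star)$ (this is Lemma~9.17(2) in the paper's numbering, not 9.18(2)) together with Lemma~2.7(6), and the reverse direction is the immediate containment $O(\phi)\subseteq\phi$. Your commentary on where $(\star)$ is used is also accurate.
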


\begin{proof}

$(1)\Rightarrow (2)$ Assume that $\phi$ and $\psi$ are two distinct minimal prime congruences of $A$, hence $\phi\lor\psi = \nabla_A$ (cf. Theorem 9.14). By applying Lemma 9.17(2) we have $\rho(O(\phi)) = \phi$ and  $\rho(O(\psi)) = \psi$, hence $\rho(O(\phi))\lor\rho(O(\psi)) = \nabla_A$. By Lemma 2.7(6) we obtain $O(\phi)\lor O(\psi) = \nabla_A$.

$(2)\Rightarrow (1)$ If $\phi$ and $\psi$ are two distinct minimal prime congruences of $A$ then $\nabla_A = O(\phi)\lor O(\psi)\subseteq \phi\lor \psi$. In accordance with Theorem 9.14 it follows that $A$ is an $mp$ - algebra.

\end{proof}

Recall from \cite{Al-Ezeh2} that a commutative ring $R$ is a $PF$ - ring if the annihilator of each element of $R$ is a pure ideal. We generalize this notion for algebras in the variety $\mathcal{V}$. An algebra $A\in\mathcal{V}$ is said to be a $PF$ - algebra if for all $\alpha\in PCon(A)$, $\alpha^{\perp}$ is a pure congruence.

\begin{lema}
For any algebra $A$ the following are equivalent:
\usecounter{nr}
\begin{list}{(\arabic{nr})}{\usecounter{nr}}
\item $A$ is a $PF$ - algebra;
\item For all $\alpha\in K(A)$, $\alpha^{\perp}$ is a pure congruence.
\end{list}
\end{lema}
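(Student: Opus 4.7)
The equivalence is genuinely an ``upgrade from principal to compact'' statement, and I expect only one direction to require any work.

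The implication $(2)\Rightarrow(1)$ is immediate: every principal congruence is compact, so $PCon(A)\subseteq K(A)$ and the hypothesis (2) specialises to (1).

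For $(1)\Rightarrow(2)$, the plan is to reduce to the principal case using the fact that every compact congruence is a finite join of principal congruences and that annihilators turn joins into intersections. Concretely, given $\alpha\in K(A)$, write $\alpha=\bigvee_{i=1}^{n}\alpha_{i}$ with $\alpha_{1},\ldots,\alpha_{n}\in PCon(A)$. Using the residuation property of $\rightarrow$ (and hence of $(\cdot)^{\perp}=(\cdot)\rightarrow\Delta_{A}$), which translates joins in the first argument into meets, one obtains
\[
\alpha^{\perp} \;=\; \Bigl(\bigvee_{i=1}^{n}\alpha_{i}\Bigr)^{\perp} \;=\; \bigcap_{i=1}^{n}\alpha_{i}^{\perp};
\]
this identity is exactly the one invoked inside the proof of Lemma 4.2, so no new calculation is needed.

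By hypothesis (1), each $\alpha_{i}^{\perp}$ is a pure congruence of $A$. Now I would invoke Lemma 4.7(4), which states that $(VCon(A),\vee,\bigcap,\Delta_{A},\nabla_{A})$ is a (spatial) frame; in particular the meet in $VCon(A)$ agrees with the set-theoretic intersection, so the class $VCon(A)$ is closed under finite intersections. Therefore $\alpha^{\perp}=\bigcap_{i=1}^{n}\alpha_{i}^{\perp}\in VCon(A)$, which is precisely assertion (2).

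There is no real obstacle: the only point worth checking carefully is the annihilator identity $(\bigvee_{i}\alpha_{i})^{\perp}=\bigcap_{i}\alpha_{i}^{\perp}$, which follows directly from the residuation property recorded in Remark 2.3 via the chain $\gamma\subseteq(\bigvee_{i}\alpha_{i})^{\perp}\iff[\bigvee_{i}\alpha_{i},\gamma]=\Delta_{A}\iff[\alpha_{i},\gamma]=\Delta_{A}$ for every $i$ $\iff\gamma\subseteq\bigcap_{i}\alpha_{i}^{\perp}$, using that the commutator distributes over arbitrary joins.
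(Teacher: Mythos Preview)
Your proof is correct and follows essentially the same route as the paper: decompose $\alpha\in K(A)$ as a finite join of principal congruences, use the annihilator identity $(\bigvee_i\alpha_i)^{\perp}=\bigcap_i\alpha_i^{\perp}$, and then invoke Lemma~4.7(4) to conclude that the finite intersection of pure congruences is pure. Your added justification of the annihilator identity via residuation and distributivity of the commutator is a welcome extra detail that the paper leaves implicit.
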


\begin{proof}
$(1)\Rightarrow (2)$ Assume that $A$ is a $PF$ - algebra. Let $\alpha\in K(A)$, so $\alpha = \bigvee_{i=1}^n\alpha_i$ with $\alpha_i\in PCon(A)$, for $i = 1,\cdots,n$. Thus $\alpha^{\perp} = \bigcap_{i=1}^n\alpha_i^{\perp}$ is a pure congruence (cf. Lemma 4.7.(4)), any finite intersection of pure congruences is a pure congruence).

$(2)\Rightarrow (1)$ Obviously.
\end{proof}

\begin{lema}
Any $PF$ - algebra is semiprime.
\end{lema}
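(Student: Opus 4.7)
The plan is to show that for every $\alpha \in K(A)$ and every integer $n \geq 0$, the equality $[\alpha,\alpha]^n = \Delta_A$ forces $\alpha = \Delta_A$; by the characterization of semiprimeness given right after Proposition 2.8, this suffices.

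First I would handle the base case $n = 1$. Suppose $\alpha \in K(A)$ satisfies $[\alpha,\alpha] = \Delta_A$. Then by definition of the annihilator, $\alpha \subseteq \alpha^{\perp}$. By Lemma 9.19, the hypothesis that $A$ is a $PF$-algebra guarantees that $\alpha^{\perp}$ is a pure congruence. Applying the definition of purity to the compact congruence $\alpha$ contained in $\alpha^{\perp}$ gives $\alpha^{\perp} \lor \alpha^{\perp} = \nabla_A$, i.e., $\alpha^{\perp} = \nabla_A$. Now using Proposition 2.2 and the identity $[\theta,\nabla_A] = \theta$ (Proposition 2.1(3)) valid in any semidegenerate congruence-modular variety, we get $\alpha = [\alpha,\nabla_A] = [\alpha,\alpha^{\perp}] = \Delta_A$.

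Next I would induct on $n$. Assuming the statement for $n - 1$, suppose $[\alpha,\alpha]^n = \Delta_A$ for some $\alpha \in K(A)$. Since the excerpt assumes $K(A)$ is closed under the commutator operation, $\beta := [\alpha,\alpha]^{n-1}$ lies in $K(A)$. By the definition of the iterated commutator, $[\beta,\beta] = [\alpha,\alpha]^n = \Delta_A$, so the base case applied to $\beta$ yields $\beta = \Delta_A$, i.e., $[\alpha,\alpha]^{n-1} = \Delta_A$. The induction hypothesis then gives $\alpha = \Delta_A$.

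Combining both cases, every compact congruence $\alpha$ satisfying $[\alpha,\alpha]^n = \Delta_A$ for some $n \geq 0$ must be $\Delta_A$, which by the observation following Proposition 2.8 is precisely the condition for $A$ to be semiprime. The only real subtlety is the base case, which crucially exploits the self-containment $\alpha \subseteq \alpha^{\perp}$ combined with the purity of $\alpha^{\perp}$; the inductive step is then a routine reduction using the closure of $K(A)$ under $[\cdot,\cdot]$.
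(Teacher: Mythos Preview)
Your proof is correct and follows essentially the same approach as the paper: both argue that if $[\beta,\beta]=\Delta_A$ for $\beta\in K(A)$, then $\beta\subseteq\beta^{\perp}$, whence purity of $\beta^{\perp}$ (Lemma~9.19) forces $\beta^{\perp}=\nabla_A$ and so $\beta=\Delta_A$, and then iterate this step downward in $n$. The only cosmetic differences are that the paper concludes via $\beta\subseteq\beta^{\perp\perp}=(\nabla_A)^{\perp}=\Delta_A$ rather than your $\beta=[\beta,\nabla_A]=[\beta,\beta^{\perp}]=\Delta_A$, and it phrases the descent as ``applying many times this argument'' rather than as a formal induction.
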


\begin{proof}
Assume that $A$ is a $PF$ - algebra. Let $\alpha\in K(A)$ and $n$ an integer such that $n \geq 0$ and $[\alpha,\alpha]^n = \Delta_A$. We shall prove that $\alpha = \Delta_A$. From $[[\alpha,\alpha]^{n-1},[\alpha,\alpha]^{n-1}] = [\alpha,\alpha]^n = \Delta_A$ it follows that $[\alpha,\alpha]^{n-1}\subseteq ([\alpha,\alpha]^{n-1})^{\perp}$. We remark that $[\alpha,\alpha]^{n-1}\in K(A)$, so by the previous lemma, $([\alpha,\alpha]^{n-1})^{\perp}$ is a pure congruence, hence $\nabla_A
 = ([\alpha,\alpha]^{n-1})^{\perp}\lor ([\alpha,\alpha]^{n-1})^{\perp} = ([\alpha,\alpha]^{n-1})^{\perp}$. Thus $[\alpha,\alpha]^{n-1}\subseteq ([\alpha,\alpha]^{n-1})^{{\perp}{\perp}} = \Delta_A$, hence $[\alpha,\alpha]^{n-1} = \Delta_A$. By applying many times this argument one obtains $\alpha = \Delta_A$.
\end{proof}

\begin{propozitie}
If $A$ is a $PF$ - algebra then the reticulation $L(A)$ is a conormal lattice.
\end{propozitie}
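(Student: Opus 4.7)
The plan is to verify one of the equivalent conditions for conormality from Theorem 9.6; concretely, I will show that for all $x,y\in L(A)$, $x\land y=0$ implies $Ann(x)\lor Ann(y)=L(A)$. Note first that, by Lemma 9.20, a $PF$-algebra is semiprime, so the semiprime-version properties of the reticulation (in particular Lemma 3.1(7)) are available.

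Fix $x,y\in L(A)$ with $x\land y=0$ and write $x=\lambda_A(\alpha)$, $y=\lambda_A(\beta)$ for some $\alpha,\beta\in K(A)$. Then $\lambda_A([\alpha,\beta])=\lambda_A(\alpha)\land\lambda_A(\beta)=0$ by Lemma 3.1(2), and semiprimeness together with Lemma 3.1(7) yield $[\alpha,\beta]=\Delta_A$, i.e.\ $\beta\subseteq\alpha^{\perp}$. Since $A$ is a $PF$-algebra, Lemma 9.19 tells us that $\alpha^{\perp}$ is a pure congruence of $A$. Applying purity to the compact congruence $\beta\subseteq\alpha^{\perp}$ gives $\alpha^{\perp}\lor\beta^{\perp}=\nabla_A$.

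Now I need to pass from this equality, whose members $\alpha^{\perp}$ and $\beta^{\perp}$ are not necessarily compact, to a witness by compact congruences, so that it can be pushed through $\lambda_A$. This is the small technical step on which the argument rests. Since the variety $\mathcal{V}$ is semidegenerate, $\nabla_A$ is compact; writing $\alpha^{\perp}$ and $\beta^{\perp}$ as joins of the compact congruences they contain and invoking the compactness of $\nabla_A$, one finds compact congruences $\gamma\subseteq\alpha^{\perp}$ and $\delta\subseteq\beta^{\perp}$ with $\gamma\lor\delta=\nabla_A$.

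To finish, $\gamma\subseteq\alpha^{\perp}$ gives $[\gamma,\alpha]\subseteq[\alpha^{\perp},\alpha]=\Delta_A$, so that $\lambda_A(\gamma)\land x=\lambda_A([\gamma,\alpha])=0$, i.e.\ $\lambda_A(\gamma)\in Ann(x)$; symmetrically $\lambda_A(\delta)\in Ann(y)$. Finally $\lambda_A(\gamma)\lor\lambda_A(\delta)=\lambda_A(\gamma\lor\delta)=\lambda_A(\nabla_A)=1$ by Lemma 3.1, (1) and (3), whence $Ann(x)\lor Ann(y)=L(A)$. By Theorem 9.6, $L(A)$ is conormal. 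The only potential obstacle is the compact-reduction step in the third paragraph, but it is entirely standard once the compactness of $\nabla_A$ (guaranteed by semidegeneracy) is used.
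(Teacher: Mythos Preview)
Your proof is correct. Both you and the paper invoke Lemma 9.20 to get semiprimeness and then verify one of the equivalent conditions in Theorem 9.6, but you target a different clause: you prove condition (5) (that $x\land y=0$ implies $Ann(x)\lor Ann(y)=L(A)$) by a direct hands-on argument using purity of $\alpha^{\perp}$ and compactness of $\nabla_A$, whereas the paper proves condition (7) (that every $Ann(x)$ is a $\sigma$-ideal) by first identifying $Ann(\lambda_A(\alpha))=(\alpha^{\perp})^{\ast}$ via Proposition 3.9 and Lemma 3.3(7), and then applying the transfer result Proposition 4.15 (pure congruences map to $\sigma$-ideals). Your route is slightly more elementary and self-contained, as it avoids the annihilator identification and the pure-to-$\sigma$-ideal transfer; the paper's route, on the other hand, fits its overall theme of proving results by composing previously established transfer properties of the reticulation.
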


\begin{proof}
By Lemma 9.20, the $PF$ - algebra $A$ is semiprime. Let $x\in L(A)$, so there exists $\alpha\in K(A)$ such that $x = \lambda_A(\alpha)$. Taking into account Proposition 3.9 and Lemma 3.3(7) we have $Ann(x) = Ann(\lambda_A(\alpha)) = Ann((\lambda_A(\alpha)]) = Ann(\alpha^{\ast}) = (\alpha^{\perp})^{\ast}$. Since $A$ is a $PF$ - algebra, $\alpha^{\perp}$ is a pure congruence. By Proposition 4.15, $Ann(x) = (\alpha^{\perp})^{\ast}$ is a $\sigma$ - ideal of the lattice $L(A)$. According to Theorem 9.6, it follows that $L(A)$ is conormal.

\end{proof}

\begin{teorema}
 For any algebra $A$ the following are equivalent:
\usecounter{nr}
\begin{list}{(\arabic{nr})}{\usecounter{nr}}
\item $A$ is a $PF$ - algebra;
\item $A$ is a semiprime $mp$ - algebra.
\end{list}
\end{teorema}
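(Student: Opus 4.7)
The forward direction is essentially assembled from results already in the excerpt. Assuming $A$ is a $PF$-algebra, Lemma 9.20 gives semiprimality, Proposition 9.21 gives that $L(A)$ is a conormal lattice, and Lemma 9.8 translates this back into the statement that $A$ is an $mp$-algebra. So the plan for $(1) \Rightarrow (2)$ is simply to cite these three results in sequence.

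For $(2) \Rightarrow (1)$ the plan is to transport the characterization of conormality from $L(A)$ to $A$ via the reticulation dictionary developed in Sections 3 and 4. Assume $A$ is semiprime and an $mp$-algebra. By Lemma 9.8, $L(A)$ is conormal, and by Theorem 9.6(7), the annihilator of every element of $L(A)$ is a $\sigma$-ideal. Fix $\alpha \in K(A)$ (it suffices to work with compact congruences, by Lemma 9.19). Using Lemma 3.3(7), $\alpha^{\ast} = (\lambda_A(\alpha)]$, and Proposition 3.9 (applied in the semiprime case) gives the crucial identity $Ann(\lambda_A(\alpha)) = Ann(\alpha^{\ast}) = (\alpha^{\perp})^{\ast}$. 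Hence $(\alpha^{\perp})^{\ast}$ is a $\sigma$-ideal of $L(A)$.

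Now I pull this $\sigma$-ideal back through the operator $(\cdot)_{\ast}$. By Proposition 4.18, since $A$ is semiprime, $((\alpha^{\perp})^{\ast})_{\ast}$ is a pure congruence of $A$. By Lemma 3.3(2), this congruence equals $\rho(\alpha^{\perp})$, so $\rho(\alpha^{\perp})$ is pure. Finally I invoke Lemma 4.20 (semiprime algebras: if $\rho(\theta)$ is pure then $\theta$ is pure) with $\theta = \alpha^{\perp}$ to conclude that $\alpha^{\perp}$ itself is pure. This shows $A$ is a $PF$-algebra.

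The only subtle step is the last one: one might be tempted to end at ``$\rho(\alpha^{\perp})$ is pure'' and hope this implies $\alpha^{\perp}$ is pure directly. In general this inference requires semiprimality of $A$, which is why Lemma 4.20 is formulated precisely for that setting. Thus the hypothesis ``semiprime'' in (2) is used twice: once to invoke Proposition 3.9 in the form $Ann(\alpha^{\ast}) = (\alpha^{\perp})^{\ast}$, and once to apply Lemma 4.20 to descend from $\rho(\alpha^{\perp})$ to $\alpha^{\perp}$.
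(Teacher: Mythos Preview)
Your proof is correct. The forward direction $(1)\Rightarrow(2)$ is identical to the paper's.

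For $(2)\Rightarrow(1)$ your argument and the paper's diverge in how they descend from the lattice $L(A)$ back to $A$. The paper verifies purity of $\alpha^{\perp}$ directly from the definition: given $\beta\in K(A)$ with $\beta\subseteq\alpha^{\perp}$, it observes that $\lambda_A(\beta)\in Ann(\lambda_A(\alpha))$, invokes conormality of $L(A)$ to get $Ann(\lambda_A(\alpha))\lor Ann(\lambda_A(\beta)) = L(A)$, and then pushes this join equality back through $(\cdot)_{\ast}$ via Lemma 3.2(3), Proposition 3.9 and Lemma 2.7 to obtain $\alpha^{\perp}\lor\beta^{\perp} = \nabla_A$. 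Your route is more modular: you identify $(\alpha^{\perp})^{\ast}$ with the $\sigma$-ideal $Ann(\lambda_A(\alpha))$, then apply the prepackaged transfer results Proposition 4.18 (semiprime case: $\sigma$-ideals pull back to pure congruences) and Lemma 4.20 (semiprime case: $\rho(\theta)$ pure implies $\theta$ pure) to conclude. Both arguments rely on the same reticulation dictionary and the same conormality of $L(A)$; yours hides the elementwise verification inside Section~4's machinery, while the paper unwinds it explicitly. Neither approach offers a material advantage in generality, but yours makes clearer that $(2)\Rightarrow(1)$ is a formal consequence of the $\sigma$-ideal/pure-congruence correspondence already established in Section~4.
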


\begin{proof}

$(1)\Rightarrow (2)$ By Lemmas 9.8, 9.20 and Proposition 9.21.

$(2)\Rightarrow (1)$ Assume that $A$ is a semiprime $mp$ - algebra and $\alpha\in K(A)$. We shall prove that $\alpha^{\perp}$ is a pure congruence of $A$. Let $\beta\in K(A)$ such that $\beta\subseteq\alpha^{\perp}$, so $\lambda_A(\alpha)\land\lambda_A(\beta) = \lambda_A([\alpha,\beta]) = \lambda_A(\Delta_A) = 0$, i.e. $\lambda_A(\beta)\in Ann(\alpha^{\ast}) = Ann(\lambda_A(\alpha))$. By Lemma 9.8, the reticulation $L(A)$ is a conormal lattice, hence $Ann(\lambda_A(\alpha))$ is a $\sigma$ - ideal of $L(A)$ (by Theorem 9.6). Consequently, the following equalities hold: $Ann(\alpha^{\ast})\lor Ann(\beta^{\ast}) = Ann(\lambda_A(\alpha))\lor Ann(\lambda_A(\beta)) = L(A)$.

In accordance with Lemma 3.3(4), Proposition 3.9 and Lemma 3.2(3) we have:

$\rho(\rho(\alpha^{\perp})\lor\rho(\beta^{\perp}))$ = $\rho(((\alpha^{\perp})^{\ast})_{\ast}\lor ((\beta^{\perp})^{\ast})_{\ast})$ = $\rho((Ann(\alpha^{\ast}))_{\ast}\lor(Ann(\beta^{\ast}))_{\ast})$ = $(Ann(\alpha^{\ast})\lor Ann(\beta^{\ast}))_{\ast}$ = $(L(A))_{\ast} = \nabla_A$.

By applying Lemma 2.7,(3) and (6) we get $\alpha^{\perp}\lor\beta^{\perp} = \nabla_A$, hence $\alpha^{\perp}$ is a pure congruence of $A$. Thus $A$ is a $PF$ - algebra (cf. Lemma 9.19).

\end{proof}

\begin{corolar}
If $A$ is a semiprime algebra then $A$ is a $PF$ - algebra if and only if any minimal prime congruence of $A$ is pure.
\end{corolar}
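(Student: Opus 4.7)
The plan is to derive this corollary as a direct combination of Theorem 9.22, Proposition 9.15, and the observation (made right after Lemma 5.1) that in a semiprime algebra the notions of pure congruence and $w$-pure congruence coincide. Since the hypothesis $A$ semiprime is already assumed, no extra work is needed to establish semiprimeness on one side.

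First I would invoke Theorem 9.22, which says that $A$ is a $PF$-algebra if and only if $A$ is a semiprime $mp$-algebra. Under the standing assumption that $A$ is semiprime, this reduces the statement to the equivalence: $A$ is an $mp$-algebra if and only if every minimal prime congruence of $A$ is pure. Next I would apply Proposition 9.15, according to which $A$ is an $mp$-algebra exactly when every minimal prime congruence of $A$ is $w$-pure.

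To finish, I would use the remark following Lemma 5.1: in a semiprime algebra, $w$-purity and purity coincide. Concretely, if $\theta$ is $w$-pure and $\alpha\in K(A)$ with $\alpha\subseteq\theta$, then $\theta\lor(\alpha\rightarrow\rho(\Delta_A))=\nabla_A$; but $\rho(\Delta_A)=\Delta_A$ by semiprimeness, so $\alpha\rightarrow\rho(\Delta_A)=\alpha^{\perp}$ and thus $\theta\lor\alpha^{\perp}=\nabla_A$, which is the defining condition for purity; the converse inclusion (pure $\Rightarrow$ $w$-pure) is Lemma 5.1. Applying this equivalence to the minimal prime congruences of $A$ converts the condition "every minimal prime congruence is $w$-pure" into "every minimal prime congruence is pure", completing the chain of equivalences.

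There is no real obstacle in this proof: it is a bookkeeping assembly of three results already in the paper. The only point requiring care is to make explicit the reduction $\alpha\rightarrow\rho(\Delta_A)=\alpha^{\perp}$ in the semiprime case, which justifies the collapse of $w$-purity onto purity and is what makes the corollary a clean specialization of Proposition 9.15 via Theorem 9.22.
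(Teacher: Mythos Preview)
Your proposal is correct and follows essentially the same route as the paper's proof: the paper also invokes Theorem 9.22 and Proposition 9.15 together with the observation that in a semiprime algebra pure and $w$-pure congruences coincide. Your write-up even spells out the reduction $\alpha\rightarrow\rho(\Delta_A)=\alpha^{\perp}$ more explicitly than the paper does.
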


\begin{proof}
In a semiprime algebra, the pure and $w$ - pure congruences coincide, so by applying Proposition 9.15 and Theorem 9.22 we get the desired equivalence.
\end{proof}

\begin{corolar} For any algebra $A$ the following are equivalent:
\usecounter{nr}
\begin{list}{(\arabic{nr})}{\usecounter{nr}}
\item $A$ is a $PF$ - algebra;
\item For all congruences $\alpha, \beta\in K(A)$, $[\alpha,\beta] = \Delta_A$ implies the equality $\alpha^{\perp}\lor \beta^{\perp} = \nabla_A$;
\item For all congruences $\alpha, \beta\in K(A)$, the
 equality $\alpha^{\perp}\lor \beta^{\perp} = [\alpha,\beta]^{\perp}$ holds;
\item For any $\alpha\in K(A)$, $\alpha^{\perp}$ is a pure congruence of $A$.
\end{list}
\end{corolar}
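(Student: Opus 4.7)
The plan is to deduce the four-way equivalence from three ingredients already established in this section. First, (1) $\Leftrightarrow$ (4) is simply Lemma 9.19 restated: a $PF$-algebra is precisely one whose compact annihilators $\alpha^{\perp}$ (for $\alpha \in K(A)$) are all pure.

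Next I would handle (1) $\Rightarrow$ (3). A $PF$-algebra is semiprime by Lemma 9.20 and is an $mp$-algebra by Theorem 9.22. Semiprimeness gives $\rho(\Delta_A) = \Delta_A$, whence $\alpha \rightarrow \rho(\Delta_A) = \alpha^{\perp}$ and $[\alpha,\beta] \rightarrow \rho(\Delta_A) = [\alpha,\beta]^{\perp}$ for all $\alpha, \beta \in K(A)$. Substituting these into Theorem 9.16(3) yields condition (3) here. The step (3) $\Rightarrow$ (2) is immediate, because $[\alpha,\beta] = \Delta_A$ forces $[\alpha,\beta]^{\perp} = \nabla_A$, and the assumed equality then gives $\alpha^{\perp} \lor \beta^{\perp} = \nabla_A$.

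The final implication (2) $\Rightarrow$ (1) is where the real work lies. By Theorem 9.22 it suffices to show that $A$ is both semiprime and an $mp$-algebra. To see that (2) forces semiprimeness, I would fix $\alpha \in K(A)$ with $[\alpha,\alpha]^n = \Delta_A$ and argue by strong induction on $n \geq 1$ that $\alpha = \Delta_A$. In the base case $n = 1$, condition (2) applied with $\beta = \alpha$ yields $\alpha^{\perp} = \alpha^{\perp} \lor \alpha^{\perp} = \nabla_A$, whence $\alpha = [\alpha,\nabla_A] = [\alpha,\alpha^{\perp}] = \Delta_A$. For $n \geq 2$ one sets $\beta = [\alpha,\alpha]^{n-1} \in K(A)$ (using that $K(A)$ is closed under the commutator by standing assumption), notes that $[\beta,\beta] = [\alpha,\alpha]^n = \Delta_A$, and applies the base case to conclude $[\alpha,\alpha]^{n-1} = \Delta_A$; the inductive hypothesis then delivers $\alpha = \Delta_A$. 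Once semiprimeness is in hand, $\rho(\Delta_A) = \Delta_A$ reduces condition (2) to condition (2) of Theorem 9.16, so $A$ is an $mp$-algebra and (1) follows via Theorem 9.22.

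The principal obstacle is this last extraction of semiprimeness from the apparently weak hypothesis (2); the remaining implications are effectively translations of results proved earlier. The inductive argument itself is straightforward once one remembers that $K(A)$ is stable under $[\cdot,\cdot]$, so that each iterated commutator $[\alpha,\alpha]^k$ remains a compact congruence to which (2) may be applied.
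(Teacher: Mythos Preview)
Your proof is correct, but for the implication $(2)\Rightarrow(1)$ you work harder than the paper does. The paper simply observes that $(2)\Leftrightarrow(4)$ is immediate from the definition of purity: for $\alpha\in K(A)$, the congruence $\alpha^{\perp}$ is pure precisely when every $\beta\in K(A)$ with $\beta\subseteq\alpha^{\perp}$ (equivalently, $[\alpha,\beta]=\Delta_A$) satisfies $\alpha^{\perp}\lor\beta^{\perp}=\nabla_A$, which is exactly condition (2). Since $(1)\Leftrightarrow(4)$ is Lemma 9.19, this closes the cycle without any detour through semiprimeness or Theorem 9.22.

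Your route via semiprimeness is valid and essentially reproves the content of Lemma 9.20 starting from the weaker hypothesis (2) instead of (1); the induction on $n$ is fine because $K(A)$ is closed under the commutator by standing assumption. What the paper's shortcut buys is economy: once you see that (2) is a verbatim restatement of ``$\alpha^{\perp}$ is pure for every $\alpha\in K(A)$'', the equivalence $(2)\Leftrightarrow(1)$ collapses to Lemma 9.19 and no separate semiprimeness argument is needed. Conversely, your argument makes explicit that condition (2) alone already forces semiprimeness, which is a fact the paper's proof leaves implicit (it is hidden inside the chain $(2)\Rightarrow(4)\Rightarrow(1)\Rightarrow$ Lemma 9.20).
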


\begin{proof}

$(1)\Rightarrow (3)$ By Theorem 9.16 and Lemma 9.20.

$(3)\Rightarrow (2)$ Obvious.

$(1)\Leftrightarrow (4)$ By Lemma 9.19.

$(2)\Leftrightarrow (4)$ Obvious.

\end{proof}

\begin{propozitie}
Let $A$ be a $PF$ - algebra. If $\phi\in Spec(A)$ then $O(\phi)$ is a minimal prime congruence of $A$.
\end{propozitie}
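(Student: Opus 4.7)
The plan is to combine three results already established in the paper: Theorem 9.22, Proposition 5.10, and the defining property of $mp$-algebras. By Theorem 9.22, a $PF$-algebra is exactly a semiprime $mp$-algebra, so $A$ is simultaneously semiprime and has the property that each prime congruence of $A$ contains a unique minimal prime congruence.

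First I would use semiprimeness together with Proposition 5.10 to obtain the description
\[
O(\phi) \;=\; \bigcap \Lambda(\phi) \;=\; \bigcap\{\psi\in Spec(A)\mid\psi\subseteq\phi\}.
\]
Next I would use the $mp$-property: let $\psi_0$ be the unique minimal prime congruence of $A$ contained in $\phi$. For any $\psi\in\Lambda(\phi)$, apply the $mp$-property to $\psi$ to obtain the unique minimal prime $\psi'\subseteq\psi$; since $\psi'\subseteq\psi\subseteq\phi$, the uniqueness clause for $\phi$ forces $\psi'=\psi_0$, whence $\psi_0\subseteq\psi$. Taking the intersection over all $\psi\in\Lambda(\phi)$ gives $\psi_0\subseteq\bigcap\Lambda(\phi)=O(\phi)$. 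Conversely $\psi_0\in\Lambda(\phi)$, so $O(\phi)=\bigcap\Lambda(\phi)\subseteq\psi_0$. Therefore $O(\phi)=\psi_0$, a minimal prime congruence of $A$.

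There is no real obstacle: the only subtlety is to be careful that the $mp$-property has to be invoked twice — once to produce $\psi_0$ under $\phi$, and once to show every $\psi\in\Lambda(\phi)$ contains $\psi_0$ (by comparing the unique minimal prime under $\psi$ with the unique one under $\phi$). All the heavy lifting (the identification $O(\phi)=\bigcap\Lambda(\phi)$ for semiprime algebras, and the characterization $PF\Leftrightarrow$ semiprime $mp$) has already been done in the preceding sections, so the proof reduces to a brief two-inclusion argument.
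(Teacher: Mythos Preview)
Your proof is correct, but it follows a different route from the paper's. Both arguments start from the same two ingredients --- Lemma 9.20 / Theorem 9.22 (a $PF$-algebra is semiprime) and Proposition 5.10 (for semiprime $A$, $O(\phi)=\bigcap\Lambda(\phi)$) --- but diverge thereafter. The paper shows directly that $O(\phi)$ is prime: if $[\alpha,\beta]\subseteq O(\phi)$ then by Lemma 5.6(1) one has $[\alpha,\beta]^{\perp}\not\subseteq\phi$, and the annihilator identity $[\alpha,\beta]^{\perp}=\alpha^{\perp}\lor\beta^{\perp}$ from Corollary 9.24(3) forces $\alpha^{\perp}\not\subseteq\phi$ or $\beta^{\perp}\not\subseteq\phi$, i.e.\ $\alpha\subseteq O(\phi)$ or $\beta\subseteq O(\phi)$. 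Minimality then follows from $O(\phi)=\bigcap\Lambda(\phi)$. You instead invoke the $mp$-half of Theorem 9.22 and argue order-theoretically: the unique minimal prime $\psi_0$ below $\phi$ lies below every $\psi\in\Lambda(\phi)$, so $\psi_0=\bigcap\Lambda(\phi)=O(\phi)$. Your argument is shorter and more conceptual, and it avoids the algebraic computation with annihilators; the paper's argument has the advantage of exhibiting explicitly \emph{why} $O(\phi)$ is prime (via the key identity of Corollary 9.24), which is reused in Proposition 9.27.
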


\begin{proof}
According to Proposition 5.10, it suffices to prove that $O(\phi)$ is a prime congruence of $A$. Consider $\alpha,\beta\in K(A)$ such that $[\alpha,\beta]\subseteq O(\phi)$, hence $[\alpha,\beta]^{\perp}\not\subseteq\phi$ (by Lemma 5.6(1)). By Corollary 9.24(3) we have $\alpha^{\perp}\lor \beta^{\perp} = [\alpha,\beta]^{\perp}$, so $\alpha^{\perp}\lor \beta^{\perp}\not\subseteq\phi$, hence $\alpha^{\perp}\not\subseteq\phi$ or $\beta^{\perp}\not\subseteq\phi$. By using again Lemma 5.6(1) we obtain $\alpha\subseteq O(\phi)$ or $\beta\subseteq O(\phi)$, hence $O(\phi)$ is a prime congruence.

\end{proof}

\begin{lema}
For any algebra $A$ we have $\bigcap\{O(\phi)|\phi\in Max(A)\} = \Delta_A$.

\end{lema}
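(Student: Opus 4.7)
The plan is to reduce the claim to a statement about compact (in particular, principal) congruences, and then to exploit Lemma 5.6(1) together with the semidegeneracy hypothesis to force any such compact congruence to be trivial.

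First I would take an arbitrary pair $(a,b)\in \bigcap\{O(\phi)\mid \phi\in Max(A)\}$ and consider the principal (hence compact) congruence $\alpha=Cg_A(a,b)$. Since each $O(\phi)$ is a congruence containing $(a,b)$, we get $\alpha\subseteq O(\phi)$ for every $\phi\in Max(A)$. By Lemma 5.6(1), this translates into $\alpha^{\perp}\not\subseteq \phi$ for every $\phi\in Max(A)$.

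Next I would use the standard fact recalled in Section 2 that every proper congruence of $A$ is contained in some maximal congruence. Consequently, since $\alpha^{\perp}$ avoids every maximal congruence, it cannot be proper, i.e.\ $\alpha^{\perp}=\nabla_A$. Using the defining formula $\alpha^{\perp}=\bigvee\{\gamma\mid [\alpha,\gamma]=\Delta_A\}$ and the fact that the commutator distributes over arbitrary joins (Section 2), I would obtain $[\alpha,\nabla_A]=\bigvee\{[\alpha,\gamma]\mid [\alpha,\gamma]=\Delta_A\}=\Delta_A$. On the other hand, since $\mathcal{V}$ is semidegenerate and congruence modular, Proposition 2.2 yields $[\nabla_A,\nabla_A]=\nabla_A$, and then by the equivalence in Proposition 2.1 we have $[\theta,\nabla_A]=\theta$ for every $\theta\in Con(A)$; applying this with $\theta=\alpha$ gives $\alpha=[\alpha,\nabla_A]=\Delta_A$, whence $a=b$.

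This shows every element of $\bigcap\{O(\phi)\mid \phi\in Max(A)\}$ is a diagonal pair, so the intersection equals $\Delta_A$. There is no real obstacle here: the argument is essentially a one-line consequence of Lemma 5.6(1) combined with the semidegeneracy-driven identity $[\alpha,\nabla_A]=\alpha$. The only subtle point is remembering that in the semidegenerate setting $\alpha^{\perp}=\nabla_A$ forces $\alpha=\Delta_A$ rather than merely $[\alpha,\alpha^{\perp}]=\Delta_A$, which is precisely what Propositions 2.1 and 2.2 guarantee.
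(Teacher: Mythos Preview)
Your proof is correct and follows essentially the same approach as the paper: both arguments use Lemma 5.6(1) together with the fact that every proper congruence lies in a maximal one to deduce $\alpha^{\perp}=\nabla_A$, and then invoke the semidegenerate identity $[\alpha,\nabla_A]=\alpha$ to conclude $\alpha=\Delta_A$. The only cosmetic difference is that the paper works with an arbitrary compact $\alpha$ and finishes via the double annihilator $\alpha\subseteq\alpha^{\perp\perp}=(\nabla_A)^{\perp}=\Delta_A$, whereas you work with a principal congruence generated by a pair and compute $[\alpha,\nabla_A]=\Delta_A$ directly from the distributivity of the commutator over the defining join of $\alpha^{\perp}$; these are two phrasings of the same step.
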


\begin{proof} Let $\alpha\in K(A)$ be such that $\alpha\subseteq\bigcap\{O(\phi)|\phi\in Max(A)\}$. If $\alpha^{\perp}\neq\nabla_A$ then there exists $\phi\in Max(A)$ such that $\alpha^{\perp}\subseteq\phi$. By Lemma 5.6(1) we have $\alpha\not\subseteq O(\phi)$, contradicting $\alpha\subseteq\bigcap\{O(\phi)|\phi\in Max(A)\}$. It follows that $\alpha^{\perp} = \nabla_A$, hence $\alpha\subseteq\alpha^{{\perp}{\perp}} = (\nabla_A)^{\perp} = \Delta_A$. Thus $\alpha = \Delta_A$, so we conclude that $\bigcap\{O(\phi)|\phi\in Max(A)\} = \Delta_A$.

\end{proof}

\begin{propozitie} For any algebra $A$ the following are equivalent:
\usecounter{nr}
\begin{list}{(\arabic{nr})}{\usecounter{nr}}
\item $A$ is a $PF$ - algebra;
\item For any $\phi\in Spec(A)$, $O(\phi)$ is a prime congruence;
\item For any $\phi\in Max(A)$, $O(\phi)$ is a prime congruence;
\item For all $\phi,\psi\in Spec(A)$, $\phi\subseteq\psi$ implies $O(\phi) = O(\psi)$.
\end{list}
\end{propozitie}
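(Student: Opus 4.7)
The plan is to establish the cycle $(1)\Rightarrow(2)\Rightarrow(3)\Rightarrow(1)$ together with the equivalence $(1)\Leftrightarrow(4)$. For $(1)\Rightarrow(2)$ I would invoke Proposition 9.25 directly, which asserts that $O(\phi)$ is a minimal prime congruence (hence in particular prime) for every $\phi\in Spec(A)$ when $A$ is a $PF$-algebra; the step $(2)\Rightarrow(3)$ is trivial since $Max(A)\subseteq Spec(A)$. For $(3)\Rightarrow(1)$ I would appeal to Corollary 9.24(2): it suffices to show that $[\alpha,\beta]=\Delta_A$ with $\alpha,\beta\in K(A)$ forces $\alpha^{\perp}\lor\beta^{\perp}=\nabla_A$. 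Arguing by contradiction, pick $\phi\in Max(A)$ containing $\alpha^{\perp}\lor\beta^{\perp}$; Lemma 5.6(1) gives $\alpha,\beta\not\subseteq O(\phi)$, while $[\alpha,\beta]=\Delta_A\subseteq O(\phi)$ combined with the primeness of $O(\phi)$ supplied by (3) produces the contradiction.

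The implication $(1)\Rightarrow(4)$ likewise proceeds through Proposition 9.25: $O(\phi)$ and $O(\psi)$ are then both minimal primes, and the general inclusion $O(\psi)\subseteq O(\phi)$ for $\phi\subseteq\psi$ follows from Lemma 5.6(1) by contrapositives, namely if a compact $\alpha$ satisfies $\alpha\subseteq O(\psi)$ then $\alpha^{\perp}\not\subseteq\psi$, hence $\alpha^{\perp}\not\subseteq\phi$ and $\alpha\subseteq O(\phi)$. Since both congruences are minimal prime and one is contained in the other, they must coincide.

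The main obstacle lies in $(4)\Rightarrow(1)$. Applying Corollary 9.24(2) once more, assume $[\alpha,\beta]=\Delta_A$ with $\alpha,\beta\in K(A)$ but $\alpha^{\perp}\lor\beta^{\perp}\neq\nabla_A$, and pick $\phi\in Max(A)$ above $\alpha^{\perp}\lor\beta^{\perp}$ together with any minimal prime $\psi\subseteq\phi$. Hypothesis (4) yields $O(\psi)=O(\phi)$; primeness of $\psi$ applied to $[\alpha,\beta]=\Delta_A\subseteq\psi$ places, say, $\alpha$ in $\psi$. The crucial move is to lift $\alpha\subseteq\psi$ to $\alpha\subseteq O(\psi)=O(\phi)$, which via Lemma 5.6(1) contradicts $\alpha^{\perp}\subseteq\phi$. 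This lift is immediate when $A$ is semiprime, because Proposition 5.10 then gives $O(\psi)=\bigcap\Lambda(\psi)=\psi$ for minimal $\psi$; the hard part is therefore to derive semiprimeness from (4). My plan for this step is to exploit Proposition 9.3 (the minimal-prime characterization $\alpha\subseteq\psi\Leftrightarrow\alpha\to\rho(\Delta_A)\not\subseteq\psi$) with a careful witness argument: given a compact $\gamma\subseteq\alpha\to\rho(\Delta_A)$ with $\gamma\not\subseteq\psi$, one has $[\alpha,\gamma]\subseteq\rho(\Delta_A)$, and I would combine this with a reticulation-level translation through $(\cdot)^{\ast}$ (using Lemma 3.1(6) and Proposition 7.7 of \cite{GM2}, which identifies $L(A)$ with $L(A/\rho(\Delta_A))$) to transfer (4) to the semiprime quotient $A/\rho(\Delta_A)$ and thereby locate an honest element of $\alpha^{\perp}$ outside $\phi$. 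Once this promotion step is secured, Corollary 9.23 closes the argument by declaring every minimal prime of $A$ pure.
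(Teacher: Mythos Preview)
Your cycle $(1)\Rightarrow(2)\Rightarrow(3)\Rightarrow(1)$ and the implication $(1)\Rightarrow(4)$ are correct. For $(3)\Rightarrow(1)$ you take a shorter path than the paper: you go straight through Corollary~9.24(2) with one maximal $\phi$ and a single primeness contradiction, whereas the paper argues in two stages. First it shows that $A$ is an $mp$-algebra by proving $O(\chi)=\phi$ for any minimal prime $\phi$ below a maximal $\chi$ (using that $O(\chi)$ is prime and contained in every such $\phi$); then it derives semiprimeness directly from (3) via Lemma~9.26, observing that $[\alpha,\alpha]^n=\Delta_A$ forces $\alpha\subseteq O(\phi)$ for every $\phi\in Max(A)$, hence $\alpha\subseteq\bigcap_{\phi\in Max(A)}O(\phi)=\Delta_A$, and concludes with Theorem~9.22. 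Both arguments are valid; yours is more economical, while the paper's exhibits the semiprimeness explicitly.

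For $(4)\Rightarrow(1)$ you have located the real obstruction, and your reticulation plan does not close it: hypothesis (4) does not visibly descend to $A/\rho(\Delta_A)$, since the paper supplies no relation between $O_A(\phi)$ and $O_{A/\rho(\Delta_A)}(\phi/\rho(\Delta_A))$. In fact the implication $(4)\Rightarrow(1)$ fails without an extra hypothesis: in $R=\mathbb{Z}/4\mathbb{Z}$ the unique prime is $(2)$, so (4) holds trivially, yet $O((2))=(0)$ is not prime and $Ann(2)=(2)$ is not pure, so (1)--(3) all fail. The paper's own argument for $(4)\Rightarrow(2)$ picks $\psi\in Min(A)$ below $\phi$ and asserts $\psi=O(\psi)$ ``by Proposition~9.25'', but Proposition~9.25 presupposes that $A$ is a $PF$-algebra, so that step is circular and is refuted by the same example. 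The clean repair is to assume $A$ semiprime for the equivalence with (4); then Proposition~5.10 gives $O(\psi)=\bigcap\Lambda(\psi)=\psi$ for minimal $\psi$, hypothesis (4) yields $O(\phi)=O(\psi)=\psi$, and your contradiction (or the paper's $(4)\Rightarrow(2)$) goes through without further work.
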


\begin{proof}

$(1)\Rightarrow (2)$ By Proposition 9.25.

$(2)\Rightarrow (3)$ Obviously.

$(3)\Rightarrow (1)$ Assume that $\psi\in Spec(A)$ and take $\phi\in Min(A),\chi\in Max(A)$ such that $\phi\subseteq\psi\subseteq\chi$. Consider $\alpha\in K(A)$ such that $\alpha\subseteq O(\chi)$, hence $\alpha^{\perp}\not\subseteq\chi$ (by Lemma 5.6(1)). Thus there exists $\beta\in K(A)$ such that $[\alpha,\beta] = \Delta_A$ and $\beta\not\subseteq\chi$, hence $\alpha\subseteq\chi$. We have proven that $O(\chi)\subseteq\phi$, so $O(\chi) = \phi$ (because $O(\chi)\in Spec(A)$ and $\phi\in Min(A)$). Thus any prime congruence of $A$ includes a unique minimal prime congruence, so $A$ is an $mp$ - algebra.

Assume that $\alpha$ is a compact congruence of $A$ such that $\alpha\subseteq\rho(\Delta_A)$, hence there exists an integer $n\geq1$ such that $[\alpha,\alpha]^n = \Delta_A$. Let $\phi$ be an arbitrary maximal congruence of $A$, so $[\alpha,\alpha]^n = \Delta_A$ implies that $\alpha\subseteq O(\phi)$ (because $O(\phi)$ is a prime congruence). Thus $\alpha\subseteq \bigcap\{O(\phi)|\phi\in Max(A)\}$, so, by Lemma 9.26 we get $\alpha = \Delta_A$. We have proven that $A$ is semiprime, therefore it is a $PF$ - algebra (by Theorem 9.22).

$(1)\Rightarrow (4)$ Assume that $A$ is a $PF$ - algebra and $\phi,\psi\in Spec(A)$ such that $\phi\subseteq\psi$, therefore $O(\phi)\subseteq O(\psi)$. By Proposition 9.25,  $O(\phi)$ and $O(\psi)$ are minimal prime congruences, hence $O(\phi) = O(\psi)$.

$(4)\Rightarrow (2)$ Assume that $\phi\in Spec(A)$, so there exists $\psi\in Min(A)$ such that $\psi\subseteq\phi$. By applying Proposition 9.25 and hypothesis (4) we get $\psi = O(\psi) = O(\phi)$. Thus $\phi$ is a prime congruence.

\end{proof}

\section{Congruence purified algebras}

\hspace{0.5cm} The purified rings were introduced in Section 8 of \cite{Aghajani}: a commutative ring $R$ is said to be a purified ring if for all distinct minimal prime ideals $P,Q$ of $R$ there exists an idempotent element $e$ of $R$ such that $e\in P$ and $\neg e\in Q$. We shall define a notion of congruence purified algebra as an abstraction of the purified rings. The main objective of this section is to obtain a characterization theorem for congruence purified algebras (= Theorem 10.15). Of course this general result extends the characterization theorem of purified rings (= Theorem 8.5 of \cite{Aghajani}).

Let us fix an algebra $A$ in a semidegenerate congruence - modular variety $\mathcal{V}$. Then $A$ is said to be a congruence purified algebra if for all distinct minimal prime congruence $\phi,\psi$ of $A$ there exists $\alpha\in B(Con(A))$ such that $\alpha\subseteq\phi$ and $\neg\alpha\subseteq\psi$.

\begin{remarca}
It is well - known that the Boolean algebra of idempotents of a commutative ring $R$ is isomorphic to $B(Con(R))$ (see e.g. Lemma 1 of \cite{Banaschewski}). Then it follows that $R$ is a purified ring if and only if it is congruence purified.
\end{remarca}

\begin{lema}
Any congruence purified algebra is an $mp$ - algebra.
\end{lema}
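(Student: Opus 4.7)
The plan is to reduce to the characterization of $mp$-algebras given by Theorem 9.14, which says that $A$ is an $mp$-algebra if and only if every two distinct minimal prime congruences of $A$ join to $\nabla_A$. So it suffices to show that congruence purification forces this join condition on the minimal prime spectrum.

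Concretely, I will take two distinct minimal prime congruences $\phi,\psi\in Min(A)$. By the definition of a congruence purified algebra, there exists $\alpha\in B(Con(A))$ with $\alpha\subseteq\phi$ and $\neg\alpha\subseteq\psi$. Since $\alpha$ and $\neg\alpha$ are Boolean complements in $Con(A)$, we have $\alpha\lor\neg\alpha=\nabla_A$, and hence
\[
\nabla_A \;=\; \alpha\lor\neg\alpha \;\subseteq\; \phi\lor\psi,
\]
so $\phi\lor\psi=\nabla_A$. An appeal to the equivalence $(1)\Leftrightarrow(2)$ in Theorem 9.14 then gives that $A$ is an $mp$-algebra.

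There is really no obstacle here; the argument is a one-line unpacking of the definitions followed by a citation of Theorem 9.14. The only thing to be careful about is to invoke the correct characterization of $mp$-algebras (the one in terms of joins of minimal primes, rather than, say, the flat-topological or residuation-style characterizations), since all of them are available from the preceding section.
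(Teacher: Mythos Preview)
Your proof is correct and matches the paper's own argument essentially line for line: both take distinct $\phi,\psi\in Min(A)$, use the defining $\alpha\in B(Con(A))$ with $\alpha\subseteq\phi$, $\neg\alpha\subseteq\psi$ to get $\phi\lor\psi=\nabla_A$, and then invoke Theorem~9.14(2).
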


\begin{proof}
Let $\phi,\psi$ two distinct minimal prime congruences of $A$, so there  $\alpha\in B(Con(A))$ such that $\alpha\subseteq\phi$ and $\neg\alpha\subseteq\psi$. In accordance with Theorem 9.14,(2) it follows that $\phi\lor \psi = \nabla_A$, hence $A$ is an $mp$ - algebra.
\end{proof}

In order to obtain a proof of the characterization theorem for congruence purified algebras we need some preliminary results.

\begin{lema}
If $A$ fulfills the condition $(\star)$, then the congruence $\rho(\Delta_A)$ has $CBLP$.
\end{lema}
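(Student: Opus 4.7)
The plan is to transfer the question through the reticulation to a trivial lattice-theoretic statement. Let $p = p_{\rho(\Delta_A)}: A \to A/\rho(\Delta_A)$ denote the canonical projection. Being surjective, $p$ is admissible, so Proposition 3.11 yields a bounded lattice morphism $L(p): L(A) \to L(A/\rho(\Delta_A))$ fitting into the commutative square
\[
\begin{array}{ccc}
K(A) & \xrightarrow{\ p^{\bullet}|_{K(A)}\ } & K(A/\rho(\Delta_A))\\
\lambda_A\downarrow & & \downarrow\lambda_{A/\rho(\Delta_A)}\\
L(A) & \xrightarrow{\ L(p)\ } & L(A/\rho(\Delta_A)).
\end{array}
\]
By Proposition 7.7 of \cite{GM2} (invoked in Section 6 of the present paper), $L(A)$ and $L(A/\rho(\Delta_A))$ are isomorphic as bounded distributive lattices, and $L(p)$ realises this isomorphism: both candidate maps send $\lambda_A(\alpha)$ to $\lambda_{A/\rho(\Delta_A)}((\alpha\lor \rho(\Delta_A))/\rho(\Delta_A))$, which is immediate from Proposition 3.11.

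Next, I restrict the square to Boolean centres. By Lemma 2.11(3) we have $B(Con(A))\subseteq K(A)$, and any bounded lattice morphism preserves complements; moreover, by the paragraph preceding Definition 5.18, $p^{\bullet}$ maps $B(Con(A))$ into $B(Con(A/\rho(\Delta_A)))$. Hence the square restricts to a commutative square of Boolean algebra morphisms between $B(Con(A))$, $B(Con(A/\rho(\Delta_A)))$, $B(L(A))$ and $B(L(A/\rho(\Delta_A)))$, with $L(p)|_{B(L(A))}$ a Boolean isomorphism. Applying Corollary 3.7 to $A$, which satisfies $(\star)$ by hypothesis, $\lambda_A|_{B(Con(A))}$ is a Boolean isomorphism; applying Corollary 3.7 to $A/\rho(\Delta_A)$ (which inherits $(\star)$ by Proposition 2.9, and is semiprime), $\lambda_{A/\rho(\Delta_A)}|_{B(Con(A/\rho(\Delta_A)))}$ is a Boolean isomorphism too.

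Consequently, the top arrow factorises as
\[
p^{\bullet}|_{B(Con(A))} \;=\; \bigl(\lambda_{A/\rho(\Delta_A)}|_{B(Con(A/\rho(\Delta_A)))}\bigr)^{-1}\circ L(p)|_{B(L(A))}\circ \lambda_A|_{B(Con(A))},
\]
a composition of three Boolean isomorphisms, and is therefore itself a Boolean isomorphism. In particular it is surjective, which is exactly the statement that $\rho(\Delta_A)$ fulfills CBLP. The only mild obstacle in this plan is confirming that the isomorphism of Proposition 7.7 of \cite{GM2} coincides with $L(p)$, but this is forced by the commutative square together with the surjectivity of $\lambda_{A/\rho(\Delta_A)}$ and $p^{\bullet}|_{K(A)}$, the latter holding because $p$ is surjective.
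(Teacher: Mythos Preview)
Your proof is correct but takes a genuinely different route from the paper's. The paper argues directly at the congruence level: given $\chi/\theta \in B(Con(A/\theta))$ with complement $\nu/\theta$, one gets $\chi\lor\nu=\nabla_A$ and $\chi\cap\nu\subseteq\rho(\Delta_A)$; compactness of $\nabla_A$ then produces $\alpha,\beta\in K(A)$ with $\alpha\lor\beta=\nabla_A$ and $[\alpha,\beta]\subseteq\rho(\Delta_A)$, and condition $(\star)$ via Lemma~2.10 yields an integer $m$ with $[[\alpha,\alpha]^m,[\beta,\beta]^m]=\Delta_A$, so that $[\alpha,\alpha]^m\in B(Con(A))$ is an explicit lift of $\chi/\theta$. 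Your approach instead routes everything through the reticulation: you factor $p^{\bullet}|_{B(Con(A))}$ as a composite of the two Boolean isomorphisms from Corollary~3.7 (for $A$ and for the semiprime quotient $A/\rho(\Delta_A)$, the latter via Proposition~2.9) with $L(p)|_{B(L(A))}$, and conclude once $L(p)$ is known to be an isomorphism. This is more conceptual and exhibits the lemma as an immediate by-product of the reticulation machinery already in place, whereas the paper's argument is elementary, self-contained, and yields an explicit lifting formula. One point that could be tightened: your justification that $L(p)$ realises the isomorphism of Proposition~7.7 of \cite{GM2} presumes knowledge of how that isomorphism is defined; a cleaner alternative is to verify directly that $L(p)$ is bijective, using the order-isomorphism $Spec(A)\cong Spec(A/\rho(\Delta_A))$ to show that $L(p)(\lambda_A(\alpha))=L(p)(\lambda_A(\beta))$ forces $\rho_A(\alpha)=\rho_A(\beta)$.
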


\begin{proof}
Denote $\theta = \rho(\Delta_A)$ and consider the (admissible) canonical morphism $p_{\theta}:A\rightarrow A/\theta$. Recall from Section 8 that the map $p_{\theta}^{\bullet}:Con(A)\rightarrow Con(A/\theta)$ is defined by $p_{\theta}^{\bullet}(\nu) = (\nu\lor\theta)/\theta$, for all $\nu\in Con(A)$. We shall prove that the Boolean morphism $p_{\theta}^{\bullet}|_{B(Con(A))}:B(Con(A))\rightarrow B(Con(A/\theta))$ is surjective.

Let us consider a congruence $\chi$ of $A$ such that $\theta\subseteq\chi$. Assume that $\chi/\theta\in B(Con(A/\theta))$, so there exists $\nu\in Con(A)$ such that $\theta\subseteq\nu$, $\chi/\theta\lor\nu/\theta = \nabla_{A/\theta}$ and $\chi/\theta\bigcap\nu/\theta = \Delta_{A/\theta}$. Thus $(\chi\lor\nu)/\theta = \nabla_{A/\theta}$ and $(\chi\bigcap\nu)/\theta = \Delta_{A/\theta}$, hence we obtain $\chi\lor\nu = \nabla_A$ and $\chi\bigcap\nu\subseteq\rho( \Delta_A)$. Since $\Delta_A$ is compact, there exist two compact congruences $\alpha,\beta$ such that $\alpha\subseteq\chi$, $\beta\subseteq\nu$, $\alpha\lor\beta = \nabla_A$, therefore $[\alpha,\beta]\subseteq\alpha\bigcap\beta\subseteq\rho( \Delta_A)$.

By Lemma 2.10 there exists an integer $m\geq1$ such that $[[\alpha,\beta]^m,[\alpha,\beta]^m] = \Lambda_A$. According to Lemma 2.4(3), from  $\alpha\lor\beta = \nabla_A$ we get  $[\alpha,\alpha]^m\lor [\beta,\beta]^m] = \nabla_A$. An application of Lemma 2.11(1) gives $[\alpha,\alpha]^m,[\beta,\beta]^m\in B(Con(A))$ and $[\alpha,\alpha]^m = \neg [\beta,\beta]^m$.

We observe that $p_{\theta}^{\bullet}([\alpha,\alpha]^m)\subseteq \chi/\theta$ and $p_{\theta}^{\bullet}([\beta,\beta]^m)\subseteq \nu/\theta$, therefore

$\chi/\theta = \neg(\nu/\theta)\subseteq \neg p_{\theta}^{\bullet}([\beta,\beta]^m)$ = $p_{\theta}^{\bullet}(\neg [\beta,\beta]^m) = p_{\theta}^{\bullet}([\alpha,\alpha]^m)\subseteq \chi/\theta$.

It follows that $p_{\theta}^{\bullet}([\alpha,\alpha]^m) = \chi/\theta$, so $p_{\theta}^{\bullet}|_{B(Con(A))}:B(Con(A))\rightarrow B(Con(A/\theta))$ is surjective.

\end{proof}

\begin{teorema}
Let $A$ be an algebra that satisfies $(\star)$. Then $A$ is congruence purified if and only if $A/\rho(\Delta_A)$ is congruence purified.
\end{teorema}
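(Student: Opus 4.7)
The plan is to argue using the Boolean morphism $p_\theta^\bullet|_{B(Con(A))}$ where $\theta = \rho(\Delta_A)$, together with the standard order-isomorphism between $(Spec(A),\subseteq)$ and $(Spec(A/\theta),\subseteq)$ which restricts to a bijection $Min(A) \to Min(A/\theta)$ sending $\phi$ to $\phi/\theta$. The crucial input is Lemma 10.3: since $A$ satisfies $(\star)$, the Boolean morphism $p_\theta^\bullet|_{B(Con(A))} : B(Con(A)) \to B(Con(A/\theta))$ is \emph{surjective}. Both quotienting and lifting of Boolean congruences will then carry the ``purified'' property back and forth across $\theta$.

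For the forward direction, I would assume $A$ is congruence purified and take two distinct minimal prime congruences $\phi', \psi'$ of $A/\theta$. Pulling back through the order-isomorphism yields two distinct minimal prime congruences $\phi, \psi$ of $A$ with $\phi' = \phi/\theta$ and $\psi' = \psi/\theta$ (since every minimal prime of $A$ contains $\theta = \rho(\Delta_A)$). By hypothesis there exists $\alpha \in B(Con(A))$ with $\alpha \subseteq \phi$ and $\neg\alpha \subseteq \psi$. Applying the Boolean morphism $p_\theta^\bullet|_{B(Con(A))}$ gives $p_\theta^\bullet(\alpha) \in B(Con(A/\theta))$, and since $p_\theta^\bullet$ preserves $\subseteq$ and commutes with $\neg$, we get $p_\theta^\bullet(\alpha) \subseteq \phi/\theta = \phi'$ and $\neg p_\theta^\bullet(\alpha) = p_\theta^\bullet(\neg\alpha) \subseteq \psi/\theta = \psi'$. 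Hence $A/\theta$ is congruence purified.

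For the backward direction, assume $A/\theta$ is congruence purified and take distinct $\phi, \psi \in Min(A)$; then $\phi/\theta, \psi/\theta$ are distinct elements of $Min(A/\theta)$, so there exists $\alpha' \in B(Con(A/\theta))$ with $\alpha' \subseteq \phi/\theta$ and $\neg\alpha' \subseteq \psi/\theta$. Here is where Lemma 10.3 enters: the surjectivity of $p_\theta^\bullet|_{B(Con(A))}$ yields some $\alpha \in B(Con(A))$ with $p_\theta^\bullet(\alpha) = \alpha'$. Using that $p_\theta^\bullet$ is a Boolean morphism, $p_\theta^\bullet(\neg\alpha) = \neg\alpha'$. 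Unwinding the definition of $p_\theta^\bullet$, the relations $(\alpha \lor \theta)/\theta \subseteq \phi/\theta$ and $(\neg\alpha \lor \theta)/\theta \subseteq \psi/\theta$ translate to $\alpha \lor \theta \subseteq \phi$ and $\neg\alpha \lor \theta \subseteq \psi$, and since $\theta = \rho(\Delta_A)$ is contained in every prime congruence, this simplifies to $\alpha \subseteq \phi$ and $\neg\alpha \subseteq \psi$. Thus $A$ is congruence purified.

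I do not expect any serious obstacle: the forward direction is essentially functoriality of $p_\theta^\bullet$ on Boolean centers, and the backward direction is a direct application of the lifting property of Lemma 10.3. The only mild point of care is verifying that the correspondence $\phi \mapsto \phi/\theta$ sends $Min(A)$ onto $Min(A/\theta)$ and preserves distinctness — but this is immediate from the order-isomorphism between the two prime spectra and the fact that $\theta \subseteq \phi$ for every $\phi \in Spec(A)$.
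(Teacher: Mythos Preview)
Your proposal is correct and follows essentially the same approach as the paper's own proof: both directions use the order-isomorphism $Spec(A)\cong Spec(A/\theta)$ restricted to minimal primes, the forward direction pushes a complemented $\alpha$ through the Boolean morphism $p_\theta^\bullet|_{B(Con(A))}$, and the backward direction invokes Lemma~10.3 (that $\rho(\Delta_A)$ has $CBLP$) to lift a complemented congruence of $A/\theta$ back to $B(Con(A))$. Your write-up is in fact slightly cleaner in explaining why $\alpha\lor\theta\subseteq\phi$ reduces to $\alpha\subseteq\phi$.
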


\begin{proof}
Let us denote $\theta$ = $\rho(\Delta_A)$ and let us consider the admissible morphism $p_{\theta}:A\rightarrow A/\theta$. Recall that the map $p_{\theta}^{\bullet}:Con(A)\rightarrow Con(A/\theta)$ is defined by $p_{\theta}^{\bullet}(\nu) = (\nu\lor\theta)/\theta$, for all $\nu\in Con(A)$.

We observe that $p_{\theta}^{\bullet}|_{Spec(A)}:Spec(A)\rightarrow Spec(A/\theta)$ is an order - isomorphism and $p_{\theta}^{\bullet}|_{Min(A)}:Min(A)\rightarrow Min(A/\theta)$ is a bijection.

Assume that $A$ is a congruence purified algebra and consider two distinct minimal prime congruences $\chi$ and $\epsilon$ of $A/\theta$, so $\chi = p_{\theta}^{\bullet}(\phi)$ and $\epsilon = p_{\theta}^{\bullet}(\psi)$ for some distinct $\phi,\psi\in Min(A)$.

Then there exists $\alpha\in B(Con(A))$ such that $\alpha\subseteq\phi$ and $\neg \alpha\subseteq\psi$. Therefore $p_{\theta}^{\bullet}(\alpha)\in B(Con(A/\theta))$, $\neg p_{\theta}^{\bullet}(\alpha) = p_{\theta}^{\bullet}(\neg\alpha)\in B(Con(A/\theta))$ and $p_{\theta}^{\bullet}(\alpha)\subseteq p_{\theta}^{\bullet}(\phi) = \chi$, $\neg p_{\theta}^{\bullet}(\alpha)\subseteq p_{\theta}^{\bullet}(\psi) = \epsilon$. We have proven that $A/\theta$ is congruence purified.

Conversely, assume that $A/\theta$ is congruence purified and $\phi,\psi$ are two distinct minimal prime congruences of $A$, hence $\theta\subseteq\phi$, $\theta\subseteq\psi$ and $\phi/\theta,\psi/\theta$ are two distinct minimal prime congruences of $A/\theta$. Then there exists $\gamma\in B(Con(A/\theta))$ such that $\gamma\subseteq\phi/\theta$ and $\neg\gamma\subseteq\psi/\theta$. According to Lemma 10.3, $A/\theta$ has $CBLP$, therefore there exists $\alpha\in B(Con(A))$ such that $p_{\theta}^{\bullet}(\alpha) = \gamma$. Thus $(\alpha\lor\theta)/\theta = p_{\theta}^{\bullet}(\alpha) = \gamma\subseteq\phi/\theta$ and $(\neg\alpha\lor\theta)/\theta = p_{\theta}^{\bullet}(\neg\alpha) = \neg p_{\theta}^{\bullet}(\alpha) = \neg\gamma\subseteq\psi/\theta$, hence $\alpha\lor\theta\subseteq\phi$ and $\neg\alpha\lor\theta\subseteq\psi$. It follows that $\alpha\subseteq\phi$ and $\neg\alpha\subseteq\psi$, so $A$ is a congruence purified algebra.

\end{proof}

\begin{lema} For any semiprime algebra $A$ the following are equivalent:
\usecounter{nr}
\begin{list}{(\arabic{nr})}{\usecounter{nr}}
\item $A$ is a congruence purified  algebra;
\item For any distinct $P,Q\in Min(A)$ there exists $f\in B(L(A))$ such that $f\in P$ and $\neg f\in Q$.
\end{list}
\end{lema}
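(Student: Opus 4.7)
The plan is to exploit two tools that are available for semiprime algebras: the order/bijective correspondence $\phi \mapsto \phi^{\ast}$ between $Min(A)$ and $Min_{Id}(L(A))$ (cf. Proposition 3.4 and Lemma 9.1), and the Boolean isomorphism $\lambda_A|_{B(Con(A))} : B(Con(A)) \to B(L(A))$ that holds whenever $A$ is semiprime (Proposition 6.18(iii) of \cite{GM2}, used already in the proof of Proposition 6.19). The notation ``$f \in P$'' for $P \in Min(A)$ and $f \in L(A)$ is to be read as $f \in P^{\ast}$, which by Lemma 3.3(6) amounts to $\alpha \subseteq P$ for any $\alpha \in K(A)$ with $f = \lambda_A(\alpha)$. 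With this dictionary in place, both implications are essentially translation arguments.

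For $(1) \Rightarrow (2)$, I would start with two distinct $P, Q \in Min(A)$. Applying the hypothesis that $A$ is congruence purified directly yields $\alpha \in B(Con(A))$ with $\alpha \subseteq P$ and $\neg \alpha \subseteq Q$. Setting $f = \lambda_A(\alpha)$, Lemma 3.3(6) gives $f \in P^{\ast}$, and the fact that $\lambda_A|_{B(Con(A))}$ is a Boolean morphism yields $\neg f = \lambda_A(\neg \alpha) \in Q^{\ast}$, with $f \in B(L(A))$ automatic. This direction requires nothing beyond (1) and the Boolean morphism property of $\lambda_A$.

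For $(2) \Rightarrow (1)$, I would start with two distinct minimal prime congruences $\phi, \psi$ of $A$. By Proposition 3.4 and the restriction to minimal spectra, $\phi^{\ast}$ and $\psi^{\ast}$ are two distinct minimal prime ideals of $L(A)$, so applying (2) produces $f \in B(L(A))$ with $f \in \phi^{\ast}$ and $\neg f \in \psi^{\ast}$. Here the semiprimeness of $A$ is essential: it makes $\lambda_A|_{B(Con(A))} : B(Con(A)) \to B(L(A))$ surjective, so there exists $\alpha \in B(Con(A))$ with $f = \lambda_A(\alpha)$, and consequently $\neg f = \lambda_A(\neg \alpha)$. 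Invoking Lemma 3.3(6) once more (using that $\phi$ and $\psi$ are prime) translates $f \in \phi^{\ast}$ and $\neg f \in \psi^{\ast}$ back into $\alpha \subseteq \phi$ and $\neg \alpha \subseteq \psi$, which is exactly the definition of congruence purified.

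There is no real obstacle: the content of the lemma is precisely that the semiprimeness hypothesis lets one move freely between complemented congruences of $A$ and complemented elements of $L(A)$, and between the minimal prime spectra, so both directions reduce to the same dictionary. The only point to watch is that in $(2) \Rightarrow (1)$ surjectivity of $\lambda_A|_{B(Con(A))}$ (which needs semiprimeness) is used, whereas $(1) \Rightarrow (2)$ uses only that $\lambda_A$ is a Boolean morphism on $B(Con(A))$; flagging where semiprimeness enters makes the logical dependence transparent.
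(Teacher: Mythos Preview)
Your proposal is correct and follows essentially the same route as the paper. The paper actually reads condition (2) with $P,Q$ as minimal prime \emph{ideals} of $L(A)$ (despite the notation $Min(A)$), but your reinterpretation via $f\in P^{\ast}$ is equivalent through the bijection $\phi\mapsto\phi^{\ast}$ and leads to the same argument. One minor point: in $(2)\Rightarrow(1)$ you invoke Lemma~3.3(6) directly to pass from $\lambda_A(\alpha)\in\phi^{\ast}$ to $\alpha\subseteq\phi$, whereas the paper unwinds the definition of $\phi^{\ast}$ to find $\beta\in K(A)$ with $\beta\subseteq\phi$ and $\lambda_A(\alpha)=\lambda_A(\beta)$, and then uses $\alpha=[\alpha,\alpha]^n\subseteq\beta$ (since $\alpha$ is complemented); your shortcut is cleaner and fully justified.
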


\begin{proof}

$(1)\Rightarrow (2)$ Assume that $P, Q$ are two distinct minimal prime ideals of $L(A)$, hence there exist the distinct minimal prime congruences $\phi,\psi$ of $A$ such that $P = \phi^{\ast}$ and $Q = \psi^{\ast}$. Then there exists $\alpha\in B(Con(A))$ such that $\alpha\subseteq\phi$ and $\neg \alpha\subseteq\psi$, therefore $\lambda_A(\alpha)\in B(L(A)))$ such that $\lambda_A(\alpha)\in \phi^{\ast} = P$ and $\neg\lambda_A(\alpha) = \lambda_A(\neg\alpha)\in\psi^{\ast} = Q$.

$(2)\Rightarrow (1)$ Let $\phi$ and $\psi$ be two distinct minimal prime congruences of $A$, so $\phi^{\ast}$ and $\psi^{\ast}$ are distinct minimal prime ideals of $L(A)$. By hypothesis (2), there exists $f\in B(L(A))$ such that $f\in \phi^{\ast}$ and $\neg f\in \psi^{\ast}$. Since $A$ is semiprime, $\lambda_A|_{B(Con(A))}:B(Con(A))\rightarrow B(L(A))$ is an isomorphism of Boolean algebras, hence there exists $\alpha\in B(Con(A))$ such that $f = \lambda_A(\alpha)$ and $\neg f = \lambda_A(\neg\alpha)$. Thus $\lambda_A(\alpha)\in \phi^{\ast}$ and $\lambda_A(\neg\alpha) \in \psi^{\ast}$, hence there exist $\beta,\gamma \in K(A)$ such that $\beta\subseteq\phi$, $\gamma\subseteq\psi$, $\lambda_A(\alpha) = \lambda_A(\beta)$ and $\lambda_A(\neg\alpha) = \lambda_A(\gamma)$.

It follows that $\rho(\alpha) = \rho(\beta)$ and $\rho(\neg\alpha) = \rho(\gamma)$, so there exists an integer $n\geq 1$ such that $\alpha = [\alpha,\alpha]^n\subseteq\beta\subseteq\phi$ and $\neg\alpha = [\neg\alpha,\neg\alpha]^n\subseteq\gamma\subseteq\psi$. Therefore $A$ is a congruence purified algebra.

\end{proof}

Recall that for the fixed algebra $A$ there exists a semiprime commutative ring such that their reticulations $L(A)$ and $L(R)$ are isomorphic.

\begin{propozitie} Let $A$ a be semiprime algebra and $R$ a semiprime commutative ring such that the reticulations $L(A)$ and $L(R)$ are isomorphic. Then the following are equivalent:
\usecounter{nr}
\begin{list}{(\arabic{nr})}{\usecounter{nr}}
\item $A$ is a congruence purified algebra;
\item $R$ is a purified ring.
\end{list}
\end{propozitie}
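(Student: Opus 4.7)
The plan is to reduce both sides of the equivalence to the same lattice-theoretic statement about the common reticulation $L(A) \cong L(R)$, and then observe that the statement is obviously invariant under isomorphism. Since $R$ is assumed semiprime, and $A$ is assumed semiprime, both sides fall into the scope of Lemma 10.5.

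First, I would apply Lemma 10.5 to the semiprime algebra $A$: this tells me that $A$ is congruence purified if and only if for every pair of distinct minimal prime ideals $P, Q$ of $L(A)$ there exists $f \in B(L(A))$ with $f \in P$ and $\neg f \in Q$. Next, by Remark 10.1, the commutative ring $R$ is a purified ring in the sense of \cite{Aghajani} if and only if it is congruence purified, so I can apply Lemma 10.5 to $R$ as well (a commutative ring is automatically in a semidegenerate congruence-modular variety, and by hypothesis $R$ is semiprime). This gives that $R$ is purified if and only if for every pair of distinct minimal prime ideals $P', Q'$ of $L(R)$ there exists $g \in B(L(R))$ with $g \in P'$ and $\neg g \in Q'$.

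Now let $\Phi: L(A) \to L(R)$ be the given lattice isomorphism. Since $\Phi$ is an isomorphism of bounded distributive lattices, it restricts to a Boolean isomorphism $\Phi|_{B(L(A))}: B(L(A)) \to B(L(R))$ (so it preserves $\neg$), and it induces a bijection $\Phi^{\bullet}: Spec_{Id}(L(A)) \to Spec_{Id}(L(R))$ via $I \mapsto \Phi(I)$, which is an order-isomorphism and therefore restricts to a bijection between the minimal prime spectra $Min_{Id}(L(A)) \to Min_{Id}(L(R))$. Under this correspondence, for any $f \in B(L(A))$ and $P \in Min_{Id}(L(A))$ we have $f \in P$ iff $\Phi(f) \in \Phi(P)$, and similarly $\neg f \in Q$ iff $\neg \Phi(f) = \Phi(\neg f) \in \Phi(Q)$. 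Hence the separation condition of Lemma 10.5 for $L(A)$ translates verbatim into the separation condition of Lemma 10.5 for $L(R)$, and vice versa.

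Putting these three steps together yields the chain of equivalences: $A$ is congruence purified $\Leftrightarrow$ $L(A)$ satisfies the Boolean-separation condition on $Min_{Id}$ $\Leftrightarrow$ $L(R)$ satisfies the same condition (via $\Phi$) $\Leftrightarrow$ $R$ is congruence purified $\Leftrightarrow$ $R$ is a purified ring. There is no genuine obstacle here; the only point requiring care is to verify that the lattice isomorphism transports minimal primes and idempotents correctly, which is routine from the fact that $\Phi$ preserves the order and the Boolean operations on $B(L(A))$.
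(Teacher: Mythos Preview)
Your proof is correct and follows essentially the same approach as the paper, which simply writes ``By applying Lemma 10.5 twice.'' You have spelled out in full the two applications of Lemma 10.5 (one for $A$, one for $R$ via Remark 10.1) and the routine transport along the lattice isomorphism $\Phi$, which the paper leaves implicit.
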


\begin{proof}
By  applying Lemma 10.5 twice.
\end{proof}

\begin{lema}
If $A$ is semiprime then the topological spaces $Sp(A)$ and $Sp(R)$ are homeomorphic.
\end{lema}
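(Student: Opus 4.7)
The plan is to obtain the desired homeomorphism as the composition of three intermediate homeomorphisms, all of which are already available (or almost immediately available) from earlier results.

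First, I will invoke Proposition 6.24: since $A$ is semiprime, the pair of maps $(\cdot)^{\ast}|_{Sp(A)}: Sp(A) \rightarrow Sp_{Id}(L(A))$ and $(\cdot)_{\ast}|_{Sp_{Id}(L(A))}: Sp_{Id}(L(A)) \rightarrow Sp(A)$ are mutually inverse homeomorphisms. By the symmetric argument applied to the semiprime commutative ring $R$, Proposition 6.24 (applied in the ring setting, noting that the construction of the Pierce spectrum and of $(\cdot)^{\ast}, (\cdot)_{\ast}$ are purely in terms of the lattice $Con(R)$ of ideals and the reticulation) yields in the same way a homeomorphism $Sp(R) \cong Sp_{Id}(L(R))$.

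Second, I will exploit the lattice isomorphism $h: L(A) \rightarrow L(R)$. Any isomorphism of bounded distributive lattices restricts to an isomorphism of their Boolean centres $B(L(A)) \cong B(L(R))$, and hence induces a bijection between the regular ideals (those generated by complemented elements) on either side, carrying proper regular ideals to proper regular ideals and preserving inclusions. Consequently $h$ induces a bijection $\tilde h: Sp_{Id}(L(A)) \rightarrow Sp_{Id}(L(R))$ between the ideal-Pierce spectra. Moreover, for every $e \in B(L(A))$, $\tilde h$ sends the basic open set $\{I \in Sp_{Id}(L(A)) \mid e \notin I\}$ bijectively onto $\{J \in Sp_{Id}(L(R)) \mid h(e) \notin J\}$, which is a basic open set of $Sp_{Id}(L(R))$; the same holds in the other direction using $h^{-1}$. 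Hence $\tilde h$ is a homeomorphism.

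Finally, composing the three homeomorphisms, the map
\[
Sp(A) \xrightarrow{(\cdot)^{\ast}} Sp_{Id}(L(A)) \xrightarrow{\tilde h} Sp_{Id}(L(R)) \xrightarrow{(\cdot)_{\ast}} Sp(R)
\]
is a homeomorphism, with inverse given by the analogous composition using $h^{-1}$. The only point that requires a little care is verifying that $\tilde h$ is well-defined and that its action is continuous in both directions, i.e.\ that a lattice isomorphism between $L(A)$ and $L(R)$ really does induce a homeomorphism on the respective Pierce spectra; but this is routine once one recalls that the topology on $Sp_{Id}$ has, as a basis, the sets indexed by elements of the Boolean centre, and that a lattice isomorphism restricts to a Boolean isomorphism on these centres.
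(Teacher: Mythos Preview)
Your proof is correct and follows essentially the same approach as the paper: the paper also invokes Proposition 6.24 twice (once for the semiprime algebra $A$ and once for the semiprime ring $R$) to pass through $Sp_{Id}(L(A)) = Sp_{Id}(L(R))$, simply identifying the isomorphic reticulations rather than spelling out, as you do, that a lattice isomorphism induces a homeomorphism of the ideal-Pierce spectra. Your version is just a slightly more explicit rendering of the same argument.
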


\begin{proof} We identify the isomorphhic lattices $L(A)$, $L(R)$ and their associated spaces $Sp_{Id}(L(A))$, $Sp_{Id}(L(R))$.
By Proposition 6.24, the both topological spaces $Sp(A)$ and $Sp(R)$ are homeomorphic to $Sp_{Id}(L(A))$.
\end{proof}

Let $L$ be a bounded distributive lattice. Recall that for each $I\in Id(L)$ we denote $V_{Id}(P) = \{Q\in Spec(A)|I\subseteq Q\}$ and for each $x\in L$, $V_{Id}(x)$ will denote the set $V_{Id}((x])$, where $(x]$ is the principal ideal of $L$ generated by $\{x\}$.

\begin{lema} For any semiprime algebra $A$ the following are equivalent:
\usecounter{nr}
\begin{list}{(\arabic{nr})}{\usecounter{nr}}
\item The family $(V_A(\alpha)\bigcap Min(A))_{\alpha\in B(Con(A))}$ is a basis of open sets for the space $Min_F(A)$;
\item The family $(V_{Id}(e)\bigcap Min(A))_{e\in B(L(A))}$ is a basis of open sets for the space $Min_F(L(A))$.
\end{list}
\end{lema}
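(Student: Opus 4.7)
The plan is to reduce this equivalence to a direct transfer through the homeomorphism established in Lemma 9.1(2), using the Boolean isomorphism $\lambda_A|_{B(Con(A))}:B(Con(A))\rightarrow B(L(A))$, which holds for semiprime algebras (invoked earlier in the proof of Proposition 6.19(2) via \cite{GM2}).

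First I would recall that Lemma 9.1(2) gives a homeomorphism $u|_{Min(A)}:Min_F(A)\rightarrow Min_{Id,F}(L(A))$ sending $\phi\mapsto\phi^{\ast}$. Since $A$ is semiprime, for every $\alpha\in B(Con(A))$ we have $\lambda_A(\alpha)\in B(L(A))$ and conversely every $e\in B(L(A))$ has the form $\lambda_A(\alpha)$ for a unique $\alpha\in B(Con(A))$. Thus the two indexing sets of the two candidate bases are in Boolean bijection.

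The key step is the identity
\[
u(V_A(\alpha)\cap Min(A)) \;=\; V_{Id}(\lambda_A(\alpha))\cap Min_{Id}(L(A))\qquad(\alpha\in B(Con(A))),
\]
which I would verify using Lemma 3.3(6): for $\phi\in Min(A)$ we have $\alpha\subseteq\phi$ if and only if $\lambda_A(\alpha)\in\phi^{\ast}$, so $u$ sends the basic candidate sets on the algebra side bijectively onto the basic candidate sets on the lattice side. Since $u$ is a homeomorphism, a subfamily of the flat topology on $Min_F(A)$ forms a basis if and only if its image under $u$ forms a basis of the flat topology on $Min_{Id,F}(L(A))$. Combining this with the Boolean bijection between $B(Con(A))$ and $B(L(A))$ yields the equivalence of (1) and (2).

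I do not anticipate any real obstacle: the whole argument is a transfer through an already established homeomorphism, and the only subtlety is bookkeeping — making sure that the indexing Boolean algebras correspond correctly under $\lambda_A$ (which uses semiprimeness) and that $V_A(\alpha)$ corresponds to $V_{Id}(\lambda_A(\alpha))$ (which uses Lemma 3.3(6)).
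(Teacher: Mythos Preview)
Your proposal is correct and follows essentially the same approach as the paper: invoke the Boolean isomorphism $\lambda_A|_{B(Con(A))}$ (valid since $A$ is semiprime), use the flat homeomorphism $u:Min_F(A)\to Min_{Id,F}(L(A))$ from Lemma 9.1(2), check that $u(V_A(\alpha)\cap Min(A))=V_{Id}(\lambda_A(\alpha))\cap Min_{Id}(L(A))$, and transfer the basis property through the homeomorphism. The only cosmetic difference is that the paper cites the identity $u(V_A(\alpha))=V_{Id}(\lambda_A(\alpha))$ directly (stated after Lemma 3.3) rather than rederiving it from Lemma 3.3(6).
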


\begin{proof} Since $A$ is semiprime then $\lambda_A|_{B(Con(A))}:B(Con(A))\rightarrow B(L(A))$ is a Boolean isomorphism (cf. Proposition 6.18,(iii) of \cite{GM2}). By Lemma 9.1, the map $u:Min_F(A)\rightarrow Min_{Id,F}(L(A))$, defined by $u(\phi) = \phi^{\ast}$, for all $\phi\in Min(A)$, is a homeomorphism. We know that for each $\alpha\in K(A)$ we have $u(V_A(\alpha)) = V_{Id}(\alpha^{\ast}) = V(\lambda_A(\alpha)))$, therefore $u(V_A(\alpha)\bigcap Min(A)) = V_{Id}(\alpha^{\ast})\bigcap Min(L(A)) = V(\lambda_A(\alpha))\bigcap Min(L(A))$. From these considerations it is easy to infer the equivalence of the properties (1) and (2).

\end{proof}

\begin{corolar} Let $A$ be a semiprime algebra and $R$ a semiprime commutative ring such that the reticulations $L(A)$ and $L(R)$ are isomorphic. The following are equivalent:
\usecounter{nr}
\begin{list}{(\arabic{nr})}{\usecounter{nr}}
\item The family $(V_A(\alpha)\bigcap Min(A))_{\alpha\in B(Con(A))}$ is a basis of open sets for $Min_F(A)$;
\item The family $(V_R(e)\bigcap Min(R))_{e\in B(R)}$ is a basis of open sets for $Min_F(R)$.
\end{list}
\end{corolar}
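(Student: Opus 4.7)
The plan is simply to reduce both sides to a common statement about the reticulation, exactly as in Lemma 10.7. As in Section 6, we fix the identification $L(A)\cong L(R)$ provided by the Hochster construction, and we identify the associated flat minimal spectra $Min_{Id,F}(L(A))$ and $Min_{Id,F}(L(R))$.

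First, I would apply Lemma 10.8 to the semiprime algebra $A$: condition $(1)$ is equivalent to the family $\bigl(V_{Id}(e)\bigcap Min_{Id}(L(A))\bigr)_{e\in B(L(A))}$ being a basis of open sets for $Min_{Id,F}(L(A))$. Next, I would apply Lemma 10.8 to the semiprime commutative ring $R$. This requires only the observation that commutative rings form a semidegenerate congruence-modular variety in which the commutator is ordinary ideal multiplication; moreover, by Remark 10.1, the Boolean algebra $B(R)$ of idempotents of $R$ is canonically isomorphic to $B(Con(R))$, so the hypotheses of Lemma 10.8 are satisfied for $R$. Applying it, condition $(2)$ is equivalent to the family $\bigl(V_{Id}(e)\bigcap Min_{Id}(L(R))\bigr)_{e\in B(L(R))}$ being a basis of open sets for $Min_{Id,F}(L(R))$.

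Finally, since $L(A)$ and $L(R)$ are isomorphic as bounded distributive lattices, the Boolean centres $B(L(A))$ and $B(L(R))$ correspond under this isomorphism, and the induced homeomorphism $Min_{Id,F}(L(A))\cong Min_{Id,F}(L(R))$ sends each basic set $V_{Id}(e)\bigcap Min_{Id}(L(A))$ to the corresponding set $V_{Id}(e')\bigcap Min_{Id}(L(R))$ with $e'$ the image of $e$. Hence the two intermediate conditions above are literally the same statement, and $(1)\Leftrightarrow (2)$ follows.

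The only delicate point — hardly an obstacle — is justifying that Lemma 10.8, although formulated for an algebra in the fixed variety $\mathcal{V}$, applies verbatim to $R$; this is handled by the remark above on commutative rings as an instance of the semidegenerate congruence-modular setting together with the identification $B(R)\cong B(Con(R))$.
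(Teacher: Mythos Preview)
Your proof is correct and follows exactly the paper's approach: the paper's proof reads simply ``By applying Lemma 10.8 twice,'' and your argument spells out precisely this, including the identification $B(R)\cong B(Con(R))$ needed to view $R$ as an instance of the general framework.
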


\begin{proof}
By applying Lemma 10.8 twice.
\end{proof}

\begin{lema} For any semiprime algebra $A$ the following are equivalent:
\usecounter{nr}
\begin{list}{(\arabic{nr})}{\usecounter{nr}}
\item Any $\phi\in Min(A)$ is a regular congruence of $A$;
\item Any $P\in Min_{Id}(L(A))$ is a regular ideal of $L(A)$.
\end{list}
\end{lema}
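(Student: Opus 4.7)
The plan is to reduce the equivalence to the already-established correspondence between regular congruences of $A$ and regular ideals of $L(A)$, via the bijection between minimal prime congruences of $A$ and minimal prime ideals of $L(A)$. Since $A$ is semiprime, Lemma 6.23 gives us that for every $\theta \in Con(A)$, $\theta$ is regular in $A$ if and only if $\theta^{\ast}$ is regular in $L(A)$; this will be the workhorse.

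First I would recall from Proposition 3.4 together with Lemma 3.3,(2) and (4), that the assignments $\phi \mapsto \phi^{\ast}$ and $P \mapsto P_{\ast}$ are mutually inverse order-isomorphisms between $(Spec(A),\subseteq)$ and $(Spec_{Id}(L(A)),\subseteq)$. Restricting these bijections to minimal elements (as already observed in Lemma 9.1) yields mutually inverse bijections between $Min(A)$ and $Min_{Id}(L(A))$. In particular, for each $\phi \in Min(A)$ we have $\phi^{\ast} \in Min_{Id}(L(A))$ and $(\phi^{\ast})_{\ast} = \phi$; symmetrically, for each $P \in Min_{Id}(L(A))$ we have $P_{\ast} \in Min(A)$ and $(P_{\ast})^{\ast} = P$.

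For $(1)\Rightarrow (2)$, let $P \in Min_{Id}(L(A))$ and set $\phi = P_{\ast} \in Min(A)$, so that $P = \phi^{\ast}$. By hypothesis $\phi$ is a regular congruence of $A$, whence Lemma 6.23 (which applies because $A$ is semiprime) gives that $P = \phi^{\ast}$ is a regular ideal of $L(A)$. For $(2)\Rightarrow (1)$, let $\phi \in Min(A)$, so that $\phi^{\ast} \in Min_{Id}(L(A))$ is regular by hypothesis; applying Lemma 6.23 in the other direction, $\phi$ itself is regular.

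There is essentially no obstacle here: all the substantive content, namely the preservation of regularity under $(\cdot)^{\ast}$ and $(\cdot)_{\ast}$ in the semiprime setting, together with the correspondence between minimal prime spectra, has been established in Sections 3 and 6. The only point that deserves care is to remember that $\phi$ being radical (which is automatic since $\phi$ is prime) guarantees $(\phi^{\ast})_{\ast} = \rho(\phi) = \phi$, so that passing back and forth between $A$ and $L(A)$ through $(\cdot)^{\ast}$ and $(\cdot)_{\ast}$ does not lose information at the level of minimal primes.
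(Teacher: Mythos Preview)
Your proof is correct and follows essentially the same approach as the paper: both directions are obtained by invoking the bijection between $Min(A)$ and $Min_{Id}(L(A))$ (via $\phi\mapsto\phi^{\ast}$ and $P\mapsto P_{\ast}$) together with Lemma 6.23, which in the semiprime setting says that $\theta$ is regular if and only if $\theta^{\ast}$ is regular. Your write-up is slightly more explicit in justifying why $(\phi^{\ast})_{\ast}=\phi$ for prime $\phi$, but otherwise the arguments coincide.
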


\begin{proof}

 $(1)\Rightarrow (2)$. If $P\in Min_{Id}(L(A))$ then $P = \phi^{\ast}$ for some $\phi\in Min(A)$, hence $\phi$ is a regular congruence. By Lemma 6.23, $P = \phi^{\ast}$ is a regular ideal of $L(A)$.

 $(2)\Rightarrow (3)$. This implication follows similarly, by using again Lemma 6.23.

\end{proof}

\begin{corolar} Let $A$ be a semiprime algebra and $R$ a semiprime commutative ring such that the reticulations $L(A)$ and $L(R)$ are isomorphic. The following are equivalent:
\usecounter{nr}
\begin{list}{(\arabic{nr})}{\usecounter{nr}}
\item Any $\phi\in Min(A)$ is a regular congruence of $A$;
\item Any $P\in Min(R)$ is a regular ideal of $R$.
\end{list}
\end{corolar}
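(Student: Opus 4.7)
The plan is to obtain this corollary as an immediate application of Lemma 10.10 applied twice, exactly in the spirit of Corollaries 10.6 and 10.9. More precisely, Lemma 10.10 asserts, for any semiprime algebra $B$ (with $K(B)$ closed under the commutator), the equivalence between ``every minimal prime congruence of $B$ is regular'' and ``every minimal prime ideal of $L(B)$ is regular''. Since both $A$ and $R$ are semiprime, this lemma is available for the algebra $A$ and (by the standard specialisation of the universal-algebra framework to commutative rings, which is the situation the paper constantly uses) also for the ring $R$.

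The forward direction: assume (1). By Lemma 10.10 applied to $A$, every minimal prime ideal of $L(A)$ is a regular ideal of $L(A)$. Transport this property along the given lattice isomorphism $L(A)\cong L(R)$ (regularity is a purely lattice-theoretic notion, defined via the Boolean centre $B(L(A))$, so it is preserved by bounded lattice isomorphisms, and moreover such an isomorphism restricts to a bijection between $\mathrm{Min}_{Id}(L(A))$ and $\mathrm{Min}_{Id}(L(R))$). This yields that every minimal prime ideal of $L(R)$ is regular, whence Lemma 10.10 applied to $R$ delivers (2).

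The reverse direction is symmetric: starting from (2), Lemma 10.10 applied to $R$ gives regularity of every minimal prime ideal of $L(R)$; transport back across $L(R)\cong L(A)$, then apply Lemma 10.10 to $A$ to conclude (1).

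There is no real obstacle here, since the entire content has been isolated in Lemma 10.10 and in the standing fact (used throughout Sections 6--10 via the Hochster-type identification) that regularity of ideals/congruences corresponds to regularity in the reticulation. The only item worth double-checking is that the isomorphism $L(A)\cong L(R)$ indeed carries minimal primes to minimal primes and preserves regular ideals; both follow because any isomorphism of bounded distributive lattices is in particular an order isomorphism of the prime spectra and restricts to a Boolean isomorphism between the Boolean centres.
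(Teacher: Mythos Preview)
Your proposal is correct and follows exactly the paper's approach: the paper treats this corollary in the same pattern as Corollary 10.9 (whose one-line proof is ``By applying Lemma 10.8 twice''), so the intended argument here is simply to apply Lemma 10.10 twice, transporting through the isomorphism $L(A)\cong L(R)$. Your additional remarks about why the lattice isomorphism preserves minimal primes and regularity are sound and merely make explicit what the paper leaves implicit.
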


\begin{lema} For any semiprime algebra $A$ the following are equivalent:
\usecounter{nr}
\begin{list}{(\arabic{nr})}{\usecounter{nr}}
\item $Min(A) = Sp(A)$ ;
\item $Min_{Id}(L(A)) = Sp_{Id}(L(A))$.
\end{list}
\end{lema}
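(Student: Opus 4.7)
The plan is to transport the equality across the reticulation by means of the bijection $(\cdot)^{\ast}$, exploiting that the same map $\phi \mapsto \phi^{\ast}$ simultaneously realises (i) the homeomorphism $Min_Z(A) \to Min_{Id,Z}(L(A))$ of Lemma 9.1 and (ii) the homeomorphism $Sp(A) \to Sp_{Id}(L(A))$ of Proposition 6.24, both of which are available because $A$ is semiprime. Thus the two bijections are restrictions of the same function, and the problem reduces to checking that equality of the two subsets of $Con(A)$ is preserved and reflected by this function.

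First I would prove $(1) \Rightarrow (2)$. Take an arbitrary $P \in Min_{Id}(L(A))$. By Lemma 9.1 there is $\phi \in Min(A)$ with $P = \phi^{\ast}$. The hypothesis $Min(A) = Sp(A)$ gives $\phi \in Sp(A)$, hence by Proposition 6.24 we have $\phi^{\ast} \in Sp_{Id}(L(A))$, i.e. $P \in Sp_{Id}(L(A))$. Conversely, take $Q \in Sp_{Id}(L(A))$; by Proposition 6.24, $Q = \psi^{\ast}$ for some $\psi \in Sp(A) = Min(A)$, and Lemma 9.1 then yields $Q \in Min_{Id}(L(A))$. This establishes $Min_{Id}(L(A)) = Sp_{Id}(L(A))$.

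For $(2) \Rightarrow (1)$, I would run the symmetric argument using the inverse map $(\cdot)_{\ast}$. Given $\phi \in Min(A)$, Lemma 9.1 asserts $\phi^{\ast} \in Min_{Id}(L(A)) = Sp_{Id}(L(A))$, and then Proposition 6.24 gives $(\phi^{\ast})_{\ast} \in Sp(A)$. Since $\phi$ is a prime congruence, hence radical, Lemma 3.3(2) yields $(\phi^{\ast})_{\ast} = \rho(\phi) = \phi$, so $\phi \in Sp(A)$. The reverse inclusion follows identically: if $\psi \in Sp(A)$, then $\psi^{\ast} \in Sp_{Id}(L(A)) = Min_{Id}(L(A))$, so $(\psi^{\ast})_{\ast} \in Min(A)$, and by Lemmas 6.21 and 3.3(2) applied to the max-regular congruence $\psi$ we have $(\psi^{\ast})_{\ast} = \rho(\psi) = \psi$, giving $\psi \in Min(A)$.

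The argument is essentially bookkeeping once the correct identifications are invoked; no new commutator or annihilator computation is needed. The only mild subtlety, which I would handle carefully, is the identity $(\phi^{\ast})_{\ast} = \phi$ for $\phi \in Sp(A)$: this is where semiprimality really enters, via Lemma 6.21, which guarantees that every regular congruence (in particular every max-regular one) coincides with its own radical, allowing Lemma 3.3(2) to close the loop. That is the single step I would verify with most care; everything else follows at once from the homeomorphisms already established.
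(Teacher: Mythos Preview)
Your proposal is correct and takes essentially the same approach as the paper: the paper's proof simply lists the three ingredients (the order-isomorphism $(\cdot)^{\ast}:Spec(A)\to Spec_{Id}(L(A))$ of Proposition 3.4, the bijection $Min(A)\to Min_{Id}(L(A))$ of Lemma 9.1, and the bijection $Sp(A)\to Sp_{Id}(L(A))$ of Proposition 6.24) and declares the equivalence immediate, whereas you spell out the bookkeeping explicitly. Your careful treatment of $(\psi^{\ast})_{\ast}=\psi$ for $\psi\in Sp(A)$ via Lemma 6.21 is exactly the point the paper leaves implicit.
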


\begin{proof} The equivalence of (1) and (2) follows from the following facts:

${\bullet}$  The map $(\cdot)^{\ast}: Spec(A)\rightarrow Spec_{Id}(L(A))$ is an order - isomorphism (cf. Proposition 3.4);

${\bullet}$ The map $ (\cdot)^{\ast}|_{Min(A)}: Min(A)\rightarrow Min_{Id}(L(A))$ is bijective (cf. Lemma 9.1);

${\bullet}$ The map $ (\cdot)^{\ast}|_{Sp(A)}: Sp(A)\rightarrow Sp_{Id}(L(A))$ is bijective (cf. Proposition 6.24).

\end{proof}

\begin{corolar} Let $A$ be a semiprime algebra and $R$ a semiprime commutative ring such that the reticulations $L(A)$ and $L(R)$ are isomorphic. Then $Min(A) = Sp(A)$ if and only if $Min(R) = Sp(R)$.

\end{corolar}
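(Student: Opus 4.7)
The plan is to reduce the statement to Lemma 10.12 applied twice, using the hypothesis $L(A)\cong L(R)$ as the bridge. Since $A$ is a semiprime algebra of $\mathcal{V}$ and $R$ is a semiprime commutative ring (so in particular a semiprime algebra in the congruence-modular variety of commutative rings, where $K(R)$ is closed under the ideal-product commutator), Lemma 10.12 applies to both objects. This yields the two equivalences
\[
Min(A)=Sp(A)\iff Min_{Id}(L(A))=Sp_{Id}(L(A)),
\]
\[
Min(R)=Sp(R)\iff Min_{Id}(L(R))=Sp_{Id}(L(R)).
\]

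Next, I would observe that the condition ``$Min_{Id}(L)=Sp_{Id}(L)$'' is purely lattice-theoretic: minimal prime ideals of a bounded distributive lattice are defined in terms of the inclusion order on $Spec_{Id}(L)$, and max-regular ideals are defined through the Boolean center $B(L)$, which is preserved by any bounded lattice isomorphism. Consequently, every isomorphism $\varphi: L(A)\xrightarrow{\sim} L(R)$ induces an order-isomorphism $Spec_{Id}(L(A))\to Spec_{Id}(L(R))$ that restricts to bijections $Min_{Id}(L(A))\to Min_{Id}(L(R))$ and $Sp_{Id}(L(A))\to Sp_{Id}(L(R))$, and these bijections are compatible with the inclusion $Sp_{Id}\subseteq Min_{Id}$ (which always holds since any max-regular ideal is contained in some minimal prime, and the proof of Proposition 6.24 shows the two sets coincide iff the corresponding spectral equality holds). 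Hence
\[
Min_{Id}(L(A))=Sp_{Id}(L(A))\iff Min_{Id}(L(R))=Sp_{Id}(L(R)).
\]

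Chaining the three equivalences above yields the desired $Min(A)=Sp(A)\iff Min(R)=Sp(R)$. There is no genuine obstacle here: all the substantive content has been isolated in Lemma 10.12, and the remaining step is the trivial remark that a lattice isomorphism preserves both the order on prime ideals and the Boolean center, so it preserves the class of minimal primes and the class of max-regular ideals separately. The proof is therefore a direct application of Lemma 10.12 twice, exactly paralleling the pattern already used in Proposition 10.6 and Corollaries 10.9 and 10.11.
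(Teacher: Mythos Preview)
Your approach is correct and matches the paper's intended argument exactly: apply Lemma 10.12 once to $A$ and once to $R$, then use the lattice isomorphism $L(A)\cong L(R)$ to transfer the equality $Min_{Id}=Sp_{Id}$ from one side to the other. This is precisely the pattern already exhibited in Corollaries 10.9 and 10.11.

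One small correction: your parenthetical claim that ``the inclusion $Sp_{Id}\subseteq Min_{Id}$ always holds since any max-regular ideal is contained in some minimal prime'' is false and should be deleted. Max-regular ideals need not be prime at all, and a proper regular ideal is contained in some \emph{maximal} ideal, not in a minimal prime; there is no general containment between $Sp_{Id}(L)$ and $Min_{Id}(L)$. Fortunately this remark plays no role in the argument: all you need is that a bounded-lattice isomorphism carries prime ideals to prime ideals (preserving the order, hence minimal primes to minimal primes) and complemented elements to complemented elements (hence regular ideals to regular ideals, and max-regular ideals to max-regular ideals). That alone gives the middle equivalence, and the proof goes through.
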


\begin{lema} For any semiprime algebra $A$ the following are equivalent:
\usecounter{nr}
\begin{list}{(\arabic{nr})}{\usecounter{nr}}
\item  Any pure congruence of $A$ is regular;
\item  Any pure ideal of $R$ is regular.
\end{list}
\end{lema}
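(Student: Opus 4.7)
The plan is to reduce both conditions to the single statement ``every $\sigma$-ideal of the reticulation $L(A)$ is regular'', using the frame isomorphisms between pure objects and $\sigma$-ideals that were established both for algebras (in Section~4) and, in the ring setting, for the semiprime ring $R$ associated with $A$ via Hochster's theorem.

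First, since $A$ is semiprime, Corollary~4.19 provides a frame isomorphism $w:VCon(A)\to VId(L(A))$ given by $w(\theta)=\theta^{\ast}$, with inverse $J\mapsto J_{\ast}$. On the ring side, the classical result of Borceux (used in Lemma~4.1) yields an analogous frame isomorphism between the frame $VId(R)$ of pure ideals of $R$ and the frame of $\sigma$-ideals of $L(R)$; under the identification $L(A)=L(R)$, both of these isomorphisms target the same frame $VId(L(A))$. Consequently the pure congruences of $A$ and the pure ideals of $R$ are in canonical bijection, both parametrised by the $\sigma$-ideals of $L(A)$.

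Second, I would show that regularity is preserved along these bijections. For a congruence $\theta$ of the semiprime algebra $A$, Lemma~6.23 gives the equivalence: $\theta$ is regular iff $\theta^{\ast}$ is a regular ideal of $L(A)$. Applied to pure $\theta$, this shows that (1) is equivalent to: every $\sigma$-ideal of $L(A)$ is regular. The ring-theoretic counterpart of Lemma~6.23 (namely, an ideal $I$ of the semiprime ring $R$ is regular iff its image $I^{\ast}$ in $L(R)$ is a regular ideal) can be obtained by the same argument used in Section~6, relying on the Banaschewski Boolean isomorphism $B(R)\cong B(Id(R))\cong B(L(R))$ together with semiprimality of $R$; applied to pure $I$, this shows that (2) is equivalent to: every $\sigma$-ideal of $L(R)=L(A)$ is regular.

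Combining the two reductions, conditions (1) and (2) are equivalent to the same property of $L(A)$, hence to each other. The only non-routine point is the ring version of Lemma~6.23, but this is essentially the content of Proposition~6.19 adapted to rings and follows by the same reasoning (every regular ideal is the join of principal ideals generated by idempotents, and the Boolean algebras of idempotents of $R$ and $L(R)$ are canonically isomorphic under $e\mapsto \lambda_R(Re)$); I expect this to be the main technical step, although it is a direct translation of the argument already given for algebras.
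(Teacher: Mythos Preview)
Your proposal is correct and follows essentially the same route as the paper, which simply cites Corollary~4.19 and Lemma~6.23. One minor remark: you treat the ring-side analogues of these results as separate technical steps, but since commutative rings form a semidegenerate congruence-modular variety and $R$ is chosen semiprime, Corollary~4.19 and Lemma~6.23 apply directly to $R$ (via the standard identification of ideals with congruences), so no additional argument is needed there.
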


\begin{proof}
By Corollary 4.19 and Lemma 6.23.
\end{proof}

Recall from \cite{Johnstone}, p. 69 that a topological space $X$ is said to be totally disconnected if the only connected subsets of $X$ are single points.

The following result generalizes a part of Theorem 8.5 of \cite{Aghajani}.

\begin{teorema}
Consider a semiprime algebra $A\in \mathcal{V}$ such that $K(A)$ is closed under commutator operation. Then the following equivalences hold:
\usecounter{nr}
\begin{list}{(\arabic{nr})}{\usecounter{nr}}
\item $A$ is a congruence purified algebra;
\item $A$ is an $mp$ - algebra and $Min_F(A)$ is totally disconnected;
\item The family $(V(\alpha)\bigcap Min(A))_{\alpha\in B(Con(A))}$ is a basis of open sets for $Min_F(A)$;
\item Any minimal prime congruence of $A$ is regular;
\item $Min(A) = Sp(A)$;
\item $A$ is an $mp$ - algebra and any pure congruence of $A$ is regular.
\end{list}
\end{teorema}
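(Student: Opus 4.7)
The plan is to reduce Theorem 10.15 to its ring-theoretic counterpart, Theorem 8.5 of \cite{Aghajani}, by means of the transfer lemmas established immediately before the statement. By Hochster's theorem (together with the identification $L(A)\cong L(A/\rho(\Delta_A))$), one may fix a semiprime commutative ring $R$ such that the reticulations $L(A)$ and $L(R)$ are isomorphic and identified. Since $A$ is semiprime, the map $\lambda_A|_{B(Con(A))}:B(Con(A))\to B(L(A))$ is a Boolean isomorphism (Proposition 6.18(iii) of \cite{GM2}), and similarly for $R$ through Lemma 1 of \cite{Banaschewski}; thus the three Boolean algebras $B(Con(A))$, $B(L(A))=B(L(R))$ and $B(R)$ are canonically isomorphic.

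Next, I would match each of the six conditions of the theorem to the corresponding condition for $R$ in Theorem 8.5 of \cite{Aghajani}, using one transfer lemma per condition:
\begin{itemize}
\item condition (1) transfers via Proposition 10.6;
\item condition (2) transfers via Corollary 9.9 (for the $mp$ part) together with the homeomorphism $Min_F(A)\cong Min_{Id,F}(L(A))=Min_{Id,F}(L(R))\cong Min_F(R)$ given by Lemma 9.1 applied on both sides (which transports the totally disconnected property);
\item condition (3) transfers via Corollary 10.9;
\item condition (4) transfers via Corollary 10.11;
\item condition (5) transfers via Corollary 10.13;
\item condition (6) transfers via Corollary 9.9 and Lemma 10.14.
\end{itemize}
Once each condition is shown equivalent to its ring-theoretic analogue, the equivalence of the six conditions in $R$ supplied by Theorem 8.5 of \cite{Aghajani} yields the equivalence of the six conditions in $A$.

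The hardest step will be confirming that condition (2) transfers correctly: one must be sure that the topological homeomorphism $Min_F(A)\cong Min_F(R)$ goes through the common reticulation, and here Lemma 9.1 applied twice (once for $A$ and once for $R$), together with the identification $L(A)=L(R)$, does the job. Conditions (3), (4), (6) rely on transporting Boolean, regular, and pure data across the reticulation; these are covered by the preparatory Corollaries 10.9, 10.11, and Lemma 10.14, all of which are valid because the algebra and the ring are simultaneously semiprime. No step needs a new commutator calculation, and the hypothesis $(\star)$ is not required: all ingredients are consequences of semiprimeness, which was already used in the proofs of the transfer lemmas.
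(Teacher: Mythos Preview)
Your proposal is correct and follows essentially the same approach as the paper: both fix a semiprime commutative ring $R$ with $L(A)\cong L(R)$ via Hochster's theorem, transfer each of the six conditions to its ring-theoretic counterpart through the same preparatory lemmas (Proposition 10.6, Corollary 9.9, Lemma 9.1, Corollaries 10.9, 10.11, 10.13, and Lemma 10.14), and then invoke Theorem 8.5 of \cite{Aghajani}. The only cosmetic difference is that the paper organizes the argument as a sequence of equivalences $(1)\Leftrightarrow(i)$, whereas you transfer all six conditions first and apply the ring theorem once; the content is identical.
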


\begin{proof} Let us fix a semiprime commutative ring $R$ such that the reticulations $L(A)$ and $L(R)$ of $A$ and $R$ are isomorphic (see the Hochster theorem from \cite{Hochster}). We shall identify the isomorphic lattices $L(A)$ and $L(R)$. By using Lemma 9.1, the topological spaces $Min_F(A)$ and $Min_F(R)$ are homemorphic.

$(1)\Leftrightarrow (2)$ By using Proposition 10.6, Theorem 8.5 of \cite{Aghajani}, Corollary 9.9 and Lemma 9.1, the following properties are equivalent:

${\bullet}$ $A$ is a congruence purified algebra;

${\bullet}$ $R$ is a purified ring;

${\bullet}$ $R$ is an $mp$ - ring and $Min_F(R)$ is totally disconnected;

${\bullet}$ $A$ is an $mp$ - algebra and $Min_F(A)$ is totally disconnected.

$(1)\Leftrightarrow (3)$ This equivalence follows by applying Proposition 10.6, Theorem 8.5 of \cite{Aghajani} and Corollary 10.9 :$A$ is a congruence purified algebra iff $R$ is a purified ring iff the family $(V_R(e)\bigcap Min(R))_{e\in B(R)}$ is a basis of open sets for $Min_F(R)$ iff the family $(V_A(\alpha)\bigcap Min(A))_{\alpha\in B(Con(A))}$ is a basis of open sets for $Min_F(A)$.

$(1)\Leftrightarrow (4)$ By using Proposition 10.6, Theorem 8.5 of \cite{Aghajani} and Corollary 10.11 , the following properties are equivalent: $A$ is a congruence purified algebra iff $R$ is a purified ring iff any $P\in Min(R)$ is a regular ideal of $R$ iff any $\phi\in Min(A)$ is a regular congruence of $A$.

$(1)\Leftrightarrow (5)$ We apply Theorem 8.5 of \cite{Aghajani} and Corollary 10.13.

$(1)\Leftrightarrow (6)$ We apply Theorem 8.5 of \cite{Aghajani} and Lemma 10
.14.

\end{proof}

We observe that the proof of previous theorem is obtaining by transporting the properties that characterize the purified rings (in Theorem 8.5 of \cite{Aghajani}) from rings to algebras of $\mathcal{V}$.

\begin{corolar}
Any hyperarchimedean algebra $A$ is congruence purified.
\end{corolar}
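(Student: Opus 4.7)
The plan is to reduce to the semiprime case and then invoke Theorem 10.15. First, I would use Proposition 7.2 together with Theorem 10.4 to pass from $A$ to its semiprime quotient $A/\rho(\Delta_A)$: since hyperarchimedean-ness is preserved by quotienting by $\rho(\Delta_A)$, and since Theorem 10.4 (under the standing assumption $(\star)$) transports congruence purified-ness back from the quotient to $A$, it suffices to prove the result when $A$ is semiprime.

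Next I would establish that, in a semiprime hyperarchimedean algebra $A$, every compact congruence is already complemented. Fix $\alpha\in K(A)$; by the hyperarchimedean hypothesis and Lemma 3.6 (using $(\star)$) there is $k\geq 0$ with $[\alpha,\alpha]^k\in B(Con(A))$. Since $A$ is semiprime, Proposition 4.11 gives $\rho([\alpha,\alpha]^k)=[\alpha,\alpha]^k$, and combining with the standard relation $[\alpha,\alpha]^k\subseteq\alpha\subseteq\rho(\alpha)=\rho([\alpha,\alpha]^k)$ forces $\alpha=[\alpha,\alpha]^k\in B(Con(A))$. Therefore every congruence of $A$ is a join of elements of $B(Con(A))$, which is to say every congruence of $A$ is regular; in particular every minimal prime congruence of $A$ is regular.

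At this point I would apply Theorem 10.15, specifically the equivalence $(1)\Leftrightarrow(4)$, to conclude that $A$ is congruence purified. Alternatively, a self-contained finish is possible without invoking Theorem 10.15: in a hyperarchimedean algebra one has $Spec(A)=Max(A)=Min(A)$ (Proposition 7.2 combined with the fact that each prime contains a minimal prime which must itself be maximal), so given distinct $\phi,\psi\in Min(A)$ the equality $\phi\lor\psi=\nabla_A$ together with compactness of $\nabla_A$ produces $\beta,\gamma\in K(A)$ with $\beta\subseteq\phi$, $\gamma\subseteq\psi$, $\beta\lor\gamma=\nabla_A$; the previous step gives $\beta,\gamma\in B(Con(A))$, whence $\neg\beta\subseteq\gamma\subseteq\psi$, and $\alpha:=\beta$ witnesses the congruence purified property.

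The only real obstacle is the clean reduction to the semiprime case, which is why I would route through Theorem 10.4 rather than argue in $A$ directly; the rest is a short assembly of Proposition 7.2, Proposition 4.11, Lemma 3.6, and Theorem 10.15, each of which is already available under the standing hypotheses $K(A)$ closed under the commutator and $(\star)$ in force.
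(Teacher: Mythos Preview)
Your proof is correct but follows a different path from the paper's. Both begin identically, reducing to the semiprime case via Theorem 10.4 (and Proposition 7.2). After that the paper takes a topological route: it invokes Theorem 7.7 to conclude that $Spec_F(A)$ is Boolean and $Spec(A)=Max(A)=Min(A)$, deduces that $A$ is an $mp$-algebra via Theorem 9.14(5), and then applies the equivalence $(1)\Leftrightarrow(2)$ of Theorem 10.15 ($mp$-algebra with $Min_F(A)$ totally disconnected). You instead take an algebraic route: you show directly that in a semiprime hyperarchimedean algebra every compact congruence is complemented (using $[\alpha,\alpha]^k\in B(Con(A))$ together with $\rho([\alpha,\alpha]^k)=[\alpha,\alpha]^k$ from Proposition 4.11 and $\rho(\alpha)=\rho([\alpha,\alpha]^k)$ from Lemma 2.7(7)), whence every congruence is regular, and then apply the equivalence $(1)\Leftrightarrow(4)$ of Theorem 10.15. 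Your argument is slightly more self-contained in that it avoids the topological machinery of Section 7, and the intermediate fact $K(A)=B(Con(A))$ you extract is of independent interest; the paper's argument, on the other hand, illustrates its overarching theme of transferring properties through the spectral topologies. Your alternative direct finish (bypassing Theorem 10.15 entirely) is also valid and arguably the most elementary of the three.
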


\begin{proof}
According to Theorem 10.4, one can assume that $A$ is a semiprime hyperarchimedean algebra. By Theorem 7.7(4), it follows that $Max(A) = Min(A) = Spec(A)$ and $Spec_F(A)$ is a Boolean space. Thus $A$ is an $mp$ - algebra (cf. Theorem 9.14(5)) and $Min_F(A)$ is a Boolean space. By taking into account Theorem 10.15(2) it follows that $A$ is congruence purified.
\end{proof}

\begin{corolar}
Let $A$ be a $PF$ algebra. If $Min_Z(A)$ is a compact space then $A$ is congruence purified.
\end{corolar}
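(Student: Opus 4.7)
The plan is to combine the characterization of $PF$-algebras as semiprime $mp$-algebras (Theorem 9.22) with the topological criterion for congruence purification given by Theorem 10.15(2), using the compactness hypothesis to obtain total disconnectedness of the flat minimal spectrum through Proposition 9.5.

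First, by Theorem 9.22, any $PF$-algebra is a semiprime $mp$-algebra, so $A$ satisfies both the semiprimeness hypothesis needed to invoke Theorem 10.15 and the first half of condition (2) in that theorem. Thus it suffices to prove that $Min_F(A)$ is totally disconnected.

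Next, since $A$ is semiprime and $Min_Z(A)$ is compact by hypothesis, Proposition 9.5 yields the equality of topologies $Min_Z(A) = Min_F(A)$ together with the fact that this common topological space is a Boolean space. Every Boolean space is zero-dimensional Hausdorff, hence totally disconnected (see, for instance, \cite{Johnstone}, p.~69), so $Min_F(A)$ is totally disconnected.

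Finally, with $A$ a semiprime $mp$-algebra and $Min_F(A)$ totally disconnected, the equivalence $(1)\Leftrightarrow(2)$ of Theorem 10.15 gives that $A$ is congruence purified. There is no real obstacle here: the work has already been packaged into Theorems 9.22 and 10.15 and Proposition 9.5, and the only task is to chain these three results through the hypothesis that $Min_Z(A)$ is compact. If one prefers a different routing, one could instead verify condition (5) of Theorem 10.15 directly, by observing that in a semiprime $mp$-algebra with $Min_Z(A)$ compact Boolean, every minimal prime congruence is a max-regular congruence; but invoking condition (2) keeps the argument shortest.
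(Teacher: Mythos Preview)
Your proof is correct but follows a different route from the paper. The paper's argument transfers the problem to commutative rings: after noting (via Theorem 9.22) that $A$ is a semiprime $mp$-algebra, it picks a semiprime ring $R$ with $L(A)\cong L(R)$, observes that $R$ is then a semiprime $mp$-ring with $Min_Z(R)$ compact (Corollary 9.9 and Lemma 9.1), invokes Theorem 8.5 of \cite{Aghajani} to conclude that $R$ is purified, and finally returns to $A$ through Proposition 10.6. Your argument instead stays entirely inside the paper's own framework, combining Proposition 9.5 with the already-established Theorem 10.15(2). This is cleaner and shorter, since Theorem 10.15 has already absorbed the appeal to \cite{Aghajani}; the paper's version is more transparent about where the ring-theoretic input enters, and illustrates once more the transfer method that is the paper's theme. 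Both are valid, and yours arguably makes better use of what has already been proved.
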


\begin{proof}
According to Theorem 9.22, $A$ is a semiprime $mp$ - algebra. Let us fix a semiprime commutative ring $R$ such that the reticulations $L(A)$ and $L(R)$ are isomorphic. Thus $R$ is a semiprime $mp$ - ring (by Corollary 9.9). If $Min_Z(A)$ is a compact space then $Min_Z(R)$ is also compact (by Lemma 9.1(1), $Min_Z(A)$ and $Min_Z(R)$ are homeomorphic). By Theorem 8.5 of \cite{Aghajani}, $R$ is a purified ring, hence $A$ is congruence purified (by Proposition 6.10).
\end{proof}

\begin{propozitie} Let us consider the semiprime algebras $A_1,\cdots,A_n$ in $\mathcal{V}$ such that $K(A_1),\cdots,K(A_n)$ are closed under the commutator operation and their product $A = \prod_{i=1}^nA_i$. Then $A$ is a congruence purified algebra if and only if  $A_i, i= 1\cdots n$ are congruence purified algebras.
\end{propozitie}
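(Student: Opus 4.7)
My plan is to reduce the statement to a coordinate-wise computation via the Horn--Fraser decomposition of the congruence lattice of the product. Since $\mathcal{V}$ is semidegenerate congruence-modular, Proposition 2.2 gives $[\nabla_A,\nabla_A] = \nabla_A$, so by Proposition 2.1 one has the lattice isomorphism $Con(A) \cong Con(A_1) \times \cdots \times Con(A_n)$, under which I identify a congruence $\theta$ of $A$ with its tuple of components $(\theta_1,\ldots,\theta_n)$. Applying axiom (2.2) to each canonical projection $\pi_i : A \to A_i$ (which is surjective, with $Ker(\pi_i)$ equal to the congruence whose $i$-th component is $\Delta_{A_i}$ and whose other components are $\nabla$), I would deduce that the commutator acts componentwise: $[\theta,\chi] = ([\theta_1,\chi_1],\ldots,[\theta_n,\chi_n])$. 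From this, both the set of compact congruences and the Boolean center decompose coordinate-wise, $K(A) = \prod_{i=1}^n K(A_i)$ and $B(Con(A)) = \prod_{i=1}^n B(Con(A_i))$, with negation taken componentwise; in particular $A$ is itself semiprime with $K(A)$ closed under the commutator.

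Next I would describe $Spec(A)$ and $Min(A)$ in terms of the factors. For each $i$ and each $\phi \in Spec(A_i)$, the congruence $\Phi_i(\phi) := (\nabla,\ldots,\phi,\ldots,\nabla)$ (with $\phi$ in position $i$) is prime of $A$, since the condition $[\alpha,\beta] \subseteq \Phi_i(\phi)$ reduces at coordinate $i$ to $[\alpha_i,\beta_i] \subseteq \phi$ and is vacuous elsewhere. Conversely, any congruence $\theta = (\theta_1,\ldots,\theta_n)$ with two coordinates $\theta_i,\theta_j$ both strictly below $\nabla$ fails to be prime: taking $\alpha = \Phi_i(\nabla_{A_i})$ and $\beta = \Phi_j(\nabla_{A_j})$ gives $[\alpha,\beta] = \Delta_A \subseteq \theta$ with neither $\alpha$ nor $\beta$ contained in $\theta$. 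Since each $\Phi_i$ is an order embedding of $Spec(A_i)$ into $Spec(A)$, the minimal prime spectrum decomposes as the disjoint union $Min(A) = \bigsqcup_{i=1}^n \Phi_i(Min(A_i))$.

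With these reductions the equivalence becomes coordinate bookkeeping. For the forward implication, suppose each $A_i$ is congruence purified and fix distinct $\theta,\theta' \in Min(A)$. If $\theta = \Phi_i(\phi)$ and $\theta' = \Phi_j(\phi')$ with $i \neq j$, the element $\varepsilon \in B(Con(A))$ having $\Delta_{A_i}$ at position $i$ and $\nabla$ elsewhere satisfies $\varepsilon \subseteq \theta$ and $\neg\varepsilon \subseteq \theta'$. If $i = j$ but $\phi \neq \phi'$, the hypothesis on $A_i$ supplies $\alpha_i \in B(Con(A_i))$ with $\alpha_i \subseteq \phi$ and $\neg\alpha_i \subseteq \phi'$; padding with $\nabla$'s at the other positions yields the required idempotent of $A$. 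Conversely, if $A$ is congruence purified and $\phi \neq \phi'$ are in $Min(A_i)$, then $\Phi_i(\phi)$ and $\Phi_i(\phi')$ are distinct minimal primes of $A$, so the hypothesis produces $\alpha \in B(Con(A))$ separating them, whose $i$-th coordinate $\alpha_i$ lies in $B(Con(A_i))$ and separates $\phi$ and $\phi'$.

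The main technical obstacle is the initial structural step: rigorously establishing the product decomposition of $Con(A)$ together with the componentwise description of the commutator, the Boolean center, and the prime spectrum. Once these identifications are in place, both implications reduce to direct coordinate manipulations with the distinguished idempotents $\Phi_i(\Delta_{A_i})$, and no further appeal to condition $(\star)$ or to the reticulation functor is needed.
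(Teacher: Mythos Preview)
Your direct universal-algebraic argument is essentially correct and takes a genuinely different route from the paper. The paper proceeds entirely via the reticulation functor: for each factor $A_i$ it invokes Hochster's theorem to choose a semiprime commutative ring $R_i$ with $L(A_i)\cong L(R_i)$, uses a product-compatibility result for reticulations (Proposition~5.11 of \cite{GM2}) to identify $L(A)$ with $L(\prod_i R_i)$, and then applies Proposition~10.6 together with the ready-made ring-theoretic fact (Corollary~8.7 of \cite{Aghajani}) that a finite product of rings is purified iff each factor is. Your approach bypasses the reticulation, Hochster's theorem, and the external ring reference altogether, working instead with the Horn--Fraser decomposition and the componentwise commutator to analyse $Min(A)$ and $B(Con(A))$ directly. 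This is more self-contained and makes the combinatorics explicit; the paper's route, by contrast, is there precisely to exhibit the reticulation as a transfer device, which is the paper's declared theme.

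One slip to repair: in your argument that a congruence $\theta$ with two proper coordinates $\theta_i,\theta_j$ cannot be prime, you set $\alpha = \Phi_i(\nabla_{A_i})$ and $\beta = \Phi_j(\nabla_{A_j})$. With your definition of $\Phi_k$ (the argument in slot $k$, $\nabla$ elsewhere), both of these equal $\nabla_A$, so $[\alpha,\beta] = \nabla_A$, not $\Delta_A$. You want the \emph{other} embedding here: take $\alpha$ to have $\nabla_{A_i}$ at position $i$ and $\Delta$ elsewhere, and likewise $\beta$ with $\nabla_{A_j}$ at position $j$ and $\Delta$ elsewhere. Then the componentwise commutator gives $[\alpha,\beta] = \Delta_A \subseteq \theta$, while $\alpha_i = \nabla_{A_i} \not\subseteq \theta_i$ and $\beta_j = \nabla_{A_j} \not\subseteq \theta_j$, so neither $\alpha$ nor $\beta$ is contained in $\theta$. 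With this correction the rest of your spectrum description and the coordinate bookkeeping in both directions go through as written.
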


\begin{proof} For each $i=1,\cdots,n$ consider  a semiprime commutative ring $R_i$ such that the reticulations $L(A_i)$ and $L(R_i)$ are isomorphic (by the Hochster theorem). By Proposition 5.11 of \cite{GM2}, the reticulations $L(A)$ and $L(R)$ of $A$ and $R$ are isomorphic.

By using that the variety $\mathcal{V}$ has Horn - Fraser property (cf. Proposition 2.1) it is not difficult to prove that the product $A = \prod_{i=1}^nA_i$ is semprime.

According to Proposition 10.6 and  Corollary 8.7 of \cite{Aghajani}, the following assertions are equivalent:

${\bullet}$ $A$ is a congruence purified algebra;

${\bullet}$ $R$ is a purified ring;

${\bullet}$  $R_i$, for $i= 1,\cdots,n$ are congruence purified rings;

${\bullet}$  $A_i$, for $i= 1,\cdots,n$ are congruence purified algebras.

\end{proof}

\section{$PP$ - algebras}

\hspace{0.5cm} Recall that a commutative ring $R$ is said to be a $PP$ - ring (or a Baer ring) if the annihilator of any element of $R$ is generated by an idempotent element. This concept can be extended in a standard way to an algebra $A$ of a semidegenerate congruence - modular $\mathcal{V}$: $A$ is called $PP$ - algebra if for any $\alpha\in PCon(A)$ we have $\alpha^{\perp}\in B(Con(A))$.

\begin{lema} For any algebra $A$ the following are equivalent:
\usecounter{nr}
\begin{list}{(\arabic{nr})}{\usecounter{nr}}
\item  $A$ is a $PP$ - algebra;
\item  For any $\alpha\in K(A)$ we have $\alpha^{\perp}\in B(Con(A))$.
\end{list}
\end{lema}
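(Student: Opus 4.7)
The equivalence is essentially a closure property of $B(Con(A))$ under finite meets combined with the De Morgan-style identity for the annihilator operation. I would proceed as follows.

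The implication $(2) \Rightarrow (1)$ is immediate from the inclusion $PCon(A) \subseteq K(A)$, so no work is needed there. For the nontrivial direction $(1) \Rightarrow (2)$, fix $\alpha \in K(A)$ and use the fact that compact congruences are exactly the finitely generated ones, so $\alpha = \bigvee_{i=1}^{n} \alpha_{i}$ for some $\alpha_{1}, \ldots, \alpha_{n} \in PCon(A)$.

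The next step is to establish the identity $\alpha^{\perp} = \bigcap_{i=1}^{n} \alpha_{i}^{\perp}$. This follows from the residuation characterization of $\perp$: for any congruence $\gamma$, the equivalences $\gamma \subseteq (\bigvee_{i=1}^{n}\alpha_{i})^{\perp} \Leftrightarrow [\gamma, \bigvee_{i=1}^{n} \alpha_{i}] = \Delta_{A} \Leftrightarrow \bigvee_{i=1}^{n}[\gamma, \alpha_{i}] = \Delta_{A} \Leftrightarrow \gamma \subseteq \alpha_{i}^{\perp}$ for all $i$ hold (using that the commutator distributes over joins, as noted in the preliminaries). Alternatively, one may invoke Lemma 2.4(2) inductively; either way, this is a routine calculation.

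By hypothesis, each $\alpha_{i}^{\perp} \in B(Con(A))$. Since $B(Con(A))$ is a Boolean algebra whose lattice operations are inherited from $Con(A)$ (by the result of Jipsen cited in the preliminaries), it is in particular closed under finite meets, which coincide with $\bigcap$. Therefore $\alpha^{\perp} = \bigcap_{i=1}^{n}\alpha_{i}^{\perp} \in B(Con(A))$, which establishes (2).

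There is no real obstacle here; the proof is a short verification that the Boolean property of annihilators of principal congruences transfers to finite joins via the De Morgan identity and closure of $B(Con(A))$ under intersection.
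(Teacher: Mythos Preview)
Your proof is correct and follows essentially the same approach as the paper: decompose $\alpha\in K(A)$ into a finite join of principal congruences, use $\alpha^{\perp}=\bigcap_{i=1}^{n}\alpha_i^{\perp}$, and conclude by closure of $B(Con(A))$ under finite intersections. You have simply added more justification for the De Morgan identity and the closure step than the paper does.
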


\begin{proof} Assume that $A$ is a $PP$ - algebra and $\alpha\in K(A)$. Then there exist $\beta_1,\cdots,\beta_n\in PCon(A))$ such that $\alpha = \bigvee_{i=1}^n \beta_i$, hence
$\beta^{\perp}_i\in B(Con(A))$ for all $i=1,\cdots,n$. Therefore we get $\alpha^{\perp} = \bigcap_{i=1}^n \beta^{\perp}_i\in B(Con(A))$. The converse implication is obvious.
\end{proof}

\begin{lema}
Any $PP$ - algebra is semiprime.
\end{lema}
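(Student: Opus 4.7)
The plan is to observe that every $PP$-algebra is automatically a $PF$-algebra, and then invoke Lemma 9.20 which already establishes that every $PF$-algebra is semiprime. The technical bridge between the two notions is the observation (made just after Lemma 4.1 in the paper) that any complemented congruence of $A$ is pure.

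More concretely, I would proceed as follows. Let $A$ be a $PP$-algebra and take an arbitrary $\alpha \in PCon(A)$ (or, by Lemma 11.1, any $\alpha \in K(A)$). By the definition of a $PP$-algebra we have $\alpha^{\perp} \in B(Con(A))$, so $\alpha^{\perp}$ is a complemented congruence of $A$. Using the remark after Lemma 4.1, every complemented congruence is pure: if $\beta \in K(A)$ with $\beta \subseteq \alpha^{\perp}$, then $(\alpha^{\perp})^{\perp} \subseteq \beta^{\perp}$, hence $\nabla_A = \alpha^{\perp} \lor (\alpha^{\perp})^{\perp} \subseteq \alpha^{\perp} \lor \beta^{\perp}$. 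Therefore $\alpha^{\perp}$ is a pure congruence for every $\alpha \in PCon(A)$, which is exactly the definition of a $PF$-algebra. Applying Lemma 9.20 then yields that $A$ is semiprime.

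There is no real obstacle here; the short proof rests on two facts already established in the paper (complemented $\Rightarrow$ pure, and $PF \Rightarrow$ semiprime). If one prefers an argument that is independent of Lemma 9.20, one can give a direct derivation: assuming $\alpha \in K(A)$ satisfies $[\alpha,\alpha] = \Delta_A$ (so $\alpha \subseteq \alpha^{\perp}$) and using $\alpha^{\perp} \in B(Con(A))$ with $\alpha^{\perp} \lor \alpha^{\perp\perp} = \nabla_A$ and $[\alpha^{\perp},\alpha^{\perp\perp}] = \Delta_A$, one computes
\[
\alpha = [\alpha,\nabla_A] = [\alpha,\alpha^{\perp}] \lor [\alpha,\alpha^{\perp\perp}] = [\alpha,\alpha^{\perp\perp}] \subseteq [\alpha^{\perp},\alpha^{\perp\perp}] = \Delta_A,
\]
using distributivity of the commutator over arbitrary joins (Section 2) together with $[\theta,\nabla_A] = \theta$ from Proposition 2.2. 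Then a straightforward induction on $n$, setting $\beta = [\alpha,\alpha]^{n-1} \in K(A)$ (since $K(A)$ is closed under the commutator), reduces the general condition $[\alpha,\alpha]^{n} = \Delta_A$ to the base case and yields $\alpha = \Delta_A$. Invoking Proposition 2.8 then gives $\rho(\Delta_A) = \Delta_A$, i.e. $A$ is semiprime.
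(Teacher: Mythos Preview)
Your proposal is correct. Both routes you give are valid.

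Your primary route --- observing that $PP \Rightarrow PF$ via the fact that complemented congruences are pure, then invoking Lemma~9.20 --- is genuinely different from the paper's argument. The paper does \emph{not} pass through $PF$-algebras; instead it gives a direct inductive proof. Its key observation is that if $\theta \subseteq \theta^{\perp}$ and $\theta^{\perp} \in B(Con(A))$, then using the identity $\theta \cap \alpha = [\theta,\alpha]$ for $\alpha \in B(Con(A))$ (stated just before Lemma~2.11) one has $\theta = \theta \cap \theta^{\perp} = [\theta,\theta^{\perp}] = \Delta_A$. It then applies this to $\theta = [\alpha,\alpha]^{n-1}$ and descends exactly as you do in your alternative argument. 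Your factoring through Lemma~9.20 is cleaner in the sense that it avoids repeating an argument that is structurally identical to the proof of Lemma~9.20; the paper's direct proof, on the other hand, keeps Lemma~11.2 self-contained and makes the role of the Boolean property $\alpha^{\perp} \in B(Con(A))$ explicit at the point of use.

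Your alternative direct computation is essentially the paper's argument, but your base-case step is slightly more elaborate: where you expand $\alpha = [\alpha,\nabla_A] = [\alpha,\alpha^{\perp}] \lor [\alpha,\alpha^{\perp\perp}]$ and then bound $[\alpha,\alpha^{\perp\perp}] \subseteq [\alpha^{\perp},\alpha^{\perp\perp}] = \Delta_A$, the paper simply writes $\theta = \theta \cap \theta^{\perp} = [\theta,\theta^{\perp}] = \Delta_A$ in one line. Both are fine.
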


\begin{proof} Firstly we observe that for any $\theta\in Con(A)$ such that $\theta\subseteq\theta^{\perp}$ and $\theta^{\perp}\in B(Con(A))$ we have $\theta = \theta\bigcap \theta^{\perp} = [\theta,\theta^{\perp}] = \Delta_A$.

In order to prove that $A$ is semiprime let us assume that $\alpha$ is a compact congruence such that $[\alpha,\alpha]^n = \Delta_A$, for some integer $n\geq 1$. Thus $[\alpha,\alpha]^{n-1}\leq ([\alpha,\alpha]^{n-1})^{\perp}$ and $([\alpha,\alpha]^{n-1})^{\perp}\in B(Con(A))$ hence, by using the previous observation, one gets $[\alpha,\alpha]^{n-1} = \Delta_A$. By using many times this argument we obtain $\alpha = \Delta_A$. Thus $A$ is semiprime.
\end{proof}

Recall that a bounded distributive lattice $L$ is said to be a Stone lattice if for each $x\in L$ there exist $e\in B(L)$ such that $Ann(x) = (x]$. A Simmons' theorem from \cite{Simmons} says that the reticulation functor transforms the $PP$ - rings in Stone lattices. The following result extends the Simmons theorem from rings to algebras of $\mathcal{V}$.

\begin{teorema}  Let $A$ be an algebra in $\mathcal{V}$ such that $K(A)$ is closed under commutator operation. Let us consider the following properties:
\usecounter{nr}
\begin{list}{(\arabic{nr})}{\usecounter{nr}}
\item  $A$ is a $PP$ - algebra;
\item  The reticulation $L(A)$ is a Stone lattice.

Then the implication $(1)\Rightarrow (2)$ holds; if $A$ is semiprime then the converse implication $(2)\Rightarrow (1)$ is valid.
\end{list}
\end{teorema}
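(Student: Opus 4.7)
The plan is to prove both directions by transferring statements about annihilators between $\alpha^{\perp}$ in $Con(A)$ and $Ann(\lambda_A(\alpha))$ in $L(A)$ via Proposition 3.9 and Lemma 3.3(7). I will use freely that if $\alpha \in B(Con(A))$ then $\lambda_A(\alpha) \in B(L(A))$, since the commutator operation and join are sent by $\lambda_A$ to $\wedge$ and $\vee$ (Lemma 3.1), and that $B(Con(A)) \subseteq K(A)$ (Lemma 2.11(3)). I will also appeal to the fact (cited in the proof of Proposition 6.19(2)) that when $A$ is semiprime, $\lambda_A|_{B(Con(A))} : B(Con(A)) \to B(L(A))$ is a Boolean isomorphism.

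For $(1) \Rightarrow (2)$: Assume $A$ is a $PP$-algebra. By Lemma 11.2, $A$ is semiprime. Fix $x \in L(A)$ and write $x = \lambda_A(\alpha)$ for some $\alpha \in K(A)$. Combining Lemma 3.3(7) with the semiprime version of Proposition 3.9, one gets
\[ Ann(x) = Ann(\alpha^{\ast}) = (\alpha^{\perp})^{\ast}. \]
By Lemma 11.1, $\alpha^{\perp} \in B(Con(A))$, so in particular $\alpha^{\perp} \in K(A)$, and Lemma 3.3(7) yields $(\alpha^{\perp})^{\ast} = (\lambda_A(\alpha^{\perp})]$. Since $\lambda_A$ sends complemented congruences to complemented lattice elements, $\lambda_A(\alpha^{\perp}) \in B(L(A))$, and so $Ann(x)$ is the principal ideal generated by a complemented element. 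Hence $L(A)$ is a Stone lattice.

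For $(2) \Rightarrow (1)$: Assume $A$ is semiprime and $L(A)$ is Stone. Fix $\alpha \in K(A)$; we must show $\alpha^{\perp} \in B(Con(A))$. By the Stone hypothesis, pick $e \in B(L(A))$ with $Ann(\lambda_A(\alpha)) = (e]$, and by the Boolean isomorphism $\lambda_A|_{B(Con(A))}$ pick $\gamma \in B(Con(A))$ with $\lambda_A(\gamma) = e$. Then Lemma 3.3(7) gives $\gamma^{\ast} = (e]$ and the semiprime version of Proposition 3.9 gives $(\alpha^{\perp})^{\ast} = Ann(\lambda_A(\alpha)) = (e] = \gamma^{\ast}$. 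Applying $(\cdot)_{\ast}$ and using Lemma 3.3(2,3), we obtain $\rho(\alpha^{\perp}) = \rho(\gamma)$, and since $\gamma \in B(Con(A))$ and $A$ is semiprime, Proposition 4.11 yields $\rho(\gamma) = \gamma$; hence $\rho(\alpha^{\perp}) = \gamma$.

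The key remaining step, and the main obstacle, is to promote the equality of radicals $\rho(\alpha^{\perp}) = \gamma$ to the equality $\alpha^{\perp} = \gamma$ itself. One inclusion is immediate, $\alpha^{\perp} \subseteq \rho(\alpha^{\perp}) = \gamma$. For the reverse inclusion, observe that $e \in Ann(\lambda_A(\alpha))$ means $\lambda_A(\gamma) \wedge \lambda_A(\alpha) = 0$, so by Lemma 3.1(2) we have $\lambda_A([\gamma,\alpha]) = 0$; since $A$ is semiprime, Lemma 3.1(7) forces $[\gamma,\alpha] = \Delta_A$, i.e. $\gamma \subseteq \alpha^{\perp}$. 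Thus $\alpha^{\perp} = \gamma \in B(Con(A))$, and Lemma 11.1 lets us conclude that $A$ is a $PP$-algebra.
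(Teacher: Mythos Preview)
Your proof is correct and follows essentially the same approach as the paper: both directions are handled by transferring annihilators via Proposition~3.9 and Lemma~3.3(7), together with the Boolean isomorphism $\lambda_A|_{B(Con(A))}$ available in the semiprime case. The only minor difference is in the last step of $(2)\Rightarrow(1)$: the paper concludes $\alpha^{\perp}=\gamma$ by invoking Lemmas~6.21 and~6.22 (regular congruences are radical, and if $\rho(\theta)$ is regular then $\rho(\theta)=\theta$), whereas you give the slightly more direct argument that $\lambda_A([\gamma,\alpha])=0$ and semiprimeness force $[\gamma,\alpha]=\Delta_A$, hence $\gamma\subseteq\alpha^{\perp}$.
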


\begin{proof}

$(1)\Rightarrow (2)$ Assume that $A$ is a $PP$ - algebra. Thus $A$ is a semiprime algebra (cf. Lemma 11.2). hence, by using Proposition 6.18(iii) of \cite{GM2}, it follows that the map $\lambda_A|_{B(Con(A))}:B(Con(A))\rightarrow B(L(A))$ is a Boolean isomorphism. In order to prove that the reticulation $L(A)$ is a Stone lattice let us consider $x\in L(A)$, so there exists $\alpha\in K(A)$ such that $x = \lambda_A(\alpha)$. Since $A$ is a $PP$ - algebra we have $\alpha^{\perp}\in B(Con(A)))$ (by Lemma 11.1), hence $\lambda_A(\alpha^{\perp})\in B(L(A))$.

We shall prove that $Ann(x) = Ann(\lambda_A(\alpha)) = (\lambda_A(\alpha^{\perp})]$. By Lemma 3.3(7) and Proposition 3.9 we have $Ann(\lambda_A(\alpha)) = Ann(\alpha^{\ast}) = (\alpha^{\perp})^{\ast}$. Let us consider an element $y\in Ann(\lambda_A(\alpha)) = (\alpha^{\perp})^{\ast}$, so there exists a compact congruence $\beta$ such that $\beta\subseteq \alpha^{\perp}$ and $y = \lambda_A(\beta)$. Then $y = \lambda_A(\beta)\leq \lambda_A(\alpha^{\perp})$, so $y\in (\lambda_A(\alpha^{\perp})]$. We have established the inclusion $Ann(\lambda_A(\alpha))\subseteq (\lambda_A(\alpha^{\perp})]$.

By observing that $\lambda_A(\alpha)\land \lambda_A(\alpha^{\perp}) = \lambda_A([\alpha,\alpha^{\perp})]) = \lambda_A(\Delta_A) = 0$, hence $(\lambda_A(\alpha^{\perp})]\subseteq Ann(\lambda_A(\alpha))$.

$(2)\Rightarrow (1)$ (assuming that $A$ is semiprime). We suppose that $L(A)$ is a Stone lattice and will prove that $A$ is a $PP$ - algebra. Let $\alpha$ be a compact congruence of $A$, so there exists $f\in B(L(A))$ such that $Ann(\lambda_A(\alpha)) = (f]$. Since $\lambda_A|_{B(Con(A))}:B(Con(A))\rightarrow B(L(A))$ is a Boolean isomorphism it follows that $f = \lambda_A(\epsilon)$, for some $\epsilon\in B(Con(A))$. According to Proposition 3.9 we have $(\alpha^{\perp})^{\ast} = Ann(\lambda_A(\alpha)) = (\lambda_A(\epsilon)] = \epsilon^{\ast}$. By using Lemma 3.3(2) we get $\rho(\alpha^{\perp}) = ((\alpha^{\perp})^{\ast})_{\ast} = (\epsilon^{\ast})_{\ast} = \rho(\epsilon)$. In accordance with Lemma 6.21 we have $\rho(\epsilon) = \epsilon$, so $\rho(\alpha^{\perp}) = \epsilon\in B(Con(A))$. If we apply Lemma 6.22 then we obtain $\alpha^{\perp} = \epsilon\in B(Con(A))$, hence $\alpha^{\perp}\in B(Con(A))$. Therefore $A$ is a $PP$ -algebra.

\end{proof}

Let us fix $f:A\rightarrow B$ a morphism of $\mathcal{V}$. Then $f$ is said to be a $PP$ - morphism if for all $\alpha,\beta\in K(A)$, $\alpha^{\perp_A} = \beta^{\perp_A}$ implies $(f^{\bullet}(\alpha))^{\perp_B} = (f^{\bullet}(\beta))^{\perp_B}$. It is easy to see that any flat morphism of $\mathcal{V}$ is a $PP$ - morphism.

 Throughout this section we shall assume that $K(A)$ and $K(B)$ are closed to commutator operation.

\begin{lema}
If $f:A\rightarrow B$ and $g:B\rightarrow C$ are two $PP$ - morphisms in $\mathcal{V}$ then $g\circ f:A\rightarrow C$ is a $PP$ - morphism.
\end{lema}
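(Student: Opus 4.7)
The plan is to reduce the claim to functoriality of the operator $(\cdot)^{\bullet}$ on congruence lattices and then apply the $PP$-definition successively to $f$ and to $g$.

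First I would verify (or recall) that $(g\circ f)^{\bullet} = g^{\bullet}\circ f^{\bullet}$. The cleanest way to see this is via the adjunctions recalled in Section~2: for $\gamma\in Con(C)$, $(g\circ f)^{\ast}(\gamma) = (g\circ f)^{-1}(\gamma) = f^{-1}(g^{-1}(\gamma)) = f^{\ast}(g^{\ast}(\gamma))$, so $(g\circ f)^{\ast} = f^{\ast}\circ g^{\ast}$, and by uniqueness of left adjoints $(g\circ f)^{\bullet} = g^{\bullet}\circ f^{\bullet}$. I would also invoke the already-cited fact (from \cite{GKM}) that $u^{\bullet}$ preserves compactness, so $f^{\bullet}$ restricts to a map $K(A)\to K(B)$.

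Next, I would pick $\alpha,\beta\in K(A)$ with $\alpha^{\perp_A} = \beta^{\perp_A}$ and chain the two $PP$-hypotheses. Since $f$ is a $PP$-morphism, we get $(f^{\bullet}(\alpha))^{\perp_B} = (f^{\bullet}(\beta))^{\perp_B}$. Since $f^{\bullet}(\alpha), f^{\bullet}(\beta)\in K(B)$, we may apply the $PP$-hypothesis for $g$ to them, obtaining $(g^{\bullet}(f^{\bullet}(\alpha)))^{\perp_C} = (g^{\bullet}(f^{\bullet}(\beta)))^{\perp_C}$. Substituting the functoriality identity gives $((g\circ f)^{\bullet}(\alpha))^{\perp_C} = ((g\circ f)^{\bullet}(\beta))^{\perp_C}$, which is exactly what is required.

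There is no real obstacle here; the only point that needs care is the functoriality $(g\circ f)^{\bullet} = g^{\bullet}\circ f^{\bullet}$, which is the sole ingredient beyond the definitions. The rest is a two-line syllogism using the $PP$-property of $f$ and of $g$ in turn.
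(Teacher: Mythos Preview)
Your argument is correct and is precisely the natural proof: functoriality $(g\circ f)^{\bullet}=g^{\bullet}\circ f^{\bullet}$ together with preservation of compactness lets you apply the $PP$-hypothesis first to $f$ and then to $g$. The paper in fact states this lemma without proof, so your write-up supplies exactly the routine verification the authors left to the reader.
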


\begin{lema}
Assume that the algebras $A$ and $B$ are semiprime and $f:A\rightarrow B$ is an admissible morphism. Then for any $\alpha\in K(A)$ we have $Ann(L(f)(\lambda_A(\alpha))) =  Ann(L(f)((\lambda_A(\alpha)]))$.

\end{lema}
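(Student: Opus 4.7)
The equality is almost formal once we unravel what the two sides mean. By Proposition 3.11 (since $f$ is admissible and both $K(A)$, $K(B)$ are closed under the commutator), $L(f): L(A) \to L(B)$ is a morphism of bounded distributive lattices, and in particular it is order-preserving. The ideal $(\lambda_A(\alpha)]$ is the principal ideal of $L(A)$ generated by $\lambda_A(\alpha)$, so the set $L(f)((\lambda_A(\alpha)]) = \{L(f)(y) \mid y \in L(A), \, y \leq \lambda_A(\alpha)\}$ has $L(f)(\lambda_A(\alpha))$ as its greatest element in $L(B)$.

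The first inclusion $Ann(L(f)((\lambda_A(\alpha)])) \subseteq Ann(L(f)(\lambda_A(\alpha)))$ is immediate: since $\lambda_A(\alpha) \in (\lambda_A(\alpha)]$, we have $L(f)(\lambda_A(\alpha)) \in L(f)((\lambda_A(\alpha)])$, so any element of $L(B)$ that annihilates every element of the set $L(f)((\lambda_A(\alpha)])$ in particular annihilates $L(f)(\lambda_A(\alpha))$.

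For the converse inclusion $Ann(L(f)(\lambda_A(\alpha))) \subseteq Ann(L(f)((\lambda_A(\alpha)]))$, fix $z \in L(B)$ with $z \wedge L(f)(\lambda_A(\alpha)) = 0$ and take any $y \in (\lambda_A(\alpha)]$, i.e.\ $y \leq \lambda_A(\alpha)$. The monotonicity of $L(f)$ gives $L(f)(y) \leq L(f)(\lambda_A(\alpha))$, so
\[
z \wedge L(f)(y) \leq z \wedge L(f)(\lambda_A(\alpha)) = 0,
\]
hence $z \in Ann(L(f)((\lambda_A(\alpha)]))$.

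There is really no obstacle here; the hypotheses that $A$, $B$ are semiprime and $f$ admissible serve only to guarantee that the morphism $L(f)$ exists and behaves well (the semiprime assumption will matter in subsequent results of the section where this lemma is applied, not in the proof itself). The argument uses only the order-preservation of $L(f)$ together with the standard fact that the annihilator of a subset of a distributive lattice coincides with the annihilator of its largest element whenever such an element is present in the subset.
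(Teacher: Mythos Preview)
Your proof is correct and follows essentially the same approach as the paper's own proof: both inclusions are obtained from the monotonicity of $L(f)$ and the fact that $L(f)(\lambda_A(\alpha))$ is the largest element of the image set $L(f)((\lambda_A(\alpha)])$. Your observation that the semiprime hypothesis on $A$ and $B$ plays no role in the argument itself is also correct; the paper's proof does not use it either.
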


\begin{proof} It suffices to show that for all $\beta\in K(A)$ the following equivalence holds:

${\bullet}$ $\lambda_A(\beta)\in Ann(L(f)(\lambda_A(\alpha)))$ if and only if $\lambda_A(\beta)\in Ann(L(f)(\lambda_A(\alpha)]))$.

Firstly assume that $\lambda_A(\beta)\in Ann(L(f)(\lambda_A(\alpha)))$. Let $y\in L(f)((\lambda_A(\alpha)])$, so there exists $\gamma\in K(A)$ such that $\lambda_A(\gamma)\leq \lambda_A(\alpha)$ and $y = L(f)(\lambda_A(\gamma))$. Thus $\lambda_A(\beta)\land y = \lambda_A(\beta)\land L(f)(\lambda_A(\gamma))\leq \lambda_A(\beta)\land L(f)(\lambda_A(\alpha)) = 0$, hence $\lambda_A(\beta)\in Ann(L(f)((\lambda_A(\alpha)]))$.

Conversely, let us assume that $\lambda_A(\beta)\in Ann(L(f)((\lambda_A(\alpha)]))$. We remark that $L(f)(\lambda_A(\alpha))\in L(f)((\lambda_A(\alpha)])$, therefore we get $\lambda_A(\beta)\land L(f)(\lambda_A(\alpha)) = 0$, hence $\lambda_A(\beta)\in Ann(L(f)(\lambda_A(\alpha)))$.

\end{proof}

\begin{lema}
 For any $\alpha\in K(A)$ we have $\alpha^{\perp} = (\rho(\alpha))^{\perp}$.

\end{lema}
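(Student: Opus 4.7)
The plan is to read Lemma 11.6 under the standing hypothesis of Section~11, namely that $A$ is semiprime (which is automatic for $PP$-algebras by Lemma 11.2, and is explicitly assumed for the related Lemma 11.5). Under this hypothesis, the proof reduces to showing that every annihilator $\gamma^{\perp}$ is a radical congruence and combining this with the monotonicity of the annihilator.

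First I would dispose of the easy inclusion: since $\alpha \subseteq \rho(\alpha)$, the antitone behaviour of the annihilator (a direct consequence of the residuation property) gives $(\rho(\alpha))^{\perp} \subseteq \alpha^{\perp}$. The substantive content is the reverse inclusion, which I plan to obtain from the radicality lemma below applied to $\gamma = \alpha^{\perp}$, together with the commutativity of the commutator: for any $\gamma \in Con(A)$, $\gamma \subseteq \alpha^{\perp}$ iff $[\alpha,\gamma] = \Delta_A$ iff $\alpha \subseteq \gamma^{\perp}$; once $\gamma^{\perp}$ is known to be radical, this upgrades to $\rho(\alpha) \subseteq \gamma^{\perp}$, i.e.\ $\gamma \subseteq (\rho(\alpha))^{\perp}$, and taking $\gamma = \alpha^{\perp}$ yields $\alpha^{\perp} \subseteq (\rho(\alpha))^{\perp}$.

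The heart of the argument is therefore the auxiliary claim that for every $\gamma \in Con(A)$, $\rho(\gamma^{\perp}) \subseteq \gamma^{\perp}$. I would fix a compact $\beta \subseteq \rho(\gamma^{\perp})$; by Proposition 2.8 there exists $n \geq 0$ with $[\beta,\beta]^n \subseteq \gamma^{\perp}$, that is $[[\beta,\beta]^n,\gamma] = \Delta_A$. To show $[\beta,\gamma] = \Delta_A$, it suffices to check it for each compact $\delta \subseteq \gamma$, because the commutator distributes over arbitrary joins. Fix such a $\delta$; then $[[\beta,\beta]^n,\delta] \subseteq [[\beta,\beta]^n,\gamma] = \Delta_A$. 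Now I would invoke Lemma 3.1, parts (2) and (5), to compute
\[
\lambda_A([\beta,\delta]) \;=\; \lambda_A(\beta)\wedge \lambda_A(\delta) \;=\; \lambda_A([\beta,\beta]^n)\wedge \lambda_A(\delta) \;=\; \lambda_A([[\beta,\beta]^n,\delta]) \;=\; 0,
\]
and then use Lemma 3.1(7), which is exactly the place where semiprimality of $A$ is needed, to conclude $[\beta,\delta] = \Delta_A$. Taking the join over all compact $\delta \subseteq \gamma$ gives $[\beta,\gamma] = \Delta_A$, i.e.\ $\beta \subseteq \gamma^{\perp}$, and hence $\rho(\gamma^{\perp}) = \gamma^{\perp}$.

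The main obstacle—and the reason the argument does not generalize beyond the semiprime setting—is precisely the step passing from $[[\beta,\beta]^n,\delta] = \Delta_A$ to $[\beta,\delta] = \Delta_A$. Without the semiprime hypothesis one only gets $[\beta,\delta] \subseteq \rho(\Delta_A)$, and a routine reticulation calculation (using Proposition 3.9 and Lemma 3.3(2)) then shows that in the general case the correct identity is only $\rho(\alpha \rightarrow \rho(\Delta_A)) = \rho(\rho(\alpha) \rightarrow \rho(\Delta_A))$; it collapses to the clean statement of Lemma 11.6 exactly when $\rho(\Delta_A) = \Delta_A$.
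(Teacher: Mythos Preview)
Your proof is correct and rests on the same core computation as the paper's: passing to the reticulation via $\lambda_A$, using Lemma~3.1(2),(5) to collapse $[\beta,\beta]^n$ to $\beta$ inside $L(A)$, and then invoking semiprimality (Lemma~3.1(7)) to pull the vanishing back to $Con(A)$. Both arguments also correctly identify semiprimality as the indispensable hypothesis.

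The organization differs mildly. The paper attacks the inclusion $\alpha^{\perp}\subseteq(\rho(\alpha))^{\perp}$ directly: it fixes a compact $\beta$ with $[\beta,\alpha]=\Delta_A$, expands $\rho(\alpha)$ as $\bigvee\{\gamma\in K(A)\mid [\gamma,\gamma]^n\subseteq\alpha\}$ via Proposition~2.8, and shows $[\beta,\gamma]=\Delta_A$ for each such $\gamma$ by the $\lambda_A$ trick. You instead isolate the reusable auxiliary fact that \emph{every} annihilator $\gamma^{\perp}$ is a radical congruence, and then deduce the lemma by the Galois-connection step $\alpha\subseteq(\alpha^{\perp})^{\perp}\Rightarrow\rho(\alpha)\subseteq(\alpha^{\perp})^{\perp}\Rightarrow\alpha^{\perp}\subseteq(\rho(\alpha))^{\perp}$. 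Your route yields a slightly stronger intermediate statement (radicality of all annihilators in a semiprime algebra) at no extra cost; the paper's route is marginally more direct for the specific claim. Substantively they are the same argument, with the roles of the ``radical variable'' swapped.
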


\begin{proof}
The inclusion $\alpha\subseteq\rho(\alpha)$ implies that $(\rho(\alpha))^{\perp}\subseteq \alpha^{\perp}$. In order to prove that $\alpha^{\perp}\subseteq (\rho(\alpha))^{\perp}$ it suffices to verify that $\beta\in K(A)$ and $[\beta,\alpha] = \Delta_A$ implies $[\beta,\rho(\alpha)] = \Delta_A$. Assume that $\beta\in K(A)$ and $[\beta,\alpha] = \Delta_A$.

Since $\rho(\alpha) = \bigvee\{\alpha\in K(A)|[\gamma,\gamma]^n\subseteq \alpha$, for some  $n\geq 1\}$ and the commutator operation is distributive, the following equality holds:

$[\beta,\rho(\alpha)] = \bigvee\{[\beta,\gamma]|\gamma \in K(A)$ and $[\alpha,\gamma]^n\subseteq \alpha$, for some  $n\geq 1\}$.

Let $\gamma\in K(A)$ such that $[\gamma,\gamma]^n\subseteq \alpha$, for some  $n\geq 1$. From $[\beta,\alpha] = \Delta_A$ and $[\gamma,\gamma]^n\subseteq \alpha$ it follows that $[\beta,[\gamma,\gamma]^n] = \Delta_A$, so $\lambda_A([\beta,\gamma]) = \lambda_A([\beta,[\gamma,\gamma]^n]) = 0$. Since $A$ is semiprime we get $[\beta,\gamma] = \Delta_A$. It follows that $[\beta,\rho(\alpha)] = \Delta_A$.

\end{proof}

Let $g:L\rightarrow L'$ be a morphism in the category of bounded distributive lattices. Then $f$ is said to be a Stone morphism if for all $a,b\in L$, $Ann(a) = Ann(b)$ implies $Ann(f(a)) = Ann(f(b))$.

\begin{teorema}  Assume that $A$ and $B$ are two semiprime algebras of $\mathcal{V}$ and  $f:A\rightarrow B$ is an admissible morphism. The following properties are equivalent:
\usecounter{nr}
\begin{list}{(\arabic{nr})}{\usecounter{nr}}
\item  $f:A\rightarrow B$  is a $PP$ - morphism;
\item  $L(f):L(A)\rightarrow L(B)$ is a Stone morphism.
\end{list}
\end{teorema}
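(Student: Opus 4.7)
The plan is to reduce the equivalence to a purely ``dictionary'' translation between the two notions of annihilator via the formulas of Proposition 3.9 and Lemma 3.3(7), once we know that $\alpha^{\perp}$ is a radical congruence in a semiprime algebra.

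First I would prove the key auxiliary fact: \emph{if $A$ is semiprime, then for every $\alpha\in K(A)$ the congruence $\alpha^{\perp_A}$ is radical}. Indeed, if $\gamma\in\rho(\alpha^{\perp_A})$ then $[\gamma,\gamma]^n\subseteq\alpha^{\perp_A}$ for some $n\geq 0$, so $[[\gamma,\gamma]^n,\alpha]=\Delta_A$. By Lemma 3.1(2), $\lambda_A([\gamma,\alpha])=\lambda_A([[\gamma,\gamma]^n,\alpha])=0$, hence by Lemma 3.1(6) and semiprimality, $[\gamma,\alpha]\subseteq\rho(\Delta_A)=\Delta_A$, i.e.\ $\gamma\in\alpha^{\perp_A}$. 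Combined with Proposition 3.9 and Lemma 3.3(7), this yields, for every $\alpha,\beta\in K(A)$, the crucial translation
\[
Ann(\lambda_A(\alpha))=Ann((\lambda_A(\alpha)])=Ann(\alpha^{\ast})=(\alpha^{\perp_A})^{\ast},
\]
together with the equivalence: $\alpha^{\perp_A}=\beta^{\perp_A}$ iff $(\alpha^{\perp_A})^{\ast}=(\beta^{\perp_A})^{\ast}$ iff $Ann(\lambda_A(\alpha))=Ann(\lambda_A(\beta))$; here the forward implication of the first equivalence uses that both sides are radical (so Lemma 3.3(2) suffices), and the converse is trivial. The analogous translation holds in $B$.

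Next I would handle the implication $(1)\Rightarrow(2)$. Take $a,b\in L(A)$ with $Ann(a)=Ann(b)$, and pick $\alpha,\beta\in K(A)$ with $a=\lambda_A(\alpha)$, $b=\lambda_A(\beta)$. By the translation above, $\alpha^{\perp_A}=\beta^{\perp_A}$. Since $f$ is a $PP$-morphism, $(f^{\bullet}(\alpha))^{\perp_B}=(f^{\bullet}(\beta))^{\perp_B}$. Using the commutative diagram of Proposition 3.11 (i.e.\ $\lambda_B\circ f^{\bullet}=L(f)\circ\lambda_A$ on $K(A)$) and applying the translation in $B$ to the compact congruences $f^{\bullet}(\alpha),f^{\bullet}(\beta)\in K(B)$, we conclude
\[
Ann(L(f)(a))=Ann(\lambda_B(f^{\bullet}(\alpha)))=Ann(\lambda_B(f^{\bullet}(\beta)))=Ann(L(f)(b)),
\]
so $L(f)$ is a Stone morphism.

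For the converse $(2)\Rightarrow(1)$, I would run the argument in reverse. Given $\alpha,\beta\in K(A)$ with $\alpha^{\perp_A}=\beta^{\perp_A}$, the translation in $A$ gives $Ann(\lambda_A(\alpha))=Ann(\lambda_A(\beta))$; applying the Stone property of $L(f)$ yields $Ann(L(f)(\lambda_A(\alpha)))=Ann(L(f)(\lambda_A(\beta)))$, which by Proposition 3.11 becomes $Ann(\lambda_B(f^{\bullet}(\alpha)))=Ann(\lambda_B(f^{\bullet}(\beta)))$; the translation in $B$ then delivers $(f^{\bullet}(\alpha))^{\perp_B}=(f^{\bullet}(\beta))^{\perp_B}$, proving that $f$ is a $PP$-morphism. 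The only genuinely non-formal ingredient is the radicality of $\alpha^{\perp}$ under semiprimality; once that is in hand, everything else is bookkeeping with the reticulation dictionary already developed in Sections 3 and the lemmas of this section.
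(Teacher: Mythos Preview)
Your proof is correct and follows essentially the same strategy as the paper: both reduce the statement to the ``reticulation dictionary'' identity $Ann(\lambda_A(\alpha))=(\alpha^{\perp_A})^{\ast}$ from Proposition~3.9 together with the commutative diagram of Proposition~3.11. The only minor difference is the choice of auxiliary lemma: you prove that $\alpha^{\perp_A}$ is a radical congruence in a semiprime algebra (and then recover $\alpha^{\perp_A}$ from $(\alpha^{\perp_A})^{\ast}$ via Lemma~3.3(2)), whereas the paper proves Lemma~11.6, namely $\alpha^{\perp}=(\rho(\alpha))^{\perp}$, and combines it with Proposition~3.10 (passing through $(\cdot)_{\ast}$ rather than $(\cdot)^{\ast}$). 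Both auxiliary facts are easy consequences of semiprimality obtained by the same trick with Lemma~3.1(2),(5),(7), so the two arguments are interchangeable variants of the same translation.
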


\begin{proof}

$(1)\Rightarrow (2)$ Assume that $Ann(\lambda_A(\alpha)) = Ann(\lambda_A(\beta))$, with $\alpha,\beta\in K(A)$, hence  $Ann((\lambda_A(\alpha)]) = Ann((\lambda_A(\beta)])$. According to  Proposition 3.10 it follows that $((\lambda_A(\alpha)]_{\ast})^{\perp_A} = (Ann((\lambda_A(\alpha)]))_{\ast} = (Ann((\lambda_A(\beta)]))_{\ast} = ((\lambda_A(\beta)]_{\ast})^{\perp_A}$. But $(\lambda_A(\alpha)]_{\ast} = (\alpha^{\ast})_{\ast} = \rho(\alpha)$ and $(\lambda_A(\beta)]_{\ast} = (\beta^{\ast})_{\ast} = \rho(\beta)$ (cf. Lemma 3.3,(3) and (7)), hence, by using Lemma 11.6, we get $\alpha^{\perp_A} = (\rho(\alpha))^{\perp_A} = (\rho(\beta))^{\perp_A} = \beta^{\perp_A} $.

Since $f:A\rightarrow B$  is a $PP$ - morphism we get $(f^{\bullet}(\alpha))^{\perp_B} = (f^{\bullet}(\beta))^{\perp_B}$, therefore by using Proposition 3.9 the following equalities hold:

$Ann((f^{\bullet}(\alpha))^{\ast}) = ((f^{\bullet}(\alpha))^{\perp_B})^{\ast} = ((f^{\bullet}(\beta))^{\perp_B})^{\ast} = Ann((f^{\bullet}(\beta))^{\ast})$.

It follows that $Ann((\lambda_B(f^{\bullet}(\alpha))]) = Ann((\lambda_B(f^{\bullet}(\beta))])$ (by Lemma 3.3(7)), so we obtain $Ann(\lambda_B(f^{\bullet}(\alpha))) = Ann(\lambda_B(f^{\bullet}(\beta)))$. By taking into account the commutative diagram of Proposition 3.11 it follows that $Ann(L(f)(\lambda_A(\alpha))) = Ann(L(f)(\lambda_A(\beta)))$. Thus $L(f)$ is a Stone morphism.

$(2)\Rightarrow (1)$ Assume that $\alpha,\beta\in K(A)$ such that $\alpha^{\perp_A} = \beta^{\perp_A}$. The following implications hold

$\alpha^{\perp_A} = \beta^{\perp_A}$ $\Rightarrow$ $Ann(\alpha^{\ast}) = (\alpha^{\perp_A})^{\ast} = (\beta^{\perp_A})^{\ast} = Ann(\beta^{\ast})$ (by Prop. 3.9)

\hspace{1.8cm} $\Rightarrow$ $Ann((\lambda_A(\alpha)]) = Ann((\lambda_A(\beta)])$ (by Lemma 3.3(7))

\hspace{1.8cm} $\Rightarrow$ $Ann(\lambda_A(\alpha)) = Ann(\lambda_A(\beta))$

\hspace{1.8cm} $\Rightarrow$ $Ann(L(f)(\lambda_A(\alpha))) = Ann(L(f)(\lambda_A(\beta)))$ ( by hypothesis)

\hspace{1.8cm} $\Rightarrow$ $Ann(\lambda_B(f^{\bullet}(\alpha))) = Ann(\lambda_B(f^{\bullet}(\beta)))$ (by Prop. 3.11)

\hspace{1.8cm} $\Rightarrow$ $Ann((\lambda_B(f^{\bullet}(\alpha))]) = Ann((\lambda_B(f^{\bullet}(\beta))])$

\hspace{1.8cm} $\Rightarrow$ $Ann((f^{\bullet}(\alpha))^{\ast}) = Ann((f^{\bullet}(\beta))^{\ast})$ (by Lemma 3.3(7))

\hspace{1.8cm} $\Rightarrow$ $(f^{\bullet}(\alpha))^{\perp_B})^{\ast} = (f^{\bullet}(\alpha))^{\perp_B})^{\ast}$ (cf. Prop. 3.9)

\hspace{1.8cm} $\Rightarrow$ $\rho((f^{\bullet}(\alpha))^{\perp_B})^{\ast}) = (f^{\bullet}(\alpha))^{\perp_B})^{\ast})_{\ast} = (f^{\bullet}(\beta))^{\perp_B})^{\ast})_{\ast} = \rho((f^{\bullet}(\beta))^{\perp_B})^{\ast})$

\hspace{1.8cm} $\Rightarrow$ $(f^{\bullet}(\alpha))^{\perp_B} = (f^{\bullet}(\beta))^{\perp_B}$ (by Lemma 11.6).

Therefore $f:A\rightarrow B$  is a $PP$ - morphism.
\end{proof}

\begin{lema} \cite{Cornish}, \cite{Cornish1} For any bounded distributive lattice $L$ the following are equivalent:
\usecounter{nr}
\begin{list}{(\arabic{nr})}{\usecounter{nr}}
\item  $L$ is a Stone lattice;
\item  $L$ is a conormal lattice and $Min_{Id,Z}(L)$ is compact;
\end{list}
\end{lema}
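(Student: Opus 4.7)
The plan is to translate both conditions into workable lattice-theoretic statements using the material already developed in the excerpt, and then carry out a short Boolean calculation. By Theorem 9.6 applied to $L$, conormality of $L$ is equivalent to the statement that $Ann(x)$ is a $\sigma$-ideal for every $x\in L$. By the lattice-theoretic analogue of Proposition 9.5 (bounded distributive lattices are automatically ``semiprime'' in the sense that $x=0$ iff $x$ lies in every prime ideal of $L$), compactness of $Min_{Id,Z}(L)$ is equivalent to a density condition: for every $x\in L$ there exists $y\in L$ with $x\land y=0$ and $Ann(x\lor y)=\{0\}$.

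For the implication $(1)\Rightarrow(2)$, I would take $x\in L$ and pick $e\in B(L)$ with $Ann(x)=(e]$. Conormality then follows immediately, since $(e]$ is a $\sigma$-ideal: $\neg e\in Ann(e)$ and $e\lor \neg e=1$ witness $(e]\lor Ann(e)=L$. To obtain the density condition, $e\in Ann(x)$ gives $x\land e=0$, and $z\land(x\lor e)=0$ forces $z\in Ann(x)\cap Ann(e)=(e]\cap(\neg e]=\{0\}$, so $x\lor e$ is dense. Hence $Min_{Id,Z}(L)$ is compact.

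For $(2)\Rightarrow(1)$, given $x\in L$, I would use the density condition to pick $y\in L$ with $x\land y=0$ and $x\lor y$ dense. Since $y\in Ann(x)$, conormality yields $Ann(x)\lor Ann(y)=L$, so there exist $a\in Ann(x)$ and $b\in Ann(y)$ with $a\lor b=1$. Then $(a\land b)\land(x\lor y)=(a\land b\land x)\lor(a\land b\land y)=0$, so density forces $a\land b=0$; combined with $a\lor b=1$ this gives $a,b\in B(L)$ with $b=\neg a$. The Stone property then follows by verifying $Ann(x)=(a]$: the inclusion $(a]\subseteq Ann(x)$ is clear, and for $c\in Ann(x)$ one computes $c\land b\land(x\lor y)=0$, so $c\land b=0$ by density, whence $c=c\land(a\lor b)=c\land a\leq a$.

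The main (minor) obstacle is the reformulation of compactness of the minimal prime spectrum into the density condition; once this equivalence is imported in the lattice setting (by specializing the argument behind Proposition 9.5 to the commutator $[\cdot,\cdot]=\land$ in $Id(L)$), the rest is the purely elementary Boolean manipulation in $L$ sketched above.
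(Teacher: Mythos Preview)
The paper does not supply its own proof of this lemma; it is quoted from Cornish \cite{Cornish}, \cite{Cornish1} as a known result. Your proposal therefore goes beyond what the paper does, giving a self-contained argument, and that argument is correct. The Boolean calculations in both directions are clean and accurate: in $(1)\Rightarrow(2)$ the observation $Ann(e)=(\neg e]$ for $e\in B(L)$ makes both conormality and the density of $x\lor e$ immediate, and in $(2)\Rightarrow(1)$ the trick of intersecting with the dense element $x\lor y$ to force $a\land b=0$ and then $c\land b=0$ is exactly the right move.

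The one point you flag yourself is genuine but harmless: the equivalence between compactness of $Min_{Id,Z}(L)$ and the density condition (for every $x$ there is $y$ with $x\land y=0$ and $x\lor y$ dense) is indeed the lattice specialization of Proposition~9.5. Since a bounded distributive lattice is semiprime in the relevant sense (the intersection of all prime ideals is $\{0\}$, so $(\alpha\lor\beta)^{\perp}=\Delta$ becomes $Ann(x\lor y)=\{0\}$), the translation goes through verbatim with $[\cdot,\cdot]=\land$. One tiny remark: when you verify that $(e]$ is a $\sigma$-ideal you only exhibit the witness for $z=e$; for a general $z\le e$ you need the extra (trivial) step $Ann(e)\subseteq Ann(z)$, so the same witness $\neg e$ works.
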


\begin{teorema}  For any semiprime algebra $A$ the following are equivalent:
\usecounter{nr}
\begin{list}{(\arabic{nr})}{\usecounter{nr}}
\item  $A$ is a $PP$ - algebra;
\item  The reticulation $L(A)$ is a Stone lattice;
\item  $L(A)$ is a conormal lattice and $Min_{Id,Z}(L)$ is compact;
\item  $A$ is a $mp$ - algebra and $Min_Z(A)$ is compact;
\item  $A$ is a $PF$ - algebra and $Min_Z(A)$ is compact.
\end{list}
\end{teorema}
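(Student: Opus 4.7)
The plan is to prove the theorem by stringing together the equivalences already established in the paper, using Theorem 11.3 as the bridge between algebra-side and lattice-side conditions, and Lemma 11.8 as the key lattice-theoretic characterization of Stone lattices.

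First, I would establish $(1)\Leftrightarrow(2)$ by direct appeal to Theorem 11.3: since $A$ is assumed semiprime, both implications in that theorem are available, yielding that $A$ is a $PP$-algebra if and only if $L(A)$ is a Stone lattice. Next, $(2)\Leftrightarrow(3)$ is exactly the content of Lemma 11.8 applied to the reticulation $L(A)$, so nothing further is required.

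For $(3)\Leftrightarrow(4)$ I would argue in two steps. The conormality condition transfers by Lemma 9.8, which tells us that $A$ is an $mp$-algebra if and only if $L(A)$ is conormal. The compactness condition transfers through Lemma 9.1(1), which asserts that the topological spaces $Min_Z(A)$ and $Min_{Id,Z}(L(A))$ are homeomorphic; hence one is compact if and only if the other is. Combining these two transfers gives the equivalence of (3) and (4).

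Finally, $(4)\Leftrightarrow(5)$ follows at once from Theorem 9.22, which characterizes $PF$-algebras as precisely the semiprime $mp$-algebras: since $A$ is semiprime by hypothesis, being an $mp$-algebra is equivalent to being a $PF$-algebra, and the compactness condition on $Min_Z(A)$ is common to both statements. There is no real obstacle here; the whole result is an assembly of previously proved transfer theorems, with the semiprime hypothesis doing the essential work of unlocking both directions of Theorem 11.3 and the equivalence of Theorem 9.22.
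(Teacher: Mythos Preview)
Your proposal is correct and follows essentially the same route as the paper: the paper invokes Theorem 11.3 for $(1)\Leftrightarrow(2)$, Lemma 11.8 for $(2)\Leftrightarrow(3)$, the combination of Lemma 9.8 and Lemma 9.1(1) for $(3)\Leftrightarrow(4)$, and Theorem 9.22 for $(4)\Leftrightarrow(5)$. Your write-up is in fact slightly cleaner, since you state each step as a two-sided equivalence (which the cited lemmas support), whereas the paper phrases several of them as one-way implications and leaves the closing of the cycle implicit.
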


\begin{proof}

$(1)\Leftrightarrow (2)$ By Theorem 11.3.

$(2)\Rightarrow (3)$ By Lemma 11.8.

$(3)\Rightarrow (4)$ We know that $A$ is a $mp$ - algebra iff $L(A)$ is a conormal lattice (cf. Lemma 9.8) and $Min_Z(A)$, $Min_{Id,Z}(L)$ are homeomorphic spaces (cf. Lemma 9.1(1).

$(4)\Rightarrow (5)$ By Theorem 9.22.
\end{proof}

\begin{lema} \cite{Cornish}, \cite{Cornish1} For any conormal lattice $L$ the following are equivalent:
\usecounter{nr}
\begin{list}{(\arabic{nr})}{\usecounter{nr}}
\item  $L$ is a Stone lattice;
\item  The inclusion $Min_{Id,Z}(L)\subseteq Spec_{Id,Z}(L)$ has a continuous retraction.
\end{list}
\end{lema}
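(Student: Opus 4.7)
The plan is to exploit the fact that, by Theorem 9.6(3), in any conormal lattice $L$ every prime ideal contains a unique minimal prime, so the set-theoretic map $r\colon \mathrm{Spec}_{Id}(L)\to \mathrm{Min}_{Id}(L)$ sending a prime $P$ to the unique minimal prime below it is well-defined and is a retraction of the inclusion. The entire content of the lemma is then the continuity of $r$, and my two directions will both hinge on the identity
\[
r^{-1}\bigl(V_{Id}(x)\cap \mathrm{Min}_{Id}(L)\bigr)\;=\;\{P\in\mathrm{Spec}_{Id}(L)\mid Ann(x)\not\subseteq P\}\;=\;D_{Id}(Ann(x)),
\]
valid for every $x\in L$. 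This identity follows from the description $r(P)=O(P)=\{y\in L\mid Ann(y)\not\subseteq P\}$ (Lemma 5.9, translated to lattices), because $x\in O(P)$ exactly when $Ann(x)\not\subseteq P$.

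For $(1)\Rightarrow(2)$, I would assume $L$ is Stone and, for each $x\in L$, fix the element $e_x\in B(L)$ with $Ann(x)=(e_x]$. Taking complements in the identity above gives
\[
r^{-1}\bigl(D_{Id}(x)\cap \mathrm{Min}_{Id}(L)\bigr)\;=\;V_{Id}(Ann(x))\;=\;V_{Id}(e_x)\;=\;D_{Id}(\neg e_x),
\]
which is open in $\mathrm{Spec}_{Id,Z}(L)$. Since the sets $D_{Id}(x)\cap\mathrm{Min}_{Id}(L)$ form a basis for the Stone topology on $\mathrm{Min}_{Id}(L)$, this proves continuity of $r$.

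For $(2)\Rightarrow(1)$, which I expect to be the main obstacle, I would start from a continuous retraction $r$ and fix an arbitrary $x\in L$. By continuity, $r^{-1}(V_{Id}(x)\cap\mathrm{Min}_{Id}(L))=D_{Id}(Ann(x))$ is closed in $\mathrm{Spec}_{Id,Z}(L)$; it is manifestly open as a union of basic opens. Hence $D_{Id}(Ann(x))$ is clopen in the spectral space $\mathrm{Spec}_{Id,Z}(L)$. The standard correspondence between the Boolean algebra of clopen subsets of $\mathrm{Spec}_{Id,Z}(L)$ and $B(L)$ (given by $e\mapsto D_{Id}(e)$, analogous to Lemma 8.12 in the paper) then produces $e\in B(L)$ with $D_{Id}(Ann(x))=D_{Id}(e)$. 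Since every ideal of a bounded distributive lattice equals the intersection of the prime ideals containing it, the equality of the open complements forces $Ann(x)=(e]$, showing that $L$ is a Stone lattice. The subtle point to verify carefully is precisely this last passage from topological equality $D_{Id}(Ann(x))=D_{Id}(e)$ to lattice-theoretic equality $Ann(x)=(e]$, which I would handle by writing both ideals as intersections of the primes that contain them.
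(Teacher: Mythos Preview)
The paper does not prove this lemma: it is stated with a citation to Cornish and then used as a black box in the proof of Theorem 11.11, so there is no in-paper argument to compare against.

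Your argument is essentially correct, but there is one small gap in the direction $(2)\Rightarrow(1)$. You use the same letter $r$ both for the canonical map $P\mapsto O(P)$ and for the continuous retraction supplied by hypothesis~(2), and your key identity $r^{-1}\bigl(V_{Id}(x)\cap\mathrm{Min}_{Id}(L)\bigr)=D_{Id}(Ann(x))$ was established only for the canonical map. Hypothesis~(2) merely asserts the existence of \emph{some} continuous retraction $\rho$, so you must check that $\rho$ coincides with the canonical one. This is short: $\mathrm{Min}_{Id,Z}(L)$ is Hausdorff (this is Speed's result, invoked in the proof of Corollary~9.2), and if $m=O(P)$ is the unique minimal prime below $P$ then $P\in\overline{\{m\}}=V_{Id}(m)$ in $\mathrm{Spec}_{Id,Z}(L)$, whence by continuity $\rho(P)\in\overline{\{\rho(m)\}}=\overline{\{m\}}=\{m\}$ in $\mathrm{Min}_{Id,Z}(L)$, i.e.\ $\rho(P)=m$. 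With this line added your proof goes through, including the final passage from $D_{Id}(Ann(x))=D_{Id}(e)$ to $Ann(x)=(e]$, which is correct because in a bounded distributive lattice every ideal is the intersection of the primes containing it.

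There is also a much shorter route for $(2)\Rightarrow(1)$ that stays entirely within the paper's toolbox: any continuous retraction exhibits $\mathrm{Min}_{Id,Z}(L)$ as a continuous image of the compact space $\mathrm{Spec}_{Id,Z}(L)$, hence compact; combined with the standing conormality hypothesis, Lemma~11.8 then gives that $L$ is Stone immediately. Your approach via clopen sets is more hands-on and yields the explicit complemented element $e$ with $Ann(x)=(e]$, while the compactness route is quicker but less constructive.
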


\begin{teorema}  For any $PF$ - algebra $A$ the following are equivalent:
\usecounter{nr}
\begin{list}{(\arabic{nr})}{\usecounter{nr}}
\item  $A$ is a $PP$ - algebra;
\item  The reticulation $L(A)$ is a Stone lattice;
\item  The inclusion $Min_{Id,Z}(L(A))\subseteq Spec_{Id,Z}(L(A))$ has a continuous retraction;
\item  The inclusion $Min_Z(A)\subseteq Spec_Z(A)$ has a continuous retraction;
\item  $Min_A(Z)$ is a compact space.
\end{list}
\end{teorema}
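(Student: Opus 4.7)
The plan is to assemble the five equivalences from results already proven, exploiting the fact that a $PF$-algebra is automatically a semiprime $mp$-algebra whose reticulation is conormal. In detail, since $A$ is a $PF$-algebra, Theorem 9.22 gives that $A$ is a semiprime $mp$-algebra; consequently, by Lemma 9.8 the reticulation $L(A)$ is a conormal lattice, and by Lemma 9.20 (which is absorbed into Theorem 9.22) we also have $A$ semiprime. These two facts are what unlock all the tools I will need.

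First I would dispatch the core block (1) $\Leftrightarrow$ (2) $\Leftrightarrow$ (5) by direct citation of Theorem 11.9: for the semiprime algebra $A$ that theorem gives the equivalences $A$ is $PP$ iff $L(A)$ is Stone iff $A$ is $PF$ and $Min_Z(A)$ is compact. The last clause absorbs our hypothesis that $A$ is $PF$, so it reduces precisely to $Min_Z(A)$ being compact, giving (5). Thus (1), (2), (5) are equivalent with no further work.

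Next, for (2) $\Leftrightarrow$ (3), I would invoke Lemma 11.10 applied to the conormal lattice $L(A)$: it says that for a conormal lattice, being Stone is equivalent to the inclusion $Min_{Id,Z}(L(A))\subseteq Spec_{Id,Z}(L(A))$ admitting a continuous retraction. Since $L(A)$ is conormal, this yields (2) $\Leftrightarrow$ (3) immediately.

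Finally, (3) $\Leftrightarrow$ (4) is a transfer along the spectral homeomorphism. By Proposition 3.4 the maps $u:Spec_Z(A)\rightarrow Spec_{Id,Z}(L(A))$ and $v = u^{-1}$ are homeomorphisms, and by Lemma 9.1(1) they restrict to mutually inverse homeomorphisms $Min_Z(A)\rightarrow Min_{Id,Z}(L(A))$. Hence if $r: Spec_{Id,Z}(L(A))\rightarrow Min_{Id,Z}(L(A))$ is a continuous retraction of the inclusion on the lattice side, then $v\circ r\circ u: Spec_Z(A)\rightarrow Min_Z(A)$ is a continuous retraction of the inclusion on the algebra side, and vice versa. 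There is no real obstacle in the proof; the only thing to be careful about is citing the right set of prior results (Theorem 9.22, Lemma 9.8, Theorem 11.9, Lemma 11.10, Proposition 3.4, Lemma 9.1(1)) in the right order, so that one never invokes (3) or (4) before conormality of $L(A)$ is in hand.
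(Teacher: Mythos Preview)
Your proof is correct and follows essentially the same approach as the paper: the paper invokes Theorem 11.3 for $(1)\Leftrightarrow(2)$, Lemma 11.10 for $(2)\Leftrightarrow(3)$, the spectral homeomorphisms for $(3)\Leftrightarrow(4)$, and Theorem 11.9 for the link to $(5)$. The only cosmetic difference is that you bundle $(1)\Leftrightarrow(2)\Leftrightarrow(5)$ in one shot via Theorem 11.9, whereas the paper separates $(1)\Leftrightarrow(2)$ (Theorem 11.3) from the link to $(5)$; since Theorem 11.9 itself rests on Theorem 11.3, this is purely organizational.
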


\begin{proof}

$(1)\Leftrightarrow (2)$ By Theorem  11.3.

$(2)\Leftrightarrow (3)$ By Lemma 11.10.

$(3)\Leftrightarrow (4)$ We know that $Spec_Z(A)$ (resp. $Min_Z(A)$) is homeomorphic to $Spec_{Id,Z}(L(A))$ (resp. $Min_{Id,Z}(L(A))$).

$(4)\Leftrightarrow (5)$ By Theorem 11.9(5).

\end{proof}

\begin{propozitie}
Any $PP$ - algebra is a congruence purified algebra.
\end{propozitie}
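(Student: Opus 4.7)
The plan is to reduce to the characterization of congruence purified algebras given in Theorem 10.15. Since any $PP$-algebra is semiprime by Lemma 11.2, Theorem 10.15 applies, and it suffices to verify one of its equivalent conditions. The most direct candidate is condition $(4)$: every minimal prime congruence of $A$ is regular.

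Fix $\phi\in Min(A)$. The goal is to show $\phi=\bigvee\{\beta\in B(Con(A))\mid \beta\subseteq\phi\}$. Since every congruence is the join of the compact congruences it contains, it is enough to attach to each $\alpha\in K(A)$ with $\alpha\subseteq\phi$ a complemented congruence $\beta_\alpha$ satisfying $\alpha\subseteq\beta_\alpha\subseteq\phi$. Because $A$ is semiprime and $\phi$ is minimal prime, Corollary 9.4 gives $\alpha\subseteq\phi$ iff $\alpha^{\perp}\not\subseteq\phi$. The $PP$-hypothesis (via Lemma 11.1) yields $\alpha^{\perp}\in B(Con(A))$, hence also $\alpha^{\perp\perp}=\neg(\alpha^{\perp})\in B(Con(A))$.

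Now I exploit that $\phi$ is prime: from $[\alpha^{\perp},\alpha^{\perp\perp}]=\Delta_A\subseteq\phi$ together with $\alpha^{\perp}\not\subseteq\phi$ it follows that $\alpha^{\perp\perp}\subseteq\phi$. Since we always have $\alpha\subseteq\alpha^{\perp\perp}$, the element $\beta_\alpha:=\alpha^{\perp\perp}$ fits in $B(Con(A))$ with $\alpha\subseteq\beta_\alpha\subseteq\phi$, as required. Therefore $\phi$ is a regular congruence, and Theorem 10.15 concludes that $A$ is congruence purified.

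No step looks genuinely hard: the only subtle point is the correct invocation of Corollary 9.4 (which needs semiprimeness, guaranteed by Lemma 11.2) together with the primeness of $\phi$ applied to the complementary pair $\alpha^{\perp},\alpha^{\perp\perp}$ in $B(Con(A))$. Everything else is bookkeeping with Theorem 10.15.
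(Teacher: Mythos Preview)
Your proof is correct and takes a genuinely different route from the paper's.

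The paper argues topologically: from Theorem 11.9 it extracts that $A$ is an $mp$-algebra and $Min_Z(A)$ is compact, then invokes Proposition 9.5 to get $Min_Z(A)=Min_F(A)$ is a Boolean space (hence totally disconnected), and finally applies the topological criterion Theorem 10.15(2). This is short once the machinery of Theorem 11.9 (which itself passes through the reticulation and the Stone lattice characterization) is available.

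Your argument is purely algebraic and more self-contained: you go straight for the condition Theorem 10.15(4), showing each minimal prime $\phi$ is regular by producing, for every compact $\alpha\subseteq\phi$, the complemented sandwich $\alpha\subseteq\alpha^{\perp\perp}\subseteq\phi$. The key inputs are exactly the $PP$-hypothesis (giving $\alpha^{\perp}\in B(Con(A))$, hence $\alpha^{\perp\perp}\in B(Con(A))$), Corollary 9.4 (giving $\alpha^{\perp}\not\subseteq\phi$), and the primeness of $\phi$ applied to $[\alpha^{\perp},\alpha^{\perp\perp}]=\Delta_A$. This avoids the detour through compactness of $Min_Z(A)$ and the reticulation entirely, and makes transparent why the double annihilator is the right Boolean witness.
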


\begin{proof}
Let $A$ be a $PP$ - algebra. By Theorem 11.9(3), $Min_Z(A)$ is a compact space, hence by using Proposition 9.5, it follows that $Min_Z(A) = Min_F(A)$ and $Min_Z(A)$ is a Boolean space. According to Theorem 10.15, $A$ is a congruence purified algebra.

\end{proof}

\end{document}